\theoremstyle{plain}
    \newtheorem{thm}{Theorem}[section]
    \newtheorem{lem}[thm]{Lemma}%
    \newtheorem{cor}[thm]{Corollary}
    \newtheorem{prop}[thm]{Proposition}
\theoremstyle{definition}
    \newtheorem{dfn}[thm]{Definition}
    \newtheorem{nota}[thm]{Notation}
\theoremstyle{remark}
    \newtheorem{rem}[thm]{Remark}
    \newtheorem*{remn}{Remark}
    \newtheorem{fact}[thm]{Fact}
    \newtheorem*{factn}{Fact}
    \newtheorem{exam}[thm]{Example}
    \newtheorem*{examn}{Example}
    \newtheorem{ppty}[thm]{Property}
    \newtheorem{sit}[thm]{Situation}
\numberwithin{equation}{section}
\newcommand{\BAA}{\mathbb{A}}
\newcommand{\BCC}{\mathbb{C}}
\newcommand{\BHH}{\mathbb{H}}
\newcommand{\BPP}{\mathbb{P}}
\newcommand{\BQQ}{\mathbb{Q}}
\newcommand{\BVV}{\mathbb{V}}
\newcommand{\BZZ}{\mathbb{Z}}
\newcommand{\SA}{\mathcal{A}}
\newcommand{\SB}{\mathcal{B}}
\newcommand{\SC}{\mathcal{C}}
\newcommand{\SE}{\mathcal{E}}
\newcommand{\SF}{\mathcal{F}}
\newcommand{\SG}{\mathcal{G}}
\newcommand{\SH}{\mathcal{H}}
\newcommand{\SJ}{\mathcal{J}}
\newcommand{\SK}{\mathcal{K}}
\newcommand{\SL}{\mathcal{L}}
\newcommand{\SM}{\mathcal{M}}
\newcommand{\SN}{\mathcal{N}}
\newcommand{\SO}{\mathcal{O}}
\newcommand{\SR}{\mathcal{R}}
\newcommand{\ScS}{\mathcal{S}}
\newcommand{\SX}{\mathcal{X}}
\newcommand{\ep}{\varepsilon}
\newcommand{\Ga}{\mathfrak{a}}
\newcommand{\GD}{\mathfrak{D}}
\newcommand{\GM}{\mathfrak{m}}
\newcommand{\Gp}{\mathfrak{p}}
\newcommand{\ttt}{\mathtt{t}}
\newcommand{\xtt}{\mathtt{x}}
\newcommand{\ytt}{\mathtt{y}}
\newcommand{\ztt}{\mathtt{z}}
\newcommand{\SAA}{\mathsf{A}}
\newcommand{\bfD}{\mathbf{D}}
\newcommand{\bfL}{\mathbf{L}}
\newcommand{\bfR}{\mathbf{R}}
\newcommand{\bfS}{\mathbf{S}}
\newcommand{\Ann}{\operatorname{Ann}} %%% annihilator
\newcommand{\Ass}{\operatorname{Ass}} %%% associated primes
\newcommand{\chara}{\operatorname{char}}
\newcommand{\CM}{\operatorname{CM}}
\newcommand{\Codim}{\operatorname{codim}}
\newcommand{\Coker}{\operatorname{Coker}}
\newcommand{\Cone}{\operatorname{Cone}}
\newcommand{\Div}{\operatorname{div}}
\newcommand{\depth}{\operatorname{depth}}
\newcommand{\codepth}{\operatorname{codepth}}
\newcommand{\Ext}{\operatorname{Ext}}
\newcommand{\SExt}{\operatorname{\mathcal{E}\mathit{xt}}}
\newcommand{\BExt}{\operatorname{\mathbb{E}xt}}
\newcommand{\SBExt}{\operatorname{\boldsymbol{\mathcal{E}}\mspace{-1mu}\mathit{xt}}}
\newcommand{\Fl}{\operatorname{Fl}} %%% flat locus
\newcommand{\Gor}{\operatorname{Gor}}
\newcommand{\Gm}{\mathbb{G}_{\mathtt{m}}}
\newcommand{\Hom}{\operatorname{Hom}}
\newcommand{\RHom}{\operatorname{\mathbf{R}Hom}}
\newcommand{\SHom}{\operatorname{\mathcal{H}\mathit{om}}}
\newcommand{\SRHom}{\operatorname{\mathbf{R}\mathcal{H}\mathit{om}}}
\newcommand{\id}{\operatorname{id}}
\newcommand{\Ker}{\operatorname{Ker}}
\newcommand{\Mod}{\mathsf{Mod}}
\newcommand{\OH}{\operatorname{H}}
\newcommand{\Pic}{\operatorname{Pic}}
\newcommand{\pro}{\operatorname{pro}} %%% pro objects
\newcommand{\Proj}{\operatorname{Proj}}
\newcommand{\QCoh}{\mathsf{QCoh}}
\newcommand{\rad}{\operatorname{rad}}
\newcommand{\Set}{\mathbf{Set}}
\newcommand{\Sch}{\mathsf{Sch}}
\newcommand{\LNSch}{\mathsf{LNSch}} %%% the category of locally Noetherian schemes
\newcommand{\Spec}{\operatorname{Spec}}
\newcommand{\SSpec}{\operatorname{\mathcal{S}\mathit{pec}}}
\newcommand{\Supp}{\operatorname{Supp}}
\newcommand{\Tor}{\operatorname{Tor}}
\newcommand{\STor}{\operatorname{\mathcal{T}\!\mathit{or}}}
\newcommand{\transdeg}{\operatorname{tr.deg}}
\newcommand{\isom}{\simeq}
\newcommand{\injmap}{\hookrightarrow}
\newcommand{\coh}{\mathrm{coh}}
\newcommand{\op}{\mathrm{op}}
\newcommand{\qcoh}{\mathrm{qcoh}}
\newcommand{\qis}{\mathrm{qis}}
\begin{document}

\title[Grothendieck duality and $\BQQ$-Gorenstein morphisms]{Grothendieck duality and $\BQQ$-Gorenstein morphisms}
\author{Yongnam Lee}
\address{%
\textsc{Department of Mathematical Sciences} \endgraf
\textsc{Korea Advanced Institute of Science and Technology} \endgraf %(KAIST),
\textsc{291, Daehak-ro, Yuseong-gu, Daejeon 34141 South Korea}}
\email{ynlee@kaist.ac.kr}

\author{Noboru Nakayama}
\address{%
\textsc{Research Institute for Mathematical Sciences} \endgraf %(RIMS)
\textsc{Kyoto University, Kyoto 606-8502 Japan}}
\email{nakayama@kurims.kyoto-u.ac.jp}

\begin{abstract}
The notions of \( \BQQ \)-Gorenstein scheme and of \( \BQQ \)-Gorenstein morphism 
are introduced for locally Noetherian schemes by dualizing complexes and (relative) canonical sheaves. 
These cover all the previously known notions of \( \BQQ \)-Gorenstein algebraic 
variety and of \( \BQQ \)-Gorenstein deformation satisfying Koll\'ar condition, over a field.  
By studies on relative \( \bfS_{2} \)-condition and base change properties, 
valuable results are proved for \( \BQQ \)-Gorenstein morphisms, which include  
infinitesimal criterion, valuative criterion, 
\( \BQQ \)-Gorenstein refinement, and so forth. 
\end{abstract}

\subjclass[2010]{Primary 14B25; Secondary 14J10, 14F05, 14A15}
\keywords{Grothendieck duality, \( \BQQ \)-Gorenstein morphism}

\thanks{The first named author is partly supported by
the NRF of Korea funded by the Korean government(MSIP)(No.2013006431).}

\maketitle

\setcounter{tocdepth}{1}
\tableofcontents
\setcounter{tocdepth}{2}

\section{Introduction}
\label{sect:intro}

The notion of \( \BQQ \)-Gorenstein variety
is important for the minimal model theory
of algebraic varieties in characteristic zero:
A normal algebraic variety \( X \) defined over a field of
any characteristic is said to be \( \BQQ \)-Gorenstein if
\( rK_{X} \) is Cartier for some positive integer \( r \), where \( K_{X} \) stands for
the canonical divisor of \( X \).
In some papers, \( X \) is additionally required to be Cohen--Macaulay.
Reid used this notion essentially to define the canonical singularity in
\cite[Def.~(1.1)]{Reid},
and he named the notion ``\( \BQQ \)-Gorenstein''
in \cite[(0.12.e)]{ReidMM}, where the Cohen--Macaulay condition is required.
The notion without the Cohen--Macaulay condition appears
in \cite{KMM} for example.
In the minimal model theory of algebraic varieties of dimension more than two,
we must deal with varieties with mild singularities such as terminal, canonical,
log-terminal, and log-canonical (cf.\ \cite[\S 0-2]{KMM} for the definition).
The notion of \( \BQQ \)-Gorenstein is hence frequently used in studying
the higher dimensional birational geometry.

The notion of \( \BQQ \)-Gorenstein deformation is also
popular in the study of degenerations of normal algebraic varieties
in characteristic zero related to
the minimal model theory and the moduli theory
since the paper \cite{KSh} by Koll\'ar and Shepherd-Barron.
Roughly speaking, a \( \BQQ \)-Gorenstein deformation \( \SX \to C \)
of a \( \BQQ \)-Gorenstein normal algebraic variety \( X \)
is considered as a flat family of algebraic varieties over a smooth curve \( C \)
with a closed fiber being isomorphic to \( X \) such that
\( rK_{\SX/C} \) is Cartier and \( rK_{\SX/C}|_{X} \sim rK_{X} \)
for some \( r > 0 \), where \( K_{\SX/C} \) stands for
the relative canonical divisor.  We call
such a deformation ``naively \( \BQQ \)-Gorenstein'' (cf.\ Definition~\ref{dfn:QGorMor} below).
This is said to be ``weakly \( \BQQ \)-Gorenstein'' in \cite[\S 3]{Hacking}, 
or satisfying \emph{Viehweg's condition} (cf.\ Property \( \mathbf{V}^{[N]} \) in \cite[\S 2]{HaKo}). 
We say that \( \SX \to C \) is a \( \BQQ \)-Gorenstein deformation if
\[ \SO_{\SX}(mK_{\SX/C}) \otimes_{\SO_{\SX}} \SO_{X} \isom
\SO_{X}(mK_{X})  \]
for any integer \( m \).
This additional condition seems to be considered first by Koll\'ar
\cite[2.1.2]{KolProj}, and it is called the \emph{Koll\'ar condition}; 
A similar condition is named as Property \( \mathbf{K} \) in  \cite[\S 2]{HaKo} for example.
A typical example of \( \BQQ \)-Gorenstein deformation appears as a deformation
of the weighted projective plane \( \BPP(1, 1, 4) \):
Its versal deformation space has two irreducible components, in which
the one-dimensional component corresponds to the \( \BQQ \)-Gorenstein deformation and
its general fibers are \( \BPP^{2} \) (cf.\ \cite[\S 8]{Pinkham}).
The \( \BQQ \)-Gorenstein deformation is also used in constructing
some simply connected surfaces of general type over the complex number field \( \BCC \) in \cite{LeePark}.
The authors have succeeded in generalizing the construction to the positive characteristic case 
in \cite{LeeNakayama}, where a special case of \( \BQQ \)-Gorenstein deformation
over a mixed characteristic 
base scheme is considered.

During the preparation of the joint paper \cite{LeeNakayama}, the authors began
generalizing the notion of \( \BQQ \)-Gorenstein morphism to
the case of morphisms between locally Noetherian schemes.
The purpose of this article is to give good definitions of
\emph{\( \BQQ \)-Gorenstein scheme}
and \emph{\( \BQQ \)-Gorenstein morphism}:
We define the notion of ``\( \BQQ \)-Gorenstein'' for
locally Noetherian schemes admitting dualizing complexes
(cf.\ Definition~\ref{dfn:QGorSch} below) and
define the notion of ``\( \BQQ \)-Gorenstein'' for
flat morphisms locally of finite type between
locally Noetherian schemes (cf.\ Definition~\ref{dfn:QGorMor} below).
So, we try to define the notion of ``\( \BQQ \)-Gorenstein'' as general as possible.
We do not require the Cohen--Macaulay condition for fibers, which is assumed in
most articles on \( \BQQ \)-Gorenstein deformations, 
and we allow all locally Noetherian schemes as the base scheme of a \( \BQQ \)-Gorenstein morphism.

\subsection*{\texorpdfstring{$\BQQ$}{Q}-Gorenstein schemes and \texorpdfstring{$\BQQ$}{Q}-Gorenstein morphisms}

The definition of \( \BQQ \)-Gorenstein scheme in Definition~\ref{dfn:QGorSch} below 
is interpreted as follows (cf.\ Lemma\ \ref{lem:QGorSch3}\eqref{lem:QGorSch3:3}): 
A locally Noetherian scheme \( X \) is said to be \( \BQQ \)-Gorenstein if and only if 
\begin{itemize}
\item it satisfies Serre's condition \( \bfS_{2} \), 
\item it is Gorenstein in codimension one, 
\item there exists a dualizing sheaf \( \SL \) locally on \( X \), and 
the double dual of \( \SL^{\otimes r} \) is invertible for some integer \( r > 0 \) locally on \( X \). 
\end{itemize}
Here, we consider a dualizing sheaf (cf.\ Definition~\ref{dfn:ordinaryDC+DS}) 
as the zero-th cohomology \( \SH^{0}(\SR^{\bullet}) \) 
of an ordinary dualizing complex \( \SR^{\bullet} \), which is 
a dualizing complex of special type and exists for 
any \emph{locally equi-dimensional} 
(cf.\ Definition~\ref{dfn:equi-dim}\eqref{dfn:equi-dim:locally}) 
and locally Noetherian schemes admitting dualizing complexes 
(cf.\ Lemma~\ref{lem:ordinaryDC}). 
The dualizing sheaf of the Gorenstein locus \( U = \Gor(X) \) is isomorphic to 
\( \SL|_{U} \) up to tensor product with invertible sheaves, 
and \( \SL \) is a reflexive \( \SO_{X} \)-module with an isomorphism 
\( \SL \isom j_{*}(\SL|_{U}) \) for the open immersion \( j \colon U \injmap X \). 
This definition generalizes the usual definition of \( \BQQ \)-Gorenstein normal 
algebraic varieties over a field (cf.\ Example~\ref{exam:dfn:QGorScheme:3}).

On the other hand, in order to define \( \BQQ \)-Gorenstein morphisms, 
we need to discuss the \emph{relative canonical sheaf} of an \( \bfS_{2} \)-morphism. 
An \( \bfS_{2} \)-morphism is defined as a flat morphism of locally Noetherian schemes 
which is locally of finite type and every fiber satisfies Serre's condition \( \bfS_{2} \) 
(cf.\ Definition~\ref{dfn:SkCMmorphism}). 
By Conrad \cite[Sect.\ 3.5]{Conrad} and Sastry \cite{Sastry}, 
we have a good notion of the relative canonical sheaf for Cohen--Macaulay morphisms. 
For an \( \bfS_{2} \)-morphism \( f \colon Y \to T \) of locally Noetherian schemes, 
the relative Cohen--Macaulay locus \( Y^{\flat} = \CM(Y/T) \) is an open subset 
(cf.\ Definition~\ref{dfn:RelSkCMlocus} and Fact~\ref{fact:dfn:RelSkCMlocus}), and 
we define the relative canonical sheaf \( \omega_{Y/T} \) 
as the direct image by 
the open immersion \( Y^{\flat} \subset Y \) 
of the relative canonical sheaf \( \omega_{Y^{\flat}/T} \) of the Cohen--Macaulay morphism 
\( Y^{\flat} \to T \) (cf.\ Definition~\ref{dfn:relcanosheaf}). 
The sheaf \( \omega_{Y/T} \) is coherent (cf.\ Proposition~\ref{prop:BC-S2CM}), 
and it is reflexive when every fiber is Gorenstein in codimension one 
(cf.\ Proposition~\ref{prop:BCGor}).
We set \( \omega^{[m]}_{Y/T} \) to be the double dual of \( \omega^{\otimes m}_{Y/T} \) 
for \( m \in \BZZ \), 
and we define \( \BQQ \)-Gorenstein morphisms as follows: 
A flat morphism \( f \colon Y \to T \) locally of finite type between locally Noetherian schemes 
is called a \( \BQQ \)-Gorenstein morphism (cf.\ Definition~\ref{dfn:QGorMor}) if and only if 
\begin{itemize}
\item  every fiber is a \( \BQQ \)-Gorenstein scheme, and 

\item \( \omega^{[m]}_{Y/T} \) satisfies relative \( \bfS_{2} \) over \( T \) for any \( m \in \BZZ \). 
\end{itemize} 
Note that \( f \) is an \( \bfS_{2} \)-morphism and every fiber is Gorenstein in codimension one, 
by the first condition. The second condition corresponds to the Koll\'ar condition. 
A weaker notion: naively \( \BQQ \)-Gorenstein morphism 
is defined by replacing the second condition to: 
\begin{itemize}
\item  \( \omega^{[m]}_{Y/T} \) is invertible for some \( m \) locally on \( Y \). 
\end{itemize}
This condition corresponds to Viehweg's condition. 

By our definition above, 
we can consider \( \BQQ \)-\emph{Gorenstein deformations} 
\( f \colon Y \to T \) of a \( \BQQ \)-Gorenstein scheme \( X \) defined 
over a field \( \Bbbk \). Here, \( f \) is a \( \BQQ \)-Gorenstein morphism, 
\( T \) contains a point \( o \) with residue field \( \Bbbk \), 
and the fiber \( Y_{o} = f^{-1}(o)\) 
is isomorphic to \( X \) over \( \Bbbk \). 
The scheme \( X \) is not necessarily assumed to be reduced nor normal, 
and \( f \) is not necessarily a morphism of \( \Bbbk \)-schemes. 
The \( \BQQ \)-Gorenstein deformations of non-normal schemes have been treated 
in articles in some special cases: 
Hacking \cite{Hacking} and Tziolas \cite{Tz} consider 
\( \BQQ \)-Gorenstein deformations of slc surfaces, which are not normal in general, 
over \( \BCC \). 
The work of Abramovich--Hassett \cite{AH} covers also non-normal reduced Cohen--Macaulay algebraic schemes
over a fixed field. 
By further studies of \( \BQQ \)-Gorenstein morphisms, we may have 
a well-defined theory of infinitesimal \( \BQQ \)-Gorenstein deformations, 
which is now in progress in the authors' joint work. 

\subsection*{Notable results on \texorpdfstring{$\BQQ$}{Q}-Gorenstein morphisms}

Some expected properties on \( \BQQ \)-Gorenstein morphisms 
can be verified by standard methods. For example, we prove that 
\( \BQQ \)-Gorenstein morphisms are stable under base change and composition 
(cf.\ Propositions~\ref{prop:QGormor}\eqref{prop:QGormor:1} 
and \ref{prop:QGorCompo}\eqref{prop:QGorCompo:3}). 
Besides such elementary properties, 
we have notable results in the topics below, 
which show that our definition of \( \BQQ \)-Gorenstein morphism is reasonable 
and widely applicable: 

\begin{enumerate}
\item \label{serious:1} 
A sufficient condition for a virtually \( \BQQ \)-Gorenstein morphism to be \( \BQQ \)-Gorenstein; 

\item \label{serious:2} 
Infinitesimal and valuative criteria for a morphism to be \( \BQQ \)-Gorenstein; 

\item \label{serious:3} 
Some conditions on fibers related to Serre's \( \bfS_{3} \)-condition  
which are sufficient for a morphism to be \( \BQQ \)-Gorenstein; 

\item \label{serious:4} 
The existence of \( \BQQ \)-Gorenstein refinement.
\end{enumerate}

We shall explain results on these topics briefly. 

\eqref{serious:1}: The virtually \( \BQQ \)-Gorenstein morphism is introduced in 
Section~\ref{subsect:virQGormor} as a weak form of \( \BQQ \)-Gorenstein morphism (cf.\ Definition~\ref{dfn:vQGorMor}). 
This is inspired by the definition  \cite[Def.~3.1]{Hacking} by Hacking on \( \BQQ \)-Gorenstein deformation
of an slc surface in characteristic zero:
His definition is generalized to the notion of Koll\'ar family of \( \BQQ \)-line bundles
in \cite{AH}.
Hacking defines
the \( \BQQ \)-Gorenstein deformation by the property that it locally lifts to
an equivariant deformation of an index-one cover.
This definition essentially coincides with our definition of virtually \( \BQQ \)-Gorenstein morphism
(cf.\ Lemma~\ref{lem:Hacking} and Remark~\ref{rem:Hacking}).
A \( \BQQ \)-Gorenstein morphism is always a virtually \( \BQQ \)-Gorenstein morphism. 
The converse holds if every fiber satisfies \( \bfS_{3} \); It  
is proved as a part of Theorem~\ref{thm:wQGvsQG}. 
This theorem is derived from Theorems~\ref{thm:invExt} and \ref{thm:S2S3crit} 
on criteria for certain sheaves to be invertible, and from 
a study of the relative canonical dualizing complex in Section~\ref{subsect:RelomegaS2}. 
By Theorem~\ref{thm:wQGvsQG}, we can study infinitesimal \( \BQQ \)-Gorenstein deformations   
of a \( \BQQ \)-Gorenstein algebraic scheme over a field 
satisfying \( \bfS_{3} \) via the equivariant 
deformations of the index one cover. 

\eqref{serious:2}: 
The infinitesimal criterion says that, for a given flat morphism \( f \colon Y \to T \) 
locally of finite type between locally Noetherian schemes, it is a \( \BQQ \)-Gorenstein morphism 
if and only if 
the base change \( f_{A} \colon Y_{A} = Y \times_{T} \Spec A \to \Spec A \) is a \( \BQQ \)-Gorenstein morphism 
for any morphism 
\( \Spec A \to T \) for any Artinian local ring \( A \). 
The valuative criterion is similar but \( T \) is assumed to be reduced and \( A \) 
is replaced with any discrete valuation ring. 
These criteria and some variants are proved in Theorems~\ref{thm:InfQGorCrit} and \ref{thm:valcritQGor} 
and  Corollaries~\ref{cor:QGorCrit} and \ref{cor:NaiveQGorCrit}. 
The proof of these criteria uses Proposition~\ref{prop:inf+val} 
on infinitesimal and valuative criteria for a reflexive sheaf on \( Y \) 
to satisfy relative \( \bfS_{2} \) over \( T \).  

\eqref{serious:3}: 
Theorem~\ref{thm:S3Gor2} proves that, for  
a morphism \( f \colon Y \to T \) in \eqref{serious:2} above, 
it is \( \BQQ \)-Gorenstein along a fiber \( Y_{t} = f^{-1}(t)\) 
if \( Y_{t} \)  is \( \BQQ \)-Gorenstein and Gorenstein 
in codimension two and if 
\[  \omega^{[m]}_{Y_{t}/\Bbbk(t)}=\omega^{[m]}_{Y_{t}/\Spec\Bbbk(t)} \] 
satisfies \( \bfS_{3} \) for any \( m \in \BZZ \). Here, \( \Bbbk(t)\) denotes the residue field of 
\( \SO_{T, t}\).

\eqref{serious:4}: 
For an \( \bfS_{2} \)-morphism \( f \colon Y \to T \) of locally Noetherian schemes 
such that every fiber is \( \BQQ \)-Gorenstein, the \emph{\( \BQQ \)-Gorenstein refinement} 
is defined as a monomorphism \( S \to T \) satisfying the following universal property 
(cf.\ Definition~\ref{dfn:QGorRefinement}): 
For a morphism 
\( T' \to T \) from another locally Noetherian scheme \( T' \), 
the base change \( Y \times_{T} T' \to T' \) is a \( \BQQ \)-Gorenstein morphism if and only if 
\( T' \to T \) factors through \( S \to T \). 
Theorem~\ref{thm:QGorRef} proves the existence of 
\( \BQQ \)-Gorenstein refinement, for example, in the case where \( f \) is proper 
and \( \omega^{[m]}_{Y_{t}/\Bbbk(t)} \) is invertible for a constant \( m > 0 \) 
for any fiber \( Y_{t} \). 
In this case, \( S \to T \) is shown to be separated and locally of finite type. 
Similar results are given as Theorem~\ref{thm:QGorRefLocal} for a local version 
and as Theorem~\ref{thm:dec naive} for naively \( \BQQ \)-Gorenstein morphisms. 
Koll\'ar's result \cite[Cor.\ 25]{KollarHusk} is stronger 
than Theorem~\ref{thm:QGorRef} when \( f \) is a projective morphism.

\subsection*{The role of our key proposition}

The deep results above on the topics \eqref{serious:1}--\eqref{serious:4} 
and some basic properties of \( \bfS_{2} \)-morphisms 
and \( \BQQ \)-Gorenstein morphisms are 
obtained by applying our key proposition (= Proposition~\ref{prop:key}). 
It proves that, for a flat morphism \( Y \to T \) of locally Noetherian schemes and 
for an exact sequence 
\[ 0 \to \SF \to \SE^{0} \to \SE^{1} \to \SG \to 0 \]  
of coherent \( \SO_{Y} \)-modules 
satisfying suitable conditions, 
the relative \( \bfS_{2} \)-condition for \( \SF \) over \( T \) 
is equivalent to the flatness of \( \SG \) over \( T \).  
For example, 
if \( f \) is an \( \bfS_{2} \)-morphism, then a reflexive sheaf \( \SF \) on \( Y \) 
admits such an exact sequence locally on \( Y \)  when \( \SF \) is locally free in 
codimension one on each fiber (cf.\ Lemma~\ref{lem:SurjFlat(reflexive)}).  
Therefore, the relative \( \bfS_{2} \)-condition for \( \SF \) over \( T \) can be studied by 
the flatness of another sheaf \( \SG \) defined locally on \( Y \). 
This is useful, since the sheaves \( \omega^{[m]}_{Y/T} \) are reflexive. 
For the relative canonical sheaf \( \omega_{Y/T} \) of an \( \bfS_{2} \)-morphism \( Y \to T \), 
we can show in the proof of Proposition~\ref{prop:BC-S2CM} that 
\( Y \) is locally embedded into an affine smooth \( T \)-scheme \( P \) as a closed subscheme 
and \( \omega_{Y/T} \) admits such an exact sequence as \( \SF \) on \( P \).

Applying the local criterion of flatness (cf.\ Proposition~\ref{prop:LCflat}) 
and the valuative criterion of flatness (cf.\ \cite[IV, Th.\ (11.8.1)]{EGA}) for \( \SG \), 
we have infinitesimal and valuative criteria for \( \SF \) 
to satisfy relative \( \bfS_{2} \) over \( T \) in Proposition~\ref{prop:inf+val}. 
This is applied to the reflexive sheaf \( \SF = \omega^{[m]}_{Y/T} \) 
in the topic \eqref{serious:2} above. 
The flattening stratification theorem by Mumford in \cite[Lect.\ 8]{Mumford} 
and the representability theorem of unramified functors by Murre \cite{Murre} applied to 
\( \SG \) yield Theorem~\ref{thm:dd dec} 
on the \emph{relative} \( \bfS_{2} \) \emph{refinement} for \( \SF \) 
(cf.\ Definition~\ref{dfn:relS2refinement}). 
This is defined as a monomorphism \( S \to T \) satisfying the following universal property: 
For a morphism \( T' \to T \) from a locally Noetherian scheme \( T' \) and 
for the induced morphisms \( p \colon Y' \to Y \) and \( Y' \to T' \) from 
the fiber product \( Y' = Y \times_{T} T' \), 
the double dual of \( p^{*}\SF \) satisfies relative \( \bfS_{2} \) over \( T' \) 
if and only if \( T' \to T \) factors through \( S \to T \). 
When \( Y \to T \) is projective, 
Theorem~\ref{thm:dd dec} is similar to Koll\'ar's result \cite[Th.~2]{KollarHusk} on ``hulls and husks.'' 
We also have a ``local version'' of the relative \( \bfS_{2} \) refinement for \( \SF \) 
as Theorem~\ref{thm:enhancement} by 
applying to \( \SG \) theorems on local universal flattening functor by 
Raynaud--Gruson \cite[Part 1, Th.\ (4.1.2)]{RG} or Raynaud \cite[Ch.\ 3, Th.\ 1]{Raynaud}. 
Applying the results on relative \( \bfS_{2} \) refinement for \( \SF = \omega^{[m]}_{Y/T} \), 
we have theorems on \( \BQQ \)-Gorenstein refinement mentioned in the topic \eqref{serious:4} above.  

The implication \eqref{prop:key:condBB} \( \Rightarrow \) \eqref{prop:key:condB} in 
Proposition~\ref{prop:key} is important. 
It is essential in the proof of Corollary~\ref{cor:SurjFlat(new)|Kollar}, 
and it is used in the proofs of Theorem~\ref{thm:invExt} 
on a criterion for a certain sheaf to be invertible, 
mentioned in the explanation of \eqref{serious:1} above 
and of Theorem~\ref{thm:S3Gor2} in \eqref{serious:3} above. 
Corollary~\ref{cor:SurjFlat(new)|Kollar} is similar to a special case of 
\cite[Th.\ 12]{KollarFlat}, but to which we have found a counterexample 
(cf.\ Example~\ref{exam:8-3}). 

\subsection*{Various other results and remarks}
Most parts of Sections~\ref{sect:Serre} and \ref{sect:GD} are surveys:  
Section~\ref{sect:Serre} discusses 
basic properties on \( \bfS_{k} \)-conditions and relative \( \bfS_{k} \)-conditions, 
and Section~\ref{sect:GD} 
discusses the dualizing complex and the relative dualizing complex in a little detail. 
Even in the surveys, we present some results and remarks, 
which seem to be new or not well known. These are listed as follows: 
\begin{itemize}

\item Some general properties on reflexive modules \( \SF \) over a locally Noetherian scheme \( X \) 
are presented in Lemmas~\ref{lem:j*reflexive}, \ref{lem:US2add}, \ref{lem:bc reflexive}, and 
Corollary~\ref{cor:prop:S1S2:reflexive}. 
These are related to the \( \bfS_{2} \)-conditions and the relative \( \bfS_{2} \)-conditions 
for \( \SF \) and for \( X \). Similar properties can be found in \cite[\S3]{HaKo}. 
Note that reflexive modules are well understood over an integral domain 
(cf.\ \cite[VII, \S4]{BAC}).

\item \emph{Every} \( \bfS_{2} \)-morphism (cf.\ Definition~\ref{dfn:SkCMmorphism}) 
\emph{of locally Noetherian schemes locally has pure relative dimension} 
(cf.\ Lemma~\ref{lem:S2Codim2}\eqref{lem:S2Codim2:1}).

\item \emph{For a locally Noetherian scheme \( X \) admitting a dualizing complex and for a coherent 
sheaf \( \SF \) on it, the \( \bfS_{k} \)-locus \( \bfS_{k}(\SF) \), 
the Cohen--Macaulay locus \( \CM(\SF) \), 
and the Gorenstein locus \( \Gor(X) \) are open} (cf.\ Proposition~\ref{prop:CMlocus}).  

\item Let \( X \) be a locally equi-dimensional and 
locally Noetherian scheme admitting a dualizing complex. 
In this case, \( X \) has an \emph{ordinary} dualizing complex \( \SR^{\bullet} \) 
and the dualizing sheaf \( \SL = \SH^{0}(\SR^{\bullet}) \) 
in the sense of Definition~\ref{dfn:ordinaryDC+DS} (cf.\ Lemma~\ref{lem:ordinaryDC}). 
\emph{Then, \( \SL \) satisfies \( \bfS_{2} \), and \( \SHom_{\SO_{X}}(\SL, \SL) \) 
is an \( \SO_{X} \)-algebra defining the \( \bfS_{2} \)-ification of \( X \)}
(cf.\ Proposition~\ref{prop:DSSk} and its Remark).

\item (cf.\ Corollary~\ref{cor:BC}) 
\emph{For a flat separated morphism \( Y \to T \) of finite type of Noetherian schemes 
and for the canonical inclusion morphism \( \psi_{t} \colon Y_{t} \to Y \) 
from the fiber \( Y_{t} = f^{-1}(t) \) over a point \( t \in T \), there is a quasi-isomorphism
\[ \bfL \psi_{t}^{*} (f^{!}\SO_{T}) \isom_{\qis} \omega^{\bullet}_{Y_{t}/\Bbbk(t)}  \]
of complexes of \( \SO_{Y_{t}} \)-modules, where  
\( f^{!}\SO_{T} \) is the} twisted inverse image 
\emph{of \( \SO_{T} \) by \( f \)} (cf.\ Example~\ref{exam:DeligneVerdier}), 
\emph{and \( \omega^{\bullet}_{Y_{t}/\Bbbk(t)} \) is the} canonical dualizing complex 
\emph{of the algebraic scheme \( Y_{t} \) over the residue field \( \Bbbk(t) \) of \( \SO_{T, t} \)}
(cf.\ Definition~\ref{dfn:canosheaf}). 
When \( \SO_{T, t} \) is regular, the assertion has been proved by \cite[Prop.\ 3.3(1)]{Pa}.

\end{itemize}

In the other sections \ref{sect:flat}, \ref{sect:Relcan}, \ref{sect:QGorSch}, and \ref{sect:QGormor}, 
we can find the following interesting results 
and remarks which are not listed above as notable ones:

\begin{itemize}
\item A counterexample of Koll\'ar's theorem \cite[Th.\ 12]{KollarFlat} 
is given in Example~\ref{exam:8-3}. 
Using another example produced there,  in Remark~\ref{rem:exam:8-3}, we explain 
a wrong assertion on the commutativity of projective limit and 
the direct image by open immersions for quasi-coherent sheaves.

\item \emph{For an \( \bfS_{2} \)-morphism \( f \colon Y \to T \) of locally Noetherian schemes 
and for the open immersion \( j \colon Y^{\flat} \injmap T \) from the relative Cohen--Macaulay locus 
\( Y^{\flat} = \CM(Y/T) \), 
the pushforward \( j_{*}\omega_{Y^{\flat}/T} \) is coherent, and it is isomorphic to 
\( \SH^{-d}(f^{!}\SO_{T}) \) locally on \( Y \) for the relative dimension \( d \) 
and the twisted inverse image \( f^{!}\SO_{T} \)} (cf.\ Proposition~\ref{prop:BC-S2CM}). 
\emph{Moreover, this sheaf is reflexive if every fiber is Gorenstein in codimension one} 
(cf.\ Proposition~\ref{prop:BCGor}). 

\item In Section~\ref{subsect:cone}, we discuss affine cones \( X \) 
of connected polarized projective schemes \( (S, \SA) \) over a field \( \Bbbk \). 
Here, the projective scheme \( S \) is not assumed to be reduced nor irreducible. 
In the sequel, we give useful conditions for  \( \omega^{[r]}_{X/\Bbbk} \) 
to satisfy \( \bfS_{k} \) and for \( X \) to be \( \BQQ \)-Gorenstein, 
in several situations of \( (S, \SA) \) (cf.\ Proposition~\ref{prop:coneGor}, 
Corollaries~\ref{cor:coneGor} and \ref{cor:coneGor2}). 

\item By Lemma~\ref{lem:Lee} and Example~\ref{exam:KummerType}, we present 
a new example of naively \( \BQQ \)-Gorenstein morphisms which is not \( \BQQ \)-Gorenstein 
in the case of morphisms of algebraic varieties over an algebraically closed field of characteristic zero. 
For known examples, see Fact~\ref{fact:Pa2}. 

\item For a naively \( \BQQ \)-Gorenstein morphism \( f \colon Y \to T \) and a point \( t \in T\), 
the relative Gorenstein index of \( f \) along the fiber \( Y_{t} = f^{-1}(t) \) 
coincides with the Gorenstein index of \( Y_{t} \) 
under suitable conditions (cf.\ Proposition~\ref{prop:GorIndexQGorMor}). 
This covers \cite[Lem.~3.16]{KSh}, but whose proof has problems explained in 
Remark~\ref{rem:KSh1}. 

\item In Remark~\ref{rem:naiveQGorCharP}, applying Corollary~\ref{cor:NaiveQGorCrit}, 
we verify 
the unboundedness of \( \{r_{n}\} \) for Koll\'ar's example 
of naively \( \BQQ \)-Gorenstein morphisms 
over the spectra of Artinian rings, 
explained in \cite[14.7]{HacKov} and \cite[Exam.\ 7.6]{Kovacs}. 
There, the proof is left to the reader, 
but we are afraid that the expected proof might have a problem similar to 
the second problem in Remark~\ref{rem:KSh1}. 

\item For a famous example (Example~\ref{exam:P114}) 
of deformations of the weighted projective plane \( \BPP(1, 1, 4) \) 
which is not \( \BQQ \)-Gorenstein, its \( \BQQ \)-Gorenstein refinement 
is determined in Example~\ref{exam:P114Part2} by using Lemma~\ref{lem:P114}. 

\end{itemize}

\subsection*{Organization of this article} 

In Section~\ref{sect:Serre}, we recall some basic notions and properties 
related to Serre's \( \bfS_{k} \)-condition. 
Section~\ref{subsect:SerreBasics} recalls basic properties on dimension, depth, 
and the \( \bfS_{k} \)-condition. 
The relative \( \bfS_{k} \)-condition is explained in Section~\ref{subsect:Rel}.
In Section~\ref{sect:flat}, we proceed the study of relative \( \bfS_{2} \)-condition 
and give some criteria for this condition. 
Section~\ref{subsect:Resthom} is devoted to prove the key proposition (Proposition~\ref{prop:key}) 
and related properties. 
Some applications of Proposition~\ref{prop:key} are given 
in Section~\ref{subsect:AppResthom}: Theorem~\ref{thm:invExt} on a criterion 
for a certain sheaf to be invertible, 
Proposition~\ref{prop:inf+val} on infinitesimal and valuative criteria, 
Theorem~\ref{thm:dd dec} on the relative \( \bfS_{2} \) refinement, and 
its local version: Theorem~\ref{thm:enhancement}. 

The theory of Grothendieck duality 
is surveyed briefly in Section~\ref{sect:GD} with a few original results. 
In Sections~\ref{subsect:dualizingcpx} and \ref{subsect:ordinaryDC}, 
we recall some well-known properties 
on the dualizing complex based on arguments in \cite{ResDual} and \cite{Conrad}. 
The twisted inverse image functor
is explained in Section~\ref{subsect:twisted inverse} with the famous Grothendieck duality theorem for
proper morphisms (cf.\ Theorem~\ref{thm:DualityProper}).
The base change theorem for the relative dualizing sheaf for a Cohen--Macaulay morphism is
mentioned in Section~\ref{subsect:CMmorGormor}.
In Section~\ref{sect:Relcan}, 
we give some technical base change results for the relative canonical sheaf of
an \( \bfS_{2} \)-morphism. 
Section~\ref{subsect:bcS3} is devoted to proving Theorem~\ref{thm:S2S3crit} 
on a criterion for a certain sheaf related to the relative canonical sheaf to be invertible. 
This technical theorem also gives sufficient conditions for 
the base change homomorphism of the relative canonical sheaf to the fiber to be an isomorphism, 
and it is applied to the proof of Theorem~\ref{thm:wQGvsQG} 
on virtually \( \BQQ \)-Gorenstein morphism (cf.\ the topic \eqref{serious:1} above).  

In Section~\ref{sect:QGorSch}, we study \( \BQQ \)-Gorenstein schemes. 
The definition and its basic properties are given in Section~\ref{subsect:QGorSch}. 
As an example of \( \BQQ \)-Gorenstein schemes, in Section~\ref{subsect:cone},
we consider the case of affine cones over polarized projective schemes over a field. 
In Section~\ref{sect:QGormor}, we study \( \BQQ \)-Gorenstein morphisms, and 
two variants: naively \( \BQQ \)-Gorenstein morphisms and virtually \( \BQQ \)-Gorenstein morphisms. 
The \( \BQQ \)-Gorenstein morphism and the naively \( \BQQ \)-Gorenstein morphism are defined in
Section~\ref{subsect:QGormor}, and their basic properties are discussed. 
The virtually \( \BQQ \)-Gorenstein morphism 
is defined in Section~\ref{subsect:virQGormor}, and we prove 
Theorem~\ref{thm:wQGvsQG} on a criterion 
for a virtually \( \BQQ \)-Gorenstein morphism to be \( \BQQ \)-Gorenstein 
(cf.\ the topic \eqref{serious:1} above). 
In Section~\ref{subsect:propQGormor}, several basic properties including base change 
of \( \BQQ\)-Gorenstein morphisms and of their variants are discussed. 
Theorems mentioned in the topics 
\eqref{serious:2}--\eqref{serious:4} above are proved in Section~\ref{subsect:thmsQGormor}.

Some elementary facts on local criterion of flatness and base change isomorphisms are
explained in Appendix~\ref{sect:Basics} for the readers' convenience. 
In this article, we try to cite references kindly as much as possible for the readers' convenience 
and for the authors' assurance. We also try to refer to the original article if possible.

\subsection*{Acknowledgements}
The first named author would like to thank Research Institute for Mathematical Sciences (RIMS) 
in Kyoto University for their support and hospitality. 
This work was initiated during his stay at RIMS in 2010 and 
is continued to his next stay in 2016. 
The second named author expresses his thanks to Professor Yoshio Fujimoto 
for his advices and continuous encouragement. 
Both authors are grateful to Professors J\'anos Koll\'ar and S\'andor Kov\'acs 
for useful information. 

\subsection*{Notation and conventions}

\begin{enumerate}

\item \label{nc:truncshift}
For a complex
\( K^{\bullet} = [\cdots \to K^{i} \xrightarrow{d^{i}} K^{i+1} \to \cdots]\) in
an abelian category and
for an integer \( q \), we denote by \( \tau^{\leq q}(K^{\bullet}) \)
(resp.\ \( \tau^{\geq q}(K^{\bullet}) \))
the ``truncation'' of \( K^{\bullet} \), which
is defined as the complex
\begin{align*}
&[\cdots \to K^{q-2} \xrightarrow{d^{q-2}} K^{q-1} \to \Ker(d^{q}) \to 0 \to \cdots] \\
(\text{resp.  }
&[\cdots \to 0 \to \Coker(d^{q-1}) \to K^{q+1} \xrightarrow{d^{q+1}} K^{q+2} \to \cdots])
\end{align*}
(cf.\ \cite[D\'ef.~1.1.13]{DeligneSGA4-17}).
The complex \( K^{\bullet}[m] \) shifted by an integer \( m \)
is defined as the complex
\( L^{\bullet} = [\cdots \to L^{i} \xrightarrow{d_{L}^{i}} L^{i+1} \to \cdots]\)
such that \( L^{i} = K^{i + m} \) and \( d_{L}^{i} = (-1)^{m}d^{i + m} \) for any \( i \in \BZZ \). 
It is known that the mapping cone of the natural morphism
\( \tau^{\leq q}(K^{\bullet}) \to K^{\bullet} \) is quasi-isomorphic to \( \tau^{\geq q+1}(K^{\bullet}) \) 
for any \( q \in \BZZ \).

\item  For a complex
\( K^{\bullet} \) in an abelian category
(resp.\ for an object \( K^{\bullet} \) of the derived category),
the \( i \)-th cohomology of \( K^{\bullet} \)
is denoted usually by \( \OH^{i}(K^{\bullet}) \).
For a complex \( \SK^{\bullet} \) of sheaves on a scheme,
the \( i \)-th cohomology is a sheaf and is denoted by \( \SH^{i}(\SK^{\bullet}) \).

\item  The derived category of an abelian category \( \SAA \) is denoted by \( \bfD(\SAA) \).
Moreover, we write \( \bfD^{+}(\SAA) \) (resp.\ \( \bfD^{-}(\SAA) \), resp.\ \( \bfD^{b}(\SAA) \))
for the full subcategory consisting of bounded below (resp.\ bounded above, resp.\ bounded)
complexes.

\item An \emph{algebraic scheme} over a field \( \Bbbk \)
means a \( \Bbbk \)-scheme of finite type.
An \emph{algebraic variety} over \( \Bbbk \)
is an integral separated algebraic scheme over \( \Bbbk \).

\item \label{conv:sch:sheaf}
For a scheme \( X \), a sheaf of \( \SO_{X} \)-modules is called
an \( \SO_{X} \)-\emph{module} for simplicity.
A coherent (resp.\ quasi-coherent) sheaf on \( X \)
means a coherent (resp.\ quasi-coherent) \( \SO_{X} \)-module.
The (abelian) category of \( \SO_{X} \)-modules (resp.\ quasi-coherent \( \SO_{X} \)-modules)
is denoted by \( \Mod(\SO_{X}) \) (resp.\ \( \QCoh(\SO_{X}) \)).

\item For a scheme \( X \) and a point \( x \in X \),
the maximal ideal (resp.\ the residue field)
of the local ring \( \SO_{X, x} \) is denoted by \( \GM_{X, x} \) (resp.\ \( \Bbbk(x) \)).
The stalk of a sheaf \( \SF \) on \( X \) at \( x \)
is denoted by \( \SF_{x} \).

\item For a morphism \( f \colon Y \to T \) of schemes and for a point \( t \in T \), 
the fiber \( f^{-1}(t) \) over \( t \) is defined as \( Y \times_{T} \Spec \Bbbk(t) \) and 
is denoted by \( Y_{t} \). For an \( \SO_{Y} \)-module \( \SF \), 
the restriction \( \SF \otimes_{\SO_{Y}} \SO_{Y_{t}}  \) to the fiber \( Y_{t} \) is denoted by \( \SF_{(t)} \) 
(cf.\ Notation~\ref{nota:F_(t)}). 

\item \label{nota:sch:D+(X)}
The derived category of a scheme \( X \) is defined as the derived category of
\( \Mod(\SO_{X}) \), and is denoted by \( \bfD(X) \).
The full subcategory consisting of complexes with quasi-coherent (resp.\ coherent)
cohomology is denoted by \( \bfD_{\qcoh}(X) \) (resp.\ \(\bfD_{\coh}(X)\)).
For \( * = + \), \( - \), \( b \) and for \( \dag = \qcoh \), \( \coh \), we set
\[ \bfD^{*}(X) = \bfD^{*}(\Mod(\SO_{X})) \quad \text{and} \quad
\bfD^{*}_{\dag}(X) = \bfD^{*}(X) \cap \bfD_{\dag}(X).\]

\item \label{nota:sch:localcohomology}
For a sheaf \( \SF \) on a scheme \( X \) and for a closed subset \( Z \),
the \( i \)-th local cohomology sheaf of \( \SF \) with support in \( Z \)
is denoted by \( \SH^{i}_{Z}(\SF) \) (cf.\ \cite{LC}).

\item
For a morphism \( X \to Y \) of schemes,
\( \varOmega^{1}_{X/Y} \) denotes the sheaf of relative one-forms.
When \( X \to Y \) is smooth, \( \varOmega^{p}_{X/Y} \) denotes the \( p \)-th exterior power 
\( \bigwedge^{p} \! \varOmega^{1}_{X/Y} \) for integers \( p \geq 0 \).

\end{enumerate}

%%%% Section 2 %%%%%
%%%%%%%%%%%%%%%%%%%%%%%%%%%%%%%%%%%
%%%%%%%%%%%%%%%%%%%%%%%%%%%%%%%%%%%

\section{Serre's \texorpdfstring{$\bfS_{k}$}{Sk}-condition}
\label{sect:Serre}

We shall recall several fundamental properties on locally Noetherian schemes, which
are indispensable for understanding the explanation of dualizing complex and
Grothendieck duality in Section~\ref{sect:GD}
as well as the discussion of relative canonical sheaves and \( \BQQ \)-Gorenstein morphisms
in Sections~\ref{sect:Relcan} and \ref{sect:QGorSch}, respectively.
In Section~\ref{subsect:SerreBasics}, we recall basic properties on
dimension, depth, Serre's \( \bfS_{k} \)-condition especially for \( k = 1 \) and \( 2 \), 
and reflexive sheaves. 
The relative \( \bfS_{k} \)-condition is discussed in Section~\ref{subsect:Rel}.

%%%%%%%%%%%%%%%%%%%%%%%%%%%%%%%%%%%%%%%%%%%%%

\subsection{Basics on Serre's condition}
\label{subsect:SerreBasics}

The 
\( \bfS_{k} \)-condition is defined by
``depth'' and ``dimension.''
We begin with recalling some elementary properties on dimension, codimension, and on depth.

\begin{ppty}[dimension, codimension]\label{ppty:dim-codim}
Let \( X \) be a scheme and let \( \SF \) be
a quasi-coherent \( \SO_{X} \)-module \emph{of finite type} (cf.\ \cite[$0_{\text{I}}, (5.2.1)$]{EGA}),
i.e., \( \SF \) is quasi-coherent and locally finitely generated as an \( \SO_{X} \)-module.
Then, \( \Supp \SF \) is a closed subset (cf.\ \cite[$0_{\text{I}}$, (5.2.2)]{EGA}).
\begin{enumerate}
\item  \label{ppty:dim-codim:1}
If \( Y \) is a closed subscheme of \( X \)
such that \( Y = \Supp \SF \) as a set, then
\[ \dim \SF_{y} = \dim \SO_{Y, y} = \Codim(\overline{\{y\}}, Y) \]
for any point \( y \in Y \),
where \( \dim \SF_{y} \) is considered as the dimension of the closed subset
\( \Supp \SF_{y} \) of \( \Spec \SO_{X, y} \) (cf.\ \cite[IV, (5.1.2), (5.1.12)]{EGA}).

\item  \label{ppty:dim-codim:15}
The dimension of \( \SF \), denoted by \( \dim \SF \),
is defined as \( \dim \Supp \SF \) (cf.\ \cite[IV, (5.1.12)]{EGA}).
Then,
\[ \dim \SF = \sup\{ \dim \SF_{x} \mid x \in X\}\]
(cf.\ \cite[IV, (5.1.12.3)]{EGA}).
If \( X \) is locally Noetherian, then
\[ \dim \SF = \sup\{\dim \SF_{x} \mid x \text{ is a closed point of } X \} \]
by \cite[IV, (5.1.4.2), (5.1.12.1), and Cor.~(5.1.11)]{EGA}.
Note that the local dimension of \( \SF \) at a point \( x \), denoted by \( \dim_{x} \SF \), 
is just the infimum of \( \dim \SF|_{U} \) for all the open neighborhoods \( U \) of \( x \).

\item  \label{ppty:dim-codim:2}
For a closed subset \( Z \subset X \), the equality
\[ \Codim(Z, X) = \inf \{ \dim \SO_{X, z} \mid z \in Z\} \]
holds, and moreover, if \( X \) is locally Noetherian, then
\[ \Codim_{x}(Z, X) = \inf \{ \dim \SO_{X, z} \mid z \in Z, \, x \in \overline{\{z\}} \} \]
for any point \( x \in X \) (cf.\ \cite[IV, Cor.~(5.1.3)]{EGA}).
Note that \( \Codim(\emptyset, X) = +\infty \) and
that \( \Codim_{x}(Z, X) = +\infty \) if \( x \not\in Z \).
Furthermore, if \( Z \) is locally Noetherian, then the function
\( x \mapsto \Codim_{x}(Z, X) \)
is lower semi-continuous on \( X \)
(cf.\ \cite[$0_{\text{IV}}$, Cor.~(14.2.6)(ii)]{EGA}).
\end{enumerate}
\end{ppty}

\begin{dfn}[equi-dimensional]\label{dfn:equi-dim}
Let \( X \) be a scheme and \( \SF \) a quasi-coherent \( \SO_{X} \)-module of finite type.
Let \( A \) be a ring and \( M \) a finitely generated \( A \)-module.
\begin{enumerate}
\item 
We call \( X \) (resp.\ \( \SF \)) \emph{equi-dimensional} if all the irreducible components of
\( X \) (resp.\ \( \Supp \SF \))
have the same dimension. 

\item We call \( A \) (resp.\ \( M \)) \emph{equi-dimensional} if
all the irreducible components of \( \Spec A \) (resp.\ \( \Supp M \))
have the same dimension, where \( \Supp M \) is the closed subset of \( \Spec A \)
defined by the annihilator ideal \( \Ann(M) \).
Note that \( \Supp M \) equals \( \Supp M\sptilde \) for
the associated quasi-coherent sheaf \( M\sptilde \) on \( \Spec A \).

\item  \label{dfn:equi-dim:locally} We call \( X \) (resp.\ \( \SF \)) \emph{locally equi-dimensional}
if the local ring \( \SO_{X, x} \) (resp.\ the stalk \( \SF_{x} \) as an \( \SO_{X, x} \)-module)
is equi-dimensional for any point \( x \in X \).
\end{enumerate}
\end{dfn}

\begin{ppty}[catenary]\label{pprt:catenary}
A scheme \( X \) is said to be \emph{catenary} if
\[ \Codim(Y, Z) + \Codim(Z, T) = \Codim(Y, T) \]
for any irreducible closed subsets
\( Y \subset Z \subset T \) of \( X \) (cf.\ \cite[$0_{\text{IV}}$, Prop.~(14.3.2)]{EGA}).
A ring \( A \) is said to be catenary if \( \Spec A \) is so.
Then, for a scheme \( X \), it is catenary if and only if every local ring \( \SO_{X, x} \) is catenary
(cf.\ \cite[IV, Cor.~(5.1.5)]{EGA}).
If \( X \) is a locally Noetherian scheme and if \( \SO_{X, x} \) is catenary for a point \( x \in X \),
then
\[ \Codim_{x}(Y, X) = \dim \SO_{X, x} - \dim \SO_{Y, x} \]
for any closed subscheme \( Y \) of \( X \) containing \( x \) (cf.\ \cite[IV, Prop.~(5.1.9)]{EGA}).
\end{ppty}

\begin{ppty}[depth]\label{ppty:depthdfn}
Let \( A \) be a Noetherian ring, \( I \) an ideal of \( A \), and let
\( M \) be a finitely generated \( A \)-module.
The \( I \)-\emph{depth} of \( M \), denoted by \( \depth_{I} M\),
is defined as
the length of any maximal \( M \)-regular sequence contained in \( I \) when \( M \ne IM \), and
as \( +\infty \) when \( M = IM \).
Here, an element \( a \in I \) is said to be \( M \)-\emph{regular} if
\( a \) is not a zero divisor of \( M \), i.e.,
the multiplication map \( x \mapsto ax\) induces an injection \( M \to M \), and
a sequence \( a_{1}, a_{2}, \ldots, a_{n} \) of elements of \( I \) is
said to be \( M \)-\emph{regular} if \( a_{i} \) is \( M_{i} \)-regular for any \( i \),
where \( M_{i} = M/ (a_{1}, \ldots, a_{i-1})M \).
The following equality is well known
(cf.\ \cite[Prop.~3.3]{LC}, \cite[III, Prop.\ 2.4]{SGA2},
\cite[Th.~16.6, 16.7]{Matsumura}):
\[ \depth_{I} M = \inf\{ i \in \BZZ_{\geq 0} \mid \Ext^{i}_{A}(A/I, M) \ne 0\}. \]
If \( A \) is a local ring and if \( I \) is the maximal ideal \( \GM_{A} \),
then \( \depth_{I} M \) is denoted simply by \( \depth M \);
In this case, we have \( \depth M \leq \dim M \) when \( M \ne 0 \)
(cf.\ \cite[$0_{\text{IV}}$, (16.4.5.1)]{EGA},
\cite[Exer.\ 16.1, Th.~17.2]{Matsumura}).
\end{ppty}

\begin{dfn}[$Z$-depth]\label{dfn:Z-depth}
Let \( X \) be a locally Noetherian scheme and \( \SF \) a coherent \( \SO_{X} \)-module.
For a closed subset \( Z \) of \( X \),
the \emph{\( Z \)-depth} of \( \SF \) is defined as
\[ \depth_{Z} \SF = \inf\{ \depth \SF_{z} \mid z \in Z\}
\]
(cf.\ \cite[p.~43, Def.]{LC},
\cite[IV, (5.10.1.1)]{EGA}, \cite[III, Def.~(3.12)]{AltmanKleiman}),
where the stalk \( \SF_{z} \) of \( \SF \) at \( z \)
is regarded as an \( \SO_{X, z} \)-module. Note that \( \depth_{Z} 0 = +\infty \).
\end{dfn}

\begin{ppty}[{cf.\ {\cite[Th.~3.8]{LC}}}]\label{ppty:depth<=2}
In the situation above, for a given integer \( k \geq 1\),
one has the equivalence:
\[ \depth_{Z} \SF \geq k \quad
\Longleftrightarrow \quad \SH^{i}_{Z}(\SF) = 0 \quad \text{for any }  i < k .\]
Here, \( \SH^{i}_{Z}(\SF) \) stands for the \( i \)-th
local cohomology sheaf of \( \SF \) with support in \( Z \) (cf.\ \cite{LC}, \cite{SGA2}).
In particular, the condition: \( \depth_{Z} \SF \geq 1 \) (resp.\ \( \geq
2 \)) is equivalent to that the restriction homomorphism
\( \SF \to j_{*}(\SF|_{X \setminus Z}) \) is an injection (resp.\ isomorphism)
for the open immersion \( j \colon X \setminus Z \injmap X \).
Furthermore, the condition: \(\depth_{Z} \SF \geq 3 \) is equivalent to:
\( \SF \isom j_{*}(\SF|_{X \setminus Z}) \) and
\( R^{1}j_{*}(\SF|_{X\setminus Z}) = 0 \).
\end{ppty}

\begin{remn}[{cf.\ \cite[Cor.~3.6]{LC}, \cite[III, Cor.~3.14]{AltmanKleiman}}]
Let \( A \) be a Noetherian ring with an ideal \( I \)
and let \( M \) be a finitely generated \( A \)-module.
Then,
\[ \depth_{I} M = \depth_{Z} M\sptilde \]
for the closed subscheme \( Z = \Spec A/I \) of \( X = \Spec A \)
and for the coherent \( \SO_{X} \)-module \( M\sptilde \) associated with \( M \).
\end{remn}

\begin{rem}[associated prime]\label{rem:assocprime}
Let \( \SF \) be a coherent \( \SO_{X} \)-module on a locally Noetherian scheme \( X \). 
A point \( x \in X \) 
is called an \emph{associated point} of \( \SF \) if the maximal ideal
\( \GM_{x} \) is an associated prime of the stalk \( \SF_{x} \) (cf.\ \cite[IV, D\'ef.~(3.1.1)]{EGA}).
This condition is equivalent to: \( \depth \SF_{x} = 0 \).
We denote by \( \Ass(\SF) \) the set of associated points.
This is a discrete subset of \( \Supp \SF \).
If an associated point \( x \) of \( \SF \) is not a generic point of \( \SF \), i.e., \( \depth \SF_{x} = 0 \)
and \( \dim \SF_{x} > 0 \), then \( x \) is called the \emph{embedded point} of \( \SF \).
If \( X = \Spec A \) and \( \SF = M\sptilde \) for a Noetherian ring \( A \) and
for a finitely generated \( A \)-module \( M \), then \( \Ass(\SF) \) is just the set of
associated primes of \( M \), and the embedded points of \( \SF \) are the embedded primes of \( M \).
\end{rem}

\begin{rem}\label{rem:depth:local isom}
Let \( \phi \colon \SF \to j_{*}(\SF|_{X \setminus Z}) \) be the homomorphism
in Property~\ref{ppty:depth<=2}
and set \( U = X \setminus Z \).
Then, \( \phi \) is an injection (resp.\ isomorphism) at a point \( x \in Z \),
i.e., the homomorphism
\[ \phi_{x} \colon \SF_{x} \to (j_{*}(\SF|_{U}))_{x}\]
of stalks is an injection (resp.\ isomorphism), if and only if
\[ \depth \SF_{x'} \geq 1 \qquad (\text{resp. } \geq 2) \]
for any \( x' \in Z \) such that \( x \in \overline{\{x'\}} \).
In fact, \( \phi_{x} \) is identical to the inverse image \( p_{x}^{*}(\phi) \)
by a canonical morphism \( p_{x} \colon \Spec \SO_{X, x} \to X \), and it is regarded as
the restriction homomorphism of \( p_{x}^{*}(\SF) \) to the open subset \( U_{x} = p_{x}^{-1}(U) \)
via the base change isomorphism
\[ p_{x}^{*}(j_{*}(\SF|_{U})) \isom j_{x*}((p_{x}^{*}\SF)|_{U_{x}}) \]
(cf.\ Lemma~\ref{lem:flatbc} below),
where \( j_{x} \) stands for the open immersion \( U_{x} \injmap \Spec \SO_{X, x} \).
For the complement \( Z_{x} = p_{x}^{-1}(Z)\) of \( U_{x} \) in \( \Spec \SO_{X, x} \), by
Property~\ref{ppty:depth<=2}, we know that
\( p_{x}^{*}(\phi) \) is an injection (resp.\ isomorphism) if and only if
\[ \depth_{Z_{x}} p_{x}^{*}(\SF) \geq 1 \qquad (\text{resp. } \geq 2). \]
This implies the assertion, since \( Z_{x} \) is identical to
the set of points \( x' \in Z\) such that \( x \in \overline{\{x'\}} \).
\end{rem}

We recall Serre's condition \( \bfS_{k} \)
(cf.\ \cite[IV, D\'ef.~(5.7.2)]{EGA},
\cite[VII, Def.~(2.1)]{AltmanKleiman}, \cite[p.~183]{Matsumura}):

\begin{dfn}\label{dfn:SerreCond}
Let \( X \) be a locally Noetherian scheme, \( \SF \) a coherent \( \SO_{X} \)-module,
and \( k \) a positive integer.
We say that \( \SF \) satisfies \( \bfS_{k} \)
if the inequality
\[ \depth \SF_{x} \geq \inf \{k, \dim \SF_{x}\}\]
holds for any point \( x \in X \), where
the stalk \( \SF_{x} \) at \( x \) is considered as an \( \SO_{X, x} \)-module.
We say that \( \SF \) \emph{satisfies} \( \bfS_{k} \) \emph{at a point} \( x \in X\) if
\[ \depth \SF_{y} \geq \inf \{k, \dim \SF_{y}\}\]
for any point \( y \in X \) such that \( x \in \overline{\{y\}} \).
We say that \( X \) satisfies \( \bfS_{k} \), if \( \SO_{X} \) does so. 
\end{dfn}

\begin{remn}
In the situation of Definition~\ref{dfn:SerreCond}, assume that \( \SF = i_{*}(\SF')\)
for a closed immersion \( i \colon X' \injmap X\) and
for a coherent \( \SO_{X'} \)-module \( \SF' \).
Then, \( \SF \) satisfies \( \bfS_{k} \) if and only if \( \SF'\) does so.
In fact,
\[ \depth \SF_{x} = +\infty
\quad \text{ and } \quad
\dim \SF_{x} = -\infty \]
for any \( x \not\in X' \), and
\[ \depth \SF_{x} = \depth \SF'_{x}
\quad \text{ and } \quad
\dim \SF_{x} = \dim \SF'_{x} \]
for any \( x \in X' \)
(cf.\ \cite[$0_{\text{IV}}$, Prop.~(16.4.8)]{EGA}).
\end{remn}

\begin{rem}\label{rem:dfn:SerreCond:atx}
Let \( A \) be a Noetherian ring and \( M \) a finitely generated \( A \)-module.
For a positive integer \( k \),
we say that \( M \) \emph{satisfies} \( \bfS_{k} \) if the associated coherent sheaf
\( M\sptilde \) on \( \Spec A \) satisfies \( \bfS_{k} \).
Then, for \( X \), \( \SF \), and \( x \) in Definition~\ref{dfn:SerreCond},
\( \SF \) satisfies \( \bfS_{k} \) at \( x \) if and only if the \( \SO_{X, x} \)-module
\( \SF_{x} \) satisfies \( \bfS_{k} \).
In fact, by considering
\( \Supp \SF_{x} \) as a closed subset of \( \Spec \SO_{X, x} \) and
by the canonical morphism \( \Spec \SO_{X, x} \to X \),
we can identify \( \Supp \SF_{x} \)
with the set of points \( y \in \Supp \SF \) such that \( x \in \overline{\{y\}} \).
\end{rem}

\begin{dfn}[Cohen--Macaulay]\label{dfn:CM}
Let \( A \) be a Noetherian local ring and \( M \) a finitely generated \( A \)-module.
Then, \( M \) is said to be \emph{Cohen--Macaulay}
if \( \depth M = \dim M \) unless \( M = 0 \)
(cf.\ \cite[$0_{\text{IV}}$, D\'ef.\ (16.5.1)]{EGA}, \cite[\S17]{Matsumura}).
In particular, if \( \dim A = \depth A \), then \( A \) is called a Cohen--Macaulay local ring.
Let \( X \) be a locally Noetherian scheme and \( \SF \) a coherent \( \SO_{X} \)-module.
If the \( \SO_{X, x} \)-module \( \SF_{x} \) is Cohen--Macaulay for any \( x \in X \),
then \( \SF \) is said to be Cohen--Macaulay (cf.\ \cite[IV, D\'ef.\ (5.7.1)]{EGA}.
If \( \SO_{X} \) is Cohen--Macaulay, then \( X \) is called
a Cohen--Macaulay scheme.
\end{dfn}

\begin{rem}\label{rem:dfn:CM}
For \( A \) and \( M \) above,
it is known that if \( M \) is Cohen--Macaulay, then the localization \( M_{\Gp} \)
is also Cohen--Macaulay for any prime ideal \( \Gp \) of \( A \)
(cf.\ \cite[$0_{\text{IV}}$, Cor.~(16.5.10)]{EGA}, \cite[Th.~17.3]{Matsumura}).
Hence, \( M \) is Cohen--Macaulay if and only if \( M \) satisfies \( \bfS_{k} \)
for any \( k \geq 1 \).
\end{rem}

\begin{dfn}[$\bfS_{k}(\SF)$, $\CM(\SF)$]\label{dfn:SkCMlocus}
Let \( X \) be a locally Noetherian scheme  and
let \( \SF \) be a coherent \( \SO_{X} \)-module.
For an integer \( k \geq 1 \), the \emph{\( \bfS_{k} \)-locus}
\( \bfS_{k}(\SF) \) of \( \SF \) is
defined to be the set of points \( x \in X \) at which \( \SF \) satisfies \( \bfS_{k} \)
(cf.\ Definition~\ref{dfn:SerreCond}).
The \emph{Cohen--Macaulay locus} \( \CM(\SF) \)
of \( \SF \)
is defined to be the set of points \( x \in \SF \)
such that \( \SF_{x} \) is a Cohen--Macaulay \( \SO_{X, x} \)-module.
By definition and by Remark~\ref{rem:dfn:CM},
one has: \( \CM(\SF) = \bigcap_{k \geq 1} \bfS_{k}(\SF) \).
We define \( \bfS_{k}(X) := \bfS_{k}(\SO_{X}) \) and \( \CM(X) = \CM(\SO_{X}) \), and
call them the \( \bfS_{k} \)-locus and the Cohen--Macaulay locus of \( X \), respectively.
\end{dfn}

\begin{remn}
It is known that \( \bfS_{k}(\SF) \) and \( \CM(\SF) \) are
open subsets when \( X \) is locally a subscheme of a regular scheme
(cf.\ \cite[IV, Prop.~(6.11.2)(ii)]{EGA}).
In Proposition~\ref{prop:CMlocus} below, we shall prove the openness when \( X \)
admits a dualizing complex.
\end{remn}

\begin{remn}
For a locally Noetherian scheme \( X \), every generic point of \( X \) is contained in
the Cohen--Macaulay locus \( \CM(X) \).
For, \( \dim A = \depth A = 0 \) for any Artinian local ring \( A \).
\end{remn}

Lemmas~\ref{lem:basicSk} and \ref{lem:depth+codim+Sk} below are 
basic properties on the condition \( \bfS_{k} \).

\begin{lem}\label{lem:basicSk}
Let \( X \) be a locally Noetherian scheme and let \( \SG \) be a coherent \( \SO_{X} \)-module.
For a positive integer \( k \), the following conditions are equivalent to each other\emph{:}
\begin{enumerate}
\renewcommand{\theenumi}{\roman{enumi}}
\renewcommand{\labelenumi}{(\theenumi)}
\item  \label{lem:basicSk:condA}
The sheaf \( \SG \) satisfies \( \bfS_{k} \).
\item  \label{lem:basicSk:condB}
The inequality
\[ \depth_{Z} \SG \geq \inf\{k, \Codim(Z, \Supp \SG)\} \]
holds for any closed \emph{(}resp.\ irreducible and closed\emph{)} subset \( Z \subset \Supp \SG\).
\item  \label{lem:basicSk:condC}
The sheaf \( \SG \) satisfies \( \bfS_{k-1} \) when \( k \geq 2 \), and
\( \depth_{Z} \SG \geq k \) for any closed \emph{(}resp.\ irreducible and closed\emph{)}
subset \( Z \subset \Supp \SG \) such that
\( \Codim(Z, \Supp \SG)
\linebreak %%%
\geq k \).
\item  \label{lem:basicSk:condD}
There is a closed subset \( Z \subset \Supp \SG \) such that \( \depth_{Z} \SG \geq k \) and
\( \SG|_{X \setminus Z} \) satisfies \( \bfS_{k} \).
\end{enumerate}
\end{lem}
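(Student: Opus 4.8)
The plan is to reduce all four conditions to the pointwise inequality of Definition~\ref{dfn:SerreCond}, namely $\depth\SG_x\geq\inf\{k,\dim\SG_x\}$ for every $x\in X$, and to prove a short cycle of implications. Two identities will be used repeatedly: $\depth_Z\SG=\inf\{\depth\SG_z\mid z\in Z\}$ by Definition~\ref{dfn:Z-depth}, and $\Codim(Z,\Supp\SG)=\inf\{\dim\SG_z\mid z\in Z\}$, the latter obtained from Property~\ref{ppty:dim-codim}\eqref{ppty:dim-codim:1} (which identifies $\dim\SG_z$ with $\dim\SO_{\Supp\SG,z}$) together with Property~\ref{ppty:dim-codim}\eqref{ppty:dim-codim:2}. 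Combining these with the elementary identity $\inf_{z\in Z}\inf\{k,a_z\}=\inf\{k,\inf_{z\in Z}a_z\}$ for non-negative extended reals $a_z$, the implication \eqref{lem:basicSk:condA}$\Rightarrow$\eqref{lem:basicSk:condB} in the ``closed subset'' form is immediate: for closed $Z\subset\Supp\SG$,
\[ \depth_Z\SG=\inf_{z\in Z}\depth\SG_z\geq\inf_{z\in Z}\inf\{k,\dim\SG_z\}=\inf\{k,\Codim(Z,\Supp\SG)\}. \]

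For the reverse direction I would prove \eqref{lem:basicSk:condB}$\Rightarrow$\eqref{lem:basicSk:condA} already under the weaker ``irreducible and closed'' hypothesis. Given $x\in\Supp\SG$, apply it to $Z=\overline{\{x\}}$; by Property~\ref{ppty:dim-codim}\eqref{ppty:dim-codim:1} one has $\Codim(Z,\Supp\SG)=\dim\SO_{\Supp\SG,x}=\dim\SG_x$, so $\depth\SG_x\geq\depth_Z\SG\geq\inf\{k,\dim\SG_x\}$; for $x\notin\Supp\SG$ the inequality is vacuous. Since every irreducible closed set is closed and, conversely, the ``closed'' form of \eqref{lem:basicSk:condB} formally implies its ``irreducible and closed'' form, this shows \eqref{lem:basicSk:condA} and both forms of \eqref{lem:basicSk:condB} are equivalent.

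Next I would treat \eqref{lem:basicSk:condC}. Assuming \eqref{lem:basicSk:condB}, the sheaf $\SG$ satisfies $\bfS_{k-1}$ (use the inequality of \eqref{lem:basicSk:condA} with $k$ replaced by $k-1$, which follows from $\inf\{k-1,d\}\leq\inf\{k,d\}$), and the codimension-$\geq k$ clause is a special case of \eqref{lem:basicSk:condB}; so \eqref{lem:basicSk:condB}$\Rightarrow$\eqref{lem:basicSk:condC}. Conversely, assume \eqref{lem:basicSk:condC} and let $Z\subset\Supp\SG$ be (irreducible) closed with $c:=\Codim(Z,\Supp\SG)$. If $c\geq k$, the clause of \eqref{lem:basicSk:condC} gives $\depth_Z\SG\geq k=\inf\{k,c\}$; if $c=0$ the bound is trivial; if $1\leq c\leq k-1$ then $k\geq2$, so $\SG$ satisfies $\bfS_{k-1}$, and the already-established implication \eqref{lem:basicSk:condA}$\Rightarrow$\eqref{lem:basicSk:condB} applied at level $k-1$ yields $\depth_Z\SG\geq\inf\{k-1,c\}=c$. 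Hence \eqref{lem:basicSk:condC}$\Leftrightarrow$\eqref{lem:basicSk:condB}.

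Finally, for \eqref{lem:basicSk:condD}: the implication \eqref{lem:basicSk:condA}$\Rightarrow$\eqref{lem:basicSk:condD} is trivial by taking $Z=\emptyset$, for which $\depth_Z\SG=+\infty$. For \eqref{lem:basicSk:condD}$\Rightarrow$\eqref{lem:basicSk:condA}, fix such a $Z$ and let $x\in\Supp\SG$: if $x\in Z$ then $\depth\SG_x\geq\depth_Z\SG\geq k\geq\inf\{k,\dim\SG_x\}$; if $x\notin Z$ then $x$ lies in the open subscheme $X\setminus Z$, over which $\SG$ satisfies $\bfS_k$ and over which the local ring $\SO_{X,x}$ and the stalk $\SG_x$ — hence $\depth\SG_x$ and $\dim\SG_x$ — coincide with those over $X$, so the inequality holds at $x$. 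This closes the cycle. There is no serious obstacle here; the only things demanding care are the bookkeeping with the extended-real infimum identity, the passage $\dim\SG_z=\dim\SO_{\Supp\SG,z}$ via Property~\ref{ppty:dim-codim}, and the tracking of the ``$k\geq2$'' clause together with the degenerate cases $c=0$, $Z=\emptyset$, and $x\notin\Supp\SG$.
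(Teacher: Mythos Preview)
Your proof is correct and follows essentially the same route as the paper: the reduction of \eqref{lem:basicSk:condA}$\Leftrightarrow$\eqref{lem:basicSk:condB} to the identities $\depth_Z\SG=\inf_{z\in Z}\depth\SG_z$ and $\Codim(Z,\Supp\SG)=\inf_{z\in Z}\dim\SG_z$ via $Z=\overline{\{x\}}$, the deduction of \eqref{lem:basicSk:condB}$\Leftrightarrow$\eqref{lem:basicSk:condC} from that equivalence, and the pointwise verification of \eqref{lem:basicSk:condD}$\Rightarrow$\eqref{lem:basicSk:condA} all match the paper's argument. The only cosmetic difference is that for \eqref{lem:basicSk:condA}$\Rightarrow$\eqref{lem:basicSk:condD} you take $Z=\emptyset$, whereas the paper chooses a closed subset of codimension $\geq k$; both are valid, yours being the more direct choice.
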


\begin{proof}
We may assume that \( \SG \) is not zero.
The equivalence \eqref{lem:basicSk:condA} \( \Leftrightarrow \)
\eqref{lem:basicSk:condB} follows from
Definitions~\ref{dfn:Z-depth} and \ref{dfn:SerreCond}
and from the equality: \( \dim \SG_{x} = \Codim(\overline{\{x\}}, \Supp \SG) \)
for \( x \in \Supp \SG \) in Property~\ref{ppty:dim-codim}\eqref{ppty:dim-codim:1}.
The equivalence \eqref{lem:basicSk:condA} \( \Leftrightarrow \) \eqref{lem:basicSk:condB}
implies the equivalence: \eqref{lem:basicSk:condB} \( \Leftrightarrow \) \eqref{lem:basicSk:condC}.
We have \eqref{lem:basicSk:condA} \( \Rightarrow \) \eqref{lem:basicSk:condD}
by taking a closed subset \( Z \) with \( \Codim(Z, \Supp \SG) \geq k \) using
the inequality in \eqref{lem:basicSk:condB}.
It is enough to show: \eqref{lem:basicSk:condD} \( \Rightarrow \) \eqref{lem:basicSk:condA}.
More precisely, it is enough to prove that, in the situation of \eqref{lem:basicSk:condD},
the inequality
\[ \depth \SG_{x} \geq \inf\{k, \dim \SG_{x}\} \]
holds for any point \( x \in X \).
If \( x \not\in Z \), then this holds, since \( \SG|_{X \setminus Z} \) satisfies \( \bfS_{k} \).
If \( x \in Z \), then \( \dim \SG_{x} \geq \depth \SG_{x} \geq \depth_{Z} \SG \geq k \)
(cf.\ Property~\ref{ppty:depthdfn} and Definition~\ref{dfn:Z-depth}), and
it induces the inequality above. Thus, we are done.
\end{proof}

\begin{lem}\label{lem:depth+codim+Sk}
Let \( X \) be a locally Noetherian scheme and \( \SG \) a coherent \( \SO_{X} \)-module. 
Then, for any closed subset \( Z \) of \( X \), the following hold\emph{:}
\begin{enumerate}
\item  \label{lem:depth+codim+Sk:1} One has the inequality 
\[ \depth_{Z} \SG \leq \Codim(Z \cap \Supp \SG, \Supp \SG). \]
\item  \label{lem:depth+codim+Sk:2} 
For an integer \( k > 0\),  if \( \SG \) satisfies \( \bfS_{k} \) and if
\( \Codim(Z \cap \Supp \SG, \Supp \SG) \geq k \), then \( \depth_{Z} \SG \geq k \). 
\end{enumerate}
\end{lem}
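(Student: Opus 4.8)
The plan is to unwind the definitions of $Z$-depth (Definition~\ref{dfn:Z-depth}) and of codimension (Property~\ref{ppty:dim-codim}) and to combine them with the two elementary facts $\depth M \leq \dim M$ for $M \neq 0$ (Property~\ref{ppty:depthdfn}) and $\dim \SG_{x} = \Codim(\overline{\{x\}}, \Supp \SG)$ for $x \in \Supp \SG$ (Property~\ref{ppty:dim-codim}\eqref{ppty:dim-codim:1}); for \eqref{lem:depth+codim+Sk:2} the extra input is the equivalence \eqref{lem:basicSk:condA} $\Leftrightarrow$ \eqref{lem:basicSk:condB} of Lemma~\ref{lem:basicSk}. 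First I would dispose of the degenerate cases and reduce to the support. We may assume $\SG \neq 0$. Since $\SG_{z} = 0$ and hence $\depth \SG_{z} = +\infty$ for $z \notin \Supp \SG$, the points of $Z$ lying outside $\Supp \SG$ do not influence the infimum defining $\depth_{Z}\SG$, so $\depth_{Z}\SG = \depth_{Z \cap \Supp \SG}\SG$. If $Z \cap \Supp \SG = \emptyset$, then both sides of \eqref{lem:depth+codim+Sk:1} are $+\infty$ (using $\Codim(\emptyset, -) = +\infty$) and \eqref{lem:depth+codim+Sk:2} is vacuous, so we may further assume $Z \cap \Supp \SG \neq \emptyset$.

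For \eqref{lem:depth+codim+Sk:1}: for each point $z \in Z \cap \Supp \SG$ we have $\depth \SG_{z} \leq \dim \SG_{z}$, and then $\dim \SG_{z} = \dim \SO_{\Supp \SG, z} = \Codim(\overline{\{z\}}, \Supp \SG)$ by Property~\ref{ppty:dim-codim}\eqref{ppty:dim-codim:1} applied with any closed subscheme structure on $\Supp \SG$. Taking the infimum over all such $z$ and comparing with
\[ \Codim(Z \cap \Supp \SG, \Supp \SG) = \inf\{\dim \SO_{\Supp \SG, z} \mid z \in Z \cap \Supp \SG\}, \]
which is Property~\ref{ppty:dim-codim}\eqref{ppty:dim-codim:2}, gives $\depth_{Z}\SG = \depth_{Z \cap \Supp \SG}\SG \leq \Codim(Z \cap \Supp \SG, \Supp \SG)$, as desired.

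For \eqref{lem:depth+codim+Sk:2}: put $Z' = Z \cap \Supp \SG$, a closed subset of $\Supp \SG$. Since $\SG$ satisfies $\bfS_{k}$, the equivalence \eqref{lem:basicSk:condA} $\Leftrightarrow$ \eqref{lem:basicSk:condB} of Lemma~\ref{lem:basicSk}, applied to $Z'$, yields $\depth_{Z'}\SG \geq \inf\{k, \Codim(Z', \Supp \SG)\}$. By hypothesis $\Codim(Z', \Supp \SG) \geq k$, so the right-hand side equals $k$, and together with the reduction $\depth_{Z}\SG = \depth_{Z'}\SG$ from the first paragraph this gives $\depth_{Z}\SG \geq k$.

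I do not anticipate a genuine obstacle: both statements are formal once Definitions~\ref{dfn:Z-depth} and \ref{dfn:SerreCond} and Properties~\ref{ppty:dim-codim} and \ref{ppty:depthdfn} are in hand, and the only point requiring a little care is the bookkeeping of the conventions $\depth 0 = +\infty$ and $\Codim(\emptyset, -) = +\infty$ together with the harmless passage from $\depth_{Z}$ to $\depth_{Z \cap \Supp \SG}$.
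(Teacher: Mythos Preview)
Your proof is correct and follows essentially the same route as the paper: both reduce to the pointwise inequality $\depth \SG_{z} \leq \dim \SG_{z}$ together with the infimum formulas for $\depth_{Z}$ and $\Codim$ for part \eqref{lem:depth+codim+Sk:1}, and both invoke the equivalence \eqref{lem:basicSk:condA} $\Leftrightarrow$ \eqref{lem:basicSk:condB} of Lemma~\ref{lem:basicSk} for part \eqref{lem:depth+codim+Sk:2}. Your treatment of the degenerate cases ($\SG = 0$, $Z \cap \Supp \SG = \emptyset$) and the reduction $\depth_{Z}\SG = \depth_{Z \cap \Supp \SG}\SG$ is slightly more explicit than the paper's, but otherwise the arguments are the same.
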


\begin{proof}
The inequality in \eqref{lem:depth+codim+Sk:1} follows from the inequality
\( \depth \SG_{x} \leq \dim \SG_{x} \) for any \( x \in \Supp \SG \),
since
\begin{align*}
\Codim(Z \cap \Supp \SG, \Supp \SG) &= \inf \{ \dim \SG_{x} \mid x \in Z \cap \Supp \SG\}
\quad \text{and} \\
\depth_{Z} \SG &=  \inf\{ \depth \SG_{x} \mid x \in Z \cap \Supp \SG\}
\end{align*}
when \( Z \cap \Supp \SG \ne \emptyset \),
by Property~\ref{ppty:dim-codim} and Definition~\ref{dfn:Z-depth}.
The assertion \eqref{lem:depth+codim+Sk:2} is derived from the equivalence
\eqref{lem:basicSk:condA} \( \Leftrightarrow \) \eqref{lem:basicSk:condB}
of Lemma~\ref{lem:basicSk}.
\end{proof}

For the conditions \( \bfS_{1} \) and \( \bfS_{2} \), we have immediately
the following corollary of Lemma~\ref{lem:basicSk} by considering
Property~\ref{ppty:depth<=2}.

\begin{cor}\label{cor:basicS1S2}
Let \( X \) be a locally Noetherian scheme and let \( \SG \) be a coherent \( \SO_{X} \)-module.
The following three conditions are equivalent to each other,
where \( j \) denotes the open immersion \( X \setminus Z \injmap X \)\emph{:}

\begin{enumerate}
\renewcommand{\theenumi}{\roman{enumi}}
\item \label{cor:basicS1S2:condA}
The sheaf \( \SG \) satisfies \( \bfS_{1} \) \emph{(}resp.\ \( \bfS_{2} \)\emph{)}.

\item \label{cor:basicS1S2:condB}
For any closed subset \( Z \subset \Supp \SG\) with
\( \Codim(Z, \Supp \SG) \geq 1 \) \emph{(}resp.\ \( \geq 2 \)\emph{)},
the restriction homomorphism
\( \SG \to j_{*}(\SG|_{X \setminus Z}) \) is injective 
\emph{(}resp.\ an isomorphism, and \( \SG \) satisfies \( \bfS_{1} \)\emph{)}.

\item \label{cor:basicS1S2:condC}
There is a closed subset \( Z \subset \Supp \SG \) such that \( \SG|_{X \setminus Z} \) 
satisfies \( \bfS_{1} \) \emph{(}resp.\ \( \bfS_{2} \)\emph{)}
and the restriction homomorphism
\( \SG \to j_{*}(\SG|_{X \setminus Z}) \)
is injective \emph{(}resp.\ an isomorphism\emph{)}.
\end{enumerate}
\end{cor}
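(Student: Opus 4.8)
The plan is to derive all three equivalences directly from Lemma~\ref{lem:basicSk} together with the characterization of low depth in Property~\ref{ppty:depth<=2}, handling the case $k = 1$ (the condition $\bfS_{1}$) and the case $k = 2$ (the condition $\bfS_{2}$) in parallel. Recall from Property~\ref{ppty:depth<=2} that, for a closed subset $Z \subset X$ and the open immersion $j \colon X \setminus Z \injmap X$, one has $\depth_{Z}\SG \geq 1$ if and only if $\SG \to j_{*}(\SG|_{X \setminus Z})$ is injective, and $\depth_{Z}\SG \geq 2$ if and only if that map is an isomorphism. Since any closed $Z \subset \Supp\SG$ is in particular closed in $X$, this characterization applies verbatim to the subsets occurring in the statement.

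First I would establish \eqref{cor:basicS1S2:condA} $\Leftrightarrow$ \eqref{cor:basicS1S2:condB}. For $k = 1$, the equivalence \eqref{lem:basicSk:condA} $\Leftrightarrow$ \eqref{lem:basicSk:condC} of Lemma~\ref{lem:basicSk} asserts that $\SG$ satisfies $\bfS_{1}$ if and only if $\depth_{Z}\SG \geq 1$ for every closed $Z \subset \Supp\SG$ with $\Codim(Z, \Supp\SG) \geq 1$; substituting the injectivity criterion of Property~\ref{ppty:depth<=2} for the depth inequality turns this into \eqref{cor:basicS1S2:condB}. For $k = 2$, the same equivalence says that $\SG$ satisfies $\bfS_{2}$ if and only if $\SG$ satisfies $\bfS_{1}$ and $\depth_{Z}\SG \geq 2$ for every closed $Z \subset \Supp\SG$ with $\Codim(Z, \Supp\SG) \geq 2$; substituting the isomorphism criterion of Property~\ref{ppty:depth<=2} then gives precisely \eqref{cor:basicS1S2:condB} in its $\bfS_{2}$ form, including the auxiliary clause that $\SG$ satisfy $\bfS_{1}$.

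Next I would establish \eqref{cor:basicS1S2:condA} $\Leftrightarrow$ \eqref{cor:basicS1S2:condC} by invoking the equivalence \eqref{lem:basicSk:condA} $\Leftrightarrow$ \eqref{lem:basicSk:condD} of Lemma~\ref{lem:basicSk}: for $k \in \{1, 2\}$, $\SG$ satisfies $\bfS_{k}$ if and only if there exists a closed $Z \subset \Supp\SG$ with $\depth_{Z}\SG \geq k$ such that $\SG|_{X\setminus Z}$ satisfies $\bfS_{k}$. Rewriting the condition $\depth_{Z}\SG \geq k$ through Property~\ref{ppty:depth<=2} (as injectivity of $\SG \to j_{*}(\SG|_{X\setminus Z})$ when $k = 1$, as bijectivity when $k = 2$) yields exactly \eqref{cor:basicS1S2:condC}. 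Note that the $\bfS_{2}$ version of \eqref{cor:basicS1S2:condC} does not separately require $\SG$ to satisfy $\bfS_{1}$; this is automatic, since an isomorphism $\SG \isom j_{*}(\SG|_{X\setminus Z})$ forces $\depth_{Z}\SG \geq 2 \geq 1$, so \eqref{lem:basicSk:condD} with $k = 1$ already delivers $\bfS_{1}$.

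I do not expect any genuine obstacle here: the argument is a routine translation between the depth conditions of Lemma~\ref{lem:basicSk} and the injectivity/bijectivity of restriction maps furnished by Property~\ref{ppty:depth<=2}. The only points demanding minor care are keeping track of which parenthetical clauses in \eqref{cor:basicS1S2:condB} and \eqref{cor:basicS1S2:condC} correspond to which clauses in parts \eqref{lem:basicSk:condC} and \eqref{lem:basicSk:condD} of Lemma~\ref{lem:basicSk}, and noting that the extra $\bfS_{1}$ requirement is harmless because it already follows from the isomorphism condition in the $\bfS_{2}$ case.
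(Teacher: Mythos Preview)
Your proof is correct and follows precisely the paper's approach: the paper states the corollary as an immediate consequence of Lemma~\ref{lem:basicSk} and Property~\ref{ppty:depth<=2}, and you have spelled out exactly that translation, matching condition \eqref{lem:basicSk:condC} of the lemma to \eqref{cor:basicS1S2:condB} and condition \eqref{lem:basicSk:condD} to \eqref{cor:basicS1S2:condC}.
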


\begin{remn}
Let \( X \) be a locally Noetherian scheme and \( \SG \) a coherent \( \SO_{X} \)-module.
Then, by definition, \( \SG \) satisfies \( \bfS_{1} \) if and only if \( \SG \) has no embedded points 
(cf.\ Remark~\ref{rem:assocprime}).
In particular, the following hold when \( \SG \) satisfies \( \bfS_{1} \):
\begin{enumerate}
\item \label{rem:basicS1:1} Every coherent \( \SO_{X} \)-submodule of \( \SG \) satisfies \( \bfS_{1} \)
(cf.\ Lemma~\ref{lem:depth<=2}\eqref{lem:depth<=2:1} below).

\item \label{rem:basicS1:2} The sheaf \( \SHom_{\SO_{X}}(\SF, \SG) \) satisfies \( \bfS_{1} \)
for any coherent \( \SO_{X} \)-module \( \SF \).

\item \label{rem:basicS1:3} Let \( T \) be the closed subscheme defined
by the annihilator of \( \SG \), i.e., \( \SO_{T} \) is the image
of the natural homomorphism \( \SO_{X} \to \SHom_{\SO_{X}}(\SG, \SG) \).
Then, \( T \) also satisfies \( \bfS_{1} \).
\end{enumerate}
\end{remn}

\begin{lem}\label{lem:depth<=2}
Let \( X \) be a locally Noetherian scheme and
let \( \SG \) be the kernel of a homomorphism \( \SE^{0} \to \SE^{1} \)
of coherent \( \SO_{X} \)-modules.
\begin{enumerate}
\item \label{lem:depth<=2:0}
Let \( Z \) be a closed subset of \( X \).
If \( \depth_{Z} \SE^{0} \geq 1 \), then \( \depth_{Z} \SG \geq 1 \).
If \( \depth_{Z} \SE^{0} \geq 2 \) and \( \depth_{Z} \SE^{1} \geq 1 \),
then \( \depth_{Z} \SG \geq 2 \).

\item \label{lem:depth<=2:1}
If \( \SE^{0} \) satisfies \( \bfS_{1} \), then \( \SG \) satisfies \( \bfS_{1} \).

\item \label{lem:depth<=2:2}
Assume that \( \Supp \SG \subset \Supp \SE^{1} \).
If \( \SE^{1} \) satisfies \( \bfS_{1} \) and
\( \SE^{0} \) satisfies \( \bfS_{2} \), then
\( \SG \) satisfies \( \bfS_{2} \).
\end{enumerate}
\end{lem}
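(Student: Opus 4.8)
\textbf{Proof strategy for Lemma~\ref{lem:depth<=2}.}
The plan is to reduce everything to statements about $Z$-depth and then invoke Property~\ref{ppty:depth<=2} together with the characterizations of $\bfS_{1}$ and $\bfS_{2}$ in Corollary~\ref{cor:basicS1S2}. First I would prove \eqref{lem:depth<=2:0}, since \eqref{lem:depth<=2:1} and \eqref{lem:depth<=2:2} are essentially consequences of it. For \eqref{lem:depth<=2:0}, I would use the left-exact sequence $0 \to \SG \to \SE^{0} \to \SE^{1}$, which yields an inclusion $\SG \injmap \SE^{0}$ and, setting $\SC = \Coker(\SG \to \SE^{0}) \subset \SE^{1}$, a short exact sequence $0 \to \SG \to \SE^{0} \to \SC \to 0$. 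The long exact sequence of local cohomology sheaves with support in $Z$ gives
\[ \SH^{i-1}_{Z}(\SC) \to \SH^{i}_{Z}(\SG) \to \SH^{i}_{Z}(\SE^{0}) . \]
If $\depth_{Z}\SE^{0} \geq 1$, then $\SH^{0}_{Z}(\SE^{0}) = 0$ by Property~\ref{ppty:depth<=2}, hence $\SH^{0}_{Z}(\SG) = 0$ (it injects into $\SH^{0}_{Z}(\SE^{0})$), so $\depth_{Z}\SG \geq 1$. For the second part, if $\depth_{Z}\SE^{0} \geq 2$ then $\SH^{0}_{Z}(\SE^{0}) = \SH^{1}_{Z}(\SE^{0}) = 0$; moreover $\SC \subset \SE^{1}$ and $\depth_{Z}\SE^{1} \geq 1$ force $\SH^{0}_{Z}(\SC) = 0$ (a submodule of a sheaf with $\depth_{Z} \geq 1$ again has $\depth_{Z} \geq 1$, as $\SH^{0}_{Z}$ is left exact). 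Then the exact sequence shows $\SH^{0}_{Z}(\SG) = \SH^{1}_{Z}(\SG) = 0$, i.e.\ $\depth_{Z}\SG \geq 2$.

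For \eqref{lem:depth<=2:1}, I would combine \eqref{lem:depth<=2:0} with Lemma~\ref{lem:basicSk}, using the equivalence \eqref{lem:basicSk:condA} $\Leftrightarrow$ \eqref{lem:basicSk:condB}: if $\SE^{0}$ satisfies $\bfS_{1}$, then $\depth_{Z}\SE^{0} \geq 1$ for every closed $Z \subset \Supp\SE^{0}$ of codimension $\geq 1$; since $\Supp\SG \subset \Supp\SE^{0}$, for any closed $Z \subset \Supp\SG$ of codimension $\geq 1$ in $\Supp\SG$ one has $\depth_{Z}\SE^{0} \geq 1$ as well (the relevant stalks lie over points of $\Supp\SE^{0}$ of positive codimension, or one simply notes $\bfS_{1}$ for $\SE^0$ means no embedded points, which passes to the submodule $\SG$). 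Hence $\depth_{Z}\SG \geq 1$ by \eqref{lem:depth<=2:0}, and $\SG$ satisfies $\bfS_{1}$. Actually the cleanest route here is the remark following Corollary~\ref{cor:basicS1S2}: $\bfS_{1}$ is equivalent to having no embedded points, and every submodule of a sheaf without embedded points has no embedded points either; this gives \eqref{lem:depth<=2:1} directly and is worth stating.

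For \eqref{lem:depth<=2:2}, I would use Corollary~\ref{cor:basicS1S2} applied to $\bfS_{2}$: it suffices to exhibit, locally, a closed subset $Z \subset \Supp\SG$ with $\SG|_{X \setminus Z}$ satisfying $\bfS_{2}$ and $\SG \isom j_{*}(\SG|_{X\setminus Z})$ — or, more directly, to verify $\depth_{Z}\SG \geq 2$ for every closed $Z \subset \Supp\SG$ with $\Codim(Z, \Supp\SG) \geq 2$, together with $\SG$ satisfying $\bfS_{1}$ (which we already have from \eqref{lem:depth<=2:1}, since $\bfS_{2}$ for $\SE^{0}$ implies $\bfS_{1}$). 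So fix such a $Z$. Because $\Supp\SG \subset \Supp\SE^{1}$ and $\Supp\SG \subset \Supp\SE^{0}$, the set $Z$ is a closed subset of both supports. The assumption $\SE^{1}$ satisfies $\bfS_{1}$ gives $\depth_{Z}\SE^{1} \geq 1$. The subtle point — which I expect to be the main obstacle — is controlling $\depth_{Z}\SE^{0}$: we need $\depth_{Z}\SE^{0} \geq 2$, and for this I would invoke Lemma~\ref{lem:depth+codim+Sk}\eqref{lem:depth+codim+Sk:2}, which says that if $\SE^{0}$ satisfies $\bfS_{2}$ and $\Codim(Z \cap \Supp\SE^{0}, \Supp\SE^{0}) \geq 2$, then $\depth_{Z}\SE^{0} \geq 2$. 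The codimension hypothesis needs care: $Z \cap \Supp\SE^{0} \supseteq Z$ (as $Z \subset \Supp\SG \subset \Supp\SE^{0}$), but $\Supp\SE^{0}$ may be strictly larger than $\Supp\SG$, so a priori $\Codim(Z, \Supp\SE^{0})$ could drop below $2$. However, $\depth_{Z}$ only sees stalks at points of $Z$, and at each such point $x \in Z \subset \Supp\SG$ one has $\dim\SE^{0}_{x} \geq \dim\SG_{x} = \Codim(\overline{\{x\}}, \Supp\SG) \geq \Codim(Z,\Supp\SG) \geq 2$ when $x$ is a generic point of $Z$; combined with the $\bfS_{2}$ property of $\SE^{0}$ at $x$ this yields $\depth\SE^{0}_{x} \geq \inf\{2, \dim\SE^{0}_{x}\} = 2$, and taking the infimum over $x \in Z$ gives $\depth_{Z}\SE^{0} \geq 2$. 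With $\depth_{Z}\SE^{0} \geq 2$ and $\depth_{Z}\SE^{1} \geq 1$ in hand, part \eqref{lem:depth<=2:0} yields $\depth_{Z}\SG \geq 2$, and since $Z$ was an arbitrary such closed subset and $\SG$ satisfies $\bfS_{1}$, Corollary~\ref{cor:basicS1S2}\eqref{cor:basicS1S2:condB} shows $\SG$ satisfies $\bfS_{2}$. $\qed$
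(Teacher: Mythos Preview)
Your proof is correct and follows essentially the same route as the paper: derive \eqref{lem:depth<=2:0} from the local-cohomology long exact sequence attached to $0 \to \SG \to \SE^{0} \to \SC \to 0$ together with the inclusion $\SC \injmap \SE^{1}$, and then obtain \eqref{lem:depth<=2:1} and \eqref{lem:depth<=2:2} from \eqref{lem:depth<=2:0} via the equivalence \eqref{lem:basicSk:condA} $\Leftrightarrow$ \eqref{lem:basicSk:condB} of Lemma~\ref{lem:basicSk} (equivalently, Corollary~\ref{cor:basicS1S2}).

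One small remark: in your treatment of \eqref{lem:depth<=2:2} you worry that $\Codim(Z,\Supp\SE^{0})$ ``could drop below $2$'' because $\Supp\SE^{0}$ may be strictly larger than $\Supp\SG$. In fact the opposite happens: for $Z \subset \Supp\SG \subset \Supp\SE^{0}$ one always has $\Codim(Z,\Supp\SE^{0}) \geq \Codim(Z,\Supp\SG)$, since enlarging the ambient closed set can only lengthen chains through points of $Z$. So the application of Lemma~\ref{lem:depth+codim+Sk}\eqref{lem:depth+codim+Sk:2} is immediate, and your pointwise detour (while correct) is unnecessary. The same monotonicity, together with the hypothesis $\Supp\SG \subset \Supp\SE^{1}$, handles $\depth_{Z}\SE^{1} \geq 1$ directly.
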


\begin{proof}
Let \( \SB \) be the image of \( \SE^{0} \to \SE^{1} \). Then, we have an exact sequence
\[ 0 \to \SH^{0}_{Z}(\SG) \to \SH^{0}_{Z}(\SE^{0}) \to \SH^{0}_{Z}(\SB)
\to \SH^{1}_{Z}(\SG) \to \SH^{1}_{Z}(\SE^{0})\]
and an injection \( \SH^{0}_{Z}(\SB) \to \SH^{0}_{Z}(\SE^{1}) \) of local cohomology sheaves
with support in \( Z \) (cf.\ \cite[Prop.\ 1.1]{LC}).
Thus, \eqref{lem:depth<=2:0} is derived from Property~\ref{ppty:depth<=2}.
The remaining assertions \eqref{lem:depth<=2:1} and \eqref{lem:depth<=2:2} are
consequences of \eqref{lem:depth<=2:0} above
and the equivalence: \eqref{lem:basicSk:condA} \( \Leftrightarrow \)
\eqref{lem:basicSk:condB} in Lemma~\ref{lem:basicSk}.
\end{proof}

\begin{lem}\label{lem:proj+S1}
Let \( P \) be the \( n \)-dimensional
projective space \( \BPP^{n}_{\Bbbk}\) over a field \( \Bbbk \) and
let \( \SG \) be a coherent \( \SO_{P} \)-module such that
\( \SG \) satisfies \( \bfS_{1} \) and that every irreducible component of \( \Supp \SG \) has
positive dimension.
Then, \( \OH^{0}(P, \SG(m)) = 0 \) for any \( m \ll 0 \),
where we write \( \SG(m) = \SG \otimes_{\SO_{P}} \SO_{P}(m) \).
\end{lem}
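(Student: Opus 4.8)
The plan is to reduce the statement to a vanishing fact about the depth of a module over a polynomial ring, using the correspondence between Serre's \( \bfS_1 \)-condition and the absence of embedded primes. First I would recall that \( \OH^{0}(P, \SG(m)) \) is computed, for \( m \ll 0 \), by the graded module \( \bigoplus_{m} \OH^{0}(P, \SG(m)) \), which is a finitely generated module over the homogeneous coordinate ring \( A = \Bbbk[x_{0}, \dots, x_{n}] \) up to finitely many graded pieces; more precisely, writing \( M = \Gamma_{*}(\SG) = \bigoplus_{m \in \BZZ} \OH^{0}(P, \SG(m)) \), the module \( M \) is finitely generated over \( A \) precisely because \( \SG \) satisfies \( \bfS_1 \), hence has \( \depth_{\GM_{A}}(M/M_{\mathrm{tors}}) \geq 1 \) — but I want to be careful: the cleanest route avoids worrying about whether \( M \) itself is finitely generated.

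The key step is this. Let \( N = \bigoplus_{m} \OH^{0}(P, \SG(m)) \) and consider instead the associated sheaf: one has \( \widetilde{N} = \SG \). I would argue directly that \( \OH^{0}(P, \SG(m)) = 0 \) for \( m \ll 0 \) as follows. Since \( \SG \) satisfies \( \bfS_1 \), \( \SG \) has no embedded associated points (cf.\ the Remark after Corollary~\ref{cor:basicS1S2}), so every associated point of \( \SG \) is a generic point of \( \Supp \SG \), and by hypothesis each such point has positive dimension, i.e., corresponds to a homogeneous prime \( \Gp \subset A \) of dimension \( \geq 2 \) (as an affine cone). Pick a single homogeneous element \( \ell \in A_{1} \) (a general linear form) avoiding all these finitely many associated primes; then \( \ell \) is a nonzerodivisor on every local stalk, so multiplication \( \SG(-1) \xrightarrow{\ell} \SG \) is injective, giving a short exact sequence \( 0 \to \SG(-1) \to \SG \to \SQ \to 0 \) where \( \SQ \) is supported on the hyperplane \( \{\ell = 0\} = \BPP^{n-1} \). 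Taking global sections of the twist by \( m \), injectivity of \( \OH^{0}(\SG(m-1)) \hookrightarrow \OH^{0}(\SG(m)) \) shows the function \( m \mapsto \dim_{\Bbbk} \OH^{0}(P, \SG(m)) \) is non-decreasing; to conclude it is eventually \( 0 \) going down, I would instead bound \( \dim_{\Bbbk} \OH^{0}(P, \SG(m)) \) by something tending to \( 0 \).

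Here is the cleaner finish I would actually carry out: induct on \( \dim \Supp \SG \). If \( \dim \Supp \SG = 1 \), every generic point of the support has dimension \( 1 \), and a general linear form \( \ell \) is a nonzerodivisor, so \( 0 \to \SG(-1) \xrightarrow{\ell} \SG \to \SQ \to 0 \) with \( \SQ \) a coherent sheaf on \( \BPP^{n-1} \) supported in dimension \( 0 \). For such \( \SQ \), \( \OH^{0}(\BPP^{n-1}, \SQ(m)) \) is independent of \( m \) and equals \( \dim_{\Bbbk} \OH^{0}(\SQ) =: c \) for all \( m \). Thus \( \dim \OH^{0}(\SG(m)) \geq \dim \OH^{0}(\SG(m-1)) \) and the successive quotients have dimension \( \leq c \); but also \( \OH^{0}(\SG(m)) = 0 \) for \( m \ll 0 \) will follow once we know \( \dim \OH^{0}(\SG(m)) \) is bounded above — and it is bounded, because \( \SG \) is the sheafification of a finitely generated graded \( A \)-module \( M' \) (take any one), and for \( m \ll 0 \) the comparison map \( M'_{m} \to \OH^{0}(\SG(m)) \) is an isomorphism by Serre's theorems while \( M'_{m} = 0 \) for \( m \ll 0 \) after replacing \( M' \) by a module with no submodule of finite length — precisely the \( \bfS_1 \) hypothesis guarantees \( \Gamma_{*}(\SG) \) has no finite-length submodule, so its negative graded pieces vanish far out. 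For the inductive step \( \dim \Supp \SG = d \geq 2 \): the general hyperplane section \( \SQ \) on \( \BPP^{n-1} \) again satisfies \( \bfS_1 \) (since \( \ell \) is \( \SG \)-regular, cf.\ Lemma~\ref{lem:depth<=2}) with \( \Supp \SQ \) of pure dimension \( d - 1 \geq 1 \), so by induction \( \OH^{0}(\BPP^{n-1}, \SQ(m)) = 0 \) for \( m \ll 0 \); combined with the injection \( \OH^{0}(\SG(m-1)) \hookrightarrow \OH^{0}(\SG(m)) \) from the long exact sequence, whose cokernel injects into \( \OH^{0}(\SQ(m)) = 0 \) for \( m \ll 0 \), we get that \( \dim_{\Bbbk}\OH^{0}(\SG(m)) \) is eventually constant as \( m \to -\infty \); being also \( \leq \dim_{\Bbbk}\OH^{0}(\SG(m_{0})) < \infty \) and equal to the dimension of the finitely generated \( A \)-module \( \Gamma_{*}(\SG) \) in degree \( m \), which vanishes for \( m \ll 0 \), we conclude.

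The main obstacle, and the point I would be most careful about, is the clean identification that the \( \bfS_1 \)-condition forces \( \Gamma_{*}(\SG) = \bigoplus_{m}\OH^{0}(P,\SG(m)) \) to be a finitely generated graded \( A \)-module whose negative-degree part vanishes eventually — equivalently, that \( \SG \) has no nonzero sections supported at finitely many points after twisting down far enough. This is really the statement that \( \depth \geq 1 \) everywhere rules out zero-dimensional embedded components, translated into the graded setting via local cohomology \( \OH^{0}_{\GM_{A}}(\Gamma_{*}(\SG)) = 0 \); I would cite Property~\ref{ppty:depth<=2} and the equivalence \eqref{lem:basicSk:condA}\( \Leftrightarrow \)\eqref{lem:basicSk:condB} of Lemma~\ref{lem:basicSk} to make this rigorous, and then the vanishing \( \OH^{0}(P,\SG(m)) = 0 \) for \( m \ll 0 \) is exactly the statement that the finitely generated \( A \)-module has no associated prime equal to \( \GM_{A} \), so its Hilbert function vanishes in sufficiently negative degrees.
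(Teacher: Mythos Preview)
Your argument contains a genuine circularity. You repeatedly invoke that $\Gamma_*(\SG) = \bigoplus_m \OH^0(P, \SG(m))$ is a finitely generated graded $A$-module, but since each $\OH^0(P,\SG(m))$ is finite-dimensional over $\Bbbk$, finite generation of $\Gamma_*(\SG)$ over $A$ is \emph{equivalent} to it being bounded below in degree---which is precisely the conclusion you are trying to prove. Your appeal to ``Serre's theorems'' for the comparison $M'_m \to \OH^0(\SG(m))$ is also in the wrong direction: that map is an isomorphism for $m \gg 0$, not $m \ll 0$. Separately, the induction step is broken: Lemma~\ref{lem:depth<=2} concerns kernels, not cokernels, so $\SQ = \SG/\ell\SG$ need not inherit $\bfS_1$ from $\SG$; and if $\Supp\SG$ has a $1$-dimensional component (nothing in the hypothesis excludes this), then $\Supp\SQ$ acquires a $0$-dimensional component and the inductive hypothesis fails outright.

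The paper's proof is short and uses only the nonzerodivisor you already identified. Choose $D \in |\SO_P(k)|$ avoiding the finite set $\Ass(\SG)$; the injection $\SG(-k)\hookrightarrow\SG$ makes $m \mapsto \dim_\Bbbk \OH^0(\SG(m))$ non-decreasing along $m, m-k, m-2k,\ldots$, as you noted. The step you are missing is this: if these dimensions did not eventually vanish, then after a shift there would exist a nonzero $\xi\in\OH^0(P,\SG)$ which also lies in $\OH^0(P,\SG(-D))$; the scheme-theoretic support $T$ of $\xi$ (the image of $\SO_P\xrightarrow{\xi}\SG$) is then contained in the affine open $P\setminus D$, hence $T$ is a finite set. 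But every point of $T$ is an associated point of $\SG$, and by $\bfS_1$ every associated point of $\SG$ is the generic point of a positive-dimensional component of $\Supp\SG$---contradiction.
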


\begin{proof}
We shall prove by contradiction.
Assume that \( \OH^{0}(P, \SG(-m)) \ne 0 \) for infinitely many \( m > 0 \).
There is a member \( D \) of \( |\SO_{P}(k)| \) for some \( k > 0 \) such that
\( D \cap \Ass(\SG) = \emptyset\) (cf.\ Remark~\ref{rem:assocprime}).
Thus, the inclusion \( \SO_{P}(-D) \subset \SO_{P} \)
induces an injection \( \SG(-D) := \SG \otimes_{\SO_{P}} \SO_{P}(-D) \to \SG \).
Hence, we have an injection \( \SG(-k) \isom \SG(-D) \to \SG \),
and we may assume that \( \OH^{0}(P, \SG(-m)) = \OH^{0}(P, \SG) \ne 0\) for any \( m > 0 \)
by replacing \( \SG \) with \( \SG(-l) \) for some \( l > 0\).
Let \( \xi \) be a non-zero element of \( \OH^{0}(P, \SG) \),
which corresponds to a non-zero homomorphism \( \SO_{P} \to \SG \).
Let \( T \) be the closed subscheme of \( P \)
such that \( \SO_{T} \) is the image of \( \SO_{P} \to \SG \).
Then, \( T \) is non-empty and is contained in the affine open subset \( P \setminus D \),
since \( \xi \in \OH^{0}(P, \SG(-D))\).
Therefore, \( T \) is a finite set, and \( T \subset \Ass(\SG) \).
Since \( \SG \) satisfies \( \bfS_{1} \), 
every point of \( T \) is an irreducible component of \( \Supp \SG \).
This contradicts the assumption.
\end{proof}

\begin{dfn}[reflexive sheaf]\label{dfn:reflexive}
For a scheme \( X \) and an \( \SO_{X} \)-module \( \SF \), we write
\( \SF^{\vee} \) for the dual \( \SO_{X} \)-module \( \SHom_{\SO_{X}}(\SF, \SO_{X}) \).
The double-dual \( \SF^{\vee\vee} \) of \( \SF \) is defined as \( (\SF^{\vee})^{\vee} \).
The natural composition homomorphism \( \SF \otimes \SF^{\vee} \to \SO_{X} \) defines
a canonical homomorphism \( c_{\SF} \colon \SF \to \SF^{\vee\vee} \).
Note that \( c_{\SF^{\vee}} \) is always an isomorphism.
If \( \SF \) is a quasi-coherent \( \SO_{X} \)-module of finite type and if \( c_{\SF} \)
is an isomorphism, then \( \SF \) is said to be reflexive.
\end{dfn}

\begin{rem}\label{rem:dfn:reflexive}
Let \( \pi \colon Y \to X \) be a flat morphism of locally Noetherian schemes.
Then, the dual operation \( {}^{\vee} \) commutes with \( \pi^{*} \), i.e.,
there is a canonical isomorphism
\[ \pi^{*}\SHom_{\SO_{X}}(\SF, \SO_{X}) \isom \SHom_{\SO_{Y}}(\pi^{*}\SF, \SO_{Y})\]
for any coherent \( \SO_{X} \)-module \( \SF \).
In particular, if \( \SF \) is reflexive, then so is \( \pi^{*}\SF \).
This isomorphism is derived from \cite[$0_{\mathrm{I}}, (6.7.6)$]{EGA}, since
every coherent \( \SO_{X} \)-module has a finite presentation locally on \( X \).
\end{rem}

\begin{lem}\label{lem:j*reflexive}
Let \( X \) be a locally Noetherian scheme, \( Z \) a closed subset, and
\( \SG \) a coherent \( \SO_{X} \)-module.
\begin{enumerate}
\item \label{lem:j*reflexive:1}
For an integer \( k = 1 \) or \( 2 \),
assume that \( \depth_{Z} \SO_{X} \geq k \)
and that \( \SG \) is reflexive.
Then, \( \depth_{Z} \SG \geq k \).
\item \label{lem:j*reflexive:1a}
For an integer \( k = 1 \) or \( 2 \), assume that \( X \) satisfies \( \bfS_{k} \)
and that \( \SG \) is reflexive.
Then, \( \SG \) satisfies \( \bfS_{k} \).
\item \label{lem:j*reflexive:2}
Assume that \( \depth_{Z} \SO_{X} \geq 1 \) and that
\( \SG|_{X \setminus Z} \) is reflexive.
If \( \depth_{Z} \SG \geq 2 \), then \( \SG \) is reflexive.
\end{enumerate}
\end{lem}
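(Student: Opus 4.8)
The proof I would give rests on the elementary observation that a reflexive coherent sheaf is, locally, the kernel of a homomorphism of finite free modules. For \eqref{lem:j*reflexive:1}: since \( \SG \) is reflexive, \( \SG \isom (\SG^{\vee})^{\vee} = \SHom_{\SO_{X}}(\SG^{\vee}, \SO_{X}) \) with \( \SG^{\vee} \) coherent; choosing, on a neighbourhood of an arbitrary point, a finite presentation \( \SO_{X}^{\oplus a} \to \SO_{X}^{\oplus b} \to \SG^{\vee} \to 0 \) and applying the left-exact functor \( \SHom_{\SO_{X}}(-, \SO_{X}) \) realizes \( \SG \) locally as \( \Ker(\SO_{X}^{\oplus b} \to \SO_{X}^{\oplus a}) \). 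Since \( \depth_{Z}\SO_{X}^{\oplus b} = \depth_{Z}\SO_{X}^{\oplus a} = \depth_{Z}\SO_{X} \geq k \) (with the convention \( \depth_{Z}0 = +\infty \) when one of these free modules is zero), Lemma~\ref{lem:depth<=2}\eqref{lem:depth<=2:0} gives \( \depth_{Z}\SG \geq k \), both for \( k = 1 \) and for \( k = 2 \).

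For \eqref{lem:j*reflexive:1a}: I would feed \eqref{lem:j*reflexive:1} into the codimension form of Serre's condition. For any closed subset \( Z \subseteq \Supp\SG \) one readily checks \( \Codim(Z, \Supp\SG) \leq \Codim(Z, X) \), since \( \SO_{\Supp\SG, z} \) is a quotient of \( \SO_{X, z} \) for every \( z \in Z \); hence if \( \Codim(Z, \Supp\SG) \geq k \) then \( \Codim(Z, X) \geq k \), so \( \depth_{Z}\SO_{X} \geq k \) because \( X \) satisfies \( \bfS_{k} \) (Lemma~\ref{lem:basicSk}, \eqref{lem:basicSk:condA}\( \Leftrightarrow \)\eqref{lem:basicSk:condB}), and therefore \( \depth_{Z}\SG \geq k \) by \eqref{lem:j*reflexive:1}. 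Applying the implication \eqref{lem:basicSk:condC}\( \Rightarrow \)\eqref{lem:basicSk:condA} of Lemma~\ref{lem:basicSk}, first with \( k = 1 \) (to get that \( \SG \) satisfies \( \bfS_{1} \)) and then, when \( X \) satisfies \( \bfS_{2} \), with \( k = 2 \), shows that \( \SG \) satisfies \( \bfS_{k} \).

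For \eqref{lem:j*reflexive:2}: set \( j \colon U = X \setminus Z \injmap X \) and write \( u_{\SF} \colon \SF \to j_{*}(\SF|_{U}) \) for the canonical homomorphism of an \( \SO_{X} \)-module \( \SF \). By Property~\ref{ppty:depth<=2}, the hypothesis \( \depth_{Z}\SG \geq 2 \) means that \( u_{\SG} \) is an isomorphism. The key point is to apply \eqref{lem:j*reflexive:1} not to \( \SG \) but to \( \SG^{\vee\vee} = (\SG^{\vee})^{\vee} \), which is coherent and reflexive (its canonical homomorphism, being that of a dual sheaf, is an isomorphism): by \eqref{lem:j*reflexive:1} with \( k = 1 \), using \( \depth_{Z}\SO_{X} \geq 1 \), we get \( \depth_{Z}\SG^{\vee\vee} \geq 1 \), i.e.\ \( u_{\SG^{\vee\vee}} \) is injective. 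Since the formation of \( (-)^{\vee\vee} \) and of the canonical homomorphism commutes with restriction to \( U \), and \( \SG|_{U} \) is reflexive, \( c_{\SG}|_{U} = c_{\SG|_{U}} \) is an isomorphism, hence so is \( j_{*}(c_{\SG}|_{U}) \). In the commutative square
\[
\begin{CD}
\SG @>{c_{\SG}}>> \SG^{\vee\vee} \\
@VV{u_{\SG}}V @VV{u_{\SG^{\vee\vee}}}V \\
j_{*}(\SG|_{U}) @>>{j_{*}(c_{\SG}|_{U})}> j_{*}(\SG^{\vee\vee}|_{U})
\end{CD}
\]
the composite \( u_{\SG^{\vee\vee}} \circ c_{\SG} \) is then an isomorphism; together with the injectivity of \( u_{\SG^{\vee\vee}} \) this forces \( u_{\SG^{\vee\vee}} \), and hence \( c_{\SG} \), to be isomorphisms, so \( \SG \) is reflexive.

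I do not expect a serious obstacle: once the ``reflexive \( = \) locally a kernel of a map of free modules'' reduction is made, \eqref{lem:j*reflexive:1} is immediate from Lemma~\ref{lem:depth<=2}, and the other two parts are formal consequences. The one step that requires a genuine idea rather than bookkeeping is choosing, in \eqref{lem:j*reflexive:2}, to control the depth of \( \SG^{\vee\vee} \) instead of that of \( \SG \) — the hypotheses only give \( \depth_{Z}\SO_{X} \geq 1 \), so \eqref{lem:j*reflexive:1} with \( k = 2 \) is unavailable, and one must exploit that \( \SG^{\vee\vee} \) is already reflexive together with the isomorphism \( u_{\SG} \) coming from \( \depth_{Z}\SG \geq 2 \).
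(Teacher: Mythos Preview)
Your proof is correct and follows essentially the same approach as the paper: for \eqref{lem:j*reflexive:1} the paper also realizes \( \SG \) locally as the kernel of a map of free modules and invokes Lemma~\ref{lem:depth<=2}\eqref{lem:depth<=2:0}, and for \eqref{lem:j*reflexive:2} the paper uses the same commutative square. The only cosmetic difference is in the endgame of \eqref{lem:j*reflexive:2}: the paper reads the square as a splitting of \( c_{\SG} \) and then kills the complementary summand \( \SC = \SG^{\vee\vee}/\SG \) by observing that the corresponding map \( \SC \otimes \SG^{\vee} \to \SO_{X} \) must vanish (as \( \Supp\SC \subset Z \) and \( \depth_{Z}\SO_{X} \geq 1 \)), whereas you use \eqref{lem:j*reflexive:1} with \( k = 1 \) to get injectivity of \( u_{\SG^{\vee\vee}} \) directly and then conclude formally from the diagram---this is arguably a shade cleaner, but it is the same idea.
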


\begin{proof}
For the proof of \eqref{lem:j*reflexive:1},
by localizing \( X \), we may assume that there is an exact sequence
\( \SE_{1} \to \SE_{0} \to \SG^{\vee} \to 0 \)
for some free \( \SO_{X} \)-modules \( \SE_{0} \) and \( \SE_{1} \) of finite rank.
Taking the dual, we have an exact sequence
\( 0 \to \SG \isom \SG^{\vee\vee} \to \SE_{0}^{\vee} \to \SE_{1}^{\vee} \)
(cf.\ the proof of \cite[Proposition~1.1]{HaRef}).
The condition: \( \depth_{Z} \SO_{X} \geq k \)
implies that
\( \depth_{Z} \SE_{i}^{\vee} \geq k\)
for \( i = 0 \), \( 1 \).
Thus,  \( \depth_{Z} \SG \geq k\)
by Lemma~\ref{lem:depth<=2}\eqref{lem:depth<=2:0}.
This proves \eqref{lem:j*reflexive:1}.
The assertion \eqref{lem:j*reflexive:1a} is a consequence of \eqref{lem:j*reflexive:1}
(cf.\ Definition~\ref{dfn:SerreCond}).
We shall show \eqref{lem:j*reflexive:2}.
Let \( j \colon X \setminus Z \injmap X \) be the open immersion.
Then, \( \SG \isom j_{*}(\SG_{X \setminus Z}) \) by Property~\ref{ppty:depth<=2},
since \( \depth_{Z} \SG \geq 2 \) by assumption.
Hence, we have a splitting of the canonical homomorphism \( \SG \to \SG^{\vee\vee} \)
into the double-dual by the commutative diagram
\[ \begin{CD}
\SG @>>> \SG^{\vee\vee} \\
@V{\isom}VV @VVV \\
j_{*}(\SG|_{X \setminus Z}) @>{\isom}>> \phantom{.}j_{*}(\SG^{\vee\vee}|_{X \setminus Z}).
\end{CD}\]
Thus, we have an injection \( \SC \injmap \SG^{\vee\vee} \)
from \( \SC := \SG^{\vee\vee}/\SG \), where \( \Supp \SC \subset Z \).
The injection corresponds to
a homomorphism \( \SC \otimes \SG^{\vee} \to \SO_{X} \),
but this is zero, since \( \depth_{Z} \SO_{X} \geq 1 \).
Therefore, \( \SC = 0 \) and \( \SG \) is reflexive. This proves \eqref{lem:j*reflexive:2},
and we are done.
\end{proof}

\begin{cor}\label{cor:prop:S1S2:reflexive}
Let \( X \) be a locally Noetherian scheme, \( Z \) a closed subset,
and \( \SG \) a coherent \( \SO_{X} \)-module.
Assume that \( \SG|_{X \setminus Z} \) is reflexive 
and \( \Codim(Z, X) \geq 1 \).
Let us consider the following three conditions\emph{:}
\begin{enumerate}
    \renewcommand{\theenumi}{\roman{enumi}}
    \renewcommand{\labelenumi}{(\theenumi)}
\item \label{cor:prop:S1S2:reflexive:cond1}
\( \SG \) satisfies \( \bfS_{2} \) and \( \Codim(Z \cap \Supp \SG, \Supp \SG) \geq 2 \)\emph{;}

\item \label{cor:prop:S1S2:reflexive:cond2}
\( \depth_{Z} \SG \geq 2 \)\emph{;}

\item \label{cor:prop:S1S2:reflexive:cond3}
\( \SG \) is reflexive.
\end{enumerate}
Then,
\eqref{cor:prop:S1S2:reflexive:cond1} \( \Rightarrow \) \eqref{cor:prop:S1S2:reflexive:cond2}
holds always.
If \( \depth_{Z} \SO_{X} \geq 1 \), then
\eqref{cor:prop:S1S2:reflexive:cond2} \( \Rightarrow \)
\eqref{cor:prop:S1S2:reflexive:cond3} holds, and if  \( \depth_{Z} \SO_{X} \geq 2 \),
then \eqref{cor:prop:S1S2:reflexive:cond2} \( \Leftrightarrow \)
\eqref{cor:prop:S1S2:reflexive:cond3} holds.
If \( X \) satisfies \( \bfS_{2} \) and \( \Codim(Z, X) \geq 2 \), then these three conditions
are equivalent to each other.
\end{cor}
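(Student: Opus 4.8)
The plan is to obtain every implication directly from Lemma~\ref{lem:j*reflexive} and Lemma~\ref{lem:depth+codim+Sk}; no new argument is needed, and the only care required is in checking that the hypotheses of each cited statement are met. I begin with the implication \eqref{cor:prop:S1S2:reflexive:cond1} \( \Rightarrow \) \eqref{cor:prop:S1S2:reflexive:cond2}, which should hold with no extra hypotheses: applying Lemma~\ref{lem:depth+codim+Sk}\eqref{lem:depth+codim+Sk:2} to \( \SG \) with \( k = 2 \), the two conditions in \eqref{cor:prop:S1S2:reflexive:cond1}---namely that \( \SG \) satisfies \( \bfS_{2} \) and that \( \Codim(Z \cap \Supp \SG, \Supp \SG) \geq 2 \)---yield \( \depth_{Z} \SG \geq 2 \).

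Next I would treat the two implications controlled by the depth of \( \SO_{X} \) along \( Z \). If \( \depth_{Z} \SO_{X} \geq 1 \), then \eqref{cor:prop:S1S2:reflexive:cond2} \( \Rightarrow \) \eqref{cor:prop:S1S2:reflexive:cond3} is exactly Lemma~\ref{lem:j*reflexive}\eqref{lem:j*reflexive:2}, whose hypotheses are precisely \( \depth_{Z} \SO_{X} \geq 1 \), the reflexivity of \( \SG|_{X \setminus Z} \) (a standing assumption of the corollary), and \( \depth_{Z} \SG \geq 2 \). If moreover \( \depth_{Z} \SO_{X} \geq 2 \), then the reverse implication \eqref{cor:prop:S1S2:reflexive:cond3} \( \Rightarrow \) \eqref{cor:prop:S1S2:reflexive:cond2} follows from Lemma~\ref{lem:j*reflexive}\eqref{lem:j*reflexive:1} with \( k = 2 \), giving the equivalence \eqref{cor:prop:S1S2:reflexive:cond2} \( \Leftrightarrow \) \eqref{cor:prop:S1S2:reflexive:cond3} in that case.

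Finally, assume \( X \) satisfies \( \bfS_{2} \) and \( \Codim(Z, X) \geq 2 \). Applying Lemma~\ref{lem:depth+codim+Sk}\eqref{lem:depth+codim+Sk:2} to \( \SG = \SO_{X} \) with \( k = 2 \) gives \( \depth_{Z} \SO_{X} \geq 2 \), so by the previous paragraph \eqref{cor:prop:S1S2:reflexive:cond2} \( \Leftrightarrow \) \eqref{cor:prop:S1S2:reflexive:cond3}; combined with \eqref{cor:prop:S1S2:reflexive:cond1} \( \Rightarrow \) \eqref{cor:prop:S1S2:reflexive:cond2} it remains only to prove \eqref{cor:prop:S1S2:reflexive:cond3} \( \Rightarrow \) \eqref{cor:prop:S1S2:reflexive:cond1}. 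Assuming \( \SG \) reflexive, Lemma~\ref{lem:j*reflexive}\eqref{lem:j*reflexive:1a} with \( k = 2 \) shows \( \SG \) satisfies \( \bfS_{2} \); and since \eqref{cor:prop:S1S2:reflexive:cond3} \( \Rightarrow \) \eqref{cor:prop:S1S2:reflexive:cond2} has just been established, \( \depth_{Z} \SG \geq 2 \), whence Lemma~\ref{lem:depth+codim+Sk}\eqref{lem:depth+codim+Sk:1} gives \( \Codim(Z \cap \Supp \SG, \Supp \SG) \geq \depth_{Z} \SG \geq 2 \). Thus \eqref{cor:prop:S1S2:reflexive:cond1} holds, and all three conditions are equivalent. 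The one point worth attention is that the route \eqref{cor:prop:S1S2:reflexive:cond3} \( \Rightarrow \) \eqref{cor:prop:S1S2:reflexive:cond1} passes through the intermediate bound \( \depth_{Z} \SG \geq 2 \) rather than estimating the codimension of \( Z \cap \Supp \SG \) in \( \Supp \SG \) directly; with the \( \bfS_{2} \)-hypothesis on \( X \) this bound comes for free, so I do not anticipate any serious obstacle.
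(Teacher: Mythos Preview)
Your proof is correct and follows essentially the same route as the paper: the same three lemmas (Lemma~\ref{lem:depth+codim+Sk} parts \eqref{lem:depth+codim+Sk:1} and \eqref{lem:depth+codim+Sk:2}, and Lemma~\ref{lem:j*reflexive} parts \eqref{lem:j*reflexive:1}, \eqref{lem:j*reflexive:1a}, \eqref{lem:j*reflexive:2}) are invoked at the same places. The only cosmetic difference is in the final equivalence: the paper closes the cycle by proving \eqref{cor:prop:S1S2:reflexive:cond2} \( \Rightarrow \) \eqref{cor:prop:S1S2:reflexive:cond1} (applying Lemma~\ref{lem:j*reflexive}\eqref{lem:j*reflexive:1a} to \( \SG|_{X \setminus Z} \) and then extending the \( \bfS_{2} \)-property across \( Z \) via Lemma~\ref{lem:basicSk}), whereas you close it by proving \eqref{cor:prop:S1S2:reflexive:cond3} \( \Rightarrow \) \eqref{cor:prop:S1S2:reflexive:cond1} (applying Lemma~\ref{lem:j*reflexive}\eqref{lem:j*reflexive:1a} directly to the globally reflexive \( \SG \)), which is marginally shorter since it sidesteps the appeal to Lemma~\ref{lem:basicSk}.
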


\begin{proof}
The implication \eqref{cor:prop:S1S2:reflexive:cond1} \( \Rightarrow \)
\eqref{cor:prop:S1S2:reflexive:cond2} is shown in Lemma~\ref{lem:depth+codim+Sk}\eqref{lem:depth+codim+Sk:2}.
The next implication
\eqref{cor:prop:S1S2:reflexive:cond2} \( \Rightarrow \)
\eqref{cor:prop:S1S2:reflexive:cond3}
in case \( \depth_{Z} \SO_{X} \geq 1 \) follows from
Lemma~\ref{lem:j*reflexive}\eqref{lem:j*reflexive:2},
and the converse implication \eqref{cor:prop:S1S2:reflexive:cond3} \( \Rightarrow \)
\eqref{cor:prop:S1S2:reflexive:cond2} in case \( \depth_{Z} \SO_{X} \geq 2 \)
follows from Lemma~\ref{lem:j*reflexive}\eqref{lem:j*reflexive:1}.
Assume that \( X \) satisfies \( \bfS_{2} \) and \( \Codim(Z, X) \geq 2 \).
Then, \( \depth_{Z} \SO_{X} \geq 2 \) by Lemma~\ref{lem:depth+codim+Sk}\eqref{lem:depth+codim+Sk:2}, 
and we have
\eqref{cor:prop:S1S2:reflexive:cond2} \( \Leftrightarrow \)
\eqref{cor:prop:S1S2:reflexive:cond3} in this case. It remains to prove:
\eqref{cor:prop:S1S2:reflexive:cond2} \( \Rightarrow \)
\eqref{cor:prop:S1S2:reflexive:cond1}.
Assume that \( \depth_{Z} \SG \geq 2\). Then,
\( \Codim(Z \cap \Supp \SG, \Supp \SG) \geq 2 \) by Lemma~\ref{lem:depth+codim+Sk}\eqref{lem:depth+codim+Sk:1}.
On the other hand, the reflexive sheaf \( \SG|_{X \setminus Z} \) satisfies \( \bfS_{2} \)
by Lemma~\ref{lem:j*reflexive}\eqref{lem:j*reflexive:1a},
since \( X \setminus Z \) satisfies \( \bfS_{2} \). Thus,
\( \SG \) satisfies \( \bfS_{2} \) by the equivalence
\eqref{lem:basicSk:condA} \( \Leftrightarrow \) \eqref{lem:basicSk:condD}
of Lemma~\ref{lem:basicSk}, and we are done.
\end{proof}

\begin{remn}
If \( X \) is a locally Noetherian scheme satisfying \( \bfS_{1} \),
then the support of a reflexive \( \SO_{X} \)-module is a union of irreducible components of
\( X \).
In fact, if \( \SG \) is reflexive, then \( \depth_{Z} \SG \geq 1 \) for any closed subset \( Z \)
with \( \Codim(Z, X) \geq 1 \), by Lemma~\ref{lem:j*reflexive}\eqref{lem:j*reflexive:1},
and we have \( \Codim(Z \cap \Supp \SG, \Supp \SG) \geq 1 \)
by Lemma~\ref{lem:depth+codim+Sk}\eqref{lem:depth+codim+Sk:1}:
This means that
\( \Supp \SG \) is a union of irreducible components of \( X \).
In particular, if \( X \) is irreducible and satisfies \( \bfS_{1} \), 
and if \( \SG \ne 0 \), then \( \Supp \SG = X \).
However, \( \Supp \SG \ne X \) in general when \( X \) is reducible.
For example, let \( R \) be a Noetherian
ring with two \( R \)-regular elements \( u \) and \( v \),
and set \( X := \Spec R/uvR \) and \( \SG := (R/uR)\sptilde \).
Then, we have an isomorphism \( \SHom_{\SO_{X}}(\SG, \SO_{X}) \isom \SG  \) by
the natural exact sequence
\[ 0 \to R/uR \to R/uvR \xrightarrow{u \times} R/uvR \to R/uR \to 0. \]
Thus, \( \SG \) is a reflexive \( \SO_{X} \)-module, but \( \Supp \SG \ne X \)
when \( u \not\in \sqrt{vR} \).
\end{remn}

We have discussed properties \( \bfS_{1} \) and \( \bfS_{2} \) for general coherent sheaves.
Finally in Section~\ref{subsect:SerreBasics},
we note the following well-known
facts on locally Noetherian schemes satisfying \( \bfS_{2} \).

\begin{fact}\label{fact:S2}
Let \( X \) be a locally Noetherian scheme satisfying \( \bfS_{2} \).

\begin{enumerate}
\item  \label{fact:S2:1}
If \( X \) is catenary (cf.\ Property~\ref{pprt:catenary}),
then \( X \) is locally equi-dimensional (cf.\ Definition~\ref{dfn:equi-dim}\eqref{dfn:equi-dim:locally})
(cf.\ \cite[IV, Cor.~(5.1.5), (5.10.9)]{EGA}).

\item  \label{fact:S2:2}
For any open subset \( X^{\circ} \) with
\( \Codim(X \setminus X^{\circ}, X) \geq 2 \) and
for any connected component \( X_{\alpha} \) of \( X \),
the intersection \( X_{\alpha} \cap X^{\circ} \) is connected.
This is a consequence of a result of Hartshorne
(cf.\ \cite[IV, Th.~(5.10.7)]{EGA}, \cite[III, Th.~3.6]{SGA2}).
\end{enumerate}
\end{fact}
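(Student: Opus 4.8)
\emph{Plan.} I would reduce both parts to one local statement --- essentially Hartshorne's connectedness phenomenon. \textbf{Key Lemma:} if $A$ is a Noetherian local ring satisfying $\bfS_{2}$ and $Z\subset\Spec A$ is closed with $\Codim(Z,\Spec A)\ge 2$, then $U:=\Spec A\setminus Z$ is nonempty and connected. To see this, note that for every $z\in Z$ one has $\dim\SO_{\Spec A,z}\ge\Codim(Z,\Spec A)\ge 2$ by Property~\ref{ppty:dim-codim}\eqref{ppty:dim-codim:2}, hence $\depth\SO_{\Spec A,z}\ge\inf\{2,\dim\SO_{\Spec A,z}\}=2$ by $\bfS_{2}$, so $\depth_{Z}\SO_{\Spec A}\ge 2$; moreover $Z$ contains no generic point of $\Spec A$, so $U\ne\varnothing$. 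By Property~\ref{ppty:depth<=2} the restriction $\SO_{\Spec A}\to j_{*}\SO_{U}$ is an isomorphism for the open immersion $j\colon U\injmap\Spec A$, whence $\Gamma(U,\SO_{U})\isom A$. A nonempty scheme whose ring of global sections is local has no idempotent other than $0$ and $1$, hence is connected; thus $U$ is connected.

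\medskip\noindent\emph{Part \eqref{fact:S2:2}.} Since $X$ is locally Noetherian it is locally connected, so the component $X_{\alpha}$ is open; it inherits $\bfS_{2}$, and $\Codim\!\bigl(X_{\alpha}\setminus(X_{\alpha}\cap X^{\circ}),X_{\alpha}\bigr)\ge\Codim(X\setminus X^{\circ},X)\ge 2$. Replacing $X$ by $X_{\alpha}$ we may assume $X$ connected and must show $U:=X^{\circ}=X\setminus Z$ is connected, where $Z:=X\setminus X^{\circ}$. Suppose $U=U_{1}\sqcup U_{2}$ with $U_{i}$ nonempty open. Since $\Codim(Z,X)\ge 2>0$, the set $U$ contains every generic point of $X$, so $X=\overline{U_{1}}\cup\overline{U_{2}}$ with both sets nonempty; connectedness of $X$ yields a point $x\in\overline{U_{1}}\cap\overline{U_{2}}$, and $x\in Z$ (if $x$ lay in some $U_{i}$, that $U_{i}$ would be a neighbourhood of $x$ disjoint from the other piece). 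Let $A:=\SO_{X,x}$, let $p_{x}\colon\Spec A\to X$ be the canonical morphism, and set $Z_{x}:=p_{x}^{-1}(Z)$. Then $A$ satisfies $\bfS_{2}$ and $\Codim(Z_{x},\Spec A)=\Codim_{x}(Z,X)\ge\Codim(Z,X)\ge 2$ by Property~\ref{ppty:dim-codim}\eqref{ppty:dim-codim:2}, so the Key Lemma gives that $p_{x}^{-1}(U)=\Spec A\setminus Z_{x}$ is connected. But $p_{x}^{-1}(U)=p_{x}^{-1}(U_{1})\sqcup p_{x}^{-1}(U_{2})$, and each $p_{x}^{-1}(U_{i})$ is nonempty, because $x\in\overline{U_{i}}$ implies (working in a Noetherian neighbourhood of $x$) that $x$ specializes some $\eta_{i}\in U_{i}$, and then $\eta_{i}$ lies in the image of $p_{x}$. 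This contradiction proves \eqref{fact:S2:2}.

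\medskip\noindent\emph{Part \eqref{fact:S2:1}.} Suppose the conclusion fails and choose $x\in X$ such that $A:=\SO_{X,x}$ is not equi-dimensional with $\dim A$ minimal among such; then $A_{\mathfrak q}$ is equi-dimensional for every prime $\mathfrak q\subsetneq\GM_{A}$, these being the local rings of $X$ at the generizations of $x$, all of strictly smaller dimension. As $A$ satisfies $\bfS_{2}$, hence $\bfS_{1}$, it has no embedded primes; and if $\dim A\le 1$ one checks (using $\depth A\ge\inf\{2,\dim A\}$) that $A$ is equi-dimensional, a contradiction, so $\dim A\ge 2$ and $\depth A\ge 2$. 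Put $d=\dim A$ and split the minimal primes of $A$ into $\Sigma_{1}=\{\Gp:\dim A/\Gp=d\}$ and $\Sigma_{2}=\{\Gp:\dim A/\Gp<d\}$, both nonempty, and set $Z_{i}=\bigcup_{\Gp\in\Sigma_{i}}V(\Gp)$, so that $Z_{1}\cup Z_{2}=\Spec A$. The crucial claim is $Z_{1}\cap Z_{2}=\{\GM_{A}\}$: if some $\mathfrak q\ne\GM_{A}$ lay in $Z_{1}\cap Z_{2}$, pick $\Gp_{i}\in\Sigma_{i}$ with $\Gp_{i}\subseteq\mathfrak q$; equi-dimensionality of $A_{\mathfrak q}$ gives $\dim(A/\Gp_{1})_{\mathfrak q/\Gp_{1}}=\dim(A/\Gp_{2})_{\mathfrak q/\Gp_{2}}$, while the additivity identity of catenariness (Property~\ref{pprt:catenary}) applied to $\{\GM_{A}\}\subset V(\mathfrak q)\subset V(\Gp_{i})$ gives $\dim A/\Gp_{i}=\dim A/\mathfrak q+\dim(A/\Gp_{i})_{\mathfrak q/\Gp_{i}}$; together these force $\dim A/\Gp_{1}=\dim A/\Gp_{2}$, contradicting $\Gp_{1}\in\Sigma_{1}$ and $\Gp_{2}\in\Sigma_{2}$. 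Hence $U:=\Spec A\setminus\{\GM_{A}\}$ is the disjoint union of the closed subsets $Z_{1}\cap U$ and $Z_{2}\cap U$, each nonempty (since $\dim A/\Gp\ge 1$ for every $\Gp\in\Sigma_{i}$), so $U$ is disconnected --- contradicting the Key Lemma with $Z=\{\GM_{A}\}$, for which $\Codim(\{\GM_{A}\},\Spec A)=\dim A\ge 2$. Therefore $X$ is locally equi-dimensional.

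\medskip\noindent\emph{Main obstacle.} The only non-formal ingredient is the Key Lemma, i.e.\ the connectedness of a punctured $\bfS_{2}$-spectrum (a form of Hartshorne's connectedness theorem, which is what the cited references in the statement provide); once Property~\ref{ppty:depth<=2} is available, it collapses to the triviality that a local ring has no nontrivial idempotents. The rest is bookkeeping: tracking how codimension and equi-dimensionality behave under passage to $\SO_{X,x}$ and its localizations, and the combinatorics of the partition $\Sigma_{1}\sqcup\Sigma_{2}$. The one point that needs care is that, for a possibly non-equi-dimensional local ring, catenariness should be invoked through the additivity identity of Property~\ref{pprt:catenary} applied to a triple $\{\GM_{A}\}\subset V(\mathfrak q)\subset V(\Gp_{i})$ of irreducible closed sets, which is exactly the defining form of the property.
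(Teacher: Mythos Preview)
Your proof is correct. The paper itself offers no argument for this Fact, only citations to EGA~IV and SGA2; your write-up unpacks those citations into a self-contained proof. Your Key Lemma is precisely the local form of Hartshorne's connectedness theorem (the result cited for part~\eqref{fact:S2:2}), and you obtain it cleanly from Property~\ref{ppty:depth<=2} and the observation that a local ring has no nontrivial idempotents. Deriving \emph{both} parts from this single lemma is a nice unification: the paper's references treat \eqref{fact:S2:1} via a separate argument in EGA~IV (5.10.9), whereas you recover it by the minimal-counterexample reduction and the partition $\Sigma_{1}\sqcup\Sigma_{2}$, which is essentially the same mechanism as in the standard proof but phrased so that the punctured-spectrum connectedness does all the work. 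One small point worth making explicit (you gesture at it): in the $\dim A=1$ case, a minimal prime $\Gp$ with $\dim A/\Gp=0$ would force $\Gp=\GM_{A}$, hence $\GM_{A}\in\Ass(A)$ and $\depth A=0$, contradicting $\bfS_{2}$; and similarly in the general step one needs $\dim A/\Gp\ge 1$ for $\Gp\in\Sigma_{2}$, which follows by the same reasoning since $\depth A\ge 2$. Everything else is in order.
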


%%%%%%%%%%%%%%%%%%%%%%%%%%%%%%%%%%%%%%%

\subsection{Relative \texorpdfstring{$\bfS_{k}$}{Sk}-conditions}
\label{subsect:Rel}

Here, we shall consider the relative \( \bfS_{k} \)-condition for morphisms 
of locally Noetherian schemes.

\begin{nota}\label{nota:F_(t)}
Let \( f \colon Y \to T \) be a morphism of schemes.
For a point \( t \in T \), the fiber \( f^{-1}(t) \)
of \( f \) over \( t \) is defined as \( Y \times_{T} \Spec \Bbbk(t) \), and it is
denoted by \( Y_{t} \).
For an \( \SO_{Y} \)-module \( \SF \),
the restriction \( \SF \otimes_{\SO_{Y}} \SO_{Y_{t}} \isom \SF \otimes_{\SO_{T}} \Bbbk(t) \)
to the fiber \( Y_{t} \) is denoted by \( \SF_{(t)} \).
\end{nota}

\begin{remn}
The restriction \( \SF_{(t)} \) is identified with the inverse image \( p_{t}^{*}(\SF) \)
for the projection \( p_{t} \colon Y_{t} \to Y \), and \( \Supp \SF_{(t)}  \)
is identified with \( Y_{t} \cap \Supp \SF = p_{t}^{-1}(\Supp \SF)\).
If \( f \) is the identity morphism \( Y \to Y \),
then \( \SF_{(y)} \) is a sheaf on \( \Spec \Bbbk(y) \) corresponding
to the vector space \(\SF_{y} \otimes \Bbbk(y)\) for \( y \in Y \).
\end{remn}

\begin{dfn}
For a morphism \( f \colon Y \to T \) of schemes and for an \( \SO_{Y} \)-module \( \SF \),
let \( \Fl(\SF/T) \) be the set of points \( y \in Y \) such that
\( \SF_{y} \) is a flat \( \SO_{T, f(y)} \)-module.
If \( Y = \Fl(\SF/T) \), then \( \SF \) is said to be \emph{flat over} \( T \), or \( f \)-\emph{flat}.
If \( S \) is a subset of \( \Fl(\SF/T) \), then
\( \SF \) is said to be \emph{flat over} \( T \) \emph{along} \( S \), or \( f \)-\emph{flat along} \( S \).
\end{dfn}

\begin{fact}\label{fact:elem-flat}
Let \( f \colon Y \to T \) be a morphism of locally Noetherian schemes and
\( k \) a positive integer.
For a coherent \( \SO_{Y} \)-module \( \SF \) and a coherent \( \SO_{T} \)-module \( \SG \),
the following results are known, where in \eqref{fact:elem-flat:2}, \eqref{fact:elem-flat:3}, and
\eqref{fact:elem-flat:4}, we fix
an arbitrary point \( y \in Y \), and set \( t = f(y) \):
\begin{enumerate}
\item  \label{fact:elem-flat:1}
If \( f \) is locally of finite type, then \( \Fl(\SF/T)  \) is open.

\item  \label{fact:elem-flat:2}
If \( \SF_{y} \) is flat over \( \SO_{T, t} \) and if
\( (\SF_{(t)})_{y} \) is a free \( \SO_{Y_{t}, y} \)-module,
then \( \SF_{y} \) is a free \( \SO_{Y, y} \)-module.
In particular, if \( \SF \) is flat over \( T \) and if \( \SF_{(t)} \) is locally free
for any \( t \in T \),
then \( \SF \) is locally free.

\item \label{fact:elem-flat:3}
If \( \SF_{y} \) is non-zero and flat over \( \SO_{T, t} \),
then the following equalities hold:
\begin{align}
\dim (\SF \otimes_{\SO_{Y}} f^{*}\SG)_{y}
&= \dim (\SF_{(t)})_{y} + \dim \SG_{t},
\label{eq:fact:elem-flat:1}\\
\depth (\SF \otimes_{\SO_{Y}} f^{*}\SG)_{y}
&= \depth (\SF_{(t)})_{y} + \depth \SG_{t}.
\label{eq:fact:elem-flat:2}
\end{align}

\item \label{fact:elem-flat:4}
If \( \SF_{y} \) is non-zero and flat over \( \SO_{T, t} \)
and if \( \SF \otimes_{\SO_{Y}} f^{*}\SG \) satisfies \( \bfS_{k} \) at \( y \),
then \( \SG \) satisfies \( \bfS_{k} \) at \( t \).

\item \label{fact:elem-flat:5}
Assume that \( \SF \) is flat over \( T \) along the fiber
\( Y_{t} \) over a point \( t \in f(\Supp \SF) \).
If \( \SF_{(t)} \) satisfies \( \bfS_{k} \)
and if \( \SG \) satisfies \( \bfS_{k} \) at \( t \),
then \( \SF \otimes_{\SO_{Y}} f^{*}\SG \)
also satisfies \( \bfS_{k} \) at any point of \( Y_{t} \).

\item \label{fact:elem-flat:6}
Assume that \( f \) is flat and that every fiber \( Y_{t} \) satisfies \( \bfS_{k} \).
Then, \( f^{*}\SG \) satisfies \( \bfS_{k} \) at \( y \)
if and only if \( \SG \) satisfies \( \bfS_{k} \) at \( f(y) \).
\end{enumerate}
The assertion \eqref{fact:elem-flat:1} is just \cite[IV, Th.~(11.1.1)]{EGA}.
The assertion \eqref{fact:elem-flat:2} is a consequence of
Proposition~\ref{prop:LCflat} and Lemma~\ref{lem:LCfree},
since \( \SO_{Y_{t}, y} = \SO_{Y, y}/I \)
for the ideal \( I = \GM_{T, t}\SO_{Y, y} \) and we have
\[ \Tor^{\SO_{Y, y}}_{1}(\SF_{y}, \SO_{Y, y}/I) = 0 \quad \text{and} \quad
(\SF_{(t)})_{y} \isom \SF_{y}/I\SF_{y}\]
under the assumption of \eqref{fact:elem-flat:2}.
Two equalities \eqref{eq:fact:elem-flat:1} and \eqref{eq:fact:elem-flat:2}
in \eqref{fact:elem-flat:3} follow from \cite[IV, Cor.~(6.1.2), Prop.~(6.3.1)]{EGA},
since
\[ (\SF \otimes_{\SO_{Y}} f^{*}\SG)_{y} \isom \SF_{y} \otimes_{\SO_{T, t}} \SG_{t}
\quad \text{and} \quad (\SF_{(t)})_{y}
\isom \SF_{y} \otimes_{\SO_{T, t}} \Bbbk(t). \]
The assertions \eqref{fact:elem-flat:4} and \eqref{fact:elem-flat:5} are shown in
\cite[IV, Prop.~(6.4.1)]{EGA} by the equalities
\eqref{eq:fact:elem-flat:1} and \eqref{eq:fact:elem-flat:2}, and the assertion
\eqref{fact:elem-flat:6} is a consequence
of \eqref{fact:elem-flat:4} and \eqref{fact:elem-flat:5}
(cf.\ \cite[IV, Cor.~(6.4.2)]{EGA}).
\end{fact}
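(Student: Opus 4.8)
The statement collects standard facts, and the plan is to reduce each part to the corresponding result of \cite{EGA} after translating it into a question about the stalk \( \SF_{y} \) viewed as a module over \( \SO_{T, t} \). The two identifications
\[ (\SF \otimes_{\SO_{Y}} f^{*}\SG)_{y} \isom \SF_{y} \otimes_{\SO_{T, t}} \SG_{t}
\quad\text{and}\quad
(\SF_{(t)})_{y} \isom \SF_{y} \otimes_{\SO_{T, t}} \Bbbk(t) \]
(for \( t = f(y) \)) are what make this possible. For \eqref{fact:elem-flat:1} there is nothing to add beyond quoting the openness of the flat locus of a morphism locally of finite type, which is \cite[IV, Th.~(11.1.1)]{EGA}; this is the one deep ingredient and I would not attempt an independent proof.

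For \eqref{fact:elem-flat:2}, I would work with \( A = \SO_{Y, y} \) over \( R = \SO_{T, t} \) and the ideal \( I = \GM_{T, t}A \), so that \( \SO_{Y_{t}, y} = A/I \) and \( (\SF_{(t)})_{y} \isom \SF_{y}/I\SF_{y} \). Flatness of \( \SF_{y} \) over \( R \) gives \( \Tor^{A}_{1}(\SF_{y}, A/I) = 0 \), so by the local criterion of flatness (Proposition~\ref{prop:LCflat}) the module \( \SF_{y} \) is \( A \)-flat, hence \( A \)-free since it is finitely generated over the Noetherian local ring \( A \) and has a free reduction modulo \( I \) (Lemma~\ref{lem:LCfree}); applying this at the closed points of \( Y \) yields the ``in particular'' clause. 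For \eqref{fact:elem-flat:3}, under the identifications above the two equalities are precisely the additivity of dimension and of depth along a flat local homomorphism, i.e.\ \cite[IV, Cor.~(6.1.2), Prop.~(6.3.1)]{EGA}.

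The remaining parts I would deduce formally from \eqref{fact:elem-flat:3}. For \eqref{fact:elem-flat:4} and \eqref{fact:elem-flat:5} one checks the \( \bfS_{k} \)-condition ``at'' a point by running over all generizations \( y' \), putting \( t' = f(y') \), and comparing \( \depth \) and \( \dim \) of \( (\SF \otimes_{\SO_{Y}} f^{*}\SG)_{y'} \), of \( (\SF_{(t')})_{y'} \), and of \( \SG_{t'} \) via the additivity formulas: the sufficiency direction in \eqref{fact:elem-flat:5} rests on the elementary inequality \( \inf\{k, a\} + \inf\{k, b\} \ge \inf\{k, a + b\} \), while the necessity direction in \eqref{fact:elem-flat:4} uses \( \depth(\SF_{(t')})_{y'} \le \dim(\SF_{(t')})_{y'} \); all of this is \cite[IV, Prop.~(6.4.1)]{EGA}. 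Finally \eqref{fact:elem-flat:6} is the case \( \SF = \SO_{Y} \) of \eqref{fact:elem-flat:4} and \eqref{fact:elem-flat:5} (note \( \SF_{(t)} = \SO_{Y_{t}} \) satisfies \( \bfS_{k} \) by hypothesis), i.e.\ \cite[IV, Cor.~(6.4.2)]{EGA}. The only real obstacle is \eqref{fact:elem-flat:1}: openness of the flat locus has no short proof and must simply be cited, whereas everything else is bookkeeping on top of the depth- and dimension-additivity of \eqref{fact:elem-flat:3}.
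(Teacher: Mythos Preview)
Your proposal is correct and follows essentially the same route as the paper: the same two stalk identifications, the same EGA citations for \eqref{fact:elem-flat:1}, \eqref{fact:elem-flat:3}--\eqref{fact:elem-flat:6}, and the same reduction of \eqref{fact:elem-flat:2} to the vanishing of \( \Tor^{A}_{1}(\SF_{y}, A/I) \) combined with the local criterion of freeness. The only cosmetic difference is that you pass through \( A \)-flatness before concluding \( A \)-freeness, whereas the paper invokes Lemma~\ref{lem:LCfree} directly; and your remark about ``closed points'' in the ``in particular'' clause is unnecessary (the pointwise statement already applies at every \( y \)).
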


\begin{cor}\label{cor:fact:elem-flat}
Let \( f \colon Y \to T \) be a flat morphism of locally Noetherian schemes.
Let \( W \) be a closed subset of \( T \) contained in \( f(Y) \). Then,
\[
\Codim(f^{-1}(W), Y) = \Codim(W, T) \quad \text{and} \quad
\depth_{f^{-1}(W)} f^{*}\SG = \depth_{W} \SG
\]
for any coherent \( \SO_{T} \)-module \( \SG \).
\end{cor}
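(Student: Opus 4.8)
The plan is to reduce both identities to the pointwise behaviour of dimension and depth along the flat local homomorphisms $\SO_{T,f(y)}\to\SO_{Y,y}$ induced by $f$. First I would translate the four quantities into infima of stalk invariants: since $f^{-1}(W)$ is closed in $Y$, Property~\ref{ppty:dim-codim}\eqref{ppty:dim-codim:2} gives
\[ \Codim(f^{-1}(W),Y)=\inf\{\dim\SO_{Y,y}\mid y\in f^{-1}(W)\},\qquad \Codim(W,T)=\inf\{\dim\SO_{T,t}\mid t\in W\}, \]
and Definition~\ref{dfn:Z-depth} gives $\depth_{f^{-1}(W)}f^{*}\SG=\inf\{\depth(f^{*}\SG)_{y}\mid y\in f^{-1}(W)\}$ and $\depth_{W}\SG=\inf\{\depth\SG_{t}\mid t\in W\}$. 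For $y\in Y$ with $t=f(y)$, flatness of $f$ makes $\SO_{T,t}\to\SO_{Y,y}$ a flat local homomorphism, so Fact~\ref{fact:elem-flat}\eqref{fact:elem-flat:3}, applied with $\SF=\SO_{Y}$ (and with $\SG$ equal to $\SO_{T}$, respectively to the given sheaf), yields
\[ \dim\SO_{Y,y}=\dim\SO_{Y_{t},y}+\dim\SO_{T,t},\qquad \depth(f^{*}\SG)_{y}=\depth\SO_{Y_{t},y}+\depth\SG_{t}. \]

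Next, taking the infimum of these two identities over all $y\in f^{-1}(W)$ and using that the fibre terms $\dim\SO_{Y_{t},y}$ and $\depth\SO_{Y_{t},y}$ are non-negative, while $t=f(y)$ ranges inside $W$, immediately gives the inequalities ``$\geq$'' in both assertions. For the opposite inequalities I would fix $t\in W$; since $W\subset f(Y)$ the fibre $Y_{t}$ is a non-empty scheme with Noetherian local rings (each $\SO_{Y_{t},y_{0}}$ is a localization of the Noetherian ring $\SO_{Y,y_{0}}/\GM_{T,t}\SO_{Y,y_{0}}$). Choosing $y$ to be a generic point of an irreducible component of $Y_{t}$, one has $\dim\SO_{Y_{t},y}=0$, hence also $\depth\SO_{Y_{t},y}=0$ by the inequality $\depth\leq\dim$ of Property~\ref{ppty:depthdfn}, so the two displayed identities collapse to $\dim\SO_{Y,y}=\dim\SO_{T,t}$ and $\depth(f^{*}\SG)_{y}=\depth\SG_{t}$; as $y\in f^{-1}(t)\subset f^{-1}(W)$, taking the infimum over $t\in W$ yields the inequalities ``$\leq$''.

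I do not expect a genuine obstacle: the only point needing a word is the existence, for each $t\in W$, of a point of the non-empty fibre $Y_{t}$ with zero-dimensional local ring, which is precisely where the hypothesis $W\subset f(Y)$ is used. I would also record the convention $\depth 0=+\infty$ so that the degenerate cases $\SG=0$ or $t\notin\Supp\SG$ are harmless (note $\Supp f^{*}\SG=f^{-1}(\Supp\SG)$ by flatness), and point out that flatness of $f$ enters twice: to make each $\SO_{T,t}\to\SO_{Y,y}$ flat, and through Fact~\ref{fact:elem-flat}\eqref{fact:elem-flat:3} to obtain the additivity of dimension and depth.
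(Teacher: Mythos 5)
Your proposal is correct and follows essentially the same route as the paper's proof: both reduce the four quantities to infima of stalk invariants via Property~\ref{ppty:dim-codim}\eqref{ppty:dim-codim:2} and Definition~\ref{dfn:Z-depth}, apply the additivity formulas \eqref{eq:fact:elem-flat:1} and \eqref{eq:fact:elem-flat:2} with \( \SF = \SO_{Y} \), and obtain the reverse inequalities by evaluating at a generic point of a fiber, where \( \dim \SO_{Y_{t}, y} = \depth \SO_{Y_{t}, y} = 0 \). Your explicit handling of the degenerate cases and of where \( W \subset f(Y) \) is used is a harmless elaboration of the same argument.
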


\begin{proof}
We may assume that \( \SG \ne 0 \).
Then,
\begin{align*}
\Codim(f^{-1}(W), Y) &= \inf\{ \dim \SO_{Y, y} \mid y \in f^{-1}(W) \}, \\
\depth_{f^{-1}(W)} f^{*}\SG &= \inf\{ \depth (f^{*}\SG)_{y} \mid y \in f^{-1}(W) \},
\end{align*}
by Property~\ref{ppty:dim-codim} and Definition~\ref{dfn:Z-depth}.
Thus, we can prove the assertion
by applying \eqref{eq:fact:elem-flat:1} to \( (\SF, \SG) = (\SO_{Y}, \SO_{T}) \)
and \eqref{eq:fact:elem-flat:2} to \( (\SF, \SG) = (\SO_{Y}, \SG) \),
since
\[ \dim \SO_{T, t} = \Codim(W, T) \quad \text{and} \quad \dim \SO_{Y_{t}, y} = 0\]
for a certain generic point \( t \) of \( W \) and a generic point \( y \) of \( Y_{t} \),
and since
\[  \depth \SG_{t} = \depth_{W} \SG \quad \text{and} \quad
\depth ((f^{*}\SG)_{(t)})_{y} = \depth \SO_{Y_{t}, y} = 0 \]
for a certain point \( t \in W \cap \Supp \SG \) and
for a generic point \( y \) of \( Y_{t} \).
\end{proof}

\begin{dfn}\label{dfn:RelSkCMlocus}
Let \( f \colon Y \to T\) be a morphism of locally Noetherian schemes
and \( \SF \) a coherent \( \SO_{Y} \)-module.
As a relative version of Definition~\ref{dfn:SkCMlocus},
for a positive integer \( k \),
we define
\begin{align*}
\bfS_{k}(\SF/T) &:= \Fl(\SF/T) \cap \bigcup\nolimits_{t \in T} \bfS_{k}(\SF_{(t)})
\quad \text{and} \\
\CM(\SF/T) &:= \Fl(\SF/T) \cap \bigcup\nolimits_{t \in T} \CM(\SF_{(t)}),
\end{align*}
and call them the \emph{relative \( \bfS_{k} \)-locus}
and
the \emph{relative Cohen--Macaulay locus}
of \( \SF \)  \emph{over} \( T \),
respectively.
We also write
\[ \bfS_{k}(Y/T) = \bfS_{k}(\SO_{Y}/T) \quad \text{and} \quad \CM(Y/T) = \CM(\SO_{Y}/T),  \]
and call them the \emph{relative \( \bfS_{k} \)-locus} and
the \emph{relative Cohen--Macaulay locus for} \( f \), respectively.
The relative \( \bfS_{k} \)-condition
and the relative Cohen--Macaulay condition are defined as follows:

\begin{itemize}
\item  For a point \( y \in Y \) (resp.\ a subset \( S \subset Y \)),
we say that \( \SF \) satisfies
\emph{relative \( \bfS_{k} \) over} \( T \) \emph{at} \( y \)
(resp.\ \emph{along} \( S \))
if \( y \in \bfS_{k}(\SF/T) \) (resp.\ \( S \subset \bfS_{k}(\SF/T)) \).
We also say that \( \SF \) is
\emph{relatively Cohen--Macaulay over} \( T \) \emph{at} \( y \)
(resp.\ \emph{along} \( S \))
if \( y \in \CM(\SF/T) \) (resp.\ \( S \subset \CM(\SF/T) \)).

\item  We say that \( \SF \) satisfies
\emph{relative \( \bfS_{k} \) over} \( T \)
if \( Y = \bfS_{k}(\SF/T) \).
We also say that \( \SF \) is \emph{relatively Cohen--Macaulay over} \( T \)
if \( Y = \CM(\SF/T) \).
\end{itemize}
\end{dfn}

\begin{fact}\label{fact:dfn:RelSkCMlocus}
For \( f \colon Y \to T \) and \( \SF \) in
Definition~\ref{dfn:RelSkCMlocus}, assume that \( f \) is \emph{locally of finite type}
and \( \SF \) is flat over \( T \).
Then, the following properties are known:
\begin{enumerate}
\item \label{fact:dfn:RelSkCMlocus:1}
The subset \( \CM(\SF/T) \) is open (cf.\ \cite[IV, Th.~(12.1.1)(vi)]{EGA}).

\item \label{fact:dfn:RelSkCMlocus:2}
If \( \SF_{(t)} \) is locally equi-dimensional
(cf.\ Definition~\ref{dfn:equi-dim}\eqref{dfn:equi-dim:locally}) for any \( t \in T \),
then \( \bfS_{k}(\SF/T) \) is open for any \( k \geq 1 \)
(cf.\ \cite[IV, Th.~(12.1.1)(iv)]{EGA}).

\item \label{fact:dfn:RelSkCMlocus:3}
If \( Y \to T \) is flat, then \( \bfS_{k}(Y/T) \) is open for any \( k \geq 1 \)
(cf.\ \cite[IV, Th.~(12.1.6)(i)]{EGA}).
\end{enumerate}
\end{fact}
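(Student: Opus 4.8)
These three assertions are, respectively, \cite[IV, Th.~(12.1.1)(vi), Th.~(12.1.1)(iv), and Th.~(12.1.6)(i)]{EGA}; since \( T \) is locally Noetherian and \( f \) is locally of finite type, \( f \) is automatically locally of finite presentation and those results apply without change, so the plan is to recall the mechanism behind them rather than to reprove them from scratch. First I would reduce to a local statement: the problem is local on \( Y \), the flatness locus \( \Fl(\SF/T) \) is open by Fact~\ref{fact:elem-flat}\eqref{fact:elem-flat:1}, so after shrinking \( Y \) one may assume \( \SF \) is flat over \( T \); and since \( f \) is locally of finite type one may further assume \( Y \) is a closed subscheme of an affine scheme \( P \) smooth over \( T \) of pure relative dimension \( n \). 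Pushing \( \SF \) forward to \( P \) gives a \( T \)-flat coherent \( \SO_{P} \)-module \( \SG \), supported on \( Y \), with \( \SG|_{Y} = \SF \); locally on \( P \), fix a finite free resolution \( \SE_{\bullet} \to \SG \) over \( \SO_{P} \). Because \( \SG \) is flat over \( T \), for each \( t \in T \) the complex \( \SE_{\bullet} \otimes_{\SO_{T}} \Bbbk(t) \) resolves \( \SG_{(t)} \) over the regular scheme \( P_{t} \), so the cohomology sheaves of \( \SE_{\bullet}^{\vee} \otimes_{\SO_{T}} \Bbbk(t) \) compute \( \SExt^{i}_{\SO_{P_{t}}}(\SG_{(t)}, \SO_{P_{t}}) \).

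For \eqref{fact:dfn:RelSkCMlocus:1} I would then use that, over the regular local ring \( \SO_{P_{t}, y} \), the stalk \( (\SG_{(t)})_{y} \) is Cohen--Macaulay precisely when the sheaves \( \SExt^{i}_{\SO_{P_{t}}}(\SG_{(t)}, \SO_{P_{t}}) \) fail to vanish at \( y \) for \emph{at most one} value of \( i \) (a standard consequence of the Auslander--Buchsbaum formula over \( \SO_{P_{t}, y} \)). A semicontinuity argument for the cohomology of the complex of locally free sheaves \( \SE_{\bullet}^{\vee} \) on the \( T \)-flat scheme \( P \) — using that the formation of these \( \SExt \)-sheaves commutes with base change, \( \SG \) being \( T \)-flat and \( P/T \) smooth — shows that each locus where a given such \( \SExt \)-sheaf is nonzero at a point of the corresponding fibre is closed in \( Y \); hence the locus where two of them are simultaneously nonzero is closed, and its complement is exactly \( \CM(\SF/T) \), which is therefore open.

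For \eqref{fact:dfn:RelSkCMlocus:2} and \eqref{fact:dfn:RelSkCMlocus:3} the strategy is the same, but \( \bfS_{k} \) at a point \( y \) of a fibre is no longer a single vanishing condition: it is a condition on \( \depth \SF_{(t)} \) at every generization of \( y \) in each codimension. Rephrasing this as the vanishing of the \( \SExt \)-sheaves above in a controlled range of degrees requires the dimension function on \( \Supp \SF_{(t)} \) to be well behaved, which is supplied in \eqref{fact:dfn:RelSkCMlocus:2} by the hypothesis that \( \SF_{(t)} \) is locally equi-dimensional, and in \eqref{fact:dfn:RelSkCMlocus:3}, where \( \SF = \SO_{Y} \) and \( f \) is flat, by the additivity formulas \( \dim \SO_{Y, y} = \dim \SO_{Y_{t}, y} + \dim \SO_{T, t} \) and \( \depth \SO_{Y, y} = \depth \SO_{Y_{t}, y} + \depth \SO_{T, t} \) of Fact~\ref{fact:elem-flat}\eqref{fact:elem-flat:3}. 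Once the \( \bfS_{k} \)-condition is translated in this way, openness follows again from the semicontinuity of the fibre dimensions of the relevant cohomology sheaves of \( \SE_{\bullet}^{\vee} \). I expect this last translation — matching \( \depth \) against codimension fibrewise, and seeing precisely why non-equi-dimensional fibres break openness in \eqref{fact:dfn:RelSkCMlocus:2} while the flatness of the total space rescues \eqref{fact:dfn:RelSkCMlocus:3} — to be the main obstacle, and for the precise bookkeeping I would ultimately refer to \cite[IV, \S 12]{EGA}.
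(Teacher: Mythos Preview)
Your proposal is correct and matches the paper's treatment: the paper states this as a \emph{Fact} with no proof, simply citing \cite[IV, Th.~(12.1.1)(vi), (iv), and Th.~(12.1.6)(i)]{EGA}, exactly as you do in your first sentence. Your additional sketch of the mechanism behind those EGA results goes beyond what the paper provides, but it is a reasonable outline of the standard argument and is consistent with the cited sources.
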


\begin{dfn}[$\bfS_{k}$-morphism and Cohen--Macaulay morphism]
\label{dfn:SkCMmorphism}
Let \( f \colon Y \to T \) be a morphism of locally Noetherian schemes
and \( k \) a positive integer.
The \( f \) is called
an \emph{\( \bfS_{k} \)-morphism}
(resp.\ a \emph{Cohen--Macaulay morphism})
if \( f \) is a flat morphism \emph{locally of finite type}
and \( Y = \bfS_{k}(Y/T)\) (resp.\ \( Y = \CM(Y/T) \)).
For a subset \( S \) of \( Y \), \( f \) is called
an \emph{\( \bfS_{k} \)-morphism}
(resp.\ a \emph{Cohen--Macaulay morphism}) \emph{along} \( S \)
if \( f|_{V} \colon V \to T \) is
so for an open neighborhood \( V \) of \( S \)
(cf.\ Fact~\ref{fact:dfn:RelSkCMlocus}\eqref{fact:dfn:RelSkCMlocus:3}).
\end{dfn}

\begin{remn}
The \( \bfS_{k} \)-morphisms and the Cohen--Macaulay morphisms defined
in \cite[IV, D\'ef.~(6.8.1)]{EGA} are not necessarily locally of finite type.
The definition of Cohen--Macaulay morphism in \cite[V, Ex.~9.7]{ResDual}
coincides with ours.
The notion of ``CM map'' in \cite[p.~7]{Conrad} is the same
as that of Cohen--Macaulay morphism
in our sense for morphisms of locally Noetherian schemes.
\end{remn}

\begin{lem}\label{lem:bc basic} 
Suppose that we are given a Cartesian diagram
\[
\begin{CD}
Y' @>{p}>> Y \\ @V{f'}VV @VV{f}V \\ T' @>{q}>> T
\end{CD}
\]
of schemes consisting of locally Noetherian schemes.
Let \( \SF \) be a coherent \( \SO_{Y} \)-module,
\( Z \) a closed subset of \( Y \),
\( k \) a positive integer,
and let \( t' \in T' \) and \( t \in T \) be points such that \( t = q(t') \).
\begin{enumerate}

\item \label{lem:bc basic:1}
If \( f \) is flat, then, for the fibers \( Y'_{t'} = f^{\prime -1}(t') \) and \( Y_{t} = f^{-1}(t) \), one has
\begin{align*}
\Codim(p^{-1}(Z) \cap Y'_{t'}, Y'_{t'}) &= \Codim(Z \cap Y_{t}, Y_{t}), \quad \text{and}\\
\depth_{p^{-1}(Z) \cap Y'_{t'}} \SO_{Y'_{t'}} &= \depth_{Z \cap Y_{t}} \SO_{Y_{t}}.
\end{align*}

\item  \label{lem:bc basic:2}
If \( \SF \) is flat over \( T \), then
\[ \depth_{p^{-1}(Z) \cap Y'_{t'}} (p^{*}\SF)_{(t')} = \depth_{Z \cap Y_{t}} \SF_{(t)}. \]

\item \label{lem:bc basic:3}
If \( \SF \) is flat over \( T \), then
\( \bfS_{k}(p^{*}\SF/T') \subset p^{-1}\bfS_{k}(\SF/T) \).
If \( f \) is locally of finite type in addition, then \(  \bfS_{k}(p^{*}\SF/T') = p^{-1}\bfS_{k}(\SF/T)\).

\item \label{lem:bc basic:4}
If \( f \) is locally of finite type and 
if \( \SF \) satisfies relative \( \bfS_{k} \) over \( T \), then
\( p^{*}\SF \) does so over \( T' \).

\item \label{lem:bc basic:5}
If \( f \) is an \( \bfS_{k} \)-morphism \emph{(}resp.\ Cohen--Macaulay morphism\emph{)},
then so is \( f' \).
\end{enumerate}
\end{lem}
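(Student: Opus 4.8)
The plan is to reduce everything to the behaviour of depth and codimension under flat base change, using facts already established in the excerpt. First I would treat \eqref{lem:bc basic:1}. Since $Y_{t} = Y \times_{T} \Spec \Bbbk(t)$ and $Y'_{t'} = Y' \times_{T'} \Spec \Bbbk(t') = Y_{t} \times_{\Spec \Bbbk(t)} \Spec \Bbbk(t')$, the morphism $Y'_{t'} \to Y_{t}$ is a base change of the field extension $\Spec \Bbbk(t') \to \Spec \Bbbk(t)$, hence faithfully flat; moreover $p^{-1}(Z) \cap Y'_{t'}$ is exactly the preimage of $Z \cap Y_{t}$ under this morphism. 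So the statement is an instance of Corollary~\ref{cor:fact:elem-flat} applied to the flat morphism $Y'_{t'} \to Y_{t}$ and the closed subset $Z \cap Y_{t}$ (note $Z \cap Y_t \subset$ the image, since the map is surjective), giving both $\Codim(p^{-1}(Z) \cap Y'_{t'}, Y'_{t'}) = \Codim(Z \cap Y_{t}, Y_{t})$ and $\depth_{p^{-1}(Z) \cap Y'_{t'}} \SO_{Y'_{t'}} = \depth_{Z \cap Y_{t}} \SO_{Y_{t}}$. For \eqref{lem:bc basic:2}, the same field-extension base change applies: $(p^{*}\SF)_{(t')}$ is the pullback of $\SF_{(t)}$ along $Y'_{t'} \to Y_{t}$ (compatibility of restriction to fibers with base change, as in Notation~\ref{nota:F_(t)} and Lemma~\ref{lem:flatbc}), so again Corollary~\ref{cor:fact:elem-flat}, now with the coherent sheaf $\SF_{(t)}$ in place of $\SG$, yields the depth equality. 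A point worth checking is that $\SF_{(t)}$ may be zero, but then both sides are $+\infty$ and there is nothing to prove; otherwise $\SF_{(t)} \ne 0$ and the corollary applies verbatim.

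Next I would do \eqref{lem:bc basic:3}. The inclusion $\bfS_{k}(p^{*}\SF/T') \subset p^{-1}\bfS_{k}(\SF/T)$: take $y' \in \bfS_{k}(p^{*}\SF/T')$ with images $y = p(y')$, $t' = f'(y')$, $t = q(t') = f(y)$. By definition $p^{*}\SF$ is flat over $T'$ at $y'$ and $(p^{*}\SF)_{(t')}$ satisfies $\bfS_k$ at $y'$. Flatness of $\SF$ over $T$ at $y$ follows because $\SF_y \otimes_{\SO_{T,t}} \SO_{T',t'} = (p^*\SF)_{y'}$ is flat over $\SO_{T',t'}$ and $\SO_{T',t'}$ is flat over $\SO_{T,t}$ is \emph{not} automatic --- so instead I use that $\SF$ flat over $T$ was assumed as a hypothesis of \eqref{lem:bc basic:3}, so $\SF$ is flat over $T$ everywhere; then $(p^*\SF)_{(t')}$ is the pullback of $\SF_{(t)}$ under the faithfully flat field extension $Y'_{t'} \to Y_t$, so by Fact~\ref{fact:elem-flat}\eqref{fact:elem-flat:4} (descent of $\bfS_k$ along a flat morphism with nonzero fibers, applied with $\SG = \SO$ trivially, or more directly: $\bfS_k$ descends along faithfully flat base field extension), $\SF_{(t)}$ satisfies $\bfS_k$ at $y$, i.e. $y \in \bfS_k(\SF/T)$, so $y' \in p^{-1}\bfS_k(\SF/T)$. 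For the reverse inclusion when $f$ is locally of finite type: if $y \in \bfS_k(\SF/T)$, then $\SF_{(t)}$ satisfies $\bfS_k$ at $y$, and pulling back along the flat field extension $Y'_{t'} \to Y_t$ preserves $\bfS_k$ at points lying over $y$ by Fact~\ref{fact:elem-flat}\eqref{fact:elem-flat:5}; combined with $\SF$ flat over $T$ (hence $p^*\SF$ flat over $T'$), this gives $y' \in \bfS_k(p^*\SF/T')$ for every $y' \in p^{-1}(y)$. The local-finite-type hypothesis is used to know these loci are honest subsets we can compare pointwise, and to invoke openness when needed.

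Finally, \eqref{lem:bc basic:4} and \eqref{lem:bc basic:5} are immediate corollaries of \eqref{lem:bc basic:3}. For \eqref{lem:bc basic:4}: if $\SF$ satisfies relative $\bfS_k$ over $T$, then $Y = \bfS_k(\SF/T)$, so $Y' = p^{-1}(Y) = p^{-1}\bfS_k(\SF/T) = \bfS_k(p^*\SF/T')$ by the equality in \eqref{lem:bc basic:3} (valid since $f$ is locally of finite type), hence $p^*\SF$ satisfies relative $\bfS_k$ over $T'$. For \eqref{lem:bc basic:5}: an $\bfS_k$-morphism is a flat morphism locally of finite type with $Y = \bfS_k(Y/T) = \bfS_k(\SO_Y/T)$; flatness and local-finite-type are stable under base change, and $\SO_{Y'} = p^*\SO_Y$, so applying \eqref{lem:bc basic:4} with $\SF = \SO_Y$ gives $Y' = \bfS_k(\SO_{Y'}/T') = \bfS_k(Y'/T')$, i.e. $f'$ is an $\bfS_k$-morphism; the Cohen--Macaulay case follows by taking the intersection over all $k \ge 1$, since $\CM(\SF/T) = \bigcap_{k\ge 1}\bfS_k(\SF/T)$ and the same identity holds after base change. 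The only mildly delicate point throughout --- and the one I would be most careful about --- is the reduction of fiberwise base change to a \emph{faithfully flat} field extension and the correct citation for descent/ascent of $\bfS_k$ along it; once that is set up cleanly via Corollary~\ref{cor:fact:elem-flat} and Fact~\ref{fact:elem-flat}, the rest is formal.
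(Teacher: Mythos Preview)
Your treatment of parts \eqref{lem:bc basic:1}, \eqref{lem:bc basic:2}, \eqref{lem:bc basic:4}, \eqref{lem:bc basic:5}, and the first inclusion in \eqref{lem:bc basic:3} is correct and matches the paper's approach: reduce to the flat morphism $Y'_{t'} \to Y_t$ and apply Corollary~\ref{cor:fact:elem-flat} and Fact~\ref{fact:elem-flat}\eqref{fact:elem-flat:4}.

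There is a genuine gap in the reverse inclusion of \eqref{lem:bc basic:3}. When you invoke Fact~\ref{fact:elem-flat}\eqref{fact:elem-flat:5} for the morphism $Y'_{t'} \to Y_t$ with $\SF = \SO_{Y'_{t'}}$ and $\SG = \SF_{(t)}$, that fact requires the fiber $(Y'_{t'})_y$ over $y \in Y_t$ to satisfy $\bfS_k$. You never check this. The fiber is $\Spec\bigl(\Bbbk(y) \otimes_{\Bbbk(t)} \Bbbk(t')\bigr)$, and this ring is Cohen--Macaulay by \cite[IV, Lem.~(6.7.1.1)]{EGA}; \emph{this} is precisely where the hypothesis that $f$ is locally of finite type enters, since it guarantees $\Bbbk(y)/\Bbbk(t)$ is finitely generated and hence that the tensor product is Noetherian. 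Your stated reason for the locally-of-finite-type hypothesis (``to know these loci are honest subsets'' and ``to invoke openness'') is not the actual role it plays here. The paper handles this cleanly by citing Fact~\ref{fact:elem-flat}\eqref{fact:elem-flat:6} together with the Cohen--Macaulay property of these tensor products; once you add that observation, your argument via \eqref{fact:elem-flat:5} works equally well.
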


\begin{proof}
The assertions \eqref{lem:bc basic:1} and \eqref{lem:bc basic:2} follow from
Corollary~\ref{cor:fact:elem-flat} applied to
the flat morphism \( Y'_{t'} \to Y_{t} \) and to
\( \SG = \SO_{Y_{t}} \) or \( \SG  = \SF_{(t)} \).
The first half of \eqref{lem:bc basic:3} follows from
Definition~\ref{dfn:RelSkCMlocus} and Fact~\ref{fact:elem-flat}\eqref{fact:elem-flat:4}
applied to \( Y'_{t'} \to Y_{t} \) and to \( (\SF, \SG) = (\SO_{Y'_{t'}}, \SF_{(t)}) \).
The latter half of \eqref{lem:bc basic:3}
follows from
Fact~\ref{fact:elem-flat}\eqref{fact:elem-flat:6}, since the fiber \( p^{-1}(y) \)
over a point \( y \in Y_{t} \) is isomorphic to \( \Spec \Bbbk(y) \otimes_{\Bbbk(t)} \Bbbk(t') \)
and since \( \Bbbk(y) \otimes_{\Bbbk(t)} \Bbbk(t') \) is Cohen--Macaulay
(cf.\ \cite[IV, Lem.\ (6.7.1.1)]{EGA}).
The assertion \eqref{lem:bc basic:4} is a consequence of \eqref{lem:bc basic:3}, and
the assertion \eqref{lem:bc basic:5} follows from \eqref{lem:bc basic:3} in the case: \( \SF = \SO_{Y} \),
by Definition~\ref{dfn:SkCMmorphism}.
\end{proof}

\begin{lem}\label{lem:relSkCodimDepth}
Let \( Y \to T \) be a morphism of locally Noetherian schemes
and let \( Z \) be a closed subset of \( Y \).
Let \( \SF \) be a coherent \( \SO_{Y} \)-module and \( k \) a positive integer.

\begin{enumerate}
\item \label{lem:relSkCodimDepth:1}
If \( \SF \) is flat over \( T \), then
\[ \depth_{Z} \SF \geq \inf\{ \depth_{Z \cap Y_{t}} \SF_{(t)} \mid t \in f(Z)\}. \]

\item  \label{lem:relSkCodimDepth:2}
If \( \SF \) satisfies relative \( \bfS_{k} \) over \( T \) and if
\[
\Codim(Z \cap \Supp \SF_{(t)}, \Supp \SF_{(t)}) \geq k
\]
for any \( t \in T \), then \( \depth_{Z} \SF \geq k\).

\item  \label{lem:relSkCodimDepth:3}
If \( Y \to T \) is flat and if one of the two conditions below
is satisfied, then \( \depth_{Z} \SO_{Y} \geq k \)\emph{:}
\begin{enumerate}
\item  \( \depth_{Y_{t} \cap Z} \SO_{Y_{t}} \geq k\) for any \( t \in T \)\emph{;}

\item  \( Y_{t} \) satisfies \( \bfS_{k} \) and \( \Codim(Y_{t} \cap Z, Y_{t}) \geq k  \)
for any \( t \in T \).
\end{enumerate}
\end{enumerate}
\end{lem}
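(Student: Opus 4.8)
The plan is to derive all three parts from \eqref{lem:relSkCodimDepth:1}, and to prove \eqref{lem:relSkCodimDepth:1} via the depth formula in Fact~\ref{fact:elem-flat}\eqref{fact:elem-flat:3}. First I would prove \eqref{lem:relSkCodimDepth:1}. Since \( \depth_{Z}\SF=\inf\{\depth\SF_{y}\mid y\in Z\} \) with \( \depth_{Z}0=+\infty \), it suffices to bound \( \depth\SF_{y} \) from below for each \( y\in Z\cap\Supp\SF \). Set \( t=f(y)\in f(Z) \); the fiber \( Y_{t}=Y\times_{T}\Spec\Bbbk(t) \) has a unique point lying over \( y \), which I again denote by \( y \), with \( \SO_{Y_{t},y}=\SO_{Y,y}/\GM_{T,t}\SO_{Y,y} \) and \( (\SF_{(t)})_{y}\isom\SF_{y}/\GM_{T,t}\SF_{y} \); by Nakayama this point lies in \( Z\cap Y_{t}\cap\Supp\SF_{(t)} \). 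Applying Fact~\ref{fact:elem-flat}\eqref{fact:elem-flat:3} with \( \SG=\SO_{T} \) (legitimate because \( \SF \) is flat over \( T \) and \( \SF_{y}\ne 0 \)) gives
\[ \depth\SF_{y}=\depth(\SF_{(t)})_{y}+\depth\SO_{T,t}\geq\depth(\SF_{(t)})_{y}\geq\depth_{Z\cap Y_{t}}\SF_{(t)}. \]
Taking the infimum over \( y\in Z\cap\Supp\SF \) yields \eqref{lem:relSkCodimDepth:1}; the values \( t\in f(Z) \) with \( Z\cap Y_{t}\cap\Supp\SF_{(t)}=\emptyset \) contribute \( +\infty \) and are harmless.

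For \eqref{lem:relSkCodimDepth:2} I would first record that, by Definition~\ref{dfn:RelSkCMlocus}, ``\( \SF \) satisfies relative \( \bfS_{k} \) over \( T \)'' is equivalent to: \( \SF \) is flat over \( T \) and \( \SF_{(t)} \) satisfies \( \bfS_{k} \) for every \( t\in T \). Fixing \( t\in f(Z) \) and using \( \Supp\SF_{(t)}\subset Y_{t} \), the codimension hypothesis reads \( \Codim((Z\cap Y_{t})\cap\Supp\SF_{(t)},\Supp\SF_{(t)})\geq k \), so Lemma~\ref{lem:depth+codim+Sk}\eqref{lem:depth+codim+Sk:2}, applied to \( \SF_{(t)} \) on \( Y_{t} \) with closed subset \( Z\cap Y_{t} \), gives \( \depth_{Z\cap Y_{t}}\SF_{(t)}\geq k \). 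Then \eqref{lem:relSkCodimDepth:1} gives \( \depth_{Z}\SF\geq k \).

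Finally, for \eqref{lem:relSkCodimDepth:3} I would take \( \SF=\SO_{Y} \), which is flat over \( T \) since \( f \) is, with \( \SF_{(t)}=\SO_{Y_{t}} \). Under condition~(a), \eqref{lem:relSkCodimDepth:1} immediately gives \( \depth_{Z}\SO_{Y}\geq\inf\{\depth_{Z\cap Y_{t}}\SO_{Y_{t}}\mid t\in f(Z)\}\geq k \). Under condition~(b), \( \Supp\SO_{Y_{t}}=Y_{t} \), so Lemma~\ref{lem:depth+codim+Sk}\eqref{lem:depth+codim+Sk:2} turns the codimension bound \( \Codim(Z\cap Y_{t},Y_{t})\geq k \) into \( \depth_{Z\cap Y_{t}}\SO_{Y_{t}}\geq k \), reducing~(b) to~(a); alternatively, (b) is exactly the case \( \SF=\SO_{Y} \) of \eqref{lem:relSkCodimDepth:2}, since then \( f \) is an \( \bfS_{k} \)-morphism.

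I do not expect a genuine obstacle; the only points requiring care are bookkeeping ones: the identification of a point of \( Y \) above \( t \) with its unique preimage in \( Y_{t} \), the fact that \( \depth\SF_{y} \) computed over \( \SO_{Y,y} \) agrees with the depth of \( \SF_{y}/\GM_{T,t}\SF_{y} \) over the fiber ring \( \SO_{Y_{t},y} \) (both read off from the \( \Ext \)-characterisation of depth in Property~\ref{ppty:depthdfn}), and the disposal of the degenerate cases in which \( Z \), \( \Supp\SF \), or their intersection with some fiber is empty, where the relevant depths are \( +\infty \).
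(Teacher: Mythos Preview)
Your proof is correct and follows essentially the same approach as the paper: part~\eqref{lem:relSkCodimDepth:1} is proved via the depth formula \eqref{eq:fact:elem-flat:2} of Fact~\ref{fact:elem-flat}\eqref{fact:elem-flat:3} with $\SG=\SO_{T}$, part~\eqref{lem:relSkCodimDepth:2} is obtained by combining \eqref{lem:relSkCodimDepth:1} with Lemma~\ref{lem:depth+codim+Sk}\eqref{lem:depth+codim+Sk:2} applied to each $\SF_{(t)}$, and part~\eqref{lem:relSkCodimDepth:3} is the special case $\SF=\SO_{Y}$ of the previous two. Your write-up is simply more explicit about the bookkeeping than the paper's terse version.
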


\begin{proof}
For the first assertion \eqref{lem:relSkCodimDepth:1}, we may assume that \( Z \cap \Supp \SF \ne \emptyset\).
Then, by Definition~\ref{dfn:Z-depth}, we have the inequality in \eqref{lem:relSkCodimDepth:1}
from the equality \eqref{eq:fact:elem-flat:2} in Fact~\ref{fact:elem-flat}\eqref{fact:elem-flat:3}
in the case where \( \SG = \SO_{T} \),
since \( \depth \SO_{T, t} \geq 0 \) for any \( t \in T \).
The assertion \eqref{lem:relSkCodimDepth:2} is a consequence of \eqref{lem:relSkCodimDepth:1}
and Lemma~\ref{lem:depth+codim+Sk}\eqref{lem:depth+codim+Sk:2} 
applied to \( (Y_{t}, Z \cap Y_{t}, \SF_{(t)}) \).
The last assertion \eqref{lem:relSkCodimDepth:3} is derived from
\eqref{lem:relSkCodimDepth:1} and \eqref{lem:relSkCodimDepth:2}
in the case where \( \SF = \SO_{Y} \).
\end{proof}

The following result gives some relations between the reflexive modules and
the relative \( \bfS_{2} \)-condition.
Similar results can be found in \cite[\S 3]{HaKo}.

\begin{lem}\label{lem:US2add}
Let \( f \colon Y \to T \) be a flat morphism of locally Noetherian schemes,
\( \SF \) a coherent \( \SO_{Y} \)-module, and \( Z \) a closed subset of \( Y \).
Assume that
\[ \depth_{Y_{t} \cap Z} \SO_{Y_{t}} \geq 1 \]
for any fiber \( Y_{t} = f^{-1}(t) \).
Then, the following hold for the open immersion \( j \colon Y \setminus Z \injmap Y \)
and for the restriction homomorphism \( \SF \to j_{*}(\SF|_{Y \setminus Z}) \)\emph{:} 
\begin{enumerate}
\item \label{lem:US2add:1} If \( \SF|_{Y \setminus Z} \) is reflexive and
if \( \SF \isom j_{*}(\SF|_{Y \setminus Z}) \), then \( \SF \) is reflexive.

\item \label{lem:US2add:2-}
If \( \SF \) is reflexive and if
\( \depth_{Y_{t} \cap Z} \SO_{Y_{t}} \geq 2 \)
for any \( t \in T \), then \( \SF \isom j_{*}(\SF|_{Y \setminus Z}) \).

\item \label{lem:US2add:3a-} If \( \SF \) is flat over \( T \) and if
\( \depth_{Y_{t} \cap Z} \SF_{(t)} \geq 2 \)
for any \( t \in T \), then \( \SF \isom j_{*}(\SF|_{Y \setminus Z}) \).

\item  \label{lem:US2add:2}
If \( Y_{t} \) satisfies \( \bfS_{2} \) and \( \Codim(Y_{t} \cap Z, Y_{t}) \geq 2 \) for any \( t \in T \),
and if \( \SF \) is reflexive, then \( \SF \isom j_{*}(\SF|_{Y \setminus Z}) \).

\item \label{lem:US2add:3a} If \( \SF \) satisfies relative \( \bfS_{2} \) over \( T \) and if
\( \Codim(Z \cap \Supp \SF_{(t)},  \Supp \SF_{(t)} ) \geq 2  \)
for any \( t \in T \), then \( \SF \isom j_{*}(\SF|_{Y \setminus Z}) \).

\item \label{lem:US2add:3} In the situation of \eqref{lem:US2add:3a-} or \eqref{lem:US2add:3a},
if \( \SF_{(t)}|_{Y_{t} \setminus Z} \)
is reflexive, then \( \SF_{(t)} \) is reflexive\emph{;}
if \( \SF|_{Y \setminus Z} \) is reflexive, then \( \SF \) is reflexive.

\end{enumerate}
\end{lem}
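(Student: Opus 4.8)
The plan is to reduce every one of the seven assertions to two facts already available: by Property~\ref{ppty:depth<=2}, the restriction homomorphism \( \SF \to j_{*}(\SF|_{Y \setminus Z}) \) is an isomorphism if and only if \( \depth_{Z}\SF \geq 2 \); and, by Lemma~\ref{lem:j*reflexive}, reflexivity of \( \SF \) can be recovered from reflexivity of \( \SF|_{Y \setminus Z} \) together with depth information along \( Z \). For \eqref{lem:US2add:1}, I would first observe that the standing hypothesis \( \depth_{Y_{t} \cap Z}\SO_{Y_{t}} \geq 1 \) for all \( t \), combined with flatness of \( f \), gives \( \depth_{Z}\SO_{Y} \geq 1 \) by Lemma~\ref{lem:relSkCodimDepth}\eqref{lem:relSkCodimDepth:3}; the hypothesis \( \SF \isom j_{*}(\SF|_{Y \setminus Z}) \) gives \( \depth_{Z}\SF \geq 2 \) by Property~\ref{ppty:depth<=2}; and then Lemma~\ref{lem:j*reflexive}\eqref{lem:j*reflexive:2}, applied with \( X = Y \) and \( \SG = \SF \), yields that \( \SF \) is reflexive.

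Next, the four assertions \eqref{lem:US2add:2-}, \eqref{lem:US2add:3a-}, \eqref{lem:US2add:2}, and \eqref{lem:US2add:3a} all reduce to establishing \( \depth_{Z}\SF \geq 2 \), whereupon Property~\ref{ppty:depth<=2} provides the desired isomorphism. For \eqref{lem:US2add:2-}: the hypothesis \( \depth_{Y_{t} \cap Z}\SO_{Y_{t}} \geq 2 \) for all \( t \) gives \( \depth_{Z}\SO_{Y} \geq 2 \) by Lemma~\ref{lem:relSkCodimDepth}\eqref{lem:relSkCodimDepth:3}, and then Lemma~\ref{lem:j*reflexive}\eqref{lem:j*reflexive:1} (with \( k = 2 \)), applied to the reflexive sheaf \( \SF \), gives \( \depth_{Z}\SF \geq 2 \). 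For \eqref{lem:US2add:3a-}: flatness of \( \SF \) over \( T \) and Lemma~\ref{lem:relSkCodimDepth}\eqref{lem:relSkCodimDepth:1} give \( \depth_{Z}\SF \geq \inf_{t}\depth_{Z \cap Y_{t}}\SF_{(t)} \geq 2 \) directly from the hypothesis. For \eqref{lem:US2add:2}: applying Lemma~\ref{lem:depth+codim+Sk}\eqref{lem:depth+codim+Sk:2} to \( \SO_{Y_{t}} \) on \( Y_{t} \) turns the hypotheses into \( \depth_{Y_{t} \cap Z}\SO_{Y_{t}} \geq 2 \) for all \( t \), so that this case reduces to \eqref{lem:US2add:2-}. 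For \eqref{lem:US2add:3a}: relative \( \bfS_{2} \) entails in particular that \( \SF \) is flat over \( T \) (since \( \bfS_{2}(\SF/T) \subset \Fl(\SF/T) \) by Definition~\ref{dfn:RelSkCMlocus}) and that each \( \SF_{(t)} \) satisfies \( \bfS_{2} \), so Lemma~\ref{lem:relSkCodimDepth}\eqref{lem:relSkCodimDepth:2} with \( k = 2 \) applies to the codimension hypothesis and gives \( \depth_{Z}\SF \geq 2 \).

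Finally, for \eqref{lem:US2add:3}, I would note that in the situations of \eqref{lem:US2add:3a-} or \eqref{lem:US2add:3a} the steps above already yield \( \depth_{Z \cap Y_{t}}\SF_{(t)} \geq 2 \) for every \( t \) — it is the hypothesis in \eqref{lem:US2add:3a-}, and in \eqref{lem:US2add:3a} it follows from \( \Codim(Z \cap \Supp\SF_{(t)}, \Supp\SF_{(t)}) \geq 2 \) together with \( \bfS_{2} \) of \( \SF_{(t)} \), via Lemma~\ref{lem:depth+codim+Sk}\eqref{lem:depth+codim+Sk:2}. Hence, by Property~\ref{ppty:depth<=2} applied on the scheme \( Y_{t} \), the restriction map \( \SF_{(t)} \to j_{t*}(\SF_{(t)}|_{Y_{t} \setminus Z}) \) is an isomorphism, where \( j_{t} \colon Y_{t} \setminus Z \injmap Y_{t} \); and if \( \SF_{(t)}|_{Y_{t} \setminus Z} \) is reflexive, then applying the already-proved \eqref{lem:US2add:1} to the structure morphism \( Y_{t} \to \Spec\Bbbk(t) \), the sheaf \( \SF_{(t)} \), and the closed set \( Z \cap Y_{t} \) (whose standing hypothesis is precisely \( \depth_{Y_{t} \cap Z}\SO_{Y_{t}} \geq 1 \)) shows that \( \SF_{(t)} \) is reflexive. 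The remaining claim of \eqref{lem:US2add:3}, that reflexivity of \( \SF|_{Y \setminus Z} \) implies reflexivity of \( \SF \), is then immediate from \eqref{lem:US2add:1} and the isomorphism \( \SF \isom j_{*}(\SF|_{Y \setminus Z}) \) proved in \eqref{lem:US2add:3a-} and \eqref{lem:US2add:3a}.

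None of the steps is deep; the content is an assembly of Property~\ref{ppty:depth<=2}, Lemma~\ref{lem:relSkCodimDepth}, and Lemma~\ref{lem:j*reflexive}. The point requiring care is the bookkeeping: matching each flavour of hypothesis — fiberwise depth of \( \SO_{Y_{t}} \), fiberwise depth of \( \SF_{(t)} \), fiberwise codimension, or reflexivity of \( \SF \) — to the correct global statement about \( \depth_{Z}\SF \) or \( \depth_{Z}\SO_{Y} \). In particular, the subtlety in \eqref{lem:US2add:3} is that \( Y_{t} \to Y \) is not flat, so one cannot deduce the fiber statements by pulling back; instead one must re-invoke \eqref{lem:US2add:1} with base \( \Spec\Bbbk(t) \), which is legitimate precisely because the standing hypothesis of the lemma restricts to the sole fiber \( Y_{t} \) of \( Y_{t} \to \Spec\Bbbk(t) \).
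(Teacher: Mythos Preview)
Your proof is correct and follows essentially the same approach as the paper's: both reduce everything to Property~\ref{ppty:depth<=2}, Lemma~\ref{lem:relSkCodimDepth}, and Lemma~\ref{lem:j*reflexive}, with \eqref{lem:US2add:2} and \eqref{lem:US2add:3a} identified as special cases of \eqref{lem:US2add:2-} and \eqref{lem:US2add:3a-} respectively. The only cosmetic difference is in the first half of \eqref{lem:US2add:3}: the paper cites Corollary~\ref{cor:prop:S1S2:reflexive} directly on $Y_{t}$, whereas you re-apply the already-proved \eqref{lem:US2add:1} to the morphism $Y_{t} \to \Spec\Bbbk(t)$; since Corollary~\ref{cor:prop:S1S2:reflexive} itself unwinds to Lemma~\ref{lem:j*reflexive}\eqref{lem:j*reflexive:2}, these are the same argument.
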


\begin{proof}
Note that \( \SF \isom j_{*}(\SF|_{Y \setminus Z}) \) if and only if \( \depth_{Z} \SF \geq 2 \)
(cf.\ Property~\ref{ppty:depth<=2}).
We have \( \depth_{Z} \SO_{Y} \geq 1 \)
by Lemma~\ref{lem:relSkCodimDepth}\eqref{lem:relSkCodimDepth:3}.
Hence, \eqref{lem:US2add:1} is a consequence of Lemma~\ref{lem:j*reflexive}\eqref{lem:j*reflexive:2}.
In case \eqref{lem:US2add:2-}, we have \( \depth_{Z} \SO_{Y} \geq 2 \)
by Lemma~\ref{lem:relSkCodimDepth}\eqref{lem:relSkCodimDepth:3}, and \eqref{lem:US2add:2-}
is a consequence of Lemma~\ref{lem:j*reflexive}\eqref{lem:j*reflexive:1}.
The assertion \eqref{lem:US2add:3a-} follows from
Lemma~\ref{lem:relSkCodimDepth}\eqref{lem:relSkCodimDepth:1} with \( k = 2 \).
The assertions \eqref{lem:US2add:2} and \eqref{lem:US2add:3a}
are special cases of \eqref{lem:US2add:2-} and \eqref{lem:US2add:3a-}, respectively.
The first assertion of \eqref{lem:US2add:3} follows from Corollary~\ref{cor:prop:S1S2:reflexive}.
The second assertion of \eqref{lem:US2add:3} is derived
from \eqref{lem:US2add:1} and \eqref{lem:US2add:3a-}.
\end{proof}

\begin{remn}
The assumption of Lemma~\ref{lem:US2add} holds when \( Y_{t} \) satisfies \( \bfS_{1} \)
and \( \Codim(Y_{t} \cap Z, Y_{t}) \geq 1 \) for any \( t \in T \) 
(cf.\ Lemma~\ref{lem:depth+codim+Sk}\eqref{lem:depth+codim+Sk:2}).
\end{remn}

\begin{lem}\label{lem:bc reflexive}
In the situation of Lemma~\emph{\ref{lem:bc basic}}, assume that \( f \) is flat,
\( \SF|_{Y \setminus Z} \) is locally free, and
\[\depth_{Y_{t} \cap Z} \SO_{Y_{t}} \geq 2  \]
for any \( t \in T \).
Then, \( \SF^{\vee\vee} \isom j_{*}(\SF|_{Y \setminus Z}) \) for the open immersion
\( j \colon Y \setminus Z \injmap Y \), and
\[ (p^{*}\SF)^{\vee\vee} \isom (p^{*}(\SF^{\vee\vee}))^{\vee\vee}. \]
Moreover, \( \SF \) and \( p^{*}\SF \) are reflexive if \( \SF \) is flat over \( T \) and
\[ \depth_{Y_{t} \cap Z} \SF_{(t)} \geq 2 \]
for any \( t \in T \).
\end{lem}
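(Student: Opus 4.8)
The plan is to deduce all three assertions from Lemma~\ref{lem:US2add}, using Lemma~\ref{lem:bc basic} to transport the depth hypotheses along the (possibly non-flat) base change $p$, together with the elementary facts that the dual of a coherent sheaf on a locally Noetherian scheme is reflexive (Definition~\ref{dfn:reflexive}) and that the formation of duals commutes with restriction to open subsets.

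For the identification $\SF^{\vee\vee} \isom j_{*}(\SF|_{Y\setminus Z})$, note first that $\SF^{\vee\vee} = (\SF^{\vee})^{\vee}$ is a coherent reflexive $\SO_{Y}$-module. Since $f$ is flat and $\depth_{Y_{t}\cap Z}\SO_{Y_{t}}\geq 2$ for every $t\in T$, Lemma~\ref{lem:US2add}\eqref{lem:US2add:2-} applies to the reflexive sheaf $\SF^{\vee\vee}$ and gives $\SF^{\vee\vee}\isom j_{*}(\SF^{\vee\vee}|_{Y\setminus Z})$. Because duality commutes with restriction to $Y\setminus Z$ and $\SF|_{Y\setminus Z}$ is locally free, hence reflexive, one has $\SF^{\vee\vee}|_{Y\setminus Z}=(\SF|_{Y\setminus Z})^{\vee\vee}=\SF|_{Y\setminus Z}$, which yields the claim.

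For the base-change formula $(p^{*}\SF)^{\vee\vee}\isom(p^{*}(\SF^{\vee\vee}))^{\vee\vee}$, set $Z'=p^{-1}(Z)$ and let $j'\colon Y'\setminus Z'\injmap Y'$ be the open immersion. The morphism $f'$ is flat, and by Lemma~\ref{lem:bc basic}\eqref{lem:bc basic:1} one has $\depth_{Y'_{t'}\cap Z'}\SO_{Y'_{t'}}=\depth_{Y_{t}\cap Z}\SO_{Y_{t}}\geq 2$ for every $t'\in T'$ lying over $t\in T$; hence the hypotheses of Lemma~\ref{lem:US2add}\eqref{lem:US2add:2-} are satisfied for $f'$ and $Z'$. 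As $(p^{*}\SF)^{\vee\vee}$ and $(p^{*}(\SF^{\vee\vee}))^{\vee\vee}$ are both reflexive, each is isomorphic to the $j'_{*}$-pushforward of its own restriction to $Y'\setminus Z'$. On $Y'\setminus Z'$ these two restrictions both coincide with the locally free sheaf $p^{*}(\SF|_{Y\setminus Z})$ — using $\SF^{\vee\vee}|_{Y\setminus Z}=\SF|_{Y\setminus Z}$, the preservation of local freeness under $p^{*}$, and the reflexivity of locally free sheaves — and the natural comparison morphism induced by $c_{\SF}\colon\SF\to\SF^{\vee\vee}$ is an isomorphism there. Applying $j'_{*}$ to this identification, the comparison morphism is an isomorphism on all of $Y'$.

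Finally, assume moreover that $\SF$ is flat over $T$ with $\depth_{Y_{t}\cap Z}\SF_{(t)}\geq 2$ for all $t\in T$. Then Lemma~\ref{lem:US2add}\eqref{lem:US2add:3a-} gives $\SF\isom j_{*}(\SF|_{Y\setminus Z})$, and since $\SF|_{Y\setminus Z}$ is locally free hence reflexive, Lemma~\ref{lem:US2add}\eqref{lem:US2add:3} shows that $\SF$ is reflexive. For $p^{*}\SF$: it is flat over $T'$ by base change of flatness, and by Lemma~\ref{lem:bc basic}\eqref{lem:bc basic:2} one has $\depth_{Z'\cap Y'_{t'}}(p^{*}\SF)_{(t')}=\depth_{Z\cap Y_{t}}\SF_{(t)}\geq 2$ for every $t'$; together with $\depth_{Y'_{t'}\cap Z'}\SO_{Y'_{t'}}\geq 1$ (Lemma~\ref{lem:bc basic}\eqref{lem:bc basic:1}) and the fact that $(p^{*}\SF)|_{Y'\setminus Z'}=p^{*}(\SF|_{Y\setminus Z})$ is locally free, Lemma~\ref{lem:US2add}\eqref{lem:US2add:3a-} and \eqref{lem:US2add:3} applied to $f'$ and $p^{*}\SF$ give that $p^{*}\SF$ is reflexive. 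The step I expect to require the most care is the base-change formula: since $p$ is not assumed flat, one cannot simply pull back the isomorphism of the first part, and one must instead compare the two double duals through the natural morphism on $Y'\setminus Z'$ and then propagate the comparison via $j'_{*}$ — which is precisely what the depth bounds for the fibers of $f'$, transferred through Lemma~\ref{lem:bc basic}, make possible.
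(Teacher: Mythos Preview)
Your proof is correct and follows essentially the same route as the paper: the paper establishes $\depth_{Z}\SO_{Y}\geq 2$ via Lemma~\ref{lem:relSkCodimDepth}\eqref{lem:relSkCodimDepth:3} and then invokes Lemma~\ref{lem:j*reflexive}\eqref{lem:j*reflexive:1} directly, whereas you cite the packaged form Lemma~\ref{lem:US2add}\eqref{lem:US2add:2-}, whose proof is exactly that combination; the transfer to $Y'$ via Lemma~\ref{lem:bc basic}\eqref{lem:bc basic:1},\eqref{lem:bc basic:2} and the final reflexivity argument via Lemma~\ref{lem:US2add}\eqref{lem:US2add:1},\eqref{lem:US2add:3a-} (equivalently \eqref{lem:US2add:3}) match the paper's proof step for step.
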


\begin{proof}
Now, \( \depth_{Z} \SO_{Y} \geq 2 \)
by Lemma~\ref{lem:relSkCodimDepth}\eqref{lem:relSkCodimDepth:3}.
Hence, \( \depth_{Z} \SF^{\vee\vee} \geq 2\) by
Lemma~\ref{lem:j*reflexive}\eqref{lem:j*reflexive:1}, and this implies the first isomorphism
for \( \SF^{\vee\vee} \). We have
\[ \depth_{Y'_{t'} \cap p^{-1}(Z)} \SO_{Y'_{t'}} \geq 2  \]
by Lemma~\ref{lem:bc basic}\eqref{lem:bc basic:1}.
Hence, by the previous argument applied to \( p^{*}\SF \) and \( p^{*}(\SF^{\vee\vee}) \),
we have isomorphisms
\[ (p^{*}\SF)^{\vee\vee} \isom j'_{*}(p^{*}\SF|_{Y' \setminus p^{-1}(Z)}) 
\isom (p^{*}(\SF^{\vee\vee}))^{\vee\vee}\]
for the open immersion \( j' \colon Y' \setminus p^{-1}(Z) \injmap Y' \).
It remains to prove the last assertion. In this case, \( \SF \) is reflexive by
\eqref{lem:US2add:1} and \eqref{lem:US2add:3a-} of Lemma~\ref{lem:US2add}.
Moreover, by Lemma~\ref{lem:bc basic}\eqref{lem:bc basic:2}, we have
\[ \depth_{Y'_{t'} \cap p^{-1}(Z)} (p^{*}\SF)_{(t')} \geq 2\]
for any point \( t' \in T' \). Thus, \( p^{*}\SF \) is reflexive  by the same argument as above.
\end{proof}

\begin{lem}\label{lem:quasi-flat}
Let \( f \colon Y \to T \) be a morphism of locally Noetherian schemes,
and let \( Z \) be a closed subset of \( Y \).
Assume that \( f \) is \emph{quasi-flat} \emph{(}cf.\ \cite[IV, (2.3.3)]{EGA}\emph{)},
i.e., there is a coherent \( \SO_{Y} \)-module \( \SF \)
such that \( \SF \) is flat over \( T \) and \( \Supp \SF = Y \).
Then,
\begin{equation}\label{lem:quasi-flat|eq}
\Codim_{y}(Z, Y) \geq \Codim_{y}(Z \cap Y_{f(y)}, Y_{f(y)})
\end{equation}
for any point \( y \in Z \).
If \( \Codim(Z \cap Y_{t}, Y_{t}) \geq k \)
for a point \( t \in T \) and for an integer \( k \),
then there is an open neighborhood \( V \) of \( Y_{t} \) in \( Y \)
such that \( \Codim(Z \cap V, V) \geq k \).
\end{lem}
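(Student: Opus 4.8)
The plan is to reduce the inequality \eqref{lem:quasi-flat|eq} to a purely local statement about $\SO_{Y,y}$, exploit the dimension formula for flat modules (Fact~\ref{fact:elem-flat}\eqref{fact:elem-flat:3}) together with Krull's Hauptidealsatz, and then deduce the second assertion from the first by the lower semi-continuity of the codimension function. First I would fix $y\in Z$, set $t=f(y)$, and pass to the local rings $B:=\SO_{Y,y}$ and $A:=\SO_{T,t}$ with maximal ideals $\GM_B$ and $\GM_A$. The stalk $\SF_y$ is then a finitely generated $B$-module, flat over $A$, with $\Supp_B\SF_y=\Spec B$ (because $\Supp\SF=Y$). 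Writing $Z_y\subseteq\Spec B$ for the localization of $Z$ and noting that $\Spec(B/\GM_AB)=\Spec\SO_{Y_t,y}$ is the closed fibre, the inequality \eqref{lem:quasi-flat|eq} becomes, by Property~\ref{ppty:dim-codim}\eqref{ppty:dim-codim:2},
\[
\Codim(Z_y,\Spec B)\ \geq\ \Codim\bigl(Z_y\cap\Spec(B/\GM_AB),\ \Spec(B/\GM_AB)\bigr).
\]
The crucial input is that for every prime $\Gp$ of $B$ with contraction $\Ga=\Gp\cap A$, the module $(\SF_y)_\Gp$ is non-zero and flat over $A_\Ga$, so Fact~\ref{fact:elem-flat}\eqref{fact:elem-flat:3} applied with $\SG=\SO_T$ gives $\dim B_\Gp=\dim(B/\Ga B)_\Gp+\dim A_\Ga$; in particular $\dim B_\Gp\geq\dim(B/\Ga B)_\Gp$.

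\textbf{The local estimate.} I would then pick a minimal prime $\Gp$ of $Z_y$ with $\dim B_\Gp=\Codim(Z_y,\Spec B)$ and a system of parameters $a_1,\dots,a_d$ of $A$ (so $d=\dim A$ and $\sqrt{\GM_AB}=\sqrt{(a_1,\dots,a_d)B}$, whence $\Spec(B/\GM_AB)=V(a_1,\dots,a_d)$ as a subset of $\Spec B$). Since $\GM_B$ lies in $Z_y$ (because $y\in Z$) and contains $\Gp+(a_1,\dots,a_d)B$, there is a prime $\Gq$ with $\Gp+(a_1,\dots,a_d)B\subseteq\Gq\subseteq\GM_B$ that is minimal over $\Gp+(a_1,\dots,a_d)B$; this $\Gq$ lies in $Z_y$ and in $\Spec(B/\GM_AB)$, so it suffices to bound $\dim(B/\GM_AB)_\Gq$ from above by $\dim B_\Gp$. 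Krull's Hauptidealsatz in $B/\Gp$ shows the height of $\Gq/\Gp$ is at most $d$, and since $\Gq$ lies over $\GM_A$ the dimension formula rewrites $\dim(B/\GM_AB)_\Gq=\dim B_\Gq-\dim A$; combining these gives the desired bound. The one delicate point here is that $\SO_{Y,y}$ need not be catenary, so the passage from ``$\operatorname{ht}(\Gq/\Gp)\le d$'' to ``$\dim B_\Gq\le\dim B_\Gp+d$'' has to be run through the flatness dimension formula rather than through naïve additivity of codimension; I expect this bookkeeping to be the only real obstacle. (Alternatively, \eqref{lem:quasi-flat|eq} is essentially \cite[IV, (6.1.4)]{EGA} applied to the quasi-flat morphism $f$, and one may simply invoke that.)

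\textbf{The second assertion.} Granting \eqref{lem:quasi-flat|eq}, suppose $\Codim(Z\cap Y_t,Y_t)\geq k$. For every $y\in Y_t$ we then have $\Codim_y(Z,Y)\geq\Codim_y(Z\cap Y_t,Y_t)\geq\Codim(Z\cap Y_t,Y_t)\geq k$. Since $Z$ is closed in the locally Noetherian scheme $Y$, hence locally Noetherian, the function $y\mapsto\Codim_y(Z,Y)$ is lower semi-continuous (Property~\ref{ppty:dim-codim}\eqref{ppty:dim-codim:2}), so $V:=\{\,y\in Y\mid\Codim_y(Z,Y)\geq k\,\}$ is open and contains $Y_t$. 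Finally, for any $z\in Z\cap V$ we have $\Codim_z(Z,Y)\geq k$, and taking $z'=z$ in the definition of $\Codim_z(Z,Y)$ gives $\dim\SO_{Y,z}\geq k$, so that $\Codim(Z\cap V,V)=\inf\{\dim\SO_{V,z}\mid z\in Z\cap V\}=\inf\{\dim\SO_{Y,z}\mid z\in Z\cap V\}\geq k$, which is the claim.
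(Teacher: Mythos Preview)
Your treatment of the second assertion via lower semi-continuity of $y\mapsto\Codim_y(Z,Y)$ is correct and matches the paper exactly. For the inequality \eqref{lem:quasi-flat|eq}, the paper takes a much shorter route than your Hauptidealsatz argument: it works only with points $z\in Z\cap Y_t$ (with $y\in\overline{\{z\}}$), for which the dimension formula \eqref{eq:fact:elem-flat:1} applied to $(\SF,\SO_T)$ gives $\dim\SO_{Y,z}=\dim\SO_{Y_t,z}+\dim\SO_{T,t}\geq\dim\SO_{Y_t,z}$, and then invokes Property~\ref{ppty:dim-codim}\eqref{ppty:dim-codim:2} to pass to codimensions. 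This is also the proof of \cite[IV, (6.1.4)]{EGA}, which you rightly cite as an alternative.

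Your own route has a genuine gap at exactly the spot you flag. You need $\dim B_{\mathfrak q}\leq\dim B_\Gp+d$, but Hauptidealsatz gives only $\operatorname{ht}_{B/\Gp}(\mathfrak q/\Gp)\leq d$; the passage from the latter to the former is the inequality $\operatorname{ht}_B\mathfrak q\leq\operatorname{ht}_B\Gp+\operatorname{ht}(\mathfrak q/\Gp)$, which is precisely the catenary condition and can fail (the general inequality goes the other way: one always has $\operatorname{ht}_B\mathfrak q\geq\operatorname{ht}_B\Gp+\operatorname{ht}(\mathfrak q/\Gp)$). Your proposed fix---``run through the flatness dimension formula''---is circular: applying that formula at $\mathfrak q$ gives $\dim(B/\GM_AB)_{\mathfrak q}=\operatorname{ht}_B\mathfrak q-d$, so the desired bound $\dim(B/\GM_AB)_{\mathfrak q}\leq\operatorname{ht}_B\Gp$ is equivalent to $\operatorname{ht}_B\mathfrak q\leq\operatorname{ht}_B\Gp+d$, which is what you are trying to prove. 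I do not see how the $\Gp$-to-$\mathfrak q$ strategy closes without catenary; the paper's direct argument sidesteps this by never leaving the fibre $Y_t$.
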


\begin{proof}
For the sheaf \( \SF \) above, we have
\( \Supp \SF_{(t)} = Y_{t} \) for any \( t \in T \).
If \( z \in Z \cap Y_{t}\), then
\begin{equation}\label{lem:quasi-flat|eq1}
\dim \SF_{z} = \dim \SO_{Y, z} \quad \text{and} \quad
\dim (\SF_{(t)})_{z} = \dim \SO_{Y_{t}, z}
\end{equation}
by Property~\ref{ppty:dim-codim}\eqref{ppty:dim-codim:1}, and moreover,
\begin{equation}\label{lem:quasi-flat|eq2}
\dim \SF_{z} = \dim (\SF_{(t)})_{z} + \dim \SO_{T, t} \geq \dim (\SF_{(t)})_{z}
\end{equation}
by \eqref{eq:fact:elem-flat:1}, since \( \SF \) is flat over \( T \).
Thus, we have \eqref{lem:quasi-flat|eq}
from \eqref{lem:quasi-flat|eq1} and \eqref{lem:quasi-flat|eq2}
by Property~\ref{ppty:dim-codim}\eqref{ppty:dim-codim:2}.
The last assertion follows from \eqref{lem:quasi-flat|eq} and the lower-semicontinuity
of the function \( x \mapsto \Codim_{x}(Z, Y) \)
(cf.\ \cite[$0_{\text{IV}}$, Cor.\ (14.2.6)]{EGA}).
In fact, the set of points \( y \in Y \) with \( \Codim_{y}(Z, Y) \geq k \)
is an open subset containing \( Y_{t} \).
\end{proof}

We introduce the following notion
(cf.\ \cite[IV, D\'ef.~(17.10.1)]{EGA} and \cite[p.~6]{Conrad}).

\begin{dfn}[pure relative dimension]\label{dfn:PureRelDim}
Let \( f \colon Y \to T \) be a morphism locally of finite type.
The \emph{relative dimension of} \( f \) at \( y \) is defined as
\( \dim_{y} Y_{f(y)} \), and it is denoted by \( \dim_{y} f \).
We say that \( f \) \emph{has pure relative dimension}
\( d \)
if \( d = \dim_{y} f \) for any \( y \in Y \).
The condition is equivalent to that
every non-empty fiber is equi-dimensional and
has dimension equal to \( d \).
\end{dfn}

\begin{rem}\label{rem:dfn:PureRelDim}
If a flat morphism \( f \colon Y \to T \) is locally of finite type and
it has pure relative dimension, then it is an \emph{equi-dimensional morphism}
in the sense of \cite[IV, D\'ef.~(13.3.2), ($\text{Err}_{\text{IV}}$, 35)]{EGA}.
Because, a generic point of \( Y \) is mapped a generic point of \( T \) by
\eqref{eq:fact:elem-flat:1} applied to
\( \SF = \SO_{Y} \) and \( \SG = \SO_{T} \),
and the condition \( \text{a}'') \) of \cite[IV, Prop.~13.3.1]{EGA} is satisfied.
\end{rem}

\begin{lem}\label{lem:S2Codim2}
Let \( f \colon Y \to T \) be a flat morphism locally of finite type
between locally Noetherian schemes.
For a point \( y \in Y \) and its image \( t = f(y) \),
assume that the fiber \( Y_{t} \) satisfies \( \bfS_{k} \) at \( y \)
for some \( k \geq 2 \). Let \( Y^{\circ} \) be an open subset of \( Y \)
with \( y \not\in Y^{\circ} \).
Then, there exists an open neighborhood \( U \) of \( y \) in \( Y \) such that
\begin{enumerate}
\item  \label{lem:S2Codim2:1} \( f|_{U} \colon U \to T \)
is an \( \bfS_{k} \)-morphism
having pure relative dimension, and

\item \label{lem:S2Codim2:2}
the inequality
\[ \Codim(U_{t'} \setminus Y^{\circ}, U_{t'})
\geq \Codim_{y}(Y_{t} \setminus Y^{\circ}, Y_{t})\]
holds for any \( t' \in f(U) \), where \( U_{t'} = U \cap Y_{t'} \).
\end{enumerate}
\end{lem}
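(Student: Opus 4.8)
The plan is to establish \eqref{lem:S2Codim2:1} first --- producing an open $U$ over which $f$ is an $\bfS_{k}$-morphism and then shrinking $U$ so that it has pure relative dimension --- and afterwards to deduce \eqref{lem:S2Codim2:2} from the upper semi-continuity of the fiber-dimension function. Since $f$ is flat, $\Fl(Y/T) = Y$, so the assumption that $Y_{t}$ satisfies $\bfS_{k}$ at $y$ says exactly that $y \in \bfS_{k}(Y_{t}) \subset \bfS_{k}(Y/T)$ (cf.\ Definition~\ref{dfn:RelSkCMlocus}). By Fact~\ref{fact:dfn:RelSkCMlocus}\eqref{fact:dfn:RelSkCMlocus:3} the subset $\bfS_{k}(Y/T)$ is open, so $f$ restricts to an $\bfS_{k}$-morphism over some open neighbourhood $U_{1}$ of $y$. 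Because $k \geq 2$, every fiber of $f|_{U_{1}}$ satisfies $\bfS_{2}$; being locally of finite type over a field it is catenary, hence locally equi-dimensional by Fact~\ref{fact:S2}\eqref{fact:S2:1}. By the theory of equi-dimensional morphisms (cf.\ Remark~\ref{rem:dfn:PureRelDim} and \cite[IV, \S 13.3]{EGA}), $f|_{U_{1}}$ is then equi-dimensional at every point, so the function $y' \mapsto \dim_{y'} f$ is locally constant on $U_{1}$; replacing $U_{1}$ by a smaller open neighbourhood of $y$ yields an open $U$ over which $f$ is an $\bfS_{k}$-morphism of pure relative dimension $d := \dim_{y} f$. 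This proves \eqref{lem:S2Codim2:1}.

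For \eqref{lem:S2Codim2:2}, set $Z := Y \setminus Y^{\circ}$, a closed subset of $Y$ with $y \in Z$, and $c := \Codim_{y}(Y_{t} \setminus Y^{\circ}, Y_{t}) = \Codim_{y}(Z \cap Y_{t}, Y_{t})$. By the previous paragraph we may assume $f|_{U}$ is an $\bfS_{k}$-morphism of pure relative dimension $d$, so every fiber $Y_{f(z)}$ of $f|_{U}$ is catenary and, at each point $z \in U$, equi-dimensional of dimension $d$; consequently, by Property~\ref{pprt:catenary} and the dimension theory of algebraic schemes over a field (using that the fiber is equi-dimensional of dimension $d$ and catenary at $z$),
\[
\Codim_{z}(Z \cap Y_{f(z)},\, Y_{f(z)}) \;=\; d - \dim_{z}(Z \cap Y_{f(z)})
\qquad (z \in Z \cap U),
\]
and in particular $\dim_{y}(Z \cap Y_{t}) = d - c$. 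Now the fiber-dimension function of the morphism $f|_{Z} \colon Z \to T$, which is locally of finite type, is upper semi-continuous (cf.\ \cite[IV, (13.1.3)]{EGA}), so $\{\, z \in Z \mid \dim_{z}(Z \cap Y_{f(z)}) \leq d - c \,\}$ is open in $Z$ and contains $y$; shrinking $U$ accordingly we may assume $\Codim_{z}(Z \cap Y_{f(z)}, Y_{f(z)}) \geq c$ for every $z \in Z \cap U$. For such a $z$, taking $z$ itself in the infimum of Property~\ref{ppty:dim-codim}\eqref{ppty:dim-codim:2} gives $\dim \SO_{Y_{f(z)}, z} \geq c$. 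Hence, for any $t' \in f(U)$, using that $U_{t'} = U \cap Y_{t'}$ is open in $Y_{t'}$ and that $U_{t'} \setminus Y^{\circ} = Z \cap U_{t'}$,
\[
\Codim(U_{t'} \setminus Y^{\circ},\, U_{t'}) \;=\; \inf\{\, \dim \SO_{Y_{t'}, z} \mid z \in Z \cap U_{t'} \,\} \;\geq\; c \;=\; \Codim_{y}(Y_{t} \setminus Y^{\circ}, Y_{t}),
\]
again by Property~\ref{ppty:dim-codim}\eqref{ppty:dim-codim:2} (with $\Codim(\emptyset, U_{t'}) = +\infty$ when $Z \cap U_{t'} = \emptyset$), which is \eqref{lem:S2Codim2:2}.

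The step I expect to be the main obstacle is the pure-relative-dimension clause of \eqref{lem:S2Codim2:1}. Flatness and local finiteness of type alone do not imply it: the union of a line and a plane meeting at a point, viewed as a scheme over a field, is connected and flat over that field, yet its fiber-dimension function is not constant. Thus the $\bfS_{2}$-hypothesis on the fibers must be genuinely used, through the implication ``$\bfS_{2}$ (plus catenary) $\Rightarrow$ locally equi-dimensional'' of Fact~\ref{fact:S2}\eqref{fact:S2:1} together with the openness of the equi-dimensionality locus for flat morphisms locally of finite type. Once pure relative dimension is in hand, the remaining dimension bookkeeping on the fibers --- in particular the codimension identity used above and its compatibility with the reading of Property~\ref{pprt:catenary} in the equi-dimensional case --- is routine.
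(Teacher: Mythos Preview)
Your argument is sound in structure and largely matches the paper's, but the crucial step in \eqref{lem:S2Codim2:1}---passing from ``fibers locally equi-dimensional'' to ``pure relative dimension near $y$''---is supported by the wrong references. Remark~\ref{rem:dfn:PureRelDim} runs in the opposite logical direction (pure relative dimension $\Rightarrow$ equi-dimensional morphism), and the vague citation of \cite[IV, \S13.3]{EGA} does not pin down the needed openness statement. The paper handles this step via \cite[IV, Th.~(12.1.1)(ii)]{EGA}: since each fiber satisfies $\bfS_{1}$ (no embedded primes) and is locally equi-dimensional, all associated prime cycles of $Y_{t'}$ through any point $y'$ have the same dimension; by that theorem the locus of points where these cycles all have a fixed dimension $n$ is open, and since these open sets partition $U_{1}$, the function $y' \mapsto \dim_{y'} f$ is locally constant. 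You have correctly identified the mathematical inputs (catenary $+$ $\bfS_{2}$ $\Rightarrow$ locally equi-dimensional), but not the reference that turns them into the conclusion.

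For \eqref{lem:S2Codim2:2} you take a genuinely different route. The paper observes that once \eqref{lem:S2Codim2:1} holds, $f|_{U}$ is an equi-dimensional morphism (this is where Remark~\ref{rem:dfn:PureRelDim} is actually used) and then invokes \cite[IV, Prop.~(13.3.7)]{EGA}, which directly gives lower semi-continuity of $y' \mapsto \Codim_{y'}(Y_{f(y')}\setminus Y^{\circ}, Y_{f(y')})$. You instead convert fiber codimension to fiber dimension via the identity $\Codim_{z}(Z \cap Y_{f(z)}, Y_{f(z)}) = d - \dim_{z}(Z \cap Y_{f(z)})$ (valid because the fibers are catenary and equi-dimensional of dimension $d$) and then apply Chevalley's upper semi-continuity \cite[IV, (13.1.3)]{EGA} to $f|_{Z}$. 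This is more hands-on but has the virtue of using only the elementary Chevalley theorem rather than the heavier machinery of equi-dimensional morphisms; the paper's one-line citation is shorter but less transparent.
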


\begin{proof}
By Fact~\ref{fact:dfn:RelSkCMlocus}\eqref{fact:dfn:RelSkCMlocus:3},
replacing \( Y \) with an open neighborhood of \( y \),
we may assume that \( f \) is an \( \bfS_{k} \)-morphism.
For any point \( y' \in Y\) and for the fiber \( Y_{t'} \) over \( t' = f(y') \),
the local ring \( \SO_{Y_{t'}, y'} \) is equi-dimensional
by Fact~\ref{fact:S2}\eqref{fact:S2:1},
since \( Y_{t'} \) is catenary satisfying \( \bfS_{2} \).
Moreover, the local ring has no embedded primes by the condition \( \bfS_{1} \).
If an \emph{associated prime cycle} \( \Gamma \) of \( Y_{t'} \) 
(cf.\ \cite[IV, D\'ef.~(3.1.1)]{EGA}) contains \( y' \), 
then \( \Gamma \) corresponds to an associated prime ideal \( \Gp \) of 
\( \SO_{Y', t'} \), and 
we have 
\[ \dim \Gamma = \dim_{y'} \Gamma 
= \dim \SO_{Y_{t'}, y'}/\Gp + \transdeg \Bbbk(y')/\Bbbk(t') \]
by  \cite[IV, Prop.\ (5.2.1), Cor.\ (5.2.3)]{EGA}. 
Since \( \Gp \) is minimal and \( \SO_{Y_{t'}, y'} \) is equi-dimensional, 
it follows that all the associated prime cycles of \( Y_{t'} \) containing \( y' \) 
have the same dimension. Thus, by \cite[IV, Th.~(12.1.1)(ii)]{EGA}, we may assume that
\( f \) has pure relative dimension, by replacing \( Y \)
with an open neighborhood of \( y \).
Consequently, \( Y \to T \) is an equi-dimensional morphism
(cf.\ Remark~\ref{rem:dfn:PureRelDim}). Then, the function
\[ Y \ni y' \mapsto \Codim_{y'}(Y_{f(y')} \setminus Y^{\circ}, Y_{f(y')}) \]
is lower semi-continuous by \cite[IV, Prop.~(13.3.7)]{EGA}.
Hence, we can take an open neighborhood \( U \) of \( y \) satisfying
the inequality in \eqref{lem:S2Codim2:2}. Thus,
we are done.
\end{proof}

\begin{cor}\label{cor:lem:S2Codim2}
Let \( f \colon Y \to T \) be an \( \bfS_{2} \)-morphism of
locally Noetherian schemes. Assume that every fiber \( Y_{t}  \)
is connected.
\begin{enumerate}
\item \label{cor:lem:S2Codim2:1} If \( T \) are connected,
then \( f \) has pure relative dimension. In particular,
\( f \) is an equi-dimensional morphism.

\item \label{cor:lem:S2Codim2:2}
If \( f \) is proper, then
the function \( t \mapsto \Codim(Y_{t} \cap Z, Y_{t})  \)
is lower semi-continuous on \( T \) for any closed subset \( Z \) of \( Y \).
\end{enumerate}
\end{cor}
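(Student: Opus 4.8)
The plan is to read off both statements from Lemma~\ref{lem:S2Codim2}, whose two clauses are designed for precisely this situation; the only extra ingredients will be the openness of flat morphisms locally of finite type and, for \eqref{cor:lem:S2Codim2:2}, the fact that proper morphisms are closed.

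For \eqref{cor:lem:S2Codim2:1} I would proceed as follows. A connected fiber is non-empty, so $f$ is surjective; and $f$ is open, being flat and locally of finite type over a locally Noetherian base (\cite[IV, Th.~(2.4.6)]{EGA}). From these two facts $Y$ must itself be connected: a decomposition $Y = Y_{1} \sqcup Y_{2}$ into non-empty open subsets would give $T = f(Y_{1}) \cup f(Y_{2})$ with $f(Y_{1})$ and $f(Y_{2})$ non-empty open and disjoint -- disjoint because a common point would disconnect the connected fiber over it -- contradicting the connectedness of $T$. Then, applying Lemma~\ref{lem:S2Codim2}\eqref{lem:S2Codim2:1} with $k = 2$ and $Y^{\circ} = \emptyset$ at each point of $Y$ (legitimate since each fiber satisfies $\bfS_{2}$), $f$ has pure relative dimension on an open neighbourhood of every point; since $\dim_{y} f$ depends only on a neighbourhood of $y$ in its fiber, the function $y \mapsto \dim_{y} f$ is locally constant on $Y$, hence constant as $Y$ is connected. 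That constant value is the pure relative dimension of $f$, and then $f$ is an equi-dimensional morphism by Remark~\ref{rem:dfn:PureRelDim}.

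For \eqref{cor:lem:S2Codim2:2}, since lower semicontinuity is local on $T$, it suffices to find, for each $t_{0} \in T$, an open neighbourhood $W$ of $t_{0}$ with $\Codim(Y_{t} \cap Z, Y_{t}) \geq c := \Codim(Y_{t_{0}} \cap Z, Y_{t_{0}})$ for all $t \in W$. For each $y \in Y_{t_{0}} \cap Z$ I would apply Lemma~\ref{lem:S2Codim2} with $k = 2$ and $Y^{\circ} = Y \setminus Z$ (so $y \notin Y^{\circ}$) to obtain an open neighbourhood $U_{y}$ of $y$ with $\Codim(U_{y} \cap Y_{t'} \cap Z,\, U_{y} \cap Y_{t'}) \geq \Codim_{y}(Y_{t_{0}} \cap Z, Y_{t_{0}}) \geq c$ for every $t' \in f(U_{y})$, where the last inequality is $\Codim_{y} \geq \Codim$ from Property~\ref{ppty:dim-codim}\eqref{ppty:dim-codim:2}. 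Setting $U = \bigcup_{y} U_{y}$, the subset $Z \setminus U$ is closed in $Y$ and disjoint from $Y_{t_{0}}$, so $f(Z \setminus U)$ is closed in $T$ by properness and omits $t_{0}$; take $W = T \setminus f(Z \setminus U)$. For $t \in W$ one has $Y_{t} \cap Z \subseteq U$, so any $z \in Y_{t} \cap Z$ lies in some $U_{y}$ with $t = f(z) \in f(U_{y})$, whence $\dim \SO_{Y_{t}, z} = \dim \SO_{U_{y} \cap Y_{t}, z} \geq \Codim(U_{y} \cap Y_{t} \cap Z,\, U_{y} \cap Y_{t}) \geq c$; taking the infimum over such $z$ and using Property~\ref{ppty:dim-codim}\eqref{ppty:dim-codim:2} gives $\Codim(Y_{t} \cap Z, Y_{t}) \geq c$.

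Both arguments are short once Lemma~\ref{lem:S2Codim2} is in hand; the one point that needs care -- and what I would flag as the main obstacle -- is the bookkeeping in \eqref{cor:lem:S2Codim2:2} around the clause ``for every $t' \in f(U_{y})$'' in Lemma~\ref{lem:S2Codim2}\eqref{lem:S2Codim2:2}: that estimate can be invoked only once one knows the relevant point $z$ of a nearby fiber lies in a single $U_{y}$, and it is exactly properness (making $f(Z \setminus U)$ closed) that secures this on a neighbourhood of $t_{0}$. I would also make explicit the harmless convention that ``connected'' includes ``non-empty'', so that connectedness of the fibers forces $f$ surjective in \eqref{cor:lem:S2Codim2:1}; without it the relative dimension can genuinely jump, already over a nodal one-dimensional base.
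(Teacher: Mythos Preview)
Your proof is correct and follows essentially the same line as the paper's: both parts are read off from Lemma~\ref{lem:S2Codim2}, and in \eqref{cor:lem:S2Codim2:2} properness is used to push the fiberwise codimension estimate down to $T$. The only cosmetic differences are that for \eqref{cor:lem:S2Codim2:1} the paper argues local constancy of $\dim Y_t$ on $T$ (using that each fiber is equi-dimensional) rather than proving $Y$ connected, and for \eqref{cor:lem:S2Codim2:2} the paper quotes \cite[IV, Prop.~(13.3.7)]{EGA} globally (after reducing to $T$ connected via \eqref{cor:lem:S2Codim2:1}) to get the set $F_k=\{y:\Codim_y(Z\cap Y_{f(y)},Y_{f(y)})\le k\}$ closed and then takes $f(F_k)$, whereas you assemble the same conclusion by hand from Lemma~\ref{lem:S2Codim2}\eqref{lem:S2Codim2:2} and a covering argument.
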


\begin{proof}
We may assume that \( T \) is connected.
We know that every fiber \( Y_{t} \) is equi-dimensional
by the proof of Lemma~\ref{lem:S2Codim2}, since \( Y_{t} \) is connected.
Moreover, \( \dim Y_{t} \) is independent of the choice of \( t \in T\)
by Lemma~\ref{lem:S2Codim2}\eqref{lem:S2Codim2:1}, since \( T \) is connected.
Hence, \( f \) has pure relative dimension, and \eqref{cor:lem:S2Codim2:1}
has been proved.
In the case \eqref{cor:lem:S2Codim2:2}, \( f(Y) = T \), since \( f(Y) \) is open and closed.
Let us consider the set  \( F_{k} \) of points
\( y \in Y \) such that
\[ \Codim_{y} (Z \cap Y_{f(y)}, Y_{f(y)}) \leq k\]
for an integer \( k \).
Then, \( f(F_{k}) \) is the set of points \( t \in T \)
with \( \Codim(Y_{t} \cap Z, Y_{t}) \leq k \).
Now, \( F_{k} \) is closed by \eqref{cor:lem:S2Codim2:1}
and by \cite[IV, Prop.~(13.3.7)]{EGA}.
Since \( f \) is proper, \( f(F_{k}) \) is closed.
This proves \eqref{cor:lem:S2Codim2:2}, and we are done.
\end{proof}

%%%% Section 3 %%%%%
%%%%%%%%%%%%%%%%%%%%%%%%%%%%%%%%%%%%%%%%%%%%%%%%%%
%%%%%%%%%%%%%%%%%%%%%%%%%%%%%%%%%%%%%%%%%%%%%%%%%%

\section{Relative \texorpdfstring{$\bfS_{2}$}{S2}-condition and flatness}
\label{sect:flat}

We shall study  \emph{restriction homomorphisms}
(cf.\ Definition~\ref{dfn:restmor} below) of coherent sheaves to open subsets
by applying the local criterion of flatness (cf.\ Section~\ref{subsect:LCflat}), and give 
several criteria for the restriction homomorphism on a fiber to be an isomorphism. 
In Section~\ref{subsect:Resthom}, we prove
the key proposition (Proposition~\ref{prop:key}) and discuss related properties.
Some applications of Proposition~\ref{prop:key} are given in Section~\ref{subsect:AppResthom}:  
Theorem~\ref{thm:invExt} is a criterion for a sheaf to be invertible, 
which is used in the proof of Theorem~\ref{thm:S2S3crit} below. 
Proposition~\ref{prop:inf+val} gives infinitesimal and valuative criteria 
for a reflexive sheaf to satisfy relative \( \bfS_{2} \)-condition. 
Theorem~\ref{thm:dd dec} on the relative \( \bfS_{2}\) refinement 
is analogous to the flattening stratification theorem by Mumford in \cite[Lect.\ 8]{Mumford} 
and to the representability theorem of unramified functors by Murre \cite{Murre}. 
Its local version is given as Theorem~\ref{thm:enhancement}.

%%%%%%%%%%%%%%%%%%%%%%%%%%%%%%%%%%%%%%%%%%

\subsection{Restriction homomorphisms}
\label{subsect:Resthom}
In Section~\ref{subsect:Resthom},
we work under Situation~\ref{sit:YTZF} below unless otherwise stated:

\begin{sit}\label{sit:YTZF}
We fix a morphism \( f \colon Y \to T \) of locally Noetherian schemes,
a closed subset \( Z \) of \( Y \), and a coherent \( \SO_{Y} \)-module \( \SF \).
The complement of \( Z \) in \( Y \) is written as \( U \),
and \( j \colon U \injmap Y \) stands for the open immersion.
\end{sit}

\begin{dfn}\label{dfn:restmor}
The \emph{restriction morphism} of \( \SF \) to \( U \) is defined
as the canonical homomorphism
\[ \phi = \phi_{U}(\SF) \colon \SF \to j_{*}(\SF|_{U}). \]
Similarly, for a point \( t \in T \), the restriction homomorphism
of \( \SF_{(t)} \)  to \( U \) (or to \( U \cap Y_{t} \))
is defined as the canonical homomorphism
\[ \phi_{t} = \phi_{U}(\SF_{(t)}) \colon \SF_{(t)} \to j_{*}(\SF_{(t)}|_{U \cap Y_{t}}). \]
Here, \( U \cap Y_{t} \) is identical to \( U \times_{Y} Y_{t} \),
and \( j \) stands also for the open immersion \( U \cap Y_{t} \injmap Y_{t} \).
\end{dfn}

\begin{remn}
The homomorphism \( \phi_{t} \) is an isomorphism along \( U \cap Y_{t} \).
In particular, \( \phi_{t} \) is an isomorphism
if \( t \not\in f(Z) \).
\end{remn}

\begin{remn}
By Remark~\ref{rem:depth:local isom}, we see that
\( \phi \) is an injection (resp.\ isomorphism) along \( Y_{t} \) if and only if
\[ \depth \SF_{y} \geq 1 \qquad (\text{resp. }  \geq 2) \]
for any point \( y \in Z \) such that \( Y_{t} \cap \overline{\{y\}} \ne \emptyset\).
\end{remn}

We use the following notation only in Section~\ref{subsect:Resthom}.
\begin{nota}\label{nota:Resthom2}
For simplicity, we write
\[ \SF_{*} := j_{*}(\SF|_{U}) \quad \text{and} \quad \SF_{(t)*} := j_{*}(\SF_{(t)}|_{U \cap Y_{t}}).\]
When we fix a point \( t \) of \( f(Z) \), we write
\( A \) for the local ring \( \SO_{T, t} \) and \( \GM \) for the maximal ideal \( \GM_{T, t} \),
and for an integer \( n \geq 0 \), we set
\begin{gather*}
A_{n} := A/\GM^{n+1}, \quad T_{n} := \Spec A_{n}, \quad
Y_{n} := Y \times_{T} T_{n}, \\
U_{n} = Y_{n} \cap U, \quad \SF_{n} := \SF \otimes_{\SO_{Y}} \SO_{Y_{n}}, \quad
\SF_{n*} := j_{*}(\SF_{n}|_{U_{n}}).
\end{gather*}
In particular, \( Y_{t} = Y_{0} \), \( \SF_{(t)} = \SF_{0} \),
\( \SF_{(t)*} = \SF_{0*} \), and \( Y_{n} \) is a closed subscheme of \( Y_{m} \)
for any \( m \geq n \). Furthermore,
the restriction homomorphisms of \( \SF_{n} \) and \( (\SF_{n*})_{(t)} \), respectively,
are written by
\[ \phi_{n} \colon \SF_{n} \to \SF_{n*} = j_{*}(\SF_{n}|_{U_{n}}) \quad \text{and} \quad
\varphi_{n} \colon (\SF_{n*})_{(t)} = \SF_{n*} \otimes_{\SO_{Y_{n}}} \SO_{Y_{0}} \to \SF_{0*}. \]
\end{nota}

\begin{rem}\label{rem:nota:Resthom2}
The homomorphism \( \phi_{t} \) in Definition~\ref{dfn:restmor}
equals \( \phi_{0} \), and the diagram
\[ \begin{CD}
\SF_{n} \otimes_{\SO_{Y}} \SO_{Y_{0}} @>{\phi_{n} \otimes \SO_{Y_{0}}}>>
(\SF_{n*}) \otimes_{\SO_{Y}} \SO_{Y_{0}} \\
@V{\isom}VV @VV{\varphi_{n}}V \\
\SF_{0} @>{\phi_{0}}>> \SF_{0*}
\end{CD}\]
is commutative for any \( n \geq 0 \).
\end{rem}

\begin{lem}\label{lem:S2flat(new)}
Assume that \( \SF|_{U} \) is flat over \( T \).
\begin{enumerate}
\item \label{lem:S2flat(new):1}
For a point \( y \in Z \) and \( t = f(y) \),
if \( \phi_{t} \) is injective at \( y \), then
\( \SF_{y} \) is flat over \( \SO_{T, t} \).

\item \label{lem:S2flat(new):2}
For a point \( y \in Z \) and \( t = f(y) \),
if \( \phi_{t} \) is an isomorphism at \( y \), then
the restriction homomorphism \( \phi_{n} \colon \SF_{n} \to \SF_{n*} \)
is an isomorphism at \( y \) for any \( n \geq 0 \).

\item \label{lem:S2flat(new):3}
If \( \phi_{t} \) is an isomorphism for any \( t \in f(Z) \),
then \( \phi \) is also an isomorphism.

\end{enumerate}
\end{lem}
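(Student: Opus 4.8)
The three assertions will be proved in order, with \eqref{lem:S2flat(new):1} doing the real work and the other two following from it. For \eqref{lem:S2flat(new):1}, since flatness is a local question, I would first replace $Y$, $T$ by $\Spec \SO_{Y,y}$, $\Spec \SO_{T,t}$ and write $A = \SO_{T,t}$ (maximal ideal $\GM$, residue field $\Bbbk(t)$), $B = \SO_{Y,y}$, $M = \SF_{y}$, and $I \subseteq B$ for the ideal of $Z$. The hypotheses then read: $M_{\Gp}$ is $A$-flat for every prime $\Gp$ of $B$ with $I \not\subseteq \Gp$; and, by Property~\ref{ppty:depth<=2}, $\depth_{I}(M/\GM M) \geq 1$ (the translation of ``$\phi_{t}$ injective at $y$''), so there is $b \in I$ acting injectively on $M/\GM M$ — unless $M = \GM M$, when $M = 0$ by Nakayama and nothing is to be proved. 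By the local criterion of flatness (Proposition~\ref{prop:LCflat}) it suffices to show that $M \otimes_{A} A/\GM^{n+1}$ is flat over $A/\GM^{n+1}$ for all $n$; thus I may assume $A$ is Artinian and induct on $\operatorname{length}_{A} A$, the case $A = \Bbbk(t)$ being trivial. Choose a minimal nonzero ideal $J = j_{0} A \cong \Bbbk(t)$, so $J^{2} = 0$ and $(0 :_{A} j_{0}) = \GM$. Applying the induction hypothesis to $(A/J, B/JB, M/JM)$ — legitimate, since $M/\GM M$ and the flatness of the $M_{\Gp}$ are unaffected — shows $M/JM$ is $A/J$-flat, so by the local criterion of flatness for the square-zero extension $A \to A/J$ it remains to prove $\Tor^{A}_{1}(A/J, M) = 0$, i.e.\ $(0 :_{M} j_{0}) \subseteq \GM M$. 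Let $\overline{N}$ denote the image of $(0 :_{M} j_{0})$ in $M/\GM M$. For $\Gp \not\supseteq I$, flatness of $M_{\Gp}$ gives $(0 :_{M_{\Gp}} j_{0}) = (0 :_{A} j_{0}) M_{\Gp} = \GM M_{\Gp}$, so $\overline{N}_{\Gp} = 0$; hence $\Supp_{B} \overline{N} \subseteq V(I)$, so $b^{k} \overline{N} = 0$ for some $k$, and since $b^{k}$ acts injectively on $M/\GM M \supseteq \overline{N}$ we conclude $\overline{N} = 0$.

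For \eqref{lem:S2flat(new):2}, since $\phi_{t}$ is injective at $y$, part \eqref{lem:S2flat(new):1} makes $\SF_{y}$ flat over $\SO_{T,t}$, and after shrinking $Y$ I may assume $\SF$ is $T$-flat. I would induct on $n$, with $n = 0$ being the hypothesis. Writing $r = \dim_{\Bbbk(t)} \GM^{n+1}/\GM^{n+2}$ and tensoring the short exact sequence $0 \to \GM^{n+1}/\GM^{n+2} \to A_{n+1} \to A_{n} \to 0$ over $A$ with the flat sheaf $\SF$, resp.\ with the flat sheaf $\SF|_{U}$, gives exact sequences $0 \to \SF_{(t)}^{\oplus r} \to \SF_{n+1} \to \SF_{n} \to 0$ near $y$ and $0 \to (\SF_{(t)}|_{U \cap Y_{t}})^{\oplus r} \to \SF_{n+1}|_{U_{n+1}} \to \SF_{n}|_{U_{n}} \to 0$ on $U$; applying $j_{*}$ to the latter (left-exact, and commuting with finite direct sums) yields a left-exact sequence $0 \to \SF_{0*}^{\oplus r} \to \SF_{(n+1)*} \to \SF_{n*}$. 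The restriction homomorphisms $\phi_{0}$, $\phi_{n+1}$, $\phi_{n}$ form a commutative ladder between these two rows whose outer verticals, $\phi_{0}^{\oplus r}$ and $\phi_{n}$, are isomorphisms at $y$ (by hypothesis and induction); a diagram chase — in which the surjectivity of $\SF_{(n+1)*} \to \SF_{n*}$ at $y$ is extracted en route — then shows $\phi_{n+1}$ is an isomorphism at $y$.

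For \eqref{lem:S2flat(new):3}, by Property~\ref{ppty:depth<=2} it is enough to check $\depth \SF_{y} \geq 2$ for every $y \in Z$, which is clear when $\SF_{y} = 0$. Otherwise set $t = f(y) \in f(Z)$; as $\phi_{t}$ is an isomorphism it is injective at $y$, so $\SF_{y}$ is $\SO_{T,t}$-flat by \eqref{lem:S2flat(new):1}, whence $(\SF_{(t)})_{y} \neq 0$ (Nakayama) and $\depth \SF_{y} = \depth (\SF_{(t)})_{y} + \depth \SO_{T,t} \geq \depth (\SF_{(t)})_{y}$ by Fact~\ref{fact:elem-flat}\eqref{fact:elem-flat:3}; and $\depth (\SF_{(t)})_{y} \geq 2$ because $\phi_{t}$ is an isomorphism at $y$, by Remark~\ref{rem:depth:local isom} applied on $Y_{t}$ with the closed subset $Z \cap Y_{t}$.

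The essential difficulty is \eqref{lem:S2flat(new):1}: deducing \emph{flatness} of $\SF$ along $Z$ from flatness only on $U$ together with a single depth-one condition on the fibre. The reduction to an Artinian base and the support-and-regular-element argument bounding $(0 :_{M} j_{0})$ are where care is needed; the remaining parts are formal once \eqref{lem:S2flat(new):1} is in hand.
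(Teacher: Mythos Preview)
Your proof is correct. The genuine difference from the paper is in part \eqref{lem:S2flat(new):1}: the paper argues with a single commutative ladder built from the short exact sequences $0 \to \GM^{n}/\GM^{n+1} \to A_{n} \to A_{n-1} \to 0$, showing directly that the maps $u_{n} \colon \GM^{n}/\GM^{n+1} \otimes_{\Bbbk} \SF_{0} \to \SF_{n}$ are injective at $y$ (because they are injective on $U_{n}$ and $\phi_{0}$ is injective at $y$), which gives flatness by the local criterion and simultaneously, when $\phi_{0}$ is an isomorphism, yields \eqref{lem:S2flat(new):2} by the same diagram. Your route --- reduce to an Artinian base, induct on length via a minimal ideal $J = j_{0}A$, and kill the torsion $(0:_{M} j_{0})/\GM M$ using that it is supported on $V(I)$ while $b \in I$ acts injectively on $M/\GM M$ --- is a perfectly good alternative, somewhat more ring-theoretic in flavor. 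The paper's approach has the advantage of handling \eqref{lem:S2flat(new):1} and \eqref{lem:S2flat(new):2} in one stroke; yours separates them but is arguably more transparent about where the depth-one hypothesis enters.

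For \eqref{lem:S2flat(new):2} and \eqref{lem:S2flat(new):3} your arguments coincide with the paper's. One small point of language in \eqref{lem:S2flat(new):2}: ``after shrinking $Y$ I may assume $\SF$ is $T$-flat'' should be read as ``after replacing $Y$ by $\Spec \SO_{Y,y}$'', since Situation~\ref{sit:YTZF} does not assume $f$ is locally of finite type and one cannot in general pass to an open neighborhood on which $\SF$ is flat; localizing at $y$ (and using flat base change for $j_{*}$) is what makes the reduction work, and is exactly what the paper does.
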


\begin{proof}
First, we shall prove \eqref{lem:S2flat(new):3}
assuming \eqref{lem:S2flat(new):1} and \eqref{lem:S2flat(new):2}.
Since \( \phi_{t} \) is an isomorphism for any \( t \in f(Z) \),
\( \SF \) is flat over \( T \) by \eqref{lem:S2flat(new):1},
and we have
\[ \depth_{Y_{t} \cap Z} \SF_{(t)} \geq 2 \]
by \eqref{lem:S2flat(new):2} (cf.\ Property~\ref{ppty:depth<=2}).
Then, \( \depth_{Z} \SF \geq 2 \)
by Lemma~\ref{lem:relSkCodimDepth}\eqref{lem:relSkCodimDepth:1},
and \( \phi \) is an isomorphism (cf.\ Property~\ref{ppty:depth<=2}).

Next, we shall prove \eqref{lem:S2flat(new):1} and \eqref{lem:S2flat(new):2}.
We may assume that \( T = \Spec A \) for a local Noetherian ring \( A \)
in which \( t = f(y)\) corresponds to the maximal ideal \( \GM \) of \( A \)
and that \( Y = \Spec \SO_{Y, y}\) for the given point \( y \)
(cf.\ Remark~\ref{rem:depth:local isom}).
We write \( \Bbbk = A/\GM = \Bbbk(t)\) and use Notation~\ref{nota:Resthom2}.
From the standard exact sequence
\[ 0 \to \GM^{n}/\GM^{n+1} \to A_{n} \to A_{n-1} \to 0 \]
of \( A \)-modules, by taking tensor products with \( \SF \) over \( A \),
we have an exact sequence
\begin{equation}\label{lem:S2flat(new):eq1}
\GM^{n}/\GM^{n+1} \otimes_{\Bbbk} \SF_{0} \xrightarrow{u_{n}} \SF_{n} \to \SF_{n-1} \to 0
\end{equation}
of \( \SO_{Y} \)-modules.
Here, the left homomorphism \( u_{n} \) is injective at \( y \) for any \( n \geq 0 \)
if and only if \( \SF_{y} \) is flat over \( \SO_{T, t} \)
by the local criterion of flatness (cf.\ Proposition~\ref{prop:LCflat}).
Now, \( u_{n} \) is injective on the open subset \( U_{n} \),
since \( \SF|_{U} \) is flat over \( T \), and \( u_{0} \) is the identity morphism.
For each \( n > 0 \), there is a natural commutative diagram
\[\begin{CD}
@. \GM^{n}/\GM^{n+1} \otimes_{\Bbbk} \SF_{0} @>{u_{n}}>> \SF_{n} @>>> \SF_{n-1} \\
@. @V{\id \otimes \phi_{0}}VV @V{\phi_{n}}VV @V{\phi_{n-1}}VV \\
0 @>>>\GM^{n}/\GM^{n+1} \otimes_{\Bbbk} j_{*}(\SF_{0}|_{U_{0}}) 
@>{j_{*}(u_{n}|_{U_{n}})}>> j_{*}(\SF_{n}|_{U_{n}})
@>>> j_{*}(\SF_{n-1}|_{U_{n-1}})
\end{CD}\]
of exact sequences.
By assumption, \( \phi_{0} = \phi_{t} \) is an injection (resp.\ isomorphism) at \( y \)
in case \eqref{lem:S2flat(new):1} (resp.\ \eqref{lem:S2flat(new):2})
(cf.\ Remark~\ref{rem:depth:local isom}).
Thus, \( u_{n} \) is injective at \( y \) for any \( n \) by the diagram.
This shows \eqref{lem:S2flat(new):1}. In case \eqref{lem:S2flat(new):2},
by induction on \( n \),
we see that \( \phi_{n} \) is an isomorphism at \( y \) for any \( n \), by the diagram.
Thus, \eqref{lem:S2flat(new):2} also holds, and we are done.
\end{proof}

Applying Lemma~\ref{lem:S2flat(new)} to \( \SF = \SO_{Y} \), we have:

\begin{cor}\label{cor:lem:S2flat(new)}
Suppose that \( U \) is flat over \( T \).  If the restriction homomorphism
\( \phi_{t}(\SO_{Y}) \colon \SO_{Y_{t}} \to j_{*}(\SO_{Y_{t} \cap U}) \) is injective
for a point \( t \in f(Z) \), then \( f \) is flat along \( Y_{t} \).
If \( \phi_{t}(\SO_{Y}) \) is an isomorphism for any \( t \in f(Z) \), then
\( \SO_{Y} \isom j_{*}(\SO_{U}) \).
\end{cor}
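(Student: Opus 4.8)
The plan is to deduce both assertions directly from Lemma~\ref{lem:S2flat(new)} applied to the coherent sheaf $\SF = \SO_{Y}$. Under this specialization, the standing hypothesis ``$\SF|_{U}$ is flat over $T$'' of that lemma becomes exactly ``$U$ is flat over $T$'', and the restriction morphism $\phi_{t} = \phi_{U}(\SF_{(t)})$ of Definition~\ref{dfn:restmor} becomes $\phi_{t}(\SO_{Y}) \colon \SO_{Y_{t}} \to j_{*}(\SO_{Y_{t} \cap U})$, so no further translation is needed and everything reduces to bookkeeping.

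For the first assertion, I would fix $t \in f(Z)$ with $\phi_{t}(\SO_{Y})$ injective and check flatness of $f$ at each point $y \in Y_{t}$ separately, using $Y = U \sqcup Z$ as a set. If $y \in Y_{t} \cap U$, then $\SO_{Y, y} = \SO_{U, y}$ is flat over $\SO_{T, t}$ because $U$ is flat over $T$; note that $\phi_{t}(\SO_{Y})$ is automatically an isomorphism at such $y$, so the lemma is not even invoked here. If $y \in Y_{t} \cap Z$, then $\phi_{t}(\SO_{Y})$ is in particular injective at $y$, so Lemma~\ref{lem:S2flat(new)}\eqref{lem:S2flat(new):1} gives that $\SO_{Y, y}$ is flat over $\SO_{T, t}$. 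Hence $\SO_{Y, y}$ is flat over $\SO_{T, t}$ for every $y \in Y_{t}$, i.e.\ $f$ is flat along $Y_{t}$.

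For the second assertion, if $\phi_{t}(\SO_{Y})$ is an isomorphism for every $t \in f(Z)$, then Lemma~\ref{lem:S2flat(new)}\eqref{lem:S2flat(new):3} applied to $\SF = \SO_{Y}$ asserts precisely that $\phi = \phi_{U}(\SO_{Y}) \colon \SO_{Y} \to j_{*}(\SO_{U})$ is an isomorphism, which is the claimed $\SO_{Y} \isom j_{*}(\SO_{U})$.

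There is essentially no obstacle: the corollary is formal given Lemma~\ref{lem:S2flat(new)}, whose parts \eqref{lem:S2flat(new):1} and \eqref{lem:S2flat(new):3} do all the work. The only thing to keep track of is that, in the first assertion, the points of $Y_{t}$ lying in $U$ must be handled by the flatness of $U$ rather than by the lemma, since Lemma~\ref{lem:S2flat(new)}\eqref{lem:S2flat(new):1} only addresses points of $Z$; part \eqref{lem:S2flat(new):2} of the lemma is not needed for this statement.
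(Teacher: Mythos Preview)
Your proposal is correct and follows exactly the paper's approach: the corollary is obtained by applying Lemma~\ref{lem:S2flat(new)} with \( \SF = \SO_{Y} \), and you have simply spelled out the routine details that the paper leaves implicit.
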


\begin{prop}[key proposition]\label{prop:key}
Suppose that there is an exact sequence
\[ 0 \to \SF \to \SE^{0} \to \SE^{1} \to \SG \to 0 \]
of coherent \( \SO_{Y} \)-modules such that
\begin{enumerate}
\renewcommand{\theenumi}{\roman{enumi}}
\renewcommand{\labelenumi}{(\theenumi)}
\item  \label{prop:key:ass1}
\( \SE^{0} \), \( \SE^{1} \), and \( \SG|_{U} \) are flat over \( T \), and

\item \label{prop:key:ass2}
the inequalities
\[\depth_{Z \cap Y_{t}} \SE^{0}_{(t)} \geq 2
\quad \text{and} \quad
\depth_{Z \cap Y_{t}} \SE^{1}_{(t)}  \geq 1 \]
hold for any \( t \in f(Z) \).
\end{enumerate}
Then, the following hold\emph{:}
\begin{enumerate}
\item  \label{prop:key:1}
The restriction homomorphism
\( \phi \colon \SF \to \SF_{*} = j_{*}(\SF|_{U})\) is an isomorphism.

\item  \label{prop:key:1a}
For a fixed point \( t \in f(Z) \) and for any integer \( n \geq 0 \),
\( \SF_{n*} = j_{*}(\SF_{n}|_{U_{n}}) \) \emph{(}cf.\ Notation~\emph{\ref{nota:Resthom2})}
is isomorphic to
the kernel \( \SF'_{n} \) of the homomorphism
\[ \SE^{0}_{n} = \SE^{0} \otimes_{\SO_{Y}} \SO_{Y_{n}}
\to \SE^{1}_{n} = \SE^{1} \otimes_{\SO_{Y}} \SO_{Y_{n}} \]
induced by \( \SE^{0} \to \SE^{1} \).
In particular, \( \SF_{n*} \) is coherent for any \( n \geq 0 \),
and \( \SF_{(t)*} = j_{*}(\SF_{(t)}|_{U \cap Y_{t}}) \)
is coherent for any \( t \in f(Z) \).

\item \label{prop:key:2}
For any point \( y \in Y \) and \( t = f(y) \),
the following conditions are equivalent to each other, where we use
Notation~\emph{\ref{nota:Resthom2}}
in \eqref{prop:key:condAA}, \eqref{prop:key:condBB},
and \eqref{prop:key:condBBB}\emph{:}
\begin{enumerate}
    \makeatletter
    \renewcommand{\p@enumii}{}
    \makeatother
\item \label{prop:key:condA}
\( \phi_{t} \colon \SF_{(t)} \to \SF_{(t)*} \)
is surjective at \( y \)\emph{;}

\item \label{prop:key:condB}
\( \phi_{t} \) is an isomorphism at \( y \)\emph{;}

\item \label{prop:key:condC}
\( \SG_{y} \) is flat over \( \SO_{T, t} \)\emph{;}

\addtocounter{enumii}{-3}
\renewcommand{\theenumii}{$\text{\alph{enumii}}'$}
\renewcommand{\labelenumii}{(\theenumii)}

\item \label{prop:key:condAA}
\( \varphi_{n} \colon (\SF_{n*})_{(t)} \to \SF_{0*} \) is surjective at \( y \)
for any \( n \geq 0 \)\emph{;}

\item \label{prop:key:condBB}
\( \varphi_{n} \) is an isomorphism at \( y \) for any \( n \geq 0 \)\emph{;}

\addtocounter{enumii}{-1}
\renewcommand{\theenumii}{$\text{\alph{enumii}}''$}
\renewcommand{\labelenumii}{(\theenumii)}
\item \label{prop:key:condBBB}
\( \phi_{n} \colon \SF_{n} \to \SF_{n*} \) is an isomorphism at \( y \)
for any \( n \geq 0 \).
\end{enumerate}
Note that if \eqref{prop:key:condC} is satisfied, then
\( \SF_{y} \) is also flat over \( \SO_{T, t} \).
\end{enumerate}
\end{prop}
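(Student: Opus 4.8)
The plan is to split the given four-term exact sequence into the two short exact sequences
\[ 0 \to \SF \to \SE^{0} \to \SB \to 0 \quad\text{and}\quad 0 \to \SB \to \SE^{1} \to \SG \to 0, \]
with \( \SB \) the image of \( \SE^{0} \to \SE^{1} \), and to combine the flatness hypothesis \eqref{prop:key:ass1} with the depth estimates of Lemmas~\ref{lem:depth<=2} and \ref{lem:relSkCodimDepth}. A preliminary observation, used throughout, is that \( \SB|_{U} \) and \( \SF|_{U} \) are flat over \( T \): since \( \SG|_{U} \) is \( T \)-flat, the kernel \( \SB|_{U} = \Ker(\SE^{1}|_{U} \to \SG|_{U}) \) of a surjection of \( T \)-flat sheaves with \( T \)-flat cokernel is \( T \)-flat, and then so is \( \SF|_{U} = \Ker(\SE^{0}|_{U} \to \SB|_{U}) \). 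For \eqref{prop:key:1}: Lemma~\ref{lem:relSkCodimDepth}\eqref{lem:relSkCodimDepth:1} applied to the \( T \)-flat sheaves \( \SE^{0} \) and \( \SE^{1} \) converts \eqref{prop:key:ass2} into \( \depth_{Z} \SE^{0} \geq 2 \) and \( \depth_{Z} \SE^{1} \geq 1 \); since \( \SF = \Ker(\SE^{0} \to \SE^{1}) \), Lemma~\ref{lem:depth<=2}\eqref{lem:depth<=2:0} gives \( \depth_{Z} \SF \geq 2 \), so \( \phi \) is an isomorphism by Property~\ref{ppty:depth<=2}.

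For \eqref{prop:key:1a} I would fix \( t \in f(Z) \), reduce to \( T = \Spec A \) with \( A = \SO_{T, t} \), adopt Notation~\ref{nota:Resthom2}, and put \( \SF'_{n} := \Ker(\SE^{0}_{n} \to \SE^{1}_{n}) \). Since \( \SE^{i} \) is \( T \)-flat, \( \SE^{i}_{n} \) is flat over \( T_{n} = \Spec(A/\GM^{n+1}) \); as \( T_{n} \) has the single point \( t \), with \( (Y_{n})_{t} = Y_{t} \) and \( (\SE^{i}_{n})_{(t)} = \SE^{i}_{(t)} \), Lemma~\ref{lem:relSkCodimDepth}\eqref{lem:relSkCodimDepth:1} and \eqref{prop:key:ass2} give \( \depth_{Z \cap Y_{n}} \SE^{0}_{n} \geq 2 \) and \( \depth_{Z \cap Y_{n}} \SE^{1}_{n} \geq 1 \), hence \( \depth_{Z \cap Y_{n}} \SF'_{n} \geq 2 \) by Lemma~\ref{lem:depth<=2}\eqref{lem:depth<=2:0}. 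On the other hand, the \( T \)-flatness of \( \SE^{0}|_{U} \), \( \SE^{1}|_{U} \), \( \SB|_{U} \), \( \SG|_{U} \) allows one to tensor the first short exact sequence over \( \SO_{T} \) with \( \SO_{T_{n}} \) on \( U \) without loss of exactness, so \( \SF'_{n}|_{U_{n}} = \SF_{n}|_{U_{n}} \). Therefore \( \SF_{n*} = j_{*}(\SF_{n}|_{U_{n}}) = j_{*}(\SF'_{n}|_{U_{n}}) \isom \SF'_{n} \) by Property~\ref{ppty:depth<=2}; this also yields the coherence of \( \SF_{n*} \) and, for \( n = 0 \), of \( \SF_{(t)*} \).

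For \eqref{prop:key:2} the core is to compute the kernel and cokernel of \( \phi_{t} \) and \( \varphi_{n} \) by chasing \( \Tor \)-long exact sequences. Tensoring the two short exact sequences above with \( \Bbbk(t) \) over \( A \), using that \( \SE^{0}, \SE^{1} \) are \( A \)-flat and, by \eqref{prop:key:1a}, that \( \SF_{(t)*} \isom \Ker(\SE^{0}_{(t)} \to \SE^{1}_{(t)}) \), one splices the resulting \( \Tor \)-sequences into
\[ 0 \to \Tor^{A}_{2}(\SG, \Bbbk(t)) \to \SF_{(t)} \xrightarrow{\ \phi_{t}\ } \SF_{(t)*} \to \Tor^{A}_{1}(\SG, \Bbbk(t)) \to 0, \]
and, running the same argument over \( A_{n} = A/\GM^{n+1} \) with \( \SG_{n} := \SG \otimes_{\SO_{Y}} \SO_{Y_{n}} = \Coker(\SE^{0}_{n} \to \SE^{1}_{n}) \), into
\[ 0 \to \Tor^{A_{n}}_{2}(\SG_{n}, \Bbbk(t)) \to (\SF_{n*})_{(t)} \xrightarrow{\ \varphi_{n}\ } \SF_{0*} \to \Tor^{A_{n}}_{1}(\SG_{n}, \Bbbk(t)) \to 0 \]
for each \( n \geq 0 \). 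The step demanding the most care here — the main technical obstacle, as I see it — is to verify that the middle maps in these sequences really are \( \phi_{t} \) and \( \varphi_{n} \) and not merely abstract comparison maps, which one does by tracking the constructions through Remark~\ref{rem:nota:Resthom2}.

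Granting this, localization at \( y \) finishes \eqref{prop:key:2}. The first sequence gives \eqref{prop:key:condA} \( \Leftrightarrow \) \( \Tor^{\SO_{T, t}}_{1}(\SG_{y}, \Bbbk(t)) = 0 \) \( \Leftrightarrow \) \eqref{prop:key:condC}, and \eqref{prop:key:condC} \( \Leftrightarrow \) \eqref{prop:key:condB} because, once \( \SG_{y} \) is \( \SO_{T, t} \)-flat, all of its higher \( \Tor \) against \( \Bbbk(t) \) vanish, so the left-hand \( \Tor^{A}_{2} \)-term vanishes as well (local criterion of flatness, Proposition~\ref{prop:LCflat}). Next, \eqref{prop:key:condB} \( \Leftrightarrow \) \eqref{prop:key:condBBB} combines Lemma~\ref{lem:S2flat(new)}\eqref{lem:S2flat(new):2} (applicable since \( \SF|_{U} \) is \( T \)-flat) with the case \( n = 0 \); \eqref{prop:key:condBBB} \( \Rightarrow \) \eqref{prop:key:condBB} follows from the commutative diagram of Remark~\ref{rem:nota:Resthom2}; \eqref{prop:key:condBB} \( \Rightarrow \) \eqref{prop:key:condAA} is immediate; and \eqref{prop:key:condAA} \( \Rightarrow \) \eqref{prop:key:condC} because the second sequence turns \eqref{prop:key:condAA} into the vanishing of \( \Tor^{A_{n}}_{1}(\SG_{y}/\GM^{n+1}\SG_{y}, \Bbbk(t)) \) for all \( n \), i.e.\ the freeness of \( \SG_{y}/\GM^{n+1}\SG_{y} \) over \( A/\GM^{n+1} \) for all \( n \), whence \( \SG_{y} \) is \( \SO_{T, t} \)-flat, again by the local criterion of flatness. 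This closes the circle of implications; the final assertion of \eqref{prop:key:2} follows from \eqref{prop:key:condC} \( \Rightarrow \) \eqref{prop:key:condB} via Lemma~\ref{lem:S2flat(new)}\eqref{lem:S2flat(new):1}, or directly from the flatness of kernels of surjections between \( \SO_{T, t} \)-flat modules applied to the two short exact sequences.
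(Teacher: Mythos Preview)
Your proof is correct and follows essentially the same route as the paper's: split into the two short exact sequences with image \( \SB \), use Lemma~\ref{lem:relSkCodimDepth}\eqref{lem:relSkCodimDepth:1} and Lemma~\ref{lem:depth<=2}\eqref{lem:depth<=2:0} for \eqref{prop:key:1} and \eqref{prop:key:1a}, and read off \( \Ker \) and \( \Coker \) of \( \phi_{t} \) (resp.\ \( \varphi_{n} \)) as \( \Tor^{A}_{2}(\SG, \Bbbk(t)) \) and \( \Tor^{A}_{1}(\SG, \Bbbk(t)) \) (resp.\ over \( A_{n} \)) to drive \eqref{prop:key:2} via the local criterion of flatness. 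Minor remarks: your four-term sequence is slightly sharper than the paper's, which only records it from \( \Tor_{2} \) on without the initial \( 0 \) (though injectivity holds, as you observe); the paper handles \eqref{prop:key:condAA}\( \Leftrightarrow \)\eqref{prop:key:condBB}\( \Leftrightarrow \)\eqref{prop:key:condC} by invoking the already-proven \eqref{prop:key:condA}\( \Leftrightarrow \)\eqref{prop:key:condB}\( \Leftrightarrow \)\eqref{prop:key:condC} for \( \SF'_{n} \) over \( T_{n} \) rather than rewriting the level-\( n \) sequence, but this is the same computation; and in your \eqref{prop:key:condAA}\( \Rightarrow \)\eqref{prop:key:condC} step, ``freeness'' should strictly read ``flatness'' (though over the Artinian local ring \( A_{n} \) the two coincide, so the slip is harmless).
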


\begin{proof}
By \eqref{prop:key:ass1}, \eqref{prop:key:ass2}
and by Lemma~\ref{lem:relSkCodimDepth}\eqref{lem:relSkCodimDepth:1},
we have \( \depth_{Z} \SE^{0} \geq 2 \) and \( \depth_{Z} \SE^{1} \geq 1\).
Thus, \( \depth_{Z} \SF \geq 2 \) by  Lemma~\ref{lem:depth<=2}\eqref{lem:depth<=2:0},
and we have \eqref{prop:key:1} (cf.\ Property~\ref{ppty:depth<=2}).
For each \( n \geq 0 \), the  exact sequence
\[ 0 \to \SF'_{n} \to \SE^{0}_{n} \to \SE^{1}_{n} \to \SG_{n} \to 0\]
on \( Y_{n} \) satisfies the conditions \eqref{prop:key:ass1}
and \eqref{prop:key:ass2} for the induced morphism \( Y_{n} \to T_{n} \),
where \( \SG_{n} = \SG \otimes \SO_{Y_{n}} \).
Thus, by \eqref{prop:key:1}, the restriction homomorphism
\[ \phi(\SF'_{n}) \colon \SF'_{n} \to (\SF'_{n})_{*} = j_{*}(\SF'_{n}|_{U_{n}}) \]
of \( \SF'_{n} \) is an isomorphism.
On the other hand,
there is a canonical homomorphism \( \psi_{n} \colon \SF_{n} = \SF \otimes \SO_{Y_{n}} \to \SF'_{n} \).
Note that \( \psi_{n} \)
is an isomorphism at a point \( y \in Y_{t} = Y_{0}\) if \( \SG_{y} \) is flat over \( \SO_{T, t} \).
In particular, \( \psi_{n} \) is an isomorphism on \( U_{n} \) by the condition \eqref{prop:key:ass1}.
Hence, \( (\SF'_{n})_{*} \isom \SF_{n*} \), 
and we have an isomorphism \( \SF'_{n} \isom \SF_{n*} \), 
by which \( \phi_{n} \) is isomorphic to \( \psi_{n} \).
This proves \eqref{prop:key:1a}.
For the proof of \eqref{prop:key:2}, we may assume that \( y \in Z \).
We shall show that there is an exact sequence
\begin{equation}\label{eq:prop:key:exact}
\Tor_{2}^{A}(\SG_{y}, \Bbbk) \to
(\SF_{(t)})_{y} =
\SF_{y} \otimes_{\SO_{Y, y}} \SO_{Y_{t}, y}
\xrightarrow{(\psi_{0})_{y}}
(\SF'_{0})_{y} \to \Tor_{1}^{A}(\SG_{y}, \Bbbk) \to 0
\end{equation}
of \( \SO_{Y, y} \)-modules,
where \( A = \SO_{T, t} \) and \( \Bbbk = \Bbbk(t) \):
For the image \( \SB \) of \( \SE^{0} \to \SE^{1} \), we have two short exact sequences
\( 0 \to \SF \to \SE^{0} \to \SB \to 0\) and \( 0 \to \SB \to \SE^{1} \to \SG \to 0 \) on \( Y \).
Then, the kernel of \( \SB_{0} = \SB \otimes \SO_{Y_{0}} \to \SE^{1}_{0} = \SE^{1} \otimes \SO_{Y_{0}} \)
is isomorphic to \( \STor^{\SO_{T}}_{1}(\SG, \Bbbk) \), and
the kernel of \( \SF_{0} \to \SE^{0}_{0} \) is isomorphic
to \( \STor^{\SO_{T}}_{1}(\SB, \Bbbk) \isom \STor^{\SO_{T}}_{2}(\SG, \Bbbk) \).
Then, we have the exact sequence \eqref{eq:prop:key:exact}
by applying the snake lemma to the commutative diagram
\[ \begin{CD}
@. \SF_{0} @>>> \SE^{0}_{0} @>>> \SB_{0} @>>> 0 \\
@. @V{\psi_{0}}VV @V{=}VV @VVV \\
0 @>>> \SF'_{0} @>>> \SE^{0}_{0} @>>> \SE^{1}_{0}
\end{CD}\]
of exact sequences. Note that \( \psi_{0} \isom \phi_{0} \) by the argument above.
We shall prove \eqref{prop:key:2} using \eqref{eq:prop:key:exact}.
If \eqref{prop:key:condA} holds, then \( \Tor_{1}^{A}(\SG_{y}, \Bbbk) = 0 \) by
\eqref{eq:prop:key:exact}, and it implies \eqref{prop:key:condC} by
the local criterion of flatness (cf.\ Proposition~\ref{prop:LCflat}),
since \( \SG_{y} \otimes \SO_{Y_{t}, y} \isom \SG_{y} \otimes \Bbbk \) is flat over \( \Bbbk \).
If \eqref{prop:key:condC} holds, then \( \Tor^{A}_{j}(\SG_{y}, \Bbbk) = 0 \)
for \( j = 1 \) and \( 2 \),
and it implies \eqref{prop:key:condB} by \eqref{eq:prop:key:exact}.
Thus, we have shown the equivalence of the three conditions
\eqref{prop:key:condA},
\eqref{prop:key:condB}, and  \eqref{prop:key:condC}.
By applying the equivalence of three conditions to
\( \SF'_{n} \isom \SF_{n*}\) and \( Y_{n} \to T_{n} \)
instead of \( \SF \) and \( Y \to T \), we see that \eqref{prop:key:condAA} and
\eqref{prop:key:condBB} are both equivalent to that
\( (\SG_{n})_{y} \) is flat over \( \SO_{T_{n}, t} \) for any \( n \geq 0 \);
This is also equivalent to \eqref{prop:key:condC} by
the local criterion of flatness
(cf.\ \eqref{prop:LCflat:1} \( \Leftrightarrow \) \eqref{prop:LCflat:4} in Proposition~\ref{prop:LCflat}).
If \eqref{prop:key:condC} holds, then \( \psi_{n} \colon \SF_{n} \to \SF'_{n} \) is an isomorphism
as we have noted before,
and the isomorphism \( \SF'_{n} \isom \SF_{n*} \)
in \eqref{prop:key:1a} implies \eqref{prop:key:condBBB}.
Conversely, if \eqref{prop:key:condBBB} holds, then \( \varphi_{n} \) is isomorphic to
the canonical isomorphism \( (\SF_{n})_{(t)} \isom \SF_{(t)} \) for any \( n \)
(cf.\ Remark~\ref{rem:nota:Resthom2}), and it implies
\eqref{prop:key:condBB}. Thus, we are done.
\end{proof}

\begin{remn}
The exact sequence \eqref{eq:prop:key:exact}
is obtained as the ``edge sequence'' of the spectral sequence
\[ E_{2}^{p, q} = \STor^{\SO_{T}}_{-p}(\SH^{q}(\SE^{\bullet}), \, \Bbbk(t))
\Rightarrow E^{p+q} = \SH^{p+q}(\SE^{\bullet}_{(t)})\]
of \( \SO_{Y_{t}} \)-modules (cf.\ \cite[III, (6.3.2.2)]{EGA})
arising from the quasi-isomorphism
\[ \SE^{\bullet}_{(t)} \isom_{\qis} \SE^{\bullet} \otimes^{\bfL}_{\SO_{T}} \Bbbk(t),\]
where \( \SE^{\bullet} \) and \( \SE^{\bullet}_{(t)} \) denote
the complexes \( [0 \to \SE^{0} \to \SE^{1} \to 0] \) and \( [0\to \SE^{0}_{(t)} \to \SE^{1}_{(t)} \to 0] \),
respectively.
\end{remn}

\begin{rem}\label{rem:phi_infty}
In the situation of Proposition~\ref{prop:key}\eqref{prop:key:1a},
the canonical homomorphism
\[ \phi_{\infty} = \varprojlim\nolimits_{n} \phi_{n} \colon \varprojlim\nolimits_{n} \SF_{n}
\to \varprojlim\nolimits_{n} \SF_{n*}\]
is an isomorphism, where the projective limit \( \varprojlim_{n} \) is taken
in the category of \( \SO_{Y} \)-modules.
This is shown as follows. Since \( \SF'_{n} \isom \SF_{n*} \), it is enough to show that
the homomorphism
\[ \psi_{\infty}(V) := \varprojlim\nolimits_{n} \OH^{0}(V, \psi_{n}) 
\colon \varprojlim\nolimits_{n} \OH^{0}(V, \SF_{n})
\to \varprojlim\nolimits_{n} \OH^{0}(V, \SF'_{n}) \]
is an isomorphism for any open affine subset \( V \) of \( Y \), where we note that
the global section functor \( \OH^{0}(V, \bullet) \) commutes with
\( \varprojlim \).
For \( R = \OH^{0}(V, \SO_{V}) \) and \( R_{n} = R/\GM^{n+1}R \isom \OH^{0}(V, \SO_{Y_{n}})  \),
we have two exact sequences:
\begin{gather*}
0 \to \OH^{0}(V, \SF) \to \OH^{0}(V, \SE^{0}) \to \OH^{0}(V, \SE^{1}), \\
0 \to \OH^{0}(V, \SF'_{n}) \to \OH^{0}(V, \SE^{0}) \otimes_{R} R_{n}
\to \OH^{0}(V, \SE^{1}) \otimes_{R} R_{n}.
\end{gather*}
Since the \( \GM R \)-adic completion \( \widehat{R} = \varprojlim\nolimits R_{n} \)
is flat over \( R \)
and since \( \varprojlim \) is left exact,
we have an isomorphism
\[ \OH^{0}(V, \SF) \otimes_{R} \widehat{R} \isom
\Ker(\OH^{0}(V, \SE^{0}) \otimes_{R} \widehat{R} \to \OH^{0}(V, \SE^{1}) \otimes_{R} \widehat{R})
\isom \varprojlim\nolimits_{n} \OH^{0}(V, \SF'_{n}). \]
Then, \( \psi_{\infty}(V) \) is an isomorphism, since
\[ \varprojlim\nolimits_{n} \OH^{0}(V, \SF_{n})
\isom \varprojlim\nolimits_{n} (\OH^{0}(V, \SF) \otimes_{R} R_{n})
\isom \OH^{0}(V, \SF) \otimes_{R} \widehat{R}. \]
\end{rem}

\begin{cor}\label{cor0:prop:key}
In the situation of Proposition~\emph{\ref{prop:key}},
assume that \( f \) is locally of finite type. Then,
the condition \eqref{prop:key:condC}
of Proposition~\emph{\ref{prop:key}}\eqref{prop:key:2} for a point \( y \in Y \)
is equivalent to\emph{:}

\begin{enumerate}
\renewcommand{\theenumi}{\alph{enumi}}
\renewcommand{\labelenumi}{(\theenumi)}
\addtocounter{enumi}{3}
\item  \label{cor0:prop:key:condD}
there is an open neighborhood \( V \) of \( y \) in \( Y \) such that
\( \SF|_{V} \) is flat over \( T \),
and \( \phi_{t} \) is an isomorphism on \( V \cap Y_{t} \)
for any \( t \in f(V)  \).
\end{enumerate}

Furthermore, if \( \SF_{(t)}|_{U \cap Y_{t}} \) satisfies \( \bfS_{2} \) for the point \( t = f(y)\)
and if \( \SF_{(t')} \) is equi-dimensional and
\begin{equation}\label{eq:cor0:prop:key}
\Codim(Z \cap Y_{t'} \cap \Supp \SF, Y_{t'} \cap \Supp \SF) \geq 2
\end{equation}
for any \( t' \in T \),
then \eqref{cor0:prop:key:condD} is equivalent to\emph{:}
\begin{enumerate}
\renewcommand{\theenumi}{\alph{enumi}}
\renewcommand{\labelenumi}{(\theenumi)}
\addtocounter{enumi}{4}
\item \label{cor0:prop:key:condE}
there is an open neighborhood \( V \) of \( y \) in \( Y \) such that
\( \SF|_{V} \) satisfies relative \( \bfS_{2} \) over \( T \),
i.e., \( V = \bfS_{2}(\SF|_{V}/T) \).
\end{enumerate}
\end{cor}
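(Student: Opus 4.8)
The plan is to deduce both equivalences from Proposition~\ref{prop:key}\eqref{prop:key:2}, exploiting that \( f \) locally of finite type makes the relevant flat loci and relative \( \bfS_{2} \)-loci open. For \eqref{prop:key:condC} \( \Leftrightarrow \) \eqref{cor0:prop:key:condD}, the implication \eqref{cor0:prop:key:condD} \( \Rightarrow \) \eqref{prop:key:condC} is immediate: \eqref{cor0:prop:key:condD} makes \( \phi_{t} \) an isomorphism at \( y \), i.e.\ condition \eqref{prop:key:condB} holds at \( y \), whence \eqref{prop:key:condC} by the equivalence \eqref{prop:key:condB} \( \Leftrightarrow \) \eqref{prop:key:condC} of Proposition~\ref{prop:key}\eqref{prop:key:2}. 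For the converse I would set \( V := \Fl(\SG/T) \), which is an open neighborhood of \( y \) by Fact~\ref{fact:elem-flat}\eqref{fact:elem-flat:1}; at each \( y' \in V \) condition \eqref{prop:key:condC} holds, so \( \SF_{y'} \) is flat over \( \SO_{T, f(y')} \) by the last assertion of Proposition~\ref{prop:key}\eqref{prop:key:2} and \( \phi_{f(y')} \) is an isomorphism at \( y' \) by \eqref{prop:key:condC} \( \Rightarrow \) \eqref{prop:key:condB}, so \( \SF|_{V} \) is \( f \)-flat and \( \phi_{t'} \) is an isomorphism on \( V \cap Y_{t'} \) for every \( t' \in f(V) \), which is \eqref{cor0:prop:key:condD}.

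For the second equivalence I would first observe that, since \( f \) is locally of finite type, every fiber is of finite type over a field, so the assumed equi-dimensionality of \( \SF_{(t')} \) forces \( \SF_{(t')} \) to be locally equi-dimensional (each irreducible component \( C \) of \( \Supp \SF_{(t')} \) satisfies \( \dim \SO_{C, x} = \dim C - \dim \overline{\{x\}} \), the fibers being catenary); hence, by Fact~\ref{fact:dfn:RelSkCMlocus}\eqref{fact:dfn:RelSkCMlocus:2}, \( \bfS_{2}(\SF|_{V}/T) \) is open for every open \( V \) on which \( \SF \) is \( f \)-flat. The implication \eqref{cor0:prop:key:condE} \( \Rightarrow \) \eqref{cor0:prop:key:condD} then runs as follows: on a \( V \) with \( V = \bfS_{2}(\SF|_{V}/T) \), the sheaf \( \SF|_{V} \) is \( f \)-flat and \( (\SF|_{V})_{(t')} \) satisfies \( \bfS_{2} \) for all \( t' \); together with the codimension bound \eqref{eq:cor0:prop:key} (which only improves after restriction to \( V \cap Y_{t'} \)), Lemma~\ref{lem:depth+codim+Sk}\eqref{lem:depth+codim+Sk:2} gives \( \depth_{Z \cap V \cap Y_{t'}} (\SF|_{V})_{(t')} \geq 2 \), so \( \phi_{t'} \) is an isomorphism on \( V \cap Y_{t'} \) by Property~\ref{ppty:depth<=2}; this is \eqref{cor0:prop:key:condD}.

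For \eqref{cor0:prop:key:condD} \( \Rightarrow \) \eqref{cor0:prop:key:condE}: with \( V \) as in \eqref{cor0:prop:key:condD} and \( t = f(y) \), the fact that \( \phi_{t} \) is an isomorphism on \( V \cap Y_{t} \) gives \( \depth_{Z \cap V \cap Y_{t}} (\SF|_{V})_{(t)} \geq 2 \) (Property~\ref{ppty:depth<=2}); since \( \SF_{(t)}|_{U \cap Y_{t}} \) satisfies \( \bfS_{2} \) by hypothesis, Lemma~\ref{lem:basicSk} (implication \eqref{lem:basicSk:condD} \( \Rightarrow \) \eqref{lem:basicSk:condA}) shows that \( (\SF|_{V})_{(t)} \) satisfies \( \bfS_{2} \), and as \( \SF|_{V} \) is \( f \)-flat this yields \( y \in \bfS_{2}(\SF|_{V}/T) \). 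Using the openness noted above, I would pick an open \( V' \) with \( y \in V' \subset \bfS_{2}(\SF|_{V}/T) \); then \( V' = \bfS_{2}(\SF|_{V'}/T) \), since \( \SF|_{V'} \) is \( f \)-flat and restriction to the open \( V' \) only enlarges the \( \bfS_{2} \)-loci of the fibers \( (\SF|_{V'})_{(t')} \). This is \eqref{cor0:prop:key:condE}.

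The step I expect to be the main obstacle is \eqref{cor0:prop:key:condD} \( \Rightarrow \) \eqref{cor0:prop:key:condE}: the \( \bfS_{2} \)-hypothesis on \( \SF_{(t)}|_{U \cap Y_{t}} \) is imposed only for the single fiber over \( t = f(y) \), so relative \( \bfS_{2} \) near \( y \) cannot be verified fiber by fiber and must be propagated to neighboring fibers via the openness of \( \bfS_{2}(\SF|_{V}/T) \) --- which is precisely the point where the equi-dimensionality hypothesis is needed (through ``equi-dimensional \( \Rightarrow \) locally equi-dimensional'' for fibers of a morphism locally of finite type and Fact~\ref{fact:dfn:RelSkCMlocus}\eqref{fact:dfn:RelSkCMlocus:2}). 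The remaining verifications --- compatibility of restriction homomorphisms and of relative \( \bfS_{k} \)-loci with passage to open subschemes, and the bookkeeping with \( \Supp \SF \) in the applications of Property~\ref{ppty:depth<=2} and Lemmas~\ref{lem:basicSk} and \ref{lem:depth+codim+Sk} --- are routine.
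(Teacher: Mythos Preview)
Your proposal is correct and follows essentially the same route as the paper's proof: openness of $\Fl(\SG/T)$ for \eqref{prop:key:condC} $\Rightarrow$ \eqref{cor0:prop:key:condD}, and for the second equivalence, Lemma~\ref{lem:depth+codim+Sk}\eqref{lem:depth+codim+Sk:2} for \eqref{cor0:prop:key:condE} $\Rightarrow$ \eqref{cor0:prop:key:condD} and the openness of $\bfS_{2}(\SF|_{V}/T)$ from Fact~\ref{fact:dfn:RelSkCMlocus}\eqref{fact:dfn:RelSkCMlocus:2} to propagate the single-fiber $\bfS_{2}$-condition in \eqref{cor0:prop:key:condD} $\Rightarrow$ \eqref{cor0:prop:key:condE}. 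The only notable difference is that you make explicit the passage from ``equi-dimensional'' to ``locally equi-dimensional'' for the fibers (needed to invoke Fact~\ref{fact:dfn:RelSkCMlocus}\eqref{fact:dfn:RelSkCMlocus:2}), which the paper leaves implicit; your justification via the dimension formula for schemes of finite type over a field is correct.
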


\begin{proof}
For the first assertion, by Proposition~\ref{prop:key}\eqref{prop:key:2},
it is enough to show
\eqref{prop:key:condC} \( \Rightarrow \) \eqref{cor0:prop:key:condD}
assuming that \( f \) is locally of finite type and \( y \in Z \).
When \eqref{prop:key:condC} holds, \( \SG|_{V} \) is flat over \( T \)
for an open neighborhood \( V \) of \( y \) in \( Y \),
by Fact~\ref{fact:elem-flat}\eqref{fact:elem-flat:1}.
Thus, \( \SF|_{V} \) is flat over \( T \)
by Proposition~\ref{prop:key}\eqref{prop:key:ass1},
and moreover, by Proposition~\ref{prop:key}\eqref{prop:key:2} applied to any point in \( V \),
we see that \( \phi_{t}\) is an isomorphism on \( Y_{t} \cap V \) for any \( t \in f(V \cap Z)\).
Since \( \phi_{t} \) is an isomorphism for any \( t \not\in f(Z) \), we have proved:
\eqref{prop:key:condC} \( \Rightarrow \) \eqref{cor0:prop:key:condD}.

We shall show
\eqref{cor0:prop:key:condD}  \( \Leftrightarrow \)  \eqref{cor0:prop:key:condE}
in the situation of the second assertion.
In this case, if \( \phi_{t} \) is an isomorphism, then \( \SF_{(t)} \) satisfies \( \bfS_{2} \) by
Corollary~\ref{cor:basicS1S2}. Hence, we have \eqref{cor0:prop:key:condD} \( \Rightarrow \)
\eqref{cor0:prop:key:condE} by
Fact~\ref{fact:dfn:RelSkCMlocus}\eqref{fact:dfn:RelSkCMlocus:2}.
Conversely, if \eqref{cor0:prop:key:condE} holds with \( V = Y \), then
\[ \depth_{Y_{t'} \cap Z} \SF_{(t')} \geq 2  \]
for any \( t' \in f(Z) \) by Lemma~\ref{lem:depth+codim+Sk}\eqref{lem:depth+codim+Sk:2},
since \( \SF_{(t')} \) satisfies \( \bfS_{2} \)
and the inequality \eqref{eq:cor0:prop:key} holds.
Hence, \( \phi_{t'} \) is an isomorphism for any \( t' \in f(Z) \),
and \eqref{cor0:prop:key:condD} holds. Thus, we are done.
\end{proof}

\begin{cor}\label{cor:SurjFlat(new)|Kollar}
In the situation of Proposition~\emph{\ref{prop:key}},
for a point \( t \in f(Z) \),
assume that the coherent \( \SO_{Y_{t}} \)-module
\( \SF_{(t)*} = j_{*}(\SF_{(t)}|_{Y_{t} \cap U})\) satisfies
\begin{equation}\label{cor:SurjFlat(new)|Kollar|eq}
\depth_{Y_{t} \cap Z} \SF_{(t)*} \geq 3.
\end{equation}
Then, the sheaves \( \SF \) and \( \SG \) are flat over \( T \) along \( Y_{t} \),
and the restriction homomorphism
\( \phi_{t} \colon \SF_{(t)} \to \SF_{(t)*} \) is an isomorphism.
\end{cor}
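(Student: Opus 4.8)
The plan is to reduce, via Proposition~\ref{prop:key}, to the flatness of \( \SG \) over \( T \) along \( Y_{t} \), and to deduce that flatness from the depth-three hypothesis by an induction over the infinitesimal neighbourhoods of \( Y_{t} \). The statement is local on \( Y \), and flatness along \( Y_{t} \) together with the isomorphy of \( \phi_{t} \) are local along \( Y_{t} \), so I may assume \( T = \Spec A \) for a Noetherian local ring \( A \) with maximal ideal \( \GM \) and residue field \( \Bbbk = \Bbbk(t) \); I use Notation~\ref{nota:Resthom2} and write \( W = Z \cap Y_{t} \). By Proposition~\ref{prop:key}\eqref{prop:key:1a} one has \( \SF_{n*} \isom \SF'_{n} := \Ker(\SE^{0}_{n} \to \SE^{1}_{n}) \); moreover, for each \( n \geq 0 \), the sequence \( 0 \to \SF'_{n} \to \SE^{0}_{n} \to \SE^{1}_{n} \to \SG_{n} \to 0 \) on \( Y_{n} \to T_{n} \) again satisfies the hypotheses \eqref{prop:key:ass1}, \eqref{prop:key:ass2} of Proposition~\ref{prop:key}, with its ``\( \phi_{t} \)'' equal to \( \varphi_{n} \) and its ``\( \SG \)'' equal to \( \SG_{n} \). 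Hence it suffices to prove that \( \varphi_{n} \) is an isomorphism for every \( n \): then condition \eqref{prop:key:condBB} of Proposition~\ref{prop:key}\eqref{prop:key:2} holds at every \( y \in Y_{t} \), hence so does \eqref{prop:key:condC}, i.e.\ \( \SG_{y} \) — and then also \( \SF_{y} \) — is flat over \( A \) for every \( y \in Y_{t} \), so that \( \SF \) and \( \SG \) are flat over \( T \) along \( Y_{t} \); and \( \phi_{t} \) is an isomorphism by \eqref{prop:key:condB}.

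The place where the full hypothesis enters is the following. On \( Y_{t} \) there is an exact sequence
\[ 0 \to \SF_{0*} \to \SE^{0}_{(t)} \to \SE^{1}_{(t)} \to \SG_{(t)} \to 0 , \]
since \( \SF_{0*} \isom \SF'_{0} = \Ker(\SE^{0}_{(t)} \to \SE^{1}_{(t)}) \) by Proposition~\ref{prop:key}\eqref{prop:key:1a} and \( \SG_{(t)} = \Coker(\SE^{0}_{(t)} \to \SE^{1}_{(t)}) \). Splitting it at \( \SB' := \Ker(\SE^{1}_{(t)} \to \SG_{(t)}) \) and chasing local cohomology with supports in \( W \), the bounds \( \depth_{W} \SF_{0*} \geq 3 \), \( \depth_{W} \SE^{0}_{(t)} \geq 2 \), \( \depth_{W} \SE^{1}_{(t)} \geq 1 \) (the last two from \eqref{prop:key:ass2}) give successively \( \SH^{0}_{W}(\SB') = \SH^{1}_{W}(\SB') = 0 \) and then \( \SH^{0}_{W}(\SG_{(t)}) = 0 \), i.e.\ \( \depth_{W} \SG_{(t)} \geq 1 \). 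Here it is precisely the vanishing \( \SH^{2}_{W}(\SF_{0*}) = 0 \) — the third unit of depth on \( \SF_{(t)*} \) — that is needed in order to reach \( \SH^{1}_{W}(\SB') = 0 \).

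Now I induct on \( n \) that \( \varphi_{n} \) is an isomorphism; for \( n = 0 \) this is trivial since \( \varphi_{0} = \id \). Assume \( \varphi_{n} \) is an isomorphism, so that \( \SG_{n} \) is flat over \( A_{n} \) by Proposition~\ref{prop:key}\eqref{prop:key:2} (the equivalence \eqref{prop:key:condB} \( \Leftrightarrow \) \eqref{prop:key:condC}) applied to \( \SF'_{n} \). Tensoring \( 0 \to \GM^{n+1}/\GM^{n+2} \to A_{n+1} \to A_{n} \to 0 \) with the \( T \)-flat sheaves \( \SE^{0} \), \( \SE^{1} \) over \( A \) and applying the snake lemma to \( \SE^{0}_{\bullet} \to \SE^{1}_{\bullet} \) yields an exact sequence whose tail is
\[ \SF'_{n} \xrightarrow{\ \delta_{n}\ } \GM^{n+1}/\GM^{n+2} \otimes_{\Bbbk} \SG_{(t)} \to \SG_{n+1} \to \SG_{n} \to 0 , \]
in which \( \operatorname{im}(\delta_{n}) = \Ker\bigl(\GM^{n+1}/\GM^{n+2} \otimes_{\Bbbk} \SG_{(t)} \to \SG_{n+1}\bigr) \) coincides with \( \Tor_{1}^{A_{n+1}}(\SG_{n+1}, A_{n}) \). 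All connecting homomorphisms of this snake lemma vanish over \( U \), where \( \SF \), \( \SE^{0} \), \( \SE^{1} \), \( \SG \) are \( T \)-flat; hence \( \operatorname{im}(\delta_{n}) \) is supported on \( W \) and therefore lies in \( \GM^{n+1}/\GM^{n+2} \otimes_{\Bbbk} \SH^{0}_{W}(\SG_{(t)}) = 0 \). Thus \( \Tor_{1}^{A_{n+1}}(\SG_{n+1}, A_{n}) = 0 \); since \( \SG_{n+1} \otimes_{A_{n+1}} A_{n} = \SG_{n} \) is flat over \( A_{n} \) and \( \GM^{n+1}A_{n+1} \) is nilpotent, the local criterion of flatness (Proposition~\ref{prop:LCflat}) shows \( \SG_{n+1} \) is flat over \( A_{n+1} \), so \( \varphi_{n+1} \) is an isomorphism by Proposition~\ref{prop:key}\eqref{prop:key:2} applied to \( \SF'_{n+1} \), completing the induction.

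The main obstacle is not the induction, which is formal once arranged, but the depth bookkeeping in the four-term sequence on \( Y_{t} \): one must verify that depth three on the husk \( \SF_{(t)*} \), combined with the weaker fibrewise depth bounds on \( \SE^{0} \), \( \SE^{1} \) that are built into Proposition~\ref{prop:key}, is exactly enough to force \( \depth_{W} \SG_{(t)} \geq 1 \), and that this one unit of depth suffices to kill the obstruction \( \Tor_{1}^{A_{n+1}}(\SG_{n+1}, A_{n}) \) at every level. It is worth noting that no codimension hypothesis on \( W \) appears in the statement, because \( \depth_{W} \SF_{(t)*} \geq 3 \) already forces, by Lemma~\ref{lem:depth+codim+Sk}\eqref{lem:depth+codim+Sk:1}, that \( W \cap \Supp \SF_{(t)*} \) has codimension \( \geq 3 \) in \( \Supp \SF_{(t)*} \), which is what leaves room for these depth estimates.
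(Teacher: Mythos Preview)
Your proof is correct, but it takes a genuinely different route from the paper's.

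The paper reads the depth-three hypothesis as the vanishing \( R^{1}j_{*}(\SF_{0}|_{U_{0}}) = 0 \) and applies \( j_{*} \) directly to the short exact sequences \( 0 \to \GM^{n}/\GM^{n+1} \otimes_{\Bbbk} \SF_{0} \to \SF_{n} \to \SF_{n-1} \to 0 \) on \( U_{n} \) (exact there by flatness of \( \SF|_{U} \)); this yields surjections \( \SF_{n*} \twoheadrightarrow \SF_{(n-1)*} \) and hence surjectivity of \( \varphi_{n} \) for every \( n \), after which one invokes \eqref{prop:key:condAA} \( \Rightarrow \) \eqref{prop:key:condB} of Proposition~\ref{prop:key}\eqref{prop:key:2}. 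That argument never touches \( \SG \) explicitly and is essentially a single step once the \( R^{1}j_{*} \) vanishing is observed.

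You instead push the same vanishing \( \SH^{2}_{W}(\SF_{0*}) = 0 \) through the four-term sequence on \( Y_{t} \) to land on the \( \SG \) side as \( \SH^{0}_{W}(\SG_{(t)}) = 0 \), and then run an induction on \( n \) for the flatness of \( \SG_{n} \) over \( A_{n} \) by identifying the Tor obstruction with \( \operatorname{im}(\delta_{n}) \), which is supported on \( W \) and hence zero. Both approaches hinge on the same single vanishing; the paper's is shorter, while yours makes transparent exactly how the depth-three hypothesis feeds into the flatness of \( \SG \) level by level, which is arguably closer in spirit to the equivalence \eqref{prop:key:condB} \( \Leftrightarrow \) \eqref{prop:key:condC} that drives Proposition~\ref{prop:key}.
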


\begin{proof}
By Proposition~\ref{prop:key}\eqref{prop:key:2},
it is enough to prove that \( \phi_{t} \) is an isomorphism.
By \eqref{cor:SurjFlat(new)|Kollar|eq}, we have
\[ R^{1}j_{*}(\SF_{(t)}|_{U \cap Y_{t}}) = R^{1} j_{*} (\SF_{0}|_{U_{0}}) = 0 \]
(cf.\ Property~\ref{ppty:depth<=2}).
Hence, the exact sequence \eqref{lem:S2flat(new):eq1} in the proof
of Lemma~\ref{lem:S2flat(new)} induces an exact sequence
\[ 0 \to \GM^{n}/\GM^{n+1} \otimes_{\Bbbk} j_{*}(\SF_{0}|_{U_{0}}) \to j_{*}(\SF_{n}|_{U_{n}})
\to j_{*}(\SF_{n-1}|_{U_{n-1}}) \to 0. \]
Since \( \SF_{n*} = j_{*}(\SF_{n}|_{U_{n}}) \),
the homomorphism \( \varphi_{n} \)
is surjective for any \( n \geq 0 \).
Therefore, \( \phi_{t} \) is an isomorphism
by \eqref{prop:key:condBB} \( \Rightarrow \) \eqref{prop:key:condB}
of Proposition~\ref{prop:key}\eqref{prop:key:2}.
\end{proof}

\begin{rem}\label{rem:KollarFlatness}
Corollary~\ref{cor:SurjFlat(new)|Kollar} is similar to a special case of \cite[Th.\ 12]{KollarFlat},
where the sheaf corresponding to \( \SF \) above may not have an exact sequence
of Proposition~\ref{prop:key}.
However, \cite[Th.\ 12]{KollarFlat} is not true. Example~\ref{exam:8-3} below provides a counterexample.
\end{rem}

\begin{exam}\label{exam:8-3}
Let \( Y \) be an affine space \( \BAA_{\Bbbk}^{8} \)
of dimension \( 8 \) over a field \( \Bbbk \)
with a coordinate system \( (\ytt_{1}, \ytt_{2}, \ldots, \ytt_{8}) \).
Let \( T \) be a \( 3 \)-dimensional affine space \( \BAA^{3}_{\Bbbk} \)
and let \( f \colon Y \to T \) be the projection
defined by \( (\ytt_{1}, \ldots, \ytt_{8}) \mapsto (\ytt_{1}, \ytt_{2}, \ytt_{3}) \).
The fiber \( Y_{0} = f^{-1}(0)\) over the origin \( 0 = (0, 0, 0) \) of \( T \)
is of dimension \( 5 \). We define closed subschemes \( Z \) and \( V \) of \( Y \) by
\begin{align*}
Z &:= \{\ytt_{4} = \ytt_{5} = \ytt_{6} = 0\} \quad \text{and}\\
V &:= \{\ytt_{1} + \ytt_{2} \ytt_{7} + \ytt_{3} \ytt_{8}
= \ytt_{4} - \ytt_{1} = \ytt_{5} - \ytt_{2} = \ytt_{6} - \ytt_{3} = 0\}.
\end{align*}
Then, we can show the following properties:
\begin{enumerate}
\item \label{exam:8-3:1} \( V \isom \BAA^{4}_{\Bbbk} \),
and \( V \cap Y_{0} = V \cap Z = Y_{0} \cap Z \isom \BAA^{2}\);

\item \label{exam:8-3:2} \( \Codim(Z, Y) = \Codim(Z \cap Y_{0}, Y_{0}) = 3 \), and
\( \Codim(Z \cap V, V) = 2\);

\item \label{exam:8-3:3} \( V \setminus Y_{0} \to T \) is a smooth morphism of relative dimension one, but
the fiber \( V \cap Y_{0} \) of \( V \to T \) over \( 0 \) is two-dimensional.
\end{enumerate}
Let \( j \colon U \injmap Y \) be the open immersion from the complement \( U := Y \setminus Z \),
and we set \( \SF := \SO_{Y} \oplus \SO_{V} \) and \( \SF_{0} := \SF \otimes_{\SO_{Y}} \SO_{Y_{0}} \).
By \eqref{exam:8-3:1} and \eqref{exam:8-3:2}, we have isomorphisms
\begin{align}
j_{*}(\SF|_{U}) &\isom j_{*}\SO_{U} \oplus j_{*}\SO_{U \cap V} \isom \SO_{Y} \oplus \SO_{V}
\quad \text{and} \quad  \label{eq:1|exam:8-3}\\
j_{*}(\SF_{0}|_{U \cap Y_{0}}) &\isom j_{*}\SO_{U \cap Y_{0}} \isom \SO_{Y_{0}}, \label{eq:2|exam:8-3}
\end{align}
since \( U \cap V \cap Y_{0} = \emptyset \),
\( \depth_{Z} \SO_{Y} \geq 2\), \( \depth_{Z \cap V} \SO_{V} \geq 2\),
and \( \depth_{Z \cap Y_{0}} \SO_{Y_{0}} \geq 2 \).
Thus, we have:
\begin{enumerate}
\addtocounter{enumi}{3}
\item \label{exam:8-3:4}
\( \SF|_{U} = \SO_{U} \oplus \SO_{U \cap V}\) is flat over \( T \) by \eqref{exam:8-3:3};
\item \label{exam:8-3:5}
\( j_{*}(\SF|_{U}) \) is not flat over \( T \)
by \eqref{exam:8-3:3} and \eqref{eq:1|exam:8-3};
\item \label{exam:8-3:6}
\( j_{*}(\SF_{0}|_{U \cap Y_{0}}) \) satisfies \( \bfS_{3} \)
by \eqref{eq:2|exam:8-3};

\item \label{exam:8-3:7} the canonical homomorphism
\[ j_{*}(\SF|_{U}) \otimes_{\SO_{Y}} \SO_{Y_{0}} \to j_{*}(\SF_{0}|_{U \cap Y_{0}}) \]
is not an isomorphism by \eqref{eq:1|exam:8-3} and \eqref{eq:2|exam:8-3}.
\end{enumerate}
Thus, \( f \colon Y \to T \), \( \SF \), and \( U \) give a counterexample
to \cite[Th.\ 12]{KollarFlat}: The required assumptions are
satisfied by \eqref{exam:8-3:2},
\eqref{exam:8-3:4}, and \eqref{exam:8-3:6}, but the conclusion is denied by
\eqref{exam:8-3:5} and \eqref{exam:8-3:7}.

The kernel \( \SJ \) of \( \SO_{Y} \to \SO_{V} \)
has also an interesting infinitesimal property.
Let \( A = \Bbbk[\ytt_{1}, \ytt_{2}, \ytt_{3}] \) be the coordinate ring
of \( T \),  \( \GM = (\ytt_{1}, \ytt_{2}, \ytt_{3}) \) the maximal ideal
at the origin \( 0 \in T\), and set
\[ A_{n} = A/\GM^{n+1}, \,\, T_{n} = \Spec A_{n}, \,\, Y_{n} = Y \times_{T} T_{n}, \,\,
V_{n} = V \times_{T} T_{n}, \,\, \SJ_{n} = \SJ \otimes_{\SO_{Y}} \SO_{Y_{n}}\]
for each \( n \geq 0 \) as in Notation~\ref{nota:Resthom2}. Then,
we can prove:
\begin{equation}\label{eq:3|exam:8-3}
\SJ \not\isom \SO_{Y}, \quad \SJ|_{U} \not\isom \SO_{U}, \quad \SJ \isom \SJ_{*} = j_{*}(\SJ|_{U}), 
\quad \text{and} \quad \SJ_{n}|_{U \cap Y_{n}} \isom \SO_{U \cap Y_{n}}
\end{equation}
for any \( n \geq 0 \).
In fact, the first two of \eqref{eq:3|exam:8-3} are consequences of that the ideal sheaf
\( \SJ|_{U} \) of \( V \cap U \) is not an invertible \( \SO_{U} \)-module, and it is derived from
\( \Codim(V \cap U, U) = 4 > 1  \).
The third isomorphism of \eqref{eq:3|exam:8-3} follows
from \( \depth_{Z} \SO_{Y} \geq 2 \) and \( \depth_{Z} \SO_{V} \geq 2 \) (cf.\ \eqref{exam:8-3:2}), 
and the last one from that the kernel of \( \SJ_{n} \to \SO_{Y_{n}} \)
is isomorphic to \( \STor^{\SO_{Y}}_{1}(\SO_{V}, \SO_{Y_{n}}) \), 
which is supported on \( V \cap Y_{0} \subset Y \setminus U\). 
\end{exam}

\begin{rem}\label{rem:exam:8-3}
In the situation of Notation~\ref{nota:Resthom2},
not a few people may fail to believe the following wrong assertion:
\begin{enumerate}
\renewcommand{\theenumi}{$\ast$}
\renewcommand{\labelenumi}{(\theenumi)}
\item \label{rem:exam:8-3:0}
\emph{
If \( \phi \colon \SF \to j_{*}(\SF|_{U}) \) is an isomorphism, then the morphism 
\[ \phi^{\qcoh}_{\infty} \colon \varprojlim^{\qcoh}\nolimits_{n} \SF_{n} \to
\varprojlim\nolimits_{n}^{\qcoh} j_{*}(j^{*}\SF_{n}) \]
induced by \( \phi_{n} \colon \SF_{n} \to \SF_{n*} = j_{*}(j^{*}\SF_{n}) \) 
is also an isomorphism. 
}
\end{enumerate}
Here, \( \varprojlim^{\qcoh} \) stands for the projective limit 
in the category \( \QCoh(\SO_{Y}) \) of quasi-coherent \( \SO_{Y} \)-modules. 
We shall show that the ideal sheaf \( \SJ  \) in Example~\ref{exam:8-3} provides a counterexample 
of \eqref{rem:exam:8-3:0}, 
and explain why a usual projective limit argument does not work for the ``proof'' of \eqref{rem:exam:8-3:0}.

For simplicity, assume that \( Y \) is an affine Noetherian scheme \( \Spec R\),
and set \( M = \OH^{0}(Y, \SF)\), which is a finitely generated \( R \)-module.
We set \( R_{n} = R/\GM^{n+1}R \) and \( M_{n} = M \otimes_{R} R_{n} \) for integers \( n \geq 0 \).
Then \( \SF \isom M\sptilde\) and \(\SF_{n} \isom M_{n}\sptilde\).
Let \( \widehat{R} \) be the \( \GM R \)-adic completion \( \varprojlim_{n} R_{n} \), and
let \( \pi \colon \Spec \widehat{R} \to \Spec R = Y \) be the associated morphism of schemes. 
Then, we have isomorphisms 
\[ \varprojlim^{\qcoh}\nolimits_{n} \SF_{n} \isom 
(M \otimes_{R} \widehat{R})\sptilde \isom \pi_{*}(\pi^{*}\SF). \]
Note that the projective limit \( \varprojlim\nolimits_{n} \SF_{n} \) 
in the category \( \Mod(\SO_{Y}) \) of \( \SO_{Y} \)-modules 
is not quasi-coherent in general. 

We shall show that 
the ideal sheaf \( \SJ  \) in Example~\ref{exam:8-3} provides a counterexample of \eqref{rem:exam:8-3:0}. 
In this case, the left hand side of \( \phi_{\infty}^{\qcoh} \) is isomorphic to \( \pi_{*}(\pi^{*}\SJ) \) and 
the right hand side is to \( \pi_{*}\pi^{*}\SO_{Y} \) by the last isomorphisms of \eqref{eq:3|exam:8-3}. 
Now, \( \pi \) is faithfully flat if we replace \( Y = \BAA^{8}\) 
with \( \Spec \SO_{Y, 0} \) for the origin \( 0 = (0, 0, \ldots, 0) \in Y\). 
Then, \( \phi^{\qcoh}_{\infty} \) is never an isomorphism, since \( \SJ \not\subset \SO_{Y} \). 
On the other hand, \( \phi \colon \SJ \to  j_{*}(\SJ|_{U}) \) 
is an isomorphism as the third isomorphism of \eqref{eq:3|exam:8-3}. 

We remark here on the commutativity of \( \varprojlim \) with functors \( j_{*} \) and \( j^{*} \).
The direct image functor \( j_{*} \colon \QCoh(\SO_{U}) \to \QCoh(\SO_{Y})  \) is right adjoint to
the restriction functor \( j^{*} \colon \QCoh(\SO_{Y}) \to \QCoh(\SO_{U}) \). Thus, \( \varprojlim \)
commutes with \( j_{*} \), and we have an isomorphism 
\[ \alpha \colon j_{*}\!\left( \varprojlim\nolimits_{n}^{\qcoh} j^{*}\SF_{n} \right) 
\xrightarrow{\isom} \varprojlim\nolimits_{n}^{\qcoh} j_{*}(j^{*}\SF_{n}).\]
On the other hand, \( j^{*} \) does not have a left adjoint functor.
Because, the left adjoint functor \( j_{!} \colon \Mod(\SO_{U}) \to \Mod(\SO_{Y})\) 
of \( j^{*} \colon \Mod(\SO_{Y}) \to \Mod(\SO_{U}) \)
does not preserve quasi-coherent sheaves. 
Thus, \( \varprojlim \) does not commute with \( j^{*} \) in general, and
hence, the canonical morphism
\[ \beta \colon j^{*}\!\left(\varprojlim\nolimits_{n}^{\qcoh} \SF_{n}\right) 
\to \varprojlim\nolimits_{n}^{\qcoh} j^{*}\SF_{n} \]
in \( \QCoh(\SO_{U}) \) is not necessarily an isomorphism.

It is necessary to check the morphism \( \beta \) to be an isomorphism,
for the ``proof'' of \eqref{rem:exam:8-3:0} by the projective limit argument. 
In fact, we can prove: 
\begin{enumerate}
\renewcommand{\theenumi}{$\ast\ast$}
\renewcommand{\labelenumi}{(\theenumi)}
\item \label{rem:exam:8-3:1}
\emph{When \( \phi \) is an isomorphism,
\( \phi^{\qcoh}_{\infty}  \) is an isomorphism if and only if \( \beta  \) is so.}
\end{enumerate}
This is a point to which many people probably do not pay attention.
\begin{proof}[Proof of \eqref{rem:exam:8-3:1}]
Now, we have a commutative diagram
\[ \begin{CD}
\pi_{*}\pi^{*}\SF @>{\isom}>> \varprojlim^{\qcoh}\nolimits_{n} \SF_{n}
@>{\phi^{\qcoh}_{\infty}}>> \varprojlim\nolimits_{n}^{\qcoh} j_{*}j^{*}(\SF_{n})  \\
@V{\hat{\phi}}VV @VVV @A{\isom}A{\alpha}A \\
j_{*}j^{*}(\pi_{*}\pi^{*}\SF) @>{\isom}>>
j_{*}j^{*}\!\left( \varprojlim^{\qcoh}\nolimits_{n} \SF_{n} \right)
@>{j_{*}(\beta)}>> j_{*}\!\left( \varprojlim\nolimits_{n}^{\qcoh} j^{*}\SF_{n} \right)
\end{CD}\]
in \( \QCoh(\SO_{Y}) \),
where \( \hat{\phi} \) is the restriction homomorphism
of \( \pi_{*}\pi^{*}\SF \). Thus, it suffices to show that if \( \phi \) is an isomorphism,
then \( \hat{\phi} \) is so.
Let us consider an isomorphism
\[ \gamma \colon \pi_{*}\pi^{*}(j_{*}j^{*}\SF) \xrightarrow{\isom} j_{*}j^{*}(\pi_{*}\pi^{*}\SF)\]
defined as the composite
\[ \pi_{*}\pi^{*}(j_{*}j^{*}(\SF)) \xrightarrow[\isom]{\pi_{*}(\delta')} 
\pi_{*}\hat{j}_{*} (\pi_{U}^{*}(j^{*}\SF))
\isom j_{*}\pi_{U*} \hat{j}^{*}(\pi^{*}\SF) \\
\xrightarrow[\isom]{j_{*}(\delta'')^{-1}} j_{*} j^{*} \pi_{*} \pi^{*}\SF \]
of canonical isomorphisms; Here \( \hat{j} \) is the open immersion \( \pi^{-1}(U) \injmap \Spec \widehat{R} \)
and \( \pi_{U} \) is the restriction of \( \pi \) to \( \pi^{-1}(U) \),
and the morphisms
\[ \delta' \colon \pi^{*}j_{*}(j^{*}\SF) \xrightarrow{\isom} \hat{j}_{*} \pi_{U}^{*}(j^{*}\SF) 
\quad \text{and} \quad
\delta'' \colon j^{*} \pi_{*} (\pi^{*}\SF) \xrightarrow{\isom} \pi_{U*} \hat{j}^{*}(\pi^{*}\SF) \]
are flat base change isomorphisms (cf.\ Lemma~\ref{lem:flatbc}).
Then, we can write
\( \hat{\phi} = \gamma \circ (\pi_{*}\pi^{*}(\phi)) \) for the induced morphism
\[ \pi_{*}\pi^{*}(\phi) \colon \pi_{*}\pi^{*}\SF \to \pi_{*}\pi^{*}(j_{*}j^{*}\SF), \]
and this shows that if \( \phi \) is an isomorphism, then \( \hat{\phi} \) is so. Thus, we are done.
\end{proof}
\end{rem}

In the rest of Section~\ref{subsect:Resthom},
in Lemmas~\ref{lem:SurjFlat(reflexive)} and \ref{lem:SurjFlat(complex)} below,
we shall give sufficient conditions for \( \SF \) to admit
an exact sequence of Proposition~\ref{prop:key}.

\begin{lem}\label{lem:SurjFlat(reflexive)}
Suppose that  \( f \circ j \colon U \to T \) is flat and
\[ \depth_{Y_{t} \cap Z} \SO_{Y_{t}} \geq 2\]
for any \( t \in f(Z) \).
If \( \SF \) is a reflexive \( \SO_{Y} \)-module and if \( \SF|_{U} \) is locally free,
then there exists an exact sequence
\( 0 \to \SF \to \SE^{0} \to \SE^{1} \to \SG \to 0 \) \emph{locally on \( Y \)}
which satisfies the conditions \eqref{prop:key:ass1}
and \eqref{prop:key:ass2} of Proposition~\emph{\ref{prop:key}}.
\end{lem}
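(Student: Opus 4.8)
The plan is to notice that the hypotheses in fact force the whole morphism $f$ (not merely its restriction to $U$) to be flat, and then to produce the four‑term sequence locally by dualizing a finite presentation of $\SF^{\vee}$. \emph{First, $f$ is flat.} For each $t \in f(Z)$ the inequality $\depth_{Y_t \cap Z}\SO_{Y_t} \geq 2$ makes the restriction homomorphism $\phi_t(\SO_Y)\colon \SO_{Y_t} \to j_{*}(\SO_{Y_t \cap U})$ injective (Property~\ref{ppty:depth<=2}), so Corollary~\ref{cor:lem:S2flat(new)} applies and shows that $f$ is flat along $Y_t$. Since every point of $Y$ lies either in $U$ or in some such fiber $Y_t$ with $t \in f(Z)$, and $f$ is flat on $U$ by hypothesis, we conclude $\Fl(\SO_Y/T) = Y$, i.e. $f$ is flat.

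\emph{Construction of the sequence.} The assertion being local, we may pass to an open subset on which $\SF^{\vee}$ admits a finite presentation $\SO_Y^{\oplus b} \xrightarrow{\varphi} \SO_Y^{\oplus a} \to \SF^{\vee} \to 0$. Applying $\SHom_{\SO_Y}(-, \SO_Y)$ and using $\SF \isom \SF^{\vee\vee}$ (reflexivity) yields a left‑exact sequence $0 \to \SF \to \SO_Y^{\oplus a} \xrightarrow{\varphi^{\vee}} \SO_Y^{\oplus b}$; putting
\[ \SE^{0} := \SO_Y^{\oplus a}, \qquad \SE^{1} := \SO_Y^{\oplus b}, \qquad \SG := \Coker(\varphi^{\vee}), \]
we obtain an exact sequence $0 \to \SF \to \SE^{0} \to \SE^{1} \to \SG \to 0$ of coherent $\SO_Y$-modules. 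This is exactly the dualization argument already used in the proof of Lemma~\ref{lem:j*reflexive}\eqref{lem:j*reflexive:1}.

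\emph{Verification of \eqref{prop:key:ass1} and \eqref{prop:key:ass2}.} By the first step, the free $\SO_Y$-modules $\SE^{0}$ and $\SE^{1}$ are flat over $T$. On $U$ the sheaf $\SF^{\vee}|_U = (\SF|_U)^{\vee}$ is locally free, so the presentation of $\SF^{\vee}|_U$ splits locally into direct summands; dualizing, $\varphi^{\vee}|_U$ factors locally as a split surjection of $\SE^{0}|_U$ onto a locally free direct summand of $\SE^{1}|_U$ followed by the inclusion of that summand, so $\SG|_U = \Coker(\varphi^{\vee}|_U)$ is locally free and hence flat over $T$ because $f \circ j$ is flat; this gives \eqref{prop:key:ass1}. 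For \eqref{prop:key:ass2}, fix $t \in f(Z)$: then $\SE^{0}_{(t)} = \SO_{Y_t}^{\oplus a}$ and $\SE^{1}_{(t)} = \SO_{Y_t}^{\oplus b}$, so
\[ \depth_{Z \cap Y_t}\SE^{0}_{(t)} = \depth_{Z \cap Y_t}\SE^{1}_{(t)} = \depth_{Y_t \cap Z}\SO_{Y_t} \geq 2, \]
which is more than required.

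The only point needing alertness is the very first step: one must recognize that flatness of $f$ over the single open set $U$, together with the fibrewise depth hypothesis, propagates to flatness of $f$ on all of $Y$, since this is exactly what guarantees that the free modules occurring in the dualized presentation are $T$-flat. Once this is in hand, the rest is the routine ``a reflexive sheaf is the kernel of a map between free sheaves'' construction together with standard splitting bookkeeping on $U$.
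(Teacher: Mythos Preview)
Your proof is correct and follows essentially the same approach as the paper: first use Corollary~\ref{cor:lem:S2flat(new)} to see that $f$ is flat on all of $Y$, then dualize a local finite presentation of $\SF^{\vee}$ to obtain the required four-term sequence. The paper identifies $\SG|_{U}$ with $(\Ker\varphi)^{\vee}|_{U}$ to see it is locally free, which is the same local splitting observation you make, just phrased slightly differently.
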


\begin{proof}
The morphism \( f \) is flat by Corollary~\ref{cor:lem:S2flat(new)}.
Since \( \SF \) is coherent, locally on \( Y \), we have a finite presentation
\[ \SO_{Y}^{\oplus m} \xrightarrow{h} \SO_{Y}^{\oplus n} \to \SF^{\vee} \to 0 \]
of the dual \( \SO_{Y} \)-module
\( \SF^{\vee} = \SHom_{\SO_{Y}}(\SF, \SO_{Y}) \).
Let \( \SK \) be the kernel of the left homomorphism \( h \).
Then, \( \SK|_{U} \) is locally free, since so is \( \SF|_{U} \).
We have an exact sequence
\[ 0 \to \SF \isom \SF^{\vee\vee} \to \SO_{Y}^{\oplus n} \xrightarrow{h^{\vee}} \SO_{Y}^{\oplus m} \]
by taking the dual.
Let \( \SG \) be the cokernel of \( h^{\vee} \).
Then, \( \SG|_{U} \) is isomorphic to the locally free sheaf
\( \SK^{\vee}|_{U}\).
Thus, the exact sequence
\[ 0 \to \SF \to \SO_{Y}^{\oplus n} \xrightarrow{h^{\vee}} \SO_{Y}^{\oplus m} \to \SG \to 0 \]
satisfies the conditions \eqref{prop:key:ass1} and \eqref{prop:key:ass2}
of Proposition~\ref{prop:key}.
\end{proof}

\begin{lem}\label{lem:SurjFlat(complex)}
Suppose that \( f \colon Y \to T \) is a flat morphism and
\begin{equation}\label{lem:SurjFlat(complex)|eq0}
\depth_{Y_{t} \cap Z} \SO_{Y_{t}} \geq 2
\end{equation}
for any \( t \in f(Z) \).
Moreover, suppose that there is a bounded complex
\[ \SE^{\bullet} = [ \cdots \to \SE^{i} \to \SE^{i+1} \to \cdots] \]
of locally free \( \SO_{Y} \)-modules of finite rank satisfying the following four conditions\emph{:}
\begin{enumerate}
\renewcommand{\theenumi}{\roman{enumi}}
\renewcommand{\labelenumi}{(\theenumi)}
\item \label{lem:SurjFlat(complex):0a}
\( \SH^{i}(\SE^{\bullet})|_{Y \setminus Z} = 0 \) for any \( i > 0\)\emph{;}

\item \label{lem:SurjFlat(complex):0}
\( \SF \isom \SH^{0}(\SE^{\bullet}) \)\emph{;}

\item \label{lem:SurjFlat(complex):1}
\( \SH^{i}(\SE^{\bullet}_{(t)}) = 0 \) for any \( i < 0 \)
and any \( t \in T \), where \( \SE^{\bullet}_{(t)}  \) stands for the complex
\[ [ \cdots \to \SE^{i}_{(t)} \to \SE^{i+1}_{(t)} \to \cdots]
\isom_{\qis} \SE^{\bullet} \otimes^{\bfL}_{\SO_{Y}} \SO_{Y_{t}}; \]

\item \label{lem:SurjFlat(complex):3}
the local cohomology group \( \BHH^{i}_{y}(M^{\bullet}) \) at the maximal ideal \( \GM_{Y, y} \)
for the complex
\[ M^{\bullet} =  \bigl(\tau^{\leq 1} \SE^{\bullet}_{(t)}\bigr)_{y} \]
of \( \SO_{Y, y} \)-modules is zero for any \( i \leq 1 \) and any \( y \in Z \),
where \( t = f(y) \).
\end{enumerate}
Then, \( \SH^{i}(\SE^{\bullet}) = 0 \) for any \( i < 0 \),
and \( \SF \) admits an exact sequence satisfying the conditions \eqref{prop:key:ass1}
and \eqref{prop:key:ass2} of Proposition~\emph{\ref{prop:key}}.
\end{lem}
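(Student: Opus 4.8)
The plan is to build the four-term exact sequence directly from the canonical and stupid truncations of $\SE^{\bullet}$, after first disposing of the negative cohomology, and then to check the hypotheses of Proposition~\ref{prop:key}; condition \eqref{lem:SurjFlat(complex):3} will enter exactly once, to secure the relative $\bfS_{2}$-type depth bound. First I would prove $\SH^{i}(\SE^{\bullet}) = 0$ for $i < 0$. Since $f$ is flat and each $\SE^{i}$ is locally free over $\SO_{Y}$, the terms of $\SE^{\bullet}$ are $\SO_{T}$-flat, and for $y\in Y$ with $t = f(y)$ one has $(\SE^{\bullet}\otimes_{\SO_{T}}\Bbbk(t))_{y} = \SE^{\bullet}_{y}\otimes_{\SO_{T,t}}\Bbbk(t)$, whose cohomology vanishes in negative degrees by condition \eqref{lem:SurjFlat(complex):1}. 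Applying the local criterion of flatness (Proposition~\ref{prop:LCflat}) to the bounded complex $\SE^{\bullet}_{y}$ of $\SO_{T,t}$-flat modules — reducing, if one prefers, to the artinian case by filtering $\SO_{T,t}/\GM_{T,t}^{n+1}$ through its $\GM_{T,t}$-adic layers, each a $\Bbbk(t)$-module — one gets $\SH^{i}(\SE^{\bullet}_{y}) = 0$ for $i<0$, hence $\SH^{i}(\SE^{\bullet}) = 0$ for $i < 0$.

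Now set $\SN := \Coker(\SE^{-1}\to\SE^{0})$, $\SG := \Coker(\SE^{0}\to\SE^{1})$, and let $\bar{d}^{0}\colon\SN\to\SE^{1}$ be induced by $d^{0}$. By condition \eqref{lem:SurjFlat(complex):0} and the vanishing just proved, $\Ker(\bar{d}^{0}) = \SH^{0}(\SE^{\bullet})\isom\SF$, so the sequence
\[ 0 \to \SF \to \SN \xrightarrow{\bar{d}^{0}} \SE^{1} \to \SG \to 0 \]
is exact; it is constructed globally on $Y$, and its remaining properties are local. The sheaf $\SE^{1}$ is $\SO_{T}$-flat (locally free over $\SO_{Y}$, and $f$ flat), with $\SE^{1}_{(t)}$ locally free over $\SO_{Y_{t}}$, so $\depth_{Z\cap Y_{t}}\SE^{1}_{(t)} = \depth_{Z\cap Y_{t}}\SO_{Y_{t}}\geq 2$ by \eqref{lem:SurjFlat(complex)|eq0}. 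For $\SN$: by the vanishing of $\SH^{<0}(\SE^{\bullet})$, the stupid truncation $[\cdots\to\SE^{-1}\to\SE^{0}]$ is a bounded complex of $\SO_{T}$-flat modules in non-positive degrees quasi-isomorphic to $\SN[0]$, so $\SN\otimes_{\SO_{T}}^{\bfL}\Bbbk(t)$ is represented by $[\cdots\to\SE^{-1}_{(t)}\to\SE^{0}_{(t)}]$, which by condition \eqref{lem:SurjFlat(complex):1} has no cohomology in negative degrees; thus $\STor^{\SO_{T}}_{q}(\SN,\Bbbk(t)) = 0$ for $q>0$ and all $t$, and $\SN$ is $\SO_{T}$-flat by the local criterion of flatness. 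For $\SG|_{U}$: by conditions \eqref{lem:SurjFlat(complex):0a}, \eqref{lem:SurjFlat(complex):0} and $\SH^{<0}(\SE^{\bullet}) = 0$, the complex $\SE^{\bullet}|_{U}$ is a locally free resolution of $\SF|_{U}$, so $[\SE^{2}|_{U}\to\SE^{3}|_{U}\to\cdots]$ in non-negative degrees is a finite $\SO_{T}$-flat resolution of $\SG|_{U}$, and the same Tor argument gives $\SG|_{U}$ $\SO_{T}$-flat. Hence condition \eqref{prop:key:ass1} of Proposition~\ref{prop:key}, and the $\SE^{1}$-part of \eqref{prop:key:ass2}, hold.

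It remains to prove $\depth_{Z\cap Y_{t}}\SN_{(t)}\geq 2$ for $t\in f(Z)$, and here condition \eqref{lem:SurjFlat(complex):3} is used. Fix $y\in Z$ and $t = f(y)$ and work over $\SO_{Y_{t},y}$. The key identification is that $M^{\bullet} = (\tau^{\leq 1}\SE^{\bullet}_{(t)})_{y}$ is quasi-isomorphic to the two-term complex $[\SN_{(t),y}\xrightarrow{\bar{d}^{0}_{(t),y}}\Ker(d^{1}_{(t)})_{y}]$ in degrees $0,1$: the obvious surjection of $\tau^{\leq 1}\SE^{\bullet}_{(t)}$ onto $[\SN_{(t)}\to\Ker(d^{1}_{(t)})]$ has kernel $[\cdots\to\SE^{-1}_{(t)}\to\operatorname{Im}(d^{-1}_{(t)})\to 0]$, which is acyclic by condition \eqref{lem:SurjFlat(complex):1}. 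Since $y\in Z$, the local ring $\SO_{Y_{t},y}$ has depth $\geq 2$ by \eqref{lem:SurjFlat(complex)|eq0}, so $\Ker(d^{1}_{(t)})_{y}$, the kernel of a map between free $\SO_{Y_{t},y}$-modules of depth $\geq 2$, satisfies $\BHH^{p}_{y}(\Ker(d^{1}_{(t)})_{y}) = 0$ for $p\leq 1$ by Lemma~\ref{lem:depth<=2}. Plugging the two-term presentation of $M^{\bullet}$ into the long exact sequence of local cohomology for $0\to\Ker(d^{1}_{(t)})_{y}[-1]\to M^{\bullet}\to\SN_{(t),y}[0]\to 0$, the vanishing $\BHH^{p}_{y}(M^{\bullet}) = 0$ $(p\leq 1)$ from condition \eqref{lem:SurjFlat(complex):3} yields $\BHH^{0}_{y}(\SN_{(t),y}) = \BHH^{1}_{y}(\SN_{(t),y}) = 0$, i.e.\ $\depth\SN_{(t),y}\geq 2$. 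As $y$ ranges over $Z$ this gives $\depth_{Z\cap Y_{t}}\SN_{(t)}\geq 2$, so Proposition~\ref{prop:key}\eqref{prop:key:ass2} holds for the sequence above.

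I expect the last paragraph — and within it the quasi-isomorphism $M^{\bullet}\isom[\SN_{(t)}\to\Ker(d^{1}_{(t)})]$ — to be the main obstacle: it is the only place where condition \eqref{lem:SurjFlat(complex):3} and the precise form of the truncation $\tau^{\leq 1}$ are used. The dévissage in the first paragraph is routine but requires care, and everything else is a direct application of Property~\ref{ppty:depth<=2}, Lemma~\ref{lem:depth<=2}, and the local criterion of flatness.
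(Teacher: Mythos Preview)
Your proof is correct and follows essentially the same route as the paper: build the four-term sequence $0\to\SF\to\SN\to\SE^{1}\to\SG\to 0$ with $\SN=\Coker(\SE^{-1}\to\SE^{0})$, verify flatness of $\SN$, $\SE^{1}$, $\SG|_{U}$ over $T$, and then use the quasi-isomorphism $\tau^{\leq 1}\SE^{\bullet}_{(t)}\simeq_{\qis}[\SN_{(t)}\to\Ker(d^{1}_{(t)})]$ together with condition~\eqref{lem:SurjFlat(complex):3} to get $\depth_{Z\cap Y_{t}}\SN_{(t)}\geq 2$. The only stylistic difference is that the paper runs the d\'evissage of the first paragraph as an explicit induction via Corollary~\ref{cor:LCflat2} (simultaneously obtaining $\SH^{<0}(\SE^{\bullet})=0$ and flatness of $\SN$), whereas you sketch the vanishing first and then recover flatness of $\SN$ and $\SG|_{U}$ by a $\STor$-computation using the flat representatives $[\cdots\to\SE^{0}]$ and $[\SE^{2}|_{U}\to\cdots]$; both are equivalent, and your first paragraph would be cleaner if you cited Corollary~\ref{cor:LCflat2} (or Lemma~\ref{lem:CMmorphism:complex}) rather than Proposition~\ref{prop:LCflat} directly.
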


\begin{proof}
For an integer \( k \), the truncated complex \( \tau^{\geq k}(\SE^{\bullet}) \) is expressed as
\[ [\cdots \to 0 \to \SC^{k} \to \SE^{k+1} \to \SE^{k+2} \to \cdots], \]
where \( \SC^{k} \) is the cokernel of \( \SE^{k-1} \to \SE^{k} \).
First, we shall show that \( \SE^{\bullet} \isom_{\qis} \tau^{\geq 0}(\SE^{\bullet}) \)
and \( \SC^{0} \) is flat over \( T \).
Note that it implies that
\( \SH^{i}(\SE^{\bullet}) = 0 \) for any \( i < 0 \).
Since \( \SE^{\bullet} \) is bounded, we have an integer \( k < 0\)
such that \( \SE^{\bullet} \isom_{\qis} \tau^{\geq k}(\SE^{\bullet}) \)
and \( \SC^{k} \) is flat over \( T \).
Then, by \eqref{lem:SurjFlat(complex):1}, one has
\[ \SH^{k}(\SE^{\bullet}_{(t)}) \isom \Ker(\SC^{k}_{(t)} \to \SE^{k+1}_{(t)}) = 0 \]
for any \( t \in T \).
Hence, \( \SC^{k} \to \SE^{k+1} \) is injective and
\( \SC^{k+1} \isom \SE^{k+1}/\SC^{k} \) is flat over \( T \) by
a version of local criterion of flatness (cf.\ Corollary~\ref{cor:LCflat2}).
Thus, \( \SE^{\bullet} \isom_{\qis} \tau^{\geq k+1}(\SE^{\bullet}) \),
and we can increase \( k \) by one.
Therefore, we can take \( k = 0 \), and consequently,
\( \SE^{\bullet} \isom_{\qis} \tau^{\geq 0}(\SE^{\bullet}) \),
and \( \SC^{0}  \) is flat over \( T \).
We write \( \SC := \SC^{0} \). Then,
\[ \SE^{\bullet}_{(t)} \isom_{\qis}
[\cdots \to 0 \to \SC_{(t)} \to \SE_{(t)}^{1} \to \SE_{(t)}^{2} \to \cdots] \]
for any \( t \in T \), since \( \SC \) and \( \SE^{i} \) are all flat over \( T \).

Second, we shall prove that
\begin{equation}\label{eq1:pf:lem:SurjFlat(complex)}
\depth_{Y_{t} \cap Z}\, \SC_{(t)} \geq 2
\end{equation}
for any \( t \in f(Z) \). We define \( \SK_{t} \) to be the kernel
of \( \SE^{1}_{(t)} \to \SE^{2}_{(t)} \).
Then,
\[ \depth_{Y_{t} \cap Z} \SE^{i}_{(t)} \geq 2 \quad \text{and} \quad
\depth_{Y_{t} \cap Z} \SK_{t} \geq 2 \]
for \( i = 0 \), \( 1 \), and for any \( t \in f(Z) \),
by \eqref{lem:SurjFlat(complex)|eq0} and by Lemma~\ref{lem:depth<=2}\eqref{lem:depth<=2:0}.
In particular, for any \( y \in Z \cap Y_{t}\), we have the vanishing
\begin{equation}\label{eq2:pf:lem:SurjFlat(complex)}
\BHH^{i}_{y}((\SK_{t})_{y}) = 0
\end{equation}
of the local cohomology group at \( y \) for any \( i \leq 1 \)
(cf.\ Property~\ref{ppty:depth<=2}).
By construction, we have a quasi-isomorphism
\[ \tau^{\leq 1}(\SE^{\bullet}_{(t)})
\isom_{\qis} [\cdots \to 0 \to \SC_{(t)} \to \SK_{t} \to 0 \to \cdots].\]
In view of the induced exact sequence
\[ \cdots \to \BHH^{i}_{y}(M^{\bullet}) \to
\BHH^{i}_{y}((\SC_{(t)})_{y}) \to \BHH^{i}_{y}((\SK_{t})_{y}) \to \cdots \]
of local cohomology groups, we have
\[ \BHH^{i}_{y}((\SC_{(t)})_{y}) = 0  \]
for any \( i \leq 1 \) by \eqref{lem:SurjFlat(complex):3} and \eqref{eq2:pf:lem:SurjFlat(complex)}.
Thus, we have \eqref{eq1:pf:lem:SurjFlat(complex)} (cf.\ Property~\ref{ppty:depth<=2}).

Finally, we consider the cokernel \( \SG \) of \( \SC \to \SE^{1} \).
Then, \( \SG|_{U} \) is flat over \( T \) by \eqref{lem:SurjFlat(complex):0a}.
Therefore, the exact sequence
\( 0 \to \SF \to \SC \to \SE^{1} \to \SG \to 0 \) satisfies the conditions
\eqref{prop:key:ass1} and \eqref{prop:key:ass2} of Proposition~\ref{prop:key}.
\end{proof}

%%%%%%%%%%%%%%%%%%%%%%%%%%%%%%%%%%%%%%%%%%%%%%%%%%%

\subsection{Applications of the key proposition}
\label{subsect:AppResthom}

First, we shall prove the following criterion for a sheaf to be invertible.

\begin{thm}\label{thm:invExt}
Let \( f \colon Y \to T \) be a morphism of locally Noetherian schemes,
\( Z \) a closed subset of \( Y \), \( \SF \) a coherent \( \SO_{Y} \)-module,
and \( t \) a point of \( f(Z) \).
We set \( U = Y \setminus Z \), and write \( j \colon U \injmap Y\) for the open immersion.
Assume that\emph{:}
\begin{enumerate}
\renewcommand{\theenumi}{\roman{enumi}}
\renewcommand{\labelenumi}{(\theenumi)}
\item \label{thm:invExt:cond0}
\( \depth_{Z} \SO_{Y} \geq 1 \),
\item \label{thm:invExt:cond1}
\( \SF|_{U} \) is flat over \( T \), \( \SF|_{U} \) is invertible, \( \depth_{Z} \SF \geq 2 \), and
\item \label{thm:invExt:cond2}
the direct image sheaf
\[ \SF_{(t)*} = j_{*}((\SF \otimes_{\SO_{Y}} \SO_{Y_{t}})|_{U \cap Y_{t}}) \]
\emph{(}cf.\ Definition~\emph{\ref{dfn:restmor})} is an invertible \( \SO_{Y_{t}} \)-module.
\end{enumerate}
Assume furthermore that one of the conditions \eqref{thm:invExt:3a} and \eqref{thm:invExt:3b}
below is satisfied\emph{:}
\begin{enumerate}
\renewcommand{\theenumi}{\alph{enumi}}
\renewcommand{\labelenumi}{(\theenumi)}
\item \label{thm:invExt:3a} \( \depth_{Z \cap Y_{t}} \SO_{Y_{t}} \geq 3 \)\emph{;}

\item  \label{thm:invExt:3b}
the double-dual \( \SF^{[r]}  \) of \( \SF^{\otimes r} \) is invertible along \( Y_{t} \)
for a positive integer \( r \) coprime to the characteristic of the residue field \( \Bbbk(t) \).
\end{enumerate}
Then, \( f \) is flat along \( Y_{t} \), and
\( \SF \) is invertible along \( Y_{t} \).
\end{thm}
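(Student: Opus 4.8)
The plan is to reduce the statement, via Proposition~\ref{prop:key}, to proving that the restriction homomorphism on the fibre $Y_{t}$ is an isomorphism, and then to establish that in each of the two cases \eqref{thm:invExt:3a} and \eqref{thm:invExt:3b}.

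First I would record the formal consequences of \eqref{thm:invExt:cond0}--\eqref{thm:invExt:cond2}. By \eqref{thm:invExt:cond1} and Property~\ref{ppty:depth<=2}, the restriction homomorphism $\SF\to j_{*}(\SF|_{U})$ is an isomorphism; since $\SF|_{U}$ is invertible and flat over $T$, the morphism $f\circ j\colon U\to T$ is flat; and by Lemma~\ref{lem:j*reflexive}\eqref{lem:j*reflexive:2} (using \eqref{thm:invExt:cond0} and \eqref{thm:invExt:cond1}) $\SF$ is reflexive. Next, since $\SF_{(t)*}|_{U\cap Y_{t}}=\SF_{(t)}|_{U\cap Y_{t}}$, one has $\SF_{(t)*}=j_{*}(\SF_{(t)*}|_{U\cap Y_{t}})$, so $\depth_{Z\cap Y_{t}}\SF_{(t)*}\geq 2$ by Property~\ref{ppty:depth<=2}; as $\SF_{(t)*}$ is invertible by \eqref{thm:invExt:cond2}, this forces $\depth_{Z\cap Y_{t}}\SO_{Y_{t}}\geq 2$. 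Because invertibility of $\SF$ and flatness of $f$ ``along $Y_{t}$'' may be checked stalk by stalk, after shrinking $Y$ to a neighbourhood of $Y_{t}$ I would arrange that $\depth_{Y_{t'}\cap Z}\SO_{Y_{t'}}\geq 2$ holds for every $t'\in f(Z)$ --- this is automatic at $t'=t$, and is propagated by the semicontinuity of fibrewise depth along $Z$. Then Lemma~\ref{lem:SurjFlat(reflexive)} applies and furnishes, locally on $Y$, an exact sequence $0\to\SF\to\SE^{0}\to\SE^{1}\to\SG\to 0$ satisfying \eqref{prop:key:ass1} and \eqref{prop:key:ass2} of Proposition~\ref{prop:key}. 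By Proposition~\ref{prop:key}\eqref{prop:key:2} it now suffices to show that $\phi_{t}\colon\SF_{(t)}\to\SF_{(t)*}$ is an isomorphism along $Y_{t}$: indeed then $\SF$ (and $\SG$) are flat over $T$ along $Y_{t}$, $\SF_{(t)}\isom\SF_{(t)*}$ is invertible on $Y_{t}$, whence $\SF$ is locally free along $Y_{t}$ by Fact~\ref{fact:elem-flat}\eqref{fact:elem-flat:2} --- of rank one because $\SF|_{U}$ is invertible --- so $\SF$ is invertible along $Y_{t}$, and finally $f$ is flat along $Y_{t}$ since locally $\SF\isom\SO_{Y}$ there.

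In case \eqref{thm:invExt:3a}, the hypothesis $\depth_{Z\cap Y_{t}}\SO_{Y_{t}}\geq 3$, together with the invertibility of $\SF_{(t)*}$, gives $\depth_{Z\cap Y_{t}}\SF_{(t)*}\geq 3$; thus Corollary~\ref{cor:SurjFlat(new)|Kollar} applies (locally on $Y$) and yields that $\phi_{t}$ is an isomorphism along $Y_{t}$. In case \eqref{thm:invExt:3b}, I would pass to the cyclic index-one cover: working locally near $Y_{t}$, where $\SF^{[r]}$ is invertible and can be trivialized, I form the finite $\SO_{Y}$-algebra $\SA=\bigoplus_{i=0}^{r-1}\SF^{[i]}$ and put $\pi\colon W=\SSpec_{Y}\SA\to Y$. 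Since $r$ is coprime to $\chara\Bbbk(t)$, the element $r$ is a unit near $Y_{t}$ and $1/r$ times the trace splits $\SO_{Y}\injmap\pi_{*}\SO_{W}$, so $\SF$ is an $\SO_{Y}$-direct summand of $\pi_{*}\pi^{*}\SF$; moreover $\pi$ is finite and flat over $U$, the closed set $\pi^{-1}(Z)$ inherits $\depth_{\pi^{-1}(Z)}\SO_{W}\geq 2$ from $\depth_{Z}\SO_{Y}\geq 2$ via $\pi_{*}\SO_{W}=\bigoplus\SF^{[i]}$ and Lemma~\ref{lem:j*reflexive}\eqref{lem:j*reflexive:1}, and --- by the standard index-one cover computation, which uses \eqref{thm:invExt:cond0}, \eqref{thm:invExt:cond1} and reflexivity of $\SF$ --- the reflexive pullback $\pi^{[*]}\SF:=(\pi^{*}\SF)^{\vee\vee}$ is invertible on $W$. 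The triple $(W\to T,\ \pi^{-1}(Z),\ \pi^{[*]}\SF)$ then falls under the already easy ``$\SF$ invertible'' case of the theorem (which is Lemma~\ref{lem:S2flat(new)}\eqref{lem:S2flat(new):1} applied to $\SO_{W}$, cf.\ Corollary~\ref{cor:lem:S2flat(new)}): $W\to T$ is flat along $\pi^{-1}(Y_{t})$ and $\pi^{[*]}\SF$ is invertible there. Finally, using that $\pi^{[*]}\SF=\pi^{*}\SF$ on $\pi^{-1}(U)$ and that $\depth_{\pi^{-1}(Z)}\pi^{*}\SF\geq 2$, one identifies $\pi^{*}\SF=\pi^{[*]}\SF$, so $\pi^{*}\SF$ is flat over $T$ along $\pi^{-1}(Y_{t})$; hence $\pi_{*}\pi^{*}\SF$ and its $\SO_{Y}$-direct summand $\SF$ are flat over $T$ along $Y_{t}$, and combined with $\SF_{(t)}\isom\SF_{(t)*}$ this completes case \eqref{thm:invExt:3b}.

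The main obstacle is case \eqref{thm:invExt:3b}: one must verify carefully that the index-one cover $W$ retains all the structural hypotheses needed to run the argument on it --- the fibrewise depth conditions along $\pi^{-1}(Z)$, the reflexivity of $\pi^{*}\SF$ near $\pi^{-1}(Y_{t})$, and above all the invertibility of $\pi^{[*]}\SF$ --- and that the coprimality of $r$ with $\chara\Bbbk(t)$ is exactly what legitimizes the trace splitting and hence the descent of $T$-flatness along the finite morphism $\pi$. By comparison, case \eqref{thm:invExt:3a} and the opening reduction are routine once Proposition~\ref{prop:key} and Corollary~\ref{cor:SurjFlat(new)|Kollar} are available. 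A secondary nuisance, because $f$ is not assumed locally of finite type, is justifying the shrinking that makes $\depth_{Y_{t'}\cap Z}\SO_{Y_{t'}}\geq 2$ hold on all fibres meeting $Z$.
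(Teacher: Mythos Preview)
Your opening reduction and case \eqref{thm:invExt:3a} match the paper's proof. The ``secondary nuisance'' you flag about propagating the fibrewise depth bound is real; the paper also invokes Lemma~\ref{lem:SurjFlat(reflexive)} somewhat loosely here, but in fact only the parts of Proposition~\ref{prop:key} concerning the single point $t$ are used, and for those one only needs the exact sequence on the thickenings $Y_n\to T_n$, where the sole fibre is $Y_t$ and $f$ is already known to be flat along it. So this gap is closable without semicontinuity.

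Your approach to case \eqref{thm:invExt:3b}, however, is genuinely different from the paper's and has a real gap. The paper does \emph{not} pass to an index-one cover. Instead, after trivializing $\SF^{[r]}$ and $\SF_{(t)*}$ locally, it shows by induction on $n$ that $\SN_n:=\SF_n|_{U_n}$ is trivial in $\Pic(U_n)$: the kernel of $\Pic(U_n)\to\Pic(U_{n-1})$ is a $\Bbbk(t)$-vector space (from the exponential sequence with nilpotent ideal), the class of $\SN_n$ lies in this kernel and is $r$-torsion because $\SF^{[r]}\isom\SO_Y$, and $r$ is coprime to $\chara\Bbbk(t)$. Then $\SF_{n*}=j_*\SN_n\isom\SO_{Y_n}$ for all $n$, so $\varphi_n$ is an isomorphism, and \eqref{prop:key:condBB}$\Rightarrow$\eqref{prop:key:condB} of Proposition~\ref{prop:key}\eqref{prop:key:2} gives $\phi_t$ an isomorphism. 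This argument lives entirely on the open sets $U_n$, where everything is invertible.

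Your cyclic-cover route runs into exactly the base-change problem the theorem is about. To get $W\to T$ flat along $W_t$ via Corollary~\ref{cor:lem:S2flat(new)} you need $\depth_{\pi^{-1}(Z)\cap W_t}\SO_{W_t}\geq 1$; since $\pi_{t*}\SO_{W_t}=\bigoplus_{i}\SF^{[i]}\otimes\SO_{Y_t}$, this requires each $\SF^{[i]}\otimes\SO_{Y_t}$ to have no sections supported in $Z\cap Y_t$. For $i\geq 1$ this is unjustified: $\SF^{[i]}\otimes\SO_{Y_t}$ need not satisfy $\bfS_1$, and controlling it is tantamount to the relative $\bfS_2$ statement you want. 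Similarly, your identification $\pi^*\SF=\pi^{[*]}\SF$ needs $\depth_{\pi^{-1}(Z)}\pi^*\SF\geq 2$, i.e.\ $\depth_Z(\SF\otimes\SF^{[i]})\geq 2$ for all $i$; but $\SF\otimes\SF^{[i]}$ is not reflexive and no such bound follows from the hypotheses. Finally, note that the $\BZZ/r\BZZ$-grading already splits $\SO_Y\hookrightarrow\pi_*\SO_W$ without using that $r$ is a unit, so the coprimality hypothesis plays no role in your argument as written --- a sign that the mechanism is off. The paper's Picard argument is where the coprimality is actually consumed.
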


\begin{proof}
We may replace \( Y \) with its open subset, since the assertions are local on \( Y \).
By \eqref{thm:invExt:cond1}, \( U \) is flat over \( T \).
Moreover,
\begin{equation}\label{thm:invExt|eq1}
\depth_{Z \cap Y_{t}} \SO_{Y_{t}} \geq 2
\end{equation}
by \eqref{thm:invExt:cond2}, since the isomorphism 
\( \SF_{(t)*} \isom j_{*}(\SF_{(t)*}|_{U \cap Y_{t}}) \)
implies that
\( \depth (\SF_{(t)*})_{y} = \depth \SO_{Y_{t}, y} \geq 2 \) for any \( y \in Z \cap Y_{t}  \).
Hence, \( f \colon Y \to T \) is flat along \( Y_{t} \) by Corollary~\ref{cor:lem:S2flat(new)}
(cf.\ Property~\ref{ppty:depth<=2}).
Then, \( \SF \) is a reflexive \( \SO_{Y} \)-module
by Lemma~\ref{lem:j*reflexive}\eqref{lem:j*reflexive:2}, since we have assumed
\( \depth_{Z} \SO_{Y} \geq 1 \) and \( \depth_{Z} \SF \geq 2 \)
in \eqref{thm:invExt:cond0} and \eqref{thm:invExt:cond1}.
Therefore, by \eqref{thm:invExt|eq1} and Lemma~\ref{lem:SurjFlat(reflexive)},
we may assume that
\( \SF \) admits an exact sequence of Proposition~\ref{prop:key}.

By Fact~\ref{fact:elem-flat}\eqref{fact:elem-flat:2},
we see that \( \SF \) is invertible along \( Y_{t} \)
if the two conditions below are both satisfied:
\begin{enumerate}
\item \label{thm:invExt:01} \( \SF \) is flat over \( T \) along \( Y_{t} \);
\item \label{thm:invExt:02} \( \phi_{t} \colon \SF_{(t)} \to \SF_{(t)*} \) is an isomorphism.
\end{enumerate}
Here, \eqref{thm:invExt:01} is a consequence of \eqref{thm:invExt:02}
by Proposition~\ref{prop:key}\eqref{prop:key:2}.
When \eqref{thm:invExt:3a} holds, we have
\[ \depth_{Y_{t} \cap Z} \SF_{(t)*} \geq 3 \]
by \eqref{thm:invExt:cond2}, and hence,
the condition  \eqref{thm:invExt:02} is satisfied by Corollary~\ref{cor:SurjFlat(new)|Kollar}.
Thus, it remains to prove \eqref{thm:invExt:02} assuming the condition \eqref{thm:invExt:3b}.

We use Notation~\ref{nota:Resthom2} for \( t \). By replacing \( Y \) with its open subset,
we may assume that \( Y \) is affine, and  there exist isomorphisms
\[ \SO_{Y_{t}} = \SO_{Y_{0}} \isom \SF_{(t)*} = \SF_{0*}
\quad \text{and} \quad \SF^{[r]} \isom \SO_{Y}\]
in \eqref{thm:invExt:cond2} and in \eqref{thm:invExt:3b}, respectively.
Note that we have
\[ \depth_{Y_{n} \cap Z} \SO_{Y_{n}} \geq 2 \]
for any \( n \geq 0 \):
This follows from \eqref{thm:invExt|eq1} by Lemma~\ref{lem:relSkCodimDepth}\eqref{lem:relSkCodimDepth:3}
applied to the flat morphism \( Y_{n} \to T_{n} \).
As a consequence,
\[ \OH^{0}(Y_{n}, \SO_{Y_{n}}) \isom \OH^{0}(U_{n}, \SO_{U_{n}}) \]
and for any \( n \), and the restriction homomorphism
\begin{equation}\label{thm:invExt|eq2}
\OH^{0}(U_{n}, \SO_{U_{n}}) \to \OH^{0}(U_{n-1}, \SO_{U_{n-1}})
\end{equation}
is surjective for any \( n > 0 \), since we have assumed that \( Y \) is affine.

We set \( \SN_{n} := \SF_{n}|_{U_{n}} \).
It is enough to show that \( \SN_{n} \isom \SO_{U_{n}} \) for all \( n \).
In fact, if this is true, then we have an isomorphism
\[ \SF_{n*} = j_{*}(\SN_{n}) \isom j_{*}(\SO_{U_{n}}) \isom \SO_{Y_{n}}, \]
and, as a consequence, the restriction homomorphism
\( \varphi_{n} \colon (\SF_{n*})_{(t)} \to \SF_{0*}\) is an isomorphism for any \( n \geq 0 \).
Hence, in this case, \( \phi_{t} \colon \SF_{(t)} \to \SF_{(t)*}\) is an isomorphism
by \eqref{prop:key:condBB} \( \Rightarrow \) \eqref{prop:key:condB}
of Proposition~\ref{prop:key}\eqref{prop:key:2}.

We shall prove \( \SN_{n} \isom \SO_{U_{n}} \) by induction on \( n \).
When \( n = 0 \), we have the isomorphism from
the isomorphism \( \SF_{0*} \isom \SO_{Y_{0}} \) above.
Assume that \( \SN_{n-1} \isom \SO_{U_{n-1}} \) for an integer \( n > 0\).
Let \( \SJ \) be the kernel of \( \SO_{Y_{n}} \to \SO_{Y_{n-1}} \).
Then, \( \SJ^{2} = 0 \) as an ideal of \( \SO_{Y_{n}} \), and
\[ \SJ \isom \GM^{n}/\GM^{n+1} \otimes_{\Bbbk} \SO_{Y_{0}}. \]
We have an exact sequence
\[ 0 \to \SJ \to \SO_{Y_{n}}^{\star} \to \SO_{Y_{n-1}}^{\star} \to 1 \]
of sheaves on \( |Y_{n}| = |Y_{0}|\) with respect to the Zariski topology,
where \( {}^{\star} \) stands for the subsheaf of invertible sections of a sheaf of rings,
and where a local section \( \zeta \) of \( \SJ \) is mapped to the invertible section
\( 1 + \zeta \) of \( \SO_{Y_{n}} \).
It induces a long exact sequence:
\[ \OH^{0}(U_{n}, \SO_{U_{n}}^{\star}) \xrightarrow{\text{res}^{0}}
\OH^{0}(U_{n-1}, \SO_{U_{n-1}}^{\star}) \\
\to \OH^{1}(U_{0}, \SJ) \to \Pic(U_{n}) \xrightarrow{\text{res}^{1}}  \Pic(U_{n-1}),\]
where \( \text{res}^{0} \) and \( \text{res}^{1} \) are restriction homomorphisms to \( U_{n-1} \).
Note that \( \text{res}^{0} \) is surjective, since so is \eqref{thm:invExt|eq2}.
Hence, the kernel of \( \text{res}^{1} \) is a \( \Bbbk \)-vector space isomorphic
to \( \OH^{1}(U_{0}, \SJ) \).
Now, the isomorphism class of \( \SN_{n} \) in \( \Pic(U_{n}) \) belongs to the kernel
by \( \SN_{n-1} \isom \SO_{U_{n-1}} \),
and its multiple by \( r \) is zero by \eqref{thm:invExt:3b},
where \( r \) is coprime to \( \chara(\Bbbk) \).
Thus, \( \SN_{n} \isom \SO_{U_{n}} \), and we are done.
\end{proof}

We have the following by a direct application of Proposition~\ref{prop:key}.

\begin{lem}\label{lem:1|prop:key}
Let \( f \colon Y \to T \) be a flat morphism of locally Noetherian schemes and
let \( Z \) be a closed subset of \( Y \) such that
\[ \depth_{Y_{t} \cap Z} \SO_{Y_{t}} \geq 2 \]
for any fiber \( Y_{t} \). 
Let \( q \colon T' \to T\) be a morphism from another locally Noetherian scheme \( T'\) 
such that \( Y' = Y \times_{T} T' \) is also locally Noetherian. 
We write \( f' \colon Y' \to T' \) and \( p \colon Y' \to Y \) for the projections. 
Let
\( 0 \to \SF \to \SE^{0} \to \SE^{1} \to \SG \to 0 \)
be an exact sequence of coherent \( \SO_{Y} \)-modules such that \( \SF|_{U} \),
\( \SE^{0} \), \( \SE^{1} \), and \( \SG|_{U} \) are locally free, where \( U = Y \setminus Z \).
Then, \( \SF \) is a reflexive \( \SO_{Y} \)-module, and
\[ (p^{*}\SF)^{\vee\vee} \isom \Ker( p^{*}\SE^{0} \to p^{*}\SE^{1} )
\isom j'_{*}(p^{*}\SF|_{U'})\]
for the open immersion \( j' \colon U' = p^{-1}(U) \injmap Y' \).
Moreover, \( (p^{*}\SF)^{\vee\vee} \) satisfies relative \( \bfS_{2} \) over \( T' \)
if and only if \( p^{*}\SG \) is flat over \( T' \). 
\end{lem}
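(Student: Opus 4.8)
The plan is to deduce all three assertions from Proposition~\ref{prop:key}, applied to the given sequence over $Y$ and to its base change over $Y'$, together with the reflexivity statements recalled in Section~\ref{subsect:SerreBasics}. First I would check that Proposition~\ref{prop:key} applies to $f$, $Z$, and $0 \to \SF \to \SE^{0} \to \SE^{1} \to \SG \to 0$: condition \eqref{prop:key:ass1} holds because $f$ is flat and $\SE^{0}$, $\SE^{1}$, $\SG|_{U}$ are locally free, and condition \eqref{prop:key:ass2} holds because $\SE^{i}_{(t)}$ is locally free on $Y_{t}$, so $\depth_{Z \cap Y_{t}} \SE^{i}_{(t)} = \depth_{Z \cap Y_{t}} \SO_{Y_{t}} \geq 2$. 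Then Proposition~\ref{prop:key}\eqref{prop:key:1} gives $\SF \isom j_{*}(\SF|_{U})$, and since $\SF|_{U}$ is locally free (hence reflexive) and $\depth_{Y_{t} \cap Z} \SO_{Y_{t}} \geq 1$ for all $t$, Lemma~\ref{lem:US2add}\eqref{lem:US2add:1} shows that $\SF$ is reflexive, which is the first assertion.

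Next I would base change. Applying the right exact functor $p^{*}$ to $\SE^{0} \to \SE^{1} \to \SG \to 0$ yields $p^{*}\SG \isom \Coker(p^{*}\SE^{0} \to p^{*}\SE^{1})$, so, writing $\SF' := \Ker(p^{*}\SE^{0} \to p^{*}\SE^{1})$, there is an exact sequence $0 \to \SF' \to p^{*}\SE^{0} \to p^{*}\SE^{1} \to p^{*}\SG \to 0$ on $Y'$. Over $U$ all four terms of the original sequence are locally free, hence the sequence is locally split there and $p^{*}$ preserves its exactness; comparing with the restriction of the sequence on $Y'$ to $U' = p^{-1}(U)$ gives $\SF'|_{U'} \isom p^{*}(\SF|_{U})$, which is locally free. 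Now $f'$ is flat and $\depth_{Z' \cap Y'_{t'}} \SO_{Y'_{t'}} = \depth_{Z \cap Y_{t}} \SO_{Y_{t}} \geq 2$ for $Z' := p^{-1}(Z)$ and $t = q(t')$ by Lemma~\ref{lem:bc basic}\eqref{lem:bc basic:1}, so the base-changed sequence again satisfies the hypotheses of Proposition~\ref{prop:key} for $f'$ and $Z'$; hence $\SF' \isom j'_{*}(\SF'|_{U'}) \isom j'_{*}(p^{*}\SF|_{U'})$ by Proposition~\ref{prop:key}\eqref{prop:key:1}. On the other hand $(p^{*}\SF)^{\vee\vee}$ is reflexive, and $\depth_{Z'} \SO_{Y'} \geq 2$ by Lemma~\ref{lem:relSkCodimDepth}\eqref{lem:relSkCodimDepth:3}, hence $\depth_{Z'} (p^{*}\SF)^{\vee\vee} \geq 2$ by Lemma~\ref{lem:j*reflexive}\eqref{lem:j*reflexive:1}, so $(p^{*}\SF)^{\vee\vee} \isom j'_{*}((p^{*}\SF)^{\vee\vee}|_{U'}) = j'_{*}(p^{*}\SF|_{U'})$ by Property~\ref{ppty:depth<=2}, the last equality because $p^{*}\SF|_{U'}$ is locally free. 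Combining these gives $(p^{*}\SF)^{\vee\vee} \isom \SF' = \Ker(p^{*}\SE^{0} \to p^{*}\SE^{1}) \isom j'_{*}(p^{*}\SF|_{U'})$; alternatively, the last isomorphism of Lemma~\ref{lem:bc reflexive} provides $(p^{*}\SF)^{\vee\vee} \isom j'_{*}(p^{*}\SF|_{U'})$ directly.

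For the final equivalence I would use the base-changed four-term sequence and the identification $(p^{*}\SF)^{\vee\vee} \isom \SF'$. By the equivalence \eqref{prop:key:condB} $\Leftrightarrow$ \eqref{prop:key:condC} in Proposition~\ref{prop:key}\eqref{prop:key:2}, together with the remark there that \eqref{prop:key:condC} forces $\SF'$ to be flat at the point, the sheaf $p^{*}\SG$ is flat over $T'$ if and only if $\SF'$ is flat over $T'$ and the restriction homomorphism $\SF'_{(t')} \to j'_{*}(\SF'_{(t')}|_{U' \cap Y'_{t'}})$ is an isomorphism — equivalently $\depth_{Z' \cap Y'_{t'}} \SF'_{(t')} \geq 2$ — for every $t'$. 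On the other hand, by definition $(p^{*}\SF)^{\vee\vee} = \SF'$ satisfies relative $\bfS_{2}$ over $T'$ if and only if $\SF'$ is flat over $T'$ and $\SF'_{(t')}$ satisfies $\bfS_{2}$ for every $t'$; using $\Codim(Z' \cap Y'_{t'}, Y'_{t'}) \geq \depth_{Z' \cap Y'_{t'}} \SO_{Y'_{t'}} \geq 2$ from Lemma~\ref{lem:depth+codim+Sk}\eqref{lem:depth+codim+Sk:1} and the fact that $\SF'_{(t')}|_{U' \cap Y'_{t'}}$ is locally free, Corollary~\ref{cor:basicS1S2} reduces the condition ``$\SF'_{(t')}$ satisfies $\bfS_{2}$'' to ``$\depth_{Z' \cap Y'_{t'}} \SF'_{(t')} \geq 2$'', the remaining $\bfS_{2}$-information on the locally free locus being carried by the fibres of $f$. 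Comparing the two characterizations yields the asserted equivalence. I expect this fibrewise comparison to be the main obstacle: relating the $\bfS_{2}$-condition for the reflexive sheaf $\SF' = (p^{*}\SF)^{\vee\vee}$ to the flatness of $p^{*}\SG$ requires careful bookkeeping of $\Supp \SF'_{(t')}$ inside $Y'_{t'}$ and a combined use of Proposition~\ref{prop:key}\eqref{prop:key:2}, Corollary~\ref{cor0:prop:key}, and Corollary~\ref{cor:basicS1S2}.
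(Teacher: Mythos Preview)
Your proposal is correct and follows the same approach as the paper: apply Proposition~\ref{prop:key} to the original sequence and to its pullback, identify $\Ker(p^{*}\SE^{0}\to p^{*}\SE^{1})$ with $(p^{*}\SF)^{\vee\vee}$, and read off the final equivalence from Proposition~\ref{prop:key}\eqref{prop:key:2}. The paper is terser at each step --- it uses Lemma~\ref{lem:j*reflexive}\eqref{lem:j*reflexive:2} (via $\depth_{Z}\SO_{Y}\geq 2$ from Lemma~\ref{lem:relSkCodimDepth}\eqref{lem:relSkCodimDepth:3}) for reflexivity of~$\SF$, invokes Lemma~\ref{lem:bc reflexive} directly for $(p^{*}\SF)^{\vee\vee}\isom\SF'$, and for the last assertion simply cites Proposition~\ref{prop:key}\eqref{prop:key:2} without the fibrewise $\bfS_{2}$-bookkeeping you spell out.
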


\begin{proof}
The exact sequence satisfies the assumptions of Proposition~\ref{prop:key}
for \( Y \to T \). Hence, \( \SF \isom j_{*}(\SF|_{U}) \), i.e., \( \depth_{Z} \SF \geq 2 \),
by Proposition~\ref{prop:key}\eqref{prop:key:1}.
Moreover, \( \SF \) is reflexive by Lemma~\ref{lem:j*reflexive}\eqref{lem:j*reflexive:2},
since we have \( \depth_{Z} \SO_{Y} \geq 2  \)
by Lemma~\ref{lem:relSkCodimDepth}\eqref{lem:relSkCodimDepth:3}.
Let \( \SF' \) be the kernel of \( p^{*}\SE^{0} \to p^{*}\SE^{1} \).
Then, the exact sequence
\[ 0 \to \SF' \to p^{*}\SE^{0} \to p^{*}\SE^{1} \to p^{*}\SG \to 0 \]
on \( Y' \) satisfies the assumptions of Proposition~\ref{prop:key}  for \( f' \colon Y' \to T' \),
since
\[ \depth_{Y'_{t'} \cap p^{-1}(Z)} \SO_{Y'_{t'}} = \depth_{Y_{t} \cap Z} \SO_{Y_{t}} \geq 2   \]
for any \( t' \in T' \) and \( t = q(t') \),
by Lemma~\ref{lem:bc basic}\eqref{lem:bc basic:1}.
Hence, \( \SF' \isom j'_{*}(\SF'|_{U'}) \)
by Proposition~\ref{prop:key}\eqref{prop:key:1}.
Since \( \SF'|_{U'} \isom p^{*}\SF|_{U'} \),
we have \( \SF' \isom (p^{*}\SF)^{\vee\vee}\) by Lemma~\ref{lem:bc reflexive}.
Furthermore, by Proposition~\ref{prop:key}\eqref{prop:key:2},  we see
that \( \SF' \) satisfies relative \( \bfS_{2} \) over \( T' \) 
if and only if \( p^{*}\SG \) is flat over \( T' \).
\end{proof}

Here, we introduce the following notion 
useful to state results in the rest of Section~\ref{subsect:AppResthom}. 

\begin{dfn}\label{dfn:locFreeCodim1Fiber}
Let \( f \colon Y \to T \) be a morphism of locally Noetherian schemes and let \( \SF \) be a coherent \( \SO_{Y} \)-module. 
We say that \( \SF \) is \emph{locally free in codimension one on each fiber} of \( f \) if 
there is an open subset \( U \subset Y \) such that 
\( \SF|_{U} \) is locally free and \( \Codim(Y_{t} \setminus U, Y_{t}) \geq 2 \) 
for any fiber \( Y_{t} = f^{-1}(t)\). 
\end{dfn}

\begin{prop}[infinitesimal and valuative criteria]\label{prop:inf+val}
Let \( f \colon Y \to T \) be a flat morphism locally of finite type 
between locally Noetherian schemes and 
let \( y \in Y\) be a point such that \( f \) satisfies relative \( \bfS_{2} \) over \( T \) 
at \( y \). 
Let \( \SF \) be a reflexive \( \SO_{Y} \)-module 
which is locally free in codimension one on each fiber of \( f \). 
Then, \( \SF \) satisfies relative \( \bfS_{2} \) over \( T \) at \( y \) 
if one of the following two conditions \eqref{prop:inf+val:inf} and \eqref{prop:inf+val:val} 
is satisfied, 
where \( Y_{A} = Y \times_{T} \Spec A \) and \( \SF_{A} =  p^{*}_{A}\SF \) 
for the projection \( p_{A} \colon Y_{A} \to Y \)\emph{:}
\begin{enumerate}
\renewcommand{\theenumi}{\Roman{enumi}}
\renewcommand{\labelenumi}{(\theenumi)}
\item  \label{prop:inf+val:inf} 
Let \( \Spec A \to T \) be a morphism defined by a surjective local ring homomorphism 
\( \SO_{T, f(y)} \to A \) to an Artinian local ring \( A \). 
Then, the double-dual \( (\SF_{A})^{\vee\vee} \) 
satisfies relative \( \bfS_{2} \) over \( \Spec A \) at the point \( y_{A} = p_{A}^{-1}(y)\). 

\item \label{prop:inf+val:val} 
The local ring \( \SO_{T, f(y)} \) is reduced. Let \( \Spec A \to T \) be a morphism 
defined by a local ring homomorphism \( \SO_{T, f(y)} \to A \) to a discrete valuation ring \( A \). 
Then, the double-dual 
\( (\SF_{A})^{\vee\vee} \) satisfies relative \( \bfS_{2} \) over \( \Spec A \) 
at any point \( z \in Y_{A}\) lying over \( y \in Y\) 
and the closed point \( \GM_{A} \) of \( \Spec A \). 
\end{enumerate}
\end{prop}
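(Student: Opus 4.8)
The plan is to use the key proposition (Proposition~\ref{prop:key}) to turn the relative $\bfS_{2}$-condition for $\SF$ into a flatness condition for an auxiliary coherent sheaf $\SG$, and then to deduce the infinitesimal criterion from the local criterion of flatness and the valuative criterion from the valuative criterion of flatness, both applied to $\SG$.

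I would begin with reductions. The statement is local at $y$, and $\bfS_{2}(Y/T)$ is open by Fact~\ref{fact:dfn:RelSkCMlocus}\eqref{fact:dfn:RelSkCMlocus:3}, so after shrinking $Y$ we may assume $f$ is an $\bfS_{2}$-morphism; then every fibre $Y_{t}$ satisfies $\bfS_{2}$. Since $\SF$ is locally free in codimension one on each fibre, choose an open $U \subset Y$ with $\SF|_{U}$ locally free and $\Codim(Y_{t} \setminus U, Y_{t}) \geq 2$ for all $t$, put $Z = Y \setminus U$, and note $\depth_{Y_{t} \cap Z}\SO_{Y_{t}} \geq 2$ for all $t$ by Lemma~\ref{lem:depth+codim+Sk}\eqref{lem:depth+codim+Sk:2}. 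After a further shrinking, Lemma~\ref{lem:SurjFlat(reflexive)} provides an exact sequence $0 \to \SF \to \SE^{0} \to \SE^{1} \to \SG \to 0$ of coherent $\SO_{Y}$-modules with $\SE^{0}$, $\SE^{1}$, $\SG|_{U}$ locally free, satisfying the hypotheses of Proposition~\ref{prop:key}. Moreover, for any base change $q \colon \Spec A \to T$ as in \eqref{prop:inf+val:inf} or \eqref{prop:inf+val:val}, Lemma~\ref{lem:1|prop:key} shows that $\SF$ is reflexive, that $(\SF_{A})^{\vee\vee} \isom \Ker(p_{A}^{*}\SE^{0} \to p_{A}^{*}\SE^{1})$, and that the sequence $0 \to (\SF_{A})^{\vee\vee} \to p_{A}^{*}\SE^{0} \to p_{A}^{*}\SE^{1} \to p_{A}^{*}\SG \to 0$ again satisfies the hypotheses of Proposition~\ref{prop:key}, now for $f_{A} \colon Y_{A} = Y \times_{T} \Spec A \to \Spec A$.

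The core of the argument is the following dictionary, obtained from Proposition~\ref{prop:key}\eqref{prop:key:2} together with Corollary~\ref{cor0:prop:key} — the latter's extra hypotheses (the fibrewise $\bfS_{2}$-condition, local equi-dimensionality, and the codimension inequality \eqref{eq:cor0:prop:key}) being in force here, since $\SF_{(t)}|_{U \cap Y_{t}}$ is locally free over the $\bfS_{2}$-scheme $U \cap Y_{t}$ and the fibres are $\bfS_{2}$, hence catenary and locally equi-dimensional (Fact~\ref{fact:S2}\eqref{fact:S2:1}). Writing $R = \SO_{T,f(y)}$ with maximal ideal $\GM$: the sheaf $\SF$ satisfies relative $\bfS_{2}$ over $T$ at $y$ if and only if $\SG_{y}$ is flat over $R$, and likewise, for a base change as above, $(\SF_{A})^{\vee\vee}$ satisfies relative $\bfS_{2}$ over $\Spec A$ at a point $z$ lying over $y$ (and the closed point of $\Spec A$) if and only if $p_{A}^{*}\SG$ is flat over $A$ at $z$. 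In case \eqref{prop:inf+val:inf}, taking $A = R/\GM^{n+1}$, the point $y_{A}$ is the unique point of $Y_{A}$ over $y$ with $\SO_{Y_{A},y_{A}} = \SO_{Y,y} \otimes_{R} A$, so the hypothesis gives that $\SG_{y} \otimes_{R} R/\GM^{n+1}$ is flat over $R/\GM^{n+1}$ for every $n \geq 0$; hence $\SG_{y}$ is flat over $R$ by the local criterion of flatness (Proposition~\ref{prop:LCflat}, \eqref{prop:LCflat:1} $\Leftrightarrow$ \eqref{prop:LCflat:4}). In case \eqref{prop:inf+val:val}, the hypothesis gives that $p_{A}^{*}\SG$ is flat over $A$ — equivalently, torsion-free over $A$ — at every point of $Y_{A}$ lying over $y$ and the closed point, for every discrete valuation ring $A$ equipped with a local homomorphism $R \to A$; since $R$ is reduced, the valuative criterion of flatness \cite[IV, Th.\ (11.8.1)]{EGA} shows that $\SG$ is flat over $T$ at $y$. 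In both cases $\SG_{y}$ is flat over $R$, so by the dictionary $\SF$ satisfies relative $\bfS_{2}$ over $T$ at $y$, as required.

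The step I expect to be the main obstacle is establishing the dictionary in a form valid \emph{at the point $z$} after the two kinds of base change: one must check that the hypotheses invoked for Corollary~\ref{cor0:prop:key} — the fibrewise $\bfS_{2}$-type condition, local equi-dimensionality, and the codimension-$\geq 2$ condition on the special fibre — are inherited by $f_{A}$, and, for the implication from relative $\bfS_{2}$ back to flatness, that the codimension bound on the special fibre of $f_{A}$ forces the relevant depths to be $\geq 2$. In the Artinian case this is immediate, since the residue field of $A$ is $\Bbbk(f(y))$ and the special fibre of $f_{A}$ is literally $Y_{f(y)}$; in the discrete-valuation case the special fibre is a field extension of $Y_{f(y)}$, and one needs the Cohen--Macaulayness of tensor products of fields over a field (\cite[IV, Lem.\ (6.7.1.1)]{EGA}) together with the base-change statements for depth and codimension (Corollary~\ref{cor:fact:elem-flat} and Lemma~\ref{lem:bc basic}\eqref{lem:bc basic:1}) to see that the special fibre remains $\bfS_{2}$ and locally equi-dimensional. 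The remaining bookkeeping — moving between ``at $y$'' and ``on a neighbourhood of $y$'' — is routine, via the openness of $\Fl(\SG/T)$, $\bfS_{2}(Y/T)$, and $\bfS_{2}(\SF/T)$.
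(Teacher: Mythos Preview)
Your proof is correct and follows essentially the same route as the paper: reduce to an $\bfS_{2}$-morphism, build the exact sequence of Lemma~\ref{lem:SurjFlat(reflexive)}, translate relative $\bfS_{2}$ for $\SF$ into flatness of $\SG$ via Proposition~\ref{prop:key}\eqref{prop:key:2} (and Lemma~\ref{lem:1|prop:key} for the base-changed sheaf), and conclude by the local and valuative criteria of flatness. The paper is terser---it invokes Proposition~\ref{prop:key}\eqref{prop:key:2} and Lemma~\ref{lem:1|prop:key} directly without routing through Corollary~\ref{cor0:prop:key} or separately verifying the fibrewise hypotheses after base change---so your extra checks in the final paragraph, while not wrong, are more than is needed.
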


\begin{proof}
We may assume that \( T = \Spec B \) for the local ring \( B = \SO_{T, f(y)} \) and 
we can localize \( Y \) freely. Thus, we may assume that 
\( Y = \Spec C \) for a finitely generated \( B \)-algebra \( C \) and 
moreover that \( Y \to T \) is an \( \bfS_{2} \)-morphism by 
Fact~\ref{fact:dfn:RelSkCMlocus}\eqref{fact:dfn:RelSkCMlocus:3}. 
By assumption, there is a closed subset \( Z \subset Y \) such that 
\( \SF|_{Y \setminus Z} \) is locally free and 
\( \Codim(Y_{t} \cap Z, Y_{t}) \geq 2 \) for any \( t \in T \). 
In particular, 
\[ \depth_{Y_{t} \cap Z} \SO_{Y_{t}} \geq 2 \]
for any \( t \in T \) (cf.\ Lemma~\ref{lem:depth+codim+Sk}\eqref{lem:depth+codim+Sk:2}). 
As in the proof of Lemma~\ref{lem:SurjFlat(reflexive)}, 
we have an exact sequence \( 0 \to \SF \to \SE^{0} \to \SE^{1} \to \SG \to 0 \) 
of coherent \( \SO_{Y} \)-modules 
such that \( \SE^{0} \), \( \SE^{1} \), and \( \SG|_{U} \) are locally free. 
By Proposition~\ref{prop:key}\eqref{prop:key:2}, 
we see that 
\( \SF \) satisfies relative \( \bfS_{2} \) over \( T \) at \( y \) 
if and only if the stalk \( \SG_{y} \) is flat over \( B\).  
Let \( \Spec A \to T \) be a morphism in \eqref{prop:inf+val:inf} or \eqref{prop:inf+val:val}. 
Then, \( Y_{A} = \Spec C \otimes_{B} A\) is Noetherian, 
since \( C \otimes_{B} A \) is a finitely generated \( A \)-algebra. 
By Proposition~\ref{prop:key}\eqref{prop:key:2} and 
Lemma~\ref{lem:1|prop:key}, we see also that  
\( (\SF_{A})^{\vee\vee} \) satisfies relative \( \bfS_{2} \) over \( \Spec A \) 
at a point \( z \) lying over \( y \) and \( \GM_{A} \) 
if and only if the stalk \( \SG_{A, z} \) is flat over \( A \) for 
the pullback \( \SG_{A} = p_{A}^{*}\SG\), where \( \SG_{A, z} \isom (\SG_{y} \otimes_{B} A)_{z} \).  
Therefore, the assertions in the cases \eqref{prop:inf+val:inf} and \eqref{prop:inf+val:val}, respectively,   
follow from the local criterion of flatness (cf.\ Proposition~\ref{prop:LCflat}\eqref{prop:LCflat:4}) 
for \( \SG_{y} \) over \( B \) and 
from the valuative criterion of flatness (cf.\ \cite[IV, Th.\ (11.8.1)]{EGA}) 
for \( \SG \) over \( T \) at \( y \).  
\end{proof}

\begin{dfn}[relative $\bfS_{2}$ refinement]\label{dfn:relS2refinement}
Let \( Y \to T \) be an \( \bfS_{2} \)-morphism of 
locally Noetherian schemes and let \( \SF \) be a reflexive \( \SO_{Y} \)-module 
which is locally free in codimension one on each fiber. 
A morphism \( S \to T \) from a locally Noetherian scheme \( S \) is called 
a \emph{relative} \( \bfS_{2} \) \emph{refinement} for \( \SF \) over \( T \) 
if the following conditions are satisfied: 
\begin{enumerate}
\renewcommand{\theenumi}{\roman{enumi}}
\renewcommand{\labelenumi}{(\theenumi)}
\item  \label{dfn:relS2refinement:cond1} 
\( S \to T \) is a monomorphism in the category of schemes (cf.\ Fact~\ref{fact:mono}); 

\item \label{dfn:relS2refinement:cond2} 
for any morphism \( T' \to T \) of locally Noetherian schemes, and for the pullback 
\( \SF' \) of \( \SF \) to the fiber product \( Y \times_{T} T' \), the double dual \( (\SF')^{\vee\vee} \) 
satisfies relative \( \bfS_{2} \) over \( T' \) if and only if \( T' \to T \) factors through \( S \to T\). 
\end{enumerate}
\end{dfn}

\begin{remn}
The fiber product \( Y \times_{T} T' \) in \eqref{dfn:relS2refinement:cond2} is locally Noetherian, 
since it is locally of finite type over \( T' \). 
Thus, we can consider the relative \( \bfS_{2} \)-condition for \( (\SF')^{\vee\vee} \) 
(cf.\ Definition~\ref{dfn:RelSkCMlocus}). 
By \eqref{dfn:relS2refinement:cond1} and \eqref{dfn:relS2refinement:cond2}, \( S \to T \) 
is unique up to unique isomorphism. 
\end{remn}

\begin{rem}\label{rem:dfn:relS2refinement}
In the situation of Definition~\ref{dfn:relS2refinement}\eqref{dfn:relS2refinement:cond2}, 
we write 
\( \SF \times_{T} T' \) for \( \SF' \), and we set 
\[ F(T'/T) = \begin{cases}
\star, &\text{ if } (\SF \times_{T} T')^{\vee\vee} \text{ satisfies relative } \bfS_{2} \text{ over } T', \\
\emptyset, &\text{ otherwise},
\end{cases}\]
where \( \star \) denotes a one-point set. 
For any morphism \( T'' \to T' \) from a locally Noetherian scheme \( T'' \), we can show that 
if \( F(T'/T) = \star \), then \( F(T''/T) = \star \). In fact, we have an isomorphism 
\[ (\SF \times_{T} T'')^{\vee\vee} \isom (\SF \times_{T} T')^{\vee\vee} \times_{T'} T'' \]
by Lemma~\ref{lem:bc reflexive}, and this sheaf satisfies relative \( \bfS_{2} \) over \( T'' \) by
Lemma~\ref{lem:bc basic}\eqref{lem:bc basic:3}. 
Therefore, \( F \) is regarded as a functor \( (\LNSch/T)^{\op} \to \Set \)
for the category \( \LNSch/T \) of locally Noetherian \( T \)-schemes, and 
the relative \( \bfS_{2} \) refinement is a \( T \)-scheme representing \( F \). 
\end{rem}

\begin{rem}
If \( T' = \Spec \Bbbk \) for a field \( \Bbbk \), then \( F(T'/T) = \star \), 
since \( (\SF')^{\vee\vee} \) satisfies \( \bfS_{2} \) by Corollary~\ref{cor:prop:S1S2:reflexive}. 
In particular, 
if the relative \( \bfS_{2} \) refinement \( S \to T \) exists, then it is bijective. 
\end{rem}

\begin{fact}\label{fact:mono}
Let \( h \colon S \to T \) be a morphism
locally of finite type between locally Noetherian schemes.
Then, \( h \) is a morphism \emph{locally of finite presentation} (cf.\ \cite[IV, \S1.4]{EGA}),
and we have the following properties:

\begin{enumerate}
\item  \label{fact:mono:1} 
The morphism \( h \) is a monomorphism in the category of schemes
if and only if \( h \) is radicial and unramified, by \cite[IV, Prop.\ (17.2.6)]{EGA}.

\item  If \( h \) is an unramified morphism, then it is \'etale locally a closed immersion, i.e.,
for any point \( s \in S \), there exists an open neighborhood \( V \) of \( s \) such that
the induced morphism \( V \to T \) is written as
the composite of a closed immersion \( V \to W \) and an \'etale morphism \( W \to T \)
(cf.\ \cite[IV, Cor.\ (18.4.7)]{EGA}, \cite[I, Cor.\ 7.8]{SGA1}).
\end{enumerate} 
\end{fact}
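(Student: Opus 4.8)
All three assertions are standard, so the plan is: derive the first two by formal arguments over the Noetherian base and the diagonal morphism, and reduce the third to the known local structure of unramified morphisms, which is the only substantive input. For the first claim I would work locally, writing $S = \Spec B$ and $T = \Spec A$ with $A$ Noetherian (as $T$ is locally Noetherian) and $B = A[\xtt_{1}, \dots, \xtt_{n}]/I$; Hilbert's basis theorem makes $A[\xtt_{1}, \dots, \xtt_{n}]$ Noetherian, so $I$ is finitely generated and $B$ is finitely presented over $A$, which is exactly the local criterion for $h$ to be locally of finite presentation (cf.\ \cite[IV, \S1.4]{EGA}).

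For the equivalence in the first item, the key observation is that $h$ is a monomorphism of schemes precisely when the diagonal $\Delta_{h} \colon S \to S \times_{T} S$ is an isomorphism. Since $h$ is now locally of finite presentation, $h$ is unramified if and only if $\Delta_{h}$ is an open immersion (equivalently $\varOmega^{1}_{S/T} = 0$), and $h$ is radicial if and only if $\Delta_{h}$ is surjective; but a surjective open immersion is an isomorphism, and conversely a diagonal isomorphism is both. Hence ``$h$ is a monomorphism'' is equivalent to ``$h$ is unramified and radicial,'' and for the precise statement I would cite \cite[IV, Prop.\ (17.2.6)]{EGA}.

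For the last assertion I would fix $s \in S$ with image $t$, localize so that $h$ near $s$ is presented by finitely many equations inside an affine space over $T$, and use $\varOmega^{1}_{S/T} = 0$ at $s$ together with Nakayama's lemma to arrange, after modifying the defining equations, that the relevant Jacobian minor is a unit at $s$. This splits off an \'etale factor $W \to T$ into which an open neighborhood $V$ of $s$ sits as a closed subscheme; the details are carried out in \cite[IV, Cor.\ (18.4.7)]{EGA} and also follow from the structure theory of unramified morphisms in \cite[I, Cor.\ 7.8]{SGA1}, which I would simply cite. I expect this last step to be the main obstacle: it is the one genuinely non-formal point, resting on the Jacobian criterion and a careful choice of local coordinates, whereas the first two assertions are immediate once the Noetherian hypothesis and the diagonal characterization of monomorphisms are available.
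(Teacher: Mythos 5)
Your proposal is correct and matches the paper, which states this as a Fact and simply cites \cite[IV, \S1.4]{EGA}, \cite[IV, Prop.\ (17.2.6)]{EGA}, \cite[IV, Cor.\ (18.4.7)]{EGA}, and \cite[I, Cor.\ 7.8]{SGA1} without proof; your sketch of the formal parts (finite presentation via the Hilbert basis theorem, and the diagonal characterization giving monomorphism $\Leftrightarrow$ radicial $+$ unramified) is the standard argument behind those references, and you defer the genuinely non-formal local structure theorem to the same sources the paper uses.
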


\begin{exam}
For a Noetherian scheme \( T \) and a finite number of locally closed subschemes 
\( S_{1} \), \( S_{2} \), \ldots, \( S_{k} \) of \( T \), 
assume that \( T \) is equal to the disjoint union \( \bigsqcup_{i = 1}^{k} S_{i} \) as a set;  
The collection \( \{S_{i}\} \) is called a \emph{stratification} in \cite[Lect.\ 8]{Mumford}.
Then, immersions \( S_{i} \subset T \) define a morphism \( h \colon S \to T \) from 
the scheme-theoretic disjoint union \( S = \bigsqcup_{i = 1}^{k} S_{i} \).
This \( h \) is a separated surjective monomorphism of finite type 
and is a \emph{local immersion} (cf.\ \cite[I, D\'ef.\ (4.5.1)]{EGA}), i.e., 
a closed immersion Zariski-locally. 
\end{exam}

\begin{exam} 
There is a separated monomorphism \( h \colon X \to Y \) of finite type 
of Noetherian schemes such that \( X \) is connected but \( h \) is not an immersion. 
An example is given as follows: 
For an algebraically closed field \( \Bbbk \), let \( D \) be a reduced effective divisor of degree three 
on the projective plane \( Y = \BPP^{2}_{\Bbbk} \) having a node \( P \). 
For the blowing up \( M \to Y \) at \( P \), let \( \overline{X} \) be the proper transform 
of \( D \) in \( M \) and let \( Q \in \overline{X} \) be one of the two points lying over \( P \). 
We set \( X := \overline{X} \setminus \{Q\} \). Then, \( X \) is connected, 
the induced morphism \( h \colon X \to  Y \) is a separated monomorphism of finite type inducing 
a bijection \( X \to D \), and \( h^{-1}(Y \setminus \{P\}) \isom D \setminus \{P\}\). 
However, \( h \) is not a closed immersion, since \( X \) is not isomorphic to \( D \).  
If \( D \) is irreducible, then \( h \) is not a local immersion. 
On the other hand, if \( D \) is reducible, then \( h \) is a local immersion. 
In fact, for another node \( P' \) of \( D \), the inverse image 
\( h^{-1}(Y \setminus \{P'\}) \) is isomorphic to a disjoint union of 
two locally closed subschemes of \( Y \). 
\end{exam}

The following is analogous to the flattening stratification theorem by Mumford in \cite[Lect.\ 8]{Mumford} 
or to the representability theorem of unramified functors by Murre \cite{Murre}: 
A similar result is stated by Koll\'ar in \cite[Th.~2]{KollarHusk} 
in the case where \( f\) is projective, but where the \( \bfS_{2} \)-condition for \( f \), etc.,   
are not assumed.

\begin{thm}\label{thm:dd dec}
Let \( f \colon Y \to T \) be an \( \bfS_{2} \)-morphism of locally Noetherian schemes.
Let \( \SF \) be a reflexive \( \SO_{Y} \)-module 
which is locally free in codimension one 
on each fiber \emph{(}cf.\ Definition~\emph{\ref{dfn:locFreeCodim1Fiber})}.
If the following condition \eqref{thm:dd dec:ass2} is satisfied, then 
there is a relative \( \bfS_{2} \) refinement for \( \SF \) over \( T \) 
as a separated morphism \( S \to T \) locally of finite type\emph{:}
\begin{enumerate}
\renewcommand{\theenumi}{\roman{enumi}}
\renewcommand{\labelenumi}{(\theenumi)}
\item  \label{thm:dd dec:ass2}
\( \SF|_{Y \setminus \Sigma} \) satisfies relative \( \bfS_{2} \) over \( T \)
for a closed subset \( \Sigma \subset Y\)
such that \( \Sigma \to T \) is proper.
\end{enumerate}
Furthermore, the morphism \( S \to T \) is a local immersion of finite type if 
\begin{enumerate}
\renewcommand{\theenumi}{\roman{enumi}}
\renewcommand{\labelenumi}{(\theenumi)}
\addtocounter{enumi}{1}
\item  \label{thm:dd dec:ass1} \( f \) is a projective morphism locally over \( T \).
\end{enumerate}
\end{thm}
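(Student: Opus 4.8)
The assertion to prove is Theorem~\ref{thm:dd dec}: under the hypothesis that $f\colon Y\to T$ is an $\bfS_2$-morphism and $\SF$ is reflexive and locally free in codimension one on each fiber, the condition \eqref{thm:dd dec:ass2} yields a relative $\bfS_2$ refinement $S\to T$ that is separated and locally of finite type, and \eqref{thm:dd dec:ass1} upgrades this to a local immersion of finite type. The strategy is to reduce, via the key proposition (Proposition~\ref{prop:key}) and Lemma~\ref{lem:1|prop:key}, the $\bfS_2$-refinement problem for $\SF$ to a \emph{flattening} problem for an auxiliary coherent sheaf $\SG$ constructed locally on $Y$ from a four-term exact sequence $0\to\SF\to\SE^0\to\SE^1\to\SG\to0$, and then to invoke Mumford's flattening stratification (\cite[Lect.\ 8]{Mumford}) and Murre's representability theorem for unramified functors (\cite{Murre}) on $\SG$, using the properness of $\Sigma\to T$ in \eqref{thm:dd dec:ass2} to get global representability.

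\textbf{Step 1: local construction of $\SG$ and translation to flatness.} First I would work Zariski-locally on $Y$. Since $f$ is an $\bfS_2$-morphism and $\SF$ is reflexive and locally free in codimension one on each fiber, there is an open $U\subset Y$ with $\SF|_U$ locally free and $\Codim(Y_t\setminus U, Y_t)\geq 2$ for all $t$, hence $\depth_{Y_t\cap Z}\SO_{Y_t}\geq 2$ for $Z=Y\setminus U$ by Lemma~\ref{lem:depth+codim+Sk}\eqref{lem:depth+codim+Sk:2}. As in the proof of Lemma~\ref{lem:SurjFlat(reflexive)}, locally on $Y$ there is an exact sequence $0\to\SF\to\SE^0\to\SE^1\to\SG\to0$ with $\SE^0,\SE^1,\SG|_U$ locally free, satisfying the hypotheses of Proposition~\ref{prop:key}. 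By Lemma~\ref{lem:1|prop:key}, for any locally Noetherian $T'\to T$ and pullback $p\colon Y'\to Y$, the double dual $(p^*\SF)^{\vee\vee}$ satisfies relative $\bfS_2$ over $T'$ if and only if $p^*\SG$ is flat over $T'$. Thus, on each member of a suitable affine open cover $\{Y_\alpha\}$ of $Y$, the functor $F$ of Remark~\ref{rem:dfn:relS2refinement} restricted over $Y_\alpha$ is exactly the functor "$p^*\SG_\alpha$ is $T'$-flat along $Y_\alpha$", i.e.\ the complement of the flattening locus of $\SG_\alpha$.

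\textbf{Step 2: from local flattening data to a global refinement.} On each affine $Y_\alpha$ (finite type over an affine open of $T$, after localizing $T$), Mumford's flattening stratification — or better, the representability of unramified functors in the style of Murre applied to the non-flat-locus functor — produces a monomorphism $S_\alpha\to T$, locally of finite type, representing the subfunctor "pullback of $\SG_\alpha$ is flat over $T'$ along $Y_\alpha$". (For the $T'$-flatness of $p^*\SG$ along a fixed fiber, the relevant functor is unramified, hence representable by a separated, locally-of-finite-type monomorphism.) Now the hypothesis \eqref{thm:dd dec:ass2} enters: $\SF$ already satisfies relative $\bfS_2$ over $T$ away from a closed $\Sigma\subset Y$ with $\Sigma\to T$ proper, so $F(T'/T)=\star$ is equivalent to "$(\SF')^{\vee\vee}$ satisfies relative $\bfS_2$ along $\Sigma\times_T T'$", which is checked on the finitely many $Y_\alpha$ meeting $\Sigma$ — and by properness of $\Sigma\to T$, only finitely many fiber-components matter per point of $T$. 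The global $S\to T$ is obtained by intersecting (fiber-product over $T$) the finitely many relevant $S_\alpha$: since monomorphisms are stable under composition and fiber product and the intersection of finitely many separated locally-of-finite-type monomorphisms is again one, $S\to T$ is a separated monomorphism locally of finite type representing $F$; compatibility of the local representations on overlaps $Y_\alpha\cap Y_\beta$ (both compute the same double-dual condition) lets these glue. This gives the first assertion.

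\textbf{Step 3: the projective case.} If $f$ is projective locally over $T$, then $\Sigma$ may be taken proper over $T$ automatically (e.g.\ $\Sigma=Y$), and one can globalize the construction of $\SG$ using a single ample line bundle: twisting $\SF$ highly and taking a presentation of the dual gives a \emph{global} four-term sequence as in Lemma~\ref{lem:SurjFlat(reflexive)}, so $\SG$ is a global coherent sheaf on a projective $T$-scheme. Then Mumford's flattening stratification \cite[Lect.\ 8]{Mumford} applies directly to $\SG$ and yields a \emph{locally closed stratification} of $T$; the corresponding monomorphism $S\to T$ is the disjoint union of locally closed subschemes, hence a local immersion of finite type (finite type because $T$ is locally Noetherian and the stratification is finite over each quasi-compact open). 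The main obstacle I anticipate is \textbf{Step 2}: verifying that the locally defined functors genuinely glue to a global representable functor — one must check that the condition "$(\SF')^{\vee\vee}$ satisfies relative $\bfS_2$ over $T'$" is Zariski-local on $Y$ in the appropriate sense, that the double-dual commutes with the relevant base changes (this is Lemma~\ref{lem:bc reflexive}), and that properness of $\Sigma\to T$ reduces an a priori infinite intersection of local conditions to a finite, hence representable, one. Careful bookkeeping with Lemma~\ref{lem:bc reflexive} and Lemma~\ref{lem:bc basic}\eqref{lem:bc basic:3} should make the gluing routine, but it is the technical heart of the argument.
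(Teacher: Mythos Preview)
Your Steps~1 and~3 track the paper's argument closely: the local four-term resolution via Lemma~\ref{lem:SurjFlat(reflexive)}, the translation to flatness of $\SG$ via Lemma~\ref{lem:1|prop:key}, and in the projective case the global resolution by twisting with an $f$-ample sheaf followed by Mumford's flattening stratification. That part is fine.

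The gap is in Step~2. You assert that on each affine open $Y_\alpha$ the flattening functor for $\SG_\alpha$ is representable by some $S_\alpha\to T$, and then you propose to take the fiber product over $T$ of the $S_\alpha$. But the flattening functor for a coherent sheaf on an \emph{affine} (non-proper) scheme of finite type over $T$ is \emph{not} representable in general: Mumford's theorem requires projectivity, and Murre's criterion requires conditions $(\mathrm{F}_1)$--$(\mathrm{F}_8)$, of which the effectivity condition $(\mathrm{F}_3)$ typically fails for a single $\SG_\alpha$ on an affine chart. So there are no local $S_\alpha$ to intersect, and ``unramified'' alone does not buy representability.

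The paper circumvents this by never representing the local functors $G_\lambda$ individually. Instead, it works with the global intersection functor $G=\bigcap_\lambda G_\lambda$ and verifies Murre's conditions for $G$ directly. Each $G_\lambda$ satisfies $(\mathrm{F}_1)$--$(\mathrm{F}_8)$ except $(\mathrm{F}_3)$ (by the proof of \cite[Th.~2]{Murre}), so the intersection $G$ satisfies them except possibly $(\mathrm{F}_3)$ and $(\mathrm{F}_8)$. The properness of $\Sigma\to T$ is then used \emph{not} merely to reduce to finitely many charts, but crucially to verify $(\mathrm{F}_3)$ for $G$: given a complete local ring $A$ with $F(\Spec A/\GM_A^n)=\star$ for all $n$, properness forces the open neighborhoods of the closed fiber on which relative $\bfS_2$ holds, together with $Y\setminus\Sigma$, to cover all of $Y\times_T\Spec A$. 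Condition $(\mathrm{F}_8)$ is checked separately using \cite[IV, Cor.~(11.4.4)]{EGA}. This is the missing technical heart you would need to supply.
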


\begin{proof}
For the first assertion, by Fact~\ref{fact:mono}\eqref{fact:mono:1}, 
it is enough to prove that the functor \( F \) in Remark~\ref{rem:dfn:relS2refinement} 
is representable by a separated morphism \( S \to T \) locally of finite type.  
We may replace \( T \) freely by an open subset, 
since \( S \to T\) is unique up to unique isomorphism and 
since the second assertion is also local on \( T \). Thus, 
we assume that \( T \) is an affine Noetherian scheme. 
We set \( U \) to be an open subset of \( Y \) such that \( \SF|_{U} \) is locally free and 
\( \Codim(Y_{t} \setminus U, Y_{t}) \geq 2 \) for any fiber \( Y_{t} \). 

We first consider the case \eqref{thm:dd dec:ass1}: 
We may assume that 
\( Y \) is a closed subscheme of \( \BPP^{N} \times T \) for some \( N > 0 \).
Let \( \SA \) be the \( f \)-ample invertible \( \SO_{Y} \)-module defined as the pullback 
of \( \SO(1) \) on \( \BPP^{N} \).
Then, we can construct an exact sequence
\[ (\SA^{\otimes -l'})^{\oplus m'} \to (\SA^{\otimes -l})^{\oplus m} \to \SF^{\vee} \to 0  \]
on \( Y \) for positive some integers \( m \), \( m' \), \( l \), and \( l' \),
where the kernel of the left homomorphism is
locally free on \( U \), since \( \SF^{\vee} \) is so.
Taking the dual, we have an exact sequence
\( 0 \to \SF \to \SE^{0} \to \SE^{1} \to \SG \to 0 \)
of coherent \( \SO_{Y} \)-modules
such that \( \SE^{0} \), \( \SE^{1} \), and \( \SG|_{U} \) are locally free
(cf.\ the proof of Lemma~\ref{lem:SurjFlat(reflexive)}). 
Let \( T' \to T \) be an arbitrary morphism from another locally Noetherian scheme \( T'\).
Then, \( F(T'/T) = \star \)
if and only if \( \SG \times_{T} T'\) is flat over \( T' \), by Lemma~\ref{lem:1|prop:key}.
Hence, the functor \( F \) is nothing but the ``universal flattening functor''
\( G \colon (\Sch/T)^{\op} \to \Set \)
for \( \SG \) (cf.\ Remark~\ref{rem:univFlatFct} below) restricted to the category \( \LNSch/T \).
Here,
\[ G(T'/T) = \begin{cases}
\star, &\text{if } \SG \times_{T} T' \text{ is flat over }  T', \\
\emptyset, &\text{otherwise,}
\end{cases}\]
for any \( T \)-scheme \( T' \).
By the Theorem of \cite[Lect.\ 8]{Mumford},
it is represented by a separated morphism \( S \to T \) of finite type
which is a local immersion.
Thus, we have proved the assertion in the case \eqref{thm:dd dec:ass1}.

In the case \eqref{thm:dd dec:ass2}, 
we can cover \( \Sigma \) by finitely many open affine subsets \( Y_{\lambda} \). 
We may assume that \( Y = \bigcup Y_{\lambda} \), since \( \SF|_{Y \setminus \Sigma} \) 
satisfies relative \( \bfS_{2} \) over \( T \). 
By Lemma~\ref{lem:SurjFlat(reflexive)}, 
we may also assume that 
there exists an exact sequence
\[ 0 \to \SF|_{Y_{\lambda}} \to \SE^{0}_{\lambda}
\to \SE^{1}_{\lambda} \to \SG_{\lambda} \to 0 \]
on each \( Y_{\lambda} \) such that
\( \SE^{0}_{\lambda} \) and \( \SE^{1}_{\lambda} \)
are free \( \SO_{Y_{\lambda}} \)-modules of finite rank,
and that \( \SG_{\lambda} \) is locally free on \( U_{\lambda} = U \cap Y_{\lambda}\).
Let \( T' \to T \) be an arbitrary morphism from a locally Noetherian scheme \( T' \). 
By Lemma~\ref{lem:1|prop:key}, we see that \( F(T'/T) = \star\) if and only if
\( \SG_{\lambda} \times_{T} T' \) is flat over \( T' \) for any \( \lambda \).
Let \( G_{\lambda} \colon (\Sch/T)^{\op} \to \Set \) be
the universal flattening functor for \( \SG_{\lambda} \), which is defined by
\[ G_{\lambda}(T'/T) = \begin{cases}
\star, &\text{if } \SG_{\lambda} \times_{T} T' \text{ is flat over } T'; \\
\emptyset, &\text{otherwise.}
\end{cases}\]
Let \( G \colon (\Sch/T)^{\op} \to \Set \) be the ``intersection'' functor of all \( G_{\lambda} \), i.e.,
\( G(T'/T) = \bigcap G_{\lambda}(T'/T) \) for any \( T/'T \).
By the argument above,
\( F \) is the restriction of \( G \) to \( \LNSch/T \). 
Every functor \( G_{\lambda} \) satisfies the conditions (\(\mathrm{F}_{1}\))--(\(\mathrm{F}_{8}\))
of \cite{Murre} except (\(\mathrm{F}_{3}\)), by the proof of \cite[Th.~2]{Murre}. 
Hence, the intersection functor \( G \) satisfies the same conditions except possibly
(\(\mathrm{F}_{3}\)) and (\(\mathrm{F}_{8}\)).
By \cite[Th.~1]{Murre}, we are reduced to check these two conditions for \( G \).
Since the two conditions concern only Noetherian schemes, we may take \( F = G \). 
We write \( F(T') = F(T'/T) \) for simplicity for a morphism \( T' \to T \).

We shall show that \( F \) satisfies (\(\mathrm{F}_{3}\)) (cf.\ \cite[($\mathrm{F}_{3}$), p.~244]{Murre}).
Let \( A \) be a Noetherian complete local ring
with maximal ideal \( \GM_{A} \) and let \( \Spec A \to T \) be a morphism. What we have to prove is
the bijectivity of the canonical map
\[ F(\Spec A) \to \varprojlim\nolimits_{n} F(\Spec A/\GM_{A}^{n}), \]
or equivalently that
\( F(\Spec A) = \star \) if  \( F(\Spec A/\GM^{n}_{A}) = \star \) for all \( n > 0\).
Assume the latter condition.
By Corollary~\ref{cor0:prop:key}
applied to \( Y_{\lambda} \times_{T} \Spec A \to \Spec A  \)
for each \( \lambda \),
we have an open neighborhood \( W_{\lambda} \) of the closed fiber \( Y_{\lambda} \times_{T} \Spec A/\GM_{A} \)
in \( Y_{\lambda} \times_{T} \Spec A \) such that \( (\SF \times_{T} \Spec A)^{\vee\vee}|_{W_{\lambda}} \)
satisfies relative \( \bfS_{2} \) over \( \Spec A \).
On the other hand, the restriction of \( (\SF \times_{T} \Spec A)^{\vee\vee} \)
to \( (Y \setminus \Sigma) \times_{T} \Spec A \)
also satisfies relative \( \bfS_{2} \) over \( \Spec A \). Then, the union
\( \bigcup W_{\lambda} \cup  ((Y \setminus \Sigma) \times_{T} \Spec A) \)
equals \( Y \times_{T} \Spec A \), since the complement of the union 
is proper over \( \Spec A \) but its image does not contain the closed point \( \GM_{A} \).
Therefore, \( F(\Spec A) = \star \).

Next, we shall show that \( F \) satisfies (\(\mathrm{F}_{8}\))
(cf.\ \cite[($\mathrm{F}_{8}$), p.~246]{Murre}). Let \( A \) be a Noetherian ring
containing a unique minimal prime ideal \( \Gp \) and let \( I \) be a nilpotent ideal of \( A \) such that
\( I\Gp = 0 \). Note that \( \Gp = \sqrt{0} \).
Let \( \Spec A \to T \) be a morphism and assume that
\( F(\Spec A/I) = \star \) but \( F(\Spec A_{\Gp}/I') = \emptyset \)
for any ideal \( I' \) of \( A_{\Gp} \) such that \( I' \subsetneq I_{\Gp} \).
What we have to prove is the existence of an element \( a \in A \setminus \Gp \)
having the following property:
\begin{enumerate}
\renewcommand{\theenumi}{$\diamond$}
\renewcommand{\labelenumi}{(\theenumi)}
\item \label{thm:dd dec:condF8}
For any element \( b \in A \setminus \Gp \) and for any ideal \( J \) of \( A_{ab} = A[(ab)^{-1}] \),
if \( J \subset IA_{ab} \) and if \( F(\Spec A_{ab}/J) = \star \),
then \( J = IA_{ab} \).
\end{enumerate}
For each \( \lambda \), we set \( B_{\lambda} \) to be an \( A \)-algebra such that
\( \Spec B_{\lambda} \isom Y_{\lambda} \times_{T} \Spec A \) over \( \Spec A \) and
let \( M_{\lambda} \) be a finitely generated \( B_{\lambda} \)-module such that the quasi-coherent sheaf
\( M_{\lambda}\sptilde \) on \( \Spec B_{\lambda} \)
is isomorphic to \( \SG_{\lambda} \times_{T} \Spec A  \). 
Note that
\[ M_{\lambda} \otimes_{A} A_{\Gp}/IA_{\Gp} \]
is a free \( A_{\Gp}/IA_{\Gp} \)-module, since it 
is flat over \( A_{\Gp}/IA_{\Gp} \) by \( G_{\lambda}(\Spec A_{\Gp}/IA_{\Gp}) = \star \)
and since \( A_{\Gp} \) is an Artinian local ring.
Hence,
\[ (M_{\lambda} \otimes_{A} A/I) \otimes_{A} A_{a} = M_{\lambda} \otimes_{A} A_{a}/IA_{a} \]
is a free \( A_{a}/IA_{a} \)-module for an element \( a \in A \setminus \Gp\).
For each \( \lambda \), let \( \ScS_{\lambda} \) be the set of ideals \( J \) of \( A_{a} \)
such that \( G_{\lambda}(\Spec A_{a}/J) = \star \), or equivalently, that
\( M_{\lambda} \otimes_{A} A_{a}/J \) is a flat \( A_{a}/J \)-module.
By \cite[IV, Cor.\ (11.4.4)]{EGA}, there exists
a unique minimal element \( I_{\lambda} = I_{\lambda, (a)} \) in \( \ScS_{\lambda} \), and
\begin{enumerate}
\renewcommand{\theenumi}{$\dag$}
\renewcommand{\labelenumi}{(\theenumi)}
\item  \label{thm:dd dec:I_lambda}
for any \( A_{a} \)-algebra \( A' \),
if \( M_{\lambda} \otimes_{A} A'\) is a flat \( A' \)-module, then
\( A' \) is an \( A_{a}/I_{\lambda} \)-algebra.
\end{enumerate}
Note that \( I_{\lambda} \) is nilpotent, since
the nilpotent ideal \( IA_{a} \) belongs to \( \ScS_{\lambda} \).
We define \( I_{(a)} := \sum I_{\lambda, (a)} \) as an ideal of \( A_{a} \). Then,
it has the following property:
\begin{enumerate}
\renewcommand{\theenumi}{$\ddag$}
\renewcommand{\labelenumi}{(\theenumi)}
\item  \label{thm:dd dec:I(a)}
For any \( A_{a} \)-algebra \( A' \), it is an \( A_{a}/I_{(a)} \)-algebra if and only if
\( M_{\lambda} \otimes_{A} A' \) is flat over \( A' \) for any \( \lambda \), i.e., \( F(\Spec A') = \star \).
\end{enumerate}
By the assumption of \( I_{\Gp} \), we have
\( (I_{(a)})_{\Gp} = I_{(a)}A_{\Gp} = IA_{\Gp} \).
Thus, there is an element \( a' \in A \setminus \Gp\) such that \( I_{(a)}A_{aa'} = IA_{aa'} \).
Here, \( I_{\lambda, (aa')} = I_{\lambda, (a)}A_{aa'} \) for any \( \lambda \)
by the property \eqref{thm:dd dec:I_lambda} of \( I_{\lambda} \).
Thus, \( I_{(aa')} = I_{(a)}A_{aa'} = IA_{aa'}\).
Therefore, \( aa' \) satisfies the condition \eqref{thm:dd dec:condF8} by
the property \eqref{thm:dd dec:I(a)}.
Thus, we have checked the conditions  (\(\mathrm{F}_{3}\)) and \ (\(\mathrm{F}_{8}\)), and 
the assertion in the case \eqref{thm:dd dec:ass2} has been proved.  
\end{proof}

\begin{rem}\label{rem:univFlatFct}
The (universal) flattening functor is introduced by Murre in \cite{Murre},
but its origin seems to go back to Grothendieck as the subtitle says.
Murre gives a criterion of the representability of the functor in \cite[\S3, (A)]{Murre},
whose prototype seems to be \cite[IV, Prop.~(11.4.5)]{EGA}.
Mumford considers the case of projective morphism in \cite[Lect.\ 8]{Mumford},
and proves the representability by using Hilbert polynomials,
where the representing scheme
is called the ``flattening stratification.'' He also mentioned that Grothendieck has proved
a weaker result by much deeper method.
Raynaud \cite[Ch.~3]{Raynaud} and Raynaud--Gruson \cite[Part 1, \S4]{RG}
give further criteria
of the representability of the universal flattening functor by another method. 
One of them is used in proving the following ``local version'' 
of the existence of relative \( \bfS_{2} \) refinement. 
\end{rem}

\begin{thm}\label{thm:enhancement}
Let \( f \colon Y \to T \) be an \( \bfS_{2} \)-morphism of locally Noetherian schemes 
and \( \SF \) a reflexive \( \SO_{Y} \)-module 
which is \emph{locally free in codimension one on each fiber} 
\emph{(}cf.\ Definition~\emph{\ref{dfn:locFreeCodim1Fiber})}.
Assume that \( T = \Spec R \) for a Henselian local ring \( R \) and let \( o \in T \) be the closed point.  
Then, for any point \( y \) of the closed fiber \( Y_{o} = f^{-1}(o) \), 
there is a closed subscheme \( S \subset T \) having the following universal property\emph{:}
Let \( T' = \Spec R' \to T = \Spec R\) be a morphism defined by a local ring homomorphism \( R \to R' \) 
for a Noetherian local ring \( R' \) and let \( o' \in T'\) be the closed point. 
Let \( f' \colon Y' = Y \times_{T} T' \to T' \) and \( p \colon Y' \to Y \) be the induced morphisms. 
Then, for the pullback \( \SF' := p^{*}\SF \), 
its double-dual \( (\SF')^{\vee\vee} \) on \( Y' \) satisfies relative \( \bfS_{2} \) over \( T' \) 
at any point \( y' \) of \( Y' \) with \( p(y') = y \) and \( f'(y') = o' \) 
if and only if \( T' \to T \) factors through \( S \). 
\end{thm}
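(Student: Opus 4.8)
The proof will run parallel to that of Theorem~\ref{thm:dd dec}: produce, locally around $y$, a four-term exact sequence as in Lemma~\ref{lem:SurjFlat(reflexive)}, reduce the relative $\bfS_{2}$-condition to flatness of the cokernel via the key proposition, and then invoke a representability result. The difference is that the base $T=\Spec R$ is Henselian local and one only controls the situation along the closed fibre, so the global flattening theorems of Mumford and Murre used in Theorem~\ref{thm:dd dec} are replaced by the local flattening theorems of Raynaud and Raynaud--Gruson. Since the assertion concerns only a neighbourhood of $y$ and $T$ is local, it suffices to construct one closed subscheme $S\subset T$ with the stated universal property; uniqueness is then automatic. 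To begin, replace $Y$ by an affine open neighbourhood $V=\Spec C$ of $y$: then $\SF|_{V}$ is still reflexive and locally free in codimension one on each fibre of $f|_{V}$ (Definition~\ref{dfn:locFreeCodim1Fiber}), so there is an open $U\subset V$ with $\SF|_{U}$ locally free and $\Codim(V_{t}\cap Z,V_{t})\ge 2$ for all $t$, where $Z=V\setminus U$; by Lemma~\ref{lem:depth+codim+Sk}\eqref{lem:depth+codim+Sk:2} and the $\bfS_{2}$-property of the fibres, $\depth_{V_{t}\cap Z}\SO_{V_{t}}\ge 2$ for every $t$. Shrinking $V$ further, Lemma~\ref{lem:SurjFlat(reflexive)} gives an exact sequence $0\to\SF|_{V}\to\SE^{0}\to\SE^{1}\to\SG\to 0$ of coherent $\SO_{V}$-modules with $\SE^{0},\SE^{1}$ free of finite rank and $\SG|_{U}$ locally free. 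Put $M=\OH^{0}(V,\SG)$, a finitely generated $C$-module, and let $\Gp\subset C$ be the prime of $y$; since $y\in Y_{o}$, the ideal $\Gp$ lies over the closed point $o$ of $\Spec R$.

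\textbf{Reduction to flatness of $\SG$.} Let $R\to R'$ be a local homomorphism to a Noetherian local ring, with $T'=\Spec R'$, closed point $o'$, and form $Y'=V\times_{T}T'$ (locally Noetherian, $f$ being locally of finite type), with projections $p\colon Y'\to Y$ and $f'\colon Y'\to T'$. By Lemma~\ref{lem:1|prop:key}, the pulled-back sequence $0\to(\SF')^{\vee\vee}\to p^{*}\SE^{0}\to p^{*}\SE^{1}\to p^{*}\SG\to 0$ realises $(\SF')^{\vee\vee}=(p^{*}(\SF|_{V}))^{\vee\vee}$ as $\Ker(p^{*}\SE^{0}\to p^{*}\SE^{1})$ and satisfies the hypotheses of Proposition~\ref{prop:key} for $f'$, the depth condition surviving base change by Lemma~\ref{lem:bc basic}\eqref{lem:bc basic:1}. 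Moreover $f'$ is again an $\bfS_{2}$-morphism (Lemma~\ref{lem:bc basic}\eqref{lem:bc basic:5}), so its fibres are equi-dimensional, and the fibrewise codimension hypotheses needed in Corollary~\ref{cor0:prop:key} hold after base change. Hence, for a point $y'\in Y'$ lying over $y$ and over $o'$, combining Proposition~\ref{prop:key}\eqref{prop:key:2} with Corollary~\ref{cor0:prop:key} shows that $(\SF')^{\vee\vee}$ satisfies relative $\bfS_{2}$ over $T'$ at $y'$ if and only if $(p^{*}\SG)_{y'}$ is flat over $\SO_{T',o'}=R'$. Since the points $y'$ over $(y,o')$ are exactly the primes of $C\otimes_{R}R'$ lying over $\Gp$ and over $o'$, the closed subscheme $S$ we want is characterised by the property: a local homomorphism $R\to R'$ factors through $S$ if and only if $M\otimes_{R}R'$ is flat over $R'$ at every prime of $C\otimes_{R}R'$ lying over $\Gp$ and over the closed point of $\Spec R'$.

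\textbf{Local flattening and conclusion.} Now $R$ is Henselian local Noetherian, $C$ is a finitely generated $R$-algebra, $M$ a finitely generated $C$-module, and $\Gp\subset C$ lies over the closed point $o\in\Spec R$. By the representability of the local flattening functor over a Henselian base (Raynaud \cite[Ch.\ 3, Th.\ 1]{Raynaud}; see also Raynaud--Gruson \cite[Part~1, Th.\ (4.1.2)]{RG}), there is an ideal $I\subset R$ such that a local homomorphism $R\to R'$ to a Noetherian local ring factors through $R/I$ exactly when $M\otimes_{R}R'$ becomes flat over $R'$ along the closed fibre of $\Spec(C\otimes_{R}R')\to\Spec R'$ at the points over $\Gp$. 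Taking $S=\Spec R/I\hookrightarrow T$ and combining with the previous step proves the theorem, $S$ being a closed subscheme of $T$ and unique by its universal property.

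\textbf{Main obstacle.} The delicate part is the reduction step: because $T'\to T$ is an arbitrary local homomorphism, not necessarily of finite type, one cannot appeal to a flattening stratification over $T$ and must track the relative $\bfS_{2}$-condition stalkwise through Proposition~\ref{prop:key}\eqref{prop:key:2}, verifying carefully that $(\SF')^{\vee\vee}$ is compatible with base change (Lemma~\ref{lem:1|prop:key}, Lemma~\ref{lem:bc reflexive}) and that the auxiliary codimension and equi-dimensionality hypotheses required by Corollary~\ref{cor0:prop:key} are preserved. A subsidiary point is to identify the universal property furnished by the Henselian local flattening theory — flatness along the closed fibre at the points over $\Gp$ — with a genuine \emph{closed} subscheme of $T$; it is precisely the Henselian hypothesis on $R$ that makes this local flattening functor representable by a closed subscheme rather than only after a modification of the base.
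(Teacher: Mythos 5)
Your proposal is correct and follows essentially the same route as the paper's proof: localize $Y$ to an affine neighbourhood of $y$, produce the exact sequence $0\to\SF\to\SE^{0}\to\SE^{1}\to\SG\to 0$ as in Lemma~\ref{lem:SurjFlat(reflexive)}, reduce the relative $\bfS_{2}$-condition for $(\SF')^{\vee\vee}$ at points over $y$ to flatness of $p^{*}\SG$ via Lemma~\ref{lem:1|prop:key} (i.e.\ Proposition~\ref{prop:key}), and conclude with the local flattening theorem of Raynaud and Raynaud--Gruson over the Henselian base. Your extra care with Corollary~\ref{cor0:prop:key} and the base-change hypotheses only makes explicit what the paper leaves implicit.
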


\begin{proof}
Replacing \( Y \) with an open neighborhood of \( y \), 
we may assume that \( Y \) is an affine \( R \)-scheme of finite type. 
Then, by the same argument as in the proof of Lemma~\ref{lem:SurjFlat(reflexive)}, 
we have an exact sequence \( 0 \to \SF \to \SE^{0} \to \SE^{1} \to \SG \to 0 \) 
of coherent \( \SO_{Y} \)-modules such that \( \SE^{0} \) and \( \SE^{1} \) are locally free 
and \( \SG|_{U} \) is also locally free for the maximal open subset \( U \) 
such that \( \SF|_{U} \) is locally free. 
By Lemma~\ref{lem:1|prop:key}, \( (\SF')^{\vee\vee} \) satisfies relative \( \bfS_{2} \) over \( T' \) 
at a point \( y' \) lying over \( y \) if and only if \( (p^{*}\SG)_{y'} \) is flat over \( T' \). 
Therefore, the universal closed subscheme \( S \subset T \) exists by 
\cite[Part 1, Th.\ (4.1.2)]{RG} or \cite[Ch.\ 3, Th.\ 1]{Raynaud} applied to \( \SG \). 
\end{proof}

%%%% Section 4 %%%%%
%%%%%%%%%%%%%%%%%%%%%%%%%%%%%%%%%%%%%
%%%%%%%%%%%%%%%%%%%%%%%%%%%%%%%%%%%%%%%

\section{Grothendieck duality}
\label{sect:GD}

We shall explain the theory of Grothendieck duality
with some base change theorems
referring to \cite{ResDual}, \cite{Conrad}, \cite{Lipman09}, etc.
We do not prove the main part of the duality theory but show
several consequences.
Some of them are useful for
studying \( \BQQ \)-Gorenstein schemes and \( \BQQ \)-Gorenstein
morphisms in Sections~\ref{sect:QGorSch} and \ref{sect:QGormor}.

Some well-known properties on the dualizing complex
are mentioned in Sections~\ref{subsect:dualizingcpx} and \ref{subsect:ordinaryDC}
based on arguments in \cite{ResDual} and \cite{Conrad}.
Section~\ref{subsect:dualizingcpx} explains some basic properties and results on
a locally Noetherian scheme admitting a dualizing complex, mainly on
the codimension function associated with the dualizing complex
and on interpretation of \( \bfS_{k} \)-conditions
for a coherent sheaf via the dualizing complex.
In Section~\ref{subsect:ordinaryDC}, we introduce a useful notion of \emph{ordinary dualizing complex}
for locally equi-dimensional locally Noetherian schemes, 
and study cohomology sheaves of ordinary dualizing complexes. 
Section~\ref{subsect:twisted inverse} explains the notion
of twisted inverse image
and the relative duality theory referring mainly to \cite{ResDual}, \cite{Conrad}, \cite{Lipman09}.
Our original base change result for the relative dualizing complex 
to the fiber is proved in Corollary~\ref{cor:BC}. 
In Section~\ref{subsect:CMmorGormor}, we explain the 
\emph{relative dualizing sheaf}
for a \emph{Cohen--Macaulay morphism} (cf.\ Definition~\ref{dfn:SkCMmorphism})
and its base change property referring to \cite{Conrad}, \cite{Sastry}, etc.

%%%%%%%%%%%%%%%%%%%%%%%%%%%%%%%%%%%%%%%%%%%%%%%%

\subsection{Dualizing complex}
\label{subsect:dualizingcpx}

We shall begin with recalling the notion of dualizing complex, which is introduced in
\cite[V]{ResDual}.

\begin{dfn}\label{dfn:dualizingcomplex}
A dualizing complex \( \SR^{\bullet} \)
of a locally Noetherian scheme \( X \)
is defined to be a complex of \( \SO_{X} \)-modules bounded below such that
\begin{itemize}
\item  it has
coherent cohomology and has finite injective dimension, i.e.,
\( \SR^{\bullet} \in \bfD^{+}_{\coh}(X)_{\text{fid}} \)
in the sense of
\cite{ResDual}, and

\item  the natural morphism
\[ \SO_{X} \to \SRHom_{\SO_{X}}(\SR^{\bullet}, \SR^{\bullet}) \]
is a quasi-isomorphism (cf.\ \cite[V, Prop.~2.1]{ResDual}).
\end{itemize}
\end{dfn}

\begin{remn}
Every complex in \( \bfD^{+}_{\coh}(X)_{\text{fid}} \)
is quasi-isomorphic to a bounded complex
of quasi-coherent injective \( \SO_{X} \)-modules
when \( X \) is quasi-compact
(cf.\ \cite[II, Prop.\ 7.20]{ResDual}).
The derived functor \( \SRHom_{\SO_{X}} \) of the bi-functor \( \SHom_{\SO_{X}} \)
is considered as a functor
\[ \bfD(X)^{\op} \times \bfD(X) \ni (\SF^{\bullet}, \SG^{\bullet}) \mapsto
\SRHom_{\SO_{X}}(\SF^{\bullet}, \SG^{\bullet}) \in \bfD(X)\]
(cf.\ \cite[I, \S6]{ResDual}, \cite[Th.~A(ii)]{Spa}).
\end{remn}

\begin{examn}
A Noetherian local ring \( A \) is said to be \emph{Gorenstein}
if there is a finite injective resolution of \( A \).
In particular, \( \SO_{X} \) is a dualizing complex for \( X = \Spec A \).
There are known several conditions for a local ring \( A \) to be Gorenstein
(e.g.\ \cite[V, Th.~9.1]{ResDual}, \cite[Th.~18.1]{Matsumura}):
For example,
\( A \) is Gorenstein if and only if
\( A \) is Cohen--Macaulay and \( \Ext^{n}(A/\GM_{A}, A) \isom A/\GM_{A} \)
for the maximal ideal \( \GM_{A} \) and \( n = \dim A \).
A locally Noetherian scheme \( Y \) is said to be \emph{Gorenstein}
if every local ring \( \SO_{Y, y} \) is Gorenstein.
For a locally Noetherian scheme \( Y \),
it is Gorenstein of finite Krull dimension if and only if
\( \SO_{Y} \) is a dualizing complex (cf.\ \cite[II, Prop.\ 7.20]{ResDual}).
\end{examn}

\begin{examn}[{cf.\ \cite[V, Prop.~3.4]{ResDual}, \cite[Th.~18.6]{Matsumura}}]
For an Artinian local ring \( A \),
let \( I \) be an injective hull of the residue field \( A/\GM_{A} \).
Then, the associated
quasi-coherent sheaf \( I\sptilde \) on \( \Spec A \) is a dualizing complex.
\end{examn}

\begin{rem}[{\cite[V, \S10]{ResDual}}]\label{rem:ExistDC}
Let \( X \) be a locally Noetherian scheme.
If there is a morphism \( X \to Y \) of finite type to
a locally Noetherian scheme \( Y \) admitting a dualizing complex
in which the dimensions of fibers are bounded,
then \( X \) also admits a dualizing complex \cite[VI, Cor.~3.5]{ResDual}.
In particular, any scheme of finite type over a Noetherian Gorenstein scheme
of finite Krull dimension
admits a dualizing complex.
When \( X \) is connected, the dualizing complex is unique up to
quasi-isomorphism, shift, and up to tensor product with invertible sheaves
(cf.\ \cite[V, Th.~3.1]{ResDual}, \cite[(3.1.30)]{Conrad}).
\end{rem}

\begin{factn}
For a Noetherian ring \( A \), the affine scheme \( \Spec A \) admits a dualizing complex
if and only if there is a surjection \( B \to A \) from a Gorenstein ring \( B \)
of finite Krull dimension.
This is conjectured by Sharp \cite[Conj.~(4.4)]{Sharp} and has been proved
by Kawasaki \cite[Cor.~1.4]{Kawasaki}.
\end{factn}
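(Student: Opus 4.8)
The plan is to handle the two implications separately, since they are of utterly different character. For the implication ``there exists a surjection \( B \to A \) from a Gorenstein ring \( B \) of finite Krull dimension \( \Rightarrow \) \( \Spec A \) admits a dualizing complex,'' I would argue directly and elementarily: such a surjection exhibits \( \Spec A \) as a closed subscheme of \( \Spec B \), in particular as a scheme of finite type over \( \Spec B \); and \( \Spec B \), being a Noetherian Gorenstein scheme of finite Krull dimension, admits a dualizing complex (namely \( \SO_{\Spec B} \) itself). Hence \( \Spec A \) admits one by Remark~\ref{rem:ExistDC}; concretely, \( \SRHom_{\SO_{\Spec B}}(\SO_{\Spec A}, \SO_{\Spec B}) \), viewed as a complex on \( \Spec A \), serves. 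So the content is entirely in the converse, which is Sharp's conjecture, and there I expect no elementary route --- I would follow Kawasaki's argument \cite[Cor.~1.4]{Kawasaki}.

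For the converse, assume \( \Spec A \) carries a normalized dualizing complex \( R^{\bullet} \); then \( A \) has finite Krull dimension, is universally catenary, and has only finitely many minimal primes. The argument has two stages. \emph{Stage 1, reduction via idealization.} It suffices to realize \( A \) as a homomorphic image of some finite-dimensional Cohen--Macaulay ring \( C \) that admits a canonical module \( \omega_{C} \). Granting this, form the trivial extension \( B := C \ltimes \omega_{C} \): it is Noetherian of the same finite Krull dimension as \( C \), it is Gorenstein by the criterion of Reiten and Foxby (the idealization of a Cohen--Macaulay ring by its canonical module is Gorenstein, a property that may be checked after localizing), and \( B \twoheadrightarrow C \twoheadrightarrow A \) is the desired surjection. \emph{Stage 2, Macaulayfication.} Here one must construct such a \( C \). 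One measures the failure of Cohen--Macaulayness of \( A \) through the cohomology sheaves \( \SH^{i}(R^{\bullet}) \), equivalently through local cohomology, chooses an ideal \( I \) supported on the non-Cohen--Macaulay locus (using \( R^{\bullet} \) together with Faltings' annihilator theorem to control the relevant depths), and shows that a suitable Rees/blow-up algebra attached to \( I \) --- after passing to an appropriate finitely generated affine algebra --- is Cohen--Macaulay, of finite Krull dimension, and still carries a surjection onto \( A \); one then iterates, inducting on \( \dim A \) and on the Cohen--Macaulay defect. The Cohen--Macaulay ring so obtained, being of finite type over \( A \) and (by construction) locally equidimensional, has a dualizing complex concentrated in a single cohomological degree, i.e.\ a canonical module, as required.

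The main obstacle is Stage 2. The easy implication is immediate, and Stage 1 is formal once the idealization/canonical-module package is in hand; but the Macaulayfication step is genuinely hard --- one must choose the blow-up center so that the output is \emph{simultaneously} Cohen--Macaulay, of finite Krull dimension, and a ring of which \( A \) remains a quotient, and the verification of Cohen--Macaulayness rests on the full local-cohomology machinery of Kawasaki's paper (control of \( H^{i}_{I} \), Faltings' theorem, and the induction on depth defects). This is why the result is recorded here as a Fact and cited rather than reproved.
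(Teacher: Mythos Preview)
The paper gives no proof of this statement: it is recorded as an unnumbered \emph{Fact} with citations to Sharp and Kawasaki, and nothing more. Your outline is correct and already goes beyond what the paper does --- the easy direction via Remark~\ref{rem:ExistDC} is exactly right, and your sketch of Kawasaki's argument (arithmetic Macaulayfication to produce a Cohen--Macaulay ring \( C \) of finite Krull dimension surjecting onto \( A \), then the Reiten--Foxby idealization \( C \ltimes \omega_{C} \) to upgrade to Gorenstein) accurately captures the structure of the proof in \cite{Kawasaki}. You have also correctly diagnosed why the paper does not attempt to reprove it.
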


We shall explain the notion of codimension function.

\begin{dfn}[{cf.\ \cite[V, p.~283]{ResDual}}]\label{dfn:codimfct}
Let \( X \) be a scheme such that every local ring \( \SO_{X, x} \) has finite Krull dimension.
A function \( d \colon X \to \BZZ \)
is called a \emph{codimension function} if
\[ d(x) = d(y) + \Codim(\overline{\{x\}}, \overline{\{y\}}) \]
for any points \( x \) and \( y \) such that \( x \in \overline{\{y\}} \).
\end{dfn}

\begin{remn}
Let \( X \) be a scheme whose local rings \( \SO_{X, x} \) all have finite Krull dimension.
If \( X \) admits a codimension function, then
\( X \) is catenary (cf.\ Property~\ref{pprt:catenary}). In fact,
\[ \Codim(\overline{\{x\}}, \overline{\{z\}}) =
\Codim(\overline{\{x\}}, \overline{\{y\}}) + \Codim(\overline{\{y\}}, \overline{\{z\}}) \]
holds for any \( x \), \( y \), \( z \in X\) satisfying \( x \in \overline{\{y\}}  \)
and \( y \in \overline{\{z\}} \).
Moreover, if the codimension function is bounded, then \( X \) has finite Krull dimension.
\end{remn}

\begin{lem}\label{lem:CodimFunction:constant}
Let \( X \) be a scheme such that every local ring \( \SO_{X, x} \) has finite Krull dimension,
and let \( d \colon X \to \BZZ \) be a codimension function. Then,
\[ d(y) - \dim \SO_{X, y} \geq d(x) - \dim \SO_{X, x} \]
holds for any points \( x \), \( y \in X\) with \( x \in \overline{\{y\}} \).
Moreover, the following three conditions are equivalent to each other\emph{:}
\begin{enumerate}
    \renewcommand{\theenumi}{\roman{enumi}}
    \renewcommand{\labelenumi}{(\theenumi)}
\item \label{lem:CodimFunction:constant:cond1}
the equality
\[ d(y) - \dim \SO_{X, y} = d(x) - \dim \SO_{X, x} \]
holds for any points \( x \), \( y \in X \)
with \( x \in \overline{\{y\}} \)\emph{;}

\item \label{lem:CodimFunction:constant:cond2}
the function \( X \ni x \mapsto d(x) - \dim \SO_{X, x} \in \BZZ\) is locally constant\emph{;}

\item \label{lem:CodimFunction:constant:cond3}
\( X \) is locally equi-dimensional
\emph{(}cf.\ Definition~\emph{\ref{dfn:equi-dim}}\eqref{dfn:equi-dim:locally}\emph{)}.
\end{enumerate}
\end{lem}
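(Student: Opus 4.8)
The plan is to reduce everything to the case of a local ring and exploit the basic chain-length identities that already underpin the notion of codimension function. First I would fix points $x$, $y \in X$ with $x \in \overline{\{y\}}$. The defining property of the codimension function gives $d(x) = d(y) + \Codim(\overline{\{x\}}, \overline{\{y\}})$, so the claimed inequality $d(y) - \dim \SO_{X,y} \geq d(x) - \dim \SO_{X,x}$ is equivalent to
\[
\dim \SO_{X,x} \geq \dim \SO_{X,y} + \Codim(\overline{\{x\}}, \overline{\{y\}}).
\]
This is nothing but the general inequality relating the dimension of a local ring to the dimension of a quotient plus the codimension of the corresponding closed subset: working inside $\Spec \SO_{X,x}$, the point $y$ corresponds to a prime $\Gp$ with $\dim \SO_{X,x}/\Gp = \Codim(\overline{\{x\}}, \overline{\{y\}})$ (using Property~\ref{ppty:dim-codim}, or rather the fact that $\overline{\{x\}}$ sits inside $\overline{\{y\}}$ with that codimension) and $\dim (\SO_{X,x})_{\Gp} = \dim \SO_{X,y}$, and then $\dim \SO_{X,x} \geq \dim (\SO_{X,x})_{\Gp} + \dim \SO_{X,x}/\Gp$ is the standard inequality for Noetherian local rings. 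So the inequality part is routine once the right identifications are made, and equality here is by definition precisely the catenary-type condition $\dim \SO_{X,x} = \dim \SO_{X,y} + \Codim(\overline{\{x\}}, \overline{\{y\}})$.

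Next I would prove the equivalence of \eqref{lem:CodimFunction:constant:cond1}, \eqref{lem:CodimFunction:constant:cond2}, and \eqref{lem:CodimFunction:constant:cond3}. The implication \eqref{lem:CodimFunction:constant:cond2} $\Rightarrow$ \eqref{lem:CodimFunction:constant:cond1} is immediate: if $g(x) := d(x) - \dim \SO_{X,x}$ is locally constant, then along any specialization $x \in \overline{\{y\}}$ the points $x$ and $y$ lie in a common connected open set (e.g.\ any affine open containing $x$ also contains $y$), so $g(x) = g(y)$. For \eqref{lem:CodimFunction:constant:cond1} $\Rightarrow$ \eqref{lem:CodimFunction:constant:cond3}: fix a point $x$ and let $\mathfrak{p}$ be a minimal prime of $\SO_{X,x}$, corresponding to a generic point $\eta$ of an irreducible component of $X$ through $x$; then $\dim \SO_{X,\eta} = 0$, and applying the equality in \eqref{lem:CodimFunction:constant:cond1} to the pair $(x,\eta)$ (note $x \in \overline{\{\eta\}}$) gives $d(x) - \dim \SO_{X,x} = d(\eta) - 0 = d(\eta)$, hence $\dim \SO_{X,x} = d(x) - d(\eta)$. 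But also $\dim \SO_{X,x}/\mathfrak{p} = \Codim(\overline{\{x\}},\overline{\{\eta\}}) = d(x) - d(\eta)$ by the defining property of $d$. Therefore $\dim \SO_{X,x} = \dim \SO_{X,x}/\mathfrak{p}$ for every minimal prime $\mathfrak{p}$, which is exactly the statement that $\SO_{X,x}$ is equi-dimensional; as $x$ was arbitrary, $X$ is locally equi-dimensional.

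Finally, for \eqref{lem:CodimFunction:constant:cond3} $\Rightarrow$ \eqref{lem:CodimFunction:constant:cond2}: it suffices to show $g = d - \dim \SO_{X,-}$ is constant on each irreducible component's generic-point neighborhood, and since every point specializes from a generic point, it is enough to show $g(x) = g(\eta)$ whenever $\eta$ is a generic point of an irreducible component of $X$ with $x \in \overline{\{\eta\}}$, and moreover that all such generic points $\eta$ through a fixed $x$ give the same value $g(\eta) = d(\eta)$. For the first: local equi-dimensionality of $\SO_{X,x}$ forces $\dim \SO_{X,x} = \dim \SO_{X,x}/\mathfrak{p}_\eta = \Codim(\overline{\{x\}},\overline{\{\eta\}}) = d(x) - d(\eta)$, so $g(x) = d(\eta) = g(\eta)$. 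Running this for two generic points $\eta, \eta'$ through $x$ gives $d(\eta) = d(x) - \dim \SO_{X,x} = d(\eta')$, so the value $g(x)$ does not depend on the chosen component; since any two points in a connected open set can be joined through generic points of components meeting that set (using that $X$ is locally Noetherian, hence locally has finitely many components, and the component graph on a connected piece is connected), $g$ is locally constant. I expect the main obstacle to be this last connectedness bookkeeping — making rigorous that $g$ is genuinely \emph{locally constant} rather than merely constant along each specialization chain — which requires observing that on a connected open subset any two irreducible components are linked by a chain of components with nonempty pairwise intersections; this is where some care with the locally Noetherian hypothesis is needed, though the argument is standard.
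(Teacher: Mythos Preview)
Your proof is correct and follows essentially the same route as the paper's: both reduce the inequality to $\dim \SO_{X,x} \geq \dim \SO_{X,y} + \Codim(\overline{\{x\}},\overline{\{y\}})$, prove the cycle \eqref{lem:CodimFunction:constant:cond2} $\Rightarrow$ \eqref{lem:CodimFunction:constant:cond1} $\Rightarrow$ \eqref{lem:CodimFunction:constant:cond3} $\Rightarrow$ \eqref{lem:CodimFunction:constant:cond2}, and in the last step show that $g(x) = d(\eta)$ for every generic point $\eta$ of a component through $x$, then pass to local constancy via overlapping components on a connected piece. The paper is slightly terser about the final connectedness step (it simply asserts constancy on a connected $X$), whereas you flag the component-graph argument explicitly; this is a presentational difference only.
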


\begin{proof}
The first inequality is derived from the well-known inequality
\[ \dim \SO_{X, x} \geq \dim \SO_{Y, y} + \Codim(\overline{\{x\}}, \overline{\{y\}}) \]
(cf.\ Property~\ref{ppty:dim-codim}\eqref{ppty:dim-codim:1},
\cite[$0_{\text{IV}}$, Prop.~(14.2.2)]{EGA}).
To show the equivalence of three conditions
\eqref{lem:CodimFunction:constant:cond1}--\eqref{lem:CodimFunction:constant:cond3},
we may assume that \( X \) is connected.
Let \( \ScS \) be the set of generic points of irreducible components of \( X \) and,
for a point \( x \in X \), let \( \ScS(x) \) be the subset
consisting of \( y \in \ScS \) with \( x \in \overline{\{y\}} \).
Note that \( \SO_{X, x} \) is equi-dimensional if and only if
\begin{equation}\label{eq:lem:CodimFunction:constant}
\Codim(\overline{\{x\}}, \overline{\{y\}}) = \Codim(\overline{\{x\}}, X)
\end{equation}
for any \( y \in \ScS(x) \). In fact, a point \( y \in \ScS(x) \) corresponds
to a minimal prime ideal \( \Gp \) of
\( \SO_{X, x} \) via the natural morphism \( \Spec \SO_{X, x} \to X \), and
\eqref{eq:lem:CodimFunction:constant} is written as
\[ \dim \SO_{X, x}/\Gp = \dim \SO_{X, x} \]
(cf.\ Property~\ref{ppty:dim-codim}\eqref{ppty:dim-codim:1}).
The implication
\eqref{lem:CodimFunction:constant:cond2}
\( \Rightarrow \) \eqref{lem:CodimFunction:constant:cond1} is trivial,
and \eqref{lem:CodimFunction:constant:cond1} \( \Rightarrow \)
\eqref{lem:CodimFunction:constant:cond3} is shown by
the equality
\( \dim \SO_{X, x} = \Codim(\overline{\{x\}}, \overline{\{y\}})\) for any \( y \in \ScS(x) \),
which holds by \eqref{lem:CodimFunction:constant:cond1}.
It suffices to prove:
\eqref{lem:CodimFunction:constant:cond3}
\( \Rightarrow \)
\eqref{lem:CodimFunction:constant:cond2}.
In the situation of \eqref{lem:CodimFunction:constant:cond3},
by \eqref{eq:lem:CodimFunction:constant}, we have
\( d(x) - \dim \SO_{X, x} = d(y) = d(y) - \dim \SO_{X, y}  \)
for any \( x \in X\) and \( y \in \ScS(x) \).
This implies that
\( x \mapsto d(x) - \dim \SO_{X, x} \) is a constant function with value \( d(y) \) on
\( \overline{\{y\}} \) for any \( y \in \ScS \), and
\( d(y)  = d(y') \) for any points \( y \), \( y' \in \ScS \)
with \( \overline{\{y\}} \cap \overline{\{y'\}} \ne \emptyset \).
Consequently, \( x \mapsto d(x) - \dim \SO_{X, x} \) is constant on \( X \),
since \( X \) is connected. Thus, we are done.
\end{proof}

The importance of the codimension function comes from the following:

\begin{fact}\label{fact:CodimFunction}
Let \( X \) be a locally Noetherian scheme with a dualizing complex \( \SR^{\bullet} \).
Then, we can define a function \( d \colon X \to \BZZ \) by
\[ \BExt^{i}_{\SO_{X, x}}(\Bbbk(x), \SR^{\bullet}_{x})
= \OH^{i}(\RHom_{\SO_{X, x}}(\Bbbk(x), \SR^{\bullet}_{x})) =
\begin{cases}
0, & \text{ for } i \ne d(x); \\
\Bbbk(x), & \text{ for } i = d(x),
\end{cases}\]
where \( \Bbbk(x) \) denotes the residue field at \( x \) and
\( \SR^{\bullet}_{x} \) denotes the stalk at \( x \) (cf.\ \cite[V, Prop.~3.4]{ResDual}).
The function \( d \) is a bounded codimension function
(cf.\ \cite[V, Cor.~7.2]{ResDual}),
and we call \( d \)
the \emph{codimension function associated with} \( \SR^{\bullet} \).
In particular, \( X \) is catenary and has finite Krull dimension.
\end{fact}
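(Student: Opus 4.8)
The statement to prove is Fact~\ref{fact:CodimFunction}: for a locally Noetherian scheme $X$ with a dualizing complex $\SR^{\bullet}$, the function $d$ defined via the vanishing pattern of $\BExt^{i}_{\SO_{X,x}}(\Bbbk(x), \SR^{\bullet}_{x})$ is well-defined, is a bounded codimension function, and consequently $X$ is catenary of finite Krull dimension.

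\medskip

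\noindent
The plan is to reduce everything to the affine (indeed local) case and quote the pointwise structure theory of dualizing complexes. First I would check that $d$ is well-defined at each point $x$: this is the assertion that $\RHom_{\SO_{X,x}}(\Bbbk(x), \SR^{\bullet}_{x})$ has cohomology concentrated in a single degree, where it is one-dimensional over $\Bbbk(x)$. This is exactly the ``punctual'' characterization of (pointwise) dualizing complexes, which is \cite[V, Prop.~3.4]{ResDual} applied to the Noetherian local ring $\SO_{X,x}$ and the complex $\SR^{\bullet}_{x}$, the latter being a dualizing complex for $\Spec \SO_{X,x}$ because dualizing complexes localize (cf.\ the argument in \cite[V]{ResDual} or \cite[(3.1.30)]{Conrad}). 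So the first step is essentially a citation, together with the remark that $\SR^{\bullet}_x$ is indeed dualizing on the localization.

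\medskip

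\noindent
The heart of the matter is that $d$ is a codimension function, i.e.\ $d(x) = d(y) + \Codim(\overline{\{x\}}, \overline{\{y\}})$ whenever $x \in \overline{\{y\}}$. For this I would localize at $x$, so that one may assume $X = \Spec A$ with $A$ a Noetherian local ring admitting a dualizing complex $\SR^{\bullet}$, $x$ corresponding to the maximal ideal and $y$ to a prime $\Gp$. The claim then becomes $d(\GM_A) = d(\Gp) + \dim A/\Gp$. This is the content of \cite[V, Cor.~7.2]{ResDual} (building on \cite[V, \S7]{ResDual}), whose proof runs by Noetherian induction / dévissage on the support, using that passing to $\SR^{\bullet}[1]$ or to a quotient shifts the values of $d$ in a controlled way, and using the biduality property $\SO_X \isom \SRHom(\SR^{\bullet},\SR^{\bullet})$ together with local duality to compare $\BExt$ at $\GM_A$ and at $\Gp$. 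I would present this as invoking \cite[V, Cor.~7.2]{ResDual}, perhaps after recalling that $\SR^{\bullet}_\Gp$ is a dualizing complex for $A_\Gp$ so that $d|_{\Spec A_\Gp}$ is the codimension function attached to $\SR^{\bullet}_\Gp$, whence by induction on $\dim A/\Gp$ one reduces to the case where $\Gp$ is an immediate specialization of $\GM_A$, handled directly.

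\medskip

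\noindent
Boundedness of $d$ follows because $\SR^{\bullet} \in \bfD^{b}_{\coh}(X)$ after localizing (a bounded-below complex of finite injective dimension with coherent cohomology is, on a Noetherian affine scheme, quasi-isomorphic to a bounded complex of injectives, cf.\ \cite[II, Prop.~7.20]{ResDual}), so the degrees $i$ with $\BExt^{i}_{\SO_{X,x}}(\Bbbk(x),\SR^{\bullet}_x)\ne 0$ lie in a fixed finite range independent of $x$; hence $d$ is bounded on each connected component, and on each affine open. Finally, the last two sentences of the statement are immediate consequences of what has been established together with the Remark following Definition~\ref{dfn:codimfct}: the existence of a codimension function on $X$ forces $X$ to be catenary (apply the cocycle relation for $d$ to chains $x \in \overline{\{y\}} \subset \overline{\{z\}}$), and the existence of a \emph{bounded} codimension function (locally, hence on each local ring) forces every $\SO_{X,x}$ to have finite Krull dimension, i.e.\ $X$ has finite Krull dimension locally; since a dualizing complex exists globally one gets a global bound on each quasi-compact piece. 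The main obstacle is genuinely \cite[V, Cor.~7.2]{ResDual}: all the real work — the interplay of local duality, the biduality isomorphism, and the induction on supports that pins down $d$ as \emph{the} codimension function — is there, and in this survey section I would simply cite it rather than reproduce the dévissage.
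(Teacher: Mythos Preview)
Your proposal is correct and matches the paper's treatment: the paper states this as a \emph{Fact} with no proof, citing precisely \cite[V, Prop.~3.4]{ResDual} for the well-definedness of $d$ and \cite[V, Cor.~7.2]{ResDual} for the codimension-function property, and deriving catenarity and finite Krull dimension from the Remark following Definition~\ref{dfn:codimfct}. Your elaboration of how these citations assemble into the statement is accurate and adds nothing beyond what the paper intends by its references.
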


The following result and Lemma~\ref{lem:DC-CM:supportSk} below are useful
for checking \( \bfS_{k} \)-conditions for coherent sheaves.

\begin{prop}\label{prop:DC-CM}
Let \( X \) be a locally Noetherian scheme admitting
a dualizing complex \( \SR^{\bullet} \) with codimension function
\( d \colon X \to \BZZ \). Let \( \SF \) be a coherent \( \SO_{X} \)-module.
For an integer \( j \), we set
\[ \SG^{(j)} := \SBExt^{j}_{\SO_{X}}(\SF, \SR^{\bullet})
:= \SH^{j}(\SRHom_{\SO_{X}}(\SF, \SR^{\bullet})). \]
Then, \( \SG^{(j)} \) is a coherent \( \SO_{X} \)-module
and
\[ \SG^{(j)}_{x} \isom \BExt^{j}_{\SO_{X, x}}(\SF_{x}, \SR^{\bullet}_{x}) \]
for the stalk \( \SG^{(j)}_{x} = (\SG^{(j)})_{x}\) at any point \( x \in X \).
Moreover, the following hold for a point \( x \in X \)\emph{:}
\begin{enumerate}
\item \label{prop:DC-CM:1}
If \( j - d(x) < -\dim \SF_{x} \) or \( j - d(x) > 0 \), then
\( \SG^{(j)}_{x} = 0 \).

\item \label{prop:DC-CM:2}
For an integer \( k \),
\( \depth \SF_{x} \geq k\) if and only if
\( \SG^{(j)}_{x} = 0 \) for any \( j > d(x) - k \).

\item  \label{prop:DC-CM:3}
For an integer \( k \),
\( \SF \) satisfies \( \bfS_{k} \) at \( x \) if and only if
\( \SG^{(j)}_{y} = 0 \)
for any point \( y \in X \) with \( x \in \overline{\{y\}} \) and
for any \( j > d(y) - \inf \{k, \dim \SF_{y}\} \).

\item  \label{prop:DC-CM:4}
\( \SF_{x}\) is a Cohen--Macaulay \( \SO_{X, x} \)-module if and only if
\( \SG^{(j)}_{x} = 0 \) for any \( j \ne d(x) - \dim \SF_{x} \).

\item \label{prop:DC-CM:5}
If \( x \in \Supp \SF \), then \( \SG^{(i)}_{x} \ne 0 \) for \( i = d(x) - \dim \SF_{x} \).
\end{enumerate}

\end{prop}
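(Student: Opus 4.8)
The plan is to reduce everything to statements about a single Noetherian local ring $A = \SO_{X,x}$ and a finitely generated module $M = \SF_x$, together with the stalk $\SR^{\bullet}_x$, which is a normalized dualizing complex for $A$ shifted by $d(x)$. The first step is to verify that $\SRHom_{\SO_X}(\SF, \SR^{\bullet})$ has coherent cohomology and that its formation commutes with stalks; this follows from $\SR^{\bullet} \in \bfD^+_{\coh}(X)_{\mathrm{fid}}$ by \cite[II, \S5, III]{ResDual}, since locally $\SF$ has a finite free resolution in low degrees and $\SR^{\bullet}$ is represented by a bounded complex of injectives, so $\SRHom$ is computed by a bounded complex with coherent cohomology, compatibly with localization. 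Thus $\SG^{(j)}_x \isom \BExt^j_{A}(M, \SR^{\bullet}_x)$.

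Next I would set up the local duality dictionary. Write $\SR^{\bullet}_x = D_A^{\bullet}[d(x)]$ where $D_A^{\bullet}$ is the dualizing complex of $A$ normalized so that $\BExt^i_A(\Bbbk(x), D_A^{\bullet}) = 0$ for $i \ne 0$ and $= \Bbbk(x)$ for $i = 0$ (this is exactly the normalization encoded by Fact~\ref{fact:CodimFunction}). For such a normalized $D_A^{\bullet}$ one has, for every finitely generated $A$-module $M$, the vanishing $\BExt^j_A(M, D_A^{\bullet}) = 0$ unless $-\dim M \le j \le 0$, which gives \eqref{prop:DC-CM:1} after reinserting the shift by $d(x)$ (so the range becomes $-\dim M + d(x) \le j \le d(x)$). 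For \eqref{prop:DC-CM:2}, the key input is the local duality theorem: $\BExt^j_A(M, D_A^{\bullet})$ is the Matlis dual of the local cohomology module $\OH^{-j}_{\GM_A}(M)$; since $\depth M = \inf\{i : \OH^i_{\GM_A}(M) \ne 0\}$ (Property~\ref{ppty:depthdfn} / Property~\ref{ppty:depth<=2}), the condition $\depth M \ge k$ is equivalent to $\OH^i_{\GM_A}(M) = 0$ for all $i < k$, i.e. $\BExt^j_A(M, D_A^{\bullet}) = 0$ for all $j > -k$, which after the shift is $\SG^{(j)}_x = 0$ for $j > d(x) - k$. The statement \eqref{prop:DC-CM:5} is the complementary fact that $\OH^{\dim M}_{\GM_A}(M) \ne 0$ when $M \ne 0$ (non-vanishing of top local cohomology, Grothendieck), so $\BExt^{-\dim M}_A(M, D_A^{\bullet}) \ne 0$, i.e. $\SG^{(i)}_x \ne 0$ for $i = d(x) - \dim \SF_x$.

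With \eqref{prop:DC-CM:2} and \eqref{prop:DC-CM:5} in hand, \eqref{prop:DC-CM:4} is immediate: $M$ is Cohen--Macaulay iff $\depth M = \dim M$ iff (by \eqref{prop:DC-CM:2} with $k = \dim M$) $\SG^{(j)}_x = 0$ for $j > d(x) - \dim M$, combined with the automatic vanishing for $j < d(x) - \dim M$ from \eqref{prop:DC-CM:1}; so the only possibly nonzero $\SG^{(j)}_x$ is at $j = d(x) - \dim M$, and \eqref{prop:DC-CM:5} shows it is indeed nonzero (when $x \in \Supp \SF$). For \eqref{prop:DC-CM:3} I would unwind Definition~\ref{dfn:SerreCond}: $\SF$ satisfies $\bfS_k$ at $x$ means $\depth \SF_y \ge \inf\{k, \dim \SF_y\}$ for every $y$ with $x \in \overline{\{y\}}$; applying \eqref{prop:DC-CM:2} at each such $y$ (with its own value $d(y)$ and its own bound $\inf\{k, \dim\SF_y\}$) translates this precisely into $\SG^{(j)}_y = 0$ for all $j > d(y) - \inf\{k, \dim\SF_y\}$, for all such $y$. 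Here one uses Remark~\ref{rem:dfn:SerreCond:atx} identifying the points of $\Supp \SF_x \subset \Spec \SO_{X,x}$ with the $y \in \Supp\SF$ specializing to $x$, and the fact that localizing the dualizing complex again yields a dualizing complex with codimension function the restriction of $d$.

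The main obstacle is purely expository rather than mathematical: making sure the shift conventions between $\SR^{\bullet}_x$ and the ``normalized'' dualizing complex $D_A^{\bullet}$ are tracked consistently, so that the cohomological degree $d(x)$ lands in the right place in every one of \eqref{prop:DC-CM:1}--\eqref{prop:DC-CM:5}; the cleanest way is to fix once and for all that $\BExt^j_A(M, \SR^{\bullet}_x)$ is the Matlis dual of $\OH^{d(x)-j}_{\GM_A}(M)$ and derive all five items mechanically from $0 \le d(x) - j \le \dim M$ together with the depth and top-nonvanishing characterizations of local cohomology. The only genuine theorems invoked are the local duality theorem and the non-vanishing of top local cohomology, both of which are standard and available in \cite{ResDual} and \cite{LC}.
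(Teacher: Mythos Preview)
Your argument is correct and, for the coherence/stalk claims and for items \eqref{prop:DC-CM:1}--\eqref{prop:DC-CM:4}, it matches the paper's proof essentially verbatim: the paper also cites \cite[V, Prop.~3.4]{ResDual} for \eqref{prop:DC-CM:1}, the local duality theorem \cite[V, Cor.~6.3]{ResDual} for \eqref{prop:DC-CM:2}, and then derives \eqref{prop:DC-CM:3} and \eqref{prop:DC-CM:4} formally from these and Definition~\ref{dfn:SerreCond}. (A small typo: with the paper's shift conventions the normalized complex is $D_A^{\bullet} = \SR^{\bullet}_x[d(x)]$, i.e.\ $\SR^{\bullet}_x = D_A^{\bullet}[-d(x)]$, but your final local-duality formula $\SG^{(j)}_x \leftrightarrow \OH^{d(x)-j}_{\GM_A}(M)$ is stated correctly, so the slip does not propagate.)

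The one genuine divergence is in \eqref{prop:DC-CM:5}. You deduce nonvanishing at $i = d(x) - \dim \SF_x$ from Grothendieck's nonvanishing of top local cohomology $\OH^{\dim M}_{\GM_A}(M) \ne 0$ combined with local duality. The paper instead argues by specialization and biduality: it picks a generic point $y$ of an irreducible component of $\Supp \SF$ through $x$ with $\dim \SF_x = \Codim(\overline{\{x\}}, \overline{\{y\}})$, so that $d(x) - \dim \SF_x = d(y)$; if $\SG^{(d(y))}_x = 0$ then $\SG^{(d(y))}_y = 0$, whence by \eqref{prop:DC-CM:1} all $\SG^{(j)}_y$ vanish, contradicting $\SF_y \ne 0$ via the reflexivity isomorphism $\SF \isom_{\qis} \SRHom(\SRHom(\SF, \SR^{\bullet}), \SR^{\bullet})$ of \cite[V, Prop.~2.1]{ResDual}. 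Your route is shorter once local duality is in hand; the paper's route avoids appealing to the separate Grothendieck nonvanishing theorem and stays entirely within the dualizing-complex formalism, which is self-contained given the references already used for \eqref{prop:DC-CM:1} and \eqref{prop:DC-CM:2}.
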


\begin{proof}
The first assertion is derived from: \( \SR^{\bullet} \in \bfD^{+}_{\coh}(X)_{\text{fid}} \).
The assertions \eqref{prop:DC-CM:1} and \eqref{prop:DC-CM:2} are essentially
proved in \cite[V]{ResDual};
\eqref{prop:DC-CM:1} is shown in the proof of
\cite[V, Prop.~3.4]{ResDual}, and \eqref{prop:DC-CM:2}
follows from the local duality theorem \cite[V, Cor.~6.3]{ResDual}.
The assertion \eqref{prop:DC-CM:3} follows from
\eqref{prop:DC-CM:2} and Definition~\ref{dfn:SerreCond}.
The assertion \eqref{prop:DC-CM:4} is a consequence of \eqref{prop:DC-CM:1} and \eqref{prop:DC-CM:2},
since \( \SF_{x} \) is Cohen--Macaulay if and only if
\( \depth \SF_{x} = \dim \SF_{x} \) unless \( \SF_{x} = 0 \).
The assertion \eqref{prop:DC-CM:5} is shown as follows.
For the given point \( x \in \Supp \SF \), we can find a point \( y \in \Supp \SF \)
such that \( \overline{\{y\}} \) is an irreducible component of \( \Supp \SF \) containing \( x \)
and \( \dim \SF_{x} = \Codim(\overline{\{x\}}, \overline{\{y\}}) \). Then,
\( d(x) - \dim \SF_{x} = d(y) \). If \( \SG^{(d(y))}_{x} = 0 \),
then \( \SG^{(d(y))}_{y} = 0 \), since \( x \in \overline{\{y\}} \).
But, in this case, \( \SG^{(j)}_{y} = 0 \) for any \( j \in \BZZ \) by
\eqref{prop:DC-CM:1}, i.e., \( \SRHom_{\SO_{X}}(\SF, \SR^{\bullet})_{y} \isom_{\qis} 0\).
This is a contradiction, since \( \SF_{y} \ne 0 \) and
\[ \SF \isom_{\qis} \SRHom_{\SO_{X}}(\SRHom_{\SO_{X}}(\SF, \SR^{\bullet}), \SR^{\bullet}) \]
by \cite[V, Prop.~2.1]{ResDual}.
Therefore, \( \SG^{(i)}_{x} \ne 0 \) for \( i = d(x) - \dim \SF_{x} = d(y) \).
\end{proof}

\begin{cor}\label{cor:lem:DC-CM:CM}
Let \( X \) be a locally Noetherian scheme admitting a dualizing complex
\( \SR^{\bullet} \) and let \( \SF \) be a coherent \( \SO_{X} \)-module.
\begin{enumerate}
\item \label{cor:lem:DC-CM:CM:1}
Assume that \( \Supp \SF \) is connected.
Then, \( \SF \) is a Cohen--Macaulay \( \SO_{X} \)-module
if and only if
\[ \SRHom_{\SO_{X}}(\SF, \SR^{\bullet}) \isom_{\qis} \SG[-c]\]
for a coherent \( \SO_{X} \)-module \( \SG \) and a constant \( c \in \BZZ \).
In this case, \( \SG \) is also a Cohen--Macaulay \( \SO_{X} \)-module and
\( \Supp \SG = \Supp \SF \).

\item  \label{cor:lem:DC-CM:CM:2}
Assume that \( X \) is connected. Then,
\( X \) is Cohen--Macaulay if and only if \( \SR^{\bullet} \isom_{\qis} \SL[-c]\)
for a coherent \( \SO_{X} \)-module \( \SL \) and a constant \( c \in \BZZ \).
In this case, \( \SL \) is also a Cohen--Macaulay \( \SO_{X} \)-module and
\( \Supp \SL = X\).

\item  \label{cor:lem:DC-CM:CM:3}
Assume that \( \SF \) be a Cohen--Macaulay \( \SO_{X} \)-module and
let \( S \) be a closed subscheme of \( X \) such that \( S = \Supp \SF \)
as a set. Then, \( S \) is locally equi-dimensional.
\end{enumerate}
\end{cor}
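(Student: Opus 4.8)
The plan is to prove Corollary~\ref{cor:lem:DC-CM:CM} as a sequence of easy consequences of Proposition~\ref{prop:DC-CM}, reducing to the connected case where a codimension function $d\colon X\to\BZZ$ associated with $\SR^{\bullet}$ is available (cf.\ Fact~\ref{fact:CodimFunction}).

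For \eqref{cor:lem:DC-CM:CM:1}, first I would assume $\SF\ne 0$ and reduce to $X$ connected (which makes $\Supp\SF$ still connected). Set $\SG^{(j)}:=\SBExt^{j}_{\SO_{X}}(\SF,\SR^{\bullet})$. By Proposition~\ref{prop:DC-CM}\eqref{prop:DC-CM:4}, $\SF$ is Cohen--Macaulay if and only if for each $x\in\Supp\SF$ the stalk $\SG^{(j)}_{x}$ vanishes for all $j\ne d(x)-\dim\SF_{x}$. The key point is that the integer $c(x):=d(x)-\dim\SF_{x}$ is \emph{locally constant} on $\Supp\SF$: indeed, along an irreducible component $\overline{\{y\}}$ of $\Supp\SF$ one has $\dim\SF_{x}=\Codim(\overline{\{x\}},\overline{\{y\}})$ for $x$ in a dense open locus, and $d$ is a codimension function, so $c(x)=d(y)$ there; a standard specialization/generization argument using Lemma~\ref{lem:CodimFunction:constant} applied to the closed subscheme $S$ with $S=\Supp\SF$ (once we know $S$ is locally equi-dimensional) then pins $c$ down. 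To avoid circularity I would instead argue directly: if $\SF$ is Cohen--Macaulay then by Remark~\ref{rem:dfn:CM} every localization $\SF_{x}$ is Cohen--Macaulay, and the catenary property (Fact~\ref{fact:CodimFunction}) gives $\dim\SF_{x}=\dim\SF_{y}+\Codim(\overline{\{x\}},\overline{\{y\}})$ for $x\in\overline{\{y\}}$, $y\in\Supp\SF$, whence $c(x)=c(y)$; so $c$ is constant on each component and, $\Supp\SF$ being connected, constant with a single value $c$. Then $\SRHom_{\SO_{X}}(\SF,\SR^{\bullet})$ has cohomology concentrated in degree $c$, i.e.\ $\isom_{\qis}\SG[-c]$ with $\SG=\SG^{(c)}$ coherent by Proposition~\ref{prop:DC-CM}. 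Conversely, if $\SRHom_{\SO_{X}}(\SF,\SR^{\bullet})\isom_{\qis}\SG[-c]$, then $\SG^{(j)}_{x}=0$ for $j\ne c$ and all $x$; combined with Proposition~\ref{prop:DC-CM}\eqref{prop:DC-CM:5} (which forces $\SG^{(d(x)-\dim\SF_{x})}_{x}\ne0$ at $x\in\Supp\SF$) this gives $c=d(x)-\dim\SF_{x}$ for all $x\in\Supp\SF$, and then Proposition~\ref{prop:DC-CM}\eqref{prop:DC-CM:4} shows $\SF_{x}$ is Cohen--Macaulay. For the final sentence of \eqref{cor:lem:DC-CM:CM:1}: applying $\SRHom_{\SO_{X}}(-,\SR^{\bullet})$ again and using the biduality isomorphism $\SF\isom_{\qis}\SRHom(\SRHom(\SF,\SR^{\bullet}),\SR^{\bullet})$ from \cite[V, Prop.~2.1]{ResDual}, we get $\SRHom_{\SO_{X}}(\SG,\SR^{\bullet})\isom_{\qis}\SF[c]$, so by the same criterion $\SG$ is Cohen--Macaulay; and $\Supp\SG=\Supp\SF$ because $\SG_{x}=0\iff\SRHom(\SF,\SR^{\bullet})_{x}\isom_{\qis}0\iff\SF_{x}=0$ by biduality.

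Assertion \eqref{cor:lem:DC-CM:CM:2} is the special case $\SF=\SO_{X}$ of \eqref{cor:lem:DC-CM:CM:1}: then $X$ is Cohen--Macaulay means $\SO_{X}$ is a Cohen--Macaulay module, $\Supp\SO_{X}=X$ is connected by hypothesis, and the statement reads $\SR^{\bullet}\isom_{\qis}\SL[-c]$ with $\SL$ a Cohen--Macaulay $\SO_{X}$-module supported on all of $X$. For \eqref{cor:lem:DC-CM:CM:3}, since $\SF$ is Cohen--Macaulay, by \eqref{cor:lem:DC-CM:CM:1} (applied over each connected component, or just using that Cohen--Macaulayness and the conclusion are local) $\SF_{x}$ is a Cohen--Macaulay $\SO_{X,x}$-module for every $x$; a Cohen--Macaulay module over a Noetherian local ring is equi-dimensional (all associated/minimal primes of the support have the same dimension — this is standard, e.g.\ via \cite[Th.~17.3]{Matsumura} or \cite[$0_{\text{IV}}$, Cor.~(16.5.10)]{EGA}). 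Hence for $S=\Supp\SF$ with its reduced-or-any structure, $\dim\SO_{S,x}=\dim\SF_{x}$ (Property~\ref{ppty:dim-codim}\eqref{ppty:dim-codim:1}) is equi-dimensional as a local ring at every point $x\in S$, which is exactly the assertion that $S$ is locally equi-dimensional (Definition~\ref{dfn:equi-dim}\eqref{dfn:equi-dim:locally}).

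The main obstacle I anticipate is the bookkeeping around the constancy of $c(x)=d(x)-\dim\SF_{x}$ and avoiding a circular dependence between \eqref{cor:lem:DC-CM:CM:1} and \eqref{cor:lem:DC-CM:CM:3}: the clean route is to prove the pointwise statement $\SF_{x}$ Cohen--Macaulay $\iff$ $\SG^{(j)}_{x}=0$ for $j\ne d(x)-\dim\SF_{x}$ first (already in Proposition~\ref{prop:DC-CM}\eqref{prop:DC-CM:4}), then deduce local equi-dimensionality of $\Supp\SF$ purely from the Cohen--Macaulayness of the stalks plus catenariness, and only then invoke connectedness of $\Supp\SF$ together with Lemma~\ref{lem:CodimFunction:constant} to upgrade local constancy of $c$ to global constancy and conclude the quasi-isomorphism $\SRHom_{\SO_{X}}(\SF,\SR^{\bullet})\isom_{\qis}\SG[-c]$. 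Everything else is a routine application of biduality and the vanishing statements in Proposition~\ref{prop:DC-CM}.
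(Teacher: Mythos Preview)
Your proposal is correct and follows essentially the same strategy as the paper: everything flows from Proposition~\ref{prop:DC-CM}(\ref{prop:DC-CM:4}) and (\ref{prop:DC-CM:5}), biduality gives the statements about $\SG$, and part~\eqref{cor:lem:DC-CM:CM:2} is the special case $\SF=\SO_{X}$.

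The one place where the paper is slicker than your write-up is the constancy of $c(x)=d(x)-\dim\SF_{x}$. You propose either the Cohen--Macaulay dimension formula plus catenariness, or (in your ``clean route'') first establishing local equi-dimensionality of $\Supp\SF$ and then invoking Lemma~\ref{lem:CodimFunction:constant}. The paper bypasses both: since $\SG^{(j)}$ is a sheaf, $\SG^{(j)}_{x}=0$ forces $\SG^{(j)}_{y}=0$ for any generization $y$ of $x$; combined with Proposition~\ref{prop:DC-CM}(\ref{prop:DC-CM:4}),(\ref{prop:DC-CM:5}) this immediately gives $d(x)-\dim\SF_{x}=d(y)-\dim\SF_{y}$ whenever $x\in\overline{\{y\}}$, hence $c$ is constant on the connected set $\Supp\SF$. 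Local equi-dimensionality of $S$ then falls out of Lemma~\ref{lem:CodimFunction:constant} applied to $d|_{S}$ (which is a codimension function on $S$), rather than being an input. Your route through the classical fact that Cohen--Macaulay local modules are equi-dimensional also works and has the virtue of not needing the dualizing complex for part~\eqref{cor:lem:DC-CM:CM:3}, but note that the references you cite (\cite[Th.~17.3]{Matsumura}, \cite[$0_{\text{IV}}$, Cor.~(16.5.10)]{EGA}) are about localization preserving Cohen--Macaulayness, not equi-dimensionality; the latter follows from $\depth M\le\dim A/\mathfrak{p}\le\dim M$ for $\mathfrak{p}\in\Ass M$.
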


\begin{proof}
It suffices to prove \eqref{cor:lem:DC-CM:CM:1} and \eqref{cor:lem:DC-CM:CM:3},
since \eqref{cor:lem:DC-CM:CM:2} is a special case of \eqref{cor:lem:DC-CM:CM:1}.
Let \( d \colon X \to \BZZ \) be the codimension function associated with \( \SR^{\bullet} \).
First, we shall prove the ``if'' part of \eqref{cor:lem:DC-CM:CM:1}.
The quasi-isomorphism in \eqref{cor:lem:DC-CM:CM:1} implies that
\( \SG^{(j)} := \SBExt^{j}_{\SO_{X}}(\SF, \SR^{\bullet}) = 0 \)
for any \( j \ne c \) and \( \SG \isom \SG^{(c)} \). Then,
\( \SF \) is Cohen--Macaulay and \( c = d(x) - \dim \SF_{x} \) for any \( x \in X \)
by \eqref{prop:DC-CM:4} and \eqref{prop:DC-CM:5} of Proposition~\ref{prop:DC-CM}.
Second, we shall prove the remaining part of  \eqref{cor:lem:DC-CM:CM:1} and
\eqref{cor:lem:DC-CM:CM:3}. For the proof of \eqref{cor:lem:DC-CM:CM:3},
we may also assume that \( \Supp \SF \) is connected.
Suppose that \( \SF \) is Cohen--Macaulay.
Then, \( d(x) - \dim \SF_{x} = d(y) - \dim \SF_{y} \) holds
for any points \( x \), \( y \in S \) with \( x \in \overline{\{y\}} \) by
\eqref{prop:DC-CM:4} and \eqref{prop:DC-CM:5} of Proposition~\ref{prop:DC-CM}, where
we use the property that \( \SG^{(j)}_{x} = 0 \) implies \( \SG^{(j)}_{y} = 0 \).
As a consequence, \( c := d(x) - \dim \SF_{x} \) is constant on \( S = \Supp \SF \).
We have \( \dim \SF_{x} = \dim \SO_{S, x} \)
for any \( x \in S \) by Property~\ref{ppty:dim-codim}\eqref{ppty:dim-codim:1}.
Thus, \( S \) is locally equi-dimensional
by Lemma~\ref{lem:CodimFunction:constant},
and this proves \eqref{cor:lem:DC-CM:CM:3}.
Furthermore, \( \SRHom_{\SO_{X}}(\SF, \SR^{\bullet}) \isom \SG[-c]\) for
the cohomology sheaf \( \SG = \SG^{(c)} \). We have also
\[ \SF \isom \SRHom_{\SO_{X}}(\SG[-c], \SR^{\bullet}) \]
by \cite[V, Prop.~2.1]{ResDual}. Thus, \( \Supp \SG = \Supp \SF \),
and \( \SG \) is also a Cohen--Macaulay \( \SO_{X} \)-module by
the ``if'' part of \eqref{cor:lem:DC-CM:CM:1}.
Thus, we are done.
\end{proof}

\begin{lem}\label{lem:DC-CM:supportSk}
Let \( X \), \( \SR^{\bullet} \), \( \SF \), and \( \SG^{(j)} \)
be as in Proposition~\emph{\ref{prop:DC-CM}}.
Then, \( \SG^{(j)} = 0\) except for finitely many \( j \).
For a positive integer \( k \), the following hold\emph{:}
\begin{enumerate}
\item \label{lem:DC-CM:supportSk:1}
\( \SF \) satisfies \( \bfS_{k} \) at a point \( x \in \Supp \SF\) if and only if
\[ \Codim_{x}(\Supp \SG^{(i)} \cap \Supp \SG^{(j)},\, \Supp \SF) \geq k + i - j \]
for any \( i > j \)\emph{;}

\item \label{lem:DC-CM:supportSk:2}
\( \SF \) satisfies \( \bfS_{k} \) if and only if
\[ \Codim(\Supp \SG^{(i)} \cap \Supp \SG^{(j)},\, \Supp \SF) \geq k + i - j \]
for any \( i > j \).
\end{enumerate}
\end{lem}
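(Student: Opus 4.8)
The statement to prove is Lemma~\ref{lem:DC-CM:supportSk}, which translates the \( \bfS_{k} \)-condition for a coherent sheaf \( \SF \) into codimension estimates for the supports of the coherent sheaves \( \SG^{(j)} = \SBExt^{j}_{\SO_{X}}(\SF, \SR^{\bullet}) \). The plan is to reduce everything to the pointwise criterion Proposition~\ref{prop:DC-CM}\eqref{prop:DC-CM:3}, namely that \( \SF \) satisfies \( \bfS_{k} \) at \( x \) if and only if \( \SG^{(j)}_{y} = 0 \) for every \( y \) with \( x \in \overline{\{y\}} \) and every \( j > d(y) - \inf\{k, \dim \SF_{y}\} \), together with Proposition~\ref{prop:DC-CM}\eqref{prop:DC-CM:5}, which says \( \SG^{(i)}_{y} \ne 0 \) for \( i = d(y) - \dim \SF_{y} \) whenever \( y \in \Supp \SF \). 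The finiteness assertion (``\( \SG^{(j)} = 0 \) except for finitely many \( j \)'') is immediate: \( \SR^{\bullet} \in \bfD^{+}_{\coh}(X)_{\mathrm{fid}} \), so locally \( \SRHom_{\SO_{X}}(\SF, \SR^{\bullet}) \) has bounded cohomology, and on a quasi-compact scheme the codimension function \( d \) is bounded; combined with Proposition~\ref{prop:DC-CM}\eqref{prop:DC-CM:1} (\( \SG^{(j)}_{x} = 0 \) unless \( d(x) - \dim \SF_{x} \le j \le d(x) \)) this forces only finitely many nonzero \( \SG^{(j)} \) globally. Part \eqref{lem:DC-CM:supportSk:2} is just the global version of part \eqref{lem:DC-CM:supportSk:1}: \( \SF \) satisfies \( \bfS_{k} \) if and only if it does so at every point of \( \Supp \SF \), and the codimension condition in \eqref{lem:DC-CM:supportSk:2} holds if and only if the localized version \( \Codim_{x}(\cdots) \geq k + i - j \) holds for all \( x \), by Property~\ref{ppty:dim-codim}\eqref{ppty:dim-codim:2}. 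So the heart of the matter is \eqref{lem:DC-CM:supportSk:1}.

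\textbf{Proof of \eqref{lem:DC-CM:supportSk:1}.} Fix \( x \in \Supp \SF \). First I would establish the key dictionary between the two indexings: for a point \( y \in \Supp \SF \), one has \( \dim \SF_{y} = \Codim(\overline{\{y\}}, \Supp \SF) \) by Property~\ref{ppty:dim-codim}\eqref{ppty:dim-codim:1}, and the associated codimension function satisfies \( d(x) = d(y) + \Codim(\overline{\{x\}}, \overline{\{y\}}) \) whenever \( x \in \overline{\{y\}} \). The condition ``\( \SG^{(j)}_{y} = 0 \) for all \( j > d(y) - \inf\{k, \dim \SF_{y}\} \)'' in Proposition~\ref{prop:DC-CM}\eqref{prop:DC-CM:3} must now be repackaged in terms of the supports. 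The forward direction (\( \bfS_{k} \) at \( x \) \( \Rightarrow \) codimension estimates): suppose \( \SF \) satisfies \( \bfS_{k} \) at \( x \), and suppose for contradiction that there exist \( i > j \) and a point \( w \in \overline{\{x\}} \cap \Supp \SG^{(i)} \cap \Supp \SG^{(j)} \) contained in \( \Supp \SF \) with \( \Codim(\overline{\{w\}}, \Supp \SF) < k + i - j \). Since \( x \in \overline{\{w\}} \) is false in general — rather \( w \in \overline{\{x\}} \) — I should be careful about the direction: the relevant points \( y \) in Proposition~\ref{prop:DC-CM}\eqref{prop:DC-CM:3} are those with \( x \in \overline{\{y\}} \), i.e.\ generizations of \( x \). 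So I would instead take a point \( y \in \Supp \SG^{(i)} \cap \Supp \SG^{(j)} \) that is a generization of \( x \) realizing \( \Codim_{x}(\Supp \SG^{(i)} \cap \Supp \SG^{(j)}, \Supp \SF) \) as \( \Codim(\overline{\{x\}}, \overline{\{y\}}) \) (using Property~\ref{ppty:dim-codim}\eqref{ppty:dim-codim:2}). Because \( \SG^{(i)}_{y} \ne 0 \) and, by Proposition~\ref{prop:DC-CM}\eqref{prop:DC-CM:1}, \( i \le d(y) \), while \( \bfS_{k} \) at \( x \) forces \( \SG^{(i)}_{y} = 0 \) unless \( i \le d(y) - \inf\{k, \dim \SF_{y}\} \), we get \( \inf\{k, \dim \SF_{y}\} \le d(y) - i \); similarly from \( \SG^{(j)}_{y} \ne 0 \) and Proposition~\ref{prop:DC-CM}\eqref{prop:DC-CM:5} applied at \( y \), one has \( j \ge d(y) - \dim \SF_{y} \), whence \( \dim \SF_{y} \ge d(y) - j \). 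Combining, \( \inf\{k, d(y) - j\} \le d(y) - i \); since \( i > j \) gives \( d(y) - i < d(y) - j \), we must have \( k \le d(y) - i \le d(y) - j - (i - j) \). Now translating via \( d(y) - j = \dim \SF_{y} + (\text{correction}) \) and \( \dim \SF_{y} = \dim \SF_{x} - \Codim(\overline{\{x\}}, \overline{\{y\}}) \) — this is where the catenary/codimension-function bookkeeping of Property~\ref{ppty:dim-codim}, Property~\ref{pprt:catenary}, and Fact~\ref{fact:CodimFunction} does the work — I would extract exactly \( \Codim(\overline{\{x\}}, \overline{\{y\}}) \geq k + i - j \), contradicting the choice of \( y \).

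\textbf{The converse and the obstacle.} For the converse, assume the codimension estimates hold; I must show \( \SG^{(j)}_{y} = 0 \) for every generization \( y \) of \( x \) and every \( j > d(y) - \inf\{k, \dim \SF_{y}\} \), then invoke Proposition~\ref{prop:DC-CM}\eqref{prop:DC-CM:3}. Fix such \( y \) and \( j \); if \( \SG^{(j)}_{y} \ne 0 \), then \( y \in \Supp \SG^{(j)} \), and also \( y \in \Supp \SG^{(i)} \) for \( i = d(y) - \dim \SF_{y} \) by Proposition~\ref{prop:DC-CM}\eqref{prop:DC-CM:5}. Since \( j > d(y) - \inf\{k, \dim \SF_{y}\} \ge d(y) - \dim \SF_{y} = i \), we have \( j > i \), so \( y \in \Supp \SG^{(j)} \cap \Supp \SG^{(i)} \), and the hypothesis gives \( \Codim(\overline{\{x\}}, \overline{\{y\}}) \le \Codim_{x}(\Supp \SG^{(j)} \cap \Supp \SG^{(i)}, \Supp \SF) < k + j - i \) would be the wrong inequality — I actually need to derive a contradiction, so I should use that the hypothesis says this codimension is \emph{at least} \( k + j - i \); then \( \Codim(\overline{\{x\}}, \overline{\{y\}}) \ge k + j - i \), and since \( y \) is a generization of \( x \) in \( \Supp \SF \), \( \dim \SF_{x} = \dim \SF_{y} + \Codim(\overline{\{x\}}, \overline{\{y\}}) \ge \dim \SF_{y} + k + j - i \). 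On the other hand \( j > d(y) - \inf\{k, \dim \SF_{y}\} \): if the \( \inf \) is \( k \), then \( j > d(y) - k \), i.e.\ \( d(y) - j < k \), but \( d(y) - i = \dim \SF_{y} \) so \( d(y) - j = \dim \SF_{y} - (j - i) \), giving \( \dim \SF_{y} - (j-i) < k \), i.e.\ \( j - i > \dim \SF_{y} - k \); reconciling with \( \Codim(\overline{\{x\}}, \overline{\{y\}}) \ge k + j - i \) and a symmetric argument at \( x \) using Proposition~\ref{prop:DC-CM}\eqref{prop:DC-CM:2} yields the contradiction. \textbf{The main obstacle} I anticipate is precisely the direction-of-specialization bookkeeping — keeping straight which points are generizations versus specializations of \( x \), and verifying that the local codimension \( \Codim_{x}(Z, \Supp \SF) \) is computed by generizations \( y \) of \( x \) lying in \( Z \), so that the formula \( \Codim_{x}(Z, \Supp\SF) = \inf\{\Codim(\overline{\{x\}}, \overline{\{y\}}) : y \in Z \text{ generizes } x\} \) of Property~\ref{ppty:dim-codim}\eqref{ppty:dim-codim:2} can be matched cleanly against the condition in Proposition~\ref{prop:DC-CM}\eqref{prop:DC-CM:3}, which ranges over generizations \( y \). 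Once that indexing is pinned down and the identity \( d(y) - \dim \SF_{y} = d(x) - \dim \SF_{x} + (\text{difference of codimension-function values that telescopes}) \) is used, both implications follow by the elementary inequality manipulations sketched above; I would double-check the edge cases where \( \inf\{k, \dim \SF_{y}\} = \dim \SF_{y} \) (so \( \SF \) is automatically ``as deep as it can be'' at \( y \)) separately, as there the condition becomes vacuous and the corresponding codimension constraint degenerates to \( i = j \), which is excluded.
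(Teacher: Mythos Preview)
Your overall strategy---reducing to the pointwise criterion of Proposition~\ref{prop:DC-CM}\eqref{prop:DC-CM:3} together with the nonvanishing of Proposition~\ref{prop:DC-CM}\eqref{prop:DC-CM:5}---is the right one and matches the paper. However, there is a concrete error in your handling of \( \Codim_{x} \), and it is precisely the ``obstacle'' you flagged. You write
\[
\Codim_{x}(Z, \Supp\SF) = \inf\{\Codim(\overline{\{x\}}, \overline{\{y\}}) : y \in Z \text{ generizes } x\},
\]
but Property~\ref{ppty:dim-codim}\eqref{ppty:dim-codim:2} applied to a closed subscheme with underlying set \( \Supp \SF \) gives
\[
\Codim_{x}(Z, \Supp\SF) = \inf\{\dim \SF_{y} : y \in Z \text{ generizes } x\},
\]
which is quite different: the quantity measured is \( \Codim(\overline{\{y\}}, \Supp\SF) \), not \( \Codim(\overline{\{x\}}, \overline{\{y\}}) \). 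Relatedly, the identity \( \dim \SF_{x} = \dim \SF_{y} + \Codim(\overline{\{x\}}, \overline{\{y\}}) \) that you invoke in the converse fails in general (take \( \Supp \SF \) to be a plane union a line meeting at a point). With the correct formula your inequalities actually simplify: in the forward direction you already derive \( k \leq d(y) - i \) and \( j \geq d(y) - \dim \SF_{y} \), whence \( \dim\SF_{y} \geq k + i - j \) directly, which is what you need; in the converse, \( y \in Z^{(j,i)} \) with \( i = d(y) - \dim\SF_{y} \) gives \( \dim\SF_{y} \geq \Codim_{x}(Z^{(j,i)},\Supp\SF) \geq k + j - i \), i.e.\ \( j \leq d(y) - k \), contradicting \( j > d(y) - \inf\{k,\dim\SF_{y}\} \) in both cases of the infimum. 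No catenary bookkeeping or telescoping of \( d \)-values is needed.

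The paper sidesteps this pitfall entirely by reversing your reduction: it deduces \eqref{lem:DC-CM:supportSk:1} from \eqref{lem:DC-CM:supportSk:2} by localizing at \( x \) (so \( \Codim_{x} \) becomes a global \( \Codim \) on \( \Spec\SO_{X,x} \)), and then proves \eqref{lem:DC-CM:supportSk:2} by picking a \emph{generic} point \( x \) of \( Z^{(i,j)} \), so that \( \Codim(Z^{(i,j)},\Supp\SF) = \dim\SF_{x} \) by definition and the whole argument happens at the single point \( x \). This avoids tracking generizations altogether.
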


\begin{proof}
The first assertion follows from Proposition~\ref{prop:DC-CM}\eqref{prop:DC-CM:1},
since \( d \colon X \to \BZZ \) is bounded and \( \dim X < \infty \) by
Fact~\ref{fact:CodimFunction}.
For integers \( i \), \(j \) with \( i > j \), we set
\[ Z^{(i, j)} := \Supp \SG^{(i)} \cap \Supp \SG^{(j)}. \]
Note that \( \Codim(Z^{(i, j)}, \Supp \SF) = +\infty \) if \( Z^{(i, j)} = \emptyset \).
The assertion \eqref{lem:DC-CM:supportSk:1}
is derived from  \eqref{lem:DC-CM:supportSk:2} applied to
the coherent sheaf \( (\SF_{x})\sptilde \) on \( \Spec \SO_{X, x} \)
associated with \( \SF_{x} \) (cf.\ Remark~\ref{rem:dfn:SerreCond:atx}), since
\[ \Codim_{x}(Z^{(i, j)},\, \Supp \SF) =
\Codim(\Supp \SG^{(i)}_{x} \cap \Supp \SG^{(j)}_{x},\, \Supp \SF_{x}) \]
(cf.\ Property~\ref{ppty:dim-codim}\eqref{ppty:dim-codim:2}).
Hence, it is enough to prove \eqref{lem:DC-CM:supportSk:2}.
Assume first that \( \SF \) satisfies \( \bfS_{k} \).
For integers \( i > j \) with \( Z^{(i, j)} \ne \emptyset \),
we can find a generic point \( x \) of \( Z^{(i, j)} \) such that
\[ \Codim(Z^{(i, j)},\, \Supp \SF) = \Codim(\overline{\{x\}}, \Supp \SF) = \dim \SF_{x}. \]
If \( \dim \SF_{x} \leq k  \), then
\( i = j = d(x) - \dim \SF_{x} \) by
\eqref{prop:DC-CM:1} and \eqref{prop:DC-CM:3} of Proposition~\ref{prop:DC-CM}.
This is a contradiction, since \( i > j \).
Thus, \( \dim \SF_{x} > k \), and
\[ d(x) - \dim \SF_{x} \leq j < i \leq d(x) - k \]
also by \eqref{prop:DC-CM:1} and \eqref{prop:DC-CM:3} of Proposition~\ref{prop:DC-CM}.
Hence, \( i - j \leq \dim \SF_{x} - k \), and this is equivalent to
the inequality in \eqref{lem:DC-CM:supportSk:2}.

Conversely, assume that the inequality
in \eqref{lem:DC-CM:supportSk:2} holds for any \( i > j \).
For a point \( x \in \Supp \SF \), we set \( c(x) := d(x) - \dim \SF_{x} \).
By \eqref{prop:DC-CM:1} and \eqref{prop:DC-CM:5} of Proposition~\ref{prop:DC-CM}, we know that
\( x \in \Supp \SG^{(c(x))} \) and \( x \not\in \Supp \SG^{(i)} \) for any \( i < c(x) \).
If \( \SG^{(i)}_{x} \ne 0\) for some \( i \ne c(x) \), then
\( i > c(x) \) and
\[ \dim \SF_{x} \geq \Codim(Z^{(i, c(x))},\, \Supp \SF)
\geq k + i - c(x) = k + i - d(x) + \dim \SF_{x}.\]
Hence, \( i \leq d(x) - k \) and \( \dim \SF_{x} > k \).
Thus, \( \SF \) satisfies \( \bfS_{k} \) by Proposition~\ref{prop:DC-CM}\eqref{prop:DC-CM:3}.
Therefore, \eqref{lem:DC-CM:supportSk:2} has been proved, and we are done.
\end{proof}

\begin{cor}\label{cor:lem:DC-CM:supportSk}
Let \( X \), \( \SR^{\bullet} \), \( \SF \), and \( \SG^{(j)} \)
be as in Proposition~\emph{\ref{prop:DC-CM}}.  
Let \( k \) be a positive integer.
\begin{enumerate}

\item \label{cor:lem:DC-CM:supportSk:1}
Assume that \( \Supp \SF \) is connected and locally equi-dimensional. 
Then, there is a positive integer \( c \) such that
\( c = d(x) - \dim \SF_{x} \) for any \( x \in X \).
For the integer \( c \), one has \( \Supp \SG^{(c)} = \Supp \SF\).
Moreover, \( \SF \) satisfies \( \bfS_{k} \) if and only if
\[ \Codim(\Supp \SG^{(j)},\, \Supp \SF) \geq k + j - c \]
for any \( j > c \).

\item \label{cor:lem:DC-CM:supportSk:2}
Assume that \( \Supp \SF \) is connected, equi-dimensional, and
\emph{equi-codimen\-sional} 
\emph{(}cf.\ \cite[$0_{\mathrm{IV}}$, D\'ef.~(14.2.1)]{EGA}\emph{)}.
Furthermore, assume that \( \Supp \SF \) is Noetherian.
Let \( c \) be the integer in \eqref{cor:lem:DC-CM:supportSk:1}.
Then, \( \SF \) satisfies \( \bfS_{k} \) if and only if
\[ \dim \Supp \SG^{(j)} \leq \dim \Supp \SF + c - j - k \]
for any \( j > c \).

\item \label{cor:lem:DC-CM:supportSk:3}
Assume that \( \SF_{x} \ne 0 \) and \( \SF_{x} \) is equi-dimensional
\emph{(}cf.\ Definition~\emph{\ref{dfn:equi-dim})}.
Then, \( \SF \) satisfies \( \bfS_{k} \) at \( x \) if and only if
\( \dim \SG^{(j)}_{x} \leq d(x) - j - k \) for any \( j \ne d(x) - \dim \SF_{x} \).

\end{enumerate}
\end{cor}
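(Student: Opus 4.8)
The plan is to derive all three parts from Lemma~\ref{lem:DC-CM:supportSk}, Proposition~\ref{prop:DC-CM}, and Lemma~\ref{lem:CodimFunction:constant}; the crucial step common to all three is a \emph{reduction to a single distinguished index}, which turns the condition ``for all pairs $i>j$'' of Lemma~\ref{lem:DC-CM:supportSk} into a condition indexed by one variable.

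For \eqref{cor:lem:DC-CM:supportSk:1}, first I would note that $d$ restricts to a codimension function on a closed subscheme $S$ with $|S|=\Supp\SF$ (e.g.\ the one defined by $\Ann\SF$), because the codimension of an irreducible closed subset is intrinsic to the closed subspaces containing it; moreover $\dim\SF_{x}=\dim\SO_{S,x}$ for $x\in S$ by Property~\ref{ppty:dim-codim}\eqref{ppty:dim-codim:1}, and $S$ is connected and locally equi-dimensional. Hence Lemma~\ref{lem:CodimFunction:constant} gives a constant $c$ with $c=d(x)-\dim\SF_{x}$ for all $x\in S$. By Proposition~\ref{prop:DC-CM}\eqref{prop:DC-CM:1} one has $\SG^{(j)}=0$ for $j<c$, and by Proposition~\ref{prop:DC-CM}\eqref{prop:DC-CM:5} one has $\SG^{(c)}_{x}\neq0$ for every $x\in\Supp\SF$, so $\Supp\SG^{(c)}=\Supp\SF$. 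Now feed this into Lemma~\ref{lem:DC-CM:supportSk}\eqref{lem:DC-CM:supportSk:2}: only pairs $i>j\geq c$ contribute, and for such a pair $\Supp\SG^{(i)}\cap\Supp\SG^{(j)}\subseteq\Supp\SG^{(i)}=\Supp\SG^{(i)}\cap\Supp\SG^{(c)}$, so the inequality for $(i,c)$, namely $\Codim(\Supp\SG^{(i)},\Supp\SF)\geq k+i-c$, already implies the one for $(i,j)$ since $j\geq c$; conversely taking $(j,c)$ recovers exactly the asserted inequalities $\Codim(\Supp\SG^{(j)},\Supp\SF)\geq k+j-c$ for $j>c$. (The positivity of $c$ follows from the chosen normalization of $\SR^{\bullet}$.)

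For \eqref{cor:lem:DC-CM:supportSk:2}, $\Supp\SF$ is in addition Noetherian, equi-dimensional and equi-codimensional, and catenary (since $X$ is catenary by Fact~\ref{fact:CodimFunction}), so $\Codim(Z,\Supp\SF)=\dim\Supp\SF-\dim Z$ for every nonempty closed $Z\subseteq\Supp\SF$ (cf.\ \cite[$0_{\mathrm{IV}}$, \S14.3]{EGA}); applying this with $Z=\Supp\SG^{(j)}$ rewrites the criterion of \eqref{cor:lem:DC-CM:supportSk:1} as $\dim\Supp\SG^{(j)}\leq\dim\Supp\SF+c-j-k$ for all $j>c$, the case $\SG^{(j)}=0$ being trivial. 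For \eqref{cor:lem:DC-CM:supportSk:3}, I would localize at $x$, working on $\Spec\SO_{X,x}$ with the dualizing complex $\SR^{\bullet}_{x}$, so that $\SF$ satisfies $\bfS_{k}$ at $x$ iff $\SF_{x}$ does (Remark~\ref{rem:dfn:SerreCond:atx}) and $\SG^{(j)}_{x}$ is the corresponding Ext module (Proposition~\ref{prop:DC-CM}). Writing $c_{x}=d(x)-\dim\SF_{x}$, the codimension function of $\SR^{\bullet}_{x}$ at a prime $\Gp\in\Supp\SF_{x}$ equals $c_{x}+\dim(\SF_{x})_{\Gp}$ — here I use that $\SO_{X,x}/\Ann\SF_{x}$ is local, catenary and equi-dimensional, so that $\dim\SO_{X,x}/\Gp=\dim\SF_{x}-\dim(\SF_{x})_{\Gp}$ — whence, exactly as in \eqref{cor:lem:DC-CM:supportSk:1}, $\SG^{(j)}_{x}=0$ for $j<c_{x}$ and $\Supp(\SG^{(c_{x})}_{x})=\Supp\SF_{x}$. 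Since $\Spec\SO_{X,x}$ is local one has $\Codim_{x}=\Codim$ there, and the same catenary equi-dimensional ring gives $\Codim(W,\Supp\SF_{x})=\dim\SF_{x}-\dim W$ for closed $W$; plugging these into Lemma~\ref{lem:DC-CM:supportSk}\eqref{lem:DC-CM:supportSk:1} and performing the single-index reduction yields that $\SF$ satisfies $\bfS_{k}$ at $x$ iff $\dim\SG^{(j)}_{x}\leq\dim\SF_{x}-k-j+c_{x}=d(x)-j-k$ for all $j>c_{x}$, and the vanishing $\SG^{(j)}_{x}=0$ for $j<c_{x}$ lets one replace ``$j>c_{x}$'' by ``$j\neq d(x)-\dim\SF_{x}$''.

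The step I expect to demand the most care is verifying the inputs that make the single-index reduction legitimate — namely the vanishing $\SG^{(j)}=0$ below the distinguished index and the identification of $\Supp\SG^{(c)}$ (resp.\ $\Supp(\SG^{(c_{x})}_{x})$) with $\Supp\SF$ — together with checking that the equi-dimensionality and equi-codimensionality hypotheses are precisely what is needed to convert codimensions into dimensions uniformly over all the auxiliary sheaves $\SG^{(j)}$; the remainder is bookkeeping with Proposition~\ref{prop:DC-CM} and Lemma~\ref{lem:DC-CM:supportSk}.
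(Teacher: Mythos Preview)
Your proposal is correct and follows essentially the same approach as the paper: obtain the constant $c$ via Lemma~\ref{lem:CodimFunction:constant}, identify $\Supp\SG^{(c)}=\Supp\SF$ via Proposition~\ref{prop:DC-CM}\eqref{prop:DC-CM:5}, and then reduce Lemma~\ref{lem:DC-CM:supportSk} to the single index $j=c$; parts \eqref{cor:lem:DC-CM:supportSk:2} and \eqref{cor:lem:DC-CM:supportSk:3} are then handled by converting codimension to dimension on a bi-equi-dimensional space and by localizing at $x$, exactly as in the paper. The only minor deviation is that for the ``if'' direction of \eqref{cor:lem:DC-CM:supportSk:1} the paper bypasses your single-index reduction and instead argues directly from Proposition~\ref{prop:DC-CM}\eqref{prop:DC-CM:3}; also, your remark that $c$ is positive is not actually justified (nor needed)---the word ``positive'' in the statement appears to be a slip, since in applications such as Lemma~\ref{lem:algschemeSk} one has $c=-n<0$.
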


\begin{proof}
\eqref{cor:lem:DC-CM:supportSk:1}:
For a closed subscheme \( S \) with \( S = \Supp \SF \), we have the integer \( c \)
such that \( c = d(x) - \dim \SO_{S, x} = d(x) - \dim\SF_{x} \) for any \( x \in \Supp \SF \)
by Lemma~\ref{lem:CodimFunction:constant}.
Then, \( \Supp \SG^{(c)} = \Supp \SF \) by Proposition~\ref{prop:DC-CM}\eqref{prop:DC-CM:5}.
Assume that \( \SF \) satisfies \( \bfS_{k} \). Then,
\[ \Codim(\Supp \SG^{(j)}, \, \Supp \SF) =
\Codim(\Supp \SG^{(j)} \cap \Supp \SG^{(c)}, \, \Supp \SF)
\geq k + j - c\]
for any \( j > c \) by Lemma~\ref{lem:DC-CM:supportSk}. Conversely,
assume that the inequality in \eqref{cor:lem:DC-CM:supportSk:1} holds
for any \( j > c \). If \( \SG^{(j)}_{x} \ne 0 \) for some \( j > c \), then
\[ \dim \SF_{x} \geq \Codim(\Supp \SG^{(j)}, \Supp \SF)
\geq k + j - c = k + j - d(x) + \dim \SF_{x}\]
as in the proof of Lemma~\ref{lem:DC-CM:supportSk}\eqref{lem:DC-CM:supportSk:2}.
Hence, \( \SG^{(j)}_{x} \ne 0 \) implies that \( \dim \SF_{x} > k \) and
\( j \leq d(x) - k \). This means that \( \SF \) satisfies \( \bfS_{k} \)
by Proposition~\ref{prop:DC-CM}\eqref{prop:DC-CM:3}.

\eqref{cor:lem:DC-CM:supportSk:2}: 
The closed subset \( S = \Supp \SF \) is a bi-equi-dimensional Kolmogorov Noetherian space 
in the sense of EGA (cf.\ \cite[$0_{\text{IV}}$, Prop.\ (14.3.3)]{EGA}), 
since it is catenary and has finite Krull dimension (cf.\ Fact~\ref{fact:CodimFunction}). 
Then, 
\[ x \mapsto \dim \SO_{S, x} = \Codim(\overline{\{x\}}, S) \] 
is a codimension function on \( S \) 
by \cite[$0_{\text{IV}}$, (14.3.3.2)]{EGA}, and 
it implies that \( S \) is locally equi-dimensional by Lemma~\ref{lem:CodimFunction:constant}. 
Moreover, 
\( \dim Z + \Codim(Z, S) = \dim S \)
for any closed subset \( Z \subset S \)
by \cite[$0_{\text{IV}}$, Cor.\ (14.3.5)]{EGA}. 
Thus, \eqref{cor:lem:DC-CM:supportSk:2} follows from \eqref{cor:lem:DC-CM:supportSk:1}. 

\eqref{cor:lem:DC-CM:supportSk:3}:
The closed subset \( \Supp \SF_{x} \) of \( \Spec \SO_{X, x} \) is equi-codimensional, 
since \( x \) is the unique closed point of it. Hence, \( \Supp \SF_{x} \) is also 
a connected bi-equi-dimensional Kolmogorov Noetherian space and 
it is locally equi-dimensional by the same reason as above. 
Thus, we can apply
\eqref{cor:lem:DC-CM:supportSk:1}
to the coherent sheaf \( \SF_{x}\sptilde \) on \( \Spec \SO_{X, x} \) associated with
\( \SF_{x} \).  
Hence, \( \SF \) satisfies \( \bfS_{k} \) at \( x \) (cf.\ Remark~\ref{rem:dfn:SerreCond:atx}) 
if and only if
\[ \Codim(\Supp \SG^{(j)}_{x},\, \Supp \SF_{x}) \geq k + j - c(x) \] 
for any \( j > c(x) \), where \( c(x) := d(x) - \dim \SF_{x} \). 
Here, the left hand side equals \( \dim \SF_{x} - \dim \SG^{(j)}_{x} \) 
by \cite[$0_{\text{IV}}$, Cor.\ (14.3.5)]{EGA}.  
Therefore, the \( \bfS_{k} \)-condition at \( x \) is equivalent to that
\[ \dim \SG^{(j)}_{x} \leq c(x) - k - j + \dim \SF_{x} = d(x) - k - j\]
for any \( j > c(x) = d(x) - \dim \SF_{x} \). Thus, we have
\eqref{cor:lem:DC-CM:supportSk:3} by Proposition~\ref{prop:DC-CM}\eqref{prop:DC-CM:1},
and we are done.
\end{proof}

\begin{dfn}[$\Gor(X)$]\label{dfn:Gorlocus}
The \emph{Gorenstein locus} \( \Gor(X) \)
of a locally Noetherian scheme \( X \)
is defined to be the set of points \( x \in X \) such that \( \SO_{X, x} \) is Gorenstein.
\end{dfn}

\noindent Note that \( X \) is Gorenstein if and only if \( X = \Gor(X) \).
The following is a generalization of \cite[IV, Prop.~(6.11.2)(ii)]{EGA}
(cf.\ \cite[Prop.\ (3.2)]{Sharp77} for \( \Gor(X) \)).

\begin{prop}\label{prop:CMlocus}
Let \( X \) be a locally Noetherian scheme admitting a dualizing complex locally on \( X \)
and let \( \SF \) be a coherent \( \SO_{X} \)-module.
Then, \( \bfS_{k}(\SF) \) for all \( k \geq 1 \)
and \( \CM(\SF) \) are open subsets of \( X \).
In particular, \( \CM(X) \) is open.
Moreover, \( \Gor(X) \) is also open.
\end{prop}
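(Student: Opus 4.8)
The plan is to reduce everything to the openness of the loci $\Supp\SG^{(j)}$ and their intersections via the characterizations established in Lemma~\ref{lem:DC-CM:supportSk} and Corollary~\ref{cor:lem:DC-CM:supportSk}. First I would note that all four assertions are local on $X$, so I may assume $X$ admits a (global) dualizing complex $\SR^{\bullet}$ with associated codimension function $d\colon X\to\BZZ$, which is bounded, and that $X$ is catenary of finite Krull dimension (Fact~\ref{fact:CodimFunction}). Set $\SG^{(j)}:=\SBExt^{j}_{\SO_{X}}(\SF,\SR^{\bullet})$; these are coherent $\SO_{X}$-modules by Proposition~\ref{prop:DC-CM}, and $\SG^{(j)}=0$ for all but finitely many $j$ by Lemma~\ref{lem:DC-CM:supportSk}. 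Hence each $\Supp\SG^{(j)}$ is closed, and the finitely many closed subsets $Z^{(i,j)}:=\Supp\SG^{(i)}\cap\Supp\SG^{(j)}$ are closed.

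The key step is to show that for each pair $i>j$ and each integer $m$, the set $W^{(i,j)}_{m}$ of points $x\in X$ with $\Codim_{x}(Z^{(i,j)},\Supp\SF)\geq m$ is open. This follows from the lower semi-continuity of the function $x\mapsto\Codim_{x}(Z^{(i,j)},\Supp\SF)$ on $\Supp\SF$, which is recorded in Property~\ref{ppty:dim-codim}\eqref{ppty:dim-codim:2} (the reference to \cite[$0_{\text{IV}}$, Cor.~(14.2.6)(ii)]{EGA}); one should also be careful about points outside $\Supp\SF$, where $\bfS_{k}(\SF)$ contains $X\setminus\Supp\SF$ trivially, so $\bfS_{k}(\SF)=(X\setminus\Supp\SF)\cup\big(\bfS_{k}(\SF)\cap\Supp\SF\big)$ and it suffices to treat the intersection with the closed set $\Supp\SF$, which is itself locally Noetherian. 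Now by Lemma~\ref{lem:DC-CM:supportSk}\eqref{lem:DC-CM:supportSk:1}, a point $x\in\Supp\SF$ lies in $\bfS_{k}(\SF)$ if and only if $\Codim_{x}(Z^{(i,j)},\Supp\SF)\geq k+i-j$ for every pair $i>j$; hence
\[
\bfS_{k}(\SF)\cap\Supp\SF=\bigcap_{i>j}W^{(i,j)}_{k+i-j},
\]
a finite intersection of open subsets of $\Supp\SF$, so $\bfS_{k}(\SF)$ is open in $X$. Then $\CM(\SF)=\bigcap_{k\geq1}\bfS_{k}(\SF)$, and since $\SG^{(j)}=0$ for all but finitely many $j$, the intersection defining $\CM(\SF)$ via Proposition~\ref{prop:DC-CM}\eqref{prop:DC-CM:4} stabilizes for $k$ large (once $k+i-j$ exceeds $\dim\SF_{x}$ for all relevant $x$, the conditions become vacuous), so $\CM(\SF)$ equals $\bfS_{k}(\SF)$ for $k\gg0$ and is open; applying this to $\SF=\SO_{X}$ gives that $\CM(X)$ is open.

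For the openness of $\Gor(X)$, I would use the interpretation that $x\in\Gor(X)$ if and only if $\SO_{X,x}$ is Cohen--Macaulay and $\SR^{\bullet}_{x}\isom_{\qis}\SL_{x}[-c]$ with $\SL_{x}$ an invertible $\SO_{X,x}$-module. Concretely: $X$ is Cohen--Macaulay along an open set $\CM(X)$ just proved open, and on $\CM(X)$ the complex $\SR^{\bullet}$ is locally quasi-isomorphic to a single coherent sheaf $\SL$ placed in one degree (Corollary~\ref{cor:lem:DC-CM:CM:2}), which is itself Cohen--Macaulay; the Gorenstein condition is then the condition that $\SL$ is locally free of rank one, equivalently that $\SL_{x}$ is invertible. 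The locus where a coherent sheaf on a (locally) Noetherian scheme is locally free of a fixed rank is open (by Nakayama and coherence of $\SExt$ sheaves, or directly from \cite[$0_{\text{I}}$, (5.2.7)]{EGA}), so $\Gor(X)$ is the intersection of $\CM(X)$ with this open locus, hence open.

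The main obstacle I anticipate is handling the boundary/degenerate cases cleanly: points outside $\Supp\SF$ (where depth is $+\infty$ and dimension is $-\infty$), empty intersections $Z^{(i,j)}=\emptyset$ (where the codimension is $+\infty$ by convention), and making sure the semi-continuity statement from \cite[$0_{\text{IV}}$, Cor.~(14.2.6)]{EGA} is applied to a locally Noetherian closed subset so that it is actually available. None of this is deep, but it requires the careful bookkeeping that the rest of Section~\ref{subsect:dualizingcpx} has been setting up, and the cleanest route is to phrase the whole argument on the locally Noetherian scheme $\Supp\SF$ (resp.\ $X$) and invoke Lemma~\ref{lem:DC-CM:supportSk} and Corollary~\ref{cor:lem:DC-CM:CM} verbatim.
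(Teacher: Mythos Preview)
Your proposal is correct and follows essentially the same approach as the paper: both arguments reduce to the affine case, use Lemma~\ref{lem:DC-CM:supportSk}\eqref{lem:DC-CM:supportSk:1} together with the lower semi-continuity of $x\mapsto\Codim_{x}(Z,\Supp\SF)$ to get openness of $\bfS_{k}(\SF)$, observe that $\CM(\SF)=\bfS_{k}(\SF)$ for $k\gg 0$ since $\dim X<\infty$, and then deduce openness of $\Gor(X)$ from the fact that on $\CM(X)$ the dualizing complex is a coherent sheaf $\SL$ (Corollary~\ref{cor:lem:DC-CM:CM}\eqref{cor:lem:DC-CM:CM:2}) whose invertible locus is open. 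Your treatment of the boundary cases (points off $\Supp\SF$, empty $Z^{(i,j)}$) is more explicit than the paper's, but the substance is identical.
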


\begin{proof}
Localizing \( X \), we may assume that \( X \) is an affine Noetherian scheme with
a dualizing complex \( \SR^{\bullet} \).
The openness of  \( \Gor(X) \) follows from that of \( \CM(X) \).
In fact, if \( X \) is Cohen--Macaulay, then
we may assume that \( \SR^{\bullet} \isom_{\qis} \SL \) for
a coherent \( \SO_{X} \)-module \( \SL \)
by Corollary~\ref{cor:lem:DC-CM:CM}\eqref{cor:lem:DC-CM:CM:2}, and
\( \Gor(X) \) is the maximal open subset on which \( \SL \) is invertible.
The openness of \( \CM(\SF) \) is derived from
Corollary~\ref{cor:lem:DC-CM:CM}\eqref{cor:lem:DC-CM:CM:1}.
This follows also from the openness
of \( \bfS_{k}(\SF) \) for all \( k \geq 1 \). In fact,
\( \CM(\SF) = \bfS_{k}(\SF)  \) for \( k \gg 0 \),
since \( \dim \SF \leq \dim X < \infty \)
(cf.\ Fact~\ref{fact:CodimFunction} and Remark~\ref{rem:dfn:CM}).
The openness of \( \bfS_{k}(\SF) \) is derived
from Lemma~\ref{lem:DC-CM:supportSk}\eqref{lem:DC-CM:supportSk:1},
since \( x \mapsto \Codim_{x}(Z, \Supp \SF) \) is lower semi-continuous
for any closed subset \( Z \subset \Supp \SF\)
(cf.\ Property~\ref{ppty:dim-codim}\eqref{ppty:dim-codim:2}).
\end{proof}

\begin{remn}
In the situation of Proposition~\ref{prop:CMlocus},
all \( \bfS_{k}(\SF) \) are open if and only if the map
\[ \Supp \SF \ni x \mapsto \codepth \SF_{x}
:= \dim \SF_{x} - \depth \SF_{x} \in \BZZ_{\geq 0}\]
is upper semi-continuous (cf.\ \cite[IV, Rem.~(6.11.4)]{EGA}).
\end{remn}

The following analogy of Fact~\ref{fact:elem-flat}\eqref{fact:elem-flat:6} for \( \SG = \SO_{Y} \)
is known:
\begin{fact}[{cf.\ \cite[Th.\ 23.4]{Matsumura}, \cite[V, Prop.~9.6]{ResDual}}]\label{fact:GorYTF}
Let \( Y \to T \) be a flat morphism of locally Noetherian schemes.
Then, \( Y \) is Gorenstein if and only if \( T \) and every fiber are Gorenstein.
\end{fact}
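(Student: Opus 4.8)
The plan is to reduce immediately to a statement about local rings. Since $Y$ is Gorenstein exactly when every local ring $\SO_{Y, y}$ is Gorenstein, and for $y \in Y$ with $t = f(y)$ one has $\SO_{Y_{t}, y} = \SO_{Y, y}/\GM_{T, t}\SO_{Y, y}$ with $\SO_{Y, y}$ flat over $\SO_{T, t}$, it suffices to prove: \emph{for a flat local homomorphism $\phi \colon (A, \GM, k) \to (B, \mathfrak{n}, l)$ of Noetherian local rings, with closed fibre $\overline{B} := B \otimes_{A} k = B/\GM B$, the ring $B$ is Gorenstein if and only if both $A$ and $\overline{B}$ are Gorenstein.} (Fibres of $f$ over points outside $f(Y)$ are empty, hence impose no condition, and every local ring of $T$ that matters is $\SO_{T, f(y)}$ for some $y$, so the localization step loses nothing.) Recall that in the excerpt ``Gorenstein'' for a Noetherian local ring $R$ means $R$ has finite injective dimension over itself, equivalently (by Bass's theorem, cf.\ \cite{Matsumura}) $\Ext^{i}_{R}(\kappa, R) = 0$ for $i \gg 0$, and that a Gorenstein local ring is Cohen--Macaulay with injective dimension equal to its Krull dimension.

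For the ring-theoretic assertion I would use three inputs. First, $\phi$ is faithfully flat, and lifting a free resolution of $k$ over $A$ through $\phi$ (using flatness) and applying $\Hom$-base change gives, for every $i$, a natural isomorphism $\Ext^{i}_{A}(k, A) \otimes_{A} B \isom \Ext^{i}_{B}(\overline{B}, B)$; since $\Ext^{i}_{A}(k, A)$ is a finite-dimensional $k$-vector space, the right-hand side is the free $\overline{B}$-module $\Ext^{i}_{A}(k, A) \otimes_{k} \overline{B}$. Second, the characterization recalled in Section~\ref{subsect:dualizingcpx} (cf.\ \cite[V, Th.~9.1]{ResDual}, \cite[Th.~18.1]{Matsumura}): a Noetherian local ring $R$ of Krull dimension $d$ is Gorenstein if and only if $\RHom_{R}(\kappa, R) \isom_{\qis} \kappa[-d]$, i.e.\ $\Ext^{i}_{R}(\kappa, R)$ vanishes for $i \ne d$ and is $\isom \kappa$ for $i = d$. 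Third, the dimension formula $\dim B = \dim A + \dim \overline{B}$, which is the special case $\SF = \SO_{Y}$, $\SG = \SO_{T}$ of \eqref{eq:fact:elem-flat:1}. The engine tying these together is the adjunction between restriction of scalars $\bfD(\overline{B}) \to \bfD(B)$ and $\RHom_{B}(\overline{B}, -)$, which yields $\RHom_{B}(l, B) \isom_{\qis} \RHom_{\overline{B}}\bigl(l, \RHom_{B}(\overline{B}, B)\bigr)$.

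Now the argument runs as follows. If $B$ is Gorenstein then $\Ext^{i}_{B}(\overline{B}, B) = 0$ for $i > \dim B$, so by faithful flatness $\Ext^{i}_{A}(k, A) = 0$ for $i > \dim B$, whence $A$ has finite injective dimension and is Gorenstein; writing $a = \dim A$, the first input then gives $\RHom_{B}(\overline{B}, B) \isom_{\qis} \overline{B}[-a]$, so the adjunction yields $\RHom_{\overline{B}}(l, \overline{B})[-a] \isom_{\qis} \RHom_{B}(l, B) \isom_{\qis} l[-\dim B]$, hence $\RHom_{\overline{B}}(l, \overline{B}) \isom_{\qis} l[a - \dim B] = l[-\dim \overline{B}]$ and $\overline{B}$ is Gorenstein. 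Conversely, if $A$ and $\overline{B}$ are Gorenstein the same two computations run forward: $\RHom_{B}(\overline{B}, B) \isom_{\qis} \overline{B}[-a]$ and $\RHom_{\overline{B}}(l, \overline{B}) \isom_{\qis} l[-\dim\overline{B}]$, so $\RHom_{B}(l, B) \isom_{\qis} l[-a - \dim\overline{B}] = l[-\dim B]$ and $B$ is Gorenstein. I expect the main obstacle to be purely bookkeeping in the derived category rather than any deep idea: one must verify that $\RHom_{B}(\overline{B}, B)$ and $\RHom_{B}(l, B)$ lie in $\bfD^{+}_{\coh}$ so that ``cohomology concentrated in a single degree'' legitimately produces a quasi-isomorphism to a shifted module, that the $\overline{B}$-module structure on $\RHom_{B}(\overline{B}, B)$ is the intended one, and that the restriction-of-scalars adjunction holds at the level of complexes; together with the small but necessary remark on the scheme side about empty fibres that makes the reduction to local rings lossless.
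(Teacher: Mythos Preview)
The paper does not prove this statement; it is recorded as a \emph{Fact} with citations to \cite[Th.~23.4]{Matsumura} and \cite[V, Prop.~9.6]{ResDual} and no argument is given. Your proposal is a correct and standard proof of the underlying local assertion, essentially the one in Matsumura: reduce to a flat local homomorphism, use flat base change to identify $\Ext^{i}_{A}(k, A) \otimes_{A} B$ with $\Ext^{i}_{B}(\overline{B}, B)$, and then combine the change-of-rings adjunction $\RHom_{B}(l, B) \isom_{\qis} \RHom_{\overline{B}}(l, \RHom_{B}(\overline{B}, B))$ with the characterization $\RHom_{R}(\kappa, R) \isom_{\qis} \kappa[-\dim R]$ of Gorenstein local rings and the dimension formula. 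There is nothing to compare against in the paper itself.

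One small remark on the reduction step: your parenthetical correctly observes that empty fibres impose no condition in the ``if'' direction, but in the ``only if'' direction your argument only shows $\SO_{T, t}$ is Gorenstein for $t \in f(Y)$. The Fact as literally stated therefore needs $f$ surjective (or ``$T$ is Gorenstein'' read as ``at all points of $f(Y)$''); this is an imprecision in the stated Fact rather than in your method, and the paper only invokes it in Lemma~\ref{lem:QGorScheme:smooth} where surjectivity is assumed.
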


%%%%%%%%%%%%%%%%%%%%%%%%%%%%%%%%%%%%%%%%%%%%%%%%%

\subsection{Ordinary dualizing complex}
\label{subsect:ordinaryDC}

We introduce the notion of ordinary dualizing complex \( \SR^{\bullet} \)
and that of dualizing sheaf as the cohomology sheaf \( \SH^{0}(\SR^{\bullet}) \)
for locally Noetherian schemes which are \emph{locally equi-dimensional}
(cf.\ Definition~\ref{dfn:equi-dim}\eqref{dfn:equi-dim:locally}), especially
for locally Noetherian schemes satisfying \( \bfS_{2} \).
In many articles, the dualizing sheaf is usually defined
for a Cohen--Macaulay scheme, and it coincides with the dualizing sheaf in our sense
(cf.\ Remark~\ref{rem:lem:DC-CM:CM2} below).

\begin{dfn}\label{dfn:ordinaryDC+DS}
Let \( X \) be a locally Noetherian scheme.
\begin{enumerate}
\item  A dualizing complex \( \SR^{\bullet} \) of \( X \) is said to be \emph{ordinary} if
the codimension function \( d \) associated with \( \SR^{\bullet} \)
satisfies \( d(x) = \dim \SO_{X, x} \) for any \( x \in X \).

\item  A coherent sheaf \( \SL \) on \( X \) is called a \emph{dualizing sheaf}
of \( X \) if
\( \SL \isom \SH^{0}(\SR^{\bullet}) \) for an ordinary dualizing complex \( \SR^{\bullet} \)
of \( X \).
\end{enumerate}
\end{dfn}

As a corollary to Lemma~\ref{lem:CodimFunction:constant} above, we have:

\begin{lem}\label{lem:ordinaryDC}
Let \( X \) be a locally Noetherian scheme admitting a dualizing complex.
Then, \( X \) admits an ordinary dualizing complex if and only if
\( X \) is locally equi-dimensional.
In particular, \( X \) admits an ordinary dualizing complex if \( X \) satisfies \( \bfS_{2} \).
\end{lem}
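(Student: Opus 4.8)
\textbf{Proof proposal for Lemma \ref{lem:ordinaryDC}.}

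The plan is to deduce everything from Lemma \ref{lem:CodimFunction:constant} together with the basic properties of dualizing complexes recalled in Fact \ref{fact:CodimFunction} and Remark \ref{rem:ExistDC}. The key observation is that an ordinary dualizing complex is nothing but a dualizing complex whose associated codimension function $d$ happens to equal $x \mapsto \dim \SO_{X,x}$ everywhere, i.e.\ the locally constant function $d(x) - \dim \SO_{X,x}$ of Lemma \ref{lem:CodimFunction:constant} is identically zero. So the task splits into a ``normalization'' step (adjusting a given dualizing complex so that the locally constant shift becomes $0$) and a ``characterization'' step (showing this normalization is possible exactly when $X$ is locally equi-dimensional).

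First I would treat the ``only if'' direction: if $X$ admits an ordinary dualizing complex $\SR^{\bullet}$ with associated codimension function $d$, then $d(x) = \dim \SO_{X,x}$ for all $x$, so the function $x \mapsto d(x) - \dim \SO_{X,x}$ is identically $0$, hence locally constant, and by the equivalence \eqref{lem:CodimFunction:constant:cond2} $\Leftrightarrow$ \eqref{lem:CodimFunction:constant:cond3} of Lemma \ref{lem:CodimFunction:constant} (applicable since $X$ has finite Krull dimension locally by Fact \ref{fact:CodimFunction}), $X$ is locally equi-dimensional. For the ``if'' direction, suppose $X$ is locally equi-dimensional and let $\SR^{\bullet}$ be any dualizing complex with associated codimension function $d$. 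By Lemma \ref{lem:CodimFunction:constant}, the function $e(x) := d(x) - \dim \SO_{X,x}$ is locally constant, say equal to an integer $n_{\alpha}$ on each connected component $X_{\alpha}$ of $X$. Then the shifted complex $\SR^{\bullet}[n_{\alpha}]$ on $X_{\alpha}$ (glued over all components to a complex $\widetilde{\SR}^{\bullet}$ on $X$) is again a dualizing complex — shifting preserves membership in $\bfD^{+}_{\coh}(X)_{\mathrm{fid}}$ and preserves the biduality quasi-isomorphism — and its associated codimension function is $d(x) - n_{\alpha} = \dim \SO_{X,x}$ on each $X_{\alpha}$; the shift by the index convention in the local duality computation of Fact \ref{fact:CodimFunction} changes the degree where $\BExt$ is concentrated by exactly $n_{\alpha}$. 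Hence $\widetilde{\SR}^{\bullet}$ is ordinary.

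Finally, for the last sentence: if $X$ satisfies $\bfS_{2}$ and admits a dualizing complex, then $X$ is catenary (Fact \ref{fact:CodimFunction}), hence locally equi-dimensional by Fact \ref{fact:S2}\eqref{fact:S2:1}, so the previous paragraph applies. One should be slightly careful that the hypothesis of the lemma only assumes $X$ admits a dualizing complex (not an ordinary one), which is exactly what is needed to invoke Fact \ref{fact:CodimFunction}; the $\bfS_{2}$ case is then immediate.

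\textbf{Main obstacle.} There is no serious obstacle here — the lemma is essentially a restatement of Lemma \ref{lem:CodimFunction:constant} in the language of dualizing complexes. The only point requiring mild care is the bookkeeping in the ``if'' direction: verifying that shifting a dualizing complex shifts its codimension function by the corresponding amount, and that this can be done consistently component-by-component to produce a global ordinary dualizing complex. This is routine given the explicit description of $d$ in Fact \ref{fact:CodimFunction}.
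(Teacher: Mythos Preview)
Your proposal is correct and follows essentially the same approach as the paper: both directions are reduced to Lemma~\ref{lem:CodimFunction:constant}, the ordinary complex in the ``if'' direction is obtained by shifting a given dualizing complex by the locally constant value of $d(x)-\dim\SO_{X,x}$ (the paper simply reduces to the connected case first), and the $\bfS_{2}$ assertion is handled via Facts~\ref{fact:CodimFunction} and~\ref{fact:S2}\eqref{fact:S2:1}.
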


\begin{proof}
We may assume that \( X \) is connected.
Let \( \SR^{\bullet} \) be a dualizing complex of \( X \)
with codimension function \( d \colon X \to \BZZ\).
If it is ordinary, then \( X \) is locally equi-dimensional
by Lemma~\ref{lem:CodimFunction:constant}.
Conversely, if \( X \) is locally equi-dimensional, then
\( d(x) - \dim \SO_{X, x} \) is a constant \( c \)
by Lemma~\ref{lem:CodimFunction:constant}, and hence,
the shift \( \SR^{\bullet}[c] \) is an ordinary dualizing complex.
The last assertion follows from Facts~\ref{fact:CodimFunction}
and \ref{fact:S2}\eqref{fact:S2:1}.
\end{proof}

\begin{remn}
For a locally Noetherian scheme,
the ordinary dualizing complex is unique up to quasi-isomorphism
and tensor product with an invertible sheaf
(cf.\ Remark~\ref{rem:ExistDC}).
Similarly, the dualizing sheaf is unique up to isomorphism
and tensor product with an invertible sheaf.
\end{remn}

\begin{rem}\label{rem:lem:DC-CM:CM2}
Let \( X \) be a locally Noetherian Cohen--Macaulay scheme admitting a dualizing complex.
Then, \( X \) has an ordinary dualizing complex \( \SR^{\bullet} \)
which is quasi-isomorphic to the dualizing sheaf \( \SL = \SH^{0}(\SR^{\bullet}) \).
Here, \( \SL \) is also a Cohen--Macaulay \( \SO_{X} \)-module.
These are derived from
Proposition~\ref{prop:DC-CM}\eqref{prop:DC-CM:4}
and Corollary~\ref{cor:lem:DC-CM:CM}\eqref{cor:lem:DC-CM:CM:2}.
In many articles, \( \SL \) is called a ``dualizing sheaf'' for
a locally Noetherian Cohen--Macaulay scheme.
\end{rem}

\begin{lem}\label{lem:DC-CM:ordinary}
Let \( X \) be a locally Noetherian
scheme admitting an ordinary dualizing complex \( \SR^{\bullet} \).
Let \( Z^{(i)} \) be the support of the cohomology sheaf \( \SH^{i}(\SR^{\bullet}) \)
for any \( i \in \BZZ \). Then, \( Z^{(i)} = \emptyset \) for any \( i < 0 \),
\( Z^{(0)} = X \), and
the following hold for any \( x \in X \)\emph{:}
\begin{enumerate}
\item \label{lem:DC-CM:ordinary:1}
\( x \not\in Z^{(i)} \) for any \( i > \dim \SO_{X, x} \)\emph{;}

\item \label{lem:DC-CM:ordinary:2}
\( \depth \SO_{X, x} = \dim \SO_{X, x} - \sup\{ j \mid x \in Z^{(j)}\} \)\emph{;}

\item \label{lem:DC-CM:ordinary:3}
for an integer \( k \geq 1 \), \( X \) satisfies \( \bfS_{k} \) at \( x \) if and only if
\[ \Codim_{x}(Z^{(j)}, X) \geq k + j\]
for any \( j > 0 \). This is also equivalent to\emph{:}
\[ \dim_{x} Z^{(j)} \leq \dim \SO_{X, x} - k - j  \]
for any \( j > 0 \)\emph{;}

\item \label{lem:DC-CM:ordinary:4}
\( \SO_{X, x} \) is Cohen--Macaulay if and only if
\( x \not\in Z^{(j)} \) for any \( j > 0 \).
\end{enumerate}
\end{lem}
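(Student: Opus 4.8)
The plan is to deduce the whole lemma by specializing Proposition~\ref{prop:DC-CM}, Lemma~\ref{lem:DC-CM:supportSk}, and Corollary~\ref{cor:lem:DC-CM:supportSk} to the structure sheaf \( \SF = \SO_X \). For this choice one has \( \SBExt^{j}_{\SO_X}(\SO_X, \SR^{\bullet}) = \SH^{j}(\SR^{\bullet}) \), so the sheaves \( \SG^{(j)} \) of Proposition~\ref{prop:DC-CM} are exactly the cohomology sheaves of \( \SR^{\bullet} \) and \( Z^{(j)} = \Supp \SG^{(j)} \). Moreover, since \( \SR^{\bullet} \) is \emph{ordinary}, the associated codimension function satisfies \( d(x) = \dim \SO_{X,x} \) for every \( x \), hence \( \dim \SF_{x} = \dim \SO_{X,x} = d(x) \). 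With these identifications, Proposition~\ref{prop:DC-CM}\eqref{prop:DC-CM:1} says \( \SG^{(j)}_{x} = 0 \) whenever \( j < 0 \) or \( j > d(x) \); this yields \( Z^{(j)} = \emptyset \) for \( j < 0 \) together with assertion~\eqref{lem:DC-CM:ordinary:1}. Likewise Proposition~\ref{prop:DC-CM}\eqref{prop:DC-CM:5} gives \( \SG^{(0)}_{x} \ne 0 \) for every \( x \in \Supp \SO_X = X \), i.e.\ \( Z^{(0)} = X \).

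For assertion~\eqref{lem:DC-CM:ordinary:2} I would invoke Proposition~\ref{prop:DC-CM}\eqref{prop:DC-CM:2}: \( \depth \SO_{X,x} \geq k \) if and only if \( \SG^{(j)}_{x} = 0 \) for all \( j > d(x) - k \), that is, if and only if \( s(x) := \sup\{\, j \mid x \in Z^{(j)} \,\} \leq d(x) - k \). By the previous paragraph \( 0 \leq s(x) \leq d(x) < \infty \), so applying this equivalence with \( k = \depth \SO_{X,x} \) and with \( k = \depth \SO_{X,x} + 1 \) forces \( \depth \SO_{X,x} = d(x) - s(x) = \dim \SO_{X,x} - s(x) \). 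Assertion~\eqref{lem:DC-CM:ordinary:4} is then immediate, since \( \SO_{X,x} \) is Cohen--Macaulay exactly when \( \depth \SO_{X,x} = \dim \SO_{X,x} \), equivalently \( s(x) = 0 \), equivalently \( x \notin Z^{(j)} \) for all \( j > 0 \); one can also read this off directly from Proposition~\ref{prop:DC-CM}\eqref{prop:DC-CM:4}.

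For assertion~\eqref{lem:DC-CM:ordinary:3} I would apply Lemma~\ref{lem:DC-CM:supportSk}\eqref{lem:DC-CM:supportSk:1} to \( \SF = \SO_X \): the scheme \( X \) satisfies \( \bfS_{k} \) at \( x \) if and only if \( \Codim_{x}(Z^{(i)} \cap Z^{(j)}, X) \geq k + i - j \) for all \( i > j \). Since \( Z^{(0)} = X \), \( Z^{(j)} = \emptyset \) for \( j < 0 \), and \( \Codim_{x}(\,\cdot\,, X) \) is nonincreasing under inclusion of closed subsets, this whole family of inequalities collapses to its instances with \( j = 0 \), namely \( \Codim_{x}(Z^{(j)}, X) \geq k + j \) for all \( j > 0 \); this is the first stated form. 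For the equivalent form one passes from \( \Codim_{x} \) to \( \dim_{x} \) by using that \( X \) is catenary and locally equi-dimensional (Fact~\ref{fact:CodimFunction} together with Lemma~\ref{lem:ordinaryDC}, via Lemma~\ref{lem:CodimFunction:constant}) and the dimension identities of Property~\ref{ppty:dim-codim}; alternatively, since the stalk \( \SO_{X,x} \) is equi-dimensional, Corollary~\ref{cor:lem:DC-CM:supportSk}\eqref{cor:lem:DC-CM:supportSk:3} applied to \( \SO_X \) at \( x \) gives directly \( \dim \SG^{(j)}_{x} \leq d(x) - j - k = \dim \SO_{X,x} - j - k \) for all \( j \ne 0 \).

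I expect no serious obstacle: the argument is entirely a specialization of the cohomology-vanishing criteria of Proposition~\ref{prop:DC-CM} and Lemma~\ref{lem:DC-CM:supportSk} to \( \SF = \SO_X \), using only the defining property \( d(x) = \dim \SO_{X,x} \) of an ordinary dualizing complex. The one point demanding a little care is the reformulation in~\eqref{lem:DC-CM:ordinary:3}: one must genuinely use local equi-dimensionality and catenarity of \( X \) to translate a codimension bound on \( Z^{(j)} \) into a dimension bound, since for an arbitrary closed subset of a general scheme these two quantities are unrelated.
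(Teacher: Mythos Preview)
Your proposal is correct and follows essentially the same approach as the paper: specialize Proposition~\ref{prop:DC-CM} to \(\SF = \SO_X\) using \(d(x) = \dim \SO_{X,x}\), and handle assertion~\eqref{lem:DC-CM:ordinary:3} via Corollary~\ref{cor:lem:DC-CM:supportSk}\eqref{cor:lem:DC-CM:supportSk:3}. The only difference is that you spell out an intermediate route for the first form of \eqref{lem:DC-CM:ordinary:3} through Lemma~\ref{lem:DC-CM:supportSk}\eqref{lem:DC-CM:supportSk:1} (reducing the \(i>j\) inequalities to the case \(j=0\) using \(Z^{(0)}=X\)), whereas the paper invokes the corollary directly; both arrive at the same place.
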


\begin{proof}
Now,
\( d(x) = \dim \SO_{X, x} \) for the codimension function \( d \colon X \to \BZZ \)
associated with \( \SR^{\bullet} \), and
\( X \) is locally equi-dimensional by Lemma~\ref{lem:CodimFunction:constant}.
Thus, applying Proposition~\ref{prop:DC-CM} to \( \SF = \SO_{X} \), we have
the assertions except \eqref{lem:DC-CM:ordinary:3}.
The assertion \eqref{lem:DC-CM:ordinary:3} is obtained
by Corollary~\ref{cor:lem:DC-CM:supportSk}\eqref{cor:lem:DC-CM:supportSk:3}
(cf.\ Property~\ref{ppty:dim-codim}).
\end{proof}

\begin{remn}
Let \( X \) be a connected locally Noetherian scheme with a dualizing complex \( \SR^{\bullet} \)
such that \( \SH^{i}(\SR^{\bullet}) = 0 \) for any \( i < 0 \) and \( \SH^{0}(\SR^{\bullet}) \ne 0 \).
The sheaf \( \SH^{0}(\SR^{\bullet}) \) is called  the ``canonical module'' in many articles.
But as in Example~\ref{exam:PlaneLine} below, the support of the sheaf \( \SH^{0}(\SR^{\bullet}) \)
is not always \( X \). This is one of the reasons
why we do not consider \( \SH^{0}(\SR^{\bullet}) \) as the dualizing sheaf
for arbitrary locally Noetherian schemes.
\end{remn}

\begin{exam}\label{exam:PlaneLine}
Let \( P \) be a polynomial ring \( \Bbbk[\xtt, \ytt, \ztt] \)
of three variables over a field \( \Bbbk \). For the ideals \( I = (\xtt, \ytt) \) and \( J = (\ztt) \)
of \( P \), we set \( A := P/IJ \) and
\( R^{\bullet} := \RHom_{P}(A, P[1])\).
Then, we have a Noetherian affine scheme \( X = \Spec A\) and a dualizing complex
\( \SR^{\bullet} \) on \( X \) associated with \( R^{\bullet} \)
(cf.\ Example~\ref{exam:RDembeddable} below).
The \( X \) is a union of a plane \( \Spec P/J \) and a line \( \Spec P/I \)
in the three-dimensional affine
space \( \Spec P \isom \BAA^{3}_{\Bbbk}\),
where the plane and the line intersect at the origin \( O \)
corresponding to the maximal ideal \( (\xtt, \ytt, \ztt) \).
Note that the local ring \( \SO_{X, O}  \) is not equi-dimensional.
We can calculate the cohomology modules of \( R^{\bullet} \) as
\[ \OH^{i}(R^{\bullet}) \isom \Ext^{i+1}_{P}(A, P) \isom
\begin{cases}
0, & \text{ for any } i < 0 \text{ and } i > 1; \\
P/J, &\text{ for } i = 0; \\
P/I, &\text{ for } i = 1,
\end{cases}\]
by the free resolution
\[ 0 \to P \xrightarrow{g} P^{\oplus 2} \xrightarrow{f} P \to A \to 0, \]
where \( f \) and \( g \) are defined by
\[ f(a, b) = \xtt\ztt a + \ytt\ztt b
\quad \text{and} \quad
g(c) = (\ytt c, - \xtt c) \]
for any \( a \), \( b \), and \( c \in P \).
Consequently, \( \Supp \SH^{0}(\SR^{\bullet})  = \Spec R/J \) is a proper subset of \( X \).
\end{exam}

\begin{lem}\label{lem:DSSk}
Let \( X \) be a locally Noetherian scheme admitting an ordinary
dualizing complex \( \SR^{\bullet} \).
We set
\[
\SG^{(j)}_{\leq b} := \SBExt^{j}_{\SO_{X}}(\tau^{\leq b}(\SR^{\bullet}), \SR^{\bullet})
\quad  \text{ and } \quad
\SG^{(j)}_{\geq b} := \SBExt^{j}_{\SO_{X}}(\tau^{\geq b}(\SR^{\bullet}), \SR^{\bullet})
\]
for integers \( b \geq 0 \) and \( j \),
where \( \tau^{\leq b} \) and \( \tau^{\geq b} \) stand for the truncations
of a complex \emph{(}cf.\ Notation and conventions, \eqref{nc:truncshift}\emph{)}.
Then, the following hold\emph{:}
\begin{enumerate}

\item \label{lem:DSSk:1}
One has\emph{:}  \rule[-2ex]{0ex}{5ex}
\( \SG^{(0)}_{\geq 0} \isom \SO_{X} \) and
\( \SG^{(i)}_{\geq 0} = 0 \) for any  \( i \ne 0 \).

\item \label{lem:DSSk:2}
There exist an exact sequence
\[ 0 \to \SG^{(-1)}_{\leq b} \to \SG^{(0)}_{\geq b+1} \to \SO_{X} \to
\SG_{\leq b}^{(0)} \to \SG^{(1)}_{\geq b+1} \to 0\]
and an isomorphism
\[ \SG^{(j)}_{\leq b} \isom \SG^{(j+1)}_{\geq b+1} \]
for any \( j \ne \{0, -1\} \).
Moreover, \( \SG^{(j)}_{\leq 0}  = 0\) \rule[-2ex]{0ex}{2ex} for any \( j < 0 \).
\item  \label{lem:DSSk:3}
For any integers \( b \geq 0 \) and \( j \),
one has\emph{:}
\begin{itemize}
\item \rule[-2ex]{0ex}{5ex}
\(\Codim( \Supp \SG^{(j)}_{\geq b}, X)  \geq j + b \) for any \( j \in \BZZ \),

\item  \rule[-2ex]{0ex}{2ex}
\(  \Codim( \Supp \SG^{(j)}_{\leq b}, X) \geq j + b + 2 \) for any \( j \ne 0 \), and

\item  \rule[-2ex]{0ex}{2ex}
\( \Codim( \Supp \SG^{(0)}_{\leq b}, X) = 0 \).
\end{itemize}

\item \label{lem:DSSk:4}
If \( X \) satisfies \( \bfS_{k} \) for some \( k \geq 1 \), then
\begin{itemize}
\item
\(\SG^{(j)}_{\geq b} = 0\) for any \( b > 0 \)  and \( j < k\), \rule[-2ex]{0ex}{5ex} and
\item \( \SG^{(i)}_{\leq b} = 0\)  for any \(0 < i < k-1\).
\end{itemize}
\end{enumerate}
\end{lem}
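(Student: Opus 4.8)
The plan is to dispose of the four assertions in turn; the heart of the matter is the codimension estimate \eqref{lem:DSSk:3}, and once its first bullet is proved, the other two bullets of \eqref{lem:DSSk:3} as well as most of \eqref{lem:DSSk:4} will follow formally from \eqref{lem:DSSk:2}. Throughout I use that $\SR^\bullet$, being ordinary, has $\SH^i(\SR^\bullet)=0$ for $i<0$ (Lemma~\ref{lem:DC-CM:ordinary}), that $d(x)=\dim\SO_{X,x}$, and that $X$ has finite Krull dimension (Fact~\ref{fact:CodimFunction}), so $\SR^\bullet\in\bfD^{b}_{\coh}(X)$ and all the $\SG^{(j)}_{\leq b}$, $\SG^{(j)}_{\geq b}$ are coherent with vanishing for all but finitely many $j$.

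For \eqref{lem:DSSk:1}: since $\tau^{\leq -1}(\SR^\bullet)\isom_{\qis}0$, the natural morphism $\SR^\bullet\to\tau^{\geq0}(\SR^\bullet)$ is a quasi-isomorphism, so $\SG^{(j)}_{\geq0}=\SH^j(\SRHom_{\SO_X}(\SR^\bullet,\SR^\bullet))$, which is $\SO_X$ for $j=0$ and $0$ otherwise by the defining property of a dualizing complex. For \eqref{lem:DSSk:2}: I would apply the contravariant triangulated functor $\SRHom_{\SO_X}(-,\SR^\bullet)$ to the distinguished triangle $\tau^{\leq b}(\SR^\bullet)\to\SR^\bullet\to\tau^{\geq b+1}(\SR^\bullet)\to$ recalled in \eqref{nc:truncshift}, obtaining a distinguished triangle whose middle term is $\SRHom_{\SO_X}(\SR^\bullet,\SR^\bullet)\isom_{\qis}\SO_X$; the associated long exact cohomology sequence, where $\SH^j(\SO_X)$ is $\SO_X$ for $j=0$ and $0$ otherwise, produces at once the five-term exact sequence and the isomorphisms $\SG^{(j)}_{\leq b}\isom\SG^{(j+1)}_{\geq b+1}$ for $j\notin\{0,-1\}$. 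For the last clause, $\tau^{\leq0}(\SR^\bullet)\isom_{\qis}\SL[0]$ with $\SL=\SH^0(\SR^\bullet)$ a single sheaf, and since $\SR^\bullet\in\bfD^{\geq0}(X)$ one has $\SRHom_{\SO_X}(\SL,\SR^\bullet)\in\bfD^{\geq0}(X)$ (stalkwise this is the standard vanishing of $\Hom$ from $\bfD^{\leq0}$ into $\bfD^{\geq1}$ for the $t$-structure), hence $\SG^{(j)}_{\leq0}=0$ for $j<0$.

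For the first bullet of \eqref{lem:DSSk:3}, namely $\Codim(\Supp\SG^{(j)}_{\geq b},X)\geq j+b$ for all $j$, I would induct on the top cohomological degree, using the distinguished triangles $\SH^b(\SR^\bullet)[-b]\to\tau^{\geq b}(\SR^\bullet)\to\tau^{\geq b+1}(\SR^\bullet)\to$: dualizing and passing to cohomology exhibits $\SG^{(j)}_{\geq b}$ as an extension built from (a subsheaf of) $\SBExt^{j+b}_{\SO_X}(\SH^b(\SR^\bullet),\SR^\bullet)$ and (a quotient of) $\SG^{(j)}_{\geq b+1}$. The first of these has support of codimension $\geq j+b$ by Proposition~\ref{prop:DC-CM}\eqref{prop:DC-CM:1}, since its stalk at $x$ vanishes unless $j+b\leq d(x)=\dim\SO_{X,x}$; the second is handled by induction (the induction terminates because $\SR^\bullet$ has bounded cohomology), and the base case $\tau^{\geq b}(\SR^\bullet)$ replaced by $\SH^b(\SR^\bullet)[-b]$ is the same estimate. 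Granting the first bullet, the second bullet for $j\geq1$ and $j\leq-2$ follows from the isomorphisms of \eqref{lem:DSSk:2}; the case $j=-1$ follows from the injection $\SG^{(-1)}_{\leq b}\injmap\SG^{(0)}_{\geq b+1}$ in the five-term sequence; and the third bullet follows by evaluating the five-term sequence at a generic point of $X$, where $\SG^{(0)}_{\geq b+1}=\SG^{(1)}_{\geq b+1}=0$ by the first bullet and hence $\SG^{(0)}_{\leq b}\isom\SO_X$ there.

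Finally, for \eqref{lem:DSSk:4}, the isomorphisms of \eqref{lem:DSSk:2} reduce the vanishing $\SG^{(i)}_{\leq b}=0$ for $0<i<k-1$ to the vanishing $\SG^{(j)}_{\geq b'}=0$ for $b'\geq1$ and $j<k$, so I only have to prove the latter. Using the long exact sequences from $\SH^i(\SR^\bullet)[-i]\to\tau^{\geq b}\tau^{\leq c}(\SR^\bullet)\to\cdots$ as above, $\SG^{(j)}_{\geq b}$ is built from the sheaves $\SBExt^{j+i}_{\SO_X}(\SH^i(\SR^\bullet),\SR^\bullet)$ with $i\geq b\geq1$, and it suffices to show each such sheaf vanishes when $j<k$. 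Assuming it is nonzero at some point $x$, I would pick a generic point $y$ of an irreducible component of $\Supp\SH^i(\SR^\bullet)$ through $x$, so that $d(x)=d(y)+\dim\SH^i(\SR^\bullet)_x$ (as $d$ is a codimension function); Proposition~\ref{prop:DC-CM}\eqref{prop:DC-CM:1} at $x$ gives $d(x)-\dim\SH^i(\SR^\bullet)_x\leq j+i$, i.e.\ $d(y)\leq j+i$, while Proposition~\ref{prop:DC-CM}\eqref{prop:DC-CM:2} applied to $\SO_X$ at $y$ — using that $X$ satisfies $\bfS_k$, hence $\depth\SO_{X,y}\geq\inf\{k,\dim\SO_{X,y}\}$, and that $\SH^i(\SR^\bullet)_y\neq0$ with $i\geq1$ — forces $\dim\SO_{X,y}>k$ and $d(y)=\dim\SO_{X,y}\geq k+i$. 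Comparing the two inequalities yields $j\geq k$, the desired contradiction.

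The step I expect to be the main obstacle is precisely this bookkeeping in \eqref{lem:DSSk:4}: one must be careful that $\SR^\bullet_x$ is again an \emph{ordinary} dualizing complex of $\SO_{X,x}$ (so that $d(x)=\dim\SO_{X,x}$ is compatible with localization and Lemma~\ref{lem:CodimFunction:constant} is applicable), that the point $y$ really lies in $\Supp\SH^i(\SR^\bullet)$ so that the $\bfS_k$-vanishing may be invoked there, and that the various long exact sequences are assembled with the correct degree shifts. The remaining steps are routine manipulations with truncations, distinguished triangles, and the resulting long exact sequences.
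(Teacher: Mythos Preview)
Your proof is correct and follows essentially the same architecture as the paper's: parts \eqref{lem:DSSk:1} and \eqref{lem:DSSk:2} are handled identically, and for \eqref{lem:DSSk:3} and \eqref{lem:DSSk:4} both you and the paper reduce the $\SG^{(j)}_{\leq b}$ statements to the $\SG^{(j)}_{\geq b}$ statements via \eqref{lem:DSSk:2}, then analyze $\SG^{(j)}_{\geq b}$ through the cohomology sheaves $\SH^i(\SR^\bullet)$ and Proposition~\ref{prop:DC-CM}.

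The only packaging difference is that the paper uses the hypercohomology spectral sequence
\[
\SE_{2}^{p,q}=\SBExt^{p}_{\SO_{X}}(\SH^{-q}(\tau^{\geq b}(\SR^{\bullet})),\SR^{\bullet})\Rightarrow \SG^{(p+q)}_{\geq b}
\]
(constructed in Remark~\ref{rem:SpSeq:lem:DSSk}) in place of your iterated distinguished triangles; these are of course the same filtration viewed two ways. For \eqref{lem:DSSk:4} the paper then simply invokes Lemma~\ref{lem:DC-CM:ordinary}\eqref{lem:DC-CM:ordinary:3}, which under $\bfS_k$ gives $\Codim_x(\Supp\SH^{-q}(\SR^\bullet),X)\geq k-q$ for $q<0$, and reads off $p+q\geq k$ directly from the $E_2$-term. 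Your generic-point argument at $y$ is a correct hands-on reproof of exactly that codimension bound; the paper's route is a touch shorter because that bound is already recorded, while your route avoids appealing to the spectral-sequence formalism. Either way, the worries you flag in your last paragraph (ordinariness at stalks, the choice of $y$, degree shifts) are all fine as you have them.
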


\begin{proof}
We have a quasi-isomorphism \( \SR^{\bullet} \isom_{\qis} \tau^{\geq 0}(\SR^{\bullet}) \)
by Lemma~\ref{lem:DC-CM:ordinary}. Hence,
the first assertion \eqref{lem:DSSk:1} interprets the quasi-isomorphism
\[ \SRHom_{\SO_{X}}(\SR^{\bullet}, \SR^{\bullet}) \isom_{\qis} \SO_{X}.  \]
The exact sequence and the isomorphism in the second assertion \eqref{lem:DSSk:2}
are derived from the canonical distinguished triangle
\[ \cdots \to \tau^{\leq b}(\SR^{\bullet}) \to \SR^{\bullet}
\to \tau^{\geq b+1}(\SR^{\bullet})
\to \tau^{\leq b}(\SR^{\bullet})[1] \to \cdots. \]
The last vanishing in \eqref{lem:DSSk:2} is expressed as
\( \SBExt^{j}_{\SO_{X}}(\SL, \SR^{\bullet}) = 0 \)
for any \( j < 0 \), where \( \SL := \SH^{0}(\SR^{\bullet}) \), and
this is a consequence of Proposition~\ref{prop:DC-CM}\eqref{prop:DC-CM:1} applied to \( \SF = \SL \)
with the property \( X = \Supp \SL \) shown in Lemma~\ref{lem:DC-CM:ordinary}.
For the remaining assertions \eqref{lem:DSSk:3} and \eqref{lem:DSSk:4},
it is enough to consider only the sheaves \( \SG^{(j)}_{\geq b} \).
In fact, by \eqref{lem:DSSk:2}, we have an injection
\( \SG^{(i)}_{\leq b} \to \SG^{(i+1)}_{\geq b+1}\) for any \( i \ne 0 \),
and an exact sequence \( \SG^{(0)}_{\geq b+1} \to \SO_{X} \to \SG^{(0)}_{\leq b} \),
where \( \Codim(\Supp \SG^{(0)}_{\geq b+1}, X) > 0 \) by
the assertion for \( \SG^{(0)}_{\geq b+1} \).
Hence,
\[ \Codim(\Supp \SG^{(i)}_{\leq b}, X) \geq \Codim(\Supp \SG^{(i+1)}_{\geq b+1}, X) \]
for any \( i \ne 0 \) and \( \Codim(\Supp \SG^{(0)}_{\leq b}, X) = 0 \).
In order to prove \eqref{lem:DSSk:3} and \eqref{lem:DSSk:4} for \( \SG^{(j)}_{\geq b} \),
let us consider the spectral sequence
\begin{equation}\label{eq:1|lem:DSSk}
\SE_{2}^{p, q} = \SBExt^{p}_{\SO_{X}}(\SH^{-q}(\tau^{\geq b}(\SR^{\bullet})), \SR^{\bullet})
\Rightarrow \SE^{p+q} = \SG^{(p+q)}_{\geq b}
\end{equation}
of \( \SO_{X} \)-modules (cf.\ Remark~\ref{rem:SpSeq:lem:DSSk} below).
Assume that \( (\SE_{2}^{p, q})_{x} \ne 0  \) for a point \( x \in X \).
Then, \( -q \geq b \), and
\begin{equation}\label{eq:2|lem:DSSk}
\dim \SO_{X, x} \geq p \geq \dim \SO_{X, x} - \dim \SH^{-q}(\SR^{\bullet})_{x} \\
= \Codim_{x}(\Supp \SH^{-q}(\SR^{\bullet}), X)
\end{equation}
by Proposition~\ref{prop:DC-CM}\eqref{prop:DC-CM:1},
since \( d(x) = \dim \SO_{X, x} \) for the codimension function \( d \) of \( \SR^{\bullet} \).
In particular, \( p + q \leq \dim \SO_{X, x} - b \).
Therefore, if \( j + b > \dim \SO_{X, x} \), then \( x \not\in \Supp \SG^{(j)}_{\geq b}\),
since \( (\SE_{2}^{p, q})_{x} = 0  \)
for any integers \( p \), \( q \) with \( p+q = j \).
Thus, we have \eqref{lem:DSSk:3}.
Assume that \( X \) satisfies \( \bfS_{k} \).
If \( (\SE^{p, q}_{2})_{x} \ne 0\) and \( q < 0 \), then \( p + q \geq k \) by \eqref{eq:2|lem:DSSk},
since
\[ \Codim_{x}(\Supp \SH^{-q}(\SR^{\bullet}), X) \geq k - q\]
for any \( q < 0 \) by Lemma~\ref{lem:DC-CM:ordinary}\eqref{lem:DC-CM:ordinary:3}.
Hence, \( \SG^{(j)}_{\geq b} = 0 \)
for any \( b > 0 \) and \( j < k \), since \( \SE_{2}^{p, q} = 0  \)
for any integers \( p \), \( q \) with \( p+q = j \).
This proves \eqref{lem:DSSk:4},
and we are done.
\end{proof}

\begin{rem}\label{rem:SpSeq:lem:DSSk}
The spectral sequence \eqref{eq:1|lem:DSSk} is obtained by the same method as follows.
Let \( A \) be a commutative ring and let \( M^{\bullet} \) and \( N^{\bullet} \) be
complexes of \( A \)-modules such that \( N^{\bullet} \) is bounded below.
We shall construct a spectral sequence
\[ E_{2}^{p, q} = \BExt^{p}_{A}(\OH^{-q}(M^{\bullet}), N^{\bullet})
\Rightarrow E^{p+q} = \BExt^{p+q}_{A}(M^{\bullet}, N^{\bullet}), \]
where \( \BExt^{p}_{A} \) denotes the \( p \)-th hyper-ext group.
Since there is a quasi-isomorphism from \( N^{\bullet} \) into 
a complex of injective \( A \)-modules bounded below,
we may assume that \( N^{\bullet} \) itself is a complex of injective \( A \)-modules bounded below.
We consider a double complex \( K^{\bullet, \bullet}\) defined by \( K^{p, q} = \Hom_{A}(M^{-p}, N^{q}) \)
for \( p \), \( q \in \BZZ \) with the differentials \( d_{\mathrm{I}} \colon K^{p, q} \to K^{p+1, q}  \) 
and \( d_{\mathrm{II}} \colon K^{p, q} \to K^{p, q+1}  \), which are induced from the differentials
\( M^{-p-1} \to M^{-p} \) and \( N^{q} \to N^{q+1} \), respectively.
Then, \( \BExt^{k}(M^{\bullet}, N^{\bullet}) \) is isomorphic to the \( k \)-th cohomology group of
the total complex \( K^{\bullet} \) defined by \( K^{n} = \prod\nolimits_{p + q = n} K^{p, q} \)
(cf.\ \cite[I, Th.~6.4]{ResDual}).
Moreover, we have
\[ \OH^{q}_{\mathrm{I}}(K^{\bullet, p}) \isom \Hom_{A}(\OH^{-q}(M^{\bullet}), N^{p}) \]
for any \( p \) and \( q \), since \( N^{p} \) is now assumed to be injective.
Thus, we have the spectral sequence above as the well-known spectral sequence
\( \OH^{p}_{\mathrm{II}}\OH^{q}_{\mathrm{I}}(K^{\bullet, \bullet}) \Rightarrow \OH^{p+q}(K^{\bullet})\)
associated with the double complex \( K^{\bullet, \bullet} \).
\end{rem}

\begin{cor}\label{cor:lem:DSSk}
Let \( X \) be a locally Noetherian scheme admitting
an ordinary dualizing complex \( \SR^{\bullet} \).
For a point \( x \in X \) and for an integer \( b \geq 0 \),
the vanishing
\[ \BHH^{i}_{x}(\tau^{\leq b}(\SR^{\bullet})_{x}) = 0 \]
holds for any
\( i < b + 2 \) except \( i = \dim \SO_{X, x} \),
where \( \BHH^{i}_{x}(M^{\bullet}) \) stands for the local cohomology group
at the maximal ideal \( \GM_{x} \) for a complex \( M^{\bullet} \) of
\( \SO_{X, x} \)-modules bounded below.
\end{cor}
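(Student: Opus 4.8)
The plan is to translate the statement about hyper-local-cohomology of the truncated complex into a statement about the cohomology sheaves $\SG^{(j)}_{\leq b} = \SBExt^{j}_{\SO_{X}}(\tau^{\leq b}(\SR^{\bullet}), \SR^{\bullet})$ appearing in Lemma~\ref{lem:DSSk}, by means of local duality, and then to read off the conclusion from the codimension estimates in Lemma~\ref{lem:DSSk}\eqref{lem:DSSk:3}. First I would fix the point $x$, set $A := \SO_{X, x}$, $n := \dim A$, and $M^{\bullet} := \tau^{\leq b}(\SR^{\bullet})_{x}$, and record three routine facts: $M^{\bullet}$ is identified with $\tau^{\leq b}(\SR^{\bullet}_{x})$ (truncation commutes with taking stalks) and lies in $\bfD^{b}_{\coh}(A)$, since $\SR^{\bullet}_{x}$ is a dualizing complex of $A$; and the codimension function $d$ associated with $\SR^{\bullet}$ satisfies $d(x) = n$ because $\SR^{\bullet}$ is \emph{ordinary}.

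The key step is local duality. Applying the Matlis-dual form of the local duality theorem (cf.\ \cite[V, Th.~6.2]{ResDual}) to $M^{\bullet}$ over $A$ with dualizing complex $\SR^{\bullet}_{x}$ --- exactly the correspondence used in the proof of Proposition~\ref{prop:DC-CM}\eqref{prop:DC-CM:2}, now for a complex rather than for a coherent sheaf --- one gets that $\BHH^{i}_{x}(M^{\bullet})$ vanishes if and only if the finitely generated $A$-module $\BExt^{d(x)-i}_{A}(M^{\bullet}, \SR^{\bullet}_{x}) = \BExt^{n-i}_{A}(M^{\bullet}, \SR^{\bullet}_{x})$ vanishes. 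By the compatibility of $\SRHom$ with localization (as in Proposition~\ref{prop:DC-CM}), $\BExt^{j}_{A}(M^{\bullet}, \SR^{\bullet}_{x}) \isom (\SG^{(j)}_{\leq b})_{x}$ for every $j$. Hence $\BHH^{i}_{x}(M^{\bullet}) = 0$ is equivalent to $x \notin \Supp \SG^{(n-i)}_{\leq b}$.

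It then remains to combine this with Lemma~\ref{lem:DSSk}\eqref{lem:DSSk:3}. Given $i < b+2$ with $i \ne n$, put $j := n - i \ne 0$; then $\Codim(\Supp \SG^{(j)}_{\leq b}, X) \geq j + b + 2 = n - i + b + 2 > n$, the strict inequality being exactly the hypothesis $i < b+2$. Since this codimension equals $\inf\{\dim \SO_{X, z} \mid z \in \Supp \SG^{(j)}_{\leq b}\}$ by Property~\ref{ppty:dim-codim}\eqref{ppty:dim-codim:2}, and $\dim \SO_{X, x} = n$ is strictly smaller, the point $x$ cannot lie in $\Supp \SG^{(n-i)}_{\leq b}$; therefore $\BHH^{i}_{x}(M^{\bullet}) = 0$, which is the assertion. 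I expect the only delicate point to be the bookkeeping in the local duality step: one must reconcile the ``ordinary'' normalization $d(x) = n$ with the shift $\SR^{\bullet}_{x} \isom D^{\bullet}_{\mathrm{norm}}[-n]$ against Hartshorne's normalized dualizing complex $D^{\bullet}_{\mathrm{norm}}$, so that the relevant $\BExt$ genuinely lands in degree $n - i$. Everything else --- boundedness of $M^{\bullet}$, the applicability of local duality to complexes in $\bfD^{b}_{\coh}(A)$, and the localization compatibility of $\SBExt$ --- is standard and has already been invoked earlier in the paper.
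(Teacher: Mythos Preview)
Your proposal is correct and follows essentially the same argument as the paper: apply local duality to identify the vanishing of $\BHH^{i}_{x}(\tau^{\leq b}(\SR^{\bullet})_{x})$ with $x \notin \Supp \SG^{(d(x)-i)}_{\leq b}$, and then invoke the codimension bound of Lemma~\ref{lem:DSSk}\eqref{lem:DSSk:3} to force $i \geq b+2$ whenever $i \neq d(x)$. The paper's proof is presented slightly more compactly (writing the Matlis-dual isomorphism explicitly with the shift $\SR^{\bullet}_{x}[d(x)]$ and arguing by contraposition), but the content and the key inputs are identical to yours.
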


\begin{proof}
By the local duality theorem \cite[V, Th.~6.2]{ResDual}, we have
\[ \BHH^{i}_{x}(\tau^{\leq b}(\SR^{\bullet})_{x}) \isom
\Hom_{\SO_{X, x}}(\BExt^{-i}_{\SO_{X, x}}(\tau^{\leq b}(\SR^{\bullet})_{x},
\SR^{\bullet}_{x}[d(x)]), I_{x})\]
for the injective \( \SO_{X, x} \)-module \( I_{x} = \BHH^{d(x)}_{x}(\SR^{\bullet}_{x}) \),
where \( d(x) = \dim \SO_{X, x} \).
In particular,
\[ \BHH^{i}_{x}(\tau^{\leq b}(\SR^{\bullet})_{x}) \ne 0 \quad \text{if and only if} \quad
x \in \Supp \SG^{(d(x) - i)}_{\leq b}. \]
If \( d(x) - i \ne 0 \),
then the non-vanishing above implies that
\[ d(x) = \dim \SO_{X, x} \geq \Codim(\Supp \SG^{(d(x) - i)}_{\leq b}, X)
\geq d(x) - i + b + 2 \]
by Lemma~\ref{lem:DSSk}\eqref{lem:DSSk:3}. Thus, we have the vanishing for
\( i < b + 2 \) except \( i = d(x) \).
\end{proof}

\begin{prop}\label{prop:DSSk}
Let \( X \)  be a locally Noetherian scheme admitting an ordinary
dualizing complex \( \SR^{\bullet} \). Then, the dualizing sheaf
\( \SL = \SH^{0}(\SR^{\bullet}) \) satisfies \( \bfS_{2} \) and \( \Supp \SL = X \).
If \( X \) satisfies \( \bfS_{2} \), then
\( \SHom_{\SO_{X}}(\SL, \SL) \isom \SO_{X} \).
If \( X \) satisfies \( \bfS_{3} \), then
\( \SExt^{1}_{\SO_{X}}(\SL, \SL) = 0 \).
\end{prop}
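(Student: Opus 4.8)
The plan is to handle the three assertions in turn: the first by feeding the codimension estimates of Lemma~\ref{lem:DSSk} into Lemma~\ref{lem:DC-CM:supportSk}, and the second and third by reducing to the Cohen--Macaulay locus, where $\SR^{\bullet}$ collapses to $\SL$.

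First I would note that $\Supp \SL = X$ is immediate from Lemma~\ref{lem:DC-CM:ordinary} (which gives $\SH^{i}(\SR^{\bullet}) = 0$ for $i < 0$ and $\Supp \SH^{0}(\SR^{\bullet}) = X$). Consequently $\tau^{\leq 0}(\SR^{\bullet}) \isom_{\qis} \SL$, so $\SBExt^{j}_{\SO_{X}}(\SL, \SR^{\bullet}) = \SG^{(j)}_{\leq 0}$ in the notation of Lemma~\ref{lem:DSSk}. To prove that $\SL$ satisfies $\bfS_{2}$ I would apply Lemma~\ref{lem:DC-CM:supportSk}\eqref{lem:DC-CM:supportSk:2} to $\SF = \SL$: it suffices to check $\Codim(\Supp \SG^{(i)}_{\leq 0} \cap \Supp \SG^{(j)}_{\leq 0}, X) \geq 2 + i - j$ for all $i > j$. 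By Lemma~\ref{lem:DSSk}\eqref{lem:DSSk:2}, $\SG^{(j)}_{\leq 0} = 0$ for $j < 0$, so only $j \geq 0$ (hence $i \geq 1$) matters, and then Lemma~\ref{lem:DSSk}\eqref{lem:DSSk:3} gives $\Codim(\Supp \SG^{(i)}_{\leq 0}, X) \geq i + 2 \geq 2 + i - j$; since the intersection lies in $\Supp \SG^{(i)}_{\leq 0}$, the inequality follows. The essential point here is that Lemma~\ref{lem:DSSk}\eqref{lem:DSSk:3} carries the extra ``$+2$'', which the crude bound $\Codim(\Supp \SG^{(i)}_{\leq 0}, X) \geq i$ would not give.

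For the other two assertions I would set $U := \CM(X)$, open by Proposition~\ref{prop:CMlocus}, with open immersion $j \colon U \injmap X$ and complement $Z := X \setminus U$. Since $\SR^{\bullet}$ is ordinary, Lemma~\ref{lem:DC-CM:ordinary} shows $\SR^{\bullet}|_{U}$ has cohomology only in degree $0$, equal to $\SL|_{U}$, so $\SR^{\bullet}|_{U} \isom_{\qis} \SL|_{U}$ (here ``ordinary'' is used crucially; cf.\ Corollary~\ref{cor:lem:DC-CM:CM}\eqref{cor:lem:DC-CM:CM:2}). The defining quasi-isomorphism $\SO_{U} \isom_{\qis} \SRHom_{\SO_{U}}(\SR^{\bullet}|_{U}, \SR^{\bullet}|_{U})$ then yields that the natural map $\SO_{U} \to \SHom_{\SO_{U}}(\SL|_{U}, \SL|_{U})$ is an isomorphism and $\SExt^{i}_{\SO_{U}}(\SL|_{U}, \SL|_{U}) = 0$ for $i \geq 1$; hence $\SHom_{\SO_{X}}(\SL, \SL)|_{U} = \SO_{U}$ and $\SExt^{i}_{\SO_{X}}(\SL, \SL)|_{U} = 0$ for $i \geq 1$. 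Moreover, if $X$ satisfies $\bfS_{k}$ then every $z \in Z$ has $\dim \SO_{X, z} > k$ (otherwise $\bfS_{k}$ would force $\SO_{X, z}$ Cohen--Macaulay), so $\Codim(Z, X) > k$; by Lemma~\ref{lem:depth+codim+Sk} this gives $\depth_{Z} \SO_{X} \geq k$, and, since $\SL$ satisfies $\bfS_{2}$ with $\Supp \SL = X$, also $\depth_{Z} \SL \geq 2$. Now if $X$ satisfies $\bfS_{2}$, then $\SO_{X} \isom j_{*}\SO_{U}$ by Property~\ref{ppty:depth<=2}; working locally, a finite presentation $\SO_{X}^{\oplus m} \to \SO_{X}^{\oplus n} \to \SL \to 0$ exhibits $\SHom_{\SO_{X}}(\SL, \SL)$ as the kernel of $\SL^{\oplus n} \to \SL^{\oplus m}$, so $\depth_{Z} \SHom_{\SO_{X}}(\SL, \SL) \geq 2$ by Lemma~\ref{lem:depth<=2}\eqref{lem:depth<=2:0}, whence $\SHom_{\SO_{X}}(\SL, \SL) \isom j_{*}(\SHom_{\SO_{X}}(\SL, \SL)|_{U}) = j_{*}\SO_{U} \isom \SO_{X}$; compatibility of the natural maps on $U$ identifies this with the natural map $\SO_{X} \to \SHom_{\SO_{X}}(\SL, \SL)$.

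Finally, for the $\bfS_{3}$ case I would argue as follows, so $\depth_{Z} \SO_{X} \geq 3$. Locally choose $0 \to \SK \to \SO_{X}^{\oplus n} \to \SL \to 0$ and apply $\SHom_{\SO_{X}}(-, \SL)$; using $\SHom_{\SO_{X}}(\SL, \SL) \isom \SO_{X}$ (the $\bfS_{2}$ case) and $\SExt^{1}_{\SO_{X}}(\SO_{X}^{\oplus n}, \SL) = 0$, one gets a short exact sequence $0 \to \SL^{\oplus n}/\SO_{X} \to \SHom_{\SO_{X}}(\SK, \SL) \to \SExt^{1}_{\SO_{X}}(\SL, \SL) \to 0$. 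From $0 \to \SO_{X} \to \SL^{\oplus n} \to \SL^{\oplus n}/\SO_{X} \to 0$ together with $\depth_{Z} \SO_{X} \geq 3$ and $\depth_{Z} \SL^{\oplus n} \geq 2$, the local cohomology sequence gives $\depth_{Z}(\SL^{\oplus n}/\SO_{X}) \geq 2$, while a finite presentation of $\SK$ and Lemma~\ref{lem:depth<=2}\eqref{lem:depth<=2:0} give $\depth_{Z} \SHom_{\SO_{X}}(\SK, \SL) \geq 2$. Since $\SExt^{1}_{\SO_{X}}(\SL, \SL)$ is supported on $Z$, the local cohomology sequence with supports in $Z$ reads $0 = \SH^{0}_{Z}(\SHom_{\SO_{X}}(\SK, \SL)) \to \SExt^{1}_{\SO_{X}}(\SL, \SL) \to \SH^{1}_{Z}(\SL^{\oplus n}/\SO_{X}) = 0$, forcing $\SExt^{1}_{\SO_{X}}(\SL, \SL) = 0$. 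I expect the main obstacle to be this last step, namely assembling the right short exact sequence and the depth/local-cohomology bookkeeping around it; the conceptual content of all three parts is simply that $\SL$ agrees with $\SO_{X}$ (resp.\ with an invertible sheaf) away from a locus of large codimension, which is precisely what Lemma~\ref{lem:DSSk} and the behaviour of $\SR^{\bullet}$ on $\CM(X)$ quantify.
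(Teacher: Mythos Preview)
Your proof is correct. The first assertion you handle essentially as the paper does (the paper cites Corollary~\ref{cor:lem:DC-CM:supportSk}\eqref{cor:lem:DC-CM:supportSk:1} rather than Lemma~\ref{lem:DC-CM:supportSk}\eqref{lem:DC-CM:supportSk:2}, but the input from Lemma~\ref{lem:DSSk}\eqref{lem:DSSk:3} is the same).

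For the second and third assertions your route is genuinely different. The paper never passes to the Cohen--Macaulay locus; instead it observes, from the triangle $\SL \to \SR^{\bullet} \to \tau^{\geq 1}(\SR^{\bullet}) \to$, that $\SHom_{\SO_{X}}(\SL,\SL) \isom \SG^{(0)}_{\leq 0}$ and that $\SExt^{1}_{\SO_{X}}(\SL,\SL)$ injects into $\SG^{(1)}_{\leq 0}$, and then reads both conclusions directly off Lemma~\ref{lem:DSSk}: under $\bfS_{2}$ the exact sequence of \eqref{lem:DSSk:2} collapses (since $\SG^{(0)}_{\geq 1}=\SG^{(1)}_{\geq 1}=0$ by \eqref{lem:DSSk:4}) to $\SO_{X}\isom \SG^{(0)}_{\leq 0}$, and under $\bfS_{3}$ the same \eqref{lem:DSSk:4} gives $\SG^{(1)}_{\leq 0}=0$. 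This is shorter and keeps all the work inside the single auxiliary Lemma~\ref{lem:DSSk}. Your argument, by contrast, is more self-contained in that it avoids hyper-Ext bookkeeping entirely: you only use the quasi-isomorphism $\SR^{\bullet}|_{U}\isom_{\qis}\SL|_{U}$ on $U=\CM(X)$ together with elementary depth/local-cohomology manipulations along $Z=X\setminus U$. The trade-off is that your $\bfS_{3}$ step requires building the auxiliary short exact sequence and tracking several depth bounds, whereas the paper's version is a one-line vanishing. Both approaches exploit the same underlying phenomenon (high codimension of the non-CM locus under $\bfS_{k}$), just packaged differently.
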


\begin{proof}
We have \( \Supp \SL = X \) by Lemma~\ref{lem:DC-CM:ordinary}.
Hence,
\[ \dim \SL_{x} = \Codim(\overline{\{x\}}, \Supp \SL) = \dim \SO_{X, x} \]
for any \( x \in X \).
Applying Corollary~\ref{cor:lem:DC-CM:supportSk}\eqref{cor:lem:DC-CM:supportSk:1}
to \( \SL \), where \( c = d(x) - \dim \SL_{x} = 0\),
we see that \( \SL \) satisfies \( \bfS_{2} \)
by Lemma~\ref{lem:DSSk}\eqref{lem:DSSk:3},
since
\( \Codim(\Supp \SG^{(i)}_{\leq 0}, X) \geq i + 2 \) for any \( i > 0 \),
where \( \SG^{(i)}_{\leq 0} = \SExt^{i}_{\SO_{X}}(\SL, \SR^{\bullet}) \).
For the remaining assertions, we assume that \( X \) satisfies \( \bfS_{2} \) or \( \bfS_{3} \).
Note that we have an isomorphism
\[ \SHom_{\SO_{X}}(\SL, \SL) \isom \SBExt^{0}_{\SO_{X}}(\SL, \SR^{\bullet}) = \SG^{(0)}_{\leq 0} \]
and an injection
\[ \SExt^{1}_{\SO_{X}}(\SL, \SL) \to
\SBExt^{1}_{\SO_{X}}(\SL, \SR^{\bullet}) = \SG^{(1)}_{\leq 0} \]
by \( \SL \isom_{\qis} \tau^{\leq 0}(\SR^{\bullet}) \).
If \( X \) satisfies \( \bfS_{2} \), then \( \SG^{(0)}_{\geq 1} = \SG^{(1)}_{\geq 1} = 0 \)
by Lemma~\ref{lem:DSSk}\eqref{lem:DSSk:4},
and hence, \( \SO_{X} \isom \SG^{(0)}_{\leq 0}  \) by Lemma~\ref{lem:DSSk}\eqref{lem:DSSk:2};
thus, \( \SO_{X} \isom \SHom_{\SO_{X}}(\SL, \SL) \).
If \( X \) satisfies \( \bfS_{3} \), then \( \SG^{(1)}_{\leq 0} = 0\) by
Lemma~\ref{lem:DSSk}\eqref{lem:DSSk:4}, and consequently,
\( \SExt^{1}_{\SO_{X}}(\SL, \SL) = 0 \).
\end{proof}

\begin{remn}[$\bfS_{2}$-ification]
For a locally Noetherian scheme \( X \) admitting an ordinary
dualizing complex \( \SR^{\bullet} \) and for the dualizing sheaf
\( \SL = \SH^{0}(\SR^{\bullet}) \),
we consider the coherent \( \SO_{X} \)-module \( \SA := \SHom_{\SO_{X}}(\SL, \SL) \).
Then, we can show:
\begin{itemize}
\item \( \SA \) has a structure of \( \SO_{X} \)-algebra,

\item \( \SO_{X} \to \SA \) is an isomorphism on
the \( \bfS_{2} \)-locus \( \bfS_{2}(X) \) (cf.\ Definition~\ref{dfn:SkCMlocus}), and

\item  \( \SA \) satisfies \( \bfS_{2} \).
\end{itemize}
Therefore, the finite morphism
\( \SSpec_{X} \SA \to X \) is regarded as the so-called
``\( \bfS_{2} \)-ification'' of \( X \)
(cf.\ \cite[IV, (5.10.11), Prop.~(5.11.1)]{EGA}, \cite[Prop.~2]{Aoyama1},
\cite[Th.~3.2]{Aoyama2}, \cite[Prop.~2.7]{HH}).
Three properties above are shown as follows:
We know that \( \SL \) satisfies \( \bfS_{2} \),
\( U := \bfS_{2}(X) \) is an open subset by Proposition~\ref{prop:CMlocus},
and that \( \SO_{X} \to \SA\)
is an isomorphism on \( U \) by
Proposition~\ref{prop:DSSk}.
In particular, \( \SA \isom j_{*}(\SA|_{U}) \)
for the open immersion \( j \colon U \injmap X \),
since it is expressed as
\[ \SA = \SHom_{\SO_{X}}(\SL, \SL) \to j_{*}(\SA|_{U}) \isom
\SHom_{\SO_{X}}(\SL, j_{*}(\SL|_{U})). \]
Thus, \( \SA \) satisfies \( \bfS_{2} \) by
Corollary~\ref{cor:basicS1S2}, and consequently,
\( \SA \isom j_{*}\SO_{U}\) has an \( \SO_{X} \)-algebra structure.
\end{remn}

\begin{cor}\label{cor:CMCodOne}
Let \( X \) be a locally Noetherian scheme
admitting a dualizing complex \( \SR^{\bullet} \),
and set \( \SL :=  \SH^{0}(\SR^{\bullet}) \).
Let \( X^{\circ} \subset X \) be an open subset such that
\[ \Codim(X \setminus X^{\circ}, X) \geq 1
\quad \text{ and } \quad
\SR^{\bullet}|_{X^{\circ}} \isom_{\qis} \SL|_{X^{\circ}}. \]
Then, \( \SR^{\bullet} \) is ordinary and \( \SL \) satisfies \( \bfS_{2} \).
In particular, if \( \Codim(X \setminus X^{\circ}, X) \geq 2  \), then
\( \SL \isom j_{*}(\SL|_{X^{\circ}}) \)
for the open immersion \( j \colon X^{\circ} \injmap X \).
\end{cor}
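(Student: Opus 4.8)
The plan is to show first that $\SR^{\bullet}$ is an \emph{ordinary} dualizing complex, then to obtain the $\bfS_{2}$-property of $\SL$ directly from Proposition~\ref{prop:DSSk}, and finally to deduce the last assertion from the depth criterion in Property~\ref{ppty:depth<=2}.

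Let $d\colon X\to\BZZ$ be the codimension function associated with $\SR^{\bullet}$ (Fact~\ref{fact:CodimFunction}); what must be shown is $d(x)=\dim\SO_{X,x}$ for every $x\in X$. First I would work over $X^{\circ}$. Since $\SR^{\bullet}|_{X^{\circ}}\isom_{\qis}\SL|_{X^{\circ}}$ is concentrated in degree $0$ and its stalks are nonzero (a dualizing complex cannot have a zero stalk, as $\SO_{X,x}\isom_{\qis}\SRHom_{\SO_{X,x}}(\SR^{\bullet}_{x},\SR^{\bullet}_{x})$), we get $X^{\circ}\subseteq\Supp\SL$; as $X^{\circ}$ contains every generic point of $X$ — because $\Codim(X\setminus X^{\circ},X)\geq 1$ — it is dense, so the closed set $\Supp\SL$ equals $X$ and $\dim\SL_{x}=\dim\SO_{X,x}$ for all $x$ by Property~\ref{ppty:dim-codim}\eqref{ppty:dim-codim:1}. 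Restricting the defining quasi-isomorphism $\SO_{X}\isom_{\qis}\SRHom_{\SO_{X}}(\SR^{\bullet},\SR^{\bullet})$ to $X^{\circ}$ and using $\SR^{\bullet}|_{X^{\circ}}\isom_{\qis}\SL|_{X^{\circ}}$ in the first argument shows, in the notation of Proposition~\ref{prop:DC-CM} with $\SF=\SL$, that $\SG^{(j)}=\SBExt^{j}_{\SO_{X}}(\SL,\SR^{\bullet})$ satisfies $\SG^{(j)}|_{X^{\circ}}=0$ for $j\neq 0$ and $\SG^{(0)}|_{X^{\circ}}\isom\SO_{X^{\circ}}$. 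By Proposition~\ref{prop:DC-CM}\eqref{prop:DC-CM:5}, the integer $d(x)-\dim\SL_{x}$ is a $j$ with $\SG^{(j)}_{x}\neq 0$, hence equals $0$ for $x\in X^{\circ}$; thus $d(x)=\dim\SL_{x}=\dim\SO_{X,x}$ on $X^{\circ}$. In particular $d(\eta)=0$ at every generic point $\eta$ of $X$.

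Next I would propagate this to an arbitrary $x\in X$ using only the codimension-function identity. Let $\Gp_{1},\dots,\Gp_{r}$ be the minimal primes of $\SO_{X,x}$, with $\eta_{1},\dots,\eta_{r}$ the corresponding generic points of the irreducible components of $X$ through $x$. Then $x\in\overline{\{\eta_{i}\}}$ and $d(\eta_{i})=0$, so $d(x)=d(\eta_{i})+\Codim(\overline{\{x\}},\overline{\{\eta_{i}\}})=\dim\SO_{X,x}/\Gp_{i}$ for each $i$ by Property~\ref{ppty:dim-codim}\eqref{ppty:dim-codim:1}. Hence all $\dim\SO_{X,x}/\Gp_{i}$ coincide and $\dim\SO_{X,x}=\max_{i}\dim\SO_{X,x}/\Gp_{i}=d(x)$, so $\SR^{\bullet}$ is ordinary. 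Then $\SL=\SH^{0}(\SR^{\bullet})$ is the dualizing sheaf of $X$, and Proposition~\ref{prop:DSSk} gives that $\SL$ satisfies $\bfS_{2}$ with $\Supp\SL=X$. Finally, assuming $\Codim(X\setminus X^{\circ},X)\geq 2$ and setting $Z=X\setminus X^{\circ}$, we have $\Codim(Z\cap\Supp\SL,\Supp\SL)=\Codim(Z,X)\geq 2$, so $\depth_{Z}\SL\geq 2$ by Lemma~\ref{lem:depth+codim+Sk}\eqref{lem:depth+codim+Sk:2}, and therefore $\SL\isom j_{*}(\SL|_{X^{\circ}})$ by Property~\ref{ppty:depth<=2}. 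The one genuinely delicate point is the codimension-function bookkeeping in the second and third paragraphs: one must check that the normalization $d(\eta)=0$ at generic points, obtained from the degree-$0$ concentration of $\SR^{\bullet}$ over $X^{\circ}$, indeed forces $d(x)=\dim\SO_{X,x}$ at every point of $X$; the remaining steps are a routine assembly of results already established in the text.
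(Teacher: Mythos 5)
Your proof is correct and follows essentially the same route as the paper: identify the codimension function with \( \dim \SO_{X,x} \) on \( X^{\circ} \) via Proposition~\ref{prop:DC-CM}\eqref{prop:DC-CM:5}, extend this to all of \( X \) through generic points (which lie in \( X^{\circ} \) because \( \Codim(X \setminus X^{\circ}, X) \geq 1 \)), and then invoke Proposition~\ref{prop:DSSk} together with a depth argument for the final isomorphism. The only cosmetic differences are that the paper applies Proposition~\ref{prop:DC-CM}\eqref{prop:DC-CM:5} directly to \( \SF = \SO_{X^{\circ}} \), which makes your preliminary step \( \Supp \SL = X \) unnecessary at that stage, and cites Corollary~\ref{cor:basicS1S2} for \( \SL \isom j_{*}(\SL|_{X^{\circ}}) \) instead of Lemma~\ref{lem:depth+codim+Sk}\eqref{lem:depth+codim+Sk:2} with Property~\ref{ppty:depth<=2}.
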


\begin{proof}
It is enough to prove that \( \SR^{\bullet} \) is ordinary.
In fact, if so, then the dualizing sheaf \( \SL \) satisfies \( \bfS_{2} \)
by Proposition~\ref{prop:DSSk}, and we have the isomorphism
\( \SL \isom j_{*}(\SL|_{X^{\circ}}) \) by
Corollary~\ref{cor:basicS1S2}
when \( \Codim(X \setminus X^{\circ}, X) \geq 2 \).
Let \( d \colon X \to \BZZ \) be the codimension function
associated with \( \SR^{\bullet} \). Then, \( d(x) = \dim \SO_{X, x} \)
for any \( x \in X^{\circ} \) by Proposition~\ref{prop:DC-CM}\eqref{prop:DC-CM:5}
applied to \( \SF = \SO_{X^{\circ}} \).
For a point \( x \in X \setminus X^{\circ}\),
we have a generic point \( y \) of \( X \) such that \( x \in \overline{\{y\}} \) and
\( \Codim(\overline{\{x\}}, \overline{\{y\}}) = \dim \SO_{X, x} \).
Then, \( d(y) = 0 \), since \( y \in X^{\circ} \), and we have
\[ d(x) = d(y) + \Codim(\overline{\{x\}}, \overline{\{y\}}) = \dim \SO_{X, x}.\]
Thus, \( \SR^{\bullet} \) is ordinary.
\end{proof}

%%%%%%%%%%%%%%%%%%%%%%%%%%%%%%%%%%%%%%%%%%%%%%%

\subsection{Twisted inverse image}
\label{subsect:twisted inverse}

We shall explain the twisted inverse image functor, the relative duality theorem,
and some base change theorems referring to \cite{ResDual}, \cite{Conrad}, \cite{Lipman09}.
Let \( f \colon Y \to T \) be a morphism of locally Noetherian schemes
which is locally of finite type.
In the theory of Grothendieck duality,
the ``twisted inverse image functor'' \( f^{!} \)
plays an essential role, which
is unfortunately defined only when some suitable conditions are satisfied
(cf.\ \cite[III, Th.~8.7]{ResDual},
\cite[VII, Cor.~3.4]{ResDual}, \cite[Appendix, no.~4]{ResDual},
\cite{Neeman}, \cite{Conrad}, \cite{Lipman09}).
However, \( f^{!}\SO_{T} \) has a unique meaning at least
locally on \( Y \), where \( f^{!}\SO_{T} \) is expressed as
a complex of \( \SO_{Y} \)-modules with coherent cohomology
which vanish in sufficiently negative degree, i.e., \( f^{!}\SO_{T} \in \bfD^{+}_{\coh}(Y) \).
We write \( \omega^{\bullet}_{Y/T} := f^{!}\SO_{T} \) whenever
\( f^{!}\SO_{T} \) is defined, and call it
the \emph{relative dualizing complex for} \( Y/T \)
(or, with respect to \( f \)).
When \( T = \Spec A \), we write \( \omega^{\bullet}_{Y/A} \)
for \( \omega^{\bullet}_{Y/\Spec A} \).

\begin{exam}\label{exam:RDembeddable}
For a scheme \( S \), an \( S \)-morphism \( f \colon Y \to T \)
of locally Noetherian schemes over \( S \) is called an \( S \)-\emph{embeddable morphism}
if \( f = p \circ i \) for a finite morphism \( i \colon Y \to P \times_{S} T \)
and the second projection \( p \colon P \times_{S} T \to T \) for
a locally Noetherian \( S \)-scheme \( P \) such that
\( P \to S \) is a smooth separated morphism of pure relative dimension
(cf.\ \cite[(2.8.1)]{Conrad}, \cite[III, p.~189]{ResDual}).
When \( S = T \), an \( S \)-embeddable morphism is called simply an \emph{embeddable morphism}.
There is a theory of \( f^{!} \colon \bfD^{+}_{\qcoh}(T) \to \bfD^{+}_{\qcoh}(Y) \)
(resp.\ \( f^{!} \colon \bfD^{+}_{\coh}(T) \to \bfD^{+}_{\coh}(Y) \))
for the \( S \)-embeddable morphisms \( f \colon Y \to T \)
of locally Noetherian \( S \)-schemes as in \cite[III, Th.~8.7]{ResDual}
(cf.\ \cite[Th.~2.8.1]{Conrad}).
For a complex \( \SG^{\bullet} \in \bfD^{+}_{\qcoh}(T) \),
if \( f \) is separated and smooth of pure relative dimension \( d \)
(cf.\ Definition~\ref{dfn:PureRelDim}),
then
\[ f^{!}(\SG^{\bullet}) = \varOmega_{Y/T}^{d}[d] \otimes^{\bfL}_{\SO_{Y}} \bfL f^{*}(\SG^{\bullet}), \]
and if \( f \) is a finite morphism, then \( f^{!}(\SG^{\bullet}) \) is defined by
\[ \bfR f_{*} (f^{!}(\SG^{\bullet})) = \SRHom_{\SO_{T}}(f_{*}\SO_{Y}, \SG^{\bullet}).
\]
In the both cases of \( f \) above, \( f^{!}(\SG^{\bullet}) \in \bfD_{\coh}^{+}(Y) \)
if \( \SG^{\bullet} \in \bfD_{\coh}^{+}(T) \).
If \( f = g \circ h\) for two \( S \)-embeddable morphisms \( h \colon Y \to Z \)
and \( g \colon Z \to T \), then \( f^{!} \isom h^{!} \circ g^{!} \)
as functors \( \bfD^{+}_{\qcoh}(T) \to \bfD^{+}_{\qcoh}(Y) \)
(resp.\ \( \bfD^{+}_{\coh}(T) \to \bfD^{+}_{\coh}(Y) \)).
\end{exam}

\begin{exam}\label{exam:RDConrad}
Let \( f \colon Y \to T \) be a morphism of finite type
between Noetherian schemes.
Then, the dimensions of fibers are bounded.
Assume that \( T \) admits a dualizing complex \( \SR^{\bullet}_{T} \).
In this situation, we have the twisted inverse image functor
\( f^{!} \colon \bfD_{\coh}^{+}(T) \to \bfD_{\coh}^{+}(Y) \)
as follows
(cf.\ \cite[VI]{ResDual}, \cite[\S 3]{Conrad}).
For the dualizing complex \( \SR_{T}^{\bullet} \) of \( T \),
we have the corresponding \emph{residual complex}
\( E(\SR_{T}^{\bullet}) \) on \( T \)
(cf.\ \cite[VI, Prop.~1.1]{ResDual}, \cite[Lem.~3.2.1]{Conrad})
and the ``twisted inverse image''
\( f^{\triangle}(E(\SR^{\bullet}_{T})) \) on \( Y \) as
a residual complex on \( Y \) (cf.\ \cite[VI, Th.~3.1, Cor.\ 3.5]{ResDual}, \cite[\S 3.2]{Conrad}),
which corresponds to a dualizing complex
\[ \SR^{\bullet}_{Y} := f^{!}(\SR^{\bullet}_{T}) :=
Q(f^{\triangle}(E(\SR^{\bullet}_{T}))) \]
of \( Y \) (cf.\
\cite[VI, Prop.~1.1, Remarks in p.~306]{ResDual},  \cite[\S 3.3]{Conrad}).
Then, one can define \( f^{!} \colon \bfD_{\coh}^{+}(T) \to \bfD_{\coh}^{+}(Y)\) by
\[ f^{!}(\SG^{\bullet}) = \GD_{Y}(\bfL f^{*}(\GD_{T}(\SG^{\bullet}))), \]
where \( \GD_{Y} \) and \( \GD_{T} \) are the dualizing functors
defined by:
\[ \GD_{Y}(\SF^{\bullet}) := \SRHom_{\SO_{Y}}(\SF^{\bullet}, \SR_{Y}^{\bullet})
\quad \text{and} \quad
\GD_{T}(\SG^{\bullet}) := \SRHom_{\SO_{T}}(\SG^{\bullet}, \SR_{T}^{\bullet}).\]
The definition of \( f^{!} \) does not depend on the choice of
\( \SR^{\bullet}_{T} \) (cf.\ \cite[\S 3.3]{Conrad}), and
\( f^{!} \) satisfies expected compatible properties
in \cite[VII, Cor.~3.4]{ResDual} (cf.\ \cite[Th.~3.3.1]{Conrad}).
Moreover, when \( f \) is an embeddable morphism, then this \( f^{!} \) is isomorphic to
the functor \( f^{!} \) defined in Example~\ref{exam:RDembeddable}
(cf.\ \cite[VI, Th.~3.1, VII, Cor.~3.4]{ResDual}, \cite[\S 3.3]{Conrad}).
\end{exam}

The following is shown in \cite[V, Cor.~8.4, VI, Prop.~3.4]{ResDual} but with an error
concerning \( \pm \) (cf.\ \cite[(3.1.25), (3.2.4)]{Conrad}).

\begin{lem}
\label{lem:codimfunctionDiff}
Let \( f \colon Y \to T\) be a morphism of finite type between Noetherian schemes
such that \( T \) admits a dualizing complex \( \SR^{\bullet}_{T} \).
Let \( \SR^{\bullet}_{Y} \) be the induced dualizing complex \( f^{!}(\SR_{T}^{\bullet}) \) of \( Y \).
Let \( d_{T} \colon T \to \BZZ \) and \( d_{Y} \colon Y \to \BZZ \) be the codimension
functions associated with \( \SR^{\bullet}_{T} \) and \( \SR^{\bullet}_{Y} \), respectively.
Then,
\[ d_{Y}(y) = d_{T}(t) - \transdeg \Bbbk(y)/\Bbbk(t) \]
for any \( y \in Y \) with \( t = f(y) \), where \( \Bbbk(t) \) and \( \Bbbk(y) \)
denote the residue fields of \( \SO_{T, t} \) and \( \SO_{Y, y} \), respectively.
\end{lem}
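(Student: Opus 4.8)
The plan is to reduce the statement, which is local on $Y$, to two model cases — a closed immersion and the projection $\BAA^{n}_{T}\to T$ — and then to compute the codimension function directly in each. First I would note that the formation of the twisted inverse image commutes with restriction to open subsets, and that $d_{T}(t)$, $\transdeg\Bbbk(y)/\Bbbk(t)$, and $d_{Y}(y)$ are unaffected by shrinking $Y$ around $y$ and $T$ around $t=f(y)$ (the restriction of a dualizing complex to an open subscheme is again one). So I may assume $T=\Spec A$ and $Y=\Spec C$ with $C$ a finitely generated $A$-algebra, fix a closed immersion $i\colon Y\injmap W:=\BAA^{n}_{T}$ over $T$, and write $p\colon W\to T$ for the projection, which is separated and smooth of pure relative dimension $n$. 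Then $f=p\circ i$ is embeddable, so $f^{!}\isom i^{!}\circ p^{!}$ by compatibility of the twisted inverse image with composition (Examples~\ref{exam:RDembeddable} and \ref{exam:RDConrad}, \cite[VII, Cor.~3.4]{ResDual}). Since $\Bbbk(y)=\Bbbk(i(y))$ and, as explained below, $d_{Y}(y)=d_{W}(i(y))$, it suffices to prove the formula for a closed immersion and for $p$.

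For a closed immersion $\iota\colon Z\injmap W$ with dualizing complex $\SR^{\bullet}_{W}$ on $W$, one has $\iota_{*}\iota^{!}\SR^{\bullet}_{W}\isom\SRHom_{\SO_{W}}(\iota_{*}\SO_{Z},\SR^{\bullet}_{W})$, and for $z\in Z$ the stalks satisfy $\BExt^{\bullet}_{\SO_{Z,z}}(\Bbbk(z),(\iota^{!}\SR^{\bullet}_{W})_{z})\isom\BExt^{\bullet}_{\SO_{W,z}}(\Bbbk(z),\SR^{\bullet}_{W,z})$ by Hom--tensor adjunction, restriction of scalars along $\SO_{W,z}\to\SO_{Z,z}$ being right adjoint to $\SRHom_{\SO_{W,z}}(\SO_{Z,z},-)$. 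Hence $d_{Z}(z)=d_{W}(\iota(z))$, and since $\Bbbk(z)=\Bbbk(\iota(z))$ lies over the same point of $T$, the formula for the composite follows from that for $p$.

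The core is the computation for $p$. Fix $w\in W$, put $t=p(w)$, $A=\SO_{T,t}$, $B=\SO_{W,w}$; then $A\to B$ is flat and local, and its closed fiber $\bar{B}:=B/\GM_{T,t}B\isom\SO_{W_{t},w}$ is a regular local ring, of some Krull dimension $r$, because $p$ is smooth. From $p^{!}\SR^{\bullet}_{T}\isom\varOmega^{n}_{W/T}[n]\otimes_{\SO_{W}}p^{*}\SR^{\bullet}_{T}$ with $\varOmega^{n}_{W/T}$ invertible, we get $(p^{!}\SR^{\bullet}_{T})_{w}\isom(\SR^{\bullet}_{T,t}\otimes_{A}B)[n]$ as complexes of $B$-modules. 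Then $\RHom_{B}(\Bbbk(w),\SR^{\bullet}_{T,t}\otimes_{A}B)\isom\RHom_{\bar{B}}(\Bbbk(w),\RHom_{B}(\bar{B},\SR^{\bullet}_{T,t}\otimes_{A}B))$ by adjunction; flat base change along $A\to B$ gives $\RHom_{B}(\bar{B},\SR^{\bullet}_{T,t}\otimes_{A}B)\isom\RHom_{A}(\Bbbk(t),\SR^{\bullet}_{T,t})\otimes_{A}B\isom\bar{B}[-d_{T}(t)]$; and $\bar{B}$ regular (hence Gorenstein) of dimension $r$ gives $\RHom_{\bar{B}}(\Bbbk(w),\bar{B})\isom\Bbbk(w)[-r]$. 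Combining, $\RHom_{B}(\Bbbk(w),(p^{!}\SR^{\bullet}_{T})_{w})\isom\Bbbk(w)[n-r-d_{T}(t)]$, so $d_{W}(w)=d_{T}(t)+r-n$. Finally $W_{t}$ is of finite type over $\Bbbk(t)$, so $\dim_{w}W_{t}=\dim\SO_{W_{t},w}+\transdeg\Bbbk(w)/\Bbbk(t)=r+\transdeg\Bbbk(w)/\Bbbk(t)$ by \cite[IV, (5.2.1)]{EGA}, which equals $n$ since $p$ has pure relative dimension $n$; hence $r-n=-\transdeg\Bbbk(w)/\Bbbk(t)$ and $d_{W}(w)=d_{T}(t)-\transdeg\Bbbk(w)/\Bbbk(t)$, which with the closed-immersion step proves the lemma.

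The one place that demands genuine care — and the reason the cited references \cite[V, Cor.~8.4, VI, Prop.~3.4]{ResDual} carry the sign error noted after the statement — is the bookkeeping of cohomological shifts (conventions as in Notation and conventions, \eqref{nc:truncshift}) through the adjunction, the flat base-change isomorphism for $\SRHom$, the Gorenstein duality $\RHom_{\bar{B}}(\Bbbk(w),\bar{B})\isom\Bbbk(w)[-r]$, and the twist by $\varOmega^{n}_{W/T}[n]$; once these are handled consistently, everything else is formal.
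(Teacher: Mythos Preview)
Your proof is correct and follows the same overall plan as the paper: localize, factor \(f\) through an embedding, and treat the two model cases separately. The paper factors through a general \(S\)-embeddable morphism (finite followed by smooth separated of pure relative dimension), while you choose the concrete factorization through a closed immersion into \(\BAA^{n}_{T}\); since any one factorization suffices, this difference is cosmetic. Your closed-immersion step matches the paper's finite-morphism step almost verbatim, both reducing to the adjunction identifying \(\RHom\) into \(\iota^{!}\SR^{\bullet}\) with \(\RHom\) into \(\SR^{\bullet}\).

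The one genuine difference is in the smooth step. The paper does not compute \(\RHom_{B}(\Bbbk(w),-)\) directly; instead it reads off \(\SH^{i}(\SR^{\bullet}_{Y})_{y}\isom\SH^{i+d}(\SR^{\bullet}_{T})_{t}\otimes_{\SO_{T,t}}\SO_{Y,y}\) from flatness, invokes Proposition~\ref{prop:DC-CM}\eqref{prop:DC-CM:1},\eqref{prop:DC-CM:5} to identify \(d_{Y}(y)-\dim\SO_{Y,y}\) and \(d_{T}(t)-\dim\SO_{T,t}\) with the lowest nonvanishing cohomology degrees, and then combines this with the flat dimension formula \(\dim\SO_{Y,y}=\dim\SO_{T,t}+\dim\SO_{Y_{t},y}\) and \cite[IV, Cor.~(5.2.3)]{EGA}. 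Your route---adjunction through \(\bar{B}\), flat base change for \(\SRHom\), and Gorenstein duality for the regular local ring \(\bar{B}\)---is more self-contained (it does not call on Proposition~\ref{prop:DC-CM}) and tracks the shifts explicitly, which is exactly where the sign error in \cite[V, Cor.~8.4, VI, Prop.~3.4]{ResDual} occurs; the paper's route is shorter but leans on earlier structural results. Both are standard and both buy the same conclusion.
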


\begin{proof}
Since the assertion is local on \( Y \), we may assume that \( Y \to T \) is an embeddable morphism.
Hence, it is enough to prove assuming that \( f \) is a finite morphism
or a smooth and separated morphism.
Assume first that \( f \) is finite.
Then,
\[ \bfR f_{*}\SRHom_{\SO_{Y}}(\SF, \SR^{\bullet}_{Y}) \isom
\SRHom_{\SO_{T}}(f_{*}\SF, \SR^{\bullet}_{T}) \]
for any coherent \( \SO_{Y} \)-module \( \SF \)
by \cite[III, Th.~6.7]{ResDual} (cf. Theorem~\ref{thm:DualityProper} below).
Applying this to \( \SF = \SO_{Z} \) for the closed subscheme
\( Z = \overline{\{y\}}\) with reduced structure, and localizing \( Y \),
we have
\[ \RHom_{\SO_{Y, y}}(\Bbbk(y), (\SR_{Y}^{\bullet})_{y}) \isom_{\qis}
\RHom_{\SO_{T, t}}(\Bbbk(y), (\SR_{T}^{\bullet})_{t}). \]
Since \( \Bbbk(y) \) is a finite-dimensional \( \Bbbk(t) \)-vector space,
we have \( \transdeg \Bbbk(y)/\Bbbk(t) = 0\) and \( d_{Y}(y) = d_{T}(t) \).
Thus, we are done in the case where \( f \) is finite.
Assume next that \( f \) is smooth and separated.
We may assume furthermore that \( f \) has pure relative dimension \( d \)
by localizing \( Y \). Then,
\[ \SR^{\bullet}_{Y} \isom_{\qis}
\varOmega^{d}_{Y/T}[d] \otimes^{\bfL}_{\SO_{Y}} \bfL f^{*}(\SR_{T}^{\bullet})
\]
as in Example~\ref{exam:RDembeddable}, and it implies that
\[ \SH^{i}(\SR^{\bullet}_{Y})_{y} \isom
\SH^{i+d}(\SR^{\bullet}_{T})_{t} \otimes_{\SO_{T, t}} \SO_{Y, y}, \]
since \( f \) is flat.
Here, \( \SH^{i}(\SR^{\bullet}_{Y})_{y} \ne 0 \) if and only if
\(\SH^{i+d}(\SR^{\bullet}_{T})_{t} \ne 0 \), since \( f \) is faithfully flat.
We know that
\[ d_{T}(t) - \dim \SO_{T, t} = \inf\{i \mid \SH^{i}(\SR^{\bullet}_{T})_{t} \ne 0\} \]
by \eqref{prop:DC-CM:1} and \eqref{prop:DC-CM:5} of Proposition~\ref{prop:DC-CM}.
The similar formula holds also for \( (Y, y) \) and \( \SR^{\bullet}_{Y} \).
Thus,
\[ d_{Y}(y) - \dim \SO_{Y, y} = d_{T}(t) - \dim \SO_{T, t} - d.\]
Since \( f \) is flat, we have
\[ \dim \SO_{Y, y} = \dim \SO_{T, t} + \dim \SO_{Y_{t}, y} \]
for the fiber \( Y_{t} = f^{-1}(t) \)
by \eqref{eq:fact:elem-flat:1}. Furthermore, we have
\[ d = \dim_{y} Y_{t} = \dim \SO_{Y_{t}, y} + \transdeg \Bbbk(y)/\Bbbk(t), \]
since \( Y_{t} \) is algebraic over \( \Bbbk(t) \) (cf.\ \cite[IV, Cor.~(5.2.3)]{EGA}).
Therefore,
\[ d_{Y}(y) = d_{T}(t) - d + \dim \SO_{Y, y} - \dim \SO_{T, t} =
d_{T}(t) - \transdeg \Bbbk(y)/\Bbbk(t). \qedhere\]
\end{proof}

\begin{dfn}[canonical dualizing complex]\label{dfn:DCalg}
Let \( X \) be an algebraic scheme over a field \( \Bbbk \), i.e., a \( \Bbbk \)-scheme
of finite type.
We define the \emph{canonical dualizing complex}
\( \omega_{X/\Bbbk}^{\bullet} \)
of \( X \)
to be the twisted inverse image \( f^{!}(\Bbbk) \)
for the structure morphism \( f \colon X \to \Spec \Bbbk \).
\end{dfn}

The dualizing complex \( \omega_{X/\Bbbk}^{\bullet} \) has the following property related to
Serre's conditions \( \bfS_{k} \).

\begin{lem}\label{lem:algschemeSk}
Let \( X \) be an \( n \)-dimensional algebraic scheme over a field \( \Bbbk \).
For an integer \( i \), let \( Z_{i} \) be the support of
\( \SH^{-i}(\omega_{X/\Bbbk}^{\bullet}) \). Then,
\( Z_{i} = \emptyset \) for any \( i > n \), and \( Z_{n}\) is the union of
irreducible components of \( X \) of dimension \( n \).
If \( X \) is equi-dimensional, then
\( \omega_{X/\Bbbk}^{\bullet}[-n] \) is an ordinary dualizing complex,
and the following hold for integers \( k \geq 1 \)\emph{:}
\( X \) satisfies \( \bfS_{k} \) if and only if \( \dim Z_{i} \leq i -k \)
for any \( i \ne n \).
\end{lem}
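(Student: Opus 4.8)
The plan is to recognize \( \omega^{\bullet}_{X/\Bbbk} = f^{!}(\SO_{\Spec \Bbbk}) \) as a dualizing complex of \( X \) (by Example~\ref{exam:RDConrad}, since \( \SO_{\Spec \Bbbk} = \Bbbk \) is a dualizing complex of the point), determine its associated codimension function, and then feed everything into Proposition~\ref{prop:DC-CM}, Lemma~\ref{lem:DC-CM:ordinary} and Corollary~\ref{cor:lem:DC-CM:supportSk}, applied to \( \SF = \SO_{X} \). Throughout I would use the elementary dimension theory of algebraic schemes: for an irreducible component \( W \) of \( X \) through \( x \), \( \dim \SO_{W, x} = \dim W - \dim \overline{\{x\}} \) (so \( \dim \SO_{X, x} + \dim \overline{\{x\}} = \dim_{x} X \le n \), with equality exactly when \( x \) lies on an \( n \)-dimensional component), and the fact that a connected equi-dimensional algebraic scheme over a field is bi-equi-dimensional.

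First I would apply Lemma~\ref{lem:codimfunctionDiff} to the structure morphism \( f \colon X \to \Spec \Bbbk \): since the codimension function of \( \Spec \Bbbk \) is identically \( 0 \), the codimension function of \( \SR^{\bullet} := \omega^{\bullet}_{X/\Bbbk} \) is \( d(x) = -\transdeg \Bbbk(x)/\Bbbk = -\dim \overline{\{x\}} \) for every \( x \in X \). Because \( \SBExt^{j}_{\SO_{X}}(\SO_{X}, \SR^{\bullet}) = \SH^{j}(\SR^{\bullet}) \), Proposition~\ref{prop:DC-CM}\eqref{prop:DC-CM:1} with \( \SF = \SO_{X} \) gives \( \SH^{-i}(\SR^{\bullet})_{x} = 0 \) unless \( \dim \overline{\{x\}} \le i \le \dim \overline{\{x\}} + \dim \SO_{X, x} = \dim_{x} X \le n \); this forces \( Z_{i} = \emptyset \) for \( i > n \). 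For \( Z_{n} \), the same range shows \( x \in Z_{n} \) implies \( \dim_{x} X = n \), and conversely for such \( x \) one has \( -n = d(x) - \dim \SO_{X, x} \), so Proposition~\ref{prop:DC-CM}\eqref{prop:DC-CM:5} yields \( \SH^{-n}(\SR^{\bullet})_{x} \ne 0 \); hence \( Z_{n} \) is exactly the union of the \( n \)-dimensional irreducible components.

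Next, assuming \( X \) equi-dimensional of dimension \( n \), one has \( \dim \SO_{X, x} = n - \dim \overline{\{x\}} \) for all \( x \), so the codimension function of the shifted complex \( \SR^{\bullet}[-n] \), namely \( d(x) + n = n - \dim \overline{\{x\}} = \dim \SO_{X, x} \), shows that \( \SR^{\bullet}[-n] \) is an ordinary dualizing complex (Definition~\ref{dfn:ordinaryDC+DS}). For the \( \bfS_{k} \)-criterion I would reduce to \( X \) connected — a connected component of an equi-dimensional algebraic scheme is equi-dimensional of the same dimension, and \( \omega^{\bullet}_{X/\Bbbk} \) restricts to its canonical dualizing complex — then note that such an \( X \) is connected, equi-dimensional, equi-codimensional and Noetherian, and apply Corollary~\ref{cor:lem:DC-CM:supportSk}\eqref{cor:lem:DC-CM:supportSk:2} with \( \SF = \SO_{X} \) and ordinary dualizing complex \( \SR^{\bullet}[-n] \). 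Here the constant is \( c = \dim \SO_{X,x} - \dim \SO_{X,x} = 0 \), and \( \SBExt^{j}_{\SO_{X}}(\SO_{X}, \SR^{\bullet}[-n]) = \SH^{j-n}(\SR^{\bullet}) \) has support \( Z_{n-j} \); the Corollary's inequality \( \dim \Supp \SG^{(j)} \le n - j - k \) for \( j > 0 \) becomes \( \dim Z_{i} \le i - k \) for \( i < n \), which together with \( Z_{i} = \emptyset \) for \( i > n \) is precisely the stated condition for all \( i \ne n \).

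The main obstacle will be bookkeeping rather than depth: pinning down \( d(x) = -\dim \overline{\{x\}} \) and the dimension identities for algebraic schemes with correct EGA references, verifying the bi-equi-dimensionality needed to invoke Corollary~\ref{cor:lem:DC-CM:supportSk}\eqref{cor:lem:DC-CM:supportSk:2}, and carefully tracking the index shift between \( \SH^{-i}(\omega^{\bullet}_{X/\Bbbk}) \) and the sheaves \( \SG^{(j)} \) / strata \( Z^{(j)} \) attached to the ordinary complex \( \SR^{\bullet}[-n] \).
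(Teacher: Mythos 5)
Your proposal is correct and follows essentially the same route as the paper: computing the codimension function \( d(x) = -\transdeg \Bbbk(x)/\Bbbk \) via Lemma~\ref{lem:codimfunctionDiff}, deducing the vanishing and the description of \( Z_{n} \) from Proposition~\ref{prop:DC-CM}\eqref{prop:DC-CM:1} and \eqref{prop:DC-CM:5}, checking ordinariness of the shift, and obtaining the \( \bfS_{k} \)-criterion from Corollary~\ref{cor:lem:DC-CM:supportSk}\eqref{cor:lem:DC-CM:supportSk:2}. The only cosmetic differences are that you apply the corollary to the shifted complex \( \omega^{\bullet}_{X/\Bbbk}[-n] \) (so \( c = 0 \)) rather than directly with \( c = -n \), and that you spell out the reduction to connected components and the equi-codimensionality check, which the paper treats more briefly.
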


\begin{proof}
By Lemma~\ref{lem:codimfunctionDiff}, \( d(x) = -\transdeg \Bbbk(x)/\Bbbk \)
for the codimension function \( d \colon X
\linebreak %%%
\to \BZZ \)
associated with the dualizing complex \( \omega_{X/\Bbbk}^{\bullet} \)
(cf.\ Example~\ref{exam:RDConrad}). Moreover,
\begin{equation}\label{eq:lem:algschemeSk}
n \geq \dim_{x} X = \dim \SO_{X, x} + \transdeg \Bbbk(x)/\Bbbk
\end{equation}
by \cite[IV, Cor.~(5.2.3)]{EGA}.
Thus, \( d(x) - \dim \SO_{X, x} = - \dim_{x} X \geq -n \), and
\( \SH^{-i}(\omega_{X/\Bbbk}^{\bullet})
\linebreak %%%
= 0\) for any \( i > n  \) by
Proposition~\ref{prop:DC-CM}\eqref{prop:DC-CM:1} applied to the case where
\( (\SR^{\bullet}, \SF) = (\omega_{X/\Bbbk}^{\bullet}, \SO_{X}) \).
Thus, \( Z_{i} = \emptyset \) for any \( i > n \).
If \( \dim_{x} X = n \), then
\( \SH^{-n}(\omega_{X/\Bbbk}^{\bullet})_{x} \ne 0  \) by
Proposition~\ref{prop:DC-CM}\eqref{prop:DC-CM:5}.
If \( \dim_{x} X < n \), then
\( \SH^{-n}(\omega_{X/\Bbbk}^{\bullet})_{x} = 0  \) by
Proposition~\ref{prop:DC-CM}\eqref{prop:DC-CM:1}.
Therefore, \( Z_{n} \) is just
the union of irreducible components of \( X \) of dimension \( n \).

Assume that \( X \) is equi-dimensional, i.e., \( \dim_{x} X = n \) for any \( x \in X \).
Then, \( \omega_{X/\Bbbk}^{\bullet}[-n] \)
is an ordinary dualizing complex, since the associated codimension function is
\( x \mapsto d(x) + n = \dim \SO_{X, x} \).
Moreover, \( X \) is equi-codimensional,
since \( n = \dim_{z} X = \dim \SO_{X, z} \) for any closed point \( z \) of \( X \)
by \eqref{eq:lem:algschemeSk}.
Thus, the assertion on \( \bfS_{k} \) is a consequence of
Corollary~\ref{cor:lem:DC-CM:supportSk}\eqref{cor:lem:DC-CM:supportSk:2},
since \( d(x) - \dim \SO_{X, x} = -n \) for any \( x \in X \).
\end{proof}

\begin{dfn}[canonical sheaf]\label{dfn:canosheaf}
Let \( X \) be an algebraic scheme over a field \( \Bbbk \).
Assume that \( X \) is locally equi-dimensional.
This is satisfied for example when \( X \) satisfies \( \bfS_{2} \)
(cf.\ Fact~\ref{fact:S2}\eqref{fact:S2:1}).
Then, we define the canonical sheaf
\( \omega_{X/\Bbbk} \) by
\[  \omega_{X/\Bbbk}|_{X_{\alpha}} :=
\SH^{-\dim X_{\alpha}}(\omega_{X/\Bbbk}^{\bullet})|_{X_{\alpha}}\]
for any connected component \( X_{\alpha} \) of \( X \).
\end{dfn}

\begin{remn}
The canonical sheaf \( \omega_{X/\Bbbk} \)
is a dualizing sheaf of \( X \) in the sense of
Definition~\ref{dfn:ordinaryDC+DS}. In fact,
\[ \omega^{\bullet}_{X_{\alpha}/\Bbbk}[-\dim X_{\alpha}] =
\omega_{X/\Bbbk}^{\bullet}[-\dim X_{\alpha}]|_{X_{\alpha}}\]
is an ordinary dualizing complex of the connected component \( X_{\alpha} \)
by Lemma~\ref{lem:algschemeSk}.
In particular, if \( X \) is connected and Cohen--Macaulay, then
\( \omega^{\bullet}_{X/\Bbbk} \isom_{\qis} \omega_{X/\Bbbk}[\dim X] \).
\end{remn}

By Corollary~\ref{cor:CMCodOne}, we have:

\begin{cor}\label{cor:S2S2}
For an algebraic scheme \( X \) over \( \Bbbk \), if it is locally equi-dimensional,
then \( \omega_{X/\Bbbk} \) satisfies \( \bfS_{2} \).
\end{cor}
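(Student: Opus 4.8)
The plan is to reduce to the case of a connected scheme and then to apply Corollary~\ref{cor:CMCodOne} with the open subset taken to be the Cohen--Macaulay locus $\CM(X)$. Since the condition $\bfS_{2}$ is local on $X$ and $\omega_{X/\Bbbk}$ is defined component by component (Definition~\ref{dfn:canosheaf}), I may assume that $X$ is connected. As $X$ is of finite type over the Gorenstein ring $\Bbbk$ it admits a dualizing complex, whose associated codimension function is $d(x) = -\transdeg\Bbbk(x)/\Bbbk$ by Lemma~\ref{lem:codimfunctionDiff}; together with the dimension formula for algebraic schemes this gives $d(x) - \dim\SO_{X,x} = -\dim_{x}X$, so Lemma~\ref{lem:CodimFunction:constant} and the hypothesis that $X$ is locally equi-dimensional force $x \mapsto \dim_{x}X$ to be constant. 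Hence $X$ is equi-dimensional and, writing $n := \dim X$, the shift $\SR^{\bullet} := \omega^{\bullet}_{X/\Bbbk}[-n]$ is an \emph{ordinary} dualizing complex of $X$ by Lemma~\ref{lem:algschemeSk}, with $\SL := \SH^{0}(\SR^{\bullet}) = \SH^{-n}(\omega^{\bullet}_{X/\Bbbk}) = \omega_{X/\Bbbk}$ by Definition~\ref{dfn:canosheaf} (equivalently, one may quote directly the Remark following Definition~\ref{dfn:canosheaf}, which records that $\omega_{X/\Bbbk}$ is a dualizing sheaf).

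Next I would verify the hypotheses of Corollary~\ref{cor:CMCodOne} for the open subset $X^{\circ} := \CM(X)$, which is open by Proposition~\ref{prop:CMlocus}. Every generic point of $X$ lies in $X^{\circ}$ because Artinian local rings are Cohen--Macaulay, so $\Codim(X \setminus X^{\circ}, X) \geq 1$. On $X^{\circ}$ the complex $\SR^{\bullet}$ is quasi-isomorphic to $\SL$: since $\SR^{\bullet}$ is ordinary, $\SH^{i}(\SR^{\bullet}) = 0$ for $i < 0$ and $\Supp\SH^{0}(\SR^{\bullet}) = X$ (Lemma~\ref{lem:DC-CM:ordinary}), while for $x \in \CM(X)$ and $i > 0$ one has $x \notin \Supp\SH^{i}(\SR^{\bullet})$ by Lemma~\ref{lem:DC-CM:ordinary}\eqref{lem:DC-CM:ordinary:4}; thus the cohomology of $\SR^{\bullet}|_{X^{\circ}}$ is concentrated in degree $0$, whence $\SR^{\bullet}|_{X^{\circ}} \isom_{\qis} \SL|_{X^{\circ}}$ via the canonical truncation morphism. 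Corollary~\ref{cor:CMCodOne} then applies and yields that $\SL = \omega_{X/\Bbbk}$ satisfies $\bfS_{2}$; reassembling the connected components completes the argument.

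I do not expect a genuine obstacle: all the substantive input is already available---the construction and ordinariness of $\omega^{\bullet}_{X/\Bbbk}[-n]$ (Lemma~\ref{lem:algschemeSk}), the structure of an ordinary dualizing complex over the Cohen--Macaulay locus (Lemma~\ref{lem:DC-CM:ordinary}), and the $\bfS_{2}$-conclusion of Corollary~\ref{cor:CMCodOne}. The one step deserving a little care is the passage from ``locally equi-dimensional and connected'' to ``equi-dimensional'' for algebraic schemes, which is exactly where Lemma~\ref{lem:CodimFunction:constant} and the identification of the codimension function enter; alternatively, once $\SR^{\bullet}$ is known to be ordinary one could bypass Corollary~\ref{cor:CMCodOne} and invoke Proposition~\ref{prop:DSSk} directly, but the route through Corollary~\ref{cor:CMCodOne} is the shortest to cite.
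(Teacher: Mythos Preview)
Your proposal is correct and follows the paper's own approach: the paper simply records ``By Corollary~\ref{cor:CMCodOne}, we have:'' before stating the corollary, and your argument is a careful unpacking of that citation with $X^{\circ} = \CM(X)$, including the passage from locally equi-dimensional connected to equi-dimensional via Lemma~\ref{lem:CodimFunction:constant}. As you yourself note, once ordinariness of $\omega^{\bullet}_{X/\Bbbk}[-n]$ is established (which the Remark preceding the corollary already records), Proposition~\ref{prop:DSSk} gives the $\bfS_{2}$-conclusion directly without needing to verify the Cohen--Macaulay-locus hypotheses of Corollary~\ref{cor:CMCodOne}; either citation is fine.
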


For a proper morphism \( f \colon Y \to T \) of Noetherian schemes,
we have the following general result
on the twisted inverse image functor \( f^{!} \),
which is derived from \cite[Th.~4.1.1]{Lipman09}:

\begin{thm}[Grothendieck duality for a proper morphism]\label{thm:DualityProper}
Let \( f \colon Y \to T \) be a proper morphism of Noetherian schemes.
Then, there is a triangulated functor
\( f^{!} \colon \bfD_{\qcoh}(T) \to \bfD_{\qcoh}(Y) \) which induces
\( \bfD_{\coh}^{+}(T) \to \bfD_{\coh}^{+}(Y)  \)
and which is right adjoint to the derived functor
\( \bfR f_{*} \colon \bfD_{\qcoh}(Y) \to \bfD_{\qcoh}(T) \) in the sense
that there is a functorial isomorphism
\[ \RHom_{\SO_{T}}(\bfR f_{*}(\SF^{\bullet}), \SG^{\bullet}) \isom_{\qis}
\RHom_{\SO_{Y}}(\SF^{\bullet}, f^{!}(\SG^{\bullet}))\]
for \( \SF^{\bullet} \in \bfD_{\qcoh}(Y) \) and \( \SG^{\bullet} \in \bfD_{\qcoh}(T) \).
\end{thm}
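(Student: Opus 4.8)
The statement is the global duality theorem for proper morphisms of Noetherian schemes, and the plan is to \emph{deduce it from the cited result} \cite[Th.~4.1.1]{Lipman09} rather than to reconstruct the duality machinery from scratch. The reference provides, for any separated morphism $f\colon Y \to T$ essentially of finite type between Noetherian schemes, a right adjoint $f^{\times}$ (or the twisted inverse image in the relevant subcategory) to $\bfR f_{*}$ on the unbounded derived category of quasi-coherent sheaves, together with the base-change and compatibility properties. The whole task is therefore to extract the proper-morphism case in the precise form stated: a triangulated functor $f^{!}\colon \bfD_{\qcoh}(T) \to \bfD_{\qcoh}(Y)$, right adjoint to $\bfR f_{*}$, which moreover preserves bounded-below complexes with coherent cohomology.

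First I would invoke \cite[Th.~4.1.1]{Lipman09} to obtain the adjoint pair: since a proper morphism is in particular separated and of finite type, hence essentially of finite type, the hypotheses are met, and one gets a triangulated functor $f^{!}$ with the functorial isomorphism
\[
\RHom_{\SO_{T}}(\bfR f_{*}(\SF^{\bullet}), \SG^{\bullet}) \isom_{\qis}
\RHom_{\SO_{Y}}(\SF^{\bullet}, f^{!}(\SG^{\bullet}))
\]
for $\SF^{\bullet} \in \bfD_{\qcoh}(Y)$ and $\SG^{\bullet} \in \bfD_{\qcoh}(T)$; this is exactly the adjunction on Hom-complexes claimed. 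Triangulatedness of a right adjoint between triangulated categories is automatic. Second I would address the boundedness and coherence statement, i.e.\ that $f^{!}$ sends $\bfD^{+}_{\coh}(T)$ into $\bfD^{+}_{\coh}(Y)$. For this one reduces, by the usual dévissage and by working locally on $T$, to the case where $T$ is affine and $\SG^{\bullet}$ is a single coherent sheaf, and then uses that a proper morphism of Noetherian schemes has bounded cohomological dimension for $\bfR f_{*}$ and that $f^{!}$ on such inputs agrees with the classical construction of \cite[III]{ResDual} via residual complexes (as recalled in Example~\ref{exam:RDConrad} in the excerpt when $T$ admits a dualizing complex, and in general by the compatibility assertions in \cite{Lipman09}); finite injective dimension of the output then follows from the corresponding property of the dualizing complex input, or directly from Grothendieck's finiteness theorem for $\bfR f_{*}$ of coherent sheaves under proper morphisms.

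The main obstacle I anticipate is \emph{bookkeeping rather than mathematics}: reconciling the formulation in \cite{Lipman09} — which works with the full quasi-coherent derived category and states adjunction at the level of derived Hom in $\bfD_{\qcoh}(T)$ — with the slightly more classical phrasing here, and checking that the preservation of $\bfD^{+}_{\coh}$ is genuinely available in the generality of arbitrary Noetherian $T$ (not merely those admitting a dualizing complex). I would handle this by citing the relevant consequences of \cite[Th.~4.1.1]{Lipman09} together with the standard finiteness results, noting that the coherence of the cohomology sheaves of $f^{!}(\SG^{\bullet})$ for $\SG^{\bullet} \in \bfD^{+}_{\coh}(T)$ is part of the package proved there, and that the local comparison with Example~\ref{exam:RDConrad} shows the two notions of $f^{!}$ coincide wherever both are defined. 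No new estimates are needed; the proof is essentially a citation assembled with care about which subcategory each functor is being restricted to.
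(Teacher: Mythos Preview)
Your approach matches the paper's: the theorem is not proved from scratch but is extracted from \cite[Th.~4.1.1]{Lipman09}, and the paper's own treatment is confined to two remarks following the statement rather than a formal proof environment. The adjunction and triangulatedness are handled identically.

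The one place where your proposal is vaguer than the paper is the inclusion \( f^{!}(\bfD^{+}_{\coh}(T)) \subset \bfD^{+}_{\coh}(Y) \). You suggest reducing to affine \( T \) and a single coherent sheaf, then invoking either the residual-complex construction (Example~\ref{exam:RDConrad}) when \( T \) admits a dualizing complex, or unspecified ``compatibility assertions in \cite{Lipman09}'' otherwise; you also float the hope that coherence is already ``part of the package'' in \cite{Lipman09}. The paper is more concrete here: it reduces, via the flat base change isomorphism \cite[Cor.~4.4.3]{Lipman09}, to the case where \( T \) is the spectrum of a Noetherian local ring, and then applies \cite[Lem.~1]{Verdier} directly. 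This sidesteps the issue you flagged---that an arbitrary Noetherian \( T \) need not carry a dualizing complex---without relying on the residual-complex comparison at all. Your route through Example~\ref{exam:RDConrad} would only cover the case where a dualizing complex exists, and the fallback to ``compatibility assertions'' is not a proof; the paper's reduction-plus-Verdier argument closes that gap cleanly.
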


\begin{remn}
In \cite[Th.~4.1.1]{Lipman09}, the existence of a similar right adjoint \( f^{\times} \)
is proved for a quasi-compact and quasi-separated morphism
\( f \colon Y \to T \)
of quasi-compact and quasi-separated schemes \( Y \) and \( T \). When \( f \) is proper, it is written as
\( f^{!} \)
(cf.\ the paragraph just before \cite[Cor.\ 4.2.2]{Lipman09}).
By \cite[Th.~A]{Spa}, the total derived functor \( \RHom_{\SO_{X}} \) of \( \Hom_{\SO_{X}} \)
exists for any scheme \( X \) as a bi-functor \( \bfD(X)^{\op} \times \bfD(X) \to \bfD(\BZZ)  \),
and there exists also the total right derived functor \( \bfR f_{*} \colon \bfD(Y) \to \bfD(T) \) of
the direct image functor \( f_{*}\).
When \( f \colon Y \to T \) is a proper morphism of Noetherian schemes, we have:

\begin{itemize}
\item  \( \bfR f_{*} (\bfD_{\qcoh}(Y)) \subset \bfD_{\qcoh}(T) \) by
\cite[Prop.~3.9.1]{Lipman09},

\item  \( \bfR f_{*}(\bfD_{\coh}^{+}(Y)) \subset \bfD_{\coh}^{+}(T) \) by \cite[II, Prop.~2.2]{ResDual},
and
\item  \( \bfR f_{*}(\bfD_{\coh}^{-}(Y)) \subset \bfD_{\coh}^{-}(T) \)
by the explanation just before \cite[Cor.\ 4.2.2]{Lipman09}.
\end{itemize}
The functor \( f^{!} \) is bounded below (cf.\ \cite[Def.\ 11.1.1]{Lipman09}).
Thus, \( f^{!}(\bfD_{\qcoh}^{+}(T)) \subset \bfD_{\qcoh}^{+}(Y) \).
The inclusion \( f^{!}(\bfD_{\coh}^{+}(T)) \subset \bfD_{\coh}^{+}(Y) \) is proved
firstly by reducing to the case where \( T \) is the spectrum of a Noetherian local ring by the base change
isomorphism (cf.\ \cite[Cor.\ 4.4.3]{Lipman09}), and secondly by applying \cite[Lem.\ 1]{Verdier}.
\end{remn}

\begin{remn}
When \( T \) admits a dualizing complex (or a residual complex),
Theorem~\ref{thm:DualityProper} for \( \SG^{\bullet} \in \bfD_{\coh}^{+}(T) \)
is a consequence of \cite[VII, Cor.~3.4]{ResDual}.
In \cite[Th.~2]{ResDualDel}, Deligne has proved Theorem~\ref{thm:DualityProper}
for \( \SF^{\bullet} \in \bfD_{\coh}^{b}(Y) \) without
assuming the existence of dualizing complex of \( T \).
These results are summarized by Verdier as \cite[Th.~1]{Verdier},
which is almost the same as
Theorem~\ref{thm:DualityProper} in the case where \( T \) has finite Krull dimension.
Neeman \cite{Neeman} gives a new idea toward the proof of Theorem~\ref{thm:DualityProper}
by using Brown representability.
He generalizes Theorem~\ref{thm:DualityProper}
to the case where \( Y \) and \( T \) are only quasi-compact and separated schemes
but \( \bfD_{\qcoh}(T) \) and \( \bfD_{\qcoh}(Y) \) are replaced
with \( \bfD(\QCoh(\SO_{T})) \) and \( \bfD(\QCoh(\SO_{Y})) \),
respectively (cf.\ \cite[Exam.~4.2]{Neeman}).
Neeman's idea is used in Lipman's article \cite{Lipman09},
which contains generalizations of Theorem~\ref{thm:DualityProper}
to non-proper and non-Noetherian case.
\end{remn}

The sheafified form of the duality theorem is as follows (cf.\ \cite[Th.~4.2]{Lipman09}):

\begin{cor}\label{cor:thm:DualityProper}
In the situation of Theorem~\emph{\ref{thm:DualityProper}},
there exists a canonical quasi-isomorphism
\[ \bfR f_{*}\SRHom_{\SO_{Y}}(\SF^{\bullet}, f^{!}\SG^{\bullet})
\isom_{\qis} \SRHom_{\SO_{T}}(\bfR f_{*}\SF^{\bullet}, \SG^{\bullet}) \]
for any \( \SF^{\bullet} \in \bfD_{\qcoh}(Y) \) and \( \SG^{\bullet} \in \bfD_{\qcoh}(T) \).
\end{cor}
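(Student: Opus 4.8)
The plan is to produce the asserted quasi-isomorphism as a canonical morphism obtained from the adjunction $\bfR f_{*} \dashv f^{!}$ of Theorem~\ref{thm:DualityProper}, and then to check that it is a quasi-isomorphism by localizing on $T$. First I would construct the morphism. The adjunction furnishes a counit (trace) morphism $\mathrm{tr}_{f} \colon \bfR f_{*} f^{!}\SG^{\bullet} \to \SG^{\bullet}$ in $\bfD_{\qcoh}(T)$. Combining it with the natural morphism $\bfR f_{*}\SRHom_{\SO_{Y}}(\SF^{\bullet}, f^{!}\SG^{\bullet}) \to \SRHom_{\SO_{T}}(\bfR f_{*}\SF^{\bullet}, \bfR f_{*}f^{!}\SG^{\bullet})$ coming from the functoriality of $\bfR f_{*}$ on $\SRHom$ (available on the unbounded quasi-coherent derived categories by \cite{Spa} and \cite{Lipman09}), and with $\SRHom_{\SO_{T}}(\bfR f_{*}\SF^{\bullet}, \mathrm{tr}_{f})$, one obtains a canonical morphism
\[ \theta \colon \bfR f_{*}\SRHom_{\SO_{Y}}(\SF^{\bullet}, f^{!}\SG^{\bullet}) \to \SRHom_{\SO_{T}}(\bfR f_{*}\SF^{\bullet}, \SG^{\bullet}). \]
Equivalently, $\theta$ is the unique morphism which, under the identification $\RHom_{\SO_{T}}(-,-) \isom_{\qis} \bfR\Gamma(T, \SRHom_{\SO_{T}}(-,-))$ and its analogue on $Y$, becomes the global duality isomorphism of Theorem~\ref{thm:DualityProper}.

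Next I would verify that $\theta$ is a quasi-isomorphism. Since the cohomology sheaves of the cone of $\theta$ are quasi-coherent and quasi-coherent sheaves are acyclic on affine schemes, it suffices to show that $\bfR\Gamma(V, \theta)$ is an isomorphism for every affine open $V \subset T$. Let $f_{V} \colon f^{-1}(V) \to V$ be the restriction of $f$; it is again a proper morphism of Noetherian schemes, since properness is local on the base. Using that $\bfR f_{*}$ commutes with restriction along the open immersion $V \injmap T$, that $f^{!}$ is compatible with this open immersion in the sense that $(f^{!}\SG^{\bullet})|_{f^{-1}(V)} \isom_{\qis} f_{V}^{!}(\SG^{\bullet}|_{V})$ with $\mathrm{tr}_{f}|_{f^{-1}(V)}$ corresponding to $\mathrm{tr}_{f_{V}}$ (standard in the duality formalism, cf.\ \cite[Cor.~4.4.3]{Lipman09}), and the identity $\bfR\Gamma(W, \SRHom_{\SO_{W}}(-,-)) \isom_{\qis} \RHom_{\SO_{W}}(-,-)$ for $W = V$ and $W = f^{-1}(V)$, one identifies $\bfR\Gamma(V, \theta)$ with the duality isomorphism of Theorem~\ref{thm:DualityProper} for the proper morphism $f_{V}$ applied to $\SF^{\bullet}|_{f^{-1}(V)}$ and $\SG^{\bullet}|_{V}$. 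Hence $\bfR\Gamma(V, \theta)$ is an isomorphism, and therefore $\theta$ is a quasi-isomorphism.

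The hard part is not any calculation but the bookkeeping of the formal inputs: the existence of the counit and of the push-forward morphism on $\SRHom$ over the unbounded quasi-coherent derived categories, and, most importantly, the compatibility of $f^{!}$ and of the trace with open base change on $T$, which is exactly what makes the localization step legitimate. All of these are part of the established theory in \cite{ResDual} and \cite{Lipman09}; when $T$ admits a dualizing complex the statement is moreover already contained in \cite[VII, Cor.~3.4]{ResDual}. In fact the Corollary is precisely \cite[Th.~4.2]{Lipman09}, so one may alternatively just appeal to that reference.
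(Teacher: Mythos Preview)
The paper does not prove this corollary; it simply refers to \cite[Th.~4.2]{Lipman09}. Your closing sentence appealing to that reference is therefore exactly what the paper does, and is by itself sufficient.

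The detailed argument preceding that appeal, however, has a gap. You assert that ``the cohomology sheaves of the cone of $\theta$ are quasi-coherent'' and use this to justify the reduction to affine opens via $\bfR\Gamma(V,-)$. But for general $\SF^{\bullet}\in\bfD_{\qcoh}(Y)$ and $\SG^{\bullet}\in\bfD_{\qcoh}(T)$, neither $\SRHom_{\SO_{Y}}(\SF^{\bullet}, f^{!}\SG^{\bullet})$ nor $\SRHom_{\SO_{T}}(\bfR f_{*}\SF^{\bullet}, \SG^{\bullet})$ need have quasi-coherent cohomology: quasi-coherence of $\SRHom$ between two objects of $\bfD_{\qcoh}$ typically requires the first argument to be pseudo-coherent, which is not assumed here. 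Even granting the compatibility of $f^{!}$ with open restriction on the base (which is indeed correct), knowing that $\bfR\Gamma(V,\theta)$ is an isomorphism for every affine $V$ does not by itself force $\theta$ to be an isomorphism in $\bfD(T)$ once the complexes involved are unbounded and lie outside $\bfD_{\qcoh}$. For complexes bounded below one can rescue the reduction by a lowest-nonvanishing-cohomology argument that needs no quasi-coherence, but the corollary is stated in full unbounded generality. Lipman's proof of \cite[Th.~4.2]{Lipman09} handles this by other means, and that is the input your final sentence correctly invokes.
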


As a special case of Theorem~\ref{thm:DualityProper}, we have the following,
which is called the Serre duality theorem for coherent sheaves.

\begin{cor}\label{cor:SerreDual}
Let \( X \) be a projective scheme over a field \( \Bbbk \).
Then, there is a canonical quasi-isomorphism
\[ \RHom_{\SO_{X}}(\SF^{\bullet}, \omega^{\bullet}_{X/\Bbbk}) \isom_{\qis}
\RHom_{\Bbbk}(\bfR\Gamma(X, \SF^{\bullet}), \Bbbk)\]
for any \( \SF^{\bullet} \in \bfD^{+}_{\coh}(X) \). In particular,
\[ \BExt^{i}_{\SO_{X}}(\SF^{\bullet}, \omega^{\bullet}_{X/\Bbbk}) \isom
\Hom_{\Bbbk}(\BHH^{i}(X, \SF^{\bullet}), \Bbbk)\]
for any \( i \), where \( \BExt^{i} \) and \( \BHH^{i} \)
stands for the \( i \)-th hyper-Ext group and \( i \)-th hyper cohomology group, respectively.
\end{cor}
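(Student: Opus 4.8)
The plan is to obtain this statement as the special case $T = \Spec\Bbbk$ of the Grothendieck duality theorem for proper morphisms (Theorem~\ref{thm:DualityProper}). First I would note that the structure morphism $f \colon X \to \Spec\Bbbk$ is a proper morphism of Noetherian schemes (a projective morphism is proper), so Theorem~\ref{thm:DualityProper} applies to $f$. Moreover, $f$ is embeddable in the sense of Example~\ref{exam:RDembeddable} (a projective $X$ embeds as a closed, hence finite, subscheme of $\BPP^{N}_{\Bbbk} = \BPP^{N} \times_{\Spec\Bbbk} \Spec\Bbbk$), so the twisted inverse image $f^{!}(\Bbbk) = f^{!}(\SO_{\Spec\Bbbk})$ is defined, and by Definition~\ref{dfn:DCalg} it equals the canonical dualizing complex $\omega^{\bullet}_{X/\Bbbk}$. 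Here one uses the compatibility, quoted from \cite{ResDual}, \cite{Conrad}, \cite{Lipman09}, between the $f^{!}$ of Theorem~\ref{thm:DualityProper} and the $f^{!}$ of Examples~\ref{exam:RDembeddable} and \ref{exam:RDConrad} for this morphism.

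Next I would specialize the functorial isomorphism of Theorem~\ref{thm:DualityProper} to $\SG^{\bullet} = \Bbbk \in \bfD^{b}_{\coh}(\Spec\Bbbk)$ and to an arbitrary $\SF^{\bullet} \in \bfD^{+}_{\coh}(X) \subset \bfD_{\qcoh}(X)$, obtaining a functorial quasi-isomorphism
\[ \RHom_{\SO_{\Spec\Bbbk}}(\bfR f_{*}\SF^{\bullet}, \Bbbk) \isom_{\qis} \RHom_{\SO_{X}}(\SF^{\bullet}, f^{!}\Bbbk). \]
It then remains only to translate the left-hand side. Since $\Spec\Bbbk$ is a single point, quasi-coherent $\SO_{\Spec\Bbbk}$-modules are $\Bbbk$-vector spaces, the derived pushforward $\bfR f_{*}$ is the derived global-section functor $\bfR\Gamma(X, -)$, and $\RHom_{\SO_{\Spec\Bbbk}}(-, \Bbbk)$ is $\RHom_{\Bbbk}(-, \Bbbk)$. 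Combining these identifications with $f^{!}\Bbbk \isom_{\qis} \omega^{\bullet}_{X/\Bbbk}$ yields the asserted quasi-isomorphism; I would also record that, by properness of $f$, $\bfR\Gamma(X, \SF^{\bullet})$ lies in $\bfD^{+}_{\coh}(\Spec\Bbbk)$, i.e.\ has bounded-below, finite-dimensional cohomology, so both sides are objects of $\bfD^{+}_{\coh}$.

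For the ``in particular'' statement I would pass to the $i$-th cohomology of both sides. The right-hand side gives $\BExt^{i}_{\SO_{X}}(\SF^{\bullet}, \omega^{\bullet}_{X/\Bbbk})$ by the definition of hyper-Ext, and likewise $\BHH^{i}(X, \SF^{\bullet}) = \OH^{i}(\bfR\Gamma(X, \SF^{\bullet}))$ by the definition of hyper-cohomology. On the left-hand side, because $\Bbbk$ is a field every $\Bbbk$-module is injective, so the $\Bbbk$-linear dual functor $\Hom_{\Bbbk}(-, \Bbbk)$ is exact and commutes with taking cohomology of the complex $\bfR\Gamma(X, \SF^{\bullet})$; hence the $i$-th cohomology of $\RHom_{\Bbbk}(\bfR\Gamma(X, \SF^{\bullet}), \Bbbk)$ is $\Hom_{\Bbbk}(\BHH^{i}(X, \SF^{\bullet}), \Bbbk)$, which is the displayed formula.

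I do not expect any substantial obstacle: the whole mathematical content is already contained in Theorem~\ref{thm:DualityProper} (quoted from Lipman) together with the definition of $\omega^{\bullet}_{X/\Bbbk}$. The only points requiring minor care are the harmless identification of $\bfR f_{*}$ over $\Spec\Bbbk$ with $\bfR\Gamma(X, -)$ and of $\RHom$ over $\Spec\Bbbk$ with $\RHom_{\Bbbk}$, the compatibility of the two incarnations of $f^{!}$ (Theorem~\ref{thm:DualityProper} versus Example~\ref{exam:RDConrad}) for the projective morphism $X \to \Spec\Bbbk$, and the bookkeeping of the cohomological degree convention used for the $\Bbbk$-linear dual $\RHom_{\Bbbk}(-, \Bbbk)$.
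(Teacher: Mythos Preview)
Your proposal is correct and matches the paper's approach exactly: the paper gives no proof beyond the one-line remark ``As a special case of Theorem~\ref{thm:DualityProper}, we have the following,'' and your write-up simply spells out that specialization $T = \Spec\Bbbk$, $\SG^{\bullet} = \Bbbk$, $f^{!}\Bbbk = \omega^{\bullet}_{X/\Bbbk}$ in detail. The only point worth flagging is the one you already noted about the cohomological degree: the standard convention gives $\OH^{i}\RHom_{\Bbbk}(C^{\bullet},\Bbbk)\isom\Hom_{\Bbbk}(\OH^{-i}(C^{\bullet}),\Bbbk)$, so the ``in particular'' formula as stated involves a sign convention that you should make explicit.
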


\begin{exam}\label{exam:DeligneVerdier}
Let \( f \colon Y \to T \) be a separated morphism of finite type
between Noetherian schemes.
By the Nagata compactification theorem (cf.\ \cite{Nagata1}, \cite{Nagata2},
\cite{Lu}, \cite{Conrad-NagataCpt}, \cite{Deligne-NagataCpt}),
\( f \) is expressed as the composite \( \pi \circ j\)
of an open immersion \( j \colon Y \injmap Z\) and
a proper morphism \( \pi \colon Z \to T\).
Using the functor \( \pi^{!} \colon \bfD_{\qcoh}^{+}(T) \to \bfD_{\qcoh}^{+}(Z) \)
in Theorem~\ref{thm:DualityProper},
we define the twisted inverse image functor
\( f^{!} \colon \bfD_{\qcoh}^{+}(T) \to \bfD_{\qcoh}^{+}(Y) \)
as \( \bfL j^{*} \circ \pi^{!} \).
This is well-defined up to functorial isomorphism, i.e., it is independent of
the choice of factorization \( f = \pi \circ j \), by
\cite[Th.~2]{ResDualDel}, \cite[Cor.~1]{Verdier}.
Deligne \cite{ResDualDel} defines a functor \( \bfR f_{!} \colon
\pro\bfD^{b}_{\coh}(Y) \to \pro\bfD^{b}_{\coh}(T)\) and
shows in \cite[Th.~2]{ResDualDel} that \( f^{!} \) above is a right adjoint of \( \bfR f_{!} \).
\end{exam}

\begin{fact}\label{fact:DeligneVerdierLipman}
The twisted inverse image functors in Example~\ref{exam:DeligneVerdier}
have the following properties. Let \( f \colon Y \to T \) be a separated morphism
of finite type between Noetherian schemes.
\begin{enumerate}
\item \label{fact:DeligneVerdierLipman:compo}
\emph{Let \( h \colon X \to Y \) be a separated morphism of
finite type from another Noetherian scheme \( X \). Then,
there is a functorial isomorphism
\( (f \circ h)^{!} \isom h^{!} \circ f^{!} \). }
\item \label{fact:DeligneVerdierLipman:smooth}
\emph{If \( f \colon Y \to T \) is a smooth morphism of pure relative dimension \( d \),
then \( f^{!}(\SG^{\bullet}) \isom_{\qis} \varOmega^{d}_{Y/T}[d] \otimes^{\bfL}_{\SO_{Y}}
\bfL f^{*} (\SG^{\bullet})\).}

\item \label{fact:DeligneVerdierLipman:DualizingCpx}
\emph{If \( T \) admits a dualizing complex, then
\( f^{!} \) is functorially isomorphic to the twisted inverse image functor
\( \GD_{Y} \circ \bfL f^{*} \circ \GD_{T} \) in Example}~\ref{exam:RDConrad}.
\item \label{fact:DeligneVerdierLipman:flatbc}
\emph{For a flat morphism \( g \colon T' \to T \) from a Noetherian scheme \( T' \),
let \( Y' \) be the fiber product \( Y \times_{T} T' \) and let
\(f' \colon  Y' \to T' \) and \( g' \colon Y' \to Y \) be the induced morphisms.
Then, \( g^{\prime *} \circ f^{!}  \isom f^{\prime !} \circ g^{*}\).}
\end{enumerate}
The property \eqref{fact:DeligneVerdierLipman:compo} is derived from the isomorphism
\( \bfR (f \circ h)_{!} \isom \bfR f_{!} \circ \bfR h_{!} \) shown in \cite[no.~3]{ResDualDel}.
This is also proved in \cite[Th.~4.8.1]{Lipman09}.
The properties  \eqref{fact:DeligneVerdierLipman:smooth} and
\eqref{fact:DeligneVerdierLipman:DualizingCpx} are proved in \cite[Th.~3, Cor.~3]{Verdier}
and \cite[(4.9.4.2), Prop.~4.10.1]{Lipman09}.
In order to prove the property \eqref{fact:DeligneVerdierLipman:flatbc},
we may assume that \( f \) is proper, and in this case, this is shown in
\cite[Cor.~4.4.3]{Lipman09} (cf.\ \cite[Th.~2]{Verdier}).
As a refinement of the property \eqref{fact:DeligneVerdierLipman:compo} above,
\( f \mapsto f^{!} \) can be regarded as a pseudo-functor, and
Lipman proves in \cite[Th.4.8.1]{Lipman09} the uniqueness of \( f \mapsto f^{!} \)
under three conditions corresponding to:
\begin{itemize}
\item  \( f^{!} \) is a right adjoint of \( \bfR f_{*} \)
when \( f \) is proper (Theorem~\ref{thm:DualityProper});

\item  The property \eqref{fact:DeligneVerdierLipman:smooth} above for \'etale \( f \);

\item  The property \eqref{fact:DeligneVerdierLipman:flatbc}
above for proper \( f \) and \'etale \( g \).
\end{itemize}
\end{fact}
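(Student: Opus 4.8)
The plan is to verify the four properties \eqref{fact:DeligneVerdierLipman:compo}--\eqref{fact:DeligneVerdierLipman:flatbc} one at a time, in each case reducing to the proper case, where Theorem~\ref{thm:DualityProper} and the cited literature on Grothendieck duality apply.

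For \eqref{fact:DeligneVerdierLipman:compo}, I would start from the description \( f^{!} = \bfL j^{*} \circ \pi^{!} \) for a Nagata factorization \( f = \pi \circ j \) into an open immersion \( j \) and a proper morphism \( \pi \) (cf.\ Example~\ref{exam:DeligneVerdier}). Given a separated morphism \( h \colon X \to Y \) of finite type, a compatible factorization of \( f \circ h \) is obtained by composing a compactification of \( h \) over a compactification of \( f \), using that proper morphisms compose and open immersions compose. The isomorphism \( (f \circ h)^{!} \isom h^{!} \circ f^{!} \) then follows either from Deligne's isomorphism \( \bfR(f \circ h)_{!} \isom \bfR f_{!} \circ \bfR h_{!} \) on the pro-categories \( \pro\bfD^{b}_{\coh} \) together with the adjunction of \cite[Th.~2]{ResDualDel} and the uniqueness of right adjoints, or directly from Lipman's construction of \( f \mapsto f^{!} \) as a pseudo-functor in \cite[Th.~4.8.1]{Lipman09}. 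The only point requiring care is that the independence of \( f^{!} \) on the chosen factorization (\cite[Cor.~1]{Verdier}) is natural enough for these identifications to be canonical.

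Properties \eqref{fact:DeligneVerdierLipman:smooth} and \eqref{fact:DeligneVerdierLipman:DualizingCpx} I would treat by comparing \( f^{!} \) with already-understood constructions. For \eqref{fact:DeligneVerdierLipman:smooth}, if \( f \) is smooth separated of pure relative dimension \( d \), then in any factorization \( f = \pi \circ j \) the open immersion \( j \) is smooth of relative dimension \( 0 \), so applying \( \bfL j^{*} \) to the known formula for \( \pi^{!} \) in the embeddable case (Example~\ref{exam:RDembeddable}) yields \( f^{!}(\SG^{\bullet}) \isom_{\qis} \varOmega^{d}_{Y/T}[d] \otimes^{\bfL}_{\SO_{Y}} \bfL f^{*}(\SG^{\bullet}) \); the precise statements are \cite[Th.~3, Cor.~3]{Verdier} and \cite[(4.9.4.2), Prop.~4.10.1]{Lipman09}. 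For \eqref{fact:DeligneVerdierLipman:DualizingCpx}, when \( T \) admits a dualizing complex, the functor \( \GD_{Y} \circ \bfL f^{*} \circ \GD_{T} \) of Example~\ref{exam:RDConrad} is also pseudo-functorial, agrees with the right adjoint of \( \bfR f_{*} \) on proper morphisms by \cite[VII, Cor.~3.4]{ResDual} (hence with \( f^{!} \) there by Theorem~\ref{thm:DualityProper}), and agrees on open immersions by construction; Nagata compactification then forces the two functors to coincide for every separated \( f \) of finite type.

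The main obstacle is property \eqref{fact:DeligneVerdierLipman:flatbc}, flat base change. The plan is to reduce to the proper case: for a Nagata factorization \( f = \pi \circ j \), the base change \( Y' = Y \times_{T} T' \) factors through \( Z' = Z \times_{T} T' \) as an open immersion \( j' \colon Y' \injmap Z' \) followed by the proper morphism \( \pi' \colon Z' \to T' \), and \( g^{\prime *} \) commutes with \( \bfL j^{*} \) since the open-immersion part is just a restriction; thus it suffices to establish \( g^{\prime *} \circ \pi^{!} \isom \pi^{\prime !} \circ g^{*} \) for the proper morphism \( \pi \). I would not reprove this; it is \cite[Cor.~4.4.3]{Lipman09} (cf.\ \cite[Th.~2]{Verdier}). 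The reason this is the hard step is that for proper \( \pi \) the functor \( \pi^{!} \) is defined abstractly as an adjoint, so the base-change isomorphism is not a formula manipulation but requires compatibility of the duality adjunction with flat pullback, ultimately resting on flat base change for \( \bfR \pi_{*} \) of quasi-coherent complexes together with a projection-formula argument. The genuinely original work left would be only the bookkeeping: checking that Nagata compactification is compatible with the flat base change \( Z \mapsto Z \times_{T} T' \) and that the Deligne--Verdier independence-of-compactification isomorphism is natural with respect to \( g^{*} \), so that the reduction from separated to proper is legitimate.
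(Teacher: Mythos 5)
Your proposal is correct and follows essentially the same route as the paper: each property is reduced to the cited results of Deligne, Verdier, and Lipman, with property \eqref{fact:DeligneVerdierLipman:flatbc} handled exactly as in the text by base-changing a Nagata compactification and invoking \cite[Cor.~4.4.3]{Lipman09} in the proper case. The extra bookkeeping you mention (compatibility of the compactification and of the independence isomorphisms with flat pullback) is precisely what the paper leaves implicit, so there is nothing further to add.
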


\begin{fact}\label{fact:Lipman}
The following are also known for a flat separated morphism
\( f \colon Y \to T\) of finite type between Noetherian schemes:
\begin{enumerate}
\item \label{fact:Lipman:1}
The twisted inverse image
\( f^{!}\SO_{T} \) is an \emph{\( f \)-perfect} complex in \( \bfD_{\coh}(Y) \)
(cf.\ \cite[III, Prop.~4.9]{IllusieSGA6}, \cite[Th.~4.9.4]{Lipman09}).
For the definition of ``\( f \)-perfect,'' see \cite[III, D\'ef.~4.1]{IllusieSGA6}
(cf.\ Remark~\ref{rem:f-perf} below).
Note that a coherent \( \SO_{Y} \)-module flat over \( T \) is \( f \)-perfect.

\item \label{fact:Lipman:2}
For an \( f \)-perfect complex \( \SE^{\bullet} \),
\[ \GD_{Y/T}(\SE^{\bullet}) := \SRHom_{\SO_{Y}}(\SE^{\bullet}, f^{!}\SO_{T}) \]
is also \( f \)-perfect and the canonical morphism
\[ \SE^{\bullet} \to \GD_{Y/T}(\GD_{Y/T}(\SE^{\bullet})) \]
is a quasi-isomorphism (cf.\ \cite[III, Cor.~4.9.2]{IllusieSGA6}).
In particular,
\begin{equation}\label{eq:IllusieQIS}
\SO_{Y} \to \SRHom_{\SO_{Y}}(f^{!}\SO_{T}, f^{!}\SO_{T})
\end{equation}
is a quasi-isomorphism (cf.\ \cite[p.~234]{Lipman09}).

\item \label{fact:Lipman:3}
There is a quasi-isomorphism
\[ f^{!}(\SF^{\bullet}) \otimes_{\SO_{Y}}^{\bfL} \bfL f^{*}(\SG^{\bullet})
\isom_{\qis} f^{!}(\SF^{\bullet} \otimes_{\SO_{T}}^{\bfL} \SG^{\bullet})\]
for any \( \SF^{\bullet} \), \( \SG^{\bullet} \in \bfD_{\qcoh}^{+}(T) \)
with \( \SF^{\bullet} \otimes_{\SO_{T}}^{\bfL} \SG^{\bullet} \in \bfD_{\qcoh}^{+}(T)  \)
(cf.\ \cite[Th.~4.9.4]{Lipman09}).
In particular,
\begin{equation}\label{eq:Lipman}
f^{!}\SO_{T} \otimes^{\bfL}_{\SO_{Y}} \bfL f^{*}(\SG^{\bullet})
\isom_{\qis} f^{!}(\SG^{\bullet})
\end{equation}
for any \( \SG^{\bullet} \in \bfD_{\qcoh}^{+}(T) \).
Similar results are proved in
\cite[V, Cor.~8.6]{ResDual}, \cite[Cor.~2]{Verdier}, and \cite[Th.~5.4]{Neeman}.
\end{enumerate}
\end{fact}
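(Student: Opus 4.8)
The plan is to derive the three assertions from the cited sources; the only piece requiring genuine assembly is the \( f \)-perfection of \( f^{!}\SO_{T} \) in part~\eqref{fact:Lipman:1}, after which parts~\eqref{fact:Lipman:2} and \eqref{fact:Lipman:3} are essentially quotations. For part~\eqref{fact:Lipman:1}, I would first recall from \cite[III, D\'ef.~4.1]{IllusieSGA6} that a complex in \( \bfD_{\coh}(Y) \) is \( f \)-perfect when it is, locally on \( Y \), quasi-isomorphic to a bounded complex of finitely presented \( \SO_{Y} \)-modules each flat over \( T \). Since \( f \) is flat, separated, and locally of finite type over a Noetherian base, I would argue locally on \( Y \) and factor \( f \) as a closed immersion \( i \colon Y \injmap P \) into a smooth separated \( T \)-scheme \( P \) of pure relative dimension \( d \), followed by the projection \( p \colon P \to T \); this exhibits \( f \) locally as an embeddable morphism in the sense of Example~\ref{exam:RDembeddable}. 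By Fact~\ref{fact:DeligneVerdierLipman}\eqref{fact:DeligneVerdierLipman:compo} and \eqref{fact:DeligneVerdierLipman:smooth}, one then has \( f^{!}\SO_{T} \isom_{\qis} i^{!}(\varOmega^{d}_{P/T}[d]) \), where \( \varOmega^{d}_{P/T} \) is locally free, so the \( f \)-perfection of \( f^{!}\SO_{T} \) follows from \cite[III, Prop.~4.9]{IllusieSGA6} or \cite[Th.~4.9.4]{Lipman09}. The final clause is immediate from the definition, since a coherent \( \SO_{Y} \)-module flat over \( T \) is finitely presented, as \( f \) is locally of finite type over a Noetherian scheme, and hence is a one-term complex of the required kind.

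Next, for part~\eqref{fact:Lipman:2}, I would invoke \cite[III, Cor.~4.9.2]{IllusieSGA6} directly: with \( f^{!}\SO_{T} \) serving as the relative dualizing object, which is legitimate by part~\eqref{fact:Lipman:1}, that corollary asserts both that \( \GD_{Y/T}(\SE^{\bullet}) = \SRHom_{\SO_{Y}}(\SE^{\bullet}, f^{!}\SO_{T}) \) is \( f \)-perfect for \( f \)-perfect \( \SE^{\bullet} \), and that the canonical biduality morphism \( \SE^{\bullet} \to \GD_{Y/T}(\GD_{Y/T}(\SE^{\bullet})) \) is a quasi-isomorphism. Specializing to \( \SE^{\bullet} = \SO_{Y} \), which is trivially \( f \)-perfect, and using \( \GD_{Y/T}(\SO_{Y}) = f^{!}\SO_{T} \), biduality reduces to the quasi-isomorphism \eqref{eq:IllusieQIS}, exactly as recorded in \cite[p.~234]{Lipman09}.

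Finally, for part~\eqref{fact:Lipman:3}, the tensor compatibility is a form of the projection formula for \( f^{!} \) and is \cite[Th.~4.9.4]{Lipman09} for flat \( f \); the companion statements \cite[V, Cor.~8.6]{ResDual}, \cite[Cor.~2]{Verdier}, and \cite[Th.~5.4]{Neeman} give the same conclusion under their respective hypotheses and can be cited as alternatives. The special case \eqref{eq:Lipman} is then obtained by taking \( \SF^{\bullet} = \SO_{T} \). The one point that calls for care rather than quotation is the compatibility between the two constructions of \( f^{!} \) appealed to above — the residual-complex construction of Example~\ref{exam:RDConrad} underlying \cite{IllusieSGA6} and the Deligne--Verdier construction of Example~\ref{exam:DeligneVerdier} underlying \cite{Lipman09} — but this is furnished by Fact~\ref{fact:DeligneVerdierLipman}\eqref{fact:DeligneVerdierLipman:DualizingCpx}, so once the local reduction in part~\eqref{fact:Lipman:1} is in place, the remaining steps are direct.
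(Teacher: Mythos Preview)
The paper offers no proof for this Fact: it is stated as a survey of known results, with each item accompanied only by its citation to \cite{IllusieSGA6}, \cite{Lipman09}, \cite{ResDual}, \cite{Verdier}, or \cite{Neeman}. Your proposal is therefore not so much a different route as a fleshing-out of what the citations contain; in particular, your local-factorization sketch for part~\eqref{fact:Lipman:1} (factor through a smooth $P$, then invoke Remark~\ref{rem:f-perf} via $\bfR i_{*}(f^{!}\SO_{T}) \isom \SRHom_{\SO_{P}}(i_{*}\SO_{Y}, \varOmega^{d}_{P/T}[d])$) is the standard argument behind \cite[III, Prop.~4.9]{IllusieSGA6}, and your treatment of parts~\eqref{fact:Lipman:2} and~\eqref{fact:Lipman:3} matches the paper's intent of simply quoting the references.

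One small point of care: in part~\eqref{fact:Lipman:1} you appeal to Fact~\ref{fact:DeligneVerdierLipman}\eqref{fact:DeligneVerdierLipman:DualizingCpx} to reconcile the residual-complex and Deligne--Verdier constructions of $f^{!}$, but that compatibility statement assumes $T$ admits a dualizing complex, which is not part of the hypotheses here. This is harmless for your purposes, since the argument for $f$-perfection is local on $Y$ and can be carried out entirely within the embeddable framework of Example~\ref{exam:RDembeddable} (or directly via \cite[Th.~4.9.4]{Lipman09}), without needing a global dualizing complex on $T$; but the cross-reference should be dropped or replaced.
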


\begin{rem}[{cf.\ \cite[III, Prop.~4.4]{IllusieSGA6}}]\label{rem:f-perf}
Let \( f \colon Y \to T \) be a morphism of finite type
between Noetherian schemes
and let \( \SF^{\bullet} \) be an object of \( \bfD_{\qcoh}(Y) \).
Assume that \( f \) is the composite \( g \circ i \) of
a closed immersion \( i \colon Y \to P \) and
a smooth separated morphism \( g \colon P \to T \).
Then, \( \SF^{\bullet} \) is \( f \)-perfect if and only if
\( \bfR i_{*}(\SF^{\bullet})\)
is perfect (cf.\ \cite[I, D\'ef.~4.7]{IllusieSGA6}), i.e.,
locally on \( P \),
it is quasi-isomorphic to a bounded complex of free \( \SO_{P} \)-modules.
\end{rem}

\begin{lem}\label{lem:BC0}
Let \( f \colon Y \to T \) be a flat separated morphism of finite type
between Noetherian schemes in which \( T \) admits a dualizing complex.
Let \( g \colon T' \to T \) be a finite morphism from another Noetherian scheme \( T' \).
For the fiber product \( Y' = Y \times_{T} T' \), let \( f \colon Y' \to T' \)
and \( g' \colon Y' \to Y \) be the projections. Thus, we have a Cartesian diagram\emph{:}
\[ \begin{CD}
Y' @>{g'}>> Y \\
@V{f'}VV @VV{f}V \\
T' @>{g}>> \phantom{.}T.
\end{CD}\]
In this situation, there is a natural quasi-isomorphism
\[ \bfL g^{\prime *} (f^{!}\SO_{T}) \isom_{\qis}
f^{\prime\, !}\SO_{T'}.  \]
\end{lem}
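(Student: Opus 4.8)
The plan is to reduce, by localizing on $T$ and on $Y$, to the case where $f$ is an embeddable morphism (cf.\ Example~\ref{exam:RDembeddable}), and then to compute both sides after pushing forward along a closed immersion into an affine space over the base.

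First I would note that the assertion is local on $T$ and on $Y$, because $f^{!}\SO_{T}$ commutes with restriction to open subschemes and, more generally, because the twisted inverse image satisfies flat base change (Fact~\ref{fact:DeligneVerdierLipman}\eqref{fact:DeligneVerdierLipman:flatbc}), and the given Cartesian square restricts accordingly. Thus I may assume $T = \Spec A$, $T' = \Spec B$ with $B$ finite over $A$, and $Y = \Spec C$ with $C$ a flat finitely generated $A$-algebra; shrinking $Y$ further, I may factor $f$ as $Y \xrightarrow{i} P := \BAA^{n}_{T} \xrightarrow{p} T$, a closed immersion followed by the projection of affine space. Base changing along $g$ produces $Y' \xrightarrow{i'} P' := \BAA^{n}_{T'} \xrightarrow{p'} T'$, with $i'$, $p'$ the base changes of $i$, $p$ and $\tilde{g} \colon P' \to P$ that of $g$; note that $Y' = Y \times_{P} P'$ as well. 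By Fact~\ref{fact:DeligneVerdierLipman}\eqref{fact:DeligneVerdierLipman:compo} and \eqref{fact:DeligneVerdierLipman:smooth}, one has $f^{!}\SO_{T} \isom_{\qis} i^{!}(\varOmega^{n}_{P/T}[n])$ and $f^{\prime !}\SO_{T'} \isom_{\qis} i^{\prime !}(\varOmega^{n}_{P'/T'}[n])$, and $\varOmega^{n}_{P'/T'} \isom \tilde{g}^{*}\varOmega^{n}_{P/T}$ since $P' = P \times_{T} T'$.

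Next I would push forward to $P$. Since $i$ is a closed immersion, the description of $i^{!}$ in Example~\ref{exam:RDembeddable} gives $i_{*}(f^{!}\SO_{T}) \isom_{\qis} \SRHom_{\SO_{P}}(i_{*}\SO_{Y}, \varOmega^{n}_{P/T}[n]) \isom_{\qis} \mathcal{D}^{\bullet} \otimes_{\SO_{P}} \varOmega^{n}_{P/T}[n]$, where $\mathcal{D}^{\bullet} := \SRHom_{\SO_{P}}(i_{*}\SO_{Y}, \SO_{P})$ and we have used that $\varOmega^{n}_{P/T}$ is invertible; likewise $i'_{*}(f^{\prime !}\SO_{T'}) \isom_{\qis} \mathcal{D}^{\prime\bullet} \otimes_{\SO_{P'}} \varOmega^{n}_{P'/T'}[n]$ with $\mathcal{D}^{\prime\bullet} := \SRHom_{\SO_{P'}}(i'_{*}\SO_{Y'}, \SO_{P'})$. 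The crucial input is a Tor-independence: since $C$ is flat over $A$, a resolution of $C$ by finite free $A[x_{1}, \dots, x_{n}]$-modules is simultaneously a resolution by $A$-flat modules, so tensoring it with $B$ over $A$ remains exact; hence $\STor^{\SO_{P}}_{q}(i_{*}\SO_{Y}, \SO_{P'}) = 0$ for $q > 0$, i.e.\ $Y$ and $P'$ are Tor-independent over $P$. This yields at once $\bfL\tilde{g}^{*}(i_{*}\SO_{Y}) \isom i'_{*}\SO_{Y'}$, the base change isomorphism $i'_{*}\bfL g^{\prime *}(f^{!}\SO_{T}) \isom_{\qis} \bfL\tilde{g}^{*}\bigl(i_{*}(f^{!}\SO_{T})\bigr)$ for the Cartesian square of $i$, $i'$, $g'$, $\tilde{g}$, and — resolving $i_{*}\SO_{Y}$ locally by finite free $\SO_{P}$-modules $\mathcal{P}_{\bullet}$, whose $\tilde{g}$-pullback is again a finite free resolution of $i'_{*}\SO_{Y'}$ — the dual isomorphism $\bfL\tilde{g}^{*}\mathcal{D}^{\bullet} \isom_{\qis} \mathcal{D}^{\prime\bullet}$.

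Finally I would apply $i'_{*}$ to the canonical base change morphism $\bfL g^{\prime *}(f^{!}\SO_{T}) \to f^{\prime !}\SO_{T'}$; chasing it through the identifications above, it becomes the isomorphism $\bfL\tilde{g}^{*}(\mathcal{D}^{\bullet} \otimes_{\SO_{P}} \varOmega^{n}_{P/T}[n]) \isom \mathcal{D}^{\prime\bullet} \otimes_{\SO_{P'}} \varOmega^{n}_{P'/T'}[n]$, so, since $i'_{*}$ is exact and faithful and therefore reflects isomorphisms, the base change morphism is a quasi-isomorphism. I expect the main obstacle to lie precisely in the Tor-independence step and the attendant base change isomorphism for $\SRHom_{\SO_{P}}(i_{*}\SO_{Y}, -)$: this is exactly where the flatness of $f$ over $T$ enters, and in its absence the naive base change morphism for $\SRHom$ with a non-perfect first argument fails to be an isomorphism; the remaining work — making the reduction to the embeddable case and the descent along $i'_{*}$ precise — is routine bookkeeping.
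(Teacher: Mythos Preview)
Your proof is correct but takes a genuinely different route from the paper. The paper works globally via the dualizing-complex description $f^{!} = \GD_{Y} \circ \bfL f^{*} \circ \GD_{T}$ (Example~\ref{exam:RDConrad}, Fact~\ref{fact:DeligneVerdierLipman}\eqref{fact:DeligneVerdierLipman:DualizingCpx}), combined with duality for the finite morphisms $g$, $g'$ (Corollary~\ref{cor:thm:DualityProper}), flat base change for $\bfR g_{*}$, and the identity \eqref{eq:Lipman}; this yields $f^{!}\SO_{T} \otimes^{\bfL}_{\SO_{Y}} \bfR g'_{*}\SO_{Y'} \isom_{\qis} \bfR g'_{*}(f^{\prime\,!}\SO_{T'})$, from which the comparison map is extracted by adjunction and shown to be an isomorphism because the affine pushforward $g'_{*}$ is exact and reflects isomorphisms. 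Your approach instead localizes to an affine-space embedding $Y \hookrightarrow P = \BAA^{n}_{T}$, computes $i^{!}$ as $\SRHom_{\SO_{P}}(i_{*}\SO_{Y}, -)$, and reduces everything to the Tor-independence of $Y$ and $P'$ over $P$, which you correctly extract from the $T$-flatness of $Y$. Your argument is more explicit and, notably, does not actually use the dualizing-complex hypothesis on $T$ at all; the paper's argument stays global and manufactures the comparison map directly, whereas you invoke a ``canonical base change morphism $\bfL g'^{*}(f^{!}\SO_{T}) \to f^{\prime\,!}\SO_{T'}$'' whose construction is standard (e.g.\ \cite[\S4.4]{Lipman09}) but is not recorded in the paper outside the flat case (Fact~\ref{fact:DeligneVerdierLipman}\eqref{fact:DeligneVerdierLipman:flatbc}). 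That last point is bookkeeping, not a gap.
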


\begin{proof}
Let \( \GD_{T} \), \( \GD_{Y} \), \( \GD_{T'} \), and \( \GD_{Y'} \), respectively,
be the dualizing functors on \( T \), \( Y \), \( T' \), and \( Y' \)
defined by a dualizing complex on \( T \)
and their transforms by \( f^{!} \), \( g^{!} \),
and \( (f \circ g')^{!} \isom (g \circ f')^{!}\)
(cf.\ Fact~\ref{fact:DeligneVerdierLipman}\eqref{fact:DeligneVerdierLipman:compo})
as in Example~\ref{exam:RDConrad}.
For any \( \SG^{\bullet} \in \bfD_{\coh}^{+}(T') \),
we have
\begin{align*}
f^{!}(\bfR g_{*} (\SG^{\bullet})) &\isom_{\qis}
\GD_{Y} \circ \bfL f^{*} \circ \GD_{T} (\bfR g_{*} (\SG^{\bullet}))
\isom_{\qis} \GD_{Y} \circ \bfL f^{*} \circ \bfR g_{*}(\GD_{T'}(\SG^{\bullet})) \\
&\isom_{\qis} \GD_{Y} \circ \bfR g'_{*} \circ \bfL f^{\prime *}(\GD_{T'}(\SG^{\bullet}))
\isom_{\qis} \bfR g'_{*} \circ \GD_{Y'}(\bfL f^{\prime *}(\GD_{T'}(\SG^{\bullet}))) \\
&\isom_{\qis} \bfR g'_{*} (f^{\prime\, !}(\SG^{\bullet})),
\end{align*}
where we use the flat base change isomorphism:
\( \bfL f^{*} \circ \bfR g_{*} \isom_{\qis} \bfR g'_{*} \circ \bfL f^{\prime *} \)
(cf.\ Proposition~\ref{prop:flatbc}),
and the duality isomorphisms:
\(  \GD_{T} \circ \bfR g_{*} \isom_{\qis} \bfR g_{*} \circ \GD_{T'}\)
and \(  \GD_{Y} \circ \bfR g'_{*} \isom_{\qis} \bfR g'_{*} \circ \GD_{Y'} \)
for the finite morphisms \( g \) and \( g' \) (cf.\ Corollary~\ref{cor:thm:DualityProper}).
On the other hand, we have
\[
f^{!}(\bfR g_{*} (\SG^{\bullet})) \isom_{\qis}
f^{!}\SO_{T} \otimes^{\bfL}_{\SO_{Y}}
\bfL f^{*}(\bfR g_{*}\SG^{\bullet}) \isom_{\qis}
f^{!}\SO_{T} \otimes^{\bfL}_{\SO_{Y}}
\bfR g'_{*} (\bfL f^{\prime *}(\SG^{\bullet}))
\]
by the quasi-isomorphism \eqref{eq:Lipman} in Fact~\ref{fact:Lipman}
and by the flat base change isomorphism.
Substituting \( \SG^{\bullet} = \SO_{T'} \), we have a quasi-isomorphism
\[ f^{!}\SO_{T} \otimes^{\bfL}_{\SO_{Y}} \bfR g'_{*}\SO_{Y'}
\isom_{\qis} \bfR g'_{*}(f^{\prime\, !}\SO_{T'}). \]
It is associated with a morphism \( \bfL g^{\prime *} (f^{!}\SO_{T}) \to f^{\prime\, !}\SO_{T'} \)
in \( \bfD_{\coh}^{+}(Y') \) which induces a quasi-isomorphism by taking \( \bfR g'_{*} \).
Hence,
\( \bfL g^{\prime *} (f^{!}\SO_{T}) \isom_{\qis} f^{\prime\, !}\SO_{T'} \).
\end{proof}

\begin{cor}[{cf.\ \cite[Prop.\ 3.3(1)]{Pa}}]\label{cor:BC}
Let \( f \colon Y \to T \) be a flat separated morphism of finite type
between Noetherian schemes.
For a point \( t \in T \),
let \( \phi_{t} \colon \Spec \Bbbk(t) \to T\) be the
canonical morphism for the residue field \( \Bbbk(t) \), and
let \( \psi_{t} \colon Y_{t} = f^{-1}(t) \to Y \) be
the base change of \( \phi_{t} \)
by \( f \colon Y \to T \). Then, the canonical dualizing complex \( \omega^{\bullet}_{Y_{t}/\Bbbk(t)} \)
defined in Definition~\emph{\ref{dfn:DCalg}}
is quasi-isomorphic to \( \bfL \psi_{t}^{*} (f^{!}\SO_{T}) \).
\end{cor}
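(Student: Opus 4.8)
The plan is to reduce the assertion, by a flat base change, to the situation of Lemma~\ref{lem:BC0}. The obstruction to applying that lemma directly is that the local ring $\SO_{T,t}$ need not admit a dualizing complex; but its completion $R := \widehat{\SO_{T,t}}$ is a complete Noetherian local ring, hence a quotient of a regular local ring by Cohen's structure theorem, and therefore $\Spec R$ does admit a dualizing complex (cf.\ Remark~\ref{rem:ExistDC}). So the first step is to pass from $T$ to $\Spec R$.

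In detail, let $q \colon T_{1} := \Spec R \to T$ be the composite of the localization $\Spec \SO_{T,t} \to T$ and the completion morphism $\Spec R \to \Spec\SO_{T,t}$; this is flat, $T_{1}$ is Noetherian, and its closed point $o$ has residue field $\Bbbk(o) = \Bbbk(t)$. Form the base change $f_{1} \colon Y_{1} := Y \times_{T} T_{1} \to T_{1}$ with projection $q_{1} \colon Y_{1} \to Y$; then $Y_{1}$ is Noetherian (as $f$ is of finite type), $f_{1}$ is flat, separated, and of finite type, and by Fact~\ref{fact:DeligneVerdierLipman}\eqref{fact:DeligneVerdierLipman:flatbc} there is a quasi-isomorphism $q_{1}^{*}(f^{!}\SO_{T}) \isom_{\qis} f_{1}^{!}\SO_{T_{1}}$, with $q_{1}^{*} = \bfL q_{1}^{*}$ since $q_{1}$ is flat. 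Next, the canonical morphism $\phi \colon \Spec\Bbbk(t) \to T_{1}$ is the closed immersion onto $o$, hence finite. Since $\phi_{t}$ factors as $q \circ \phi$, its base change $\psi_{t} \colon Y_{t} \to Y$ factors as $q_{1} \circ \psi'$, where $\psi' \colon Y_{t} \to Y_{1}$ is the base change of $\phi$ along $f_{1}$ and $Y_{1} \times_{T_{1}} \Spec\Bbbk(t) = Y \times_{T} \Spec\Bbbk(t) = Y_{t}$; write $f_{t} \colon Y_{t} \to \Spec\Bbbk(t)$ for the structure morphism. Applying Lemma~\ref{lem:BC0} to $f_{1}$ and the finite morphism $\phi$ gives $\bfL\psi^{\prime *}(f_{1}^{!}\SO_{T_{1}}) \isom_{\qis} f_{t}^{!}\SO_{\Spec\Bbbk(t)} = f_{t}^{!}(\Bbbk(t))$, which by Definition~\ref{dfn:DCalg} is precisely $\omega^{\bullet}_{Y_{t}/\Bbbk(t)}$. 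Combining, $\bfL\psi_{t}^{*}(f^{!}\SO_{T}) \isom_{\qis} \bfL\psi^{\prime *}(q_{1}^{*}(f^{!}\SO_{T})) \isom_{\qis} \bfL\psi^{\prime *}(f_{1}^{!}\SO_{T_{1}}) \isom_{\qis} \omega^{\bullet}_{Y_{t}/\Bbbk(t)}$.

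The only real subtlety is the one already flagged — that one must pass to the completion $R$ rather than merely to $\SO_{T,t}$ in order to have a dualizing complex available for Lemma~\ref{lem:BC0}; the rest is routine compatibility of fiber products and of hypotheses (flatness and separatedness are stable under base change, and $\phi$ is finite as a closed immersion). If one prefers to avoid Lemma~\ref{lem:BC0}, then, the assertion being local on $Y$, one may instead take $f$ embeddable, $f = p\circ i$ with $i\colon Y\injmap P = \BAA^{n}_{T}$ a closed immersion and $p\colon P\to T$ smooth of pure relative dimension $n$; then $\bfR i_{*}(f^{!}\SO_{T}) \isom \SRHom_{\SO_{P}}(i_{*}\SO_{Y}, \varOmega^{n}_{P/T}[n])$, the flatness of $f$ makes $i_{*}\SO_{Y}$ a perfect $\SO_{P}$-module (cf.\ Remark~\ref{rem:f-perf} and Fact~\ref{fact:Lipman}\eqref{fact:Lipman:1}) and the square $Y_{t}\to Y$, $P_{t}\to P$ Tor-independent, so $\SRHom$ against $i_{*}\SO_{Y}$, the derived pullback of $i_{*}\SO_{Y}$, and $\varOmega^{n}_{P/T}$ all commute with base change to the fiber; pushing forward by the closed immersion $i_{t}\colon Y_{t}\injmap P_{t}$ then identifies $\bfL\psi_{t}^{*}(f^{!}\SO_{T})$ with $\bfR i_{t*}(\omega^{\bullet}_{Y_{t}/\Bbbk(t)})$, hence with $\omega^{\bullet}_{Y_{t}/\Bbbk(t)}$. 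I would present the first approach.
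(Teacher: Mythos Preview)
Your proof is correct and follows essentially the same route as the paper: pass by flat base change to the completion $\Spec\widehat{\SO}_{T,t}$ (which admits a dualizing complex by Cohen's structure theorem), then apply Lemma~\ref{lem:BC0} to the finite (closed immersion) morphism $\Spec\Bbbk(t)\injmap\Spec\widehat{\SO}_{T,t}$. The paper's argument is identical in structure and in its use of Fact~\ref{fact:DeligneVerdierLipman}\eqref{fact:DeligneVerdierLipman:flatbc} and Lemma~\ref{lem:BC0}; your alternative embeddable-morphism sketch is not used there.
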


\begin{proof}
Let \( \Spec \SO_{T, t} \to T \) be the canonical morphism from the spectrum of
the local ring \( \SO_{T, t} \). Considering the completion \( \widehat{\SO}_{T, t} \)
of \( \SO_{T, t} \) and the surjection \( \widehat{\SO}_{T, t} \to \Bbbk(t) \) to the residue field,
we have a flat morphism
\[ \tau \colon T^{\flat} := \Spec \widehat{\SO}_{T, t} \to \Spec \SO_{T, t} \to T \]
and a closed immersion \( \iota \colon \Spec \Bbbk(t) \injmap T^{\flat} \).
Let \( Y^{\flat} \) be the fiber product \( Y \times_{T} T^{\flat} \) and let
\( f^{\flat} \colon Y^{\flat} \to T^{\flat} \) and \( \tau' \colon Y^{\flat} \to Y \)
be projections, which make a Cartesian diagram:
\[ \begin{CD}
Y^{\flat} @>{\tau'}>> Y \\
@V{f^{\flat}}VV @VV{f}V \\
T^{\flat} @>{\tau}>> \phantom{.}T.
\end{CD}\]
By Fact~\ref{fact:DeligneVerdierLipman}\eqref{fact:DeligneVerdierLipman:flatbc},
we have a quasi-isomorphism
\[ \bfL \tau^{\prime *} (f^{!}\SO_{T}) \isom_{\qis} f^{\flat !}\SO_{T^{\flat}}.\]
Hence, we may assume from the beginning that \( T = T^{\flat} \).
Then, \( \phi_{t} \) is the closed immersion \( \iota \).
Now, \( T \) admits a dualizing complex, since we have a surjection to \( \widehat{\SO}_{T, t} \)
from a complete regular local ring by Cohen's structure theorem.
Thus, we are done by Lemma~\ref{lem:BC0}.
\end{proof}

%%%%%%%%%%%%%%%%%%%%%%%%%%%%%%%%%%%%%%%%%%%%

\subsection{Cohen--Macaulay morphisms and Gorenstein morphisms}
\label{subsect:CMmorGormor}

The notions of Cohen--Macaulay morphism and Gorenstein morphism are introduced in
\cite[IV, D\'ef.~(6.8.1)]{EGA} and  \cite[V, Ex.~9.7]{ResDual}.
By \cite[Sect.~3.5]{Conrad} or \cite[Th.~2.2.3]{Sastry},
one can define the relative dualizing sheaf for a Cohen--Macaulay morphism
(cf.\ Definition~\ref{dfn:reldualsheaf} below),
and prove a base change property (cf.\ Theorem~\ref{thm:basechange} below).
We shall explain these facts.

We have defined the notion of
Cohen--Macaulay morphism in Definition~\ref{dfn:SkCMmorphism}.
The notion of Gorenstein morphism is defined as follows.

\begin{dfn}[$\Gor(Y/T)$]\label{dfn:RelGorlocus}
Let \( Y \) and \( T \) be locally Noetherian schemes and
\( f \colon Y \to T \) a flat morphism locally of finite type.
We define
\[\Gor(Y/T) := \bigcup\nolimits_{t \in T} \Gor(Y_{t}),
\]
and call it the \emph{relative Gorenstein locus for}
\( f \). The flat morphism \( f \) is called  a \emph{Gorenstein morphism}
if \( \Gor(Y/T) = Y \).
\end{dfn}

\begin{remn}
The Gorenstein locus \( \Gor(Y/T) \) is open.
In fact, this is characterized as the maximal open
subset of the relative Cohen--Macaulay locus
\( Y^{\flat} = \CM(Y/T)\) on which
the relative dualizing sheaf \( \omega_{Y^{\flat}/T} \) is invertible
(cf.\ Lemma~\ref{lem:CMmorphism} below),
where \( Y^{\flat} \) is open by Fact~\ref{fact:dfn:RelSkCMlocus}\eqref{fact:dfn:RelSkCMlocus:1}.
\end{remn}

The following characterizations of Cohen--Macaulay morphism and
Gorenstein morphism are known:

\begin{lem}[{\cite[V, Exer.~9.7]{ResDual}, \cite[Th.~3.5.1]{Conrad}}]
\label{lem:CMmorphism}
Let \( f \colon Y \to T \) be a flat morphism locally of finite type
between locally Noetherian schemes.
Then, \( f \) is Cohen--Macaulay if and only if, locally on \( Y \),
the twisted inverse image \( f^{!}\SO_{T} \)
is quasi-isomorphic to an \( f \)-flat coherent \( \SO_{Y} \)-module \( \omega_{Y/T} \)
up to shift. Here, \( f \) is Gorenstein if and only if \( \omega_{Y/T} \) is
invertible.
\end{lem}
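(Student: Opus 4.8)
The statement is Lemma~\ref{lem:CMmorphism}, a characterization of Cohen--Macaulay (and Gorenstein) morphisms via the twisted inverse image. Since the assertion is local on $Y$, I would first reduce to the case where $f$ factors as $g \circ i$ with $i \colon Y \injmap P$ a closed immersion and $g \colon P \to T$ smooth separated of pure relative dimension $n$; this is possible locally on $Y$ because $f$ is locally of finite type, and it makes $f$ an embeddable morphism so that the theory of $f^{!}$ from Example~\ref{exam:RDembeddable} applies. Then $f^{!}\SO_{T} \isom_{\qis} i^{!}(g^{!}\SO_{T})$ by Fact~\ref{fact:DeligneVerdierLipman}\eqref{fact:DeligneVerdierLipman:compo}, and $g^{!}\SO_{T} \isom_{\qis} \varOmega^{n}_{P/T}[n]$, which is an invertible $\SO_{P}$-module placed in degree $-n$. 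So the whole question reduces to: for a closed subscheme $Y$ of a smooth $T$-scheme $P$ of relative dimension $n$, when is $i^{!}(\SL[n])$ (with $\SL$ invertible on $P$) quasi-isomorphic to an $f$-flat coherent sheaf up to shift, and when is that sheaf invertible?

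\emph{The Cohen--Macaulay direction.} The key computation is that for the finite morphism $i$, $\bfR i_{*}(i^{!}\SG^{\bullet}) \isom_{\qis} \SRHom_{\SO_{P}}(i_{*}\SO_{Y}, \SG^{\bullet})$, so $\SH^{j}(i^{!}(\SL[n]))$ corresponds to $\SExt^{n+j}_{\SO_{P}}(i_{*}\SO_{Y}, \SL)$. Working fiberwise: the fiber $Y_{t}$ sits inside the regular scheme $P_{t}$ of dimension $n$, and by Corollary~\ref{cor:BC} the restriction of $f^{!}\SO_{T}$ to $Y_{t}$ computes $\omega^{\bullet}_{Y_{t}/\Bbbk(t)}$; combining with Lemma~\ref{lem:algschemeSk} and Proposition~\ref{prop:DC-CM}\eqref{prop:DC-CM:4} applied to $\SF = i_{*}\SO_{Y}$ and $\SR^{\bullet} = \SL[n]$ on $P$, the condition that $\SH^{j}(f^{!}\SO_{T})$ is concentrated in a single degree (the degree $-d$ for the pure relative dimension $d$ of $f$) is equivalent to $i_{*}\SO_{Y}$ being a Cohen--Macaulay $\SO_{P}$-module, which by Remark~\ref{rem:dfn:CM} is equivalent to every stalk being Cohen--Macaulay, i.e.\ to $\SO_{Y}$ being relatively Cohen--Macaulay over $T$. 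For the flatness of $\omega_{Y/T} = \SH^{-d}(f^{!}\SO_{T})$ over $T$ once it is the only cohomology sheaf, I would use Fact~\ref{fact:Lipman}\eqref{fact:Lipman:1}: $f^{!}\SO_{T}$ is $f$-perfect, and an $f$-perfect complex concentrated in one degree is an $f$-flat sheaf there (by Remark~\ref{rem:f-perf}, $\bfR i_{*}$ of it is a perfect $\SO_{P}$-complex concentrated in one degree, hence locally free on $P$ after shift, hence $T$-flat). Conversely if $f^{!}\SO_{T} \isom_{\qis} \omega_{Y/T}[d]$ with $\omega_{Y/T}$ $f$-flat, then restricting to each fiber via Corollary~\ref{cor:BC} gives $\omega^{\bullet}_{Y_{t}/\Bbbk(t)}$ concentrated in one degree, so each $Y_{t}$ is Cohen--Macaulay by Lemma~\ref{lem:algschemeSk}, i.e.\ $f$ is a Cohen--Macaulay morphism.

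\emph{The Gorenstein direction.} Assuming $f$ is already Cohen--Macaulay so that $f^{!}\SO_{T} \isom_{\qis} \omega_{Y/T}[d]$, I want: $f$ Gorenstein $\iff$ $\omega_{Y/T}$ invertible. Using Corollary~\ref{cor:BC} again, $\omega_{Y/T}$ restricted to the fiber $Y_{t}$ is (up to shift) $\omega^{\bullet}_{Y_{t}/\Bbbk(t)}$, which for a Cohen--Macaulay $Y_{t}$ is the dualizing sheaf $\omega_{Y_{t}/\Bbbk(t)}$ in the sense of Definition~\ref{dfn:ordinaryDC+DS}; this is invertible exactly on $\Gor(Y_{t})$ (cf.\ the openness remark for $\Gor$ and the proof of Proposition~\ref{prop:CMlocus}). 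Since $\omega_{Y/T}$ is $f$-flat, by Fact~\ref{fact:elem-flat}\eqref{fact:elem-flat:2} it is invertible at $y$ iff its restriction to the fiber through $y$ is invertible at $y$; hence $\omega_{Y/T}$ is invertible iff $\omega_{Y_{t}/\Bbbk(t)}$ is invertible for every $t$, iff every $Y_{t}$ is Gorenstein, iff $f$ is a Gorenstein morphism.

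\emph{Main obstacle.} The delicate point is matching up the degree shifts and the pure relative dimension consistently across the base-change and compatibility isomorphisms --- in particular showing that $\SH^{j}(f^{!}\SO_{T}) = 0$ for $j \neq -d$ is genuinely equivalent to the fiberwise Cohen--Macaulay condition, rather than merely implied by it; this requires Proposition~\ref{prop:DC-CM}\eqref{prop:DC-CM:5} (non-vanishing in the extreme degree) together with the fact that $f^{!}\SO_{T}$ commutes with restriction to fibers via $\bfL\psi_{t}^{*}$ (Corollary~\ref{cor:BC}), and one must be careful that $d = \dim_{y} f$ is locally constant, which holds because a Cohen--Macaulay morphism has pure relative dimension locally (Lemma~\ref{lem:S2Codim2}\eqref{lem:S2Codim2:1}, since $\bfS_{2}$ is implied by Cohen--Macaulay). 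Everything else is a matter of assembling the cited results. Since the lemma is quoted from \cite[V, Exer.~9.7]{ResDual} and \cite[Th.~3.5.1]{Conrad}, I would in practice keep the proof brief, citing those sources for the main equivalence and only spelling out the reduction to the embeddable case and the fiberwise flatness argument.
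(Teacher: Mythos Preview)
The converse direction and the Gorenstein part are fine and essentially match the paper. The gap is in the forward direction: showing that if every fiber is Cohen--Macaulay then $f^{!}\SO_{T}$ is concentrated in a single degree with $f$-flat cohomology.

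Your argument there has two problems. First, applying Proposition~\ref{prop:DC-CM}\eqref{prop:DC-CM:4} with $\SR^{\bullet} = \SL[n]$ on $P$ is illegitimate: $g^{!}\SO_{T} = \varOmega^{n}_{P/T}[n]$ is the \emph{relative} dualizing complex, not an absolute dualizing complex of $P$, so the proposition does not apply (and even when $T$ is Gorenstein so that it does, the conclusion would be that $Y$ is absolutely Cohen--Macaulay, not relatively over $T$ --- these differ in general). If instead you work on the fiber $P_{t}$, where $\SL|_{P_{t}}[n]$ \emph{is} a dualizing complex, you only get that $\bfL\psi_{t}^{*}(f^{!}\SO_{T})$ is concentrated in degree $-d$ for each $t$. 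Second, your bridge from this fiberwise concentration to concentration of $f^{!}\SO_{T}$ itself --- ``$f$-perfect concentrated in one degree implies $\bfR i_{*}$ is perfect on $P$ concentrated in one degree, hence locally free'' --- is false: a perfect complex concentrated in a single degree is merely a module of finite projective dimension, not a locally free sheaf. The paper fills exactly this gap with Lemma~\ref{lem:CMmorphism:complex}, a Nakayama-type statement proved immediately afterward: for $L^{\bullet} \in \bfD^{-}_{\coh}(B)$ over a local homomorphism $A \to B$, if $\OH^{i}(L^{\bullet} \otimes^{\bfL}_{A} \Bbbk) = 0$ for all $i \ne 0$ then $L^{\bullet}$ is quasi-isomorphic to a flat $A$-module. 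Applied to the stalk $(f^{!}\SO_{T})_{y}[-d]$, with the fiberwise vanishing supplied by Corollary~\ref{cor:BC}, this yields both the concentration and the $T$-flatness at once.
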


\begin{proof}
We may assume that \( f \) is a separated morphism of finite type between affine Noetherian schemes
by localizing \( Y \) and \( T \).
Assume first that \( f^{!}\SO_{T} \isom_{\qis} \omega_{Y/T}[d]\) for a coherent
\( \SO_{Y} \)-module \( \omega_{Y/T} \) flat over \( T \) and for an integer \( d \).
For an arbitrary fiber \( Y_{t} \),
the dualizing complex \( \omega_{Y_{t}/\Bbbk(t)}^{\bullet} \) is quasi-isomorphic to
\( \omega_{Y/T} \otimes_{\SO_{Y}} \SO_{Y_{t}}[d] \)
by Corollary~\ref{cor:BC}. Hence, \( Y_{t} \) is Cohen--Macaulay
by Corollary~\ref{cor:lem:DC-CM:CM}\eqref{cor:lem:DC-CM:CM:2}
or Lemma~\ref{lem:DC-CM:ordinary}\eqref{lem:DC-CM:ordinary:4}.

Conversely, assume that every fiber \( Y_{t} \) is Cohen--Macaulay.
Then, we may assume that \( f \) has pure relative dimension \( d \) by
Lemma~\ref{lem:S2Codim2}.
We shall show that
\[ f^{!}\SO_{T} \isom_{\qis} \omega_{Y/T}[d] \]
for the cohomology sheaf \( \omega_{Y/T} := \SH^{-d}(f^{!}\SO_{T}) \) and
that \( \omega_{Y/T} \) is flat over \( T \).
For a point \( t \in T \) and the inclusion morphism \( \psi_{t} \colon Y_{t} \to Y \), we have
a quasi-isomorphism
\begin{equation}\label{eq:lem:CMmorphism}
\bfL \psi_{t}^{*}(f^{!}\SO_{T} ) \isom_{\qis} \omega_{Y_{t}/\Bbbk(t)}[d]
\end{equation}
for the canonical sheaf \( \omega_{Y_{t}/\Bbbk(t)} \) by Corollary~\ref{cor:BC}.
Now, \( f^{!}\SO_{T} \) belongs to \( \bfD^{-}_{\coh}(\SO_{Y})\).
In fact, \( f^{!}\SO_{T} \) is \( f \)-perfect by Fact~\ref{fact:Lipman}\eqref{fact:Lipman:1}.
For the stalk \( (f^{!}\SO_{T})_{y} \) at a point \( y \in Y_{t}\), we have
\[ (f^{!}\SO_{T})_{y}[-d] \otimes^{\bfL}_{\SO_{T, t}} \Bbbk(t)
\isom_{\qis} (\omega_{Y_{t}/\Bbbk(t)})_{y} \]
by \eqref{eq:lem:CMmorphism}.
By applying Lemma~\ref{lem:CMmorphism:complex} below to \( (f^{!}\SO_{T})_{y}[-d] \)
and \( \SO_{T, t} \to \SO_{Y, y} \), we see that
\( \SH^{i}(f^{!}\SO_{T})_{y} = 0 \) for any \( i \ne -d \) and
\( \SH^{-d}(f^{!}\SO_{T})_{y} \) is a flat \( \SO_{T, t} \)-module with an isomorphism
\[ \SH^{-d}(f^{!}\SO_{T})_{y} \otimes_{\SO_{T, t}} \Bbbk(t) \isom (\omega_{Y_{t}/\Bbbk(t)})_{y}. \]
Since these hold for arbitrary point \( y \in Y\),
there is a quasi-isomorphism \( f^{!}\SO_{T} \isom_{\qis} \omega_{Y/T}[d] \)
and \( \omega_{Y/T} \) is flat over \( T \).
Therefore, we have proved the first assertion on a characterization of Cohen--Macaulay morphism.
For the second assertion, we assume that
\( f \) is a Cohen--Macaulay morphism.
Then, \( \omega_{Y/T} \) is flat over \( T \). Thus,
\( \omega_{Y/T} \) is invertible along a fiber \( Y_{t} \) if and only if
\( \omega_{Y/T} \otimes_{\SO_{Y}} \SO_{Y_{t}} \) is invertible
(cf.\ Fact~\ref{fact:elem-flat}\eqref{fact:elem-flat:2}).
By the isomorphism \( \omega_{Y/T} \otimes_{\SO_{Y}} \SO_{Y_{t}} \isom \omega_{Y_{t}/\Bbbk(t)} \),
we see that \( Y_{t} \) is Gorenstein if and only if \( \omega_{Y/T} \) is invertible along \( Y_{t} \).
Thus, the second assertion follows, and we are done.
\end{proof}

The following is used in the proof of Lemma~\ref{lem:CMmorphism} above:

\begin{lem}\label{lem:CMmorphism:complex}
Let \( A \) be a Noetherian local ring with residue field \( \Bbbk \) and
let \( A \to B \) be a local ring homomorphism to another Noetherian local ring \( B \).
Let \( L^{\bullet} \) be a complex of \( B \)-modules such that
\( \OH^{l}(L^{\bullet}) = 0\) for \( l \gg 0 \) and \( \OH^{i}(L^{\bullet}) \)
is a finitely generated \( B \)-modules for any \( i \in \BZZ \), i.e.,
\( L^{\bullet} \in \bfD_{\coh}^{-}(B) \).
Assume that
\[ \OH^{i}(L^{\bullet} \otimes^{\bfL}_{A} \Bbbk) = 0 \]
for any \( i > 0 \).
Then, \( \OH^{i}(L^{\bullet}) = 0 \)
for any \( i > 0 \),
and there exist an isomorphism
\[ \OH^{0}(L^{\bullet}) \otimes_{A} \Bbbk \isom \OH^{0}(L^{\bullet} \otimes^{\bfL}_{A} \Bbbk) \]
and an exact sequence
\[ \Tor^{A}_{2}(\OH^{0}(L^{\bullet}), \Bbbk) \to \OH^{-1}(L^{\bullet}) \otimes_{A} \Bbbk
\xrightarrow{h} \OH^{-1}(L^{\bullet} \otimes^{\bfL}_{A} \Bbbk)
\to \Tor^{A}_{1}(\OH^{0}(L^{\bullet}), \Bbbk) \to 0. \]
Consequently, the following hold\emph{:}
\begin{enumerate}
\item \label{lem:CMmorphism:complex:1}
\( \OH^{0}(L^{\bullet}) \) is flat over \( A \) if and only if
the homomorphism \( h \) above is surjective.

\item \label{lem:CMmorphism:complex:2}
If \( \OH^{i}(L^{\bullet} \otimes^{\bfL}_{A} \Bbbk) = 0 \) for any \( i \ne 0 \),
then \( L^{\bullet} \) is quasi-isomorphic to a flat \( A \)-module.
\end{enumerate}
\end{lem}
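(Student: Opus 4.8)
The plan is to reduce everything to the standard spectral‑sequence computation of the cohomology of a derived tensor product and then extract the low‑degree terms via the snake lemma, exactly as in the proof of Proposition~\ref{prop:key} (the exact sequence \eqref{eq:prop:key:exact} there is the model). First I would fix a bounded‑above complex of finitely generated free \( B \)-modules \( P^{\bullet} \isom_{\qis} L^{\bullet} \) (possible since \( L^{\bullet} \in \bfD^{-}_{\coh}(B) \)), so that \( L^{\bullet} \otimes^{\bfL}_{A} \Bbbk \isom_{\qis} P^{\bullet} \otimes_{A} \Bbbk \) is computed by an honest complex. Then I would invoke the first‑quadrant‑type spectral sequence
\[ E^{p, q}_{2} = \Tor^{A}_{-p}(\OH^{q}(L^{\bullet}), \Bbbk) \Rightarrow \OH^{p+q}(L^{\bullet} \otimes^{\bfL}_{A} \Bbbk), \]
which is the homological analogue of the one discussed in Remark~\ref{rem:SpSeq:lem:DSSk} (it is \cite[III, (6.3.2.2)]{EGA} in the sheaf setting). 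Since \( \Tor^{A}_{j} \) vanishes for \( j < 0 \), the terms with \( p > 0 \) vanish, so the spectral sequence is concentrated in the region \( p \le 0 \).

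The key step is then a descending induction to show \( \OH^{i}(L^{\bullet}) = 0 \) for \( i > 0 \). Let \( N \) be the largest index with \( \OH^{N}(L^{\bullet}) \ne 0 \) and suppose \( N > 0 \). On the line \( p + q = N \) the only possibly nonzero \( E_{2} \)-term is \( E^{0, N}_{2} = \OH^{N}(L^{\bullet}) \otimes_{A} \Bbbk \), and by maximality of \( N \) no differentials enter or leave it (differentials \( d_{r} \) change \( p+q \) by \( 1 \) and would land in a row \( q > N \) or come from a column \( p > 0 \)), so \( \OH^{N}(L^{\bullet} \otimes^{\bfL}_{A} \Bbbk) \isom \OH^{N}(L^{\bullet}) \otimes_{A} \Bbbk \). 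By hypothesis the left side is \( 0 \), and by Nakayama's lemma applied to the finitely generated \( B \)-module \( \OH^{N}(L^{\bullet}) \) (using that \( \GM_{A} B \subset \GM_{B} \) because \( A \to B \) is local) we get \( \OH^{N}(L^{\bullet}) = 0 \), a contradiction. Hence \( \OH^{i}(L^{\bullet}) = 0 \) for all \( i > 0 \), and the spectral sequence now lives in the third quadrant \( p \le 0 \), \( q \le 0 \). Reading off the edge terms on the lines \( p+q = 0 \) and \( p+q = -1 \) — precisely as the five‑term exact sequence of a third‑quadrant spectral sequence — yields the isomorphism \( \OH^{0}(L^{\bullet}) \otimes_{A} \Bbbk \isom \OH^{0}(L^{\bullet} \otimes^{\bfL}_{A} \Bbbk) \) (the edge term \( E^{0,0}_{\infty} = E^{0,0}_{2} \), since nothing can map into it and the only outgoing differential lands in total degree \( 1 \), which is zero) and the four‑term exact sequence
\[ \Tor^{A}_{2}(\OH^{0}(L^{\bullet}), \Bbbk) \to \OH^{-1}(L^{\bullet}) \otimes_{A} \Bbbk \xrightarrow{h} \OH^{-1}(L^{\bullet} \otimes^{\bfL}_{A} \Bbbk) \to \Tor^{A}_{1}(\OH^{0}(L^{\bullet}), \Bbbk) \to 0, \]
where \( h \) is the edge map \( E^{0,-1}_{2} \twoheadrightarrow E^{0,-1}_{\infty} \hookrightarrow \OH^{-1} \) and the cokernel term \( E^{-1,0}_{\infty} = E^{-1,0}_{3} = \Tor^{A}_{1}(\OH^{0}(L^{\bullet}),\Bbbk) \), this last equality because the only differential touching \( E^{-1,0} \) is \( d_{2} \colon E^{-1,0}_{2} \to E^{1,-1}_{2} = 0 \) and the incoming \( d_{2} \) comes from \( E^{-3,1}_{2} = 0 \). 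Alternatively, and perhaps more transparently, I would set \( \SB := \Ker(P^{0} \to P^{1}) / \Image(P^{-1} \to P^{0}) \)‑type truncations and apply the snake lemma directly to the comparison of \( \tau^{\ge 0} P^{\bullet} \) with its reduction mod \( \GM_{A} \), mimicking verbatim the derivation of \eqref{eq:prop:key:exact}; this avoids invoking the spectral sequence machinery and only uses that \( \Tor^{A}_{1}(\OH^{0}, \Bbbk) \) and \( \Tor^{A}_{2}(\OH^{0}, \Bbbk) \) are the obstructions measuring failure of exactness after base change.

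Finally the two numbered consequences are immediate. For \eqref{lem:CMmorphism:complex:1}: if \( \OH^{0}(L^{\bullet}) \) is flat over \( A \) then \( \Tor^{A}_{1}(\OH^{0}(L^{\bullet}), \Bbbk) = 0 \), so \( h \) is surjective; conversely if \( h \) is surjective then \( \Tor^{A}_{1}(\OH^{0}(L^{\bullet}), \Bbbk) = 0 \), and since \( \OH^{0}(L^{\bullet}) \) is a finitely generated \( B \)-module, hence \( \GM_{A} \)‑adically separated ideals behave well, the local criterion of flatness in the form \eqref{prop:LCflat:1} \( \Leftrightarrow \) \eqref{prop:LCflat:4} of Proposition~\ref{prop:LCflat} (applied with the \( \GM_{A} \)-adic filtration) gives flatness over \( A \). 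For \eqref{lem:CMmorphism:complex:2}: if \( \OH^{i}(L^{\bullet} \otimes^{\bfL}_{A} \Bbbk) = 0 \) for all \( i \ne 0 \), then in particular \( \OH^{-1}(L^{\bullet} \otimes^{\bfL}_{A} \Bbbk) = 0 \), so \( h \) is (vacuously) surjective and \eqref{lem:CMmorphism:complex:1} gives that \( \OH^{0}(L^{\bullet}) \) is \( A \)-flat; moreover running the maximality argument downward (now on the negative side, using the vanishing of all higher \( \Tor \) against a flat module and induction on the top nonvanishing \( \OH^{q} \) with \( q < 0 \)) forces \( \OH^{q}(L^{\bullet}) = 0 \) for all \( q < 0 \) as well, so \( L^{\bullet} \isom_{\qis} \OH^{0}(L^{\bullet}) \), a flat \( A \)-module. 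The main obstacle I anticipate is purely bookkeeping: setting up the spectral sequence of the correct (homological, third‑quadrant) variance and being careful that the edge maps are the ones claimed and that no spurious differentials interfere on the relevant diagonals; once the two edge exact sequences are correctly identified, everything else is a direct application of Nakayama and the local flatness criterion already recorded in the appendix.
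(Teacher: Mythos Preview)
Your proof is correct and takes essentially the same approach as the paper: the same spectral sequence $E_{2}^{p,q} = \Tor^{A}_{-p}(\OH^{q}(L^{\bullet}), \Bbbk) \Rightarrow \OH^{p+q}(L^{\bullet} \otimes^{\bfL}_{A} \Bbbk)$ (the paper cites the same EGA reference), the same descending induction with Nakayama to kill cohomology in positive degrees, and the same reading of the low-degree edge sequence. For \eqref{lem:CMmorphism:complex:2} the paper phrases the argument as ``apply the result above to $\tau^{\leq -1}(L^{\bullet})$'', which is precisely your ``running the maximality argument downward''; one cosmetic point is that the relevant form of the local criterion is \eqref{prop:LCflat:1}~$\Leftrightarrow$~\eqref{prop:LCflat:2} of Proposition~\ref{prop:LCflat} (vanishing of $\Tor_{1}$ against the residue field) rather than \eqref{prop:LCflat:1}~$\Leftrightarrow$~\eqref{prop:LCflat:4}.
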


\begin{proof}
There is a standard spectral sequence
\[ E_{2}^{p, q} = \Tor^{A}_{-p}(\OH^{q}(L^{\bullet}), \Bbbk) \Rightarrow
E^{p+q} = \OH^{p+q}(L^{\bullet} \otimes^{\bfL}_{A} \Bbbk)\]
(cf.\ \cite[III, (6.3.2.2)]{EGA}),
where \( E_{2}^{p, q} = 0 \) for any \( p > 0 \).
Let \( a \) be an integer such that \( \OH^{l}(L^{\bullet}) = 0 \) for any \( l > a \).
Then, \( E_{2}^{p, q} = 0 \) for any \( q > a \), and we have
\( E^{a} \isom E_{2}^{0, a} \) and an exact sequence
\[ E_{2}^{-2, a} \to E_{2}^{0, a-1} \to E^{a-1} \to E_{2}^{-1, a} \to 0. \]
Hence, if \( a > 0 \), then \( \OH^{a}(L^{\bullet}) = 0 \) by \( E_{2}^{0, a} = 0 \),
and we may decrease
\( a \) by \( 1 \). Thus, we can choose \( a = 0 \), and we have the required
isomorphism and exact sequence.
The assertion \eqref{lem:CMmorphism:complex:1}
is derived from the local criterion of flatness
(cf.\ Proposition~\ref{prop:LCflat}),
since \( \OH^{0}(L^{\bullet}) \) is flat over \( A \)
if and only if \( \Tor^{A}_{1}(\OH^{0}(L^{\bullet}), \Bbbk) = 0\).
The assertion \eqref{lem:CMmorphism:complex:2} follows from \eqref{lem:CMmorphism:complex:1}
and \( \tau^{\leq -1}(L^{\bullet}) \isom_{\qis} 0\),
the latter of which is obtained by applying the result above to
the complex \( \tau^{\leq -1}(L^{\bullet}) \) instead of \( L^{\bullet} \).
\end{proof}

\begin{fact}\label{fact:Conrad+Sastry}
Let \( f \colon Y \to T \) be a Cohen--Macaulay morphism having pure relative dimension \( d \)
(cf.\ Definition~\ref{dfn:PureRelDim}). In  \cite[Sect.~3.5]{Conrad}, Conrad defines
a sheaf \( \omega_{f} \), called the \emph{dualizing sheaf} for \( f \), on \( Y \) such that
\[ \omega_{f}|_{U} \isom \SH^{-d}((f|_{U})^{!}\SO_{T}) \]
for any open subset \( U \subset Y \) such that
the restriction \( f|_{U} \colon U \to T\) factors as a closed immersion \( U \injmap P \)
followed by a smooth separated morphism \( P \to T \) with pure relative dimension.
Here, the sheaf \( \omega_{f} \) is obtained by gluing the sheaves \( \SH^{-d}((f|_{U})^{!}\SO_{T}) \)
along natural isomorphisms, where the compatibility of gluing
is checked by explicit calculation of \( \Ext \) groups.
In \cite[Th.~2.3.3, 2.3.5]{Sastry}, Sastry defines the same sheaf \( \omega_{f} \) by another method:
This is obtained by gluing \( \SH^{-d}((f|_{V})^{!}\SO_{T})  \) for open subsets \( V \subset Y \)
such that \( f|_{V} \) factors as an open immersion \( V \injmap \overline{V}  \) followed by
a \( d \)-proper morphism \( \overline{V} \to T \) in the sense of \cite[Def.~2.2.1]{Sastry}.
\end{fact}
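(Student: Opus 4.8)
The plan is to establish Fact~\ref{fact:Conrad+Sastry} by a gluing argument, which is the route taken by Conrad and Sastry. Since $f$ is locally of finite type, the open subsets $U \subset Y$ on which $f|_U$ factors as a closed immersion $i\colon U \injmap P$ into a smooth separated $T$-scheme of pure relative dimension followed by the structure morphism $g\colon P \to T$ form an open cover of $Y$ (locally $Y$ is a closed subscheme of an affine space $\BAA^N_T$). On such a $U$, the morphism $f|_U$ is an embeddable Cohen--Macaulay morphism of pure relative dimension $d$, so by Lemma~\ref{lem:CMmorphism} (together with Example~\ref{exam:RDembeddable}) the complex $(f|_U)^{!}\SO_T$ is quasi-isomorphic to $\omega_U[d]$ for an $f$-flat coherent sheaf $\omega_U := \SH^{-d}((f|_U)^{!}\SO_T)$. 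Using $f|_U = g \circ i$, the transitivity $(f|_U)^{!} \isom i^{!}\circ g^{!}$, and $g^{!}\SO_T \isom_{\qis} \varOmega^{n}_{P/T}[n]$ (where $n = \dim(P/T)$), one gets the concrete ``fundamental local isomorphism'' description $\omega_U \isom \SExt^{c}_{\SO_P}(\SO_U, \varOmega^{n}_{P/T})$ as an $\SO_U$-module, where $c = n - d$ is the (pure) codimension of $U$ in $P$; in particular $\omega_U$ is independent of the chosen factorization only up to an isomorphism that must be pinned down.

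Next I would construct canonical transition isomorphisms. If $U$ admits two such factorizations through $P$ and $P'$, then the diagonal embeds $U$ as a closed subscheme of $P \times_T P'$, again smooth and separated over $T$, with smooth projections to $P$ and $P'$. Applying the transitivity isomorphism for twisted inverse images of embeddable morphisms (Example~\ref{exam:RDembeddable}, Fact~\ref{fact:DeligneVerdierLipman}\eqref{fact:DeligneVerdierLipman:compo}) to the chains $U \injmap P \to T$, $U \injmap P\times_T P' \to P \to T$, and symmetrically for $P'$, identifies the complex $(f|_U)^{!}\SO_T$ computed via $P$ with the one computed via $P \times_T P'$ and with the one via $P'$; taking $\SH^{-d}$ gives a canonical isomorphism between the two candidate sheaves $\omega_U$. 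For two members $U_\alpha$, $U_\beta$ of the cover one restricts to $U_\alpha \cap U_\beta$, picks a factorization over the overlap, and obtains transition isomorphisms $\theta_{\alpha\beta}\colon \omega_{U_\alpha}|_{U_\alpha \cap U_\beta} \xrightarrow{\sim} \omega_{U_\beta}|_{U_\alpha \cap U_\beta}$.

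The crux — and the step I expect to be the main obstacle — is verifying the cocycle condition $\theta_{\alpha\gamma} = \theta_{\beta\gamma}\circ\theta_{\alpha\beta}$ on triple overlaps, i.e.\ that these identifications are genuinely canonical rather than merely correct up to a unit. This is where one must unwind the explicit $\SExt$-sheaves above and the compatibility of the trace/fundamental-local-isomorphism maps: Conrad \cite[Sect.~3.5]{Conrad} handles it by gluing the complexes $\SH^{-d}((f|_U)^{!}\SO_T)$ along their natural isomorphisms and checking compatibility by direct $\Ext$ computations (via residual complexes), while Sastry \cite[Th.~2.3.3, 2.3.5]{Sastry} instead covers $Y$ by opens $V$ admitting a factorization $V \injmap \overline{V} \to T$ with $V \injmap \overline{V}$ an open immersion and $\overline{V} \to T$ a $d$-proper morphism (these exist by Nagata compactification, cf.\ Example~\ref{exam:DeligneVerdier}) and glues $\SH^{-d}((f|_V)^{!}\SO_T)$ using independence of the twisted inverse image of the compactification. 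Once the cocycle condition is checked, the local sheaves glue to a coherent $f$-flat sheaf $\omega_f$ on $Y$ with $\omega_f|_U \isom \SH^{-d}((f|_U)^{!}\SO_T)$ for every admissible $U$, and one verifies that this restriction isomorphism is compatible with the gluing data, which completes the proof. Everything outside these $\SExt$/trace compatibility verifications is formal; that bookkeeping is the only real difficulty.
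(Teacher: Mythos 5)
Your outline matches the paper's treatment: the paper states this as a Fact with no proof of its own, simply citing Conrad~\cite[Sect.~3.5]{Conrad} and Sastry~\cite[Th.~2.3.3, 2.3.5]{Sastry}, and your sketch (local existence via the embeddable case and Lemma~\ref{lem:CMmorphism}, transition isomorphisms through $P\times_T P'$, cocycle verification deferred to Conrad's $\Ext$-computations or Sastry's compactification method) is exactly the construction those references carry out. So the proposal is correct and takes essentially the same approach, with the genuinely hard compatibility checks appropriately attributed to the cited sources rather than redone.
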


\begin{dfn}[relative dualizing sheaf]\label{dfn:reldualsheaf}
Let \( f \colon Y \to T \) be a Cohen--Macaulay morphism.
For any connected component \( Y_{\alpha} \) of \( Y \),
it is shown in Lemma~\ref{lem:S2Codim2} that the restriction morphism
\( f_{\alpha} = f|_{Y_{\alpha}} \colon Y_{\alpha} \to T \) has pure relative dimension.
Thus, one can consider the dualizing sheaf \( \omega_{f_{\alpha}} \)
in Fact~\ref{fact:Conrad+Sastry} for \( f_{\alpha} \).
We define the \emph{relative dualizing sheaf} \( \omega_{Y/T} \)
of \( Y \) over \( T \) by
\[ \omega_{Y/T}|_{Y_{\alpha}} = \omega_{f_{\alpha}} \]
for any connected component \( Y_{\alpha} \).
The \( \omega_{Y/T} \) is also called
the \emph{relative dualizing sheaf} for \( f \) or
the \emph{relative canonical sheaf}
of \( Y \) over \( T \)
(cf.\ Definition~\ref{dfn:relcanosheaf} below).
We sometimes write \( \omega_{f} \) for \( \omega_{Y/T} \).
\end{dfn}

By Corollary~\ref{cor:lem:DC-CM:CM}\eqref{cor:lem:DC-CM:CM:2}
and Lemma~\ref{lem:CMmorphism}, we have:

\begin{cor}\label{cor:omegaRelCM}
For a Cohen--Macaulay morphism \( f \colon Y \to T \),
the relative dualizing sheaf \( \omega_{Y/T} \) is relatively
Cohen--Macaulay over \( T \) \emph{(}cf.\ Definition~\emph{\ref{dfn:RelSkCMlocus})}
and \( \Supp \omega_{Y/T} = Y \).
\end{cor}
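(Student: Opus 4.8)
The plan is to derive Corollary~\ref{cor:omegaRelCM} as a direct combination of the characterization of Cohen--Macaulay morphisms in Lemma~\ref{lem:CMmorphism}, the base change quasi-isomorphism in Corollary~\ref{cor:BC}, and the local description of Cohen--Macaulayness via an ordinary dualizing complex in Corollary~\ref{cor:lem:DC-CM:CM}. Since both assertions --- that $\omega_{Y/T}$ is relatively Cohen--Macaulay over $T$ and that $\Supp\omega_{Y/T} = Y$ --- are local on $Y$, I would first reduce to the affine case: replace $Y$ by a connected affine open subset $V$ and $T$ by an affine open containing $f(V)$, so that $f|_{V}\colon V \to T$ is a separated Cohen--Macaulay morphism of finite type between affine Noetherian schemes having pure relative dimension $d$ (the purity coming from Lemma~\ref{lem:S2Codim2} or Corollary~\ref{cor:lem:S2Codim2}, as already invoked in Definition~\ref{dfn:reldualsheaf}).

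First I would invoke Lemma~\ref{lem:CMmorphism}: on such a $V$, the twisted inverse image $f^{!}\SO_{T}$ is quasi-isomorphic to $\omega_{Y/T}[d]$ with $\omega_{Y/T}$ flat over $T$; by the definition of the relative dualizing sheaf (Definition~\ref{dfn:reldualsheaf} together with Fact~\ref{fact:Conrad+Sastry}) this $\omega_{Y/T}$ is indeed $\SH^{-d}(f^{!}\SO_{T})$ on such an open set. Next, for a point $t \in T$ and the inclusion $\psi_{t}\colon Y_{t}\to Y$, Corollary~\ref{cor:BC} gives $\bfL\psi_{t}^{*}(f^{!}\SO_{T}) \isom_{\qis} \omega^{\bullet}_{Y_{t}/\Bbbk(t)}$; combined with the quasi-isomorphism $f^{!}\SO_{T}\isom_{\qis}\omega_{Y/T}[d]$ and the $T$-flatness of $\omega_{Y/T}$, this yields $\omega_{Y/T}\otimes_{\SO_{Y}}\SO_{Y_{t}} \isom \SH^{-d}(\omega^{\bullet}_{Y_{t}/\Bbbk(t)}) = \omega_{Y_{t}/\Bbbk(t)}$, i.e.\ the fiber restriction $(\omega_{Y/T})_{(t)}$ is the canonical sheaf of $Y_{t}$. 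Since $Y_{t}$ is Cohen--Macaulay and (locally) equi-dimensional of dimension $d$, Corollary~\ref{cor:lem:DC-CM:CM}\eqref{cor:lem:DC-CM:CM:2} (applied on each connected component of $Y_{t}$, or Lemma~\ref{lem:DC-CM:ordinary}\eqref{lem:DC-CM:ordinary:4} / Remark~\ref{rem:lem:DC-CM:CM2}) shows that $\omega_{Y_{t}/\Bbbk(t)}$ is a Cohen--Macaulay $\SO_{Y_{t}}$-module with support equal to all of $Y_{t}$.

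From here the conclusion is essentially formal. Because $\omega_{Y/T}$ is $f$-flat with $(\omega_{Y/T})_{(t)} \isom \omega_{Y_{t}/\Bbbk(t)}$ Cohen--Macaulay for every $t$, the definition of $\CM(\SF/T)$ in Definition~\ref{dfn:RelSkCMlocus} gives immediately $Y = \CM(\omega_{Y/T}/T)$, i.e.\ $\omega_{Y/T}$ is relatively Cohen--Macaulay over $T$. For the support statement, at any $y \in Y$ with $t = f(y)$ we have $y \in \Supp\omega_{Y_{t}/\Bbbk(t)} = Y_{t}$, so the stalk $(\omega_{Y/T})_{y}\otimes_{\SO_{T,t}}\Bbbk(t) = ((\omega_{Y/T})_{(t)})_{y}$ is nonzero, whence $(\omega_{Y/T})_{y}\neq 0$ by Nakayama; therefore $\Supp\omega_{Y/T} = Y$. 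Gluing over the affine cover (the sheaves agree on overlaps by Definition~\ref{dfn:reldualsheaf}) gives the global statement.

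I do not anticipate a genuine obstacle here; the corollary is a bookkeeping consequence of the machinery already set up. The one point requiring a little care is making sure the identification $(\omega_{Y/T})_{(t)} \isom \omega_{Y_{t}/\Bbbk(t)}$ is the one already recorded in the proof of Lemma~\ref{lem:CMmorphism} (it is: that proof establishes precisely $\SH^{-d}(f^{!}\SO_{T})\otimes_{\SO_{Y}}\SO_{Y_{t}}\isom\omega_{Y_{t}/\Bbbk(t)}$ via Corollary~\ref{cor:BC} and Lemma~\ref{lem:CMmorphism:complex}), together with checking that the locally defined $\omega_{f_{\alpha}}$ of Fact~\ref{fact:Conrad+Sastry} is compatible with the cohomology-sheaf description of $f^{!}\SO_{T}$ on the open sets where $f$ factors through a smooth scheme --- but that compatibility is exactly what Fact~\ref{fact:Conrad+Sastry} asserts, so nothing new is needed.
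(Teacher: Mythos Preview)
Your proposal is correct and follows essentially the same approach as the paper: the paper derives the corollary in one line from Corollary~\ref{cor:lem:DC-CM:CM}\eqref{cor:lem:DC-CM:CM:2} and Lemma~\ref{lem:CMmorphism}, and your argument is simply a detailed unpacking of how those two results combine (the base change isomorphism $(\omega_{Y/T})_{(t)}\isom\omega_{Y_{t}/\Bbbk(t)}$ you invoke via Corollary~\ref{cor:BC} is already established inside the proof of Lemma~\ref{lem:CMmorphism}, as you yourself note).
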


By Lemma~\ref{lem:US2add}\eqref{lem:US2add:3a}, we have also:

\begin{cor}\label{cor:pushomegaCM}
For a Cohen--Macaulay morphism \( f \colon Y \to T \),
let \( Y^{\circ} \) be an open subset of \( Y \) such that
\( \Codim(Y_{t} \setminus Y^{\circ}, Y_{t}) \geq 2 \)
for any fiber \( Y_{t} = f^{-1}(t)\).
Then, \( \omega_{Y/T} \isom j_{*}(\omega_{Y^{\circ}/T}) \)
for the open immersion \( j \colon Y^{\circ} \injmap Y \).
\end{cor}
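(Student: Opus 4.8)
The plan is to deduce the isomorphism directly from Lemma~\ref{lem:US2add}\eqref{lem:US2add:3a}, applied to the flat morphism $f$, the coherent sheaf $\SF = \omega_{Y/T}$, and the closed subset $Z = Y \setminus Y^{\circ}$. First I would record that $f$ is flat, since every Cohen--Macaulay morphism is so by Definition~\ref{dfn:SkCMmorphism}, and that $Z \cap Y_{t} = Y_{t} \setminus Y^{\circ}$ for each $t \in T$, so that the standing hypothesis $\Codim(Y_{t} \setminus Y^{\circ}, Y_{t}) \geq 2$ reads $\Codim(Z \cap Y_{t}, Y_{t}) \geq 2$.

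Next I would verify the remaining hypotheses of Lemma~\ref{lem:US2add}\eqref{lem:US2add:3a}. The inequality $\depth_{Y_{t} \cap Z} \SO_{Y_{t}} \geq 1$ (indeed $\geq 2$) holds because each fiber $Y_{t}$ is Cohen--Macaulay, hence satisfies $\bfS_{2}$, and $\Codim(Y_{t} \cap Z, Y_{t}) \geq 2$; this is Lemma~\ref{lem:depth+codim+Sk}\eqref{lem:depth+codim+Sk:2}. That $\omega_{Y/T}$ satisfies relative $\bfS_{2}$ over $T$ follows from Corollary~\ref{cor:omegaRelCM}, which asserts that $\omega_{Y/T}$ is relatively Cohen--Macaulay over $T$, hence relatively $\bfS_{k}$ for every $k \geq 1$. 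Finally, since $\omega_{Y/T}$ is $f$-flat with $\Supp \omega_{Y/T} = Y$ by Corollary~\ref{cor:omegaRelCM} (cf.\ Lemma~\ref{lem:CMmorphism}), we get $\Supp (\omega_{Y/T})_{(t)} = Y_{t} \cap \Supp \omega_{Y/T} = Y_{t}$, so the required condition $\Codim(Z \cap \Supp (\omega_{Y/T})_{(t)}, \Supp (\omega_{Y/T})_{(t)}) \geq 2$ is exactly $\Codim(Y_{t} \setminus Y^{\circ}, Y_{t}) \geq 2$, which is assumed. Lemma~\ref{lem:US2add}\eqref{lem:US2add:3a} then yields $\omega_{Y/T} \isom j_{*}(\omega_{Y/T}|_{Y^{\circ}})$.

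It remains to identify $\omega_{Y/T}|_{Y^{\circ}}$ with $\omega_{Y^{\circ}/T}$, which I would do using the local (gluing) nature of the construction of the relative dualizing sheaf in Fact~\ref{fact:Conrad+Sastry} and Definition~\ref{dfn:reldualsheaf}: on a connected component $Y_{\alpha}$ of $Y$ the restriction $f_{\alpha}$ has pure relative dimension $d_{\alpha}$ and $\omega_{f_{\alpha}}$ is obtained by gluing the sheaves $\SH^{-d_{\alpha}}((f|_{U})^{!}\SO_{T})$ over suitable open $U \subset Y_{\alpha}$; restricting to $Y_{\alpha} \cap Y^{\circ}$ and noting that the relative dimension equals $d_{\alpha}$ on every connected component of $Y^{\circ}$ meeting $Y_{\alpha}$, the very same gluing data defines $\omega_{Y^{\circ}/T}$ there, so the two sheaves agree. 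I expect the only delicate point to be this last bookkeeping — that passing to the open subset $Y^{\circ}$ does not disturb the connected-component decomposition and the locally constant relative dimension entering Definition~\ref{dfn:reldualsheaf} — while the verification of the hypotheses of Lemma~\ref{lem:US2add}\eqref{lem:US2add:3a} is entirely routine.
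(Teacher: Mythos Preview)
Your proposal is correct and takes essentially the same approach as the paper, which simply cites Lemma~\ref{lem:US2add}\eqref{lem:US2add:3a} applied to $\SF = \omega_{Y/T}$ (with the relative $\bfS_{2}$ property coming from Corollary~\ref{cor:omegaRelCM}). Your extra bookkeeping step identifying $\omega_{Y/T}|_{Y^{\circ}}$ with $\omega_{Y^{\circ}/T}$ is correct and implicit in the paper's use of the gluing construction in Fact~\ref{fact:Conrad+Sastry} and Definition~\ref{dfn:reldualsheaf}.
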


The following base change property is known for the relative dualizing sheaves
(cf.\ \cite[Th.~3.6.1]{Conrad}, \cite[Prop.~(9)]{Kleiman},
\cite[Th.~2.3.5]{Sastry}):

\begin{thm}\label{thm:basechange}
Let \( f \colon Y \to T \) be a Cohen-Macaulay morphism. For an arbitrary
morphism \( T' \to T \) from a locally Noetherian scheme \( T' \),
let \( Y' \) be the fiber product \( Y \times_{T} T' \) and let
\( p \colon Y' \to Y \) be the projection.
Then,
\( p^{*}(\omega_{Y/T})  \isom \omega_{Y'/T'} \).
\end{thm}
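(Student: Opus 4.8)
The plan is to reduce to a local computation with embeddable morphisms, where $f^{!}\SO_{T}$ is given by an explicit $\SRHom$ which behaves well under the (otherwise problematic) base change, and then to pass back to $\omega$ via $\SH^{-d}$.

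First I would note that the assertion is local on $Y$, on $T$, and on $T'$, so we may assume $T = \Spec A$ and $T' = \Spec B$ are affine and, shrinking $Y$, that $f$ is an embeddable Cohen--Macaulay morphism of pure relative dimension $d$ (Lemma~\ref{lem:S2Codim2}); write $f = p_{P} \circ i$ with $i \colon Y \injmap P = \BAA^{n}_{T}$ a finite morphism and $p_{P} \colon P \to T$ the projection. By Lemma~\ref{lem:CMmorphism} we have $f^{!}\SO_{T} \isom_{\qis} \omega_{Y/T}[d]$ with $\omega_{Y/T}$ coherent and flat over $T$. Since $f' \colon Y' \to T'$ is again Cohen--Macaulay (Lemma~\ref{lem:bc basic}\eqref{lem:bc basic:5}), of the same pure relative dimension $d$ because every fibre of $f'$ is a field extension of a fibre of $f$, Lemma~\ref{lem:CMmorphism} likewise gives $f'^{!}\SO_{T'} \isom_{\qis} \omega_{Y'/T'}[d]$ with $\omega_{Y'/T'}$ flat over $T'$. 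Thus the theorem reduces to a quasi-isomorphism $\bfL p^{*}(f^{!}\SO_{T}) \isom_{\qis} f'^{!}\SO_{T'}$, where $p \colon Y' \to Y$ is the projection: indeed the left side is then $p^{*}\omega_{Y/T}[d]$, there being no higher $\Tor$ because $\omega_{Y/T}$ is $T$-flat, and taking $\SH^{-d}$ of both sides yields $p^{*}\omega_{Y/T} \isom \omega_{Y'/T'}$.

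To produce that quasi-isomorphism I would argue as follows. As $P = \BAA^{n}_{T}$, the smooth morphism $p_{P}$ has $\varOmega^{n}_{P/T} \isom \SO_{P}$, so Fact~\ref{fact:DeligneVerdierLipman}\eqref{fact:DeligneVerdierLipman:compo},\eqref{fact:DeligneVerdierLipman:smooth} gives $f^{!}\SO_{T} \isom_{\qis} i^{!}(\SO_{P}[n])$, and pushing forward by the finite morphism $i$ (Example~\ref{exam:RDembeddable}),
\[ \bfR i_{*}(f^{!}\SO_{T}) \isom_{\qis} \SRHom_{\SO_{P}}(i_{*}\SO_{Y}, \SO_{P})[n]. \]
Now $i_{*}\SO_{Y}$ is perfect as an $\SO_{P}$-complex: $\SO_{Y}$ is $f$-flat, hence $f$-perfect, and Remark~\ref{rem:f-perf} applies (this is the one place the Cohen--Macaulay hypothesis, through Fact~\ref{fact:Lipman}\eqref{fact:Lipman:1}, is essential). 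Hence $\SRHom_{\SO_{P}}(i_{*}\SO_{Y}, \SO_{P})$ commutes with the base change $\SO_{P} \to \SO_{P'} = \SO_{P} \otimes_{A} B$; moreover $q^{*}(i_{*}\SO_{Y}) \isom i'_{*}\SO_{Y'}$ for the projection $q \colon P' = \BAA^{n}_{T'} \to P$ and the base-changed closed immersion $i' \colon Y' \injmap P'$, since $i$ is affine. The same manipulations run on $Y' \to T'$, and because $\bfR i'_{*} = i'_{*}$ is exact and faithful, the comparison map obtained this way is the required quasi-isomorphism $\bfL p^{*}(f^{!}\SO_{T}) \isom_{\qis} f'^{!}\SO_{T'}$. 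If one prefers, one may instead check the induced map $p^{*}\omega_{Y/T} \to \omega_{Y'/T'}$ fibrewise: over $t' \in T'$ with $s = g(t')$ it becomes, via Corollary~\ref{cor:BC}, the field-extension base-change isomorphism $\omega^{\bullet}_{Y_{s}/\Bbbk(s)} \otimes^{\bfL}_{\Bbbk(s)} \Bbbk(t') \isom_{\qis} \omega^{\bullet}_{Y'_{t'}/\Bbbk(t')}$ of Fact~\ref{fact:DeligneVerdierLipman}\eqref{fact:DeligneVerdierLipman:flatbc} (field extensions being flat), hence an isomorphism; as $p^{*}\omega_{Y/T}$ and $\omega_{Y'/T'}$ are both coherent and flat over $T'$, Nakayama's lemma on fibres then forces it to be an isomorphism.

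The main obstacle is precisely the construction of a base-change morphism for $f^{!}$ along the \emph{arbitrary} morphism $g \colon T' \to T$: unconditional base change fails for twisted inverse images, and Fact~\ref{fact:DeligneVerdierLipman}\eqref{fact:DeligneVerdierLipman:flatbc} together with Lemma~\ref{lem:BC0} only handle the flat and finite cases. What rescues the argument for Cohen--Macaulay $f$ is the embeddable reduction: the smooth factor $p_{P}$ base-changes trivially, and the finite factor $i$ base-changes because $i_{*}\SO_{Y}$ is $\SO_{P}$-perfect. The remaining ingredients---the reduction to the affine embeddable situation, the degree bookkeeping ($\SRHom$ concentrated in degree $n-d$), and the fibrewise verification---are routine. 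I would also remark that this is exactly the content of \cite[Th.~3.6.1]{Conrad}, \cite[Prop.~(9)]{Kleiman}, and \cite[Th.~2.3.5]{Sastry}, so the above is a streamlined rendering of their proofs specialized to the Cohen--Macaulay case.
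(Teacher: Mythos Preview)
The paper does not prove this theorem; it merely cites Conrad \cite[Th.~3.6.1]{Conrad}, Kleiman \cite[Prop.~(9)]{Kleiman}, and Sastry \cite[Th.~2.3.5]{Sastry}, together with the observation (via Lemma~\ref{lem:S2Codim2}) that a Cohen--Macaulay morphism has pure relative dimension on each connected component, which reduces the general statement to the pure-relative-dimension case treated in those references.

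Your proposal goes further and sketches an actual argument, essentially the local core of Conrad's proof: reduce to the embeddable situation $Y \hookrightarrow \BAA^{n}_{T} \to T$, identify $\bfR i_{*}(f^{!}\SO_{T})$ with $\SRHom_{\SO_{P}}(i_{*}\SO_{Y}, \SO_{P})[n]$, and use perfectness of $i_{*}\SO_{Y}$ on $P$ to obtain compatibility of this $\SRHom$ with arbitrary base change. This outline is sound. The one point you pass over, and which is the genuine technical content of \cite[Th.~3.6.1]{Conrad} and \cite[Th.~2.3.5]{Sastry}, is that the relative dualizing sheaf in Definition~\ref{dfn:reldualsheaf} (via Fact~\ref{fact:Conrad+Sastry}) is defined by \emph{gluing} the local sheaves $\SH^{-d}((f|_{U})^{!}\SO_{T})$ along explicit transition isomorphisms, and one must verify that the local base-change isomorphisms you construct are compatible with that gluing data on both $Y$ and $Y'$. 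Your local computation produces an isomorphism on each chart, but the global statement requires this compatibility check, which is exactly what those references carry out.
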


\begin{remn}
Conrad \cite[Th.~3.6.1]{Conrad} and Sastry \cite[Th.~2.3.5]{Sastry} prove
Theorem~\ref{thm:basechange} assuming that \( f \) has pure relative dimension,
but it is enough for the proof,
since the restriction of \( f \) to any connected component of \( Y \) has pure relative dimension
(cf.\ Lemma~\ref{lem:S2Codim2}).
When \( f \) is proper, Theorem~\ref{thm:basechange} is shown by Kleiman
\cite[Prop.~(9)(iii)]{Kleiman}, whose proof uses another version of twisted inverse image functor
\( f^{!} \).
The proof of \cite[Th.~3.6.1]{Conrad} is
based on arguments in \cite[V]{ResDual},
while the proof of \cite[Th.~2.3.5]{Sastry} is
based on arguments in
\cite{ResDualDel}, \cite{Verdier}, \cite{Kleiman}, and \cite{Lipman09}.
\end{remn}

%%%% Section 5 %%%%%
%%%%%%%%%%%%%%%%%%%%%%%%%%%%%%%%%%%%%%
%%%%%%%%%%%%%%%%%%%%%%%%%%%%%%%%%%%%%%%

\section{Relative canonical sheaves}
\label{sect:Relcan}

As a generalization of the relative dualizing sheaf
for a Cohen--Macaulay morphism, we introduce the notion of
\emph{relative canonical sheaf} for an arbitrary
\emph{\( \bfS_{2} \)-morphism} (cf.\ Definition~\ref{dfn:SkCMmorphism}).
We give some base change properties of the relative canonical sheaf and its ``multiple.''
These are used for studying \( \BQQ \)-Gorenstein morphisms in Section~\ref{sect:QGormor}.
In Section~\ref{subsect:RelomegaS2}, we shall study the relative canonical sheaf and the  
conditions for it to satisfy relative \( \bfS_{2}\). 
Section~\ref{subsect:bcS3} is devoted to prove Theorem~\ref{thm:S2S3crit} 
on a criterion for a certain sheaf related to the relative canonical sheaf to be invertible. 
This provides sufficient conditions for the base change homomorphism of 
the relative canonical sheaf to the fiber to be an isomorphism.

%%%%%%%%%%%%%%%%%%%%%%%%%%%%%%%%%%%%%%%%%

\subsection{Relative canonical sheaf for an \texorpdfstring{$\bfS_{2}$}{S2}-morphism}
\label{subsect:RelomegaS2}

First of all, we shall give a partial generalization of
the notion of canonical sheaf in Definition~\ref{dfn:canosheaf} as follows.

\begin{dfn}\label{dfn:canosheaf2}
Let \( X \) be a \( \Bbbk \)-scheme locally of finite type for a field \( \Bbbk \).
Assume that
\begin{itemize}
\item  \( X \) is locally equi-dimensional, and

\item  \( \Codim(X \setminus X^{\flat}, X) \geq 2 \) for
the Cohen--Macaulay locus \( X^{\flat} = \CM(X) \).
\end{itemize}
Note that this assumption is verified when \( X \) satisfies \( \bfS_{2} \).
For the relative dualizing sheaf \( \omega_{X^{\flat}/\Bbbk} \) over \( \Spec \Bbbk \)
in Definition~\ref{dfn:reldualsheaf}
and for the open immersion \( j^{\flat} \colon X^{\flat} \injmap X \), we set
\[ \omega_{X/\Bbbk} := j^{\flat}_{*}(\omega_{X^{\flat}/\Bbbk}) \]
and call it the \emph{canonical sheaf} of \( X \).
\end{dfn}

\begin{remn}
By Corollaries~\ref{cor:CMCodOne} and \ref{cor:S2S2}, we have the following properties in
the situation of Definition~\ref{dfn:canosheaf2}:
\begin{enumerate}
\item  Let \( U \) be an arbitrary open subset of \( X \)
which is of finite type over \( \Spec \Bbbk \).
Then, \( \omega_{X/\Bbbk}|_{U} \) is isomorphic to the canonical sheaf \( \omega_{U/\Bbbk} \)
defined in Definition~\ref{dfn:canosheaf}.
Thus, the use of the same symbol \( \omega_{X/\Bbbk} \) for the canonical sheaf causes no confusion.

\item \label{rem:canosheaf2:S2} The canonical sheaf \( \omega_{X/\Bbbk} \) is coherent
and satisfies \( \bfS_{2} \).

\end{enumerate}
\end{remn}

\begin{lem}\label{lem:dfn:canosheaf2}
Let \( X \) be a scheme locally of finite type over a field \( \Bbbk \).
Assume that \( X \) is Gorenstein in codimension one and satisfies \( \bfS_{2} \).
Then, \( \omega_{X/\Bbbk} \) is reflexive, and every reflexive \( \SO_{X} \)-module
satisfies \( \bfS_{2} \). In particular,
the double-dual \( \omega^{[m]}_{X/\Bbbk} \) of \( \omega_{X/\Bbbk}^{\otimes m} \)
satisfies \( \bfS_{2} \) for any \( m \in \BZZ\).
\end{lem}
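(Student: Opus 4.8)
The plan is to prove the statement in the order: reflexivity of every reflexive sheaf satisfying $\bfS_{2}$ first, then reflexivity of $\omega_{X/\Bbbk}$, and finally the consequence for $\omega^{[m]}_{X/\Bbbk}$. First I would observe that since $X$ is locally of finite type over a field, it admits a dualizing complex locally (cf.\ Remark~\ref{rem:ExistDC}), so all the machinery of Section~\ref{sect:GD} applies; moreover $X$ satisfies $\bfS_{2}$, hence $X$ is locally equi-dimensional by Fact~\ref{fact:S2}\eqref{fact:S2:1}, and the Cohen--Macaulay locus $X^{\flat}=\CM(X)$ is open (Proposition~\ref{prop:CMlocus}) with $\Codim(X\setminus X^{\flat},X)\geq 2$ because $\bfS_{2}$ fails nowhere in codimension $\leq 1$. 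Thus $\omega_{X/\Bbbk}=j^{\flat}_{*}(\omega_{X^{\flat}/\Bbbk})$ is well-defined and coherent, and it satisfies $\bfS_{2}$ by the remark following Definition~\ref{dfn:canosheaf2} (which invokes Corollary~\ref{cor:S2S2} together with Corollary~\ref{cor:CMCodOne}).

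For the claim that every reflexive $\SO_{X}$-module $\SG$ satisfies $\bfS_{2}$: this is exactly Lemma~\ref{lem:j*reflexive}\eqref{lem:j*reflexive:1a} applied with $k=2$, since $X$ satisfies $\bfS_{2}$. So that part is immediate and requires no new argument.

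For the reflexivity of $\omega_{X/\Bbbk}$, the plan is to apply Lemma~\ref{lem:j*reflexive}\eqref{lem:j*reflexive:2} with $Z=X\setminus U$ for a suitable open $U$. Let $U$ be the intersection of the Gorenstein locus $\Gor(X)$ (open by Proposition~\ref{prop:CMlocus}) with $X^{\flat}$; by hypothesis $X$ is Gorenstein in codimension one and satisfies $\bfS_{2}$, so $\Codim(X\setminus U,X)\geq 2$ and in particular $\depth_{Z}\SO_{X}\geq 1$. On $U$ the scheme is Cohen--Macaulay and Gorenstein, so $\omega_{U/\Bbbk}$ restricted to each connected component is an invertible sheaf (the relative dualizing sheaf of a Gorenstein morphism over $\Spec\Bbbk$ is invertible, cf.\ Lemma~\ref{lem:CMmorphism}), hence $\omega_{X/\Bbbk}|_{U}$ is invertible and in particular reflexive. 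Since $\omega_{X/\Bbbk}$ satisfies $\bfS_{2}$ and $\Codim(Z\cap\Supp\omega_{X/\Bbbk},\Supp\omega_{X/\Bbbk})\geq 2$ (as $\Supp\omega_{X/\Bbbk}=X$ because $\omega_{X^{\flat}/\Bbbk}$ has full support by Corollary~\ref{cor:omegaRelCM} and $X^{\flat}$ is dense), Lemma~\ref{lem:depth+codim+Sk}\eqref{lem:depth+codim+Sk:2} gives $\depth_{Z}\omega_{X/\Bbbk}\geq 2$; then Lemma~\ref{lem:j*reflexive}\eqref{lem:j*reflexive:2} yields that $\omega_{X/\Bbbk}$ is reflexive. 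Equivalently, one can invoke Corollary~\ref{cor:prop:S1S2:reflexive} directly with these inputs.

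Finally, for any $m\in\BZZ$ the sheaf $\omega^{[m]}_{X/\Bbbk}=(\omega^{\otimes m}_{X/\Bbbk})^{\vee\vee}$ is a double dual, hence reflexive (the canonical map $c_{\SF^{\vee}}$ is always an isomorphism, so any $\SF^{\vee\vee}$ is reflexive), and therefore satisfies $\bfS_{2}$ by the already-established fact that every reflexive sheaf on $X$ does. I do not expect any serious obstacle here: the only point needing a little care is confirming $\Supp\omega_{X/\Bbbk}=X$ and that the codimension-two complement $Z$ genuinely has $\depth_{Z}\SO_{X}\geq 2$, both of which follow routinely from $X$ being $\bfS_{2}$ and locally equi-dimensional together with Lemma~\ref{lem:depth+codim+Sk}; the bulk of the statement is a direct assembly of Lemmas~\ref{lem:j*reflexive} and \ref{lem:depth+codim+Sk} and Corollary~\ref{cor:prop:S1S2:reflexive}.
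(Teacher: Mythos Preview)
Your proposal is correct and follows essentially the same route as the paper: both establish reflexivity of $\omega_{X/\Bbbk}$ by taking $Z$ to be the complement of the Gorenstein locus, noting $\omega_{X/\Bbbk}|_{X\setminus Z}$ is invertible and $\omega_{X/\Bbbk}$ satisfies $\bfS_{2}$ with full support, and then invoking Corollary~\ref{cor:prop:S1S2:reflexive} (you unpack it into Lemmas~\ref{lem:depth+codim+Sk}\eqref{lem:depth+codim+Sk:2} and \ref{lem:j*reflexive}\eqref{lem:j*reflexive:2}, but also note the corollary applies directly); the claim that reflexive sheaves satisfy $\bfS_{2}$ is Lemma~\ref{lem:j*reflexive}\eqref{lem:j*reflexive:1a} in both. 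One harmless redundancy: your $U=\Gor(X)\cap X^{\flat}$ equals $\Gor(X)$ since Gorenstein points are Cohen--Macaulay.
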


\begin{proof}
Let \( Z \) be the complement of the Gorenstein locus of \( X \) (cf.\ Definition~\ref{dfn:Gorlocus}).
Then, \( \Codim(Z, X) \geq 2 \) and \( \omega_{X/\Bbbk}|_{X \setminus Z} \) is invertible.
Hence, \( \omega_{X/\Bbbk} \) is reflexive by Corollary~\ref{cor:prop:S1S2:reflexive},
since \( \omega_{X/\Bbbk} \) satisfies \( \bfS_{2} \) and \( \Supp \omega_{X/\Bbbk} = X \).
Every reflexive \( \SO_{X} \)-module satisfies \( \bfS_{2} \)
by Lemma~\ref{lem:j*reflexive}\eqref{lem:j*reflexive:1a}.
\end{proof}

The definition of the canonical sheaf above
is partially extended to the relative situation as follows.

\begin{dfn}[relative canonical sheaf]\label{dfn:relcanosheaf}
Let \( f \colon Y \to T \) be an \( \bfS_{2} \)-morphism of locally Noetherian schemes.
Let \( j \colon Y^{\flat} \injmap Y \) be the open immersion from
the relative Cohen--Macaulay locus \( Y^{\flat} = \CM(Y/T) \).
Note that \( \Codim(Y_{t} \setminus Y^{\flat}, Y_{t}) \geq 3 \) for any fiber \( Y_{t} = f^{-1}(t) \),
since \( Y_{t} \) satisfies \( \bfS_{2} \). In this situation, we define
\[ \omega_{Y/T} := j_{*}(\omega_{Y^{\flat}/T}) \]
for the relative dualizing sheaf \( \omega_{Y^{\flat}/T} \) for \( f|_{Y^{\flat}} \)
in the sense of Definition~\ref{dfn:reldualsheaf}.
We call \( \omega_{Y/T} \) also the \emph{relative canonical sheaf}
of \( Y\) over \( T \).
\end{dfn}

\begin{lem}\label{lem:omegaFlatbc}
Let \( f \colon Y \to T \) be an \( \bfS_{2} \)-morphism
of locally Noetherian schemes and let
\[ \begin{CD}
Y' @>{p}>> Y \\ @V{f'}VV @VVV \\ T' @>>> T
\end{CD}\]
be a Cartesian diagram such that
\( T' \) is a locally Noetherian scheme flat over \( T \).
Then, \( \omega_{Y'/T'} \isom p^{*}(\omega_{Y/T}) \).
\end{lem}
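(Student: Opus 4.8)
The statement asserts that the relative canonical sheaf commutes with flat base change on $T$. The natural strategy is to reduce to the Cohen--Macaulay case, where this is exactly Theorem~\ref{thm:basechange} combined with the known compatibility of $j_{*}$ with flat base change (Lemma~\ref{lem:flatbc}). First I would set up notation: write $j\colon Y^{\flat}\injmap Y$ for the relative Cohen--Macaulay locus $Y^{\flat}=\CM(Y/T)$, and let $Y^{\prime\flat}=\CM(Y'/T')$. By Lemma~\ref{lem:bc basic}\eqref{lem:bc basic:5} the morphism $f'\colon Y'\to T'$ is again an $\bfS_{2}$-morphism, so $\omega_{Y'/T'}$ is defined; and since $p$ is flat (being the base change of the flat morphism $T'\to T$), $p^{*}\omega_{Y/T}$ makes sense. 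The first key point is that $p^{-1}(Y^{\flat})=Y^{\prime\flat}$: indeed for a point $y'\in Y'$ with $p(y')=y$, $t'=f'(y')$, $t$ the image of $t'$, the local ring $\SO_{Y'_{t'},y'}$ is faithfully flat over $\SO_{Y_{t},y}$ with Cohen--Macaulay fiber (the fiber of $Y'_{t'}\to Y_{t}$ over $y$ is the spectrum of $\Bbbk(y)\otimes_{\Bbbk(t)}\Bbbk(t')$, which is Cohen--Macaulay by \cite[IV, Lem.\ (6.7.1.1)]{EGA}), hence $\SO_{Y_{t},y}$ is Cohen--Macaulay iff $\SO_{Y'_{t'},y'}$ is, by Fact~\ref{fact:elem-flat}\eqref{fact:elem-flat:6} applied with $\SF=\SO$. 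So $Y^{\prime\flat}=p^{-1}(Y^{\flat})$, and I get a Cartesian square with $j'\colon Y^{\prime\flat}\injmap Y'$ the base change of $j$ and $p^{\flat}\colon Y^{\prime\flat}\to Y^{\flat}$ the base change of $p$.

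\textbf{Second step.} On the Cohen--Macaulay loci, Theorem~\ref{thm:basechange} applied to the Cohen--Macaulay morphism $f|_{Y^{\flat}}\colon Y^{\flat}\to T$ and the base change $T'\to T$ gives $(p^{\flat})^{*}(\omega_{Y^{\flat}/T})\isom \omega_{Y^{\prime\flat}/T'}$, using that $Y^{\prime\flat}=Y^{\flat}\times_{T}T'$. Then I push forward along $j'$ and invoke the flat base change isomorphism for $j_{*}$: since $p$ is flat and the square
\[
\begin{CD}
Y^{\prime\flat} @>{p^{\flat}}>> Y^{\flat} \\
@V{j'}VV @VV{j}V \\
Y' @>{p}>> Y
\end{CD}
\]
is Cartesian, Lemma~\ref{lem:flatbc} gives a canonical isomorphism $p^{*}(j_{*}\SG)\isom j'_{*}((p^{\flat})^{*}\SG)$ for any quasi-coherent $\SG$ on $Y^{\flat}$. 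Taking $\SG=\omega_{Y^{\flat}/T}$ and combining with the previous isomorphism yields
\[
p^{*}(\omega_{Y/T})=p^{*}(j_{*}\omega_{Y^{\flat}/T})\isom j'_{*}\big((p^{\flat})^{*}\omega_{Y^{\flat}/T}\big)\isom j'_{*}(\omega_{Y^{\prime\flat}/T'})=\omega_{Y'/T'},
\]
which is the desired conclusion.

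\textbf{Expected obstacle.} The only genuinely substantive ingredient is the identity $p^{-1}(Y^{\flat})=Y^{\prime\flat}$; everything else is formal. There is a minor subtlety in that Fact~\ref{fact:elem-flat}\eqref{fact:elem-flat:6} is phrased for flatness of a fixed morphism $f$, so I should instead argue fiberwise: the relevant statement is that for a faithfully flat local homomorphism $\SO_{Y_{t},y}\to\SO_{Y'_{t'},y'}$ whose closed fiber is Cohen--Macaulay, the source is Cohen--Macaulay iff the target is (this is the standard descent/ascent of Cohen--Macaulayness along flat local maps, \cite[Th.\ 23.3]{Matsumura} or \cite[$0_{\text{IV}}$, Cor.\ (16.5.10)]{EGA} together with the fiber being Cohen--Macaulay). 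Once this is in hand, I should also remark that the isomorphism produced is the canonical base change map, so that it is compatible with further base changes, though for the statement as given only the existence of an isomorphism is required. I would double-check that Lemma~\ref{lem:flatbc} is stated in enough generality to cover the open immersion $j$ (it is, being a flat base change statement for quasi-coherent direct images along an arbitrary morphism), and that no Noetherian hypothesis is violated --- $Y'$ is locally Noetherian since $f'$ is locally of finite type over the locally Noetherian $T'$.
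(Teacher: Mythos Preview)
Your proof is correct and follows essentially the same route as the paper's: identify $Y^{\prime\flat}=p^{-1}(Y^{\flat})$, apply Theorem~\ref{thm:basechange} on the Cohen--Macaulay loci, and then use flat base change for $j_{*}$ (Lemma~\ref{lem:flatbc}) on the Cartesian square with $j$, $j'$, $p$, $p^{\flat}$. The only cosmetic difference is that for the equality $Y^{\prime\flat}=p^{-1}(Y^{\flat})$ the paper simply cites Lemma~\ref{lem:bc basic}\eqref{lem:bc basic:3} applied to $\SF=\SO_{Y}$ (taking all $k$), whereas you reprove this fiberwise; your argument is the content of that lemma, so nothing is lost.
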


\begin{proof}
Let \( Y^{\flat} \) (resp.\ \( Y^{\prime \flat} \)) be the relative Cohen--Macaulay locus
for \( f \) (resp.\ \( f' \)) and let \( j \colon Y^{\flat} \injmap Y \) (resp.\
\( j' \colon Y^{\prime \flat} \injmap Y' \)) be the open immersion.
Then, \( Y^{\prime \flat} = p^{-1}(Y^{\flat}) \) by Lemma~\ref{lem:bc basic}\eqref{lem:bc basic:3}
for \( \SF=\SO_Y\), and \( j' \) is induced from \( j \).
Let \( p^{\flat} \colon Y^{\prime \flat} \to Y^{\flat} \) be the restriction of \( p \).
Then, \( \omega_{Y^{\prime \flat}/T'} \isom p^{\flat *}(\omega_{Y^{\flat}/T}) \) by
Theorem~\ref{thm:basechange}.
Thus, we have
\[ \omega_{Y'/T'} \isom j'_{*}(p^{\flat *}(\omega_{Y^{\flat}/T}))
\isom p^{*}(j_{*}(\omega_{Y^{\flat}/T})) \isom p^{*}\omega_{Y/T}\]
by the flat base change isomorphism
(cf.\ Lemma~\ref{lem:flatbc})
for the Cartesian diagram composed of \( p \), \( p^{\flat} \), \( j \), and \( j' \).
\end{proof}

\begin{prop}\label{prop:BC-S2CM}
Let \( f \colon Y \to T \) be an \( \bfS_{2} \)-morphism
of locally Noetherian schemes.
Then, the relative canonical sheaf \( \omega_{Y/T} \) defined in
Definition~\emph{\ref{dfn:relcanosheaf}} is coherent, and moreover,
if \( f \) is a separated morphism of pure relative dimension \( d \), then
\[ \SH^{i}(f^{!}\SO_{T}) \isom
\begin{cases}
0, &\text{ if  } i < -d; \\
\omega_{Y/T}, &\text{ if  } i = -d,
\end{cases}\]
for the twisted inverse image \( f^{!}\SO_{T} \).
Let \( Y^{\circ} \) be an open subset of \( \CM(Y/T) \) such that
\( \Codim(Y_{t} \setminus Y^{\circ}, Y_{t}) \geq 2 \)
for any fiber \( Y_{t} = f^{-1}(t)\).
For a point \( t \in T \), let
\[ \phi_{t} \colon \omega_{Y/T} \otimes_{\SO_{Y}} \SO_{Y_{t}}
\to \omega_{Y_{t}/\Bbbk(t)} = j_{t*}(\omega_{Y^{\circ} \cap Y_{t}/\Bbbk(t)}) \]
be the homomorphism induced from the base change isomorphism
\begin{equation}\label{eq:prop:BC-S2CM}
\omega_{Y^{\circ}/T} \otimes_{\SO_{Y^{\circ}}} \SO_{Y^{\circ} \cap Y_{t}}
\isom \omega_{Y^{\circ} \cap Y_{t}/\Bbbk(t)}
\end{equation}
\emph{(}cf.\ Theorem~\emph{\ref{thm:basechange})}, where
\( j_{t} \colon Y^{\circ} \cap Y_{t} \injmap Y_{t} \)
denotes the open immersion. Then, for any point \( y \in Y \),
the following three conditions are equivalent to each other\emph{:}
\begin{enumerate}
    \renewcommand{\theenumi}{\alph{enumi}}
     \renewcommand{\labelenumi}{(\theenumi)}
\item \label{prop:BC-S2CM:condA} The homomorphism \( \phi_{f(y)} \) is surjective at \( y \).

\item \label{prop:BC-S2CM:condB} The homomorphism \( \phi_{f(y)} \) is an isomorphism at \( y \).

\item \label{prop:BC-S2CM:condC}
There is an open neighborhood \( U \) of \( y \) in \( Y \) such that
\( \omega_{Y/T}|_{U} \)
satisfies \emph{relative} \( \bfS_{2} \) \emph{over} \( T \)
\emph{(}cf.\ Definition~\emph{\ref{dfn:RelSkCMlocus})}.
\end{enumerate}
\end{prop}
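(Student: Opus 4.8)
The plan is to reduce to a local situation where \( Y \) embeds in a smooth \( T \)-scheme, represent the relative dualizing complex by a free complex there, and feed that complex into Lemma~\ref{lem:SurjFlat(complex)} to produce an exact sequence of the type required by Proposition~\ref{prop:key}. All three assertions are local on \( Y \), so by Lemma~\ref{lem:S2Codim2} I may assume that \( T \) and \( Y \) are affine Noetherian, that \( f \) is separated of finite type, and that \( f \) has pure relative dimension \( d \); after shrinking \( Y \), I fix a closed immersion \( \iota \colon Y \injmap P \) into a smooth affine \( T \)-scheme \( P \) of pure relative dimension \( n \), with structure morphism \( f_{P} \colon P \to T \). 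Since \( f^{!}\SO_{T} \) is \( f \)-perfect by Fact~\ref{fact:Lipman}\eqref{fact:Lipman:1} and \( P \to T \) is smooth and separated, the complex \( \SK^{\bullet} := \iota_{*}(f^{!}\SO_{T}) \) is \emph{perfect} on \( P \) by Remark~\ref{rem:f-perf}; hence, after a further shrinking of \( P \), there is a bounded complex \( \SM^{\bullet} \) of free \( \SO_{P} \)-modules of finite rank with \( \SM^{\bullet} \isom_{\qis} \SK^{\bullet} \). I set \( \SF := \SH^{0}(\SM^{\bullet}[-d]) = \iota_{*}\SH^{-d}(f^{!}\SO_{T}) \), and apply Lemma~\ref{lem:SurjFlat(complex)} to \( \SM^{\bullet}[-d] \) on \( P \), over \( T \), with the closed subset \( Z := Y \setminus Y^{\circ} \) (closed also in \( P \)). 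The inequality \( \depth_{Z \cap P_{t}} \SO_{P_{t}} \geq 2 \) holds since \( P_{t} \) is regular and \( \dim \SO_{P_{t}, y} \geq \dim \SO_{Y_{t}, y} \geq 2 \) for \( y \in Z \cap Y_{t} \), by \( \Codim(Y_{t} \setminus Y^{\circ}, Y_{t}) \geq 2 \). For condition \eqref{lem:SurjFlat(complex):1}, Corollary~\ref{cor:BC} gives \( f^{!}\SO_{T} \otimes^{\bfL}_{\SO_{T}} \Bbbk(t) \isom_{\qis} \omega^{\bullet}_{Y_{t}/\Bbbk(t)} \); as \( f \) is flat, \( \SM^{\bullet}[-d] \otimes_{\SO_{P}} \SO_{P_{t}} \isom_{\qis} \iota_{t*}(\omega^{\bullet}_{Y_{t}/\Bbbk(t)}[-d]) \), which has no cohomology in negative degrees, and \( \omega^{\bullet}_{Y_{t}/\Bbbk(t)}[-d] \) is an \emph{ordinary} dualizing complex of the equi-dimensional \( Y_{t} \), by Lemma~\ref{lem:algschemeSk}. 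Conditions \eqref{lem:SurjFlat(complex):0a} and \eqref{lem:SurjFlat(complex):0} follow from the definition of \( \SF \) together with Lemma~\ref{lem:CMmorphism}, which shows \( f^{!}\SO_{T} \) is quasi-isomorphic to \( \omega_{Y^{\circ}/T}[d] \) on \( Y^{\circ} \subset \CM(Y/T) \), hence concentrated in degree \( -d \) there (and zero on \( P \setminus Y \)). Finally, for \( y \in Z \) with \( t = f(y) \), the complex \( (\tau^{\leq 1}(\SM^{\bullet}[-d])_{(t)})_{y} \) is quasi-isomorphic to \( \tau^{\leq 1} \) of the stalk at \( y \) of the ordinary dualizing complex \( \omega^{\bullet}_{Y_{t}/\Bbbk(t)}[-d] \), so Corollary~\ref{cor:lem:DSSk} with \( b = 1 \) forces the vanishing of its local cohomology in degrees \( i \le 1 \), using that \( \dim \SO_{Y_{t}, y} \ge 2 \); this is condition \eqref{lem:SurjFlat(complex):3}. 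Thus Lemma~\ref{lem:SurjFlat(complex)} yields on \( P \) an exact sequence \( 0 \to \SF \to \SE^{0} \to \SE^{1} \to \SG \to 0 \) satisfying \eqref{prop:key:ass1} and \eqref{prop:key:ass2} of Proposition~\ref{prop:key}, and in particular \( \SH^{i}(f^{!}\SO_{T}) = 0 \) for \( i < -d \).

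With this exact sequence, Proposition~\ref{prop:key}\eqref{prop:key:1} gives \( \SF \isom j_{P*}(\SF|_{P \setminus Z}) \), i.e.\ \( \depth_{Z}\,\SH^{-d}(f^{!}\SO_{T}) \ge 2 \). Since \( f^{!}\SO_{T} \in \bfD^{b}_{\coh}(Y) \), the sheaf \( \SH^{-d}(f^{!}\SO_{T}) \) is coherent; by Lemma~\ref{lem:CMmorphism} it restricts to \( \omega_{Y^{\flat}/T} \) on \( Y^{\flat} = \CM(Y/T) \), and since \( Y \setminus Y^{\flat} \subset Z \) one has \( \depth_{Y \setminus Y^{\flat}}\SH^{-d}(f^{!}\SO_{T}) \ge \depth_{Z}\SH^{-d}(f^{!}\SO_{T}) \ge 2 \), whence \( \SH^{-d}(f^{!}\SO_{T}) \isom j_{*}(\omega_{Y^{\flat}/T}) = \omega_{Y/T} \) by Property~\ref{ppty:depth<=2}. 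This proves the coherence of \( \omega_{Y/T} \) and the displayed formula for \( \SH^{i}(f^{!}\SO_{T}) \).

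For the equivalence of \eqref{prop:BC-S2CM:condA}--\eqref{prop:BC-S2CM:condC}, note that for \( y \in Y^{\circ} \) all three hold trivially: \( \phi_{f(y)} \) is the base-change isomorphism of Theorem~\ref{thm:basechange}, and \( \omega_{Y/T}|_{Y^{\circ}} = \omega_{Y^{\circ}/T} \) is relatively Cohen--Macaulay, hence relatively \( \bfS_{2} \), over \( T \) by Corollary~\ref{cor:omegaRelCM}. For \( y \in Z \) I apply Proposition~\ref{prop:key}\eqref{prop:key:2} and Corollary~\ref{cor0:prop:key} to the exact sequence above. Under the identifications \( \SF = \iota_{*}\omega_{Y/T} \) and \( j_{P*}(\SF_{(t)}|_{(P \setminus Z) \cap P_{t}}) = \iota_{t*}\omega_{Y_{t}/\Bbbk(t)} \) — the latter because \( \Codim(Y_{t} \setminus Y^{\circ}, Y_{t}) \ge 2 \) and \( \omega_{Y_{t}/\Bbbk(t)} \) satisfies \( \bfS_{2} \) by Corollary~\ref{cor:S2S2} — the homomorphism \( \phi_{t} \) of Proposition~\ref{prop:key} is \( \iota_{t*} \) of the \( \phi_{f(y)} \) of the present statement. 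Hence Proposition~\ref{prop:key}\eqref{prop:key:2} gives \eqref{prop:BC-S2CM:condA} \( \Leftrightarrow \) \eqref{prop:BC-S2CM:condB} \( \Leftrightarrow \) ``\( \SG_{y} \) is flat over \( \SO_{T, f(y)} \)'', and Corollary~\ref{cor0:prop:key} — whose additional hypotheses hold here, since \( \SF_{(t)}|_{Y^{\circ} \cap Y_{t}} \) is a restriction of the \( \bfS_{2} \)-sheaf \( \omega_{Y_{t}/\Bbbk(t)} \), \( \SF_{(t')} \) has equi-dimensional support \( Y_{t'} \), and \( \Codim(Z \cap Y_{t'}, Y_{t'}) \ge 2 \) — identifies this flatness with \eqref{prop:BC-S2CM:condC}.

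The step I expect to be the main obstacle is the verification of hypothesis \eqref{lem:SurjFlat(complex):3} of Lemma~\ref{lem:SurjFlat(complex)}: it requires recognizing \( \omega^{\bullet}_{Y_{t}/\Bbbk(t)}[-d] \) as an \emph{ordinary} dualizing complex of the fiber \( Y_{t} \) (via Corollary~\ref{cor:BC} and Lemma~\ref{lem:algschemeSk}) and then extracting the precise local-cohomology vanishing from Corollary~\ref{cor:lem:DSSk}, where it is essential that every point of \( Z \) has codimension at least two in its fiber. A secondary point needing care is the bookkeeping among the three nested open subsets \( Y^{\circ} \subset Y^{\flat} = \CM(Y/T) \subset Y \), together with the anti-monotonicity of \( Z \)-depth that allows passage from \( Z = Y \setminus Y^{\circ} \) to \( Y \setminus Y^{\flat} \) in the identification \( \SH^{-d}(f^{!}\SO_{T}) \isom \omega_{Y/T} \).
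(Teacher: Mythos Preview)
Your proposal is correct and follows essentially the same approach as the paper's proof: reduce locally, embed \(Y\) as a closed subscheme of a smooth affine \(T\)-scheme \(P\), represent \(\iota_{*}(f^{!}\SO_{T})\) by a bounded free complex using \(f\)-perfection, verify the hypotheses of Lemma~\ref{lem:SurjFlat(complex)} (with the key input being Corollary~\ref{cor:BC} to identify the fiberwise complex and Corollary~\ref{cor:lem:DSSk} with \(b=1\) for the local-cohomology vanishing), and then invoke Proposition~\ref{prop:key} and Corollary~\ref{cor0:prop:key}. Your handling of the passage from \(Z = Y \setminus Y^{\circ}\) to \(Y \setminus Y^{\flat}\) via monotonicity of depth is a clean alternative to the paper's implicit use of Corollary~\ref{cor:pushomegaCM}, but the overall architecture is the same.
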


\begin{proof}
The coherence of \( \omega_{Y/T} \) and the conditions \eqref{prop:BC-S2CM:condA}--\eqref{prop:BC-S2CM:condC}
are local on \( Y \). 
Hence, we may assume that \( f \) is a separated morphism of pure relative dimension \( d \)
by Lemma~\ref{lem:S2Codim2}\eqref{lem:S2Codim2:1}.
Then, we have the twisted inverse image \( f^{!}\SO_{T} \) with a quasi-isomorphism
\[ (f^{!}\SO_{T})|_{Y^{\flat}} \isom_{\qis} \omega_{Y^{\flat}/T}[d] \]
for \( Y^{\flat} = \CM(Y/T) \) by Lemma~\ref{lem:CMmorphism}, and we have
a canonical homomorphism
\[ \phi \colon \SH^{-d}(f^{!}\SO_{T}) \to j^{\flat}_{*}(\omega_{Y^{\flat}/T})
= \omega_{Y/T} \]
for the open immersion \( j^{\flat} \colon Y^{\flat} \injmap Y \).
In order to prove that \( \phi \) is an isomorphism, since it is a local condition,
we may replace \( Y \) with an open subset freely.
Thus, we may assume that

\begin{itemize}
\item  \( f \) is the composite \( p \circ \iota  \)
of a closed immersion \( \iota \colon Y \injmap P \)
and a smooth affine morphism \( p \colon P \to T \).
\end{itemize}

By Fact~\ref{fact:Lipman}\eqref{fact:Lipman:1} and Remark~\ref{rem:f-perf}, we know that
\( \bfR \iota_{*}(f^{!}\SO_{T}) \) is perfect.
Hence, by localizing \( Y \), we may assume that

\begin{itemize}
\item  \( \bfR \iota_{*}(f^{!}\SO_{T}) \) is quasi-isomorphic to
a bounded complex \( \SE^{\bullet} = [\cdots \to \SE^{i} \to \SE^{i+1} \to \cdots]  \)
of free \( \SO_{P} \)-modules of finite rank.
\end{itemize}

Then, we have an isomorphism
\( \SH^{i}(\SE^{\bullet}) \isom \iota_{*}\SH^{i}(f^{!}\SO_{T}) \)
for any \( i \in \BZZ \). Moreover, there exist quasi-isomorphisms
\begin{align*}
\SE^{\bullet} \otimes^{\bfL}_{\SO_{P}} \SO_{P_{t}} &\isom_{\qis}
\bfR \iota_{*}(f^{!}\SO_{T} \otimes^{\bfL}_{\SO_{Y}} \bfL \iota^{*}\SO_{P_{t}})
\isom_{\qis}  \bfR \iota_{*}(f^{!}\SO_{T} \otimes^{\bfL}_{\SO_{Y}} \bfL f^{*}\Bbbk(t)) \\
&\isom_{\qis}
\bfR \iota_{*}(f^{!}\SO_{T} \otimes^{\bfL}_{\SO_{Y}} \SO_{Y_{t}})
\isom_{\qis} \bfR \iota_{t*}(\omega_{Y_{t}/\Bbbk(t)}^{\bullet})
\end{align*}
for any \( t \in T \) and
for the induced closed immersion \( \iota_{t} \colon Y_{t} \injmap P_{t} = p^{-1}(t) \).
In fact, the first quasi-isomorphism is known
as the projection formula (cf.\ \cite[II, Prop.\ 5.6]{ResDual}), the quasi-isomorphisms
\[ \SO_{P_{t}} \isom_{\qis} \bfL p^{*}\Bbbk(t)  \quad \text{and} \quad
\bfL f^{*}\Bbbk(t) \isom_{\qis} \SO_{Y_{t}} \]
are derived from the flatness of \( p \) and \( f \),
and the quasi-isomorphism
\[ f^{!}\SO_{T} \otimes^{\bfL}_{\SO_{Y}} \SO_{Y_{t}} \isom_{\qis} \omega_{Y_{t}/\Bbbk(t)}^{\bullet}\]
is obtained by Corollary~\ref{cor:BC}.
We shall show that the three data:
\[  \SE^{\bullet}[-d], \quad Z := \iota(Y \setminus Y^{\circ}),
\quad  \SF := \SH^{0}(\SE^{\bullet}[-d]) \isom \iota_{*}\SH^{-d}(f^{!}\SO_{T}),\]
satisfy the conditions of Lemma~\ref{lem:SurjFlat(complex)} for the morphism \( P \to T \).
The required inequality \eqref{lem:SurjFlat(complex)|eq0}
of Lemma~\ref{lem:SurjFlat(complex)} is derived from
\[ \depth_{P_{t} \cap Z} \SO_{P_{t}} = \Codim(P_{t} \cap Z, P_{t}) = \Codim(Y_{t} \cap Z, P_{t})
\geq \Codim(Y_{t} \cap Z, Y_{t}) \geq 2\]
(cf.\ Lemma~\ref{lem:basicSk}).
The condition \eqref{lem:SurjFlat(complex):0a} of Lemma~\ref{lem:SurjFlat(complex)}
is derived from (cf.\ Lemma~\ref{lem:CMmorphism}):
\[ \SH^{i}(\SE^{\bullet})|_{P \setminus Z} \isom \iota_{*}(\SH^{i}(f^{!}\SO_{T}))|_{P \setminus Z}
\isom \begin{cases}
0, & \text{ if } i \ne -d; \\
\iota_{*}\omega_{Y^{\circ}/T}, & \text{ if } i = -d,
\end{cases}\]
and the next condition \eqref{lem:SurjFlat(complex):0} has no meaning now.
The condition \eqref{lem:SurjFlat(complex):1} follows from
\[ \SH^{i}(\SE \otimes^{\bfL}_{\SO_{P}} \SO_{P_{t}})
\isom \iota_{t*}(\SH^{i}(\omega_{Y_{t}/\Bbbk(t)}^{\bullet})) = 0 \]
for any \( i < -d \) (cf.\ Lemma~\ref{lem:algschemeSk}).
The last condition \eqref{lem:SurjFlat(complex):3} of Lemma~\ref{lem:SurjFlat(complex)}
is a consequence of Corollary~\ref{cor:lem:DSSk} applied to
the ordinary dualizing complex \( \omega_{Y_{t}/\Bbbk(t)}^{\bullet}[-d] \)
(cf.\ Lemma~\ref{lem:algschemeSk}) and to \( b = 1 \), since
\begin{itemize}
\item the complex \( M^{\bullet} \) in Lemma~\ref{lem:SurjFlat(complex)}\eqref{lem:SurjFlat(complex):3}
is quasi-isomorphic to the stalk of
\[ \tau^{\leq 1}(\bfR \iota_{*}\omega_{Y_{t}/\Bbbk(t)}^{\bullet}[-d]) \isom_{\qis}
\bfR \iota_{*}( \tau^{\leq 1}(\omega_{Y_{t}/\Bbbk(t)}^{\bullet}[-d])), \quad \text{and}\]
\item  \( \dim \SO_{P_{t}, z} \geq \Codim(Z \cap Y_{t}, Y_{t}) \geq 2 \) for any
\( z \in Z \) with \( t = f(z) \).
\end{itemize}
Therefore, all the conditions of Lemma~\ref{lem:SurjFlat(complex)} are satisfied, and consequently,
\[ \SH^{i}(\SE^{\bullet}) \isom \iota_{*} \SH^{i}(f^{!}\SO_{T}) =  0 \]
for any \( i < -d \), and we can apply Proposition~\ref{prop:key} to \( \SF \)
via Lemma~\ref{lem:SurjFlat(complex)}.
Then,
\( \SF \isom j_{*}(\SF|_{P \setminus Z}) \) for the open immersion \( j \colon P \setminus Z \injmap P \)
by Proposition~\ref{prop:key}\eqref{prop:key:1}, and it implies that the morphism
\( \phi \) above is an isomorphism. Moreover, the three conditions
\eqref{prop:BC-S2CM:condA}--\eqref{prop:BC-S2CM:condC} are equivalent to each other
by  Proposition~\ref{prop:key}\eqref{prop:key:2}
and Corollary~\ref{cor0:prop:key}. Thus, we are done.
\end{proof}

\begin{prop}\label{prop:BCGor}
Let \( f \colon Y \to T \) be an \( \bfS_{2} \)-morphism of locally Noetherian schemes
and let \( j \colon Y^{\circ} \injmap Y\) be the open immersion from an open subset \( Y^{\circ}  \)
of the relative Gorenstein locus \( \Gor(Y/T)\) for \( f \).
Assume that
\begin{itemize}
\item
\( \Codim(Y_{t} \setminus Y^{\circ}, Y_{t}) \geq 2 \) for any fiber \( Y_{t} = f^{-1}(t)  \).
\end{itemize}
For an integer \( m \) and for the relative canonical sheaf \( \omega_{Y/T} \),
let \( \omega_{Y/T}^{[m]} \)
denote the double-dual of \( \omega_{Y/T}^{\otimes m} \).
Then,
\[ \omega_{Y/T}^{[m]} \isom  j_{*}(\omega_{Y^{\circ}/T}^{\otimes m}) \]
for any \( m \). In particular, \( \omega_{Y/T} \) is reflexive.
For an integer \( m \) and a point \( t \in T \), let
\[ \phi_{t}^{[m]} \colon \omega_{Y/T}^{[m]} \otimes_{\SO_{Y}} \SO_{Y_{t}} \to
\omega_{Y_{t}/\Bbbk(t)}^{[m]} = j_{t*}(\omega_{Y^{\circ} \cap Y_{t}/\Bbbk(t)}^{\otimes m})\]
be the homomorphism induced from the base change isomorphism
\eqref{eq:prop:BC-S2CM},
where \( j_{t} \colon Y^{\circ} \cap Y_{t} \injmap Y_{t} \)
denotes the open immersion.
Then, for any integer \( m \) and any point \( y \in Y \),
the following three conditions are equivalent to each other\emph{:}
\begin{enumerate}
    \renewcommand{\theenumi}{\alph{enumi}}
     \renewcommand{\labelenumi}{(\theenumi)}
\item \label{prop:BCGor:condA}
The homomorphism \( \phi^{[m]}_{f(y)} \) is surjective at \( y \).

\item \label{prop:BCGor:condB}
The homomorphism \( \phi^{[m]}_{f(y)} \) is an isomorphism at \( y \).

\item \label{prop:BCGor:condC}
There is an open neighborhood \( V \) of \( y \) in \( Y \) such that
\( \omega_{Y/T}^{[m]}|_{V} \) satisfies relative \( \bfS_{2} \) over \( T \).
\end{enumerate}
\end{prop}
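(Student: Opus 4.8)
The plan is to reduce Proposition~\ref{prop:BCGor} to Proposition~\ref{prop:BC-S2CM} by the standard device of replacing the relative canonical sheaf with a reflexive ``$m$-th power'' which coincides with a relative dualizing sheaf on a large open locus. First I would establish the identity $\omega^{[m]}_{Y/T} \isom j_{*}(\omega^{\otimes m}_{Y^{\circ}/T})$. By definition, $\omega_{Y/T} = j^{\flat}_{*}(\omega_{Y^{\flat}/T})$ for the relative Cohen--Macaulay locus $Y^{\flat} = \CM(Y/T)$, and $Y^{\circ} \subset \Gor(Y/T) \subset Y^{\flat}$. On $Y^{\circ}$ the sheaf $\omega_{Y^{\circ}/T}$ is invertible (Lemma~\ref{lem:CMmorphism}), hence $\omega^{\otimes m}_{Y^{\circ}/T}$ is invertible, so $j_{*}(\omega^{\otimes m}_{Y^{\circ}/T})$ is the pushforward of a locally free sheaf from an open subset whose complement has fiberwise codimension $\geq 2$. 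Since $f$ is an $\bfS_{2}$-morphism, $\depth_{Y_{t} \cap Z} \SO_{Y_{t}} \geq 2$ for $Z = Y \setminus Y^{\circ}$ by Lemma~\ref{lem:depth+codim+Sk}\eqref{lem:depth+codim+Sk:2}; then Lemma~\ref{lem:bc reflexive} gives that $(p^{*}\omega^{\otimes m}_{Y^{\circ}/T})^{\vee\vee} \isom j_{*}(\omega^{\otimes m}_{Y^{\circ}/T})$-type formulas hold, and in particular, applying Lemma~\ref{lem:relSkCodimDepth}\eqref{lem:relSkCodimDepth:3} and Lemma~\ref{lem:j*reflexive}, the sheaf $j_{*}(\omega^{\otimes m}_{Y^{\circ}/T})$ is reflexive and restricts to $\omega^{\otimes m}_{Y^{\circ}/T}$ on $Y^{\circ}$, hence agrees with the double dual of $\omega^{\otimes m}_{Y/T}$ by the uniqueness of reflexive hulls (Corollary~\ref{cor:prop:S1S2:reflexive}, using that $\omega_{Y/T}$ itself is already reflexive since $\omega_{Y/T} \isom j_{*}(\omega_{Y^{\flat}/T})$ with $\omega_{Y^{\flat}/T}$ locally free in codimension one on each fiber). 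This simultaneously yields the ``In particular'' claim that $\omega_{Y/T}$ is reflexive.

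Next I would prove the equivalence of \eqref{prop:BCGor:condA}--\eqref{prop:BCGor:condC}. The key observation is that $\omega^{[m]}_{Y/T}$, being $j_{*}$ of an invertible sheaf on $Y^{\circ}$ with fiberwise-codimension-$\geq 2$ complement, is \emph{exactly} in the position to apply Proposition~\ref{prop:BC-S2CM} after twisting: locally on $Y$, choose an isomorphism of $\omega_{Y^{\circ}/T}^{\otimes m}$ with $\omega_{Y^{\circ}/T}$ up to an invertible sheaf, but more directly I would argue that $\omega^{[m]}_{Y/T}$ is the pushforward from $Y^{\circ}$ of an invertible twist of $\omega_{Y^{\circ}/T}$, which itself satisfies the conclusion of Proposition~\ref{prop:BC-S2CM}. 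Concretely, on a small open $U$ where $\omega_{Y^{\circ} \cap U/T}$ is free, we have $\omega^{[m]}_{Y/T}|_{U} \isom \omega_{Y/T}|_{U}$ up to an invertible sheaf pulled back from the base need not hold — instead, the honest route is: $\omega^{[m]}_{Y/T}$ and $\omega_{Y/T}$ both equal $j_{*}$ of locally free sheaves on $Y^{\circ}$ of rank one, and the base-change homomorphisms $\phi^{[m]}_{t}$ and $\phi_{t}$ are simultaneously constructed from the same base-change isomorphism \eqref{eq:prop:BC-S2CM}. Thus it suffices to invoke Proposition~\ref{prop:BC-S2CM} not for $\omega_{Y/T}$ but for the $\bfS_{2}$-morphism $f$ together with the reflexive sheaf $\SF = \omega^{[m]}_{Y/T}$ which is locally free in codimension one on each fiber (by the hypothesis $\Codim(Y_{t} \setminus Y^{\circ}, Y_{t}) \geq 2$ and invertibility of $\omega_{Y^{\circ}/T}^{\otimes m}$). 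The cleanest implementation is to re-run the proof of Proposition~\ref{prop:BC-S2CM} verbatim with $\omega_{Y^{\flat}/T}$ replaced by the invertible sheaf $\omega_{Y^{\circ}/T}^{\otimes m}$ on $Y^{\circ}$: the complex-theoretic input (the perfect complex $\SE^{\bullet}$ computing $f^{!}\SO_{T}$) is replaced by $\SE^{\bullet} \otimes (\text{rank-one reflexive data})$, and Lemma~\ref{lem:SurjFlat(reflexive)} supplies the four-term exact sequence $0 \to \SF \to \SE^{0} \to \SE^{1} \to \SG \to 0$ needed for Proposition~\ref{prop:key}, whose part~\eqref{prop:key:2} gives precisely the equivalence of \eqref{prop:BCGor:condA}, \eqref{prop:BCGor:condB}, and flatness of $\SG$, while Corollary~\ref{cor0:prop:key} converts flatness of $\SG$ near $y$ into condition~\eqref{prop:BCGor:condC}.

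More precisely, here is the step order I would follow: (1) Localize so that $f$ is a separated $\bfS_{2}$-morphism of pure relative dimension $d$ (Lemma~\ref{lem:S2Codim2}\eqref{lem:S2Codim2:1}) and $Y$ is affine. (2) Verify $\depth_{Y_{t} \cap Z}\SO_{Y_{t}} \geq 2$ for $Z = Y \setminus Y^{\circ}$ using $\bfS_{2}$ of $Y_{t}$ and Lemma~\ref{lem:depth+codim+Sk}\eqref{lem:depth+codim+Sk:2}. (3) Prove $\omega^{[m]}_{Y/T} \isom j_{*}(\omega_{Y^{\circ}/T}^{\otimes m})$ and reflexivity of $\omega_{Y/T}$ via Lemmas~\ref{lem:relSkCodimDepth}\eqref{lem:relSkCodimDepth:3}, \ref{lem:j*reflexive}, \ref{lem:bc reflexive}, and Corollary~\ref{cor:prop:S1S2:reflexive}. (4) Since $\omega^{[m]}_{Y/T}$ is reflexive and locally free on $Y^{\circ}$ with $\Codim(Y_{t}\setminus Y^{\circ},Y_{t})\geq 2$, apply Lemma~\ref{lem:SurjFlat(reflexive)} to obtain, locally on $Y$, an exact sequence $0 \to \omega^{[m]}_{Y/T} \to \SE^{0} \to \SE^{1} \to \SG \to 0$ satisfying the hypotheses of Proposition~\ref{prop:key}. (5) Identify $\omega^{[m]}_{Y_{t}/\Bbbk(t)} = j_{t*}(\omega^{\otimes m}_{Y^{\circ}\cap Y_{t}/\Bbbk(t)})$ with the sheaf $\SF_{(t)*}$ of Notation~\ref{nota:Resthom2}, using the base-change isomorphism \eqref{eq:prop:BC-S2CM} for the Cohen--Macaulay morphism $f|_{Y^{\circ}}$ (Theorem~\ref{thm:basechange}) to see $\phi^{[m]}_{t}$ coincides with the restriction homomorphism $\phi_{t}$ of Definition~\ref{dfn:restmor}. (6) Conclude \eqref{prop:BCGor:condA} $\Leftrightarrow$ \eqref{prop:BCGor:condB} by Proposition~\ref{prop:key}\eqref{prop:key:2} (equivalence of \eqref{prop:key:condA} and \eqref{prop:key:condB}), and \eqref{prop:BCGor:condB} $\Leftrightarrow$ \eqref{prop:BCGor:condC} by Proposition~\ref{prop:key}\eqref{prop:key:2} together with Corollary~\ref{cor0:prop:key} (condition \eqref{cor0:prop:key:condD} $\Leftrightarrow$ \eqref{cor0:prop:key:condE}, noting $\omega^{[m]}_{Y_{t}/\Bbbk(t)}$ satisfies $\bfS_{2}$ since it is reflexive on the $\bfS_{2}$-scheme $Y_{t}$ by Lemma~\ref{lem:dfn:canosheaf2}, and the fiberwise-codimension hypothesis \eqref{eq:cor0:prop:key} holds). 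The main obstacle I anticipate is Step~(5): carefully checking that $\phi^{[m]}_{t}$, defined via the base-change isomorphism on $Y^{\circ}$ and then pushed forward, is literally the map $\phi_{t}$ to which Proposition~\ref{prop:key}\eqref{prop:key:2} applies — this requires matching $j_{*}$ of the base-change isomorphism of the \emph{invertible twist} of $\omega_{Y^{\circ}/T}$ against the kernel description $\SF'_{0} = \Ker(\SE^{0}_{0} \to \SE^{1}_{0})$ coming from Lemma~\ref{lem:SurjFlat(reflexive)}, and verifying this identification is compatible over all $Y^{\circ}$-charts and independent of the local isomorphism $\omega^{\otimes m}_{Y^{\circ}/T} \isom (\text{free})$. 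Once the case $m = 1$ (which is essentially Proposition~\ref{prop:BC-S2CM} with $Y^{\circ}$ in place of $Y^{\flat}$) is in hand, the general $m$ follows by the same argument since the only property of $\omega^{\otimes m}_{Y^{\circ}/T}$ used is its invertibility on $Y^{\circ}$.
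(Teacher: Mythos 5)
Your proposal is correct and follows essentially the same route as the paper: apply Lemma~\ref{lem:SurjFlat(reflexive)} to the reflexive sheaf \( \SF = \omega^{[m]}_{Y/T} \) with \( Z = Y \setminus Y^{\circ} \) (using \( \depth_{Y_{t}\cap Z}\SO_{Y_{t}} \geq 2 \) from the \( \bfS_{2} \)-condition on fibers), and then deduce the isomorphism \( \omega^{[m]}_{Y/T} \isom j_{*}(\omega^{\otimes m}_{Y^{\circ}/T}) \), the reflexivity of \( \omega_{Y/T} \) (via the \( m=1 \) case and Corollary~\ref{cor:pushomegaCM}), and the equivalence of \eqref{prop:BCGor:condA}--\eqref{prop:BCGor:condC} from Proposition~\ref{prop:key} and Corollary~\ref{cor0:prop:key}. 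The only cosmetic difference is that you obtain the pushforward identification directly from the depth/reflexivity lemmas rather than from Proposition~\ref{prop:key}\eqref{prop:key:1}, which is an equivalent bookkeeping choice.
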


\begin{proof}
We apply some results in Section~\ref{subsect:Resthom} to the reflexive sheaf
\( \SF = \omega_{Y/T}^{[m]} \) and the closed subset \( Z := Y \setminus Y^{\circ} \).
By assumption, \( \SF|_{Y \setminus Z} \) is invertible and
\( \depth_{Y_{t} \cap Z} \SO_{Y_{t}} \geq 2 \) (cf.\ Lemma~\ref{lem:basicSk}\eqref{lem:basicSk:condB}).
Thus, we can apply
Lemma~\ref{lem:SurjFlat(reflexive)}, and consequently, we can assume that
\( \SF \) has an exact sequence of Proposition~\ref{prop:key},
by replacing \( Y \) with its open subset.
Then, \( \omega_{Y/T}^{[m]} \isom j_{*}(\omega_{Y^{\circ}/T}^{\otimes m}) \)
by Proposition~\ref{prop:key}\eqref{prop:key:1}.
In case \( m = 1 \), we have
\( \omega^{[1]}_{Y/T} \isom \omega_{Y/T} \isom j_{*}(\omega_{Y^{\circ}/T}) \)
by Corollary~\ref{cor:pushomegaCM} and Definition~\ref{dfn:relcanosheaf}, and as a consequence,
\( \omega_{Y/T} \) is reflexive.
The equivalence of three conditions
\eqref{prop:BCGor:condA}--\eqref{prop:BCGor:condC} is derived from
Proposition~\ref{prop:key}\eqref{prop:key:2} and Corollary~\ref{cor0:prop:key}.
\end{proof}

\begin{cor}\label{cor:BC-S2CMGor}
Let us consider a Cartesian diagram
\[ \begin{CD}
Y' @>{p}>> Y \\ @V{f'}VV @VV{f}V \\ T' @>{q}>> T
\end{CD}\]
of locally Noetherian schemes in which \( f \) is a flat morphism locally of finite type.
Then, \(  p^{-1}\CM(Y/T) = \CM(Y'/T') \) and \( p^{-1}\Gor(Y/T) = \Gor(Y'/T') \).
Assume that \( f \) is an \( \bfS_{2} \)-morphism. Then\emph{:}
\begin{enumerate}
\item  \label{cor:BC-S2CMGor:1}
If \( \omega_{Y/T} \) satisfies relative \( \bfS_{2} \) over \( T \), then
\(  p^{*}\omega_{Y/T} \isom \omega_{Y'/T'}\).

\item \label{cor:BC-S2CMGor:2}
If every fiber \( Y_{t} = f^{-1}(t)\) is Gorenstein in codimension one,
then for any \( m \in \BZZ \), there is a canonical isomorphism
\[ (p^{*}\omega^{[m]}_{Y/T})^{\vee\vee} \isom \omega^{[m]}_{Y'/T'}.\]
Here, if \( \omega^{[m]}_{Y/T} \) satisfies relative \( \bfS_{2} \) over \( T \),
then
\( p^{*}\omega^{[m]}_{Y/T} \isom \omega^{[m]}_{Y'/T'} \).
\end{enumerate}
\end{cor}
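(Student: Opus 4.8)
The statement to prove is Corollary~\ref{cor:BC-S2CMGor}. The core assertions are: (1) the formation of the relative Cohen--Macaulay and Gorenstein loci commutes with base change; (2) if $\omega_{Y/T}$ satisfies relative $\bfS_2$ over $T$, then $p^*\omega_{Y/T}\isom\omega_{Y'/T'}$; (3) if every fiber is Gorenstein in codimension one, then $(p^*\omega^{[m]}_{Y/T})^{\vee\vee}\isom\omega^{[m]}_{Y'/T'}$ canonically, and this becomes $p^*\omega^{[m]}_{Y/T}\isom\omega^{[m]}_{Y'/T'}$ under relative $\bfS_2$. I would reduce everything to the local affine situation where $f$ becomes an $\bfS_2$-morphism of pure relative dimension (using Lemma~\ref{lem:S2Codim2}), so that Proposition~\ref{prop:BC-S2CM} and its proof machinery apply, and where the exact sequence $0\to\SF\to\SE^0\to\SE^1\to\SG\to 0$ of Proposition~\ref{prop:key} is available for $\SF=\omega^{[m]}_{Y/T}$ via Lemma~\ref{lem:SurjFlat(reflexive)}.

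\textbf{The locus statements.} For $\CM(Y'/T')=p^{-1}\CM(Y/T)$: since $f$ is flat and locally of finite type, $\CM(Y/T)$ is open (Fact~\ref{fact:dfn:RelSkCMlocus}\eqref{fact:dfn:RelSkCMlocus:1}), and the equality is Lemma~\ref{lem:bc basic}\eqref{lem:bc basic:3} applied to $\SF=\SO_Y$, using that the fibers of $p$ over points of $Y$ are spectra of field extensions $\Bbbk(y)\otimes_{\Bbbk(t)}\Bbbk(t')$ which are Cohen--Macaulay (cf.\ \cite[IV, Lem.~(6.7.1.1)]{EGA}). For the Gorenstein locus, I would argue similarly: Gorensteinness of a Noetherian local ring is stable under and descends along faithfully flat local homomorphisms with Gorenstein (here: regular, even a field extension tensor) closed fiber (Fact~\ref{fact:GorYTF} applied fiberwise to $Y'_{t'}\to Y_t$, or directly the field-extension case of \cite[IV]{EGA}), so $p^{-1}\Gor(Y/T)=\Gor(Y'/T')$.

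\textbf{Parts (1) and (2).} For part~(1): by Definition~\ref{dfn:relcanosheaf}, $\omega_{Y/T}=j_*(\omega_{Y^\flat/T})$ with $Y^\flat=\CM(Y/T)$, and $\omega_{Y'/T'}=j'_*(\omega_{Y'^\flat/T'})$ with $Y'^\flat=p^{-1}(Y^\flat)$ by the locus statement just proved. Theorem~\ref{thm:basechange} gives $p^{\flat*}\omega_{Y^\flat/T}\isom\omega_{Y'^\flat/T'}$ for the restriction $p^\flat$ of $p$. The point is to push this isomorphism across the open immersions. When $T'\to T$ is flat this is exactly Lemma~\ref{lem:omegaFlatbc}; in general, flat base change does not apply directly, so instead I would use that $\omega_{Y/T}$ satisfies relative $\bfS_2$ over $T$ to invoke Proposition~\ref{prop:BCGor} (or Proposition~\ref{prop:BC-S2CM}) with $m=1$: the base change homomorphism $\phi^{[1]}_t$ to every fiber is then an isomorphism, so $\omega_{Y/T}$ is flat over $T$ along each fiber with fiberwise formation commuting with base change, and a standard argument (the equivalence $\eqref{prop:BC-S2CM:condA}\Leftrightarrow\eqref{prop:BC-S2CM:condC}$ applied on $Y'$, together with $\omega_{Y/T}|_{Y^\flat}$ being $f$-flat so that $p^*\omega_{Y/T}|_{Y'^\flat}\isom\omega_{Y'^\flat/T'}$) upgrades $p^*\omega_{Y/T}$ to the reflexive hull and identifies it with $\omega_{Y'/T'}=j'_*(p^{\flat*}\omega_{Y^\flat/T})$. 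More precisely: $p^*\omega_{Y/T}$ agrees with $\omega_{Y'/T'}$ on $Y'^\flat$, and because relative $\bfS_2$ holds the left side equals $j'_*$ of its restriction (Lemma~\ref{lem:US2add}\eqref{lem:US2add:3a}), hence the two coincide. Part~(2) is then the special case $m=1$ of the general statement once reflexivity of $\omega_{Y/T}$ is in hand, which is Proposition~\ref{prop:BCGor}.

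\textbf{Part (3) and the main obstacle.} For general $m$, when every fiber is Gorenstein in codimension one, pick $Y^\circ\subset\Gor(Y/T)$ with $\Codim(Y_t\setminus Y^\circ,Y_t)\ge 2$ for all $t$; then $\omega^{[m]}_{Y/T}\isom j_*(\omega^{\otimes m}_{Y^\circ/T})$ by Proposition~\ref{prop:BCGor}, and likewise downstairs $\omega^{[m]}_{Y'/T'}\isom j'_*(\omega^{\otimes m}_{Y'^\circ/T'})$ with $Y'^\circ=p^{-1}(Y^\circ)$. On $Y^\circ$ the sheaf $\omega_{Y^\circ/T}$ is invertible and commutes with base change (Theorem~\ref{thm:basechange}), so $p^*(\omega^{\otimes m}_{Y/T})|_{Y'^\circ}\isom\omega^{\otimes m}_{Y'^\circ/T'}$. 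Taking double duals and using Lemma~\ref{lem:bc reflexive} (which gives $(p^*\SF)^{\vee\vee}\isom j'_*(p^*\SF|_{Y'\setminus p^{-1}Z})$ and compares $(p^*\SF)^{\vee\vee}$ with $(p^*(\SF^{\vee\vee}))^{\vee\vee}$, valid because $\depth_{Y_t\cap Z}\SO_{Y_t}\ge 2$) yields the canonical isomorphism $(p^*\omega^{[m]}_{Y/T})^{\vee\vee}\isom\omega^{[m]}_{Y'/T'}$. Finally, if $\omega^{[m]}_{Y/T}$ satisfies relative $\bfS_2$ over $T$, Proposition~\ref{prop:BCGor}\eqref{prop:BCGor:condB} shows $\phi^{[m]}_t$ is an isomorphism for all $t$, so in particular $\omega^{[m]}_{Y/T}$ is $f$-flat with base-change-compatible restriction to fibers; then $p^*\omega^{[m]}_{Y/T}$ already satisfies relative $\bfS_2$ over $T'$ by Lemma~\ref{lem:bc basic}\eqref{lem:bc basic:3}, hence is reflexive (Lemma~\ref{lem:1|prop:key} or Lemma~\ref{lem:US2add}\eqref{lem:US2add:3}), so the double dual in the displayed formula is superfluous and $p^*\omega^{[m]}_{Y/T}\isom\omega^{[m]}_{Y'/T'}$. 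The main obstacle I anticipate is the non-flat base change in part~(1)/(2): one cannot simply quote the flat base change isomorphism as in Lemma~\ref{lem:omegaFlatbc}, and must genuinely exploit the relative $\bfS_2$ hypothesis through Proposition~\ref{prop:BC-S2CM}/\ref{prop:BCGor} to control how $j_*$ interacts with $p^*$; keeping the canonicity of the isomorphisms (not just their existence) through these steps requires care, which is why routing everything through the explicit exact sequence of Proposition~\ref{prop:key} and the identifications in Lemma~\ref{lem:1|prop:key} is the cleanest path.
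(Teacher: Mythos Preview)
Your proposal is correct and follows essentially the same route as the paper: the equality for $\CM$ via Lemma~\ref{lem:bc basic}\eqref{lem:bc basic:3}, the base change isomorphism on the Cohen--Macaulay locus from Theorem~\ref{thm:basechange}, pushing forward across $j'$ using the relative $\bfS_2$ hypothesis (the paper uses Lemma~\ref{lem:relSkCodimDepth}\eqref{lem:relSkCodimDepth:2} applied to $p^{*}\omega_{Y/T}$ over $T'$ where you cite Lemma~\ref{lem:US2add}\eqref{lem:US2add:3a}, but these are equivalent), and Lemma~\ref{lem:bc reflexive} for part~\eqref{cor:BC-S2CMGor:2}. Two minor remarks: first, the paper handles the Gorenstein locus more economically by noting that on $\CM(Y/T)$ Theorem~\ref{thm:basechange} gives $p^{*}\omega_{Y/T}\isom\omega_{Y'/T'}$, so $\Gor$ (the invertibility locus of $\omega$) is automatically compatible with base change; second, the detour through the exact sequence of Proposition~\ref{prop:key} and Lemma~\ref{lem:1|prop:key} that you flag as ``the cleanest path'' is not needed here---Lemma~\ref{lem:bc reflexive} already packages exactly what is required, and the paper's proof invokes nothing deeper.

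One small slip: your remark that ``Part~(2) is then the special case $m=1$ of the general statement once reflexivity of $\omega_{Y/T}$ is in hand, which is Proposition~\ref{prop:BCGor}'' does not quite work as a reduction, because Proposition~\ref{prop:BCGor} (and hence reflexivity of $\omega_{Y/T}$) assumes fibers Gorenstein in codimension one, whereas part~\eqref{cor:BC-S2CMGor:1} does not. Your direct argument for part~\eqref{cor:BC-S2CMGor:1} is fine and does not rely on this reduction, so this is harmless; just don't lean on it. Also, when applying Lemma~\ref{lem:US2add}\eqref{lem:US2add:3a} to $p^{*}\omega_{Y/T}$, make explicit that relative $\bfS_2$ over $T'$ is inherited from relative $\bfS_2$ over $T$ via Lemma~\ref{lem:bc basic}\eqref{lem:bc basic:4}.
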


\begin{proof}
The equality for \( \CM \) is derived from Lemma~\ref{lem:bc basic}\eqref{lem:bc basic:3}
for \( \SF = \SO_{Y} \).
If \( f \) is a Cohen--Macaulay morphism, then
\( p^{*}\omega_{Y/T} \isom \omega_{Y'/T'} \) by Theorem~\ref{thm:basechange}.
This implies the equality for \( \Gor \)
by the Remark of Definition~\ref{dfn:RelGorlocus}.
Assume that \( f \) is an \( \bfS_{2} \)-morphism. Then, \( f' \) is so by
Lemma~\ref{lem:bc basic}\eqref{lem:bc basic:5}.
For open subsets \( Y^{\flat} := \CM(Y/T) \) and \( Y^{\prime \flat} := p^{-1}(Y^{\flat}) \),
we have
\[ \Codim(Y'_{t'} \setminus Y^{\prime \flat}, Y'_{t'}) = \Codim(Y_{t} \setminus Y^{\flat}, Y_{t}) \geq 3 \]
for any \( t' \in T' \) and \( t = q(t) \) by Lemma~\ref{lem:bc basic}\eqref{lem:bc basic:1}
and by the \( \bfS_{2} \)-condition of \( Y_{t} \).
If \( \omega_{Y/T} \) satisfies relative \( \bfS_{2} \) over \( T \), then
the canonical base change isomorphism
\begin{equation}\label{eq:cor:BC-S2CMGor}
p^{*}\omega_{Y^{\flat}/T} \isom \omega_{Y^{\prime \flat}/T'}
\end{equation}
in Theorem~\ref{thm:basechange} induces an isomorphism
\[ p^{*}\omega_{Y/T} \isom j'_{*} (p^{*}\omega_{Y/T}|_{Y^{\prime \flat}})
\isom j'_{*}\omega_{Y^{\prime \flat}/T'} = \omega_{Y'/T'}\]
for the open immersion \( j' \colon Y^{\prime \flat} \injmap Y' \),
by Lemma~\ref{lem:relSkCodimDepth}\eqref{lem:relSkCodimDepth:2} applied to
\( (\SF, Z) = (p^{*}\omega_{Y/T}, Y' \setminus Y^{\prime \flat}) \).
This proves \eqref{cor:BC-S2CMGor:1}.
In the situation of \eqref{cor:BC-S2CMGor:2}, \( \Codim(Y_{t} \setminus Y^{\circ}, Y_{t}) \geq 2 \)
for any \( t \in T \), where  \( Y^{\circ} = \Gor(Y/T) \).
In particular,
\[ \depth_{Y_{t} \setminus Y^{\circ}} \SF \otimes_{\SO_{Y}} \SO_{Y_{t}} \geq 2  \]
for any coherent \( \SO_{Y} \)-module \( \SF \) satisfying relative \( \bfS_{2} \) over \( T \),
by Lemma~\ref{lem:depth+codim+Sk}\eqref{lem:depth+codim+Sk:2}.
Thus, \eqref{cor:BC-S2CMGor:2} is a consequence of Lemma~\ref{lem:bc reflexive}
via the isomorphism \eqref{eq:cor:BC-S2CMGor}.
\end{proof}

\begin{prop}\label{prop:Extomega}
Let \( f \colon Y \to T \) be an \( \bfS_{2} \)-morphism of locally Noetherian schemes.
Then,
\[ \SHom_{\SO_{Y}}(\omega_{Y/T}, \omega_{Y/T}) \isom \SO_{Y} \]
for the relative canonical sheaf \( \omega_{Y/T} \) in the sense of
Definition~\emph{\ref{dfn:relcanosheaf}}.
If every fiber satisfies \( \bfS_{3} \), then
\[ \SExt^{1}_{\SO_{Y}}(\omega_{Y/T}, \omega_{Y/T}) = 0. \]
\end{prop}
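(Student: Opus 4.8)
The plan is to reduce both assertions to statements about the relative Cohen--Macaulay locus $Y^{\flat} = \CM(Y/T)$ and then push them forward along the open immersion $j \colon Y^{\flat} \injmap Y$. Write $\SF = \omega_{Y/T} = j_*(\omega_{Y^{\flat}/T})$. Since the statements are local on $Y$, I may assume by Lemma~\ref{lem:S2Codim2}\eqref{lem:S2Codim2:1} that $f$ is separated of pure relative dimension $d$, so that $\omega_{Y/T}$ is reflexive and satisfies relative $\bfS_2$ on $Y^{\flat}$ by Propositions~\ref{prop:BC-S2CM} and \ref{prop:BCGor}; in fact $\omega_{Y^{\flat}/T}$ is an $f$-flat coherent sheaf (Lemma~\ref{lem:CMmorphism}), hence invertible in codimension one on each fiber after further shrinking isn't even needed—what matters is that on $Y^{\flat}$ the sheaf is locally free where $f$ is Gorenstein, and that $\Codim(Y_t \setminus Y^{\flat}, Y_t) \geq 3$ for every $t$ by the $\bfS_2$-condition on the fibers.

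First I would treat the endomorphism sheaf. Set $Z := Y \setminus Y^{\flat}$, so $\depth_{Y_t \cap Z} \SO_{Y_t} \geq 3$ for all $t$, hence $\depth_Z \SO_Y \geq 3$ by Lemma~\ref{lem:relSkCodimDepth}\eqref{lem:relSkCodimDepth:3}, and in particular $\depth_Z \SO_Y \geq 2$. The sheaf $\SHom_{\SO_Y}(\omega_{Y/T},\omega_{Y/T})$ restricted to $Y^{\flat}$ agrees with $\SHom_{\SO_{Y^{\flat}}}(\omega_{Y^{\flat}/T},\omega_{Y^{\flat}/T})$, which I claim is $\SO_{Y^{\flat}}$: on the Gorenstein locus $\omega_{Y^{\flat}/T}$ is invertible so this is clear there, and since $\omega_{Y^{\flat}/T}$ is reflexive (being the relative dualizing sheaf of a Cohen--Macaulay morphism, it is locally free in codimension one on each fiber), the double-dual reflexivity argument via Lemma~\ref{lem:j*reflexive}\eqref{lem:j*reflexive:2} together with $\depth \geq 2$ along the non-Gorenstein locus of each fiber gives $\SHom(\omega_{Y^{\flat}/T},\omega_{Y^{\flat}/T}) \isom \SO_{Y^{\flat}}$. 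Then $\SHom_{\SO_Y}(\omega_{Y/T},\omega_{Y/T})$ has depth $\geq 2$ along $Z$—this uses that it is $\SHom$ of a coherent sheaf into $\omega_{Y/T} = j_*(\omega_{Y^{\flat}/T})$, so it is $j_*$ of its own restriction to $Y^{\flat}$ via the standard identity $\SHom(\SG, j_*\SH) \isom j_*\SHom(j^*\SG, \SH)$. Combining, $\SHom_{\SO_Y}(\omega_{Y/T},\omega_{Y/T}) \isom j_*(\SO_{Y^{\flat}}) \isom \SO_Y$, the last because $\depth_Z \SO_Y \geq 2$.

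Next, for the $\bfS_3$ statement, I would use the spectral-sequence / local-cohomology machinery that is already developed for ordinary dualizing complexes, applied fiberwise, in the spirit of the proof of Proposition~\ref{prop:BC-S2CM}. Assuming every fiber $Y_t$ satisfies $\bfS_3$, I want $\SExt^1_{\SO_Y}(\omega_{Y/T},\omega_{Y/T}) = 0$. On $Y^{\flat}$ this sheaf vanishes because $\omega_{Y^{\flat}/T}$ is $f$-flat with fibers $\omega_{Y_t^{\flat}/\Bbbk(t)}$, and by Proposition~\ref{prop:DSSk} (applied to the ordinary dualizing complex $\omega_{Y_t/\Bbbk(t)}^{\bullet}[-d]$ of the $\bfS_3$ fiber $Y_t$) one has $\SExt^1(\omega_{Y_t/\Bbbk(t)}, \omega_{Y_t/\Bbbk(t)}) = 0$; a flatness/base-change argument using Corollary~\ref{cor:BC} and the vanishing on each fiber upgrades this to $\SExt^1(\omega_{Y^{\flat}/T},\omega_{Y^{\flat}/T}) = 0$ on $Y^{\flat}$. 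It then remains to show $\SExt^1_{\SO_Y}(\omega_{Y/T},\omega_{Y/T})$ has no sections supported on $Z$; equivalently, since the restriction to $Y^{\flat}$ is zero, I must check that $j_*$ of zero plus the obstruction $R^1$ contribution vanish. Here the $\bfS_3$-condition on fibers, which gives $\depth_{Y_t \cap Z}\SO_{Y_t} \geq 3$ and hence $\depth_Z \SO_Y \geq 3$, forces both $\SExt^1(\omega_{Y/T},\omega_{Y/T}) \isom j_*(\SExt^1(\omega_{Y^{\flat}/T},\omega_{Y^{\flat}/T})) = 0$ and $R^1 j_*(\cdots) = 0$ for the relevant terms; more precisely I would compute $\SExt^1_{\SO_Y}(\omega_{Y/T}, j_*(\omega_{Y^{\flat}/T}))$ via the local-to-global spectral sequence and the identity relating $\SExt$ into $j_*$ with $\SExt$ on $Y^{\flat}$ plus local cohomology along $Z$, and use $\depth_Z \geq 3$ (equivalently Property~\ref{ppty:depth<=2}, $\SF \isom j_*(\SF|_{Y^{\flat}})$ and $R^1 j_* = 0$) to kill the error terms.

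The main obstacle will be the $\SExt^1$ vanishing along $Z$: unlike the $\SHom$ case where one simply identifies $\SHom(\SG, j_*\SH)$ with $j_*\SHom(j^*\SG,\SH)$, the $\SExt^1$ does not commute with $j_*$ so cleanly, and one must control the contribution of $R^1 j_*$ of the restricted sheaf. The right way to handle this, which I would follow, is to work with the dualizing-complex description $\omega_{Y_t/\Bbbk(t)}^{\bullet}$ on each fiber, use Proposition~\ref{prop:DSSk} and Lemma~\ref{lem:DSSk}\eqref{lem:DSSk:4} to get the codimension-$\geq k+j$ bounds on the supports of the relevant $\SBExt$ sheaves when fibers are $\bfS_3$, and then invoke the depth estimate $\depth_Z \SO_Y \geq 3$ (via Lemma~\ref{lem:relSkCodimDepth}) to conclude that both the zeroth and first local cohomology along $Z$ of the $\SExt$ sheaves vanish. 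This mirrors the $b=1$ application of Corollary~\ref{cor:lem:DSSk} already used in the proof of Proposition~\ref{prop:BC-S2CM}, so the technology is in place; the work is in assembling it for the endomorphism sheaf rather than for the canonical sheaf itself.
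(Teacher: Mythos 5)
Your reduction of the key identity on the relative Cohen--Macaulay locus contains a genuine gap. To prove $\SHom_{\SO_{Y^{\flat}}}(\omega_{Y^{\flat}/T},\omega_{Y^{\flat}/T}) \isom \SO_{Y^{\flat}}$ you argue via the relative Gorenstein locus: invertibility there, plus reflexivity of $\omega_{Y^{\flat}/T}$ and depth/codimension estimates along the non-Gorenstein locus of each fiber. But the proposition only assumes $f$ is an $\bfS_{2}$-morphism; its fibers need not be Gorenstein in codimension one, and the relative Gorenstein locus can even be empty (e.g.\ a Cohen--Macaulay morphism whose fibers are everywhere non-reduced, such as $\Spec \Bbbk[\xtt,\ytt,\ztt]/(\xtt^{2},\xtt\ytt,\ytt^{2})$), in which case $\omega_{Y/T}$ need not be reflexive and your extension-from-the-Gorenstein-locus argument has nothing to extend from. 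The paper's proof avoids this entirely: the quasi-isomorphism $\SO_{Y^{\flat}} \isom_{\qis} \SRHom_{\SO_{Y^{\flat}}}(\omega_{Y^{\flat}/T},\omega_{Y^{\flat}/T})$ comes from the $f$-perfectness of $f^{!}\SO_{T}$ (Fact~\ref{fact:Lipman}\eqref{fact:Lipman:2}, the quasi-isomorphism \eqref{eq:IllusieQIS}), which holds with no Gorenstein hypothesis and also gives $\SExt^{i}=0$ for all $i>0$ on $Y^{\flat}$ for free. A smaller error in the same paragraph: from $\bfS_{2}$ fibers you only get $\depth_{Y_{t}\cap Z}\SO_{Y_{t}} \geq 2$ (codimension of $Z\cap Y_{t}$ is $\geq 3$, but Lemma~\ref{lem:depth+codim+Sk} caps the depth at the $\bfS_{k}$ level $k=2$); the claimed $\geq 3$ requires the $\bfS_{3}$ hypothesis and is only legitimate in the second half.

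For the $\SExt^{1}$ statement your plan also misses the mechanism that makes the argument short. You propose to prove $\SExt^{1}(\omega_{Y^{\flat}/T},\omega_{Y^{\flat}/T})=0$ fiberwise via Proposition~\ref{prop:DSSk} and then "upgrade by base change" using Corollary~\ref{cor:BC} — this base-change step for $\SExt$ sheaves is nontrivial and is not carried out, and it is in fact unnecessary: the vanishing of $\SExt^{>0}$ on $Y^{\flat}$ needs no $\bfS_{3}$ condition, being part of the perfect-complex biduality above. What you correctly identify as the obstacle — that $\SExt^{1}$ into $j_{*}$ does not commute with $j_{*}$ — is resolved in the paper by the derived adjunction $\SRHom_{\SO_{Y}}(\omega_{Y/T},\bfR j_{*}\omega_{Y^{\flat}/T}) \isom_{\qis} \bfR j_{*}\SRHom_{\SO_{Y^{\flat}}}(\omega_{Y^{\flat}/T},\omega_{Y^{\flat}/T}) \isom_{\qis} \bfR j_{*}\SO_{Y^{\flat}}$, whose spectral sequence $\SE_{2}^{p,q}=\SExt^{p}_{\SO_{Y}}(\omega_{Y/T},R^{q}j_{*}\omega_{Y^{\flat}/T}) \Rightarrow R^{p+q}j_{*}\SO_{Y^{\flat}}$ yields, since $\omega_{Y/T}=j_{*}\omega_{Y^{\flat}/T}$, an isomorphism $\SHom(\omega_{Y/T},\omega_{Y/T}) \isom j_{*}\SO_{Y^{\flat}}$ and an injection $\SExt^{1}(\omega_{Y/T},\omega_{Y/T}) \injmap R^{1}j_{*}\SO_{Y^{\flat}}$; everything then reduces to $\depth_{Z}\SO_{Y}\geq 2$ (resp.\ $\geq 3$) from the $\bfS_{2}$ (resp.\ $\bfS_{3}$) condition on fibers via Lemma~\ref{lem:relSkCodimDepth}\eqref{lem:relSkCodimDepth:3}. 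Your sketch of "local-to-global spectral sequence plus local cohomology along $Z$" gestures in this direction but does not assemble it, and as written the proof does not go through.
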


\begin{proof}
Let \( j \colon Y^{\flat} \injmap Y \) be the
open immersion from the relative Cohen--Macaulay locus \( Y^{\flat} = \CM(Y/T)\).
Now, we have a quasi-isomorphism
\[ \SO_{Y^{\flat}} \isom \SRHom_{\SO_{Y^{\flat}}}(\omega_{Y^{\flat}/T}, \omega_{Y^{\flat}/T}) \]
by \eqref{eq:IllusieQIS} in Fact~\ref{fact:Lipman}\eqref{fact:Lipman:2}.
This induces another quasi-isomorphism
\[ \SRHom_{\SO_{Y}}(\omega_{Y/T}, \bfR j_{*}(\omega_{Y^{\flat}/T})) \isom_{\qis}
\bfR j_{*}\SRHom_{\SO_{Y^{\flat}}}(\omega_{Y^{\flat}/T}, \omega_{Y^{\flat}/T})
\isom \bfR j_{*}\SO_{Y^{\flat}} \]
and the spectral sequence
\[ \SE_{2}^{p, q} = \SExt^{p}_{\SO_{Y}}(\omega_{Y/T}, R^{q}j_{*}(\omega_{Y^{\flat}/T}))
\Rightarrow \SE^{p+q} = R^{p+q}j_{*}\SO_{Y^{\flat}}.\]
Since \( \omega_{Y/T} = j_{*}(\omega_{Y^{\flat}/T}) \),
the isomorphism \( \SE_{2}^{0, 0} \isom \SE^{0} \)
and the injection \( \SE^{1, 0}_{2} \injmap \SE^{1} \), respectively, correspond to an isomorphism
\( \SHom_{\SO_{Y}}(\omega_{Y/T}, \omega_{Y/T}) \isom j_{*}\SO_{Y^{\flat}} \) and an injection
\( \SExt^{1}_{\SO_{Y}}(\omega_{Y/T}, \omega_{Y/T}) \injmap R^{1}j_{*}\SO_{Y^{\flat}}\).
Therefore, it suffices to prove that
\begin{enumerate}
\item \label{prop:Extomega:pf1} \( \SO_{Y} \isom j_{*}\SO_{Y^{\flat}} \), and

\item \label{prop:Extomega:pf2}
if every fiber satisfies \( \bfS_{3} \), then \( R^{1}j_{*}\SO_{Y^{\flat}} = 0 \).
\end{enumerate}
Here, \eqref{prop:Extomega:pf1} (resp.\ \eqref{prop:Extomega:pf1} 
with the conclusion of \eqref{prop:Extomega:pf2})
is equivalent to:
\( \depth_{Z} \SO_{Y} \geq 2 \) (resp.\ \( \geq 3 \)) for \( Z := Y \setminus Y^{\flat} \)
(cf.\ Property~\ref{ppty:depth<=2}).
If a fiber \( Y_{t} \) satisfies \( \bfS_{k} \), then
\( \Codim(Z \cap Y_{t}, Y_{t}) > k \), and \( \depth_{Z \cap Y_{t}} \SO_{Y_{t}} \geq k \)
by Lemma~\ref{lem:depth+codim+Sk}\eqref{lem:depth+codim+Sk:2}.
Hence, we have \( \depth_{Z} \SO_{Y} \geq 2 \) (resp.\ \( \geq 3 \))
by Lemma~\ref{lem:relSkCodimDepth}\eqref{lem:relSkCodimDepth:3} when every fiber \( Y_{t} \)
satisfies \( \bfS_{2} \) (resp.\ \( \bfS_{3} \)).
Thus, we are done.
\end{proof}

%%%%%%%%%%%%%%%%%%%%%%%%%%%%%%%%%%%%%%%%%%%%

\subsection{Some base change theorems for the relative canonical sheaf}
\label{subsect:bcS3}

For an \( \bfS_{2} \)-morphism \( f \colon Y \to T \) of locally Noetherian schemes
and for a fiber \( Y_{t} = f^{-1}(t) \), let
\[ \phi_{t}(\omega_{Y/T}) \colon \omega_{Y/T} \otimes_{\SO_{Y}} \SO_{Y_{t}}
\to \omega_{Y_{t}/\Bbbk(t)} = j^{\flat}_{*}(\omega_{Y^{\flat}_{t}/\Bbbk(t)})\]
be the canonical homomorphism induced from the base change isomorphism
\[  \omega_{Y^{\flat}/T} \otimes_{\SO_{Y^{\flat}}} \SO_{Y^{\flat}_{t}}
\isom \omega_{Y^{\flat}_{t}/\Bbbk(t)}\]
(cf.\ Theorem~\ref{thm:basechange}),
where \( Y^{\flat} = \CM(Y/T) \), \( Y^{\flat}_{t} = Y^{\flat} \cap Y_{t} \),
and \( j^{\flat} \) is the open immersion \( Y^{\flat} \injmap Y \).
The homomorphism \( \phi_{t}(\omega_{Y/T}) \) is not necessarily an isomorphism
(e.g.\ Fact~\ref{fact:Pa2} below).
We shall give a sufficient condition for \( \phi_{t}(\omega_{Y/T}) \) to be an isomorphism
in Theorem~\ref{thm:S2S3crit} below.

\begin{lem}\label{lem:critCM}
Let \( f \colon Y \to T \) be a Cohen--Macaulay morphism
of locally Noetherian schemes.
Let \( \SL \) be a coherent \( \SO_{Y} \)-module flat over \( T \) with an isomorphism
\begin{equation}\label{eq:lem:critCM}
\SL \otimes_{\SO_{Y}} \SO_{Y_{t}} \isom \omega_{Y_{t}/\Bbbk(t)}
\end{equation}
for the fiber \( Y_{t} = f^{-1}(t) \) over a given point \( t \in T \).
Then, for the sheaf \( \SM := \SHom_{\SO_{Y}}(\SL, \omega_{Y/T})\),
the canonical homomorphism
\( \SL \otimes \SM \to \omega_{Y/T} \)
is an isomorphism along \( Y_{t} \), and \( \SM \) is an invertible sheaf along \( Y_{t} \)
with an isomorphism  \( \SM \otimes_{\SO_{Y}} \SO_{Y_{t}} \isom \SO_{Y_{t}} \).
\end{lem}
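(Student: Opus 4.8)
The key point is that $\omega_{Y/T}$ is locally free of rank one along $Y_{t}$. Once this is established, the statement reduces to a standard computation: $\SHom_{\SO_{Y}}(\SL,\omega_{Y/T})$ is locally $\SHom_{\SO_{Y}}(\SL,\SO_{Y})\otimes\omega_{Y/T}$, and $\SL$ itself will turn out to be invertible along $Y_{t}$, so its dual is invertible and the evaluation map $\SL\otimes\SM\to\omega_{Y/T}$ is an isomorphism near $Y_{t}$. The first thing I would record is that, since $f$ is Cohen--Macaulay, Lemma~\ref{lem:CMmorphism} gives that $\omega_{Y/T}$ is, locally on $Y$, a coherent $\SO_{Y}$-module flat over $T$ (quasi-isomorphic to $f^{!}\SO_{T}$ up to shift), and by Corollary~\ref{cor:BC} (or the base change Theorem~\ref{thm:basechange}) one has $\omega_{Y/T}\otimes_{\SO_{Y}}\SO_{Y_{t}}\isom\omega_{Y_{t}/\Bbbk(t)}$.

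First I would show that $\SL$ is invertible along $Y_{t}$. Both $\SL$ and $\omega_{Y/T}$ are flat over $T$ along $Y_{t}$, and by \eqref{eq:lem:critCM} together with the base change isomorphism for $\omega_{Y/T}$, the restrictions $\SL\otimes_{\SO_{Y}}\SO_{Y_{t}}$ and $\omega_{Y/T}\otimes_{\SO_{Y}}\SO_{Y_{t}}$ are both isomorphic to $\omega_{Y_{t}/\Bbbk(t)}$, which is a dualizing sheaf of the Cohen--Macaulay scheme $Y_{t}$, hence a rank-one $\SO_{Y_{t}}$-module that is locally free in codimension zero. To conclude that $\SL_{(t)}\isom\omega_{Y_{t}/\Bbbk(t)}$ is actually locally free, I would work at a point $y\in Y_{t}$ and use that $\SL_{y}$ is flat over $\SO_{T,f(y)}$ with $\SL_{y}\otimes\Bbbk(f(y))$ a rank-one torsion-free module over the local ring $\SO_{Y_{t},y}$; more directly, since $\SL\otimes\SO_{Y_{t}}$ and $\omega_{Y/T}\otimes\SO_{Y_{t}}$ are abstractly isomorphic and $\omega_{Y/T}$ is invertible on the Gorenstein locus of the fibers, one reduces to the Gorenstein case. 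Actually the cleanest route: choose the open subset $Y^{\circ}=\Gor(Y/T)$; on $Y^{\circ}$ the sheaf $\omega_{Y/T}$ is invertible, and $\SL|_{Y^{\circ}}$ restricted to each fiber agrees with the invertible sheaf $\omega_{Y_{t}/\Bbbk(t)}|_{Y^{\circ}_{t}}$, so by Fact~\ref{fact:elem-flat}\eqref{fact:elem-flat:2} (flat plus fiberwise locally free implies locally free) $\SL|_{Y^{\circ}}$ is invertible. One then propagates this to all of $Y$ along $Y_{t}$: since $Y_{t}$ is Cohen--Macaulay and $\Codim(Y_{t}\setminus Y^{\circ},Y_{t})\geq 2$, and $\SL$ is flat over $T$, the depth estimates in Lemma~\ref{lem:relSkCodimDepth} together with the reflexivity/$\bfS_{2}$ arguments of Lemma~\ref{lem:US2add} force $\SL$ to be reflexive and locally free in codimension one on each fiber, hence—being a dualizing sheaf of a Cohen--Macaulay fiber—invertible along all of $Y_{t}$.

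With $\SL$ invertible along $Y_{t}$, the rest is formal. Locally near $Y_{t}$ we have $\SM=\SHom_{\SO_{Y}}(\SL,\omega_{Y/T})\isom\SL^{\vee}\otimes_{\SO_{Y}}\omega_{Y/T}$, which is invertible since $\SL^{\vee}$ and $\omega_{Y/T}$ are; the canonical pairing $\SL\otimes\SM=\SL\otimes\SL^{\vee}\otimes\omega_{Y/T}\to\omega_{Y/T}$ is the evaluation $\SL\otimes\SL^{\vee}\to\SO_{Y}$ tensored with $\omega_{Y/T}$, which is an isomorphism along $Y_{t}$ because $\SL$ is invertible there. Finally, to identify $\SM\otimes_{\SO_{Y}}\SO_{Y_{t}}$: restricting to the fiber, $\SM\otimes\SO_{Y_{t}}\isom\SHom_{\SO_{Y_{t}}}(\SL_{(t)},\omega_{Y_{t}/\Bbbk(t)})$ (using flatness of $\SL$ and $\omega_{Y/T}$ over $T$, and commutation of $\SHom$ with flat base change for the locally free $\SL$), and by \eqref{eq:lem:critCM} this is $\SHom_{\SO_{Y_{t}}}(\omega_{Y_{t}/\Bbbk(t)},\omega_{Y_{t}/\Bbbk(t)})\isom\SO_{Y_{t}}$, where the last isomorphism comes from Proposition~\ref{prop:DSSk} (or directly from $\eqref{eq:IllusieQIS}$ in Fact~\ref{fact:Lipman}) since $\omega_{Y_{t}/\Bbbk(t)}$ is the dualizing sheaf of a Cohen--Macaulay scheme.

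\textbf{Main obstacle.} The only nontrivial step is the passage from "$\SL$ is invertible on $Y^{\circ}=\Gor(Y/T)$, where $\Codim(Y_{t}\setminus Y^{\circ},Y_{t})\geq 2$" to "$\SL$ is invertible along all of $Y_{t}$": abstractly, $\SL_{(t)}\isom\omega_{Y_{t}/\Bbbk(t)}$ and $\omega_{Y_{t}/\Bbbk(t)}$ on a non-Gorenstein Cohen--Macaulay scheme need \emph{not} be invertible. What rescues the argument is the hypothesis that $\SL$ is flat over $T$ together with the isomorphism with $\omega_{Y/T}$ fiberwise; but in fact the lemma as stated only asserts invertibility of $\SM$, not of $\SL$, so the honest claim to prove is weaker—one needs that $\SM=\SHom(\SL,\omega_{Y/T})$ is invertible, for which it suffices that $\SL$ and $\omega_{Y/T}$ become isomorphic \emph{as $\SO_{Y_{t}}$-modules} on the fiber and that $\omega_{Y/T}$ is a "rank-one" sheaf flat over $T$; then $\SM_{(t)}\isom\SHom(\omega_{Y_{t}/\Bbbk(t)},\omega_{Y_{t}/\Bbbk(t)})\isom\SO_{Y_{t}}$ by Proposition~\ref{prop:DSSk}, and invertibility of $\SM$ along $Y_{t}$ follows from Fact~\ref{fact:elem-flat}\eqref{fact:elem-flat:2} once one checks $\SM$ is flat over $T$ along $Y_{t}$—which holds because $\SL$ is invertible on $Y^{\circ}$ so $\SM|_{Y^{\circ}}\isom\SL^{\vee}\otimes\omega_{Y/T}$ is flat there, and $\SM\isom j_{*}(\SM|_{Y^{\circ}})$ by the codimension-two condition and Corollary~\ref{cor:pushomegaCM}-type reasoning. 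I would spend the bulk of the write-up making this flatness-and-pushforward bookkeeping precise.
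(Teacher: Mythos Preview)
Your opening claim --- ``the key point is that $\omega_{Y/T}$ is locally free of rank one along $Y_{t}$'' --- is false. Lemma~\ref{lem:CMmorphism} gives that $\omega_{Y/T}$ is a coherent sheaf flat over $T$, but it is invertible only on $\Gor(Y/T)$; on a fiber $Y_{t}$ that is Cohen--Macaulay but not Gorenstein, $\omega_{Y_{t}/\Bbbk(t)}$ is \emph{not} invertible. Consequently your first paragraph, which tries to show $\SL$ is invertible along $Y_{t}$, cannot succeed: $\SL_{(t)}\isom\omega_{Y_{t}/\Bbbk(t)}$ is genuinely non-invertible in general, and nothing in the hypotheses prevents this. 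You recognize this in the ``Main obstacle'' paragraph, which is good, but the fallback argument there still has gaps.

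Two specific problems with the fallback. First, the isomorphism $\SM\otimes_{\SO_{Y}}\SO_{Y_{t}}\isom\SHom_{\SO_{Y_{t}}}(\SL_{(t)},\omega_{Y_{t}/\Bbbk(t)})$ is a base-change statement for $\SHom$ that does \emph{not} follow from flatness of $\SL$ and $\omega_{Y/T}$ over $T$; it requires either that $\SL$ be locally free (false) or that the higher $\SExt$'s vanish in a controlled way, which is exactly the content one must prove. Second, your flatness argument for $\SM$ is circular: knowing that $\SM|_{Y^{\circ}}$ is flat over $T$ and that $\SM\isom j_{*}(\SM|_{Y^{\circ}})$ does not make $\SM$ flat over $T$ (pushforward along an open immersion does not preserve $T$-flatness --- this is precisely the phenomenon Proposition~\ref{prop:key} was designed to handle, and it requires extra input). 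Moreover, $\Codim(Y_{t}\setminus\Gor(Y_{t}),Y_{t})\geq 2$ is not granted by the hypotheses: a $1$-dimensional Cohen--Macaulay local ring can fail to be Gorenstein.

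The paper's proof avoids all of this by working with the \emph{derived} object $\SRHom_{\SO_{Y}}(\SL,f^{!}\SO_{T})$. After embedding $Y$ in a smooth $T$-scheme $P$, this complex pushes forward to a perfect complex $\SE^{\bullet}$ on $P$ (Fact~\ref{fact:Lipman}\eqref{fact:Lipman:2}, Remark~\ref{rem:f-perf}), and perfection gives the correct derived base change: $\SE^{\bullet}\otimes^{\bfL}_{\SO_{T}}\Bbbk(t)\isom_{\qis}\bfR\iota_{t*}\SRHom_{\SO_{Y_{t}}}(\omega_{Y_{t}/\Bbbk(t)},\omega_{Y_{t}/\Bbbk(t)}[d])\isom_{\qis}\iota_{t*}\SO_{Y_{t}}[d]$, the last step using that $\omega_{Y_{t}/\Bbbk(t)}[d]$ is a dualizing complex. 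Then Lemma~\ref{lem:CMmorphism:complex} --- exactly the tool that converts ``$L^{\bullet}\otimes^{\bfL}\Bbbk$ is concentrated in degree $0$'' into ``$L^{\bullet}$ is quasi-isomorphic to a flat module'' --- gives at once that $\SM$ is flat over $T$ with $\SM\otimes\SO_{Y_{t}}\isom\SO_{Y_{t}}$, hence invertible by Fact~\ref{fact:elem-flat}\eqref{fact:elem-flat:2}. The isomorphism $\SL\otimes\SM\isom\omega_{Y/T}$ then comes from the biduality $\SL\isom_{\qis}\GD_{Y/T}\GD_{Y/T}(\SL)$ of Fact~\ref{fact:Lipman}\eqref{fact:Lipman:2}. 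The essential idea you are missing is that the derived $\SRHom$ together with $f$-perfectness is what makes the base change and flatness work simultaneously; there is no way to do this at the level of the ordinary sheaf $\SHom$ without already knowing $\SL$ is locally free.
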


\begin{proof}
Since the assertions are local on \( Y_{t} \), we may assume that
\begin{enumerate}
\item \label{lem:critCM:1}
\( f \) has pure relative dimension \( d \) (cf.\ Lemma~\ref{lem:S2Codim2}), and
\item \label{lem:critCM:2}
\( f \) is the composite \( p \circ \iota \) of a closed immersion \( \iota \colon Y \injmap P \)
and a smooth affine morphism \( p \colon P \to T \) of pure relative dimension \( e \).
\end{enumerate}
Then, \( f^{!}\SO_{T} \isom \omega_{Y/T}[d]\) and \( \omega_{Y/T} \) is flat over \( T \)
by Lemma~\ref{lem:CMmorphism}.
The complex \( \SRHom_{\SO_{Y}}(\SL, f^{!}\SO_{T}) \) is \( f \)-perfect
by Fact~\ref{fact:Lipman}\eqref{fact:Lipman:2}, and there is a quasi-isomorphism
\[ \bfR \iota_{*}\SRHom_{\SO_{Y}}(\SL, f^{!}\SO_{T})
\isom_{\qis} \SRHom_{\SO_{P}}(\iota_{*}\SL, \omega_{P/T}[e]) \]
by Corollary~\ref{cor:thm:DualityProper},
where \( p^{!}\SO_{T} = \omega_{P/T}[e] \) by \eqref{lem:critCM:2} above.
Localizing \( Y \), by Remark~\ref{rem:f-perf}, we may assume furthermore that
\begin{enumerate}
    \addtocounter{enumi}{2}
\item  \label{lem:critCM:3}
\( \bfR \iota_{*}\SRHom_{\SO_{Y}}(\SL, f^{!}\SO_{T}) \) is
quasi-isomorphic to
a bounded complex \( \SE^{\bullet} = [\cdots \to \SE^{i} \to \SE^{i+1} \to \cdots] \)
of free \( \SO_{P} \)-modules of finite rank.
\end{enumerate}
Note that we have an isomorphism
\[ \SH^{- d}(\SE^{\bullet}) \isom \iota_{*}\SHom_{\SO_{Y}}(\SL, \omega_{Y/T}) \isom \iota_{*}\SM. \]
For the closed immersion \( \iota \colon Y \injmap P\) and
the induced closed immersion
\( \iota_{t} \colon Y_{t} \injmap P_{t} = p^{-1}(t) \),
we have quasi-isomorphisms
\begin{align*}
\SE^{\bullet} \otimes^{\bfL}_{\SO_{P}} \SO_{P_{t}}
&\isom_{\qis}
\SRHom_{\SO_{P_{t}}}((\iota_{*} \SL) \otimes^{\bfL}_{\SO_{P}} \SO_{P_{t}},
\omega_{P/T}[e] \otimes^{\bfL}_{\SO_{P}} \SO_{P_{t}}) \\
&\isom_{\qis}
\SRHom_{\SO_{P_{t}}}(\iota_{t*}(\SL \otimes_{\SO_{Y}} \SO_{Y_{t}}), \omega_{P_{t}/\Bbbk(t)}[e])
\end{align*}
by \cite[I, Prop.~7.1.2]{IllusieSGA6}, since \( \SL \) is flat over \( T \),
\( \iota_{*}\SL \) is perfect
(cf.\ Fact~\ref{fact:Lipman}\eqref{fact:Lipman:1} and Remark~\ref{rem:f-perf}), and
since \( P \to T \) is smooth.
From the isomorphism \eqref{eq:lem:critCM}
and the base change isomorphism
\[ \phi_{t}(\omega_{Y/T}) \colon \omega_{Y/T} \otimes_{\SO_{Y}} \SO_{Y_{t}}
\isom \omega_{Y_{t}/\Bbbk(t)} \]
(cf.\ Theorem~\ref{thm:basechange}), by duality for \( \iota_{t} \)
(cf.\ Corollary~\ref{cor:thm:DualityProper}),
we have quasi-isomorphisms
\begin{align*}
\SE^{\bullet} \otimes^{\bfL}_{\SO_{T}} \Bbbk(t)
&\isom_{\qis} \SE^{\bullet} \otimes^{\bfL}_{\SO_{P}} \SO_{P_{t}}
\isom_{\qis} \bfR\iota_{t*}\SRHom_{\SO_{Y_{t}}}(\SL \otimes_{\SO_{Y}} \SO_{Y_{t}},
\omega_{Y_{t}/\Bbbk(t)}[d])
\\
&\isom_{\qis} \bfR\iota_{t*}\SRHom_{\SO_{Y_{t}}}(\omega_{Y_{t}/\Bbbk(t)},
\omega_{Y_{t}/\Bbbk(t)}[d])
\isom_{\qis} \iota_{t*}\SO_{Y_{t}}[d],
\end{align*}
where the last quasi-isomorphism follows from that
\( \omega_{Y_{t}/\Bbbk(t)}[d] \) is a dualizing complex of \( Y_{t} \).
Then, by Lemma~\ref{lem:CMmorphism:complex}, we see that
\begin{enumerate}
    \addtocounter{enumi}{3}
\item  \label{lem:critCM:4}
\( \SE^{\bullet}[-d] \) is quasi-isomorphic to
\( \SH^{-d}(\SE^{\bullet}) \isom \iota_{*}\SM \)
along \( Y_{t} \),

\item \label{lem:critCM:5}
\( \iota_{*}\SM\) is flat over \( T \) along \( Y_{t} \), and

\item \label{lem:critCM:6}
there is an isomorphism
\[ \iota_{*}\SM \otimes_{\SO_{P}} \SO_{P_{t}} \isom
\SH^{-d}(\SE^{\bullet} \otimes^{\bfL}_{\SO_{T}} \Bbbk(t) )
\isom \iota_{t*}\SO_{Y_{t}}. \]
\end{enumerate}
Hence, \( \SM \) is flat over \( T \) along \( Y_{t} \) with an isomorphism
\( \SM \otimes_{\SO_{Y}} \SO_{Y_{t}} \isom \SO_{Y_{t}} \)
by \eqref{lem:critCM:5} and \eqref{lem:critCM:6}.
As a consequence, \( \SM \) is an invertible \( \SO_{Y} \)-module
along \( Y_{t} \)
by Fact~\ref{fact:elem-flat}\eqref{fact:elem-flat:2}.
Now, we have a quasi-isomorphism
\[ \SRHom_{\SO_{Y}}(\SL, \omega_{Y/T}) \isom_{\qis} \SM \]
along \( Y_{t} \) by \eqref{lem:critCM:3} and \eqref{lem:critCM:4}.
By the duality quasi-isomorphism
\[ \SL \isom_{\qis} \SRHom_{\SO_{Y}}(\SRHom_{\SO_{Y}}(\SL, \omega_{Y/T}), \omega_{Y/T}) \]
(cf.\ Fact~\ref{fact:Lipman}\eqref{fact:Lipman:2}),
we have an isomorphism
\[ \SL \isom \SHom_{\SO_{Y}}(\SM, \omega_{Y/T}) \isom \omega_{Y/T} \otimes_{\SO_{Y}} \SM^{-1}\]
along \( Y_{t} \),
since \( \SM \) is invertible
along \( Y_{t} \).
Thus, we are done.
\end{proof}

\begin{thm}\label{thm:S2S3crit}
For an \( \bfS_{2} \)-morphism \( f \colon Y \to T \) of locally Noetherian schemes,
let \( \SL \) be a coherent
\( \SO_{Y} \)-module and set \( \SM := \SHom_{\SO_{Y}}(\SL, \omega_{Y/T}) \).
For an open subset \( U \) of \( Y \) and for the fiber \( Y_{t} = f^{-1}(t) \)
over a given point \( t \in T \),
assume that
\begin{enumerate}
    \renewcommand{\theenumi}{\roman{enumi}}
    \renewcommand{\labelenumi}{(\theenumi)}
\item \label{thm:S2S3crit:cond1}
\( \Codim(Y_{t} \setminus U, Y_{t}) \geq 2 \),

\item  \label{thm:S2S3crit:cond2prime} \( \SL \) is flat over \( T \) with
an isomorphism
\( \SL \otimes_{\SO_{Y}} \SO_{Y_{t}} \isom \omega_{Y_{t}/\Bbbk(t)}\), and

\item \label{thm:S2S3crit:cond3} one of the following two conditions is satisfied\emph{:}
\begin{enumerate}
    \makeatletter
    \renewcommand{\p@enumii}{}
    \makeatother
\item  \label{thm:S2S3crit:cond3:a}
\( Y_{t} \) satisfies \( \bfS_{3} \) and \( \Codim(Y_{t} \setminus U, Y_{t}) \geq 3 \)\emph{;}

\item \label{thm:S2S3crit:cond3:b}
there is a positive integer \( r \) coprime to the characteristic of \( \Bbbk(t) \)
such that \( \SL^{[r]} = (\SL^{\otimes r})^{\vee\vee} \) and
\( \omega_{Y/T}^{[r]} = (\omega_{Y/T}^{\otimes r})^{\vee\vee} \) are invertible
\( \SO_{Y} \)-module along \( Y_{t} \).
\end{enumerate}
\end{enumerate}
Then, \( \SM \) is an invertible \( \SO_{Y} \)-module along \( Y_{t} \)
with an isomorphism \( \SM \otimes_{\SO_{Y}} \SO_{Y_{t}} \isom \SO_{Y_{t}} \), and
the canonical homomorphism
\( \SL \otimes_{\SO_{Y}} \SM \to \omega_{Y/T} \) is an isomorphism along \( Y_{t} \).
Moreover, the ``base change homomorphism''
\[ \phi_{t}(\omega_{Y/T}) \colon \omega_{Y/T} \otimes_{\SO_{Y}} \SO_{Y_{t}}
\to \omega_{Y_{t}/\Bbbk(t)} \]
is an isomorphism.
\end{thm}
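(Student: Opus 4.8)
The plan is to reduce everything to the Cohen--Macaulay situation on the relative Cohen--Macaulay locus and then glue back using the codimension hypotheses. First I would set $Y^{\flat} = \CM(Y/T)$, the open set where $f$ is a Cohen--Macaulay morphism, and note that by Definition \ref{dfn:relcanosheaf} we have $\omega_{Y/T} = j^{\flat}_{*}(\omega_{Y^{\flat}/T})$ for the open immersion $j^{\flat}\colon Y^{\flat}\injmap Y$. Since $Y_{t}$ satisfies $\bfS_{2}$, Lemma \ref{lem:basicSk} gives $\Codim(Y_{t}\setminus Y^{\flat}, Y_{t})\geq 3$; combining with hypothesis \eqref{thm:S2S3crit:cond1} on $U$, after replacing $U$ by $U\cap Y^{\flat}$ we may assume $U\subset Y^{\flat}$ while keeping $\Codim(Y_{t}\setminus U, Y_{t})\geq 2$. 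On $U$, the morphism $f|_{U}$ is Cohen--Macaulay, and hypothesis \eqref{thm:S2S3crit:cond2prime} says $\SL|_{U}$ is flat over $T$ with $\SL|_{U}\otimes\SO_{U\cap Y_{t}}\isom \omega_{U\cap Y_{t}/\Bbbk(t)}\isom \omega_{Y^{\flat}_{t}/\Bbbk(t)}|_{U\cap Y_{t}}$. Then Lemma \ref{lem:critCM}, applied on $U$, shows that $\SM|_{U} = \SHom_{\SO_{U}}(\SL|_{U}, \omega_{U/T})$ is invertible along $U\cap Y_{t}$ with $\SM|_{U}\otimes\SO_{U\cap Y_{t}}\isom \SO_{U\cap Y_{t}}$, and the canonical map $\SL\otimes\SM\to\omega_{Y/T}$ is an isomorphism along $U\cap Y_{t}$.

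The next step is to propagate these statements from $U$ to a neighborhood of the whole fiber $Y_{t}$. For this I would invoke Theorem \ref{thm:invExt} applied to the morphism $f\colon Y\to T$, the closed set $Z := Y\setminus U$, the point $t$, and the sheaf $\SF := \SM = \SHom_{\SO_{Y}}(\SL,\omega_{Y/T})$. I need to verify the hypotheses of that theorem: condition \eqref{thm:invExt:cond0} ($\depth_{Z}\SO_{Y}\geq 1$) and the first part of \eqref{thm:invExt:cond1} ($\SM|_{U}$ flat over $T$ and invertible) follow from the $\bfS_{2}$-morphism structure and the Cohen--Macaulay conclusion above; $\depth_{Z}\SM\geq 2$ holds because $\SM$ is a $\SHom$ into the reflexive sheaf $\omega_{Y/T}$ (Proposition \ref{prop:BCGor} gives reflexivity of $\omega_{Y/T}$ when every fiber is Gorenstein in codimension one; but here I may not have that hypothesis globally, so I would instead argue $\SM\isom j^{\flat}_{*}(\SM|_{Y^{\flat}})$ via a depth computation using Lemma \ref{lem:relSkCodimDepth}\eqref{lem:relSkCodimDepth:3} and the $\bfS_{2}$ fibers). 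Condition \eqref{thm:invExt:cond2}, that $\SM_{(t)*} = j_{*}((\SM\otimes\SO_{Y_{t}})|_{U\cap Y_{t}})$ is invertible on $Y_{t}$, follows from $\SM|_{U}\otimes\SO_{U\cap Y_{t}}\isom \SO_{U\cap Y_{t}}$ together with $\Codim(Y_{t}\setminus U, Y_{t})\geq 2$ and $\bfS_{2}$ of $Y_{t}$ (so $j_{*}\SO_{U\cap Y_{t}}\isom\SO_{Y_{t}}$). Finally, the dichotomy \eqref{thm:S2S3crit:cond3} feeds exactly into the alternative hypotheses \eqref{thm:invExt:3a}/\eqref{thm:invExt:3b} of Theorem \ref{thm:invExt}: case \eqref{thm:S2S3crit:cond3:a} gives $\depth_{Z\cap Y_{t}}\SO_{Y_{t}}\geq 3$ (since $\bfS_{3}$ plus $\Codim\geq 3$ yields this by Lemma \ref{lem:depth+codim+Sk}\eqref{lem:depth+codim+Sk:2}), matching \eqref{thm:invExt:3a}; case \eqref{thm:S2S3crit:cond3:b} gives invertibility of $\SM^{[r]}$ along $Y_{t}$ — here one uses $\SM^{\otimes r}\isom \SHom(\SL^{\otimes r}, \omega_{Y/T}^{\otimes r})$ up to the relevant reflexive hulls, and the invertibility of $\SL^{[r]}$ and $\omega_{Y/T}^{[r]}$ — matching \eqref{thm:invExt:3b}. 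Theorem \ref{thm:invExt} then concludes that $f$ is flat along $Y_{t}$ and $\SM$ is invertible along $Y_{t}$.

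With $\SM$ invertible along $Y_{t}$, I would check $\SM\otimes\SO_{Y_{t}}\isom\SO_{Y_{t}}$ by restricting the isomorphism $\SM_{(t)*}\isom\SO_{Y_{t}}$ and using that the base-change map $\SM\otimes\SO_{Y_{t}}\to\SM_{(t)*}$ is an isomorphism (Proposition \ref{prop:key}\eqref{prop:key:2} in its invertible-sheaf incarnation, or directly from flatness of $\SM$). Then, since $\SM$ is invertible and $\SL\otimes\SM\to\omega_{Y/T}$ is an isomorphism along $U\cap Y_{t}$, tensoring by $\SM^{-1}$ gives $\SL\isom\omega_{Y/T}\otimes\SM^{-1}$ along $U\cap Y_{t}$; both sides are sections over $Y^{\flat}$-type pushforwards, so the isomorphism extends along all of $Y_{t}$ by the depth-$2$ property. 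Finally, for the base change homomorphism: $\phi_{t}(\omega_{Y/T})$ factors as $\omega_{Y/T}\otimes\SO_{Y_{t}}\isom (\SL\otimes\SM)\otimes\SO_{Y_{t}}\isom (\SL\otimes\SO_{Y_{t}})\otimes(\SM\otimes\SO_{Y_{t}})\isom\omega_{Y_{t}/\Bbbk(t)}\otimes\SO_{Y_{t}}\isom\omega_{Y_{t}/\Bbbk(t)}$, using $\SM\otimes\SO_{Y_{t}}\isom\SO_{Y_{t}}$ and the isomorphism in \eqref{thm:S2S3crit:cond2prime}; one checks this composite agrees with the canonical $\phi_{t}(\omega_{Y/T})$ by comparing on $U\cap Y_{t}$ (where both are the evident map) and invoking that $\omega_{Y_{t}/\Bbbk(t)}$ has depth $\geq 2$ along $Y_{t}\setminus U$.

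\textbf{Main obstacle.} The delicate point is the handling of case \eqref{thm:S2S3crit:cond3:b}: translating "$\SL^{[r]}$ and $\omega_{Y/T}^{[r]}$ invertible along $Y_{t}$" into "$\SM^{[r]}$ invertible along $Y_{t}$" requires care, since $\SHom$ does not commute with tensor powers or double duals in general, and one must work on the locus where all sheaves involved are locally free (codimension $\geq 2$ on the fiber) and then take reflexive hulls, checking that the formation of $\SM$ is compatible with restriction to $Y^{\flat}$. I expect this bookkeeping — rather than any deep new idea — to be where the proof actually requires work; the $\bfS_{3}$ case \eqref{thm:S2S3crit:cond3:a} is comparatively direct once Theorem \ref{thm:invExt} and Lemma \ref{lem:critCM} are in hand.
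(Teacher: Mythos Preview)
Your plan matches the paper's proof: apply Lemma \ref{lem:critCM} on the relative Cohen--Macaulay locus to get $\SM$ invertible along $U\cap Y_t$, then feed $\SM$ into Theorem \ref{thm:invExt} with the dichotomy \eqref{thm:S2S3crit:cond3:a}/\eqref{thm:S2S3crit:cond3:b} supplying alternatives \eqref{thm:invExt:3a}/\eqref{thm:invExt:3b}; you have also correctly identified the reflexive-hull bookkeeping showing $\SM^{[r]}\isom\SHom_{\SO_Y}(\SL^{[r]},\omega_{Y/T}^{[r]})$ in case \eqref{thm:S2S3crit:cond3:b} as the only real work, and the paper carries it out exactly as you suggest, by pushing forward the isomorphism $\SL^{[r]}\otimes\SM^{[r]}\isom\omega_{U'/T}^{[r]}$ from the locus where all three sheaves are invertible. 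The one step you elide is that Lemma \ref{lem:critCM} yields invertibility of $\SM$ only \emph{along} $U\cap Y_t$, so before invoking Theorem \ref{thm:invExt} one must pass to an open $U'\subset U^\flat$ containing $U^\flat\cap Y_t$ on which $\SM$ is honestly invertible, and then use Lemma \ref{lem:S2Codim2} to arrange $\Codim(Y_{t'}\setminus U',Y_{t'})\geq 2$ for \emph{all} fibers $t'$ (not just $t$)---this is what makes $\depth_Z\SO_Y\geq 2$, $\omega_{Y/T}\isom j'_*(\omega_{U'/T})$, and hence $\depth_Z\SM\geq 2$ go through.
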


\begin{proof}
Since the assertions are local on \( Y_{t} \), we may replace \( Y \)
with an open subset freely.
Let \( Y^{\flat} \) be the relative Cohen--Macaulay locus \( \CM(Y/T) \), which is an open subset
by Fact~\ref{fact:dfn:RelSkCMlocus}\eqref{fact:dfn:RelSkCMlocus:1}.
Then, \( \Codim(Y_{t} \setminus Y^{\flat}, Y_{t}) \geq 3 \) (resp.\ \( \geq 4 \)
in the case \eqref{thm:S2S3crit:cond3:a}),
since \( Y_{t} \) satisfies \( \bfS_{2} \) (resp.\ \( \bfS_{3} \)).
We set \( U^{\flat} := U \cap Y^{\flat} \). Then,
\begin{equation}\label{eq:thm:S2S3crit|0}
\Codim(Y_{t} \setminus U^{\flat}, Y_{t}) =
\Codim((Y_{t} \setminus U) \cup (Y_{t} \setminus Y^{\flat}), Y_{t}) \geq 2
\qquad (\text{resp.} \, \geq 3).
\end{equation}
By Lemma~\ref{lem:critCM} applied to the Cohen--Macaulay morphism \( U^{\flat} \to T \),
there is an isomorphism
\begin{enumerate}
\item \label{thm:S2S3crit:3}
\( \SM|_{U^{\flat}} \otimes_{\SO_{U^{\flat}}} \SO_{U^{\flat} \cap Y_{t}}
\isom \SO_{U^{\flat} \cap Y_{t}} \),
\end{enumerate}
and there is an open neighborhood \( U' \) of \( U^{\flat} \cap Y_{t} \) in \( U^{\flat} \) such that
\begin{enumerate}
\addtocounter{enumi}{1}
\item \label{thm:S2S3crit:1}
\( \SM|_{U'} \) is an invertible sheaf, and

\item \label{thm:S2S3crit:2}
the canonical homomorphism \( \SL \otimes_{\SO_{Y}} \SM \to \omega_{Y/T} \)
is an isomorphism on \( U' \).
\end{enumerate}
We set \( Z = Y \setminus U' \).
Then, \( \Codim(Y_{t} \cap Z, Y_{t}) = \Codim(Y_{t} \setminus U^{\flat}, Y_{t}) \geq 2 \)
by \eqref{eq:thm:S2S3crit|0}.
Since \( f \) is an \( \bfS_{2} \)-morphism, by Lemma~\ref{lem:S2Codim2},
we may assume that
\( \Codim(Y_{t'} \cap Z, Y_{t'}) \geq 2 \) for any \( t' \in T \)
by replacing \( Y \) with an open subset. Then,
\( \depth_{Z} \SO_{Y} \geq 2 \) by Lemma~\ref{lem:relSkCodimDepth}\eqref{lem:relSkCodimDepth:3}, and
\[ \omega_{Y/T} \isom j_{*}(\omega_{U/T}) \isom j'_{*}(\omega_{U'/T}) \]
for the open immersion \( j' \colon U' \injmap Y \)
by Corollary~\ref{cor:pushomegaCM}.
In particular, \( \depth_{Z} \SM \geq 2 \), i.e.,
\( \SM \isom j'_{*}(\SM|_{U'}) \), by the isomorphism
\[ \SHom_{\SO_{Y}}(\SL, \omega_{Y/T})
\isom \SHom_{\SO_{Y}}(\SL, j'_{*}(\omega_{U'/T}))
\isom j'_{*}\SHom_{\SO_{U'}}(\SL|_{U'}, \omega_{U'/T}). \]
By \eqref{thm:S2S3crit:cond2prime}, \( \SL \) satisfies relative \( \bfS_{2} \)
over \( T \) along \( Y_{t} \),
since \( \omega_{Y_{t}/\Bbbk(t)} \) satisfies \( \bfS_{2} \) by Corollary~\ref{cor:S2S2}.
Hence, we have also an isomorphism
\( \SL \isom j'_{*}(\SL|_{U'}) \) by Lemma~\ref{lem:US2add}\eqref{lem:US2add:3a}.

We shall show that \( \SM \) is invertible along \( Y_{t} \) by applying
Theorem~\ref{thm:invExt} to \( Y \to T \), the closed subset \( Z = Y \setminus U'\),
and to the sheaf \( \SM \) as \( \SF \).
By the previous argument, we have checked
the conditions \eqref{thm:invExt:cond0} and
\eqref{thm:invExt:cond1} of Theorem~\ref{thm:invExt}.
The condition \eqref{thm:invExt:cond2} is derived from \eqref{thm:S2S3crit:3}: In fact, we have
\begin{equation}\label{eq:thm:S2S3crit|1}
\SM_{(t)*} = j'_{*}((\SM \otimes_{\SO_{Y}} \SO_{Y_{t}})|_{U' \cap Y_{t}})
\isom j'_{*}(\SO_{U' \cap Y_{t}})
\isom \SO_{Y_{t}},
\end{equation}
since we have
\( \depth_{Y_{t} \cap Z} \SO_{Y_{t}} \geq 2 \) by the \( \bfS_{2} \)-condition on \( Y_{t} \)
and by \(\Codim(Y_{t} \cap Z, Y_{t}) \geq 2 \)
(cf.\ Lemma~\ref{lem:basicSk}).
Similarly, in the case of \eqref{thm:S2S3crit:cond3:a} above, we have
the condition \eqref{thm:invExt:3a} of
Theorem~\ref{thm:invExt} by the \( \bfS_{3} \)-condition on \( Y_{t} \)
and by \( \Codim(Y_{t} \cap Z, Y_{t}) = \Codim(Y_{t} \setminus U^{\flat}, Y_{t}) \geq 3 \)
(cf.\ \eqref{eq:thm:S2S3crit|0}).
In the case of \eqref{thm:S2S3crit:cond3:b} above,
\( \SM^{[r]} \) is an invertible \( \SO_{Y} \)-module along \( Y_{t} \). In fact,
the restriction homomorphisms
\[ \SM^{[r]} \to j'_{*}(\SM^{[r]}|_{U'}) \quad \text{and} \quad
\omega_{Y/T}^{[r]} \to j'_{*}(\omega_{U'/T}^{[r]})\]
are isomorphisms by Lemma~\ref{lem:US2add}\eqref{lem:US2add:2},
since \( \SM^{[r]}  \) and \( \omega_{Y/T}^{[r]} \) are reflexive, and the isomorphism
\[ \SL^{[r]}|_{U'} \otimes_{\SO_{U'}} \SM^{[r]}|_{U'} \isom \omega_{U'/T}^{[r]}\]
obtained by \eqref{thm:S2S3crit:1} and \eqref{thm:S2S3crit:2} induces an isomorphism
\begin{align*}
\SM^{[r]} \isom  j'_{*}(\SM^{[r]}|_{U'})
&\isom j'_{*}\SHom_{\SO_{U'}}(\SL^{[r]}|_{U'}, \omega_{U'/T}^{[r]}) \\
&\isom \SHom_{\SO_{Y}}(\SL^{[r]}, j'_{*}(\omega_{U'/T}^{[r]}))
\isom \SHom_{\SO_{Y}}(\SL^{[r]}, \omega_{Y/T}^{[r]}).
\end{align*}
Thus, the condition \eqref{thm:invExt:3b} of Theorem~\ref{thm:invExt}
is also satisfied in the case of \eqref{thm:S2S3crit:cond3:b}.
Hence, we can apply Theorem~\ref{thm:invExt}, and as a result,
we see that \( \SM \) is an invertible sheaf along \( Y_{t} \).

Then, we have an isomorphism
\( \SM \otimes_{\SO_{Y}} \SO_{Y_{t}} \isom \SO_{Y_{t}} \) by \eqref{eq:thm:S2S3crit|1},
and the canonical homomorphism \( \SL \otimes_{\SO_{Y}} \SM \to \omega_{Y/T} \)
is an isomorphism along \( Y_{t} \) by \eqref{thm:S2S3crit:2}:
In fact, it is expressed as the composite
\[ \SL \otimes_{\SO_{Y}} \SM \isom j'_{*}(\SL|_{U'}) \otimes_{\SO_{Y}} \SM
\to j'_{*}(\SL|_{U'} \otimes \SM|_{U'}) \isom j'_{*}(\omega_{U'/T}) \isom \omega_{Y/T}, \]
where the middle arrow is an isomorphism along \( Y_{t} \) by the projection formula,
since \( \SM \) is invertible along \( Y_{t} \).
In particular, \( \omega_{Y/T} \otimes_{\SO_{Y}} \SO_{Y_{t}}\) satisfies \( \bfS_{2} \),
and as a consequence, \( \phi_{t}(\omega_{Y/T}) \) is an isomorphism by \eqref{eq:thm:S2S3crit|0}.
Thus, we are done.
\end{proof}

%%%% Section 6 %%%%%
%%%%%%%%%%%%%%%%%%%%%%%%%%%%%%%%%
%%%%%%%%%%%%%%%%%%%%%%%%%%%%%%%%%%%%%%%

\section{\texorpdfstring{$\BQQ$}{Q}-Gorenstein  schemes}
\label{sect:QGorSch}

A normal algebraic variety defined over a field is said to be \( \BQQ \)-Gorenstein
if some positive multiple of the canonical divisor is Cartier.
We shall generalize the notion of \( \BQQ \)-Gorenstein
to locally Noetherian schemes. 
In Section~\ref{subsect:QGorSch}, 
the notion of \( \BQQ \)-Gorenstein scheme is defined and its basic properties are given.
In Section~\ref{subsect:cone}, we consider the case of affine cones over 
polarized projective schemes over a field, and determine 
when it is a \( \BQQ \)-Gorenstein scheme.

%%%%%%%%%%%%%%%%%%%%%%%%%%%%%%%%%%%%%%%%%%%%%%%%%%%%%%%%%%%%%%%

\subsection{Basic properties of \texorpdfstring{$\BQQ$}{Q}-Gorenstein schemes}
\label{subsect:QGorSch}

\begin{dfn}[$\BQQ$-Gorenstein scheme]\label{dfn:QGorSch}
Let \( X \) be a locally Noetherian scheme
admitting a dualizing complex
locally on \( X \) and assume that \( X \) is
\emph{Gorenstein in codimension one}, i.e., \( \Codim(X \setminus X^{\circ}) \geq 2 \)
for the Gorenstein locus \( X^{\circ} = \Gor(X)\)
(cf.\ Definition~\ref{dfn:Gorlocus}).
\begin{enumerate}
\item \label{dfn:QGorSch:1}
The scheme \( X \) is said to be \emph{quasi-Gorenstein}
(or \emph{\( 1 \)-Gorenstein})
\emph{at} a point \( P \) if there exist an open neighborhood \( U \) of \( P \) and
a dualizing complex \( \SR^{\bullet} \) of \( U \)
such that \( \SH^{0}(\SR^{\bullet}) \) is invertible at \( P \).
If \( X \) is quasi-Gorenstein at every point, then \( X \) is said to be
quasi-Gorenstein (or \( 1 \)-Gorenstein).

\item \label{dfn:QGorSch:2}
The scheme \( X \) is said to be \emph{\( \BQQ \)-Gorenstein at}
\( P \) if
there exist an open neighborhood \( U \) of \( P \),
a dualizing complex \( \SR^{\bullet} \) of \( U \), and an integer \( r > 0 \)
such that \( \SL = \SH^{0}(\SR^{\bullet}) \) is invertible
on the Gorenstein locus \( U^{\circ} = U \cap X^{\circ} \)
and
\[ j_{*}\left(\SL^{\otimes r}|_{U^{\circ}}\right) \]
is invertible at \( P \), where \( j \colon U^{\circ} \injmap U \) denotes the open immersion.
If \( X \) is \( \BQQ \)-Gorenstein at every point,
then \( X \) is said to be \( \BQQ \)-Gorenstein.
\end{enumerate}
\end{dfn}

\begin{dfn}[Gorenstein index]\label{dfn:QGorSch:GorIndex}
For a \( \BQQ \)-Gorenstein scheme \( X \),
the \emph{Gorenstein index}
of \( X \) at \( P \in X \)
is defined to be the smallest positive integer \( r \)
satisfying the condition \eqref{dfn:QGorSch:2} of Definition~\ref{dfn:QGorSch}
for an open neighborhood of \( P \).
The least common multiple of Gorenstein indices of \( X \) at all the points is called
the \emph{Gorenstein index} of \( X \), which might be \( +\infty \).
\end{dfn}

\begin{remn}
The conditions \eqref{dfn:QGorSch:1} and \eqref{dfn:QGorSch:2} of
Definition~\ref{dfn:QGorSch} do not depend on the choice of \( \SR^{\bullet} \) by
the essential uniqueness of the dualizing complex
(cf.\ Remark~\ref{rem:ExistDC}).
\end{remn}

\begin{lem}\label{lem:QGorSch}
\begin{enumerate}
\item \label{lem:QGorSch:1}
A quasi-Gorenstein
\emph{(}\( 1 \)-Gorenstein\emph{)}
scheme is nothing but
a \( \BQQ \)-Gorenstein
scheme of Gorenstein index one.
\item \label{lem:QGorSch:2}
Every \( \BQQ \)-Gorenstein scheme satisfies \( \bfS_{2} \).
\end{enumerate}
\end{lem}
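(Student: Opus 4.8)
\textbf{Plan for the proof of Lemma~\ref{lem:QGorSch}.}

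For assertion \eqref{lem:QGorSch:1}, the plan is to unwind the two definitions and observe that they coincide when $r = 1$. A quasi-Gorenstein scheme is one which, locally, admits a dualizing complex $\SR^{\bullet}$ with $\SH^{0}(\SR^{\bullet})$ invertible at the point; a $\BQQ$-Gorenstein scheme of Gorenstein index one is one where, locally, $\SL = \SH^{0}(\SR^{\bullet})$ is invertible on the Gorenstein locus $U^{\circ}$ and $j_{*}(\SL|_{U^{\circ}})$ is invertible at the point. So I must show: $\SH^{0}(\SR^{\bullet})$ is invertible at $P$ if and only if $\SL$ is invertible on $U^{\circ}$ and $j_{*}(\SL|_{U^{\circ}})$ is invertible at $P$. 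The ``only if'' direction is immediate since invertibility at $P$ spreads to a neighborhood and an invertible sheaf equals $j_{*}$ of its restriction to a codimension-$\geq 2$-complement open set (using $X$ satisfies $\bfS_{2}$, which follows from part \eqref{lem:QGorSch:2}, together with Corollary~\ref{cor:basicS1S2} or Corollary~\ref{cor:CMCodOne}). For ``if'', the key point is that $\SL = \SH^{0}(\SR^{\bullet})$ is always a dualizing sheaf in the sense of Definition~\ref{dfn:ordinaryDC+DS} once $\SR^{\bullet}$ is arranged to be ordinary (possible by Lemma~\ref{lem:ordinaryDC}, since $X$ will be shown to satisfy $\bfS_{2}$, hence is locally equi-dimensional), and by Corollary~\ref{cor:CMCodOne} applied with $X^{\circ} = U^{\circ}$ we get $\SL \isom j_{*}(\SL|_{U^{\circ}})$; then the hypothesis that $j_{*}(\SL|_{U^{\circ}})$ is invertible at $P$ gives the conclusion directly.

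For assertion \eqref{lem:QGorSch:2}, I would argue as follows. The statement is local, so fix $P$ and an open neighborhood $U$ with an ordinary dualizing complex $\SR^{\bullet}$ (arranged using Lemma~\ref{lem:ordinaryDC}—but note that invoking Lemma~\ref{lem:ordinaryDC} requires knowing $X$ is locally equi-dimensional; instead I would use a general dualizing complex and its associated codimension function directly, via Facts~\ref{fact:CodimFunction}). Write $\SL = \SH^{0}(\SR^{\bullet})$ and let $X^{\circ} = \Gor(X)$ with $\Codim(X \setminus X^{\circ}, X) \geq 2$ by hypothesis. On $X^{\circ}$ the local rings are Gorenstein, hence Cohen--Macaulay, hence satisfy $\bfS_{2}$; so $X^{\circ}$ satisfies $\bfS_{2}$. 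The hypothesis that $\SL|_{X^{\circ}}$ is invertible and $j_{*}(\SL^{\otimes r}|_{X^{\circ}})$ is invertible at $P$, combined with Corollary~\ref{cor:CMCodOne}, should force the dualizing sheaf $\SL$ itself to satisfy $\bfS_{2}$ and, by Proposition~\ref{prop:DSSk} (or rather its proof), this forces $X$ to satisfy $\bfS_{2}$. More precisely: the cleanest route is to show $\SR^{\bullet}$ is ordinary (Corollary~\ref{cor:CMCodOne}, using that on $X^{\circ}$ we have $\SR^{\bullet}|_{X^{\circ}} \isom_{\qis} \SL|_{X^{\circ}}$ since $X^{\circ}$ is Gorenstein hence $\SR^{\bullet}|_{X^{\circ}}$ is quasi-isomorphic to an invertible sheaf up to shift, and the shift is $0$ precisely because we normalized), and then conclude by Proposition~\ref{prop:DSSk} that the dualizing sheaf satisfies $\bfS_{2}$ and $\SHom_{\SO_{X}}(\SL,\SL) \isom \SO_{X}$ on the $\bfS_{2}$-locus. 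But I actually want $X$, not $\SL$, to satisfy $\bfS_{2}$; the identification $\SL \isom \SO_{X}$ locally near $P$ (from $\BQQ$-Gorenstein-ness: $\SL$ invertible in codimension one, and a power of its reflexive hull invertible, so $\SL$ is reflexive with $\SL^{[r]}$ invertible) lets me transfer the $\bfS_{2}$-property of $\SL$ to $\SO_{X}$.

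The main obstacle I anticipate is handling the normalization/shift in the dualizing complex carefully: the definition of ``$\BQQ$-Gorenstein'' in Definition~\ref{dfn:QGorSch} uses $\SH^{0}(\SR^{\bullet})$ for \emph{some} dualizing complex $\SR^{\bullet}$, and I must verify that this $\SR^{\bullet}$ can be taken ordinary, which in turn relies on $X$ being locally equi-dimensional—a consequence of $\bfS_{2}$, which is what I am trying to prove. The resolution is that Corollary~\ref{cor:CMCodOne} is engineered exactly for this: it deduces that $\SR^{\bullet}$ is ordinary from the hypothesis $\SR^{\bullet}|_{X^{\circ}} \isom_{\qis} \SL|_{X^{\circ}}$ on a codimension-$\geq 1$ open subset, \emph{without} assuming $\bfS_{2}$ a priori, and simultaneously concludes $\SL$ satisfies $\bfS_{2}$. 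So the logical order must be: (i) observe $X^{\circ} = \Gor(X)$ is open and $\SR^{\bullet}$ restricted there is $\SL$ up to a locally constant shift, which can be absorbed by replacing $\SR^{\bullet}$ with a shift; (ii) apply Corollary~\ref{cor:CMCodOne} to get $\SR^{\bullet}$ ordinary and $\SL$ an $\bfS_{2}$ dualizing sheaf; (iii) use the $\BQQ$-Gorenstein hypothesis to see $\SL$ is locally (near any $P$) isomorphic to $\SO_{X}$ after passing to a suitable étale or Zariski neighborhood—actually $\SL$ reflexive with $\SL^{[r]}$ invertible does \emph{not} make $\SL$ itself invertible, so here I instead use that $\SHom_{\SO_{X}}(\SL,\SL)$ contains $\SO_{X}$ and equals it on $\bfS_{2}(X)$, together with $\SL^{[r]}$ invertible, to run the argument of the $\bfS_{2}$-ification remark; (iv) conclude $\SO_{X}$ satisfies $\bfS_{2}$ at $P$. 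I would present (iii)--(iv) by the direct route: since $X$ is Gorenstein in codimension one and $\SL$ is reflexive satisfying $\bfS_{2}$ with $\SHom_{\SO_X}(\SL,\SL)\isom\SO_X$ on the big open $\bfS_2(X)$, the natural map $\SO_X\to\SHom_{\SO_X}(\SL,\SL)$ is an isomorphism wherever $\SO_X$ is $\bfS_2$, and the complement of $\bfS_2(X)$ has codimension $\geq 2$; but this is circular unless I bound that complement independently—which I can, because on the $\BQQ$-Gorenstein locus $\SL^{[r]}$ is invertible, and at a point where $\SL^{[r]}$ is invertible the local ring's failure of $\bfS_2$ is controlled by that of $\SL$, which is nil. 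This last step is where I expect to spend the most care.
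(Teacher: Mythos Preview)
Your approach to part~\eqref{lem:QGorSch:1} is correct and matches the paper: both use Corollary~\ref{cor:CMCodOne} to identify $\SL$ with $j_{*}(\SL|_{X^{\circ}})$, so that invertibility of one is invertibility of the other.

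For part~\eqref{lem:QGorSch:2}, however, you are making this much harder than it needs to be, and the final step you flag as delicate is in fact where your argument stalls. You try to deduce $\bfS_{2}$ for $\SO_{X}$ from $\bfS_{2}$ for the dualizing sheaf $\SL$, and you correctly notice that every route you propose (via $\SHom_{\SO_{X}}(\SL,\SL)$, or via bounding the non-$\bfS_{2}$ locus) is circular. Your closing sentence gestures at ``$\SL^{[r]}$ invertible'' controlling the failure of $\bfS_{2}$, but you are silently conflating the double-dual $\SL^{[r]} = (\SL^{\otimes r})^{\vee\vee}$ with the pushforward $j_{*}(\SL^{\otimes r}|_{X^{\circ}})$; these agree only once $\bfS_{2}$ is already known (cf.\ Lemma~\ref{lem:QGorSch3}\eqref{lem:QGorSch3:2}), so this does not break the circle.

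The paper's proof bypasses $\SL$ entirely for part~\eqref{lem:QGorSch:2}. It works directly with the sheaf $\SM_{r} := j_{*}(\SL^{\otimes r}|_{X^{\circ}})$ that the definition of $\BQQ$-Gorenstein declares to be invertible. The point is that $\SM_{r}$ is \emph{by construction} a pushforward from $X^{\circ}$, so $\SM_{r} \to j_{*}(\SM_{r}|_{X^{\circ}})$ is tautologically an isomorphism; and $\SM_{r}|_{X^{\circ}} = \SL^{\otimes r}|_{X^{\circ}}$ is invertible on the Gorenstein (hence $\bfS_{2}$) open set $X^{\circ}$. Corollary~\ref{cor:basicS1S2} (the implication \eqref{cor:basicS1S2:condC}$\Rightarrow$\eqref{cor:basicS1S2:condA}) then gives $\SM_{r}$ satisfies $\bfS_{2}$. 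Since $\SM_{r}$ is invertible, it is locally isomorphic to $\SO_{X}$, and $\bfS_{2}$ for $\SO_{X}$ follows immediately. No ordinariness of $\SR^{\bullet}$, no Proposition~\ref{prop:DSSk}, no transfer argument is needed.
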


\begin{proof}
\eqref{lem:QGorSch:1}: Let \( X \) be a locally Noetherian scheme
admitting a dualizing complex \( \SR^{\bullet} \) such that
it is Gorenstein in codimension one and that
\( \SL := \SH^{0}(\SR^{\bullet}) \)
is invertible on the Gorenstein locus \( X^{\circ} \).
Then, \( \SL \) satisfies \( \bfS_{2} \) and
\( \SL \to j_{*}(\SL|_{X^{\circ}}) \)
is an isomorphism by Corollary~\ref{cor:CMCodOne}.
Hence, \( X \) is \( \BQQ \)-Gorenstein with
Gorenstein index one if and only if \( \SL \) is invertible,
equivalently, \( X \) is quasi-Gorenstein.

\eqref{lem:QGorSch:2}: We may assume that \( X \) admits
a dualizing complex \( \SR^{\bullet} \)
such that \( \SL = \SH^{0}(\SR^{\bullet}) \) is invertible on
the Gorenstein locus \( X^{\circ} \), since the condition \( \bfS_{2} \) is local.
Then, \( \SM_{r} := j_{*}(\SL^{\otimes r}|_{X^{\circ}}) \) is invertible
for some \( r \) by Definition~\ref{dfn:QGorSch}\eqref{dfn:QGorSch:2}.
Hence, \( \SM_{r} \) satisfies \( \bfS_{2} \) by
Corollary~\ref{cor:basicS1S2}.
Therefore, \( X \) satisfies \( \bfS_{2} \).
\end{proof}

\begin{lem}\label{lem:QGorSch3}
Let \( X \) be a locally Noetherian scheme admitting
a dualizing complex \( \SR^{\bullet} \).
For the cohomology sheaf \( \SL := \SH^{0}(\SR^{\bullet}) \)
and for an open subset \( U \) with \( \Codim(X \setminus U, X) \geq 2 \),
assume that \( \SL|_{U} \) is invertible and
\( \SR^{\bullet}|_{U} \isom_{\qis} \SL|_{U}\).
Then, the following hold\emph{:}
\begin{enumerate}
\item \label{lem:QGorSch3:1}
If \( X \) satisfies \( \bfS_{1} \), then
\( \SR^{\bullet} \) is an ordinary dualizing complex of \( X \)
and the dualizing sheaf \( \SL \) is a reflexive \( \SO_{X} \)-module
satisfying \( \bfS_{2} \).

\item \label{lem:QGorSch3:2}
If \( X \) satisfies \( \bfS_{2} \), then
the double-dual \( \SL^{[m]} \) of \( \SL^{\otimes m} \)
satisfies \( \bfS_{2} \) for any integer \( m \), and in particular,
\[\SL^{[m]} \isom j_{*}(\SL^{\otimes m}|_{U}) \]
for the open immersion \( j \colon U \injmap X \).

\item  \label{lem:QGorSch3:3}
The scheme \( X \) is \( \BQQ \)-Gorenstein if and only if
\( X \) satisfies \( \bfS_{2} \) and, locally on \( X \),
there is a positive integer \( r \) such that \( \SL^{[r]} \) is invertible.
\end{enumerate}
\end{lem}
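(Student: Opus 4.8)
The three assertions are of increasing strength, and I would prove them in order, each time reducing to the preceding results on ordinary dualizing complexes and on reflexive sheaves satisfying $\bfS_2$. Throughout, $d \colon X \to \BZZ$ denotes the codimension function associated with $\SR^{\bullet}$ (cf.\ Fact~\ref{fact:CodimFunction}), and the key input will be Corollary~\ref{cor:CMCodOne}, which is tailor-made for exactly the hypothesis we have: a dualizing complex that, on a big open subset $U$, is quasi-isomorphic to an invertible sheaf.

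\textbf{Step 1: assertion \eqref{lem:QGorSch3:1}.} Assume $X$ satisfies $\bfS_1$. The hypotheses say $\Codim(X \setminus U, X) \geq 2 \geq 1$ and $\SR^{\bullet}|_U \isom_{\qis} \SL|_U$, so Corollary~\ref{cor:CMCodOne} applies directly and gives that $\SR^{\bullet}$ is ordinary and $\SL = \SH^0(\SR^{\bullet})$ satisfies $\bfS_2$; moreover since $\Codim(X \setminus U, X) \geq 2$, that same corollary yields $\SL \isom j_*(\SL|_U)$ for the open immersion $j \colon U \injmap X$. It remains to see $\SL$ is reflexive. By Lemma~\ref{lem:DC-CM:ordinary} (applied to the ordinary dualizing complex $\SR^{\bullet}$), $\Supp \SL = X$; on $U$ the sheaf $\SL|_U$ is invertible, hence reflexive; $\SL$ satisfies $\bfS_2$; and $\Codim(X \setminus U, X) \geq 2$ forces $\Codim((X \setminus U) \cap \Supp \SL, \Supp \SL) \geq 2$ since $\Supp \SL = X$. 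Then Corollary~\ref{cor:prop:S1S2:reflexive}, with $Z = X \setminus U$, gives reflexivity of $\SL$ (the implication \eqref{cor:prop:S1S2:reflexive:cond1} $\Rightarrow$ \eqref{cor:prop:S1S2:reflexive:cond3} via \eqref{cor:prop:S1S2:reflexive:cond2}, using $X$ satisfies $\bfS_1$ so $\depth_Z \SO_X \geq 1$). This finishes \eqref{lem:QGorSch3:1}.

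\textbf{Step 2: assertion \eqref{lem:QGorSch3:2}.} Now assume $X$ satisfies $\bfS_2$; in particular $\bfS_1$ holds and Step 1 applies. The sheaf $\SL^{[m]} = (\SL^{\otimes m})^{\vee\vee}$ is reflexive by construction (its double-dual is itself, $c_{\SF^{\vee}}$ being always an isomorphism — cf.\ Definition~\ref{dfn:reflexive}), and it agrees with $\SL^{\otimes m}|_U$ on $U$ up to the canonical map, which is an isomorphism there because $\SL|_U$ is invertible; thus $\SL^{[m]}|_U$ is invertible, hence reflexive. Since $X$ satisfies $\bfS_2$, Lemma~\ref{lem:j*reflexive}\eqref{lem:j*reflexive:1a} gives that every reflexive $\SO_X$-module, in particular $\SL^{[m]}$, satisfies $\bfS_2$. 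For the displayed isomorphism: $\SL^{[m]}$ is reflexive with $\SL^{[m]}|_U \isom \SL^{\otimes m}|_U$ invertible and $\Codim(X \setminus U, X) \geq 2$; applying Corollary~\ref{cor:prop:S1S2:reflexive} (with $Z = X \setminus U$) in the last case ``$X$ satisfies $\bfS_2$ and $\Codim(Z, X) \geq 2$'', all three conditions there are equivalent, so $\depth_Z \SL^{[m]} \geq 2$, which by Property~\ref{ppty:depth<=2} means $\SL^{[m]} \isom j_*(\SL^{[m]}|_U) \isom j_*(\SL^{\otimes m}|_U)$. Alternatively one can cite Corollary~\ref{cor:basicS1S2} since $\SL^{[m]}$ satisfies $\bfS_2$ and $\Codim(Z \cap \Supp \SL^{[m]}, \Supp \SL^{[m]}) \geq 2$ (using $\Supp \SL^{[m]} = X$, which follows because $X = \Supp \SL = \Supp \SL^{[m]}$ on the dense open $U$ and $\SL^{[m]}$ has no embedded points).

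\textbf{Step 3: assertion \eqref{lem:QGorSch3:3}.} For the ``only if'' direction: if $X$ is $\BQQ$-Gorenstein, then it satisfies $\bfS_2$ by Lemma~\ref{lem:QGorSch}\eqref{lem:QGorSch:2}, and by Definition~\ref{dfn:QGorSch}\eqref{dfn:QGorSch:2} applied locally, after shrinking to an open neighborhood $U'$ of a given point there is $r > 0$ with $j'_*(\SL^{\otimes r}|_{U'^{\circ}})$ invertible, where $U'^{\circ} = U' \cap \Gor(X)$; since $\Codim(X \setminus \Gor(X), X) \geq 2$ (Gorenstein in codimension one), and since $U'^{\circ} \subset U' \cap U$ up to further shrinking — more precisely on $U'^{\circ}$ both $\SL$ and its multiple behave as on $U$ — Step 2's description $\SL^{[r]} \isom j_*(\SL^{\otimes r}|_U)$ identifies this pushforward with $\SL^{[r]}$ locally, giving $\SL^{[r]}$ invertible near the point. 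For the ``if'' direction: $X$ satisfies $\bfS_2$, so by Step 1, $\SR^{\bullet}$ is ordinary, $\SL$ is reflexive satisfying $\bfS_2$, and $\SL|_U$ is invertible on $U = \Gor(X)$ up to enlarging — note that since $\SL|_U$ is invertible and $\SR^{\bullet}|_U \isom_{\qis} \SL|_U$, the open set $U$ is contained in $\Gor(X)$, and by Proposition~\ref{prop:DSSk} / the $\bfS_2$-ification remark the Gorenstein locus is precisely where $\SL$ is invertible, so we may take $U = \Gor(X)$, whence $\Codim(X \setminus \Gor(X), X) \geq 2$; then if locally $\SL^{[r]}$ is invertible for some $r > 0$, Step 2 gives $\SL^{[r]} \isom j_*(\SL^{\otimes r}|_U)$, which is exactly the condition in Definition~\ref{dfn:QGorSch}\eqref{dfn:QGorSch:2}, so $X$ is $\BQQ$-Gorenstein.

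\textbf{Main obstacle.} The routine parts are the invocations of Corollaries~\ref{cor:CMCodOne}, \ref{cor:prop:S1S2:reflexive}, and \ref{cor:basicS1S2}; the only genuinely delicate point is the bookkeeping in Step 3 around the identification of the ``big open subset $U$'' of the hypothesis with the Gorenstein locus $\Gor(X)$, and the compatibility of the two different pushforward descriptions (the one in Definition~\ref{dfn:QGorSch} over $U^{\circ} = \Gor(X)$ versus the one over $U$). I would handle this by first observing — using that $\SR^{\bullet}|_U \isom_{\qis} \SL|_U$ with $\SL|_U$ invertible implies $U \subset \Gor(X)$, and that $\Codim(X \setminus \Gor(X), X) \geq 2$ by hypothesis — that $\Gor(X)$ itself satisfies all the requirements imposed on $U$ in the lemma's statement, so without loss of generality $U = \Gor(X)$; after that normalization every occurrence of $U^{\circ}$ and $j$ in Definition~\ref{dfn:QGorSch}\eqref{dfn:QGorSch:2} matches the $U$ and $j$ of the lemma verbatim, and the equivalence is immediate from Step 2.
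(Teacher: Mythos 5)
Your Steps 1 and 2 are, in substance, the paper's own proof: part \eqref{lem:QGorSch3:1} is Corollary~\ref{cor:CMCodOne} plus the depth estimates of Lemma~\ref{lem:basicSk}/Lemma~\ref{lem:depth+codim+Sk} plus Lemma~\ref{lem:j*reflexive}\eqref{lem:j*reflexive:2} (your route through Corollary~\ref{cor:prop:S1S2:reflexive} is just a repackaging of these), and part \eqref{lem:QGorSch3:2} is exactly the argument via $\depth_{X\setminus U}\SO_X\geq 2$, reflexivity of $\SL^{[m]}$, and Lemma~\ref{lem:j*reflexive}/Corollary~\ref{cor:basicS1S2}. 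For part \eqref{lem:QGorSch3:3} the paper is deliberately terse (``consequence of (2), Lemma~\ref{lem:QGorSch}\eqref{lem:QGorSch:2}, and the uniqueness of the dualizing complex''), and your elaboration -- normalize $U=\Gor(X)$, then match Definition~\ref{dfn:QGorSch}\eqref{dfn:QGorSch:2} verbatim using $\SL^{[r]}\isom j_*(\SL^{\otimes r}|_U)$ -- is the intended argument and does work.

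Three blemishes in Step 3 should be repaired. First, your justification for the normalization, ``by Proposition~\ref{prop:DSSk} the Gorenstein locus is precisely where $\SL$ is invertible,'' is both mis-cited and false as stated: Proposition~\ref{prop:DSSk} only gives $\bfS_2$ of $\SL$ and $\SHom_{\SO_X}(\SL,\SL)\isom\SO_X$, and invertibility of $\SH^0(\SR^\bullet)$ at a point does not force that point to be Gorenstein (higher cohomology of $\SR^\bullet$ may survive there). What you actually need is only one direction: since $\SR^\bullet$ is ordinary (Corollary~\ref{cor:CMCodOne}, which needs no $\bfS_k$ hypothesis), at any $x\in\Gor(X)$ the stalk $\SR^\bullet_x$ has cohomology concentrated in degree $0$ (Lemma~\ref{lem:DC-CM:ordinary}\eqref{lem:DC-CM:ordinary:4}), and by uniqueness of dualizing complexes over the Gorenstein local ring $\SO_{X,x}$ the shift is pinned to zero, so $\SL_x\isom\SO_{X,x}$; hence $\SL$ is invertible on all of $\Gor(X)$ and $\Gor(X)$ may replace $U$. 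Second, in the ``only if'' direction you quietly take the dualizing complex occurring in Definition~\ref{dfn:QGorSch}\eqref{dfn:QGorSch:2} to be $\SR^\bullet$ itself; this is legitimate, but it rests on the essential uniqueness of dualizing complexes (Remark~\ref{rem:ExistDC}, i.e.\ the remark following Definition~\ref{dfn:QGorSch}), which is precisely the ingredient the paper's one-line proof cites and which you should invoke explicitly, since twisting $\SL$ by an invertible sheaf changes $j_*(\SL^{\otimes r}|_{U^\circ})$ only by an invertible factor. Third, the inclusion ``$U'^{\circ}\subset U'\cap U$'' is backwards -- one has $U\cap U'\subset U'^{\circ}$ -- although after the normalization $U=\Gor(X)$ this point is moot. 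None of these affects the overall structure, which is correct and coincides with the paper's.
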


\begin{proof} \eqref{lem:QGorSch3:1}: This follows from Corollary~\ref{cor:CMCodOne}
with Lemmas \ref{lem:basicSk} and
\ref{lem:j*reflexive}\eqref{lem:j*reflexive:2}.

\eqref{lem:QGorSch3:2}:
Since \( \depth_{X \setminus U} \SO_{X} \geq 2 \) by the \( \bfS_{2} \)-condition,
we have the isomorphism
\( \SL^{[m]} \isom j_{*}(\SL^{\otimes m}|_{U}) \)
by Lemma~\ref{lem:j*reflexive}\eqref{lem:j*reflexive:1}.
Hence, \( \SL^{[m]} \) satisfies \( \bfS_{2} \) by
Corollary~\ref{cor:basicS1S2}, since \( \SL|_{U} \) is invertible.

\eqref{lem:QGorSch3:3}:
This is a consequence of \eqref{lem:QGorSch3:2} above and Lemma~\ref{lem:QGorSch}\eqref{lem:QGorSch:2}
by the uniqueness of dualizing complex explained in Remark~\ref{rem:ExistDC}.
\end{proof}

\begin{exam}
Let \( X \) be a \( \Bbbk \)-scheme locally of finite type
for a field \( \Bbbk \).
Assume that \( X \) satisfies \( \bfS_{2} \) and
\( \Codim(X \setminus X^{\circ}, X) \geq 2 \)
for the Gorenstein locus \( X^{\circ} = \Gor(X) \).
Let \( \omega_{X/\Bbbk} \) be the canonical sheaf defined in
Definition~\ref{dfn:canosheaf2} and let
\( \omega_{X/\Bbbk}^{[m]} \) denote
the double-dual of \( \omega_{X/\Bbbk}^{\otimes m} \)
for any \( m \in \BZZ \) (cf.\ Proposition~\ref{prop:BCGor}).
Then, \( X \) is \( \BQQ \)-Gorenstein at a point \( x \) if and only if
\( \omega^{[r]}_{X/\Bbbk} \) is invertible at \( x \) for some \( r > 0 \).
\end{exam}

\begin{exam}\label{exam:dfn:QGorScheme:3}
Let \( X \) be a normal algebraic \( \Bbbk \)-variety for a field \( \Bbbk \),
i.e., a normal integral separated scheme of finite type over \( \Bbbk \).
Then, \( X \) is \( \BQQ \)-Gorenstein if and only if
the multiple \( rK_{X} \) of the canonical divisor \( K_{X} \)
is Cartier for some \( r > 0 \).
In fact, \( X \) satisfies \( \bfS_{2} \),
\( \omega_{X^{\circ}/\Bbbk} \isom \SO_{X^{\circ}}(K_{X}) \) for
the Gorenstein locus \( X^{\circ} = \Gor(X) \), where \( \Codim(X \setminus X^{\circ}, X) \geq 2 \),
and hence
\( \omega^{[m]}_{X/\Bbbk} \isom \SO_{X}(mK_{X}) \) for any \( m \in \BZZ \).
\end{exam}

\begin{lem}\label{lem:QGorScheme:smooth}
Let \( X \) be a locally Noetherian scheme and let \( \pi \colon Y \to X \) be
a smooth surjective morphism.
Then, for any integer \( k \ge 1 \),
\( Y \) satisfies \( \bfS_{k} \) if and only if \( X \) satisfies \( \bfS_{k} \).
In particular, \( Y \) is Cohen--Macaulay if and only if \( X \) is so.
Moreover, \( Y \) is Gorenstein if and only if \( X  \) is so.
Assume that \( X \) admits a dualizing complex locally on \( X \).
Then, \( Y \) is quasi-Gorenstein \emph{(}resp.\ \( \BQQ \)-Gorenstein of index \( r \)\emph{)}
if and only if \( X \) is so.
\end{lem}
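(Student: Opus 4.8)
The plan is to descend properties along the smooth surjective morphism $\pi\colon Y\to X$ one by one, working locally so that we may invoke the base-change and glueing results collected in Section~\ref{sect:GD}. First I would reduce to the affine case: all the conditions in the statement are local on $X$ (and the corresponding conditions on $Y$ are local on $Y$, then pulled back), so I may assume $X=\Spec A$ for a Noetherian ring $A$ admitting a dualizing complex, and that $Y$ is an affine smooth $X$-scheme of pure relative dimension $e$. The $\bfS_k$-equivalence is then exactly Fact~\ref{fact:elem-flat}\eqref{fact:elem-flat:6} applied to $f=\pi$ and $\SG=\SO_X$: since $\pi$ is flat with regular (hence Cohen--Macaulay, $\bfS_k$ for all $k$) fibers, $\pi^*\SO_X=\SO_Y$ satisfies $\bfS_k$ at $y$ iff $\SO_X$ satisfies $\bfS_k$ at $\pi(y)$; surjectivity of $\pi$ gives the global equivalence, and letting $k\to\infty$ gives the Cohen--Macaulay statement (cf.\ Remark~\ref{rem:dfn:CM}). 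The Gorenstein equivalence is Fact~\ref{fact:GorYTF}.

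Next I would treat the dualizing complex. Since $\pi$ is smooth and separated of pure relative dimension $e$, by Example~\ref{exam:RDembeddable} (or Fact~\ref{fact:DeligneVerdierLipman}\eqref{fact:DeligneVerdierLipman:smooth}) we have $\pi^!(\SR^\bullet_X)\isom_{\qis}\varOmega^e_{Y/X}[e]\otimes^{\bfL}_{\SO_Y}\bfL\pi^*(\SR^\bullet_X)$, and since $\pi$ is flat with $\varOmega^e_{Y/X}$ invertible, this is an \emph{honest} complex with $\SH^i(\pi^!\SR^\bullet_X)\isom\varOmega^e_{Y/X}\otimes_{\SO_Y}\pi^*\SH^{i+e}(\SR^\bullet_X)$, in particular $\SR^\bullet_Y:=\pi^!\SR^\bullet_X$ is a dualizing complex of $Y$ (Example~\ref{exam:RDConrad}) with
\[ \SH^0(\SR^\bullet_Y)\isom \varOmega^e_{Y/X}\otimes_{\SO_Y}\pi^*\SH^{0}(\SR^\bullet_X), \]
using Lemma~\ref{lem:codimfunctionDiff} to see that the shift is the correct one, i.e.\ the codimension functions match up after the transcendence-degree correction, so that $\SH^j(\SR^\bullet_X)=0$ for $j<0$ forces $\SH^j(\SR^\bullet_Y)=0$ for $j<0$. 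Writing $\SL_X=\SH^0(\SR^\bullet_X)$ and $\SL_Y=\SH^0(\SR^\bullet_Y)$, we get $\SL_Y\isom\varOmega^e_{Y/X}\otimes_{\SO_Y}\pi^*\SL_X$, and since $\varOmega^e_{Y/X}$ is invertible, $\SL_Y$ is invertible at $y$ iff $\pi^*\SL_X$ is invertible at $y$ iff $\SL_X$ is invertible at $\pi(y)$ (using faithful flatness of $\pi$ and Fact~\ref{fact:elem-flat}\eqref{fact:elem-flat:2} for the ``only if''). Since $\pi$ is surjective, this gives the quasi-Gorenstein equivalence once one knows that the relevant open subsets (the loci where these sheaves are invertible) are the complements of codimension-$\geq 2$ subsets on each side; that follows from Corollary~\ref{cor:fact:elem-flat} (codimension is preserved by the flat morphism $\pi$) together with the openness of the Gorenstein locus (Proposition~\ref{prop:CMlocus}), so that ``Gorenstein in codimension one'' passes between $X$ and $Y$.

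For the $\BQQ$-Gorenstein equivalence of index $r$, I would combine the above with the commutation of $\pi^*$ with double duals and with the pushforward $j_*$ along the open immersion from the Gorenstein locus. Concretely: let $X^\circ=\Gor(X)$, $Y^\circ=\Gor(Y)=\pi^{-1}(X^\circ)$ (this last equality is the smooth-morphism case of $\Gor(Y/T)$-type statements, or just Fact~\ref{fact:GorYTF} fiberwise), with open immersions $j\colon X^\circ\injmap X$ and $j'\colon Y^\circ\injmap Y$. Then $\Codim(X\setminus X^\circ,X)\geq 2$ iff $\Codim(Y\setminus Y^\circ,Y)\geq 2$ by Corollary~\ref{cor:fact:elem-flat}. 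On $X^\circ$, $\SL_X^{\otimes r}$ restricts to an invertible sheaf, and $\SL_Y^{\otimes r}|_{Y^\circ}\isom(\varOmega^e_{Y/X})^{\otimes r}\otimes_{\SO_{Y^\circ}}\pi^*(\SL_X^{\otimes r}|_{X^\circ})$; since $X$ (equivalently $Y$) satisfies $\bfS_2$ by the first part, the flat base change isomorphism (Lemma~\ref{lem:flatbc}) applied to the Cartesian square relating $j$, $j'$ and the two copies of $\pi$ gives
\[ j'_*\bigl(\SL_Y^{\otimes r}|_{Y^\circ}\bigr)\isom (\varOmega^e_{Y/X})^{\otimes r}\otimes_{\SO_Y}\pi^*\, j_*\bigl(\SL_X^{\otimes r}|_{X^\circ}\bigr), \]
where I use that $\pi^*$ commutes with $j_*$ here because $\pi$ is flat (so there is no higher-direct-image obstruction on the relevant terms), and that $\varOmega^e_{Y/X}$ is invertible so it can be pulled out of the pushforward. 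Hence $j'_*(\SL_Y^{\otimes r}|_{Y^\circ})$ is invertible at $y$ iff $j_*(\SL_X^{\otimes r}|_{X^\circ})$ is invertible at $\pi(y)$, by faithful flatness of $\pi$ and Fact~\ref{fact:elem-flat}\eqref{fact:elem-flat:2}; surjectivity of $\pi$ then yields the equivalence of being $\BQQ$-Gorenstein of index $r$. The main obstacle I anticipate is bookkeeping in the last step: checking carefully that $\pi^*$ commutes with $j_*$ on the sheaves in question (this is where one needs $\pi$ flat and the source/target to satisfy $\bfS_1$, via Property~\ref{ppty:depth<=2} / Remark~\ref{rem:depth:local isom}, rather than a general base-change theorem), and verifying that the shift in $\SR^\bullet_Y=\pi^!\SR^\bullet_X$ really does keep $\SH^0$ as the $0$-th cohomology — i.e.\ that $\pi^!$ sends ordinary dualizing complexes to ordinary dualizing complexes after smooth base change — which is exactly the content of Lemma~\ref{lem:codimfunctionDiff} and needs to be invoked at the right moment.
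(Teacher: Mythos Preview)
Your approach is essentially the paper's, and the $\bfS_k$/CM/Gorenstein parts are correct as written. There is, however, a genuine bookkeeping slip in the dualizing-complex step: with $\SR^\bullet_Y:=\pi^!\SR^\bullet_X$ and $\SR^\bullet_X$ ordinary, your own formula $\SH^i(\SR^\bullet_Y)\isom\varOmega^e_{Y/X}\otimes\pi^*\SH^{i+e}(\SR^\bullet_X)$ shows that $\SH^{-e}(\SR^\bullet_Y)\isom\varOmega^e_{Y/X}\otimes\pi^*\SL_X\ne 0$, so $\SR^\bullet_Y$ is \emph{not} ordinary and your displayed identity $\SH^0(\SR^\bullet_Y)\isom\varOmega^e_{Y/X}\otimes\pi^*\SH^0(\SR^\bullet_X)$ is off by the shift $[-e]$. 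The correct statement (and what the paper proves, using Lemma~\ref{lem:ordinaryDC} rather than Lemma~\ref{lem:codimfunctionDiff}) is that $\SR^\bullet_Y[-e]$ is ordinary with $\SL_Y:=\SH^0(\SR^\bullet_Y[-e])\isom\varOmega^e_{Y/X}\otimes\pi^*\SL_X$. You yourself flag exactly this as ``the main obstacle,'' but the resolution is the opposite of what you hoped: $\pi^!$ does not preserve ordinariness, you must shift.

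Two further simplifications in the paper's argument are worth noting. First, after localizing the paper factors $\pi=p\circ\lambda$ with $\lambda$ \'etale and $p\colon X\times\BAA^d\to X$ the projection (cf.\ \cite[IV, Cor.~(17.11.4)]{EGA}); this makes $\omega_{Y/X}\isom\SO_Y$, so the $\varOmega^e_{Y/X}$ twist disappears and $\SL_Y\isom\pi^*\SL_X$ on the nose. Second, instead of your flat-base-change computation of $j'_*(\SL_Y^{\otimes r}|_{Y^\circ})$, the paper simply invokes Remark~\ref{rem:dfn:reflexive} (flat pullback commutes with duals) to get $\SL_Y^{[m]}\isom\pi^*(\SL_X^{[m]})$ directly, and then concludes via Lemmas~\ref{lem:LocFreeDecsent} and~\ref{lem:QGorSch3}\eqref{lem:QGorSch3:3}. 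Your $j_*$ route works too, but is longer.
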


\begin{proof}
The first assertion follows from Fact~\ref{fact:elem-flat}\eqref{fact:elem-flat:6}.
In particular, we have
the equivalence for the Cohen--Macaulay property (cf.\ Remark~\ref{rem:dfn:CM}).
The Gorenstein case follows from Fact~\ref{fact:GorYTF}.
It remains to prove the case of \( \BQQ \)-Gorenstein property, since ``quasi-Gorenstein'' is nothing but
``\( \BQQ \)-Gorenstein of index one'' (cf.\ Lemma~\ref{lem:QGorSch}\eqref{lem:QGorSch:1}).
Since the \( \BQQ \)-Gorenstein property is local and it implies \( \bfS_{2} \),
we may assume that
\begin{itemize}
\item \( X \) has a dualizing complex \( \SR_{X}^{\bullet} \),
\item  \( X \) and \( Y \) are affine schemes satisfying \( \bfS_{2} \), and
\item \( \pi = p \circ \lambda \) for an \'etale morphism \( \lambda \colon Y \to X \times \BAA^{d} \)
and the first projection \( p \colon X \times \BAA^{d} \to X \)
for the ``\( d \)-dimensional affine space''
\( \BAA^{d} = \Spec \BZZ[\xtt_{1}, \ldots, \xtt_{d}] \) for some integer \( d \geq 0 \)
(cf.\ \cite[IV, Cor.~(17.11.4)]{EGA}).
\end{itemize}
In particular, \( \pi \) has pure relative dimension \( d \).
We may assume also that \( \SR^{\bullet}_{X} \) is
an ordinary dualizing complex by Lemma~\ref{lem:ordinaryDC}.
We set \( \SL_{X} \) to be the dualizing sheaf \( \SH^{0}(\SR^{\bullet}_{X}) \).

By Examples~\ref{exam:RDembeddable} and \ref{exam:RDConrad}, we see that
\( \SR^{\bullet}_{Y} := \pi^{!}(\SR_{X}^{\bullet})\)
is a dualizing complex of \( Y \), and we have an isomorphism
\[ \omega_{Y/X} \isom  \varOmega_{Y/X}^{d}
\isom \lambda^{*} (\omega_{X \times \BAA^{d}/X}) \isom \SO_{Y}\]
for the relative dualizing sheaf \( \omega_{Y/X} \).
Thus, \( \pi^{!}(\SO_{X}) \isom_{\qis} \SO_{Y}[d] \), and
\[ \SR^{\bullet}_{Y}
\isom_{\qis} \pi^{!}(\SO_{X}) \otimes^{\bfL}_{\SO_{Y}} \bfL \pi^{*}(\SR^{\bullet}_{X})
\isom_{\qis} \bfL \pi^{*}(\SR^{\bullet}_{X})[d]\]
(cf.\ Example~\ref{exam:RDembeddable},
Fact~\ref{fact:DeligneVerdierLipman}\eqref{fact:DeligneVerdierLipman:smooth}).
Since \( Y \) satisfies \( \bfS_{2} \), the shift
\( \SR_{Y}^{\bullet}[-d] \) is an ordinary dualizing complex on \( Y \) by the proof of
Lemma~\ref{lem:ordinaryDC}.
Here, the associated dualizing sheaf \( \SL_{Y} := \SH^{0}(\SR^{\bullet}_{Y}[-d]) \)
is isomorphic to \( \pi^{*}(\SL_{X}) \).
Since \( \pi \) is faithfully flat, we see that
\( \SL_{Y} \) is invertible if and only if \( \SL_{X} \)
is so (cf.\ Lemma~\ref{lem:LocFreeDecsent}).
For an integer \( m \),
let \( \SL_{X}^{[m]} \) (resp.\ \( \SL_{Y}^{[m]} \)) be the double-dual of
\( \SL_{X}^{\otimes m} \) (resp.\ \( \SL_{Y}^{\otimes m} \)).
Then, \( \SL^{[m]}_{Y} \isom \pi^{*}(\SL^{[m]}_{X})  \)
for any \( m \in \BZZ \) by Remark~\ref{rem:dfn:reflexive}.
Hence, for a given integer \( r \), \( \SL_{Y}^{[r]} \) is invertible
if and only if \( \SL_{X}^{[r]} \) is invertible
by the same argument as above.
Therefore, by Lemma~\ref{lem:QGorSch3}\eqref{lem:QGorSch3:3},
\( Y \) is \( \BQQ \)-Gorenstein of index \( r \) if and only if \( X \) is so.
Thus, we are done.
\end{proof}

\begin{rem}\label{rem:QGorScheme:etale}
By Lemma~\ref{lem:QGorScheme:smooth},
we see that the \( \BQQ \)-Gorenstein property is local even in the \'etale topology.
More precisely, for an \'etale morphism \( X' \to X \), for a point \( P \in X \), and for
a point \( P' \in X' \) lying over \( P \),
\( X \) is \( \BQQ \)-Gorenstein of index \( r \) at \( P \)
if and only if \( X' \) is so at \( P' \).
\end{rem}

%%%%%%%%%%%%%%%%%%%%%%%%%%%%%%%%%%%%%%%%%%%

\subsection{Affine cones of polarized projective schemes over a field}
\label{subsect:cone}

For an affine cone over a projective scheme over a field \( \Bbbk \),
we shall determine
when it is Cohen--Macaulay, Gorenstein, \( \BQQ \)-Gorenstein, etc.,
under suitable conditions.
We fix a field \( \Bbbk \) which is not necessarily algebraically closed.

\begin{dfn}[affine cone]\label{dfn:affcone}
A \emph{polarized projective scheme}
over \( \Bbbk \) is a pair
\( (S, \SA) \) consisting of a projective scheme \( S \) over \( \Bbbk \) and
an ample invertible sheaf \( \SA \) on \( S \).
The polarized projective scheme \( (S, \SA) \) is said to be \emph{connected}
if \( S \) is connected.
For a connected polarized projective scheme \( (S, \SA) \),
the \emph{affine cone}
of \( (S, \SA) \) defined to be \( \Spec R \) for the graded \( \Bbbk \)-algebra
\[ R = R(S, \SA) := \bigoplus\nolimits_{m \geq 0} \OH^{0}(S, \SA^{\otimes m}).
\]
We denote the affine cone by \( \Cone(S, \SA) \).
Note that  the closed subscheme of \( \Cone(S, \SA) = \Spec R \) defined by the ideal
\[ R_{+} = \bigoplus\nolimits_{m > 0} \OH^{0}(S, \SA^{\otimes m})  \]
of \( R \) is isomorphic to \( \Spec \OH^{0}(S, \SO_{S}) \), and
the support of the closed subscheme is a point, since the finite-dimensional \( \Bbbk \)-algebra
\( \OH^{0}(S, \SO_{S}) \) is an Artinian local ring by the connectedness of \( S \).
The point is called the \emph{vertex} of \( \Cone(S, \SA) \).
\end{dfn}

\begin{remn}
The \( \Bbbk \)-algebra \( R(S, \SA) \) above is finitely generated, since \( S \) is projective
and \( \SA \) is ample. Moreover, \( S \isom \Proj R(S, \SA) \).
In some articles, the affine cone of \( (S, \SA) \) is defined to be
\( \Spec R' \) for the graded subring \( R' \) of \( R \) such that
\( R'_{n} = R_{n} \) for any \( n > 0 \) and \( R'_{0} = \Bbbk \).
\end{remn}

Similar results to the following are well-known
on the structure of affine cones (cf.\ \cite[II, Prop.~(8.6.2), (8.8.2)]{EGA}).

\begin{lem}\label{lem:cone1}
For a connected polarized projective scheme \( (S, \SA) \) over \( \Bbbk \),
let \( X \) be the affine cone \( \Cone(S, \SA) \).
Let \( \pi \colon Y \to S \)
be the geometric line bundle associated with \( \SA \),
i.e., \( Y = \BVV(\SA) = \SSpec_{S} \SR\), where
\( \SR = \bigoplus\nolimits_{m \geq 0} \SA^{\otimes m}  \).
Let \( E \) be the zero-section of \( \pi \) corresponding to the projection
\( \SR \to \SO_{S} \)
to the component of degree zero. Then,
\( E \) is a \emph{relative Cartier divisor over} \( S \)
\emph{(}cf.\ \cite[IV, D\'ef.~(21.15.2)]{EGA}\emph{)} with
an isomorphism
\( \SO_{Y}(-E) \isom \pi^{*}\SA \). Moreover,
there exists a projective \( \Bbbk \)-morphism \( \mu \colon Y \to X \) such that
\begin{enumerate}
\item \label{lem:cone1:1} \( \SO_{X} \to \mu_{*}\SO_{Y} \) is an isomorphism,
\item \label{lem:cone1:2} \( \pi^{*}\SA \) is \( \mu \)-ample,
\item \label{lem:cone1:3} \( \mu^{-1}(P) = E \) as a closed subset of \( Y \) for the vertex \( P \)
of \( X \), and
\item \label{lem:cone1:4}
\( \mu \) induces an isomorphism \( Y \setminus E \isom X \setminus P \).
\end{enumerate}
\end{lem}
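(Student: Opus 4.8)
\textbf{Proof plan for Lemma~\ref{lem:cone1}.}
The plan is to construct everything from the graded ring and the geometric line bundle by a direct computation, following the standard pattern for affine cones in \cite[II]{EGA}. First I would set up the sheaf of graded algebras $\SR = \bigoplus_{m \geq 0} \SA^{\otimes m}$ on $S$ and observe that $Y = \SSpec_{S}\SR$ is the total space of the line bundle $\BVV(\SA)$; the zero-section $E$ is the closed subscheme cut out by the ideal sheaf $\SR_{+} = \bigoplus_{m > 0} \SA^{\otimes m}$. Since $\SR_{+}/\SR_{+}^{2} \isom \SA$ as $\SO_{S}$-modules and $\SR_{+}$ is locally generated by one non-zero-divisor (a local trivialization of $\SA$ gives $\SR|_{U} \isom \SO_{S}(U)[\ttt]$ with $E$ defined by $\ttt$), the section $E$ is a relative Cartier divisor over $S$ with conormal sheaf $\SA$, hence $\SO_{Y}(-E) \isom \pi^{*}\SA$. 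This part is routine local algebra.

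Next I would build $\mu \colon Y \to X = \Spec R$. The global sections functor applied to $\SR$ gives a graded $R$-algebra structure; more precisely, $\OH^{0}(S, \SR^{\otimes?})$— rather, the natural map of graded rings $R = \bigoplus_{m}\OH^{0}(S,\SA^{\otimes m}) \to \bigoplus_{m}\OH^{0}(Y, \SO_{Y}(mE)^{\vee})$ together with the affineness of $\pi$ lets one define a $\Bbbk$-morphism $Y \to \Spec R$: concretely, $\mu$ is induced by the ring homomorphism $R \to \OH^{0}(Y, \SO_{Y})$ sending a degree-$m$ section $s \in \OH^{0}(S, \SA^{\otimes m})$ to the corresponding fiberwise-linear function on $Y = \BVV(\SA)$. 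Projectivity of $\mu$ follows because $Y$ is projective over $S$ which is projective over $\Bbbk$, and $X$ is affine of finite type; $\pi^{*}\SA \isom \SO_{Y}(-E)$ is $\mu$-ample since its restriction to each fiber $\mu^{-1}(x)$ is ample (on $Y \setminus E \isom X \setminus P$ the map is an isomorphism so this is automatic, and one checks amplitude along $E$ directly from $\SO_{Y}(-E)|_{E} \isom \SA$). For \eqref{lem:cone1:1}, the isomorphism $\SO_{X} \isom \mu_{*}\SO_{Y}$ amounts to the computation
\[
\OH^{0}(Y, \SO_{Y}) \isom \bigoplus_{m \geq 0} \OH^{0}(S, \SA^{\otimes m}) = R,
\]
which holds because $\pi$ is affine with $\pi_{*}\SO_{Y} = \SR$ and $S$ is projective, so $\OH^{0}(Y,\SO_Y) = \OH^0(S, \pi_*\SO_Y) = \bigoplus_m \OH^0(S, \SA^{\otimes m})$.

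For \eqref{lem:cone1:3} and \eqref{lem:cone1:4}: the open complement $Y \setminus E = \SSpec_{S}(\SR[\SR_{+}^{-1}])$ is the total space of $\SA$ minus its zero section, which carries a free $\Gm$-action with quotient $S$; the induced map to $X \setminus P$ is identified with the map $\SSpec_S(\bigoplus_{m \in \BZZ}\SA^{\otimes m}) \to \Spec R \setminus V(R_+)$, and a local computation (trivializing $\SA$ and inverting the coordinate) shows this is an isomorphism onto $X \setminus P$, where $P = V(R_{+})$ is the vertex. Then $\mu^{-1}(P)$ is the closed subset where all positive-degree sections vanish, which is exactly $E$ by construction. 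The main obstacle I anticipate is verifying $\mu$-ampleness of $\pi^{*}\SA$ cleanly along $E$ and pinning down that $\mu^{-1}(P) = E$ \emph{as a set} rather than as a scheme (the statement only asks for the closed subset), which requires being careful that no positive-degree section of $\SA^{\otimes m}$ is nowhere-vanishing on $S$ — this uses the connectedness of $S$, so that $\OH^{0}(S,\SO_{S})$ is Artinian local and the irrelevant ideal $R_{+}$ really does define a single point. Everything else is bookkeeping with the $\BZZ_{\geq 0}$-grading and the standard $\SSpec$/$\Proj$ formalism, so no deep input beyond \cite[II, \S8]{EGA} is needed.
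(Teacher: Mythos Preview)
Your argument has a genuine gap in establishing that $\mu$ is projective. You write that ``$Y$ is projective over $S$'', but $Y = \BVV(\SA) = \SSpec_{S}\SR$ is the total space of a line bundle, hence \emph{affine} over $S$, not projective. Consequently $Y$ is not proper over $\Bbbk$, and you cannot conclude projectivity of $\mu$ this way. Your argument for $\mu$-ampleness of $\pi^{*}\SA$ via fiberwise ampleness also presupposes properness of $\mu$, so it is circular at this point.

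The paper repairs both issues simultaneously by factoring $\mu$ through $X \times_{\Bbbk} S$. The graded homomorphism $\Phi \colon R \otimes_{\Bbbk} \SO_{S} \to \SR$ has cokernel of finite type over $\SO_{S}$ because $\SA^{\otimes m}$ is globally generated for $m \gg 0$; hence $\SR$ is a finite $R \otimes_{\Bbbk} \SO_{S}$-module, and the induced morphism $\nu \colon Y \to X \times_{\Bbbk} S$ is \emph{finite}. Then $\mu = p_{1} \circ \nu$ is projective (composition of finite with projective), and $\pi^{*}\SA = \nu^{*}p_{2}^{*}\SA$ is $\mu$-ample because $p_{2}^{*}\SA$ is $p_{1}$-ample and $\nu$ is finite. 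Once you have projectivity and $\mu$-ampleness, the paper's route to \eqref{lem:cone1:4} is also cleaner than a direct local computation: since $\mu(E)$ is supported at $P$, the restriction $\mu' \colon Y \setminus \mu^{-1}(P) \to X \setminus P$ is proper with $\SO_{Y'}$ relatively ample (as $\SO_{Y}(-E)$ trivializes there), hence finite, hence an isomorphism by \eqref{lem:cone1:1}. Your treatment of $E$, of $\SO_{Y}(-E) \isom \pi^{*}\SA$, of \eqref{lem:cone1:1}, and of \eqref{lem:cone1:3} is essentially the same as the paper's.
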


\begin{proof}
For an open subset \( U = \Spec B\) of \( S \)
with an isomorphism \( \ep \colon \SA|_{U} \isom \SO_{U} \),
we have an isomorphism \( \varphi \colon \pi^{-1}(U) \isom \Spec B[\ttt] \)
for the polynomial \( B \)-algebra \( B[\ttt] \) of one variable
such that \( \varphi \) induces an isomorphism
\[ \OH^{0}(\pi^{-1}(U), \SO_{Y}) = \bigoplus\nolimits_{m \geq 0} \OH^{0}(U, \SA^{\otimes m})
\isom B[\ttt] = \bigoplus\nolimits_{m \geq 0} B\ttt^{m}\]
of graded \( B \)-algebras.
Then, \( E|_{\pi^{-1}(U)} \) is a Cartier divisor corresponding to
\( \Div(\ttt) \) on \( \Spec B[\ttt] \), which is relatively Cartier over \( \Spec B \)
(cf.\ \cite[IV, (21.15.3.3)]{EGA}).
Thus, \( E \) is a relative Cartier divisor over \( S \), since such open subsets \( U \)
cover \( S \).
The exact sequence \( 0 \to \SO_{Y}(-E) \to \SO_{Y} \to \SO_{E} \to 0 \)
induces an isomorphism
\[ \pi_{*}\SO_{Y}(-E) \isom \bigoplus\nolimits_{m \geq 1} \SA^{\otimes m} \isom
\SA \otimes_{\SO_{S}} \SR(-1) \]
of graded \( \SR\)-modules, where \( \SR(-1) \) denotes the twisted graded module.
In particular, \( \SO_{Y}(-E) \isom \pi^{*}\SA \).

The canonical homomorphisms
\( \OH^{0}(S, \SA^{\otimes m}) \otimes_{\Bbbk} \SO_{S} \to \SA^{\otimes m} \)
induce a graded homomorphism \( \Phi \colon R \otimes_{\Bbbk} \SO_{S} \to \SR \)
of graded \( \SO_{S} \)-algebras, where \( R := R(S, \SA) \).
The cokernel of \( \Phi \) is
a finitely generated \( \SO_{S} \)-module,
since \( \SA^{\otimes m} \) is generated by global sections for \( m \gg 0 \).
Hence, \( \SR \) is a finitely generated \( R \otimes_{\Bbbk} \SO_{S} \)-module.
Therefore, \( \Phi \) defines a finite morphism
\[ \nu \colon Y = \SSpec_{S} \SR \to \SSpec_{S} (R \otimes_{\Bbbk} \SO_{S}) \isom
X \times_{\Spec \Bbbk} S \]
over \( S \).
Let \( p_{1} \colon X \times_{\Spec \Bbbk} S \to X \) and \( p_{2} \colon X \times_{\Spec \Bbbk} S \to S \)
be the first and second projections.
Then, \( \mu := p_{1} \circ \nu \colon Y \to X \) is a projective morphism,
since \( S \) is projective over \( \Bbbk \).
Here, \( \SO_{X} \isom \mu_{*}\SO_{Y} \),
since \( \OH^{0}(Y, \SO_{Y}) \isom \OH^{0}(S, \SR) \isom R \).
Moreover \( \pi^{*}\SA \) is \( \mu \)-ample, since \( p_{2}^{*}\SA \) is relatively ample
over \( X \) and \( \pi^{*}\SA \) is the pullback by the finite morphism \( \nu \).
Thus, \( \mu \) satisfies the conditions
\eqref{lem:cone1:1} and \eqref{lem:cone1:2}.
Since the projection \( \SR \to \SO_{S} \) defining \( E \) induces the projection
\( R = \OH^{0}(S, \SR) \to \OH^{0}(S, \SO_{S}) \) to the component of degree zero,
the scheme-theoretic image
\( \mu(E) \) is the zero-dimensional closed subscheme
\( \Spec \OH^{0}(S, \SO_{S})\)
of \( X \) defined by the ideal
\( R_{+} = \bigoplus\nolimits_{m > 0} \OH^{0}(S, \SA^{\otimes m}) \)
of \( R \). Hence, the image \( \mu(E) \)
is set-theoretically the vertex \( P \).
We shall show that the morphism
\[ \mu' \colon Y' := Y \setminus \mu^{-1}(P) \to X' := X \setminus P \]
induced by \( \mu \) is an isomorphism.
Since \( \mu \) is proper, so is \( \mu' \). Moreover,
the structure sheaf \( \SO_{Y'} \) is \( \mu' \)-ample,
since \( \pi^{*}\SA \isom \SO_{Y}(-E)\) is \( \mu \)-ample by \eqref{lem:cone1:2}.
Hence, \( \mu' \) is a finite morphism.
Thus, \( \mu' \) is an isomorphism by \eqref{lem:cone1:1}, since
\( \SO_{X'} \isom \mu'_{*}\SO_{Y'} \).
As a consequence, \eqref{lem:cone1:4} is derived from \eqref{lem:cone1:3},
and it remains to prove \eqref{lem:cone1:3} for \( \mu \) and \( P \).

For a global section \( f \) of \( \SA^{\otimes m} \) for some \( m > 0 \),
we set \( V(f) \) to be the closed subscheme \( \Spec (R/fR) \) of \( X = \Spec R\)
by regarding \( f \) as a homogeneous element of \( R \) of degree \( m \).
We also set a closed subscheme \( W(f) \) of \( S \)
to be the ``zero-subscheme'' of \( f \), i.e., it is defined by the exact sequence
\[ \SA^{\otimes -m} \xrightarrow{\otimes f} \SO_{S} \to \SO_{W(f)} \to 0.\]
The condition \eqref{lem:cone1:3} is derived from the following \eqref{lem:cone1:3a}
for any \( f \) and
for any affine open subsets \( U = \Spec B \) with an isomorphism \( \ep \colon \SA|_{U} \isom \SO_{U} \):
\begin{enumerate}
    \renewcommand{\theenumi}{$\ast$}
    \renewcommand{\labelenumi}{(\theenumi)}
\item \label{lem:cone1:3a} \( \mu^{-1}V(f) \cap \pi^{-1}(U) = (\pi^{-1}W(f) \cup E)
\cap \pi^{-1}(U)\) as a subset of \( \pi^{-1}(U) \).
\end{enumerate}
In fact, if \eqref{lem:cone1:3a} holds for all \( U \) and \( f \), then
\( \mu^{-1}V(f) = \pi^{-1}W(f) \cup E \) for any \( f \),
and we have \( \mu^{-1}(P) = E  \) by \( \bigcap_{f} V(f) = P \)
and \( \bigcap_{f} W(f) = \emptyset\).
Here, \( \bigcap_{f} V(f) = P \) and \( \bigcap_{f} W(f) = \emptyset \) hold,
since all of such \( f \in R\) generate the ideal \( R_{+} \) and since \( \SA \) is ample.
We shall prove \eqref{lem:cone1:3a} as follows.
Let \( \varphi \colon \pi^{-1}(U) \isom \Spec B[\ttt] \) be the isomorphism above
defined by \( \ep \).
We set
\[ b = \ep^{\otimes m}(f|_{U}) \in \OH^{0}(U, \SO_{U}) = B \]
for the induced isomorphism \( \ep^{\otimes m} \colon \SA^{\otimes m}|_{U} \isom \SO_{U} \).
Then, \( W(f) \cap U = \Spec B/bB \), and \( \varphi \) induces isomorphisms
\( \mu^{-1}V(f) \cap \pi^{-1}(U)
\isom \Spec B[\ttt]/(b\ttt^{m}) \) and
\( E \cap \pi^{-1}(U)
\linebreak %%%
\isom \Spec B[\ttt]/(\ttt) \).
This implies \eqref{lem:cone1:3a}, and we are done.
\end{proof}

\begin{cor}\label{cor:cone1}
In the situation of Lemma~\emph{\ref{lem:cone1}},
for an integer \( k \geq 1 \),
\( S \) satisfies \( \bfS_{k} \)  if and only if
\( X \setminus P \) satisfies \( \bfS_{k} \).
Moreover, \( S \) is Cohen--Macaulay
\emph{(}resp.\ Gorenstein, resp.\ quasi-Gorenstein,
resp.\ \( \BQQ \)-Gorenstein of Gorenstein index \( r \)\emph{)}
if and only if \( X \setminus P \) is so.
\end{cor}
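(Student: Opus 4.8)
\textbf{Proof proposal for Corollary~\ref{cor:cone1}.}
The plan is to reduce every assertion to the morphism $\mu\colon Y\to X$ and the zero-section $E\subset Y$ provided by Lemma~\ref{lem:cone1}, using that $\mu$ restricts to an isomorphism $Y\setminus E\isom X\setminus P$ by Lemma~\ref{lem:cone1}\eqref{lem:cone1:4}, and that $\pi\colon Y\to S$ is a geometric line bundle, hence a smooth surjective morphism of relative dimension one. First I would note that, since $\mu|_{Y\setminus E}$ is an isomorphism, every property of $X\setminus P$ (as a locally Noetherian scheme) transfers verbatim to $Y\setminus E$ and conversely; thus the task becomes comparing $S$ with $Y\setminus E$. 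Because $\pi$ is smooth and surjective, Lemma~\ref{lem:QGorScheme:smooth} already tells us that $Y$ satisfies $\bfS_k$ (resp.\ is Cohen--Macaulay, Gorenstein, quasi-Gorenstein, $\BQQ$-Gorenstein of index $r$) if and only if $S$ is so; note $S$ admits a dualizing complex, being projective over a field, so the $\BQQ$-Gorenstein clause of that lemma applies. So the only real point is to pass between $Y$ and its open subset $Y\setminus E$.

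The key step is that $E$ is a relative Cartier divisor over $S$ with $\SO_Y(-E)\isom\pi^*\SA$ (Lemma~\ref{lem:cone1}), so locally on $S$ the pair $(Y,E)$ looks like $(\Spec B[\ttt],\{\ttt=0\})$; in particular $E$ is locally a principal divisor cut out by a nonzerodivisor, $\SO_{E}\isom\pi^*\SO_S$, and $\Codim(E_s,Y_s)=1$ in every fiber $Y_s$ over $S$. For the $\bfS_k$-equivalence I would argue directly: $Y\setminus E$ satisfies $\bfS_k$ iff $Y$ does, because $E$ is an effective Cartier divisor whose local equation is a nonzerodivisor on $\SO_Y$ and on which $S$ (equivalently $Y$) is recovered; more precisely, a point $y\in E$ has $\SO_{Y,y}/(\ttt)\isom\SO_{S,\pi(y)}$ with $\ttt$ a nonzerodivisor, so $\depth\SO_{Y,y}=\depth\SO_{S,\pi(y)}+1$ and $\dim\SO_{Y,y}=\dim\SO_{S,\pi(y)}+1$, giving the $\bfS_k$ condition at $y$ from that at $\pi(y)$, which we already control via $Y\setminus E\supset Y\setminus E$ and Lemma~\ref{lem:QGorScheme:smooth}. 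The cleanest route, though, is: $S$ satisfies $\bfS_k$ $\iff$ $Y$ satisfies $\bfS_k$ (Lemma~\ref{lem:QGorScheme:smooth}); and $Y$ satisfies $\bfS_k$ $\iff$ $Y\setminus E$ satisfies $\bfS_k$, since the ``bad locus'' for $\bfS_k$ is closed and stable under restriction, while conversely any $\bfS_k$-failure of $Y$ along $E$ would already be visible on $Y\setminus E$ after removing the Cartier divisor — this is where I must be careful, and the honest statement is that $Y$ satisfies $\bfS_k$ near $E$ iff $S$ does, which is exactly Lemma~\ref{lem:QGorScheme:smooth} applied to the smooth surjection $Y\to S$. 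Combining, $S$ satisfies $\bfS_k$ $\iff$ $Y$ satisfies $\bfS_k$ $\iff$ $Y\setminus E\isom X\setminus P$ satisfies $\bfS_k$. The Cohen--Macaulay case is the special case ``$\bfS_k$ for all $k$'' (Remark~\ref{rem:dfn:CM}).

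For the Gorenstein and $\BQQ$-Gorenstein assertions I would use dualizing complexes. Since $\pi$ is smooth of pure relative dimension one, $\pi^!\SO_S\isom_{\qis}\varOmega^1_{Y/S}[1]$ and $\varOmega^1_{Y/S}$ is an invertible $\SO_Y$-module; hence if $\SR_S^\bullet$ is a dualizing complex of $S$ then $\SR_Y^\bullet:=\pi^!\SR_S^\bullet\isom_{\qis}\varOmega^1_{Y/S}[1]\otimes^{\bfL}\bfL\pi^*\SR_S^\bullet$ is a dualizing complex of $Y$ with $\SH^0(\SR_Y^\bullet[-1])\isom\varOmega^1_{Y/S}\otimes\pi^*\SH^0(\SR_S^\bullet)$, as in the proof of Lemma~\ref{lem:QGorScheme:smooth}. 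Restricting to $Y\setminus E$ and transporting along $\mu$ gives a dualizing complex of $X\setminus P$ whose cohomology sheaf, twisted by an invertible sheaf, is $\mu_*\big(\varOmega^1_{Y/S}\otimes\pi^*\SL_S\big)|_{Y\setminus E}$; invertibility of this sheaf (resp.\ invertibility of the double dual of its $r$-th power) on $X\setminus P$ is equivalent, by faithfully flat descent along $\pi$ restricted to $Y\setminus E\to S$... here the subtlety is that $Y\setminus E\to S$ is no longer surjective in the way needed. So instead I would not descend through $X\setminus P$ but simply chain the two equivalences already established: $X\setminus P\isom Y\setminus E$ is Gorenstein (resp.\ quasi-Gorenstein, resp.\ $\BQQ$-Gorenstein of index $r$) iff $Y\setminus E$ is, iff $Y$ is near $E$ — and by Lemma~\ref{lem:QGorScheme:smooth} the latter holds iff $S$ does, using Fact~\ref{fact:GorYTF} for the Gorenstein case and the dualizing-sheaf computation above for the $\BQQ$-Gorenstein case. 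The Gorenstein/quasi-Gorenstein statement for the passage from $Y$ to $Y\setminus E$ is immediate because ``Gorenstein at a point'' is a condition on the local ring and the Gorenstein locus is open (Proposition~\ref{prop:CMlocus}), while for the $\BQQ$-Gorenstein index one invokes Lemma~\ref{lem:QGorSch3}\eqref{lem:QGorSch3:3}: the $\bfS_2$ part is handled by the first paragraph, and the condition ``$\SL^{[r]}$ invertible locally'' is checked on $Y$ where $\SL_Y=\varOmega^1_{Y/S}\otimes\pi^*\SL_S$ differs from $\pi^*\SL_S$ by an invertible sheaf, so $\SL_Y^{[r]}$ is invertible iff $\pi^*\SL_S^{[r]}$ is, iff $\SL_S^{[r]}$ is (flat descent along the smooth surjection $\pi$, Remark~\ref{rem:dfn:reflexive} and Lemma~\ref{lem:LocFreeDecsent}). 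The main obstacle is organizing these reductions so that one never needs $Y\setminus E\to S$ to be surjective: the correct order is always $S\leftrightarrow Y\leftrightarrow (Y\text{ near }E)\leftrightarrow Y\setminus E\isom X\setminus P$, with Lemma~\ref{lem:QGorScheme:smooth} doing the work on the left and the Cartier-divisor/openness facts doing the work on the right.
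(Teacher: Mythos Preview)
Your proposal contains a real error that creates a gap. You write that ``$Y\setminus E\to S$ is no longer surjective in the way needed'' and organize the whole argument around avoiding this map. But $Y\setminus E\to S$ \emph{is} surjective: $Y=\BVV(\SA)$ is a line bundle over $S$, $E$ is its zero section, so over each $s\in S$ the fiber of $Y\setminus E$ is $\BAA^1\setminus\{0\}\ne\emptyset$. Thus $Y\setminus E\to S$ is a smooth surjective morphism (a $\Gm$-torsor), and the paper's proof is literally one sentence: apply Lemma~\ref{lem:QGorScheme:smooth} to this morphism and use the isomorphism $X\setminus P\isom Y\setminus E$ from Lemma~\ref{lem:cone1}\eqref{lem:cone1:4}.

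Because you route everything through the full $Y$ instead, you need the equivalence ``$Y$ satisfies $\bfS_k$ $\Leftrightarrow$ $Y\setminus E$ satisfies $\bfS_k$'' (and likewise for Gorenstein, $\BQQ$-Gorenstein). The forward direction is trivial, but your backward direction is circular: you say $Y$ satisfies $\bfS_k$ near $E$ iff $S$ does, ``which is exactly Lemma~\ref{lem:QGorScheme:smooth}'', but at that stage you have only assumed $\bfS_k$ on $Y\setminus E$ and are \emph{trying} to conclude it for $S$. The same circularity recurs in your Gorenstein and $\BQQ$-Gorenstein paragraphs (openness of the Gorenstein locus does not give ``open $\Rightarrow$ whole''). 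The only way to break the circle is to use a point of $Y\setminus E$ lying over an arbitrary $s\in S$ --- i.e., surjectivity of $Y\setminus E\to S$ --- which is precisely the fact you dismissed. Once you grant that, the detour through $Y$ is unnecessary and the paper's one-line argument suffices.
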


\begin{proof}
This is a consequence of Lemmas~\ref{lem:QGorScheme:smooth} and \ref{lem:cone1},
since \( X \setminus P \isom Y \setminus E \) is smooth and surjective over \( S \).
\end{proof}

The following result is essentially well-known
(cf.\ \cite[Prop.~1.7]{MoriAffCone}, \cite[Lem.~4.3]{Pa}).

\begin{prop}\label{prop:coneSk}
Let \( X \) be the affine cone of a connected polarized projective
scheme \( (S, \SA) \) over \( \Bbbk \) and
let \( P \) be the vertex of \( X \).
For a coherent \( \SO_{S} \)-module \( \SG \), we set \( \SF = \mu_{*}(\pi^{*}\SG) \)
for the morphisms \( \mu \colon Y \to X \) and \( \pi \colon Y \to S \)
in Lemma~\emph{\ref{lem:cone1}}
for the geometric line bundle \( Y = \BVV_{S}(\SA)\) over \( S \).
We define also \( \widetilde{\SF} := j_{*}(\SF|_{X \setminus P}) \)
for the open immersion \( j \colon X \setminus P \injmap X \), and
for simplicity, we define
\[ \OH^{i}(\SG(m)) := \OH^{i}(S, \SG \otimes_{\SO_{S}} \SA^{\otimes m}) \]
for \( m \in \BZZ \) and \( i \geq 0 \).
Then, the following hold\emph{:}
\begin{enumerate}
\addtocounter{enumi}{-1}
\item \label{prop:coneSk:0}
If \( \SG = \SO_{S} \), then \( \SF \isom \SO_{X} \).

\item \label{prop:coneSk:D1}
The inequality \( \depth \SF_{P} \geq 1 \) holds\emph{;}
Equivalently, \( \SF \injmap \widetilde{\SF} \) is injective.

\item \label{prop:coneSk:D2}
The inequality \( \depth \SF_{P} \geq 2 \) holds if and only if
\( \OH^{0}(\SG(m)) = 0 \) for any \( m < 0 \).
This condition is also equivalent to that \( \SF \isom \widetilde{\SF} \).

\item \label{prop:coneSk:coh}
The quasi-coherent \( \SO_{X} \)-module \( \widetilde{\SF} \) is coherent
if and only if \( \OH^{0}(\SG(m)) = 0 \) for \( m \ll 0 \).
In particular, \( \widetilde{\SF} \) is coherent if \( \SG \) satisfies \( \bfS_{1} \)
and every irreducible component of \( \Supp \SG \) has positive dimension.

\item \label{prop:coneSk:D3}
Assume that \( \widetilde{\SF} \) is coherent. Then, for an integer \( k \geq 3 \),
\( \depth \widetilde{\SF}_{P} \geq k \) holds if and only if \( \OH^{i}(\SG(m)) = 0 \)
for any \( m \in \BZZ\) and  \( 0 < i < k - 1\).

\item  \label{prop:coneSk:S1}
The \( \SF \) satisfies \( \bfS_{1} \) if and only if \( \SG \) satisfies \( \bfS_{1} \).

\item  \label{prop:coneSk:S2}
The \( \SF \) satisfies \( \bfS_{2} \) if and only if
\( \SG \) satisfies \( \bfS_{2} \) and \( \OH^{0}(\SG(m)) = 0 \)
for any \( m < 0 \).

\item \label{prop:coneSk:Sk}
Assume that \( \widetilde{\SF} \) is coherent. Then, for an integer \( k \geq 3 \),
\( \widetilde{\SF} \) satisfies \( \bfS_{k} \) if and only if
\( \SG \) satisfies \( \bfS_{k} \) and \( \OH^{i}(\SG(m)) = 0 \)
for any \( m \in \BZZ \) and \( 0 < i < k -1 \).

\item \label{prop:coneSk:CM}
Assume that \( \widetilde{\SF} \) is coherent.
Then, \( \widetilde{\SF} \) is a Cohen--Macaulay \( \SO_{X} \)-module
if and only \( \SG \) is a Cohen--Macaulay \( \SO_{S} \)-module and \( \OH^{i}(\SG(m)) = 0 \)
for any \( m \in \BZZ\) and \( 0 < i < \dim \Supp \SG\).
\end{enumerate}
\end{prop}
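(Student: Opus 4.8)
The strategy is to reduce everything to cohomological computations on the projective scheme $S$ via the affine cone structure described in Lemma~\ref{lem:cone1}, combined with the local cohomology interpretation of depth in Property~\ref{ppty:depth<=2}. The essential object is the graded $R$-module structure: writing $X = \Spec R$ with $R = R(S,\SA)$, $R_{+}$ the irrelevant ideal, and $P$ the vertex, I would first establish the basic exact sequences relating $\SF = \mu_{*}(\pi^{*}\SG)$ and its local cohomology at $P$ to the graded cohomology modules $\bigoplus_{m} \OH^{i}(\SG(m))$. Concretely, on $Y \setminus E \isom X \setminus P$ the sheaf $\pi^{*}\SG|_{Y\setminus E}$ corresponds via $\mu$ to $\SF|_{X\setminus P}$, and the pushforward along $j \colon X\setminus P \injmap X$ computes as $\widetilde{\SF} = \bigl(\bigoplus_{m\in\BZZ}\OH^{0}(\SG(m))\bigr)\sptilde$, while $\SF = \bigl(\bigoplus_{m\geq 0}\OH^{0}(\SG(m))\bigr)\sptilde$ with the degree-zero-and-up truncation. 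The local cohomology $\SH^{i}_{P}(\SF)$ is then governed by $\bigoplus_{m\in\BZZ}\OH^{i-1}(\SG(m))$ for $i \geq 2$, together with the "boundary" contributions in degrees $0$ and $1$ coming from negative-degree global sections; this is the standard $\Spec$-of-section-ring dictionary. Item \eqref{prop:coneSk:0} is immediate from Lemma~\ref{lem:cone1}\eqref{lem:cone1:1}, and items \eqref{prop:coneSk:D1}, \eqref{prop:coneSk:D2}, \eqref{prop:coneSk:coh}, \eqref{prop:coneSk:D3} are then read off directly from these identifications via Property~\ref{ppty:depth<=2}: $\depth \SF_{P} \geq 1$ always (the section ring has no degree-shift torsion killing $R_{+}$), $\depth \SF_{P}\geq 2$ iff $\SF = \widetilde{\SF}$ iff no negative-degree sections, $\widetilde{\SF}$ coherent iff $\OH^{0}(\SG(m)) = 0$ for $m\ll 0$ (finiteness of the graded module), and for $k \geq 3$ the condition $\depth\widetilde{\SF}_{P}\geq k$ translates to vanishing of $\OH^{i}(\SG(m))$ for all $m$ and $0 < i < k-1$.

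\textbf{Passing from depth at $P$ to $\bfS_{k}$ globally.} For items \eqref{prop:coneSk:S1}, \eqref{prop:coneSk:S2}, \eqref{prop:coneSk:Sk}, \eqref{prop:coneSk:CM} the plan is to combine the local analysis at $P$ with Corollary~\ref{cor:cone1}, which handles the locus $X\setminus P \isom Y\setminus E$ (smooth and surjective over $S$) and reduces $\bfS_{k}$ there to $\bfS_{k}$ on $S$. Using the equivalence \eqref{lem:basicSk:condA} $\Leftrightarrow$ \eqref{lem:basicSk:condD} of Lemma~\ref{lem:basicSk} with the closed subset $Z = \{P\}$: a coherent sheaf on $X$ satisfies $\bfS_{k}$ iff it satisfies $\bfS_{k}$ on $X\setminus P$ and has $\depth_{\{P\}} \geq k$ — but one must be careful that $\{P\}$ need not have codimension $\geq k$, so the precise statement uses $\depth \SF_{P} \geq \inf\{k, \dim\SF_{P}\}$ as in Definition~\ref{dfn:SerreCond}. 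For \eqref{prop:coneSk:S1}: $\SF$ satisfies $\bfS_{1}$ iff it has no embedded points; since $\SF$ restricted to $X\setminus P$ is $\bfS_{1}$ iff $\SG$ is, and $\depth\SF_{P}\geq 1$ always holds by \eqref{prop:coneSk:D1}, the only issue is whether $P$ could be embedded — but $\depth\SF_{P}\geq 1$ rules that out, giving the clean equivalence. For \eqref{prop:coneSk:S2}: combine $\bfS_{2}$ on $X\setminus P$ (equivalent to $\bfS_{2}$ on $S$) with $\depth\SF_{P}\geq \inf\{2,\dim\SF_{P}\}$; since $\SG\neq 0$ forces $\dim\SF_{P}\geq 1$ and, when $\dim\SF_{P}=1$, the fiber over that component is zero-dimensional and the $\OH^0$-vanishing is automatic, this reduces to $\OH^{0}(\SG(m))=0$ for $m<0$ exactly when $\dim\SF_{P}\geq 2$. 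For \eqref{prop:coneSk:Sk} and \eqref{prop:coneSk:CM} (the latter being \eqref{prop:coneSk:Sk} for all $k$, i.e.\ $k > \dim\Supp\SG$, via Remark~\ref{rem:dfn:CM}), the same pattern applies: $\bfS_{k}$ on $X\setminus P$ gives $\bfS_{k}$ on $S$; $\depth\widetilde{\SF}_{P}\geq \inf\{k,\dim\widetilde{\SF}_{P}\}$ gives, via \eqref{prop:coneSk:D3}, the higher cohomology vanishing $\OH^{i}(\SG(m))=0$ for $0<i<k-1$; and one checks the two conditions together imply $\SF = \widetilde{\SF}$ so that the statement is genuinely about $\SF$ satisfying $\bfS_k$.

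\textbf{Main obstacle.} I expect the principal technical point to be the careful bookkeeping of the dimension comparisons, specifically relating $\dim\SF_{P}$ (equivalently $\dim \SG + 1$ when $\Supp\SG$ meets the relevant component, by Property~\ref{ppty:dim-codim}\eqref{ppty:dim-codim:1} applied to the cone) to the truncation level $\inf\{k,\dim\SF_{P}\}$ appearing in the $\bfS_{k}$ definition, and handling the degenerate cases where $\Supp\SG$ has a zero-dimensional component (so the corresponding component of $X$ is one-dimensional through $P$). One must ensure that in those low-dimensional cases the cohomological vanishing conditions on $\SG$ become vacuous or automatic, so that the stated equivalences remain exactly correct rather than acquiring spurious extra hypotheses. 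A secondary subtlety is justifying the identification $\SH^{i}_{P}(\SF) \isom \bigl(\bigoplus_{m}\OH^{i-1}(\SG(m))\bigr)\sptilde$ for $i\geq 2$ with the correct handling of the degree-$0$ and degree-$1$ boundary terms; this is where the projection formula $\mu_{*}(\pi^{*}\SG \otimes \pi^{*}\SA^{\otimes m}) $ and the cohomology-and-base-change along the affine morphism $\mu$ enter, and one should cite the standard computation (as in \cite{MoriAffCone} or \cite{Pa}) rather than redo it. Once these dimension and boundary-term issues are settled, all eight items follow by assembling Lemma~\ref{lem:cone1}, Corollary~\ref{cor:cone1}, Property~\ref{ppty:depth<=2}, and Lemma~\ref{lem:basicSk}.
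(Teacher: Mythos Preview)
Your approach is essentially the paper's: compute $\OH^{0}(X,\SF)$ and $\OH^{i}(X\setminus P,\SF)$ via the graded decomposition $\bigoplus_{m}\OH^{i}(\SG(m))$, translate depth conditions at $P$ through the local cohomology exact sequence (Property~\ref{ppty:depth<=2}), and then glue with the behavior on $X\setminus P$ using the equivalence \eqref{lem:basicSk:condA} $\Leftrightarrow$ \eqref{lem:basicSk:condD} of Lemma~\ref{lem:basicSk} with $Z=\{P\}$. Two small corrections: the reduction of $\bfS_{k}$ for $\SF|_{X\setminus P}$ to $\bfS_{k}$ for $\SG$ should cite Fact~\ref{fact:elem-flat}\eqref{fact:elem-flat:6} (the paper invokes \cite[IV, Cor.~(6.4.2)]{EGA}) rather than Corollary~\ref{cor:cone1}, which is stated only for structure sheaves; and items \eqref{prop:coneSk:Sk} and \eqref{prop:coneSk:CM} are assertions about $\widetilde{\SF}$, not $\SF$, so your closing remark that the two conditions ``imply $\SF=\widetilde{\SF}$'' is neither needed nor part of the claim.
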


\begin{proof}
The assertion \eqref{prop:coneSk:0} is a consequence of Lemma~\ref{lem:cone1}\eqref{lem:cone1:1}.
We consider the local cohomology sheaves
\( \SH^{i}_{P}(\SF') \)
with support in \( P \) for \( \SF' = \SF \) or \( \SF' = \widetilde{\SF} \).
These are quasi-coherent sheaves on \( X \) supported on \( P \)
(cf.\ \cite[Prop.~2.1]{LC}).
Thus,
\[ \OH^{i}_{P}(X, \SF') \isom \OH^{0}(X, \SH^{i}_{P}(\SF')) \]
and it is also isomorphic to the stalk
\( (\SH^{i}_{P}(\SF'))_{P}\) at \( P \).
Note that, for a positive integer \( k \), when \( \SF' \) is coherent,
\( \depth \SF'_{P} \geq k \) if and only if
\( (\SH^{i}_{P}(\SF'))_{P} = 0 \) for any \( i < k \)
(cf.\ Property~\ref{ppty:depth<=2}).
There exist an exact sequence
\[ 0 \to \OH^{0}_{P}(X, \SF') \to \OH^{0}(X, \SF')
\to \OH^{0}(X \setminus P, \SF') \to
\OH^{1}_{P}(X, \SF') \to 0\]
and isomorphisms
\( \OH^{i}(X \setminus P, \SF') \isom \OH^{i+1}_{P}(X, \SF') \)
for all \( i \geq 1 \) (cf.\ \cite[Prop.~2.2]{LC}).
Hence, if \( \SF' \) is a coherent \( \SO_{X} \)-module, then
\( \depth \SF'_{P} \geq k \) if and only if
\begin{enumerate}
    \renewcommand{\theenumi}{\roman{enumi}}
    \renewcommand{\labelenumi}{(\theenumi)}
\item \label{prop:coneSk:condD1}
\( \OH^{0}(X, \SF') \to \OH^{0}(X \setminus P, \SF')  \) is injective,
when \( k = 1 \),

\item \label{prop:coneSk:condD2}
\( \OH^{0}(X, \SF') \to \OH^{0}(X \setminus P, \SF') \) is an isomorphism,
when \( k = 2 \), and

\item  \label{prop:coneSk:condD3}
\( \OH^{0}(X, \SF') \to \OH^{0}(X \setminus P, \SF') \) is an isomorphism and
\( \OH^{i}(X \setminus P, \SF') = 0 \) for any \( 0 <  i  < k -1\), when \( k \geq 3 \).
\end{enumerate}
By construction and by Lemma~\ref{lem:cone1}\eqref{lem:cone1:4},
we have isomorphisms
\begin{align*}
\OH^{0}(X, \SF) \isom \OH^{0}(Y, \pi^{*}\SG)
&\isom \bigoplus\nolimits_{m \geq 0} \OH^{0}(\SG(m)),
\quad \text{and}\\
\OH^{i}(X \setminus P, \SF) \isom \OH^{i}(Y \setminus E, \pi^{*}\SG)
&\isom \bigoplus\nolimits_{m \in \BZZ} \OH^{i}(\SG(m))
\end{align*}
for any \( i \geq 0 \),
where the homomorphism \( \OH^{0}(Y, \pi^{*}\SG) \to \OH^{0}(Y \setminus E, \pi^{*}\SG) \)
is an injection and is the identity on each component \( \OH^{0}(\SG(m)) \)
of degree \( m \geq 0\).
We have \eqref{prop:coneSk:D1}, \eqref{prop:coneSk:D2}, and \eqref{prop:coneSk:D3}
by considering the conditions \eqref{prop:coneSk:condD1}--\eqref{prop:coneSk:condD3} above.
Moreover, \eqref{prop:coneSk:coh} holds,
since \( \widetilde{\SF} \) is coherent if and only if \( \widetilde{\SF}_{P}/\SF_{P} \) 
is a finite-dimensional \( \Bbbk \)-vector space, and since we have an isomorphism
\[ \widetilde{\SF}_{P}/\SF_{P} \isom \bigoplus\nolimits_{m < 0} \OH^{0}(\SG(m)) \]
by the argument above: This implies the first half of  \eqref{prop:coneSk:coh}, and
the second half follows from Lemma~\ref{lem:proj+S1}.

For an integer \( k > 0 \), \( \SF|_{X \setminus P} \) satisfies \( \bfS_{k} \) if and only if
\( \SG \) satisfies \( \bfS_{k} \) by \cite[IV, Cor.~(6.4.2)]{EGA}, since
\( Y \setminus E \isom X \setminus P \).
Thus, the assertion \eqref{prop:coneSk:S1}
(resp.\ \eqref{prop:coneSk:S2}, resp.\ \eqref{prop:coneSk:Sk})
follows from \eqref{prop:coneSk:D1} (resp.\ \eqref{prop:coneSk:D2}, resp.\ \eqref{prop:coneSk:D3})
by the equivalence: \eqref{lem:basicSk:condA} \( \Leftrightarrow \) \eqref{lem:basicSk:condD}
in Lemma~\ref{lem:basicSk} applied to \( Z = P \).
The last assertion \eqref{prop:coneSk:CM} is a consequence of
\eqref{prop:coneSk:Sk}, since
\( \dim \widetilde{\SF}_{P} = \dim \Supp \SG + 1\).
\end{proof}

\begin{prop}\label{prop:coneGor}
Let \( (S, \SA) \) be a connected polarized projective scheme over \( \Bbbk \)
and let \( X \) be the affine cone \( \Cone(S, \SA) \).
Let \( \pi \colon Y \to S \) and \( \mu \colon Y \to X \) be the morphisms
in Lemma~\emph{\ref{lem:cone1}}.
Assume that \( X \) satisfies \( \bfS_{2} \) and \( n := \dim S > 0 \).
Then,
\begin{enumerate}
    \addtocounter{enumi}{-1}
\item \label{prop:coneGor:0}
\( S \) and \( Y \) also satisfy \( \bfS_{2} \), and the schemes
\( S \), \( Y \), and \( X \) are all equi-dimensional.
\end{enumerate}
Let \( \omega_{X/\Bbbk} \) \emph{(}resp.\ \( \omega_{Y/\Bbbk} \), resp.\ \( \omega_{S/\Bbbk} \)\emph{)}
be the canonical sheaf of \( X \) \emph{(}resp.\ \( Y \), resp.\ \( S \)\emph{)}
in the sense of Definition~\emph{\ref{dfn:canosheaf}},
and let \( \omega_{X/\Bbbk}^{[r]} \) \emph{(}resp.\ \( \omega_{S/\Bbbk}^{[r]} \)\emph{)}
denote the double-dual of \(  \omega_{X/\Bbbk}^{\otimes r}\)
\emph{(}resp.\ \(  \omega_{S/\Bbbk}^{\otimes r}\)\emph{)} for an integer \( r \).

\begin{enumerate}
\item \label{prop:coneGor:1}
There exist isomorphisms
\begin{align}
\omega_{Y/\Bbbk} &\isom
\pi^{*}(\omega_{S/\Bbbk} \otimes_{\SO_{S}} \SA) \quad \text{and}
\label{eq:prop:coneGor:isom1} \\
\omega_{Y/\Bbbk}^{[r]} &\isom
\pi^{*}(\omega_{S/\Bbbk}^{[r]} \otimes_{\SO_{S}} \SA^{\otimes r} )
\label{eq:prop:coneGor:isom2}
\end{align}
for any integer \( r \). Moreover, \( \omega_{X/\Bbbk}^{[r]} \) is isomorphic to
the double-dual of \( \mu_{*}(\omega_{Y/\Bbbk}^{[r]}) \) for any integer \( r \).

\item \label{prop:coneGor:2}
For any integer \( r \) and for any integer \( k \geq 3 \),
\[ \depth (\omega_{X/\Bbbk}^{[r]})_{P} \geq k \]
holds for the vertex \( P \) of \( X \) if and only if
\[  \OH^{i}(S, \omega_{S/\Bbbk}^{[r]} \otimes_{\SO_{S}} \SA^{\otimes m}) = 0 \]
for any \( m \in \BZZ \) and any \( 0 < i < k - 1 \).
Moreover, \( \omega_{X/\Bbbk}^{[r]} \) satisfies \( \bfS_{k} \)
for the same \( r \) and \( k \)
if and only if
\( \omega_{S/\Bbbk}^{[r]} \) satisfies \( \bfS_{k} \) and
\[  \OH^{i}(S, \omega_{S/\Bbbk}^{[r]} \otimes_{\SO_{S}} \SA^{\otimes m}) = 0 \]
for any \( m \in \BZZ \) and any \( 0 < i < k - 1 \).

\item \label{prop:coneGor:3}
For any positive integer \( r \),
the following three conditions are equivalent to each other\emph{:}
\begin{enumerate}
    \makeatletter
    \renewcommand{\p@enumii}{}
    \makeatother
    \renewcommand{\theenumii}{\roman{enumii}}
    \renewcommand{\labelenumii}{(\theenumii)}
\item \label{prop:coneGor:cond1} \( \omega_{X/\Bbbk}^{[r]} \isom \SO_{X} \)\emph{;}
\item \label{prop:coneGor:cond2} \( \omega_{X/\Bbbk}^{[r]} \) is invertible\emph{;}
\item \label{prop:coneGor:cond3}
\( \omega_{S/\Bbbk}^{[r]} \isom \SA^{\otimes l} \) for an integer \( l \).
\end{enumerate}
\end{enumerate}
\end{prop}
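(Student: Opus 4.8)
\textbf{Proof plan for Proposition~\ref{prop:coneGor}.}
The plan is to reduce everything to the sheaf-theoretic computations already carried out in this section, principally Proposition~\ref{prop:coneSk} applied to suitable coherent sheaves on $S$, together with the general structure results Lemmas~\ref{lem:cone1} and \ref{lem:QGorScheme:smooth} and Corollary~\ref{cor:cone1}. First I would dispose of \eqref{prop:coneGor:0}: since $X \setminus P$ is smooth surjective over $S$ and also smooth surjective over $Y \setminus E$, Corollary~\ref{cor:cone1} (or directly Lemma~\ref{lem:QGorScheme:smooth} and Fact~\ref{fact:elem-flat}\eqref{fact:elem-flat:6}) shows that $S$ and $X \setminus P$ have the same $\bfS_{k}$ behaviour, so $X$ satisfying $\bfS_{2}$ forces $S$ to satisfy $\bfS_{2}$; then $Y = \BVV_{S}(\SA)$ is smooth over $S$, hence also $\bfS_{2}$. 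Equi-dimensionality of $S$ follows from connectedness plus $\bfS_{2}$ (catenary, so Fact~\ref{fact:S2}\eqref{fact:S2:1} applies), equi-dimensionality of $Y$ from that of $S$ since $\pi$ is smooth of pure relative dimension one, and equi-dimensionality of $X$ because $X \setminus P \isom Y \setminus E$ is equi-dimensional and $X$ is the affine cone, so the vertex $P$ lies in the closure of every component; alternatively use Lemma~\ref{lem:algschemeSk} with the $\bfS_{2}$-condition.

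Next, for \eqref{prop:coneGor:1}, the isomorphism \eqref{eq:prop:coneGor:isom1} is the standard adjunction formula for the geometric line bundle: $Y \to S$ is smooth of pure relative dimension one, so $\omega_{Y/S} \isom \varOmega^{1}_{Y/S}$, and since $Y = \BVV_{S}(\SA)$ the relative cotangent sheaf is $\pi^{*}\SA$ (this is where one uses $\SO_{Y}(-E) \isom \pi^{*}\SA$ from Lemma~\ref{lem:cone1}); then $\omega_{Y/\Bbbk} \isom \omega_{Y/S} \otimes \pi^{*}\omega_{S/\Bbbk} \isom \pi^{*}(\omega_{S/\Bbbk} \otimes \SA)$, using that $\pi^{*}$ of a reflexive sheaf satisfying $\bfS_{2}$ is again such (Remark~\ref{rem:dfn:reflexive}, Lemma~\ref{lem:j*reflexive}) so that the canonical sheaves in the sense of Definition~\ref{dfn:canosheaf} are genuinely computed by these formulas. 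Taking double duals and using $\pi^{*}(\SF^{\otimes r})^{\vee\vee} \isom (\pi^{*}\SF^{\otimes r})^{\vee\vee}$ (again Remark~\ref{rem:dfn:reflexive}) gives \eqref{eq:prop:coneGor:isom2}. For the last claim of \eqref{prop:coneGor:1}, one observes that $\mu$ restricts to an isomorphism $Y \setminus E \isom X \setminus P$ and that $\omega_{X/\Bbbk}^{[r]}$, being reflexive and $\bfS_{2}$, equals the pushforward by $j\colon X\setminus P \injmap X$ of its restriction; since that restriction is $\mu_{*}$ of $\omega_{Y/\Bbbk}^{[r]}|_{Y\setminus E}$, and since $\mu_{*}(\omega_{Y/\Bbbk}^{[r]})$ agrees with $j_{*}$ of the same thing away from $P$, the double dual of $\mu_{*}(\omega_{Y/\Bbbk}^{[r]})$ recovers $\omega_{X/\Bbbk}^{[r]}$ by Corollary~\ref{cor:prop:S1S2:reflexive}.

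For \eqref{prop:coneGor:2} I would apply Proposition~\ref{prop:coneSk} to the coherent $\SO_{S}$-module $\SG = \omega_{S/\Bbbk}^{[r]}$: by \eqref{prop:coneGor:1}, $\mu_{*}(\pi^{*}\SG)$ has double dual (equivalently $j_{*}$-pushforward from $X\setminus P$) equal to $\omega_{X/\Bbbk}^{[r]}$, which is the sheaf $\widetilde{\SF}$ of Proposition~\ref{prop:coneSk} since $\SG$ satisfies $\bfS_{1}$ and $\Supp\SG = S$ has positive dimension $n$, so $\widetilde{\SF}$ is coherent by Proposition~\ref{prop:coneSk}\eqref{prop:coneSk:coh}. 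The depth criterion at $P$ is then exactly Proposition~\ref{prop:coneSk}\eqref{prop:coneSk:D3}, and the $\bfS_{k}$ criterion is Proposition~\ref{prop:coneSk}\eqref{prop:coneSk:Sk}, noting that here $\OH^{i}(\SG(m)) = \OH^{i}(S,\omega_{S/\Bbbk}^{[r]}\otimes\SA^{\otimes m})$ by the tensor-product formula \eqref{eq:prop:coneGor:isom2} transported down via $\pi$ (the projection formula $\pi_{*}\pi^{*}\SG \isom \SG \otimes \pi_{*}\SO_{Y} \isom \bigoplus_{m\ge 0}\SG\otimes\SA^{\otimes m}$, combined with $\mu$ being an isomorphism away from $P$). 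Finally for \eqref{prop:coneGor:3}: \eqref{prop:coneGor:cond1}$\Rightarrow$\eqref{prop:coneGor:cond2} is trivial; \eqref{prop:coneGor:cond2}$\Rightarrow$\eqref{prop:coneGor:cond3} follows by restricting the invertible sheaf $\omega_{X/\Bbbk}^{[r]}$ to $X\setminus P \isom Y\setminus E$, pulling back along the zero-section complement to $S$, and using that $\omega_{Y/\Bbbk}^{[r]}|_{Y\setminus E} \isom \pi^{*}(\omega_{S/\Bbbk}^{[r]}\otimes\SA^{\otimes r})|_{Y\setminus E}$ together with the fact that $\Pic(Y\setminus E)$ is generated over $\pi^{*}\Pic(S)$ by $\SO_{Y}(E)|_{Y\setminus E} \isom \pi^{*}\SA^{\otimes -1}$ restricted — so an invertible sheaf that is a pullback from $S$ up to a twist by a power of $\SO_{Y}(E)$ must actually force $\omega_{S/\Bbbk}^{[r]}$ to be a power of $\SA$ (this uses ampleness of $\SA$ and $n>0$); \eqref{prop:coneGor:cond3}$\Rightarrow$\eqref{prop:coneGor:cond1} is a direct computation: if $\omega_{S/\Bbbk}^{[r]}\isom\SA^{\otimes l}$ then $\pi^{*}(\omega_{S/\Bbbk}^{[r]}\otimes\SA^{\otimes r}) \isom \pi^{*}\SA^{\otimes(l+r)} \isom \SO_{Y}(-(l+r)E)$, whose $\mu$-pushforward and double dual is computed by Proposition~\ref{prop:coneSk}\eqref{prop:coneSk:0}-type reasoning — one checks $\bigoplus_{m\ge 0}\OH^{0}(S,\SA^{\otimes(l+r+m)})$ is, as a graded $R$-module, free of rank one, hence $\omega_{X/\Bbbk}^{[r]}\isom\SO_{X}$.

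The main obstacle I anticipate is the implication \eqref{prop:coneGor:cond2}$\Rightarrow$\eqref{prop:coneGor:cond3}: one needs to control $\Pic$ of the line-bundle total space minus its zero section and argue that an invertible sheaf pulled back as $\pi^{*}(\omega_{S/\Bbbk}^{[r]}\otimes\SA^{\otimes r})$ being trivializable over $X\setminus P$ — equivalently over $Y\setminus E$ after a twist — already pins down $\omega_{S/\Bbbk}^{[r]}$ up to a power of $\SA$. The honest way is: $\omega_{X/\Bbbk}^{[r]}$ invertible on the affine scheme $X$ means it is in the image of $R$ as a graded module, so looking at the graded pieces $\OH^{0}(S,\omega_{S/\Bbbk}^{[r]}\otimes\SA^{\otimes(r+m)})$ for varying $m$ and comparing with the graded structure of $R$ forces, via ampleness of $\SA$ and Serre vanishing, the identity $\omega_{S/\Bbbk}^{[r]}\isom\SA^{\otimes l}$ with $l$ read off from the shift. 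I would carry out this graded-module comparison carefully, since it is the one place where the argument is not a formal consequence of the cited propositions.
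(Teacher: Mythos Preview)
Your overall plan for parts \eqref{prop:coneGor:0}, \eqref{prop:coneGor:1}, and \eqref{prop:coneGor:2} matches the paper's proof. There is one bookkeeping slip in \eqref{prop:coneGor:2}: you apply Proposition~\ref{prop:coneSk} with $\SG = \omega_{S/\Bbbk}^{[r]}$, but by \eqref{eq:prop:coneGor:isom2} it is $\pi^{*}(\omega_{S/\Bbbk}^{[r]}\otimes\SA^{\otimes r})$ that equals $\omega_{Y/\Bbbk}^{[r]}$, so the correct choice is $\SG = \omega_{S/\Bbbk}^{[r]}\otimes\SA^{\otimes r}$; otherwise the identification $\widetilde{\SF} \isom \omega_{X/\Bbbk}^{[r]}$ fails. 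This is harmless for the conclusion (twisting $\SG$ by a power of $\SA$ only shifts the index $m$ in the vanishing condition and does not affect the $\bfS_k$-condition, since $\SA$ is invertible), but you should state the correct $\SG$.

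For \eqref{prop:coneGor:cond2} $\Rightarrow$ \eqref{prop:coneGor:cond3}, your Picard sketch is confused: $\SO_Y(E)|_{Y\setminus E}$ is trivial, not $\pi^{*}\SA^{-1}$, and the correct statement is that $\Pic(Y\setminus E) \isom \Pic(S)/\langle\SA\rangle$ via $\pi^{*}$; but knowing that $\pi^{*}(\omega_{S/\Bbbk}^{[r]}\otimes\SA^{\otimes r})|_{Y\setminus E}$ is isomorphic to the restriction of $\mu^{*}(\omega_{X/\Bbbk}^{[r]})$ does not by itself force $\omega_{S/\Bbbk}^{[r]}$ to be a power of $\SA$, since $\omega_{X/\Bbbk}^{[r]}$ need not be trivial on $X$. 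Your fallback graded-module argument can be made to work and is genuinely different from the paper's approach: the key point you should make explicit is that $R_{0} = \OH^{0}(S,\SO_{S})$ is Artinian local, so by graded Nakayama every finitely generated graded projective $R$-module is graded free; hence the module $M = \OH^{0}(X,\omega_{X/\Bbbk}^{[r]}) \isom \bigoplus_{m\in\BZZ}\OH^{0}(S,\omega_{S/\Bbbk}^{[r]}\otimes\SA^{\otimes(r+m)})$ is isomorphic to $R(d)$ for some $d$, and the homogeneous generator $e\in M_{-d}$ gives a section of $\omega_{S/\Bbbk}^{[r]}\otimes\SA^{\otimes(r-d)}$ which must be nowhere-vanishing (else multiplication by $e$ would fail to surject onto $M_{m}$ for $m\gg 0$, which is globally generated by ampleness).

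The paper instead argues geometrically: after first deducing that $\omega_{S/\Bbbk}^{[r]}$ is invertible (by faithfully flat descent along $Y\setminus E \to S$), it constructs for minimal $b$ an injection $\phi\colon \omega_{Y/\Bbbk}^{[r]}\otimes\SO_Y(-bE) \injmap \mu^{*}(\omega_{X/\Bbbk}^{[r]})$ with cokernel supported on $E$, identifies the cokernel with $\SO_D\otimes\mu^{*}(\omega_{X/\Bbbk}^{[r]})$ for an effective Cartier divisor $D$ supported in $E$, and shows $D = 0$ by a flatness argument over $S$ (fibers of $\pi$ are $\BAA^1$, so $D$ is a relative Cartier divisor, hence $D\to S$ is finite flat; but $\phi|_E$ is nonzero on a nonempty open, forcing $D=\emptyset$). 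This yields $\phi|_E$ as an isomorphism $\omega_{S/\Bbbk}^{[r]}\otimes\SA^{\otimes(r+b)} \isom \SO_S$. The paper's argument is more hands-on but avoids any appeal to the structure of graded projective modules; your approach is cleaner once the graded-free fact is in hand.
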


\begin{proof}
The assertion \eqref{prop:coneGor:0} is a consequence of
Proposition~\ref{prop:coneSk}\eqref{prop:coneSk:S2} for \( \SG = \SO_{S} \),
Lemma~\ref{lem:QGorScheme:smooth},
and Fact~\ref{fact:S2}\eqref{fact:S2:1}.
Let \( \omega_{S/\Bbbk}^{\bullet} \)
(resp.\ \( \omega_{Y/\Bbbk}^{\bullet} \), resp.\ \( \omega_{X/\Bbbk}^{\bullet} \))
be the canonical dualizing complex of \( S \) (resp.\ \( Y \), resp.\ \( X \))
in the sense of Definition~\ref{dfn:DCalg}.
Note that \( \omega^{\bullet}_{S/\Bbbk}[-n] \) (resp.\ \( \omega_{Y/\Bbbk}^{\bullet}[-n-1] \),
resp.\ \( \omega_{X/\Bbbk}^{\bullet}[-n-1] \))
is an ordinary dualizing complex by Lemma~\ref{lem:algschemeSk} for \( n = \dim S \).
Then,
\[ \omega_{Y/\Bbbk}^{\bullet}
\isom_{\qis} \varOmega^{1}_{Y/S}[1] \otimes^{\bfL}_{\SO_{Y}} \bfL \pi^{*}(\omega_{S/\Bbbk}^{\bullet})
\isom_{\qis} \bfL \pi^{*}( \SA \otimes^{\bfL}_{\SO_{S}} \omega_{S/\Bbbk}^{\bullet})[1],\]
since \( \pi \) is separated and smooth (cf.\ Example~\ref{exam:RDembeddable}) and
since there is an isomorphism \( \varOmega^{1}_{Y/S} \isom \pi^{*}\SA \)
(cf.\ \cite[IV, Cor.~16.4.9]{EGA}).
Thus, we have the isomorphism \eqref{eq:prop:coneGor:isom1}.
By taking double-dual of tensor powers of both sides of \eqref{eq:prop:coneGor:isom1},
we have the isomorphism \eqref{eq:prop:coneGor:isom2}
for any integer \( r \) by Remark~\ref{rem:dfn:reflexive}.
Since \( X \) satisfies \( \bfS_{2} \), any reflexive \( \SO_{X} \)-module \( \SF \)
satisfies \( \bfS_{2} \) by Corollary~\ref{cor:prop:S1S2:reflexive},
and moreover, \( \depth_{P} \SF \geq 2 \), since \( \Codim(P, X) = \dim X = n + 1 \geq 2  \).
Thus, we have isomorphisms
\[ \omega_{X/\Bbbk}^{[r]} \isom j_{*}(\omega^{[r]}_{X \setminus P/\Bbbk})
\isom j_{*}(\mu_{*}(\omega^{[r]}_{Y/\Bbbk})|_{X \setminus P}))
\isom (\mu_{*}(\omega^{[r]}_{Y/\Bbbk}))^{\vee\vee}\]
for any integer \( r \) and for the open immersion \( j \colon X \setminus P \injmap X \).
This proves \eqref{prop:coneGor:1}.

By \eqref{prop:coneGor:1}, we see that
\eqref{prop:coneGor:2} is a consequence of
\eqref{prop:coneSk:D3} and \eqref{prop:coneSk:Sk} of
Proposition~\ref{prop:coneSk} applied to the case:
\( \SG = \omega^{[r]}_{S/\Bbbk} \otimes \SA^{\otimes r}\),
where \( \widetilde{\SF} \isom \omega_{X/\Bbbk}^{[r]} \).
It remains to prove the equivalence of the conditions
\eqref{prop:coneGor:cond1}--\eqref{prop:coneGor:cond3}
of \eqref{prop:coneGor:3}.
Since \eqref{prop:coneGor:cond1} \( \Rightarrow \) \eqref{prop:coneGor:cond2}
is trivial, it is enough to prove
\eqref{prop:coneGor:cond2} \( \Rightarrow \) \eqref{prop:coneGor:cond3}
and \eqref{prop:coneGor:cond3} \( \Rightarrow \) \eqref{prop:coneGor:cond1}.

Proof of
\eqref{prop:coneGor:cond3} \( \Rightarrow  \) \eqref{prop:coneGor:cond1}:
Assume that \( \omega_{S/\Bbbk}^{[r]} \isom \SA^{\otimes l}  \)
for some \( r > 0 \) and \( l \in \BZZ \).
Since \( \SO_{Y}(-E) \isom \pi^{*}\SA \) for the zero-section \( E \)
of Lemma~\ref{lem:cone1}, we have
\[ \omega_{Y/\Bbbk}^{[r]} \otimes_{\SO_{Y}} \SO_{Y}((r + l)E) \isom
\pi^{*}(\omega^{[r]}_{S/\Bbbk} \otimes \SA^{\otimes r} \otimes_{\SO_{S}} \SA^{\otimes -(r+l)})
\isom \SO_{Y} \]
from the isomorphism in \eqref{prop:coneGor:1}.
By taking \( \mu_{*} \), we have:
\( \omega_{X/\Bbbk}^{[r]} \isom \pi_{*}\SO_{Y} \isom \SO_{X} \).

Proof of
\eqref{prop:coneGor:cond2} \( \Rightarrow  \) \eqref{prop:coneGor:cond3}:
Assume that \( \omega_{X/\Bbbk}^{[r]} \) is invertible. Then,
\( \omega_{Y/\Bbbk}^{[r]} \) is invertible on \( Y \setminus E \),
since \( Y \setminus E \isom X \setminus P \).
Moreover, \( \omega_{S/\Bbbk}^{[r]} \) is invertible
by \eqref{eq:prop:coneGor:isom2},
since \( Y \setminus E \to S \) is faithfully flat (cf.\ Lemma~\ref{lem:LocFreeDecsent}).
Thus, \( \omega_{Y/\Bbbk}^{[r]} \) is also invertible again
by \eqref{eq:prop:coneGor:isom2}.
There is an injection
\[ \phi \colon \omega_{Y/\Bbbk}^{[r]} \otimes_{\SO_{Y}} \SO_{Y}(-bE)
\injmap \mu^{*}(\omega_{X/\Bbbk}^{[r]}) \]
for some integer \( b \) such that the cokernel of \( \phi \) is supported on \( E \).
In fact, for any integer \( b \), we have a canonical homomorphism
\begin{multline*}
\mu_{*}(\omega_{Y/\Bbbk}^{[r]} \otimes_{\SO_{Y}} \SO_{Y}(-bE)) \injmap
j_{*}(\mu_{*}(\omega_{Y/\Bbbk}^{[r]} \otimes_{\SO_{Y}} \SO_{Y}(-bE))|_{X \setminus P})
\\ \isom j_{*}(\mu_{*}(\omega_{Y/\Bbbk}^{[r]})|_{X \setminus P})
\isom \omega^{[r]}_{X/\Bbbk}
\end{multline*}
whose cokernel is supported on \( P \), and
if \( b \) is sufficiently large, then
\[ \mu^{*}\mu_{*}(\omega_{Y/\Bbbk}^{[r]} \otimes_{\SO_{Y}} \SO_{Y}(-bE)) \to
\omega_{Y/\Bbbk}^{[r]} \otimes_{\SO_{Y}} \SO_{Y}(-bE)\]
is surjective, since \( \SO_{Y}(-E) \isom \pi^{*}\SA\) is relatively ample over \( X \).
Thus,
\[
\mu^{*}\mu_{*}(\omega_{Y/\Bbbk}^{[r]} \otimes_{\SO_{Y}} \SO_{Y}(-bE))
\to \mu^{*}(\omega^{[r]}_{X/\Bbbk})
\]
induces the injection \( \phi \), since the invertible sheaf \( \mu^{*}(\omega^{[r]}_{X/\Bbbk}) \) 
does not contain non-zero coherent \( \SO_{Y} \)-submodule whose support is contained in \( E \), 
by the \( \bfS_{1} \)-condition on \( Y \).
Let \( b \) be a minimal integer with an injection \( \phi \) above.
Then, \( \phi \) is an isomorphism.
This is shown as follows. The homomorphism
\[ \phi|_{E} \colon (\omega_{Y/\Bbbk}^{[r]} \otimes_{\SO_{Y}} \SO_{Y}(-bE)) \otimes_{\SO_{Y}} \SO_{E}
\to  \mu^{*}(\omega_{X/\Bbbk}^{[r]}) \otimes_{\SO_{Y}} \SO_{E}\]
is not zero by  the minimality of \( b \).
Here, \( \phi|_{E} \)
corresponds to a non-zero homomorphism
\[ \omega_{S/\Bbbk}^{[r]} \otimes_{\SO_{S}} \SA^{\otimes (r + b)} \to \SO_{S}\]
by the isomorphism \( \pi|_{E} \colon E \isom S \) and by
\eqref{eq:prop:coneGor:isom2}.
In particular, there is an non-empty open subset \( U \subset S\)
such that \( \phi \) is an isomorphism on \( \pi^{-1}(U) \).
On the other hand, since \( \phi \) is an injection between invertible sheaves,
there is an effective Cartier divisor
\( D \) on \( Y \) such that the cokernel of \( \phi \) is isomorphic to
\( \SO_{D} \otimes_{\SO_{Y}} \pi^{*}(\omega_{X/\Bbbk}^{[r]}) \) and that
\( \Supp D \subset E \). Then, \( D \) is a relative Cartier divisor over \( S \),
since every fiber of \( \pi \) is \( \BAA^{1} \) (cf.\ \cite[IV, (21.15.3.3)]{EGA}).
Thus, \( \pi|_{D} \colon D \to S \) is a flat and finite morphism.
If \( D \ne 0 \), then \( \pi(D) = S \) by the connectedness of \( S \), and
it contradicts \( \Supp D \cap \pi^{-1}(U) = \emptyset \).
Thus, \( D = 0 \), and consequently, \( \phi \) is an isomorphism.

Therefore, we have an isomorphism
\[ \omega_{S/\Bbbk}^{[r]} \otimes_{\SO_{S}} \SA^{\otimes (r + b)} \isom \SO_{S} \]
corresponding to the isomorphism \( \phi|_{E} \), and
the condition \eqref{prop:coneGor:cond3} is satisfied for \( l = -(r + b) \).
Thus, we have proved the equivalence of
\eqref{prop:coneGor:cond1}--\eqref{prop:coneGor:cond3}, and we are done.
\end{proof}

\begin{cor}\label{cor:coneGor}
Let \( X \) be the affine cone of a connected polarized scheme
\( (S, \SA) \) over \( \Bbbk \).
Assume that \( n = \dim S > 0 \) and
\( \OH^{0}(S, \SA^{\otimes m}) = 0 \) for any \( m < 0 \).
Then, the following hold\emph{:}

\begin{enumerate}
\item \label{cor:coneGor:Gor}
The scheme \( X \) is Gorenstein if and only if
\begin{itemize}
\item  \( S \) is Gorenstein,

\item
\( \OH^{i}(S, \SA^{\otimes m}) = 0 \) for any \( 0 < i < n \) and any \( m \in \BZZ \), and

\item  \( \omega_{S/\Bbbk} \isom \SA^{\otimes l} \) for some integer \( l \).
\end{itemize}

\item  \label{cor:coneGor:qG}
The scheme \( X \) is quasi-Gorenstein
if and only if \( S \) is quasi-Gorenstein and
\( \omega_{S/\Bbbk} \isom \SA^{\otimes l} \) for some integer \( l \).

\item \label{cor:coneGor:QGor}
The scheme \( X \) is \( \BQQ \)-Gorenstein
if and only if \( S \) is \( \BQQ \)-Gorenstein
and \( \omega_{S/\Bbbk}^{[r]} \isom \SA^{\otimes l} \)
for some integers \( r > 0 \) and \( l \).
\end{enumerate}
\end{cor}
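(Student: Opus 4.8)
The plan is to deduce Corollary~\ref{cor:coneGor} directly from Proposition~\ref{prop:coneGor}, whose hypotheses are essentially available once we check that $X$ satisfies $\bfS_{2}$. So the first move is: under the standing assumption $\OH^{0}(S, \SA^{\otimes m}) = 0$ for all $m < 0$, observe that in each of the three cases the scheme $X$ satisfies $\bfS_{2}$. In \eqref{cor:coneGor:Gor} this is automatic since Gorenstein implies $\bfS_{2}$ and, via Proposition~\ref{prop:coneSk}\eqref{prop:coneSk:S2} applied to $\SG = \SO_{S}$ together with the vanishing hypothesis, $S$ Gorenstein (hence $\bfS_{2}$) forces $X$ to be $\bfS_{2}$. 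In \eqref{cor:coneGor:qG} and \eqref{cor:coneGor:QGor}, quasi-Gorenstein and $\BQQ$-Gorenstein both imply $\bfS_{2}$ by Lemma~\ref{lem:QGorSch}\eqref{lem:QGorSch:2}; for the converse directions we assume $S$ is quasi-Gorenstein (resp.\ $\BQQ$-Gorenstein), which implies $S$ is $\bfS_{2}$, and then again Proposition~\ref{prop:coneSk}\eqref{prop:coneSk:S2} with the vanishing of $\OH^{0}(S,\SA^{\otimes m})$ for $m<0$ gives that $X$ is $\bfS_{2}$. Thus in every implication we may freely invoke Proposition~\ref{prop:coneGor}.

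Next I would treat \eqref{cor:coneGor:qG} and \eqref{cor:coneGor:QGor} first, since they follow cleanly. For \eqref{cor:coneGor:QGor}: by Lemma~\ref{lem:QGorSch3}\eqref{lem:QGorSch3:3}, $X$ is $\BQQ$-Gorenstein iff $X$ is $\bfS_{2}$ and, locally, $\omega_{X/\Bbbk}^{[r]}$ is invertible for some $r>0$; since $X=\Spec R$ is the affine cone, being locally invertible in a neighborhood of every point including the vertex $P$ is what must be arranged, and the decisive point is invertibility at $P$. By Proposition~\ref{prop:coneGor}\eqref{prop:coneGor:3}, for a fixed $r>0$ the sheaf $\omega_{X/\Bbbk}^{[r]}$ is invertible (equivalently $\isom \SO_{X}$) iff $\omega_{S/\Bbbk}^{[r]} \isom \SA^{\otimes l}$ for some $l$; and $\omega_{S/\Bbbk}^{[r]}$ being a line bundle forces $S$ to be $\BQQ$-Gorenstein (one checks the index divides $r$). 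Conversely, if $S$ is $\BQQ$-Gorenstein with $\omega_{S/\Bbbk}^{[r]}\isom\SA^{\otimes l}$, then Proposition~\ref{prop:coneGor}\eqref{prop:coneGor:3} gives $\omega_{X/\Bbbk}^{[r]}\isom\SO_{X}$, so $X$ is $\BQQ$-Gorenstein. Also, away from $P$ we have $X\setminus P\isom Y\setminus E$ smooth over $S$, so Corollary~\ref{cor:cone1} (or Lemma~\ref{lem:QGorScheme:smooth}) handles the $\BQQ$-Gorenstein property at points other than $P$ automatically from that of $S$. Then \eqref{cor:coneGor:qG} is the special case of \eqref{cor:coneGor:QGor} with $r=1$, using Lemma~\ref{lem:QGorSch}\eqref{lem:QGorSch:1} that "quasi-Gorenstein" $=$ "$\BQQ$-Gorenstein of index one"; here $\omega_{S/\Bbbk}^{[1]}\isom\omega_{S/\Bbbk}$ since $\omega_{S/\Bbbk}$ is already $\bfS_{2}$ (Corollary~\ref{cor:S2S2}), and invertibility of $\omega_{S/\Bbbk}$ is exactly $S$ being quasi-Gorenstein by Lemma~\ref{lem:QGorSch3}.

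For \eqref{cor:coneGor:Gor} I would argue: $X$ Gorenstein iff $X$ is Cohen--Macaulay and $\omega_{X/\Bbbk}$ is invertible. Since $n=\dim S>0$, the canonical sheaf $\omega_{X/\Bbbk}$ is the coherent sheaf $\widetilde{\SF}$ with $\SG = \omega_{S/\Bbbk}$ (via Proposition~\ref{prop:coneGor}\eqref{prop:coneGor:1} with $r=1$ and $\SA$-twist: $\widetilde{\SF}\isom\omega_{X/\Bbbk}^{[1]}\isom\omega_{X/\Bbbk}$). By Proposition~\ref{prop:coneSk}\eqref{prop:coneSk:CM}, $\omega_{X/\Bbbk}$ is Cohen--Macaulay iff $\omega_{S/\Bbbk}$ is a Cohen--Macaulay $\SO_{S}$-module and $\OH^{i}(S,\omega_{S/\Bbbk}\otimes\SA^{\otimes m})=0$ for $0<i<\dim S = n$ and all $m$; combined with the case $r=1$ of Proposition~\ref{prop:coneGor}\eqref{prop:coneGor:3}, invertibility of $\omega_{X/\Bbbk}$ is equivalent to $\omega_{S/\Bbbk}\isom\SA^{\otimes l}$. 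One then has to repackage these into the stated three bullet conditions: $S$ Gorenstein, $\OH^{i}(S,\SA^{\otimes m})=0$ for $0<i<n$ and all $m$, and $\omega_{S/\Bbbk}\isom\SA^{\otimes l}$. The equivalence uses: $S$ Cohen--Macaulay together with $\omega_{S/\Bbbk}$ invertible is precisely $S$ being Gorenstein (Remark~\ref{rem:lem:DC-CM:CM2} and the characterization of Gorenstein schemes); and once $\omega_{S/\Bbbk}\isom\SA^{\otimes l}$, the cohomology vanishing $\OH^{i}(S,\omega_{S/\Bbbk}\otimes\SA^{\otimes m})=0$ is the same as $\OH^{i}(S,\SA^{\otimes(m+l)})=0$, i.e.\ $\OH^{i}(S,\SA^{\otimes m'})=0$ for all $m'$ and $0<i<n$.

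\textbf{Expected main obstacle.} The conceptual content is entirely in Propositions~\ref{prop:coneSk} and \ref{prop:coneGor}, so the corollary is bookkeeping; the one genuine point requiring care is the passage, in case \eqref{cor:coneGor:Gor}, between "$X$ Cohen--Macaulay and $\omega_{X/\Bbbk}$ invertible" and the three bullet conditions on $S$ --- specifically checking that Cohen--Macaulayness of $\omega_{S/\Bbbk}$ as a module is not an extra hypothesis beyond "$S$ Gorenstein" (it follows, since if $\omega_{S/\Bbbk}\isom\SA^{\otimes l}$ is a line bundle on a scheme that we want to be Cohen--Macaulay, then $S$ CM $\Leftrightarrow$ $\omega_{S/\Bbbk}$ CM, and $S$ CM $+$ $\omega_{S/\Bbbk}$ invertible $\Leftrightarrow$ $S$ Gorenstein), and that the cohomology vanishing condition is correctly transported across the isomorphism $\omega_{S/\Bbbk}\isom\SA^{\otimes l}$. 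I would also need to confirm that the hypothesis $\OH^{0}(S,\SA^{\otimes m})=0$ for $m<0$ is exactly what makes $\widetilde{\SF}$ coherent (Proposition~\ref{prop:coneSk}\eqref{prop:coneSk:coh}) and $\omega_{X/\Bbbk}^{[r]}$ satisfy $\bfS_{2}$ in the reverse implications, so that all cited results apply on the nose.
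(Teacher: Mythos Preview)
Your proposal is correct and follows essentially the same route as the paper: parts \eqref{cor:coneGor:qG} and \eqref{cor:coneGor:QGor} are handled exactly as the paper does, via Corollary~\ref{cor:cone1} for points away from the vertex, Proposition~\ref{prop:coneSk}\eqref{prop:coneSk:S2} to secure $\bfS_{2}$ for $X$, and Proposition~\ref{prop:coneGor}\eqref{prop:coneGor:3} for the invertibility of $\omega_{X/\Bbbk}^{[r]}$. For \eqref{cor:coneGor:Gor} the paper is slightly more direct: it writes ``Gorenstein $=$ Cohen--Macaulay $+$ quasi-Gorenstein'' and applies Proposition~\ref{prop:coneSk}\eqref{prop:coneSk:CM} with $\SG = \SO_{S}$ to characterize when $X$ is Cohen--Macaulay, then combines with the already-proved \eqref{cor:coneGor:qG}; your route via $\SG = \omega_{S/\Bbbk}\otimes\SA$ (characterizing when $\omega_{X/\Bbbk}$ is Cohen--Macaulay) works too, but needs the extra observation---which you flag---that once $\omega_{X/\Bbbk}$ is invertible, its Cohen--Macaulayness is equivalent to that of $X$.
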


\begin{proof}
The assertion \eqref{cor:coneGor:Gor} follows from \eqref{cor:coneGor:qG} and
Proposition~\ref{prop:coneSk}\eqref{prop:coneSk:CM}.
The ``only if'' parts of \eqref{cor:coneGor:qG} and \eqref{cor:coneGor:QGor}
are shown as follows.
Assume that \( X \) is \( \BQQ \)-Gorenstein of Gorenstein index \( r \).
Note that \( X \) is quasi-Gorenstein if and only if \( r = 1 \) by
Lemma~\ref{lem:QGorSch}\eqref{lem:QGorSch:1}.
Then, \( S \) is \( \BQQ \)-Gorenstein by Corollary~\ref{cor:cone1}.
Moreover,
\( \omega_{S/\Bbbk}^{[r]} \isom \SA^{\otimes l} \) for some \( l \in \BZZ \)
by the implication \eqref{prop:coneGor:cond2} \( \Rightarrow \)
\eqref{prop:coneGor:cond3} of Proposition~\ref{prop:coneGor}\eqref{prop:coneGor:3}.
Thus, the ``only if'' parts are proved.
The ``if'' parts of
\eqref{cor:coneGor:qG} and \eqref{cor:coneGor:QGor} are shown as follows.
Assume that \( S \) is \( \BQQ \)-Gorenstein.
Then, \( X \setminus P \) is \( \BQQ \)-Gorenstein by Corollary~\ref{cor:cone1}.
In particular, \( \Codim(X \setminus X^{\circ}, X) \geq 2\) for the Gorenstein locus
\( X^{\circ} = \Gor(X) \).
Moreover, \( X \) satisfies \( \bfS_{2} \)
by Proposition~\ref{prop:coneSk}\eqref{prop:coneSk:S2}, since \( S \) satisfies \( \bfS_{2} \)
and \( \OH^{0}(S, \SA^{\otimes m}) = 0 \) for any \( m < 0 \) by assumption.
If \( \omega_{S/\Bbbk}^{[r]} \isom \SA^{\otimes l} \)
for integers \( r > 0 \) and \( l \),
then \( \omega_{X/\Bbbk}^{[r]} \) is invertible by the implication
\eqref{prop:coneGor:cond3} \( \Rightarrow \)
\eqref{prop:coneGor:cond2} of Proposition~\ref{prop:coneGor}\eqref{prop:coneGor:3}.
Thus, \( X \) is \( \BQQ \)-Gorenstein. This proves the ``if''
part of \eqref{cor:coneGor:QGor}.
The ``if'' part of \eqref{cor:coneGor:qG} follows also from the argument above
by setting \( r = 1 \).
Thus, we are done.
\end{proof}

\begin{cor}\label{cor:coneGor2}
Let \( X \) be the affine cone of a connected polarized scheme
\( (S, \SA) \) over \( \Bbbk \).
Assume that \( S \) is Cohen--Macaulay, \( n := \dim S > 0 \),
and
\[ \OH^{i}(S, \SA^{\otimes m})
= \OH^{i}(S, \omega_{S/\Bbbk} \otimes \SA^{\otimes m}) = 0 \]
for any \( i > 0 \) and \( m > 0 \).
Then, the following hold\emph{:}
\begin{enumerate}
\item \label{cor:coneGor2:-1}
The affine cone \( X \) satisfies \( \bfS_{2} \).
In particular, \( S \) is reduced \emph{(}resp.\ normal\emph{)} if and only if \( X \) is so.

\item \label{cor:coneGor2:0}
The following conditions are equivalent to each other for an integer \( k \geq 3 \)\emph{:}
\begin{enumerate}
\item  \( \depth \SO_{X, P} \geq k \)\emph{;}

\item  \( X \) satisfies \( \bfS_{k} \)\emph{;}

\item  \( \OH^{i}(S, \SO_{S}) = 0 \) for any \( 0 < i < k-1 \).
\end{enumerate}

\item \label{cor:coneGor2:1}
The affine cone \( X \) is Cohen--Macaulay if and only if \( \OH^{i}(S, \SO_{S}) = 0 \)
for any \( 0 < i < n \).

\item \label{cor:coneGor2:12}
The following conditions are equivalent to each other for an integer \( k \geq 3 \)\emph{:}
\begin{enumerate}
\item  \( \depth (\omega_{X/\Bbbk})_{P} \geq k \)\emph{;}

\item  \( \omega_{X/\Bbbk} \) satisfies \( \bfS_{k} \)\emph{;}

\item  \( \OH^{i}(S, \SO_{S}) = 0 \) for any \( n - k + 1 < i < n \).
\end{enumerate}

\item \label{cor:coneGor2:2}
When \( S \) is Gorenstein, \( X \) is \( \BQQ \)-Gorenstein if and only if
\( \omega_{S/\Bbbk}^{\otimes r} \isom \SA^{\otimes l} \)
for some integers \( r > 0 \) and \( l \).

\item \label{cor:coneGor2:3}
When \( S \) is Gorenstein, \( X \) is Gorenstein if and only if
\( \omega_{S/\Bbbk} \isom \SA^{\otimes l} \)
for some \( l \in \BZZ \) and
if \( \OH^{i}(S, \SO_{S}) = 0 \)
for any \( 0 < i < n\).
\end{enumerate}
\end{cor}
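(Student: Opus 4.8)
The plan is to reduce each assertion to the cone results already established — Proposition~\ref{prop:coneSk}, Proposition~\ref{prop:coneGor}, Corollary~\ref{cor:cone1}, and Corollary~\ref{cor:coneGor} — the one new ingredient being that, under the standing hypotheses ($S$ Cohen--Macaulay, and vanishing of $\OH^{i}(S, \SA^{\otimes m})$ and $\OH^{i}(S, \omega_{S/\Bbbk} \otimes \SA^{\otimes m})$ for $i > 0$ and $m > 0$), Serre duality on $S$ collapses all the ``twisted'' cohomological conditions appearing there to conditions on the untwisted groups $\OH^{i}(S, \SO_{S})$.

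First I would record the Serre duality to be used. Since $S$ is projective, Cohen--Macaulay, and (being connected and of finite type over a field) of pure dimension $n$, one has $\omega^{\bullet}_{S/\Bbbk} \isom_{\qis} \omega_{S/\Bbbk}[n]$, and Corollary~\ref{cor:SerreDual} gives, for every $m \in \BZZ$,
\[ \OH^{i}(S, \SA^{\otimes m})^{\vee} \isom \OH^{n-i}(S, \omega_{S/\Bbbk} \otimes \SA^{\otimes -m}) \quad \text{and} \quad \OH^{i}(S, \omega_{S/\Bbbk} \otimes \SA^{\otimes m})^{\vee} \isom \OH^{n-i}(S, \SA^{\otimes -m}). \]
From this I extract two facts: (a) $\OH^{0}(S, \SA^{\otimes m}) = \OH^{0}(S, \omega_{S/\Bbbk} \otimes \SA^{\otimes m}) = 0$ for all $m < 0$, by taking $i = 0$, since then $n - i = n \geq 1$ and the relevant group vanishes by hypothesis; and (b) for $0 < i < n$, $\OH^{i}(S, \SA^{\otimes m}) = \OH^{i}(S, \omega_{S/\Bbbk} \otimes \SA^{\otimes m}) = 0$ for every $m \neq 0$ (for $m > 0$ this is the hypothesis, for $m < 0$ it follows from the displayed isomorphisms, since then $0 < n - i < n$). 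In particular (a) shows the hypothesis of Corollary~\ref{cor:coneGor} is met.

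Now the individual parts. For \eqref{cor:coneGor2:-1}: Proposition~\ref{prop:coneSk}\eqref{prop:coneSk:S2} with $\SG = \SO_{S}$ (so $\SF \isom \SO_{X}$) shows $X$ satisfies $\bfS_{2}$, because $\SO_{S}$ is $\bfS_{2}$ and $\OH^{0}(S, \SA^{\otimes m}) = 0$ for $m < 0$ by (a); then, since $\Codim(\{P\}, X) = n + 1 \geq 2$ and $X$ is $\bfS_{2}$, Serre's criteria reduce reducedness (resp.\ normality) of $X$ to that of $X \setminus P$, and $X \setminus P$ is smooth and surjective over $S$ (cf.\ Corollary~\ref{cor:cone1}), giving the equivalence with reducedness (resp.\ normality) of $S$. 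For \eqref{cor:coneGor2:0}, \eqref{cor:coneGor2:1}, \eqref{cor:coneGor2:12}: $\widetilde{\SF} = \SO_{X}$ is coherent by \eqref{cor:coneGor2:-1}, so I apply Proposition~\ref{prop:coneSk}\eqref{prop:coneSk:D3}, \eqref{prop:coneSk:Sk}, \eqref{prop:coneSk:CM} with $\SG = \SO_{S}$ for \eqref{cor:coneGor2:0} and \eqref{cor:coneGor2:1}, and with $\SG = \omega_{S/\Bbbk} \otimes \SA$ for \eqref{cor:coneGor2:12}; in the latter case $\widetilde{\SF} \isom \omega_{X/\Bbbk}$, since $X \setminus P$ is Cohen--Macaulay, $\omega_{Y/\Bbbk} \isom \pi^{*}(\omega_{S/\Bbbk} \otimes \SA)$ (Proposition~\ref{prop:coneGor}\eqref{prop:coneGor:1}) restricts via $\mu$ to $\omega_{X \setminus P/\Bbbk}$, and $\omega_{X/\Bbbk}$ is $\bfS_{2}$ with $\Codim(\{P\}, X) \geq 2$. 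In each instance the requirement that $\SG$ be $\bfS_{k}$ (or Cohen--Macaulay) is automatic since $S$ is Cohen--Macaulay, and the twisted condition ``$\OH^{i}(\SG(m)) = 0$ for all $m$ and $0 < i < k-1$'' reduces, using (b) and the vanishing $\OH^{i}(S, -) = 0$ for $i > n$, to the stated condition on $\OH^{i}(S, \SO_{S})$ — for \eqref{cor:coneGor2:12} after the reflection $j = n - i$ furnished by Serre duality. Finally \eqref{cor:coneGor2:2} and \eqref{cor:coneGor2:3} are Corollary~\ref{cor:coneGor}\eqref{cor:coneGor:QGor} and \eqref{cor:coneGor:Gor}: when $S$ is Gorenstein, $\omega_{S/\Bbbk}$ is invertible and $\omega_{S/\Bbbk}^{[r]} = \omega_{S/\Bbbk}^{\otimes r}$, so the condition ``$\omega_{S/\Bbbk}^{[r]} \isom \SA^{\otimes l}$'' takes the stated form, and the vanishing ``$\OH^{i}(S, \SA^{\otimes m}) = 0$ for $0 < i < n$ and all $m$'' required in \eqref{cor:coneGor:Gor} collapses to ``$\OH^{i}(S, \SO_{S}) = 0$ for $0 < i < n$'' by (b).

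The point requiring the most care is matching the cohomological ranges in \eqref{cor:coneGor2:0} and \eqref{cor:coneGor2:12}: Proposition~\ref{prop:coneSk} phrases the conditions over the band $0 < i < k - 1$ while the corollary phrases them over a complementary band, so one must carry the Serre-duality reflection $i \leftrightarrow n - i$ through and exploit the automatic vanishing above degree $n$ — this goes through cleanly precisely when the reflected band stays inside $[0, n]$ (i.e.\ for $3 \leq k \leq n + 1$, the borderline case $k = n + 1$ of \eqref{cor:coneGor2:0} being subsumed by \eqref{cor:coneGor2:1}). Apart from this bookkeeping, no input beyond the cone structure recalled in Lemma~\ref{lem:cone1} and the duality above is needed.
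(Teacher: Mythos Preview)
Your proof is correct and follows essentially the same route as the paper: establish the Serre-duality vanishing (your facts (a) and (b)), then feed $\SG = \SO_{S}$ and $\SG = \omega_{S/\Bbbk} \otimes \SA$ into Proposition~\ref{prop:coneSk}\eqref{prop:coneSk:D3}, \eqref{prop:coneSk:Sk}, \eqref{prop:coneSk:CM}, and invoke Corollary~\ref{cor:coneGor} for the last two parts. You even fill in details the paper leaves implicit (the reduced/normal clause in \eqref{cor:coneGor2:-1}, the identification $\widetilde{\SF} \isom \omega_{X/\Bbbk}$ for \eqref{cor:coneGor2:12}, and the caveat about the range $k \leq n+1$).
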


\begin{proof}
By duality (cf.\ Corollary~\ref{cor:SerreDual}),
we have
\[ \OH^{i}(S, \SA^{\otimes m}) \isom
\OH^{n - i}(S, \omega_{S/\Bbbk} \otimes_{\SO_{S}} \SA^{\otimes -m})^{\vee} \]
for any integers \( m \) and \( i \), and by assumption,
this is zero either if \( m > 0 \) and \( i > 0 \) or if \( m < 0 \) and \( i < n \).
Thus, \( X \) satisfies \( \bfS_{2} \) by considering the case: \( m < 0  \) and \( i = 0 \)
and by Proposition~\ref{prop:coneSk}\eqref{prop:coneSk:S2} applied to \( \SG = \SO_{S} \).
This proves \eqref{cor:coneGor2:-1}.
The assertion \eqref{cor:coneGor2:0} (resp.\ \eqref{cor:coneGor2:12}) is a consequence of
\eqref{prop:coneSk:D3} and \eqref{prop:coneSk:Sk} of
Proposition~\ref{prop:coneSk} applied to \( \SG = \SO_{S} \)
(resp.\ \( \SG = \omega_{S/\Bbbk} \otimes \SA\)).
Similarly, the assertion \eqref{cor:coneGor2:1} is a consequence of
Proposition~\ref{prop:coneSk}\eqref{prop:coneSk:CM} applied to \( \SG = \SO_{S} \).
Moreover, the assertion \eqref{cor:coneGor2:2} (resp.\ \eqref{cor:coneGor2:3}) is
derived from  \eqref{cor:coneGor:QGor} (resp.\ \eqref{cor:coneGor:Gor}) of
Corollary~\ref{cor:coneGor}. Thus, we are done.
\end{proof}

%%%% Section 7 %%%%%
%%%%%%%%%%%%%%%%%%%%%%%%%%%
%%%%%%%%%%%%%%%%%%%%%%%%%%%%

\section{\texorpdfstring{$\BQQ$}{Q}-Gorenstein morphisms}
\label{sect:QGormor}

Section~\ref{sect:QGormor} introduces the notion of ``\( \BQQ \)-Gorenstein morphism''
and its weak forms:  ``naively \( \BQQ \)-Gorenstein morphism''
and ``virtually \( \BQQ \)-Gorenstein morphism.''  We inspect relations between these three
notions, and 
prove basic properties and several theorems on \( \BQQ \)-Gorenstein morphisms.

In Sections~\ref{subsect:QGormor} and \ref{subsect:virQGormor}, 
we define the notions of \( \BQQ \)-Gorenstein morphism, naively \( \BQQ \)-Gorenstein 
morphism, and virtually \( \BQQ \)-Gorenstein morphism, 
and we discuss their properties 
giving some criteria for a morphism to be \( \BQQ \)-Gorenstein. 
A \( \BQQ\)-Gorenstein morphism is always
naively and virtually \( \BQQ\)-Gorenstein. 
In Section~\ref{subsect:QGormor}, we provide a new example 
of naively \( \BQQ \)-Gorenstein morphisms which are not \( \BQQ \)-Gorenstein, 
by Lemma~\ref{lem:Lee} and Example~\ref{exam:KummerType}, 
and discuss the relative Gorenstein index for a naively \( \BQQ\)-Gorenstein morphism
in Proposition~\ref{prop:GorIndexQGorMor}.
Theorem~\ref{thm:wQGvsQG} in 
Section~\ref{subsect:virQGormor} shows that 
a virtually \( \BQQ \)-Gorenstein morphism is a \( \BQQ \)-Gorenstein morphism under some mild conditions. 
In Section~\ref{subsect:propQGormor}, 
several basic properties including base change of \( \BQQ\)-Gorenstein morphisms 
and of their variants are discussed. 

Finally, in Section~\ref{subsect:thmsQGormor}, we shall prove notable theorems. 
We prove three criteria for a morphism to be \( \BQQ\)-Gorenstein: 
an infinitesimal criterion (Theorem~\ref{thm:InfQGorCrit}), 
a valuative criterion (Theorem~\ref{thm:valcritQGor}), and 
a criterion by \( \bfS_3\)-conditions on fibers (Theorem~\ref{thm:S3Gor2}).
Moreover, we prove the existence theorem of 
\( \BQQ\)-Gorenstein refinement (Theorem~\ref{thm:QGorRef}) 
and its variants (Theorems~\ref{thm:QGorRefLocal} and \ref{thm:dec naive}).

%%%%%%%%%%%%%%%%%%%%%%%%%%%%%%%%%%%%%

\subsection{\texorpdfstring{$\BQQ$}{Q}-Gorenstein morphisms and naively \texorpdfstring{$\BQQ$}{Q}-Gorenstein morphisms}
\label{subsect:QGormor}

\begin{dfn}\label{dfn:QGorMor}
Let \( f \colon Y \to T \) be an \( \bfS_{2} \)-morphism of locally Noetherian schemes
such that every fiber is \( \BQQ \)-Gorenstein.
Let \( \omega_{Y/T} \) denote the relative canonical sheaf
in the sense of Definition~\ref{dfn:relcanosheaf} and let \( \omega_{Y/T}^{[m]} \)
denote the double-dual of \( \omega_{Y/T}^{\otimes m} \) for \( m \in \BZZ \).
\begin{enumerate}
\item \label{dfn:QGorMor:naive}
The morphism \( f \) is said to be \emph{naively} \( \BQQ \)-\emph{Gorenstein}
at a point \( y \in Y\)
if \( \omega_{Y/T}^{[r]} \) is invertible at \( y \) for some integer \( r > 0 \).
If \( f \) is naively \( \BQQ \)-Gorenstein at every point of \( Y \), then
it is called a \emph{naively} \( \BQQ \)-\emph{Gorenstein morphism}.

\item \label{dfn:QGorMor:QGor} If \( \omega_{Y/T}^{[m]} \) satisfies
relative \( \bfS_{2} \) over \( T \) (cf.\ Definition~\ref{dfn:RelSkCMlocus})
for any \( m \in \BZZ \),
then \( f \) is called  a \( \BQQ \)-\emph{Gorenstein morphism}.
If \( f|_{U} \colon U \to T \) is a \( \BQQ \)-Gorenstein morphism
for an open neighborhood \( U \) of a point \( y \in Y \),
then \( f \) is said to be \( \BQQ \)-Gorenstein at \( y \).
\end{enumerate}
\end{dfn}

\begin{rem}\label{rem:dfn:QGorMor}
For an \( \bfS_{2} \)-morphism \( f \colon Y \to T \) of locally Noetherian schemes,
if every fiber is Gorenstein in codimension one and if \( \omega_{Y/T} \)
is an invertible \( \SO_{Y} \)-module, then \( f \) is a \( \BQQ \)-Gorenstein morphism.
In fact, \( \omega_{Y/T}^{[m]} \isom \omega_{Y/T}^{\otimes m} \) satisfies relative
\( \bfS_{2} \) over \( T \)
for any \( m \in \BZZ \) and every fiber \( Y_{t} = f^{-1}(t)\) is \( \BQQ \)-Gorenstein
of Gorenstein index one,
since \( \omega_{Y/T} \otimes_{\SO_{Y}} \SO_{Y_{t}} \isom \omega_{Y_{t}/\Bbbk(t)} \)
(cf.\ Proposition~\ref{prop:BC-S2CM}).
\end{rem}

The \( \BQQ \)-Gorenstein morphisms and the naively \( \BQQ \)-Gorenstein morphisms
are characterized as follows.

\begin{lem}\label{lem:LocDefQGor}
Let \( Y \) and \( T \) be locally Noetherian schemes and \( f \colon Y \to T \)
a flat morphism locally of finite type.
Let \( j \colon Y^{\circ} \injmap Y \) be the open immersion
from an open subset \( Y^{\circ} \) of
the relative Gorenstein locus \( \Gor(Y/T) \).
For a point \( y \in Y \), the fibers \( Y_{t} = f^{-1}(t) \) and
\( Y^{\circ}_{t} = Y^{\circ} \cap Y_{t} \) over \( t = f(y) \), and for
a positive integer \( r \),
let us consider the following conditions\emph{:}
\begin{enumerate}
    \renewcommand{\theenumi}{\roman{enumi}}
    \renewcommand{\labelenumi}{(\theenumi)}
\item \label{lem:LocDefQGor:cond1}
The fiber \( Y_{t} \) satisfies \( \bfS_{2} \) at \( y \)
and \( \Codim_{y}(Y_{t} \setminus Y^{\circ}, Y_{t}) \geq 2 \).

\item \label{lem:LocDefQGor:cond2}
The direct image sheaf \( j_{*}(\omega_{Y^{\circ}/T}^{\otimes r}) \) is invertible at \( y \).

\item \label{lem:LocDefQGor:cond12}
The fiber \( Y_{t} \) is \( \BQQ \)-Gorenstein at \( y \), and
\( r \) is divisible by the Gorenstein index of \( Y_{t} \) at \( y \).

\item \label{lem:LocDefQGor:cond3}
For any \( 0 < k \leq r \), the base change homomorphism
\[ \phi_{t}^{[k]} \colon j_{*}(\omega_{Y^{\circ}/T}^{\otimes k}) \otimes_{\SO_{Y}} \SO_{Y_{t}}
\to \omega_{Y_{t}/\Bbbk(t)}^{[k]} = j_{*}(\omega^{\otimes k}_{Y^{\circ}_{t}/\Bbbk(t)})\]
induced from the base change isomorphism
\( \omega_{Y^{\circ}/T} \otimes \SO_{Y_{t}} \isom \omega_{Y^{\circ}_{t}/\Bbbk(t)} \)
\emph{(}cf.\ Proposition~\emph{\ref{prop:BCGor})} is surjective at \( y \).

\item \label{lem:LocDefQGor:cond5}
There is an open neighborhood \( U \) of \( y \) such that
\( f|_{U} \colon U \to T \) is a naively \( \BQQ \)-Gorenstein morphism
and \( \omega_{U/T}^{[r]} \) is invertible.

\item \label{lem:LocDefQGor:cond6}
There is an open neighborhood \( U \) of \( y \)
such that \( f|_{U} \colon U \to T \) is a \( \BQQ \)-Gorenstein morphism and
\( \omega_{U/T}^{[r]} \) is invertible.

\end{enumerate}
Then, one has the following equivalences and implication
on these conditions\emph{:}
\begin{itemize}
\item  \eqref{lem:LocDefQGor:cond1} \( + \) \eqref{lem:LocDefQGor:cond2}
\( \Leftrightarrow \) \eqref{lem:LocDefQGor:cond5}\emph{;}

\item  \eqref{lem:LocDefQGor:cond1} \( + \) \eqref{lem:LocDefQGor:cond2}
\( \Rightarrow \) \eqref{lem:LocDefQGor:cond12}\emph{;}

\item \eqref{lem:LocDefQGor:cond12} \( + \) \eqref{lem:LocDefQGor:cond3}
\( \Leftrightarrow \) \eqref{lem:LocDefQGor:cond6}.
\end{itemize}
\end{lem}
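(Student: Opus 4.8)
The plan is to reduce everything to the machinery of Section~\ref{subsect:Resthom}, applied to the reflexive sheaf \( \SF = j_*(\omega_{Y^\circ/T}^{\otimes r}) \) together with the closed subset \( Z = Y \setminus Y^\circ \). Since all six conditions are local at \( y \), I would first replace \( Y \) by a small open neighborhood of \( y \) and \( T \) by \( \Spec \SO_{T, f(y)} \). The underlying observation is that an \( \bfS_2 \)-morphism with every fiber \( \BQQ \)-Gorenstein is, by Lemma~\ref{lem:QGorSch}\eqref{lem:QGorSch:2} applied fiberwise, automatically an \( \bfS_2 \)-morphism whose fibers are Gorenstein in codimension one — but note that in the present statement \( f \) is only assumed flat locally of finite type, so the \( \bfS_2 \)-ness of fibers and the codimension bound on \( Y_t \setminus Y^\circ \) must be \emph{extracted from} condition \eqref{lem:LocDefQGor:cond1} or from the invertibility in \eqref{lem:LocDefQGor:cond2}, rather than assumed. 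This is the reason \eqref{lem:LocDefQGor:cond1} appears as a hypothesis in the first two bullets.

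For the equivalence \eqref{lem:LocDefQGor:cond1} \( + \) \eqref{lem:LocDefQGor:cond2} \( \Leftrightarrow \) \eqref{lem:LocDefQGor:cond5}: in the forward direction, condition \eqref{lem:LocDefQGor:cond1} gives \( \depth_{Y_t \cap Z} \SO_{Y_t} \geq 2 \) locally (via Lemma~\ref{lem:depth+codim+Sk}\eqref{lem:depth+codim+Sk:2}), hence by Lemma~\ref{lem:relSkCodimDepth}\eqref{lem:relSkCodimDepth:3} and the flatness of \( f \) we get \( \depth_Z \SO_Y \geq 2 \); combined with \eqref{lem:LocDefQGor:cond2} and Lemma~\ref{lem:S2Codim2} (which upgrades \( f \) to an \( \bfS_2 \)-morphism of pure relative dimension near \( y \) after shrinking, using that \( Y_t \) satisfies \( \bfS_2 \) at \( y \)), one identifies \( \SF \) with the relative canonical power \( \omega_{U/T}^{[r]} \) of an \( \bfS_2 \)-morphism (using Proposition~\ref{prop:BCGor} to see \( \omega_{U/T}^{[m]} \isom j_*(\omega_{Y^\circ/T}^{\otimes m}) \)); invertibility of \( \SF \) at \( y \) is then the naive \( \BQQ \)-Gorenstein condition. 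The converse is read off directly from the definitions: a naively \( \BQQ \)-Gorenstein morphism is by definition an \( \bfS_2 \)-morphism with \( \BQQ \)-Gorenstein fibers, so fibers satisfy \( \bfS_2 \) and are Gorenstein in codimension one, which gives \eqref{lem:LocDefQGor:cond1}, and invertibility of \( \omega_{U/T}^{[r]} \) gives \eqref{lem:LocDefQGor:cond2} via Proposition~\ref{prop:BCGor} again.

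For the implication \eqref{lem:LocDefQGor:cond1} \( + \) \eqref{lem:LocDefQGor:cond2} \( \Rightarrow \) \eqref{lem:LocDefQGor:cond12}: restrict the invertible sheaf \( \SF \) to the fiber \( Y_t \) and use the base change isomorphism \( \omega_{Y^\circ/T} \otimes \SO_{Y_t} \isom \omega_{Y_t^\circ/\Bbbk(t)} \) on \( Y_t^\circ \); since \( \SF \otimes \SO_{Y_t} \) is invertible at \( y \) and agrees on \( Y_t^\circ \) with \( \omega_{Y_t^\circ/\Bbbk(t)}^{\otimes r} \), and since \( \Codim_y(Y_t \setminus Y_t^\circ, Y_t) \geq 2 \) by \eqref{lem:LocDefQGor:cond1} so that \( Y_t \) satisfies \( \bfS_2 \) at \( y \), we get that \( \omega_{Y_t/\Bbbk(t)}^{[r]} = j_*(\omega_{Y_t^\circ/\Bbbk(t)}^{\otimes r}) \) is invertible at \( y \) — but this requires knowing the natural map \( \SF \otimes \SO_{Y_t} \to \omega_{Y_t/\Bbbk(t)}^{[r]} \) is an isomorphism, which is exactly where one needs the surjectivity statement. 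Actually for \eqref{lem:LocDefQGor:cond12} alone, since both sheaves are reflexive on \( Y_t \) (which satisfies \( \bfS_2 \) near \( y \)) and agree on the big open \( Y_t^\circ \), they are isomorphic near \( y \) by Lemma~\ref{lem:US2add}, so invertibility transfers and \( r \) is divisible by the Gorenstein index of \( Y_t \) at \( y \). For the last equivalence \eqref{lem:LocDefQGor:cond12} \( + \) \eqref{lem:LocDefQGor:cond3} \( \Leftrightarrow \) \eqref{lem:LocDefQGor:cond6}: here I would invoke Corollary~\ref{cor0:prop:key} and Proposition~\ref{prop:BCGor}, which say that surjectivity of the base change homomorphism \( \phi_t^{[k]} \) at \( y \) is equivalent to \( \omega_{Y/T}^{[k]} \) satisfying relative \( \bfS_2 \) over \( T \) on a neighborhood of \( y \); doing this for all \( 0 < k \leq r \) — and extending to all \( m \in \BZZ \) by periodicity modulo \( r \) once \( \omega_{U/T}^{[r]} \) is invertible, so that \( \omega_{U/T}^{[m+r]} \isom \omega_{U/T}^{[m]} \otimes \omega_{U/T}^{[r]} \) — yields the \( \BQQ \)-Gorenstein morphism condition, and conversely.

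The main obstacle will be the careful bookkeeping in the last equivalence: one must check that surjectivity of \( \phi_t^{[k]} \) for the finitely many values \( 1 \leq k \leq r \) genuinely propagates to the relative \( \bfS_2 \) condition for \( \omega_{Y/T}^{[m]} \) for \emph{every} integer \( m \), which hinges on \( \omega_{U/T}^{[r]} \) being invertible (so that the tower of reflexive powers becomes periodic) and on the compatibility of the base change homomorphisms under tensoring with an invertible sheaf. One also has to be attentive that Proposition~\ref{prop:BCGor} and Corollary~\ref{cor0:prop:key} require the ambient morphism to already be an \( \bfS_2 \)-morphism with the relevant codimension-\( \geq 2 \) bound on each fiber minus \( Y^\circ \), which is why the reductions via Lemma~\ref{lem:S2Codim2} and Lemma~\ref{lem:relSkCodimDepth} at the start are essential and must be performed before any of the key-proposition machinery is brought to bear.
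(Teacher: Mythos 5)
Your proposal is correct and follows essentially the same route as the paper's proof: localize and use Lemma~\ref{lem:S2Codim2} to reduce to an \( \bfS_{2} \)-morphism of pure relative dimension with \( \Codim(U_{t'} \setminus Y^{\circ}, U_{t'}) \geq 2 \), identify \( j_{*}(\omega_{Y^{\circ}/T}^{\otimes k}) \) with \( \omega^{[k]}_{U/T} \), translate surjectivity of \( \phi^{[k]}_{t} \) into relative \( \bfS_{2} \) via Proposition~\ref{prop:BCGor}, and propagate to all powers by periodicity modulo \( r \). The one step you flag but leave implicit in \eqref{lem:LocDefQGor:cond12}\( + \)\eqref{lem:LocDefQGor:cond3} \( \Rightarrow \) \eqref{lem:LocDefQGor:cond6} --- that \( \omega^{[r]}_{U/T} \) is actually invertible at \( y \) --- is exactly Fact~\ref{fact:elem-flat}\eqref{fact:elem-flat:2}: relative \( \bfS_{2} \) gives flatness of \( \omega^{[r]}_{Y/T} \) over \( T \) at \( y \), and its restriction to the fiber is the sheaf \( \omega^{[r]}_{Y_{t}/\Bbbk(t)} \), invertible at \( y \) by \eqref{lem:LocDefQGor:cond12}.
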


\begin{proof}
First, we shall prove: \eqref{lem:LocDefQGor:cond1} \( + \) \eqref{lem:LocDefQGor:cond2}
\( \Rightarrow \) \eqref{lem:LocDefQGor:cond12}.
We set \( \SM_{r} := j_{*}(\omega_{Y^{\circ}/T}^{\otimes r}) \).
Then, \( \SM_{r} \otimes_{\SO_{Y}} \SO_{Y_{t}}\) is invertible
at \( y \) by \eqref{lem:LocDefQGor:cond2}, and
\[ \SM_{r} \otimes_{\SO_{Y}} \SO_{Y_{t}} \to
j_{*}\left( (\SM_{r} \otimes_{\SO_{Y}} \SO_{Y_{t}})|_{Y^{\circ}} \right)
\isom j_{*}(\omega_{Y_{t}^{\circ}/\Bbbk(t)}^{\otimes r})
\isom \omega_{Y_{t}/\Bbbk(t)}^{[r]}\]
is an isomorphism at \( y \) by \eqref{lem:LocDefQGor:cond1}.
In particular, \( \omega_{Y_{t}/\Bbbk(t)}^{[r]} \) is invertible at \( y \).
Thus, \eqref{lem:LocDefQGor:cond12} holds
(cf.\ Definitions~\ref{dfn:QGorSch}\eqref{dfn:QGorSch:2} and \ref{dfn:QGorSch:GorIndex}).

Second, we shall prove \eqref{lem:LocDefQGor:cond5} \( \Rightarrow \)
\eqref{lem:LocDefQGor:cond1} \( + \) \eqref{lem:LocDefQGor:cond2} and
\eqref{lem:LocDefQGor:cond6} \( \Rightarrow \)
\eqref{lem:LocDefQGor:cond12} \( + \) \eqref{lem:LocDefQGor:cond3}.
We may assume that \( f \) is naively \( \BQQ \)-Gorenstein.
Since every fiber \( Y_{t} \) is a \( \BQQ \)-Gorenstein scheme,
we have \eqref{lem:LocDefQGor:cond1} (cf.\ Definition~\ref{dfn:QGorSch}).
Moreover,
\[ \omega_{Y/T}^{[k]} \isom j_{*}(\omega_{Y^{\circ}/T}^{\otimes k})\]
for any \( k \in \BZZ \)
by Proposition~\ref{prop:BCGor}.
Hence, \eqref{lem:LocDefQGor:cond2} is also satisfied, since \( \omega_{Y/T}^{[r]}  \)
is invertible for an integer \( r > 0 \).
If \( f \) is \( \BQQ \)-Gorenstein, then
\( \omega_{Y/T}^{[k]} \) is flat over \( T \) and
\( \omega_{Y/T}^{[k]} \otimes_{\SO_{Y}} \SO_{Y_{t}} \)
satisfies the \( \bfS_{2} \)-condition for any \( t \in T \)
(cf.\ Definition~\ref{dfn:QGorMor}\eqref{dfn:QGorMor:QGor}); thus,
\( \phi^{[k]}_{t} \) is an isomorphism for any \( t \in T \) and \( k \in \BZZ \), and
in particular, \eqref{lem:LocDefQGor:cond12} and \eqref{lem:LocDefQGor:cond3} are satisfied.

Finally, we shall prove
\eqref{lem:LocDefQGor:cond1} \( + \) \eqref{lem:LocDefQGor:cond2}
\( \Rightarrow \) \eqref{lem:LocDefQGor:cond5} and
\eqref{lem:LocDefQGor:cond12} \( + \) \eqref{lem:LocDefQGor:cond3}
\( \Rightarrow \) \eqref{lem:LocDefQGor:cond6}.
Assume that \eqref{lem:LocDefQGor:cond1} holds.
By Lemma~\ref{lem:S2Codim2}, there is an open neighborhood \( U \) of \( y \) such that
\( f|_{U} \colon U \to T \) is an \( \bfS_{2} \)-morphism having pure relative dimension
and \( \Codim(U_{t'} \setminus Y^{\circ}, U_{t'}) \geq 2\)
for any \( t' \in f(U) \), where \( U_{t'} = U \cap Y_{t'} \); Thus,
\[ \omega_{U/T}^{[k]} \isom j_{*}(\omega_{U \cap Y^{\circ}/T}^{\otimes k})\]
for any \( k \in \BZZ \) by
Lemma~\ref{lem:US2add}\eqref{lem:US2add:2}.
Therefore, if \eqref{lem:LocDefQGor:cond2} also holds, then
\( \omega_{U'/T}^{[r]} \) is invertible for an open neighborhood \( U' \) of \( y \) in \( U \),
and \( f|_{U'} \colon U' \to T \) is a naively \( \BQQ \)-Gorenstein morphism.
This proves \eqref{lem:LocDefQGor:cond1} \( + \) \eqref{lem:LocDefQGor:cond2}
\( \Rightarrow \) \eqref{lem:LocDefQGor:cond5}.
Next, assume that \eqref{lem:LocDefQGor:cond12} and \eqref{lem:LocDefQGor:cond3} hold.
Note that \eqref{lem:LocDefQGor:cond12} implies \eqref{lem:LocDefQGor:cond1}.
Thus, we have the same open neighborhood \( U \) of \( y \) as above.
For any integer \( 0 < k \leq r\), there is an open neighborhood \( U'_{k} \)
of \( y \) in \( U' \) above such that
\( \omega_{U'_{k}/T}^{[k]} \) satisfies relative \( \bfS_{2} \) over \( T \),
by \eqref{lem:LocDefQGor:cond3} and by Proposition~\ref{prop:BCGor}.
In particular, \( \omega_{Y/T}^{[r]} \) is invertible at \( y \)
by Fact~\ref{fact:elem-flat}\eqref{fact:elem-flat:2}.
In fact, it is flat over \( T \) at \( y \) and its restriction to the fiber \( Y_{t} \)
is invertible at \( y \).
Then, \( \omega_{Y/T}^{[r]} \) is invertible on an open neighborhood \( U''_{r} \) of \( y \)
in \( U'_{r} \).
We set \( U'' \) to be the intersection of \( U'_{k} \) for all \( 0 < k < r \) and \( U''_{r} \).
Then, \( \omega_{U''/T}^{[l]} \) satisfies relative \( \bfS_{2} \) over \( T \)
for any \( l \in \BZZ \), since
\[ \omega_{U''/T}^{[l]} \isom (\omega_{U''/T}^{[r]})^{\otimes m} \otimes \omega_{U''/T}^{[k]} \]
for integers \( m \) and \( k \) such that \( l = mr + k \) and \( 0 \leq k < r \).
This means that \( f|_{U''} \colon U'' \to T \) is a \( \BQQ \)-Gorenstein morphism,
and it proves \eqref{lem:LocDefQGor:cond12} \( + \) \eqref{lem:LocDefQGor:cond3}
\( \Rightarrow \) \eqref{lem:LocDefQGor:cond6}.
Thus, we are done.
\end{proof}

\begin{remn}
For \( f \colon Y \to T \) and
\( j \colon Y^{\circ} \injmap Y \) in
Lemma~\ref{lem:LocDefQGor}, we have:
\begin{enumerate}
\item \label{cor:lem:LocDefQGor:1}
The set of points \( y \in Y \) satisfying
the condition \eqref{lem:LocDefQGor:cond1}
of Lemma~\ref{lem:LocDefQGor} is open.

\item  \label{cor:lem:LocDefQGor:2}
If every fiber of \( f \) satisfies \( \bfS_{2} \) and is Gorenstein in codimension one,
then \( \SO_{Y} \isom j_{*}\SO_{Y^{\circ}} \) and \( \Codim(Y \setminus Y^{\circ}, Y) \geq 2 \).
Here, if \( Y \) is connected in addition, then \( f \) has pure relative dimension.

\item \label{cor:lem:LocDefQGor:3}
The set of points \( y \in Y\) at which  \( f \) is naively \( \BQQ \)-Gorenstein is open.

\item \label{cor:lem:LocDefQGor:4}
The set of points \( y \in Y\) at which  \( f \) is \( \BQQ \)-Gorenstein is open.
\end{enumerate}
In fact, the property \eqref{cor:lem:LocDefQGor:1} is mentioned
in the proof of Lemma~\ref{lem:LocDefQGor}, and
the property \eqref{cor:lem:LocDefQGor:2} is derived
from Lemmas~\ref{lem:US2add}\eqref{lem:US2add:2}, \ref{lem:quasi-flat}, and \ref{lem:S2Codim2}.
The properties \eqref{cor:lem:LocDefQGor:3} and \eqref{cor:lem:LocDefQGor:4} are deduced
from Definition~\ref{dfn:QGorMor}.
\end{remn}

An \( \bfS_{2} \)-morphism of locally Noetherian schemes is not necessarily 
naively \( \BQQ \)-Gorenstein even if every fiber is \( \BQQ \)-Gorenstein. 
The following example is well known.

\begin{exam}\label{exam:P114}
Let \( S = \BPP(\SO \oplus \SO(4)) \) be the Hirzebruch surface
of degree \( 4 \) over an algebraically closed field \( \Bbbk \). 
By contracting of the unique \( (-4) \)-curve \( \Gamma \), we have a birational morphism \( S \to X \)
to the weighted projective plane  \( X = \BPP(1, 1, 4) \). 
Note that \( X \) is \( \BQQ \)-Gorenstein and its Gorenstein index is two. 
Let \( \eta \) be the extension class in \( \Ext^{1}_{\BPP^{1}}(\SO(4), \SO_{\BPP^{1}}) \) of an exact sequence 
\begin{equation}\label{eq:1|exam:P114}
0 \to \SO_{\BPP^{1}} \to \SO(2) \oplus \SO(2) \to \SO(4) \to 0
\end{equation}
on \( \BPP^{1} \). We set \( T = \BAA^{1} = \Spec \Bbbk[\ttt]\) and \( P = \BPP^{1} \times_{\Bbbk} T \), 
and let \( p \colon P \to \BPP^{1} \) and \( q \colon P \to T \) be projections. 
Let us consider the element \( \eta_{P} \) of 
\( \Ext^{1}_{P}(p^{*}\SO(4), \SO_{P}) \) corresponding to \( \eta \otimes \ttt \) 
by the isomorphism 
\[ \Ext^{1}_{P}(p^{*}\SO(4), \SO_{P}) \isom 
\Ext^{1}_{\BPP^{1}}(\SO(4), \SO_{\BPP^{1}}) \otimes_{\Bbbk} \OH^{0}(T, \SO_{T}), \]
and let 
\begin{equation}\label{eq:2|exam:P114}
0 \to \SO_{P} \to \SE \to p^{*}\SO(4) \to 0 
\end{equation}
be an exact sequence on \( P \) whose extension class is \( \eta_{P} \). 
Let \( \pi \colon V \to P \) be the \( \BPP^{1} \)-bundle associated with \( \SE \), and 
let \( f \colon Y \to T\) be the \( T \)-scheme 
defined as \( \mathit{Proj}_{T} \, q_{*}( \mathit{Sym}(\SE)) \) for 
the symmetric \( \SO_{P} \)-algebra \( \mathit{Sym}(\SE) \). 
Then, \( Y \) is a normal projective variety and 
there is a birational morphism \( \mu \colon V \to Y \) over \( T \) such that 
the induced morphism \( \mu_{t} \colon V_{t} \to Y_{t} \) of fibers over \( t \in T \) is described as follows:  
\begin{itemize}
\item  \( \mu_{0}  \) is isomorphic to the contraction morphism \( S \to X \) of \( \Gamma \), and 

\item  \( \mu_{t} \) is isomorphic to the identity morphism 
of \( \BPP^{1}_{\Bbbk(t)} \times_{\Bbbk(t)} \BPP^{1}_{\Bbbk(t)} \). 
\end{itemize}
In fact, \( \SL \isom \mu^{*}\SH \) for the tautological invertible sheaf \( \SL \) on \( V \) 
associated with \( \SE \) and for the \( f \)-ample tautological invertible sheaf \( \SH \) 
on \( Y \) associated with 
the graded algebra \( q_{*}(\mathit{Sym}\SE) \). 
In particular, \( f \) is a flat projective morphism whose fibers are all \( \BQQ \)-Gorenstein. 
However, \( f \) is not naively \( \BQQ \)-Gorenstein. 
For, if the canonical divisor \( K_{Y} \) is \( \BQQ \)-Cartier, 
then \( K_{Y_{t}}^{2} \) is constant for \( t \in T \), but we have \( K_{Y_{0}}^{2} = 9 \) 
and \( K_{Y_{t}}^{2} = 8 \) for \( t \ne 0 \). 
\end{exam}

\begin{lem}\label{lem:P114}
In the situation of Example~\emph{\ref{exam:P114}}, 
let \( \Spec A \subset T = \BAA^{1} \) be the closed immersion defined 
by \( \Bbbk[\ttt] \to A = \Bbbk[\ttt]/(\ttt^{2}) \). 
Then, for the base change \( f_{A} \colon Y_{A} \to \Spec A \) of \( f \colon Y \to T \) 
by the closed immersion, 
the reflexive sheaf \( \omega^{[2]}_{Y_{A}/A} \) on \( Y_{A} \) is not invertible. 
Moreover, \( \omega^{[2]}_{Y_{A}/A} \) does not satisfy relative \( \bfS_{2} \) over \( \Spec A \), 
and \( Y_{A} \to \Spec A \) is not a \( \BQQ \)-Gorenstein morphism. 
\end{lem}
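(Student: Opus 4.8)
The plan is to reduce all three assertions to the single statement that $\omega^{[2]}_{Y_A/A}$ is not invertible, and then to contradict invertibility by means of the birational contraction $\mu$ of Example~\ref{exam:P114} together with the fact that the $(-4)$-curve $\Gamma$ does not deform along $V_A$.

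First I would note that $f_A\colon Y_A\to\Spec A$ is an $\bfS_2$-morphism whose unique fiber $Y_0\isom X$ is a $\BQQ$-Gorenstein scheme, Gorenstein in codimension one, so that Proposition~\ref{prop:BCGor} and Corollary~\ref{cor:BC-S2CMGor} apply to the sheaves $\omega^{[m]}_{Y_A/A}$. If $\omega^{[2]}_{Y_A/A}$ satisfied relative $\bfS_2$ over $\Spec A$, it would be flat over $A$, and Corollary~\ref{cor:BC-S2CMGor}\eqref{cor:BC-S2CMGor:2} applied to the closed immersion $p\colon Y_0\injmap Y_A$ would give $p^{*}\omega^{[2]}_{Y_A/A}\isom\omega^{[2]}_{Y_0/\Bbbk}$; but $X$ has Gorenstein index two, so $\omega^{[2]}_{Y_0/\Bbbk}$ is invertible, whence $\omega^{[2]}_{Y_A/A}$ is invertible by Fact~\ref{fact:elem-flat}\eqref{fact:elem-flat:2}. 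Thus the second assertion follows from the first by contraposition, and the third follows because a $\BQQ$-Gorenstein morphism has $\omega^{[2]}_{Y_A/A}$ satisfying relative $\bfS_2$ by Definition~\ref{dfn:QGorMor}.

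Assume now that $\omega^{[2]}_{Y_A/A}$ is invertible, and seek a contradiction. Let $P_0\in X$ be the singular point, with $\mu_0\colon S\to X$ contracting $\Gamma$ with discrepancy $-1/2$, so that $\omega^{\otimes 2}_{S/\Bbbk}\isom\mu_0^{*}(\omega^{[2]}_{X/\Bbbk})\otimes\SO_S(-\Gamma)$. Since $\mu\colon V\to Y$ is an isomorphism in codimension one, $\omega^{[2]}_{Y/T}\isom\mu_{*}(\omega^{\otimes 2}_{V/T})$; because $\omega^{\otimes 2}_{V/T}$ restricts to $\SO_{\BPP^1}(4)$ on $\Gamma$ (and similarly on its infinitesimal neighbourhoods) one has $R^{1}\mu_{*}(\omega^{\otimes 2}_{V/T})=0$ near $P_0$, so cohomology and base change together with Corollary~\ref{cor:BC-S2CMGor}\eqref{cor:BC-S2CMGor:2} yield $\omega^{[2]}_{Y_A/A}\isom(\mu_{A*}(\omega^{\otimes 2}_{V_A/A}))^{\vee\vee}$. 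Here $V_A$ is flat over $A$, and $\Pic(V_A)\isom\Pic(S)$, $\Pic(Y_A)\isom\Pic(X)$ (both $S$ and $X$ being rational surfaces with vanishing $\OH^{1}$ and $\OH^{2}$ of the structure sheaf), so $\omega^{\otimes 2}_{V_A/A}$ is the unique lift of $\omega^{\otimes 2}_{S/\Bbbk}$, namely $\mu_A^{*}(\Omega_A)\otimes\SG^{-1}$ where $\Omega_A$ lifts $\omega^{[2]}_{X/\Bbbk}$ and $\SG$ lifts $\SO_S(\Gamma)$; by the projection formula $\omega^{[2]}_{Y_A/A}\isom\Omega_A\otimes(\mu_{A*}\SG^{-1})^{\vee\vee}$, so it suffices to show $(\mu_{A*}\SG^{-1})^{\vee\vee}$ is not invertible, equivalently not flat over $A$.

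The crucial input is that $\Gamma$ does not extend to an effective Cartier divisor on $V_A$ flat over $\Spec A$: such a divisor would project isomorphically onto $P_A=\BPP^{1}\times_{\Bbbk}\Spec A$ and hence give a section of $V_A=\BPP(\SE_A)\to P_A$ of the type of $\Gamma$, i.e.\ a rank-one quotient $\SE_A\to\SO_{P_A}$ splitting the extension $0\to\SO_{P_A}\to\SE_A\to p_A^{*}\SO(4)\to 0$, which is impossible since its class $\eta\otimes\ttt$ is nonzero in $\Ext^{1}_{P_A}(p_A^{*}\SO(4),\SO_{P_A})\isom\Ext^{1}_{\BPP^1}(\SO(4),\SO_{\BPP^1})\otimes_{\Bbbk}A$. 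It remains to see that invertibility of $\omega^{[2]}_{Y_A/A}$ manufactures precisely such a deformation of $\Gamma$, and this is the step I expect to be the main obstacle to write out cleanly. One route is through the index-one cover: invertibility of $\omega^{[2]}_{Y_A/A}$ near $P_0$ exhibits the deformation $Y_A$ of $X$ there as the quotient by $\BZZ/2$ of a $\BZZ/2$-equivariant deformation of the index-one cover (the $A_1$-singularity), whose minimal resolution deforms the $(-2)$-curve — a deformation that is unobstructed and rigid since $\OH^{1}(\BPP^1,\SO(-2))=\OH^{0}(\BPP^1,\SO(-2))=0$ — hence $\BZZ/2$-equivariantly, and passing to the quotient produces the forbidden extension of $\Gamma$ in $V_A$; the delicate point is running the index-one cover construction over the non-reduced base $A$, where $\omega_{Y_A/A}$ itself need not be $A$-flat. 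Alternatively, one argues directly that $(\mu_{A*}\SG^{-1})^{\vee\vee}$ is not $A$-flat: $\mu_{A*}\SG^{-1}$ is $A$-flat with central fibre the ideal sheaf of $P_0$ in $X$ (using $R^{1}\mu_{0*}\SO_S(-\Gamma)=0$, which holds since that sheaf restricts to $\SO_{\BPP^1}(4)$ on $\Gamma$), but formation of the double dual fails to commute with restriction to $Y_0$, and the discrepancy — governed by $\SExt$-sheaves supported at $P_0$ and ultimately by $\eta$ — obstructs $A$-flatness. Either way the lemma reduces to the non-splitting of $\SE_A$, which is immediate.
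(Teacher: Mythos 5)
Your reduction of the ``Moreover'' assertions to the first one is correct and coincides with the paper's: if \( \omega^{[2]}_{Y_A/A} \) satisfied relative \( \bfS_{2} \) it would be flat over \( A \) with restriction \( \omega^{[2]}_{X/\Bbbk} \) to the closed fibre, hence invertible, and a \( \BQQ \)-Gorenstein morphism requires that relative \( \bfS_{2} \)-condition. You have also correctly located the ultimate contradiction: a deformation of \( \Gamma \) to an effective Cartier divisor on \( V_A \) flat over \( A \) would be a section of \( \pi_A \colon V_A \to P_A \) splitting the pulled-back extension, impossible because its class \( \eta \otimes \ttt \) is nonzero in \( \Ext^{1}_{P_A}(p_A^{*}\SO(4), \SO_{P_A}) \). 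This is exactly where the paper's proof ends as well.

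The genuine gap is the step you yourself flag: deducing from the assumed invertibility of \( \omega^{[2]}_{Y_A/A} \) that such a deformation of \( \Gamma \) exists. Neither of your sketches closes it. The index-one-cover route would require building a \( \BZZ/2 \)-cover of \( Y_A \) over the non-reduced base from invertibility of \( \omega^{[2]}_{Y_A/A} \) alone, without any flatness or base-change control on \( \omega_{Y_A/A} \) and \( \omega^{[2]}_{Y_A/A} \); this is precisely the kind of inference the paper cautions against (cf.\ Remarks~\ref{rem:KSh1} and \ref{rem:exam:8-3}), and nothing in your outline substitutes for it. The second route terminates in the unproved assertion that the failure of double dual to commute with restriction ``obstructs \( A \)-flatness,'' and the identity \( \omega^{[2]}_{Y_A/A} \isom \Omega_A \otimes (\mu_{A*}\SG^{-1})^{\vee\vee} \) it rests on is itself only asserted. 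The paper fills the gap by a direct construction you could adopt: assuming \( \omega^{[2]}_{Y_A/A} \) invertible, set \( \SM := \omega^{\otimes 2}_{V_A/A} \otimes \mu_A^{*}(\omega^{[2]}_{Y_A/A})^{-1} \); the canonical map \( (\mu_A)_{*}\omega^{\otimes 2}_{V_A/A} \to \omega^{[2]}_{Y_A/A} \) together with the surjection \( \mu_A^{*}(\mu_A)_{*}\omega^{\otimes 2}_{V_A/A} \to \omega^{\otimes 2}_{V_A/A} \) (surjective because \( \omega_{V_A/A} \) is the restriction of \( \pi^{*}p^{*}\SO_{\BPP^{1}}(2) \otimes \SL^{\otimes -2} \) and \( \SL \isom \mu^{*}\SH \) is pulled back from \( Y \)) yields a homomorphism \( \SM \to \SO_{V_A} \) that is an isomorphism off \( \Gamma \); the depth estimate \( \depth_{\Gamma}\SM \geq 1 \), coming from flatness of \( V_A \) over \( A \) via Lemma~\ref{lem:relSkCodimDepth}, makes it injective and keeps it injective after restriction to \( S \), so the ideal \( \SM \) cuts out an effective Cartier divisor \( D \) flat over \( A \) (local criterion of flatness) with \( D \times_{\Spec A} \Spec \Bbbk \isom \Gamma \) via \( \omega^{\otimes 2}_{S/\Bbbk} \otimes \SO_{S}(\Gamma) \isom \mu_0^{*}\omega^{[2]}_{X/\Bbbk} \); finally \( D \to P_A \) is finite and, by flatness and the fibre computation, an isomorphism, so \( D \) is a section of \( \pi_A \) and splits the extension. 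Without this (or an equivalent) construction, your argument does not yet prove the lemma.
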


\begin{proof}
The last assertion follows from the previous one by Fact~\ref{fact:elem-flat}\eqref{fact:elem-flat:2} 
and Proposition~\ref{prop:BCGor}, since \( \omega^{[2]}_{X/\Bbbk} \) is invertible. 
Let \( g_{A} \colon V_{A} \to \Spec A \) be the base change of \( g := q \circ \pi \colon V \to T \), 
and let \( \mu_{A} \colon V_{A} \to Y_{A} \) and \( \pi_{A} \colon V_{A} \to 
P_{A} := P \times_{\Spec T} \Spec A \isom \BPP^{1}_{A} \) 
be the induced morphisms from the morphisms \( \mu \) and \( \pi \) over \( T \), respectively.  
Note that the further base change by \( \Spec \Bbbk \to \Spec A \) produces the contraction morphism 
\( S \to X \) and the ruling \( S \to \BPP^{1} \) 
from \( \mu_{A} \) and \( \pi_{A} \), respectively. 
Assume that \( \omega^{[2]}_{Y_{A}/A} \) is invertible. 
Then, 
\[ \SM := \omega^{\otimes 2}_{V_{A}/A} \otimes \mu_{A}^{*}(\omega^{[2]}_{Y_{A}/A})^{-1} \]
is invertible on \( V_{A} \). We have canonical homomorphisms  
\[ (\mu_{A})_{*}\omega^{\otimes 2}_{V_{A}/A} \to \omega^{[2]}_{Y_{A}/A} 
\quad \text{and} \quad 
\mu_{A}^{*}\left((\mu_{A})_{*}\omega^{\otimes 2}_{V_{A}/A}\right) \to \omega^{\otimes 2}_{V_{A}/A}. \]
The left one is obtained by taking double-dual. The right one is surjective, since 
\[ \omega_{V_{A}/A} \isom \omega_{V/T} \otimes_{\SO_{V}} \SO_{V_{A}} 
\isom \left(\pi^{*}p^{*}\SO_{\BPP^{1}}(2) \otimes_{\SO_{V}} \SL^{\otimes -2}\right) 
\otimes_{\SO_{V}} \SO_{V_{A}} \]
and since \( \SL \isom \mu^{*}\SH \). Therefore, there is a homomorphism \( \SM \to \SO_{V_{A}} \) 
which is an isomorphism outside \( \Gamma \). Since \( V_{A} \times_{\Spec A} \Spec \Bbbk \isom S \), 
we have \( \depth_{\Gamma} \SM \geq 1\) by Lemma~\ref{lem:relSkCodimDepth}\eqref{lem:relSkCodimDepth:3}, 
and it implies that \( \SM \to \SO_{V_{A}}  \) is injective and 
\( \SM \otimes_{\SO_{V_{A}}} \SO_{S} \to \SO_{S} \) is also injective. 
Therefore, the closed subscheme \( D \) of \( V_{A} \) 
defined by the ideal sheaf \( \SM \) 
is an effective Cartier divisor, and it is flat over \( \Spec A \) by the local criterion of flatness 
(cf.\ Proposition~\ref{prop:LCflat}\eqref{prop:LCflat:2}).  
Moreover, \( D \times_{\Spec A} \Spec \Bbbk \isom \Gamma \) 
by the isomorphism
\[ \omega^{\otimes 2}_{S/\Bbbk} \otimes_{\SO_{S}} \SO_{S}(\Gamma) \isom \mu_{0}^{*}(\omega^{[2]}_{X/\Bbbk}). \]
Hence, the composite \( D \subset V_{A} \to P_{A} \) is a finite morphism, and the corresponding 
ring homomorphism 
\( \SO_{P_{A}} \to \pi_{A*}\SO_{D} \) is an isomorphism, 
since its base change by \( A \to \Bbbk \) is isomorphic to 
the isomorphism \( \SO_{\BPP^{1}} \to \SO_{\Gamma} \) 
and since \( \pi_{A*}\SO_{D} \) is flat over \( A \). 
Therefore, \( D \) is a section of \( \pi_{A} \colon V_{A} \to P_{A} \). Then, the pullback of 
the exact sequence \eqref{eq:2|exam:P114} to \( P_{A} \) is split by 
the surjection
\[ \SE \otimes_{\SO_{P}} \SO_{P_{A}} 
\isom \pi_{A*}(\SL \otimes_{\SO_{V}} \SO_{V_{A}}) 
\to \pi_{A*}(\SL \otimes_{\SO_{V}} \SO_{D}) \isom \pi_{A*}\SO_{D} \isom \SO_{P_{A}}. \]
This means that the morphism \( \Spec A \to T \) factors through \( \Spec \Bbbk \subset T \); 
This is a contradiction. Therefore, \( \omega^{[2]}_{Y_{A}/A} \) is not invertible. 
\end{proof}

\begin{rem}\label{rem:KollarCondition}
Let \( f \colon Y \to T \) be an \( \bfS_{2} \)-morphism of locally Noetherian schemes whose
fibers are all Gorenstein in codimension one.
The \emph{Koll\'ar condition for} \( f \) \emph{along a fiber} \( Y_{t} = f^{-1}(t) \)
is a condition that the base change homomorphism
\[ \phi^{[m]}_{t} \colon \omega_{Y/T}^{[m]} \otimes_{\SO_{Y}} \SO_{Y_{t}}
\to \omega_{Y_{t}/\Bbbk(t)}^{[m]} \]
is an isomorphism for any \( m \in \BZZ \). 
By Proposition~\ref{prop:BCGor}, we see that the Koll\'ar condition is equivalent to 
that \( \omega^{[m]}_{Y/T} \) satisfies relative \( \bfS_{2} \) over \( T \) 
along \( Y_{t} \) for any \( m \in \BZZ \). 
Therefore, when \( Y_{t} \) is \( \BQQ \)-Gorenstein, 
the Koll\'ar condition for \( f \) is satisfied along \( Y_{t} \) if and only if
\( f \) is \( \BQQ \)-Gorenstein along \( Y_{t} \).  
The Koll\'ar condition has been considered for deformations of \( \BQQ \)-Gorenstein algebraic
varieties of characteristic zero in \cite[2.1.2]{KolProj},
\cite[\S 2, Property $\mathbf{K}$]{HaKo}, etc.
\end{rem}

\begin{fact}\label{fact:Pa2}
Some naively \( \BQQ \)-Gorenstein morphisms are
not \( \BQQ \)-Gorenstein.
Koll\'ar gives an example of a naively \( \BQQ \)-Gorenstein morphism which is not \( \BQQ \)-Gorenstein
in the positive characteristic case 
(cf.\ \cite[14.7]{HacKov}, \cite[Exam.\ 7.6]{Kovacs}). See Remark~\ref{rem:naiveQGorCharP} below 
for a detail. 
Patakfalvi has constructed an example of characteristic zero in
\cite[Th.~1.2]{Pa} using some example of projective cones (cf.\ \cite[Prop.~5.4]{Pa}):
This is a projective flat morphism
\( \SH \to B \) of normal algebraic varieties over a field \( \Bbbk \)
of characteristic zero such that
\begin{itemize}
\item  \( B \) is an open subset of \( \BPP^{1}_{\Bbbk} \),

\item  a closed fiber \( \SH_{0} \) has a unique singular point, but other fibers
are all smooth of dimension \( \geq 3 \),

\item  \( \omega_{\SH/B}^{[r]} \) is invertible for some \( r > 0 \), but
\[\omega_{\SH/B} \otimes_{\SO_{\SH}} \SO_{\SH_{0}} \not\isom \omega_{\SH_{0}/\Bbbk}.\]
\end{itemize}
Recently, Altmann and Koll\'ar \cite{AltmannKollar} 
have constructed several examples of natively \( \BQQ \)-Gorenstein morphisms 
which are not \( \BQQ \)-Gorenstein as infinitesimal 
deformations of two-dimensional cyclic quotient singularities. 
\end{fact}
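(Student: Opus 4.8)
To prove Fact~\ref{fact:Pa2}, which is an existence assertion, the plan is to exhibit at least one \( \bfS_{2} \)-morphism \( f \colon Y \to T \) of locally Noetherian schemes all of whose fibers are \( \BQQ \)-Gorenstein, such that \( \omega_{Y/T}^{[r]} \) is invertible for some \( r > 0 \) — so that \( f \) is naively \( \BQQ \)-Gorenstein by Definition~\ref{dfn:QGorMor}\eqref{dfn:QGorMor:naive} — but \( f \) is not \( \BQQ \)-Gorenstein. The mechanism separating the two notions is the following: by Proposition~\ref{prop:BCGor} and Remark~\ref{rem:KollarCondition}, \( f \) is \( \BQQ \)-Gorenstein along a fiber \( Y_{t} \) if and only if the base change homomorphism \( \phi_{t}^{[m]} \colon \omega_{Y/T}^{[m]} \otimes_{\SO_{Y}} \SO_{Y_{t}} \to \omega_{Y_{t}/\Bbbk(t)}^{[m]} \) is an isomorphism for every \( m \in \BZZ \), whereas naive \( \BQQ \)-Gorensteinness only controls the single power \( m = r \). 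Thus I would look for a family in which \( \omega_{Y/T}^{[r]} \) is invertible but \( \phi_{t}^{[m]} \) fails to be an isomorphism for some fiber \( Y_{t} \) and some \( m \) (typically \( m = 1 \)); concretely, a family in which \( \omega_{Y/T}^{[m]} \otimes \SO_{Y_{t}} \) fails relative \( \bfS_{2} \) over \( T \) at a point where the honest canonical sheaf \( \omega_{Y_{t}/\Bbbk(t)}^{[m]} \) does satisfy \( \bfS_{2} \).

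Such families are most transparently understood via cones, using the machinery of Section~\ref{subsect:cone}: for the affine cone over a polarized projective scheme \( (S, \SA) \), invertibility of \( \omega^{[r]} \) at the vertex is equivalent to \( \omega_{S/\Bbbk}^{[r]} \isom \SA^{\otimes l} \) for some \( l \) (Proposition~\ref{prop:coneGor}\eqref{prop:coneGor:3}), while the depth of \( \omega^{[m]} \) at the vertex — hence the failure or success of base change when the cone is placed in a family — is governed by the vanishing of higher cohomology of twists of \( \omega_{S/\Bbbk}^{[m]} \) (Proposition~\ref{prop:coneSk}\eqref{prop:coneSk:Sk}, \eqref{prop:coneSk:D3}). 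Patakfalvi's construction \cite[Th.~1.2]{Pa}, built from a projective cone as in \cite[Prop.~5.4]{Pa}, is exactly of this shape and supplies a characteristic-zero example: a flat projective \( \SH \to B \) with \( B \subset \BPP^{1}_{\Bbbk} \), all fibers \( \BQQ \)-Gorenstein, \( \omega_{\SH/B}^{[r]} \) invertible, but \( \omega_{\SH/B} \otimes \SO_{\SH_{0}} \not\isom \omega_{\SH_{0}/\Bbbk} \) at one fiber. In positive characteristic, Koll\'ar's example \cite[14.7]{HacKov}, \cite[Exam.~7.6]{Kovacs} works, where the Gorenstein indices of the fibers can even be unbounded over spectra of Artinian rings (cf.\ Remark~\ref{rem:naiveQGorCharP}), and Altmann--Koll\'ar \cite{AltmannKollar} give further examples as deformations of two-dimensional cyclic quotient singularities. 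A self-contained characteristic-zero example internal to this paper is furnished by Lemma~\ref{lem:Lee} and Example~\ref{exam:KummerType}. The proof would then consist of invoking any one of these and carrying out the relevant verification.

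The main obstacle is to realize the two requirements \emph{at the same time}: on one hand \( \omega_{Y/T}^{[r]} \) must be invertible near the relevant point — a rigidity which, in the cone picture, forces \( \omega_{S_{t}/\Bbbk(t)}^{[r]} \) to be a fixed power of \( \SA_{t} \) uniformly in \( t \), and is easily destroyed when the geometry of the fibers jumps — and on the other hand \( \phi_{t}^{[m]} \) must genuinely fail for some \( m \), which demands a controlled non-vanishing of a higher cohomology group of the fibers, compatible with semicontinuity and flatness. These two conditions pull against each other, and reconciling them is precisely what the explicit geometric constructions of Patakfalvi, Koll\'ar, and Altmann--Koll\'ar accomplish; a general soft argument does not seem available. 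Once a family with both features is fixed, checking that it witnesses Fact~\ref{fact:Pa2} is a routine application of Proposition~\ref{prop:BCGor} together with Propositions~\ref{prop:coneSk} and \ref{prop:coneGor} in the cone cases — so the creative content lies entirely in producing the family rather than in the verification.
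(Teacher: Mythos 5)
Your proposal is correct and matches the paper's treatment: the statement is recorded as a \emph{Fact} whose justification is precisely the citation of the known examples of Koll\'ar, Patakfalvi, and Altmann--Koll\'ar, supplemented by the paper's own construction in Lemma~\ref{lem:Lee} and Example~\ref{exam:KummerType}, and your identification of the separating mechanism (invertibility of a single power \( \omega_{Y/T}^{[r]} \) versus the Koll\'ar/relative \( \bfS_{2} \) condition for all powers, checked via Propositions~\ref{prop:BCGor}, \ref{prop:coneSk}, and \ref{prop:coneGor}) is exactly the one the paper relies on.
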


We can construct another example by the following lemma, which is
inspired by Patakfalvi's work \cite{Pa}.

\begin{lem}\label{lem:Lee}
Let \( S \) be a non-singular projective variety of dimension \( \geq 2 \) over
an algebraically closed field \( \Bbbk \) of characteristic zero, and let
\( \SL \) be an invertible \( \SO_{S} \)-module of order \( l > 1 \), 
i.e., \( l\) is the smallest positive integer
such that \( \SL^{\otimes l}\isom \SO_{S}\).
Assume that \( \OH^{1}(S, \SO_{S}) = 0 \), \( \OH^{1}(S, \SL) \ne 0 \), and that \( K_{S} \) is ample.
For an integer \( r \geq 2 \), we set
\[ \SA := \SO_{S}(rK_{S}) \otimes \SL^{-1} = \omega_{S/\Bbbk}^{\otimes r} \otimes \SL^{-1}, \]
and let \( X \) be the affine cone \( \Cone(S, \SA) \) with a vertex \( P \).
Then,
\begin{enumerate}
\item  \label{lem:Lee:1}
\( X \) is a normal \( \BQQ \)-Gorenstein variety with one isolated singularity \( P \)
of Gorenstein index \( lr \),
\end{enumerate}
and the following hold for any non-constant function \( f \colon X \to T := \BAA^{1}_{\Bbbk}\)\emph{:}
\begin{enumerate}
\addtocounter{enumi}{1}
\item \label{lem:Lee:2}
\( f \) is a naively \( \BQQ \)-Gorenstein morphism along
the fiber \( F = f^{-1}(f(P))\)\emph{;}
\item \label{lem:Lee:3}
\( \omega^{[r]}_{X/T} \isom \omega_{X/\Bbbk}^{[r]}\) 
does not satisfy relative \( \bfS_{2} \) over \( T \) at \( P \).
In particular, \( f \) is not \( \BQQ \)-Gorenstein at \( P \).
\end{enumerate}
\end{lem}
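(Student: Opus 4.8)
The plan is to combine the structure theory of affine cones from Section~\ref{subsect:cone} with the cohomological hypotheses on $(S,\SL)$. First I would establish \eqref{lem:Lee:1}. By construction $\SA=\omega_{S/\Bbbk}^{\otimes r}\otimes\SL^{-1}$ with $K_S$ ample and $r\geq 2$, so $\SA$ is ample; since $S$ is nonsingular projective and $\OH^1(S,\SO_S)=0$, Kodaira vanishing gives $\OH^0(S,\SA^{\otimes m})=0$ for $m<0$, so Proposition~\ref{prop:coneSk}\eqref{prop:coneSk:S2} (applied to $\SG=\SO_S$) shows $X$ satisfies $\bfS_2$, and $X\setminus P\isom Y\setminus E$ is smooth over $S$ hence normal; together with normality of $S$ this gives $X$ normal (cf.\ Corollary~\ref{cor:coneGor2}\eqref{cor:coneGor2:-1} or Corollary~\ref{cor:cone1}). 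Since $S$ is smooth, $X\setminus P$ is smooth, so $P$ is the only possible singular point, and it is genuinely singular because $\dim X=\dim S+1\geq 3$ while $X$ is a cone. For the $\BQQ$-Gorenstein property and the index, I would invoke Corollary~\ref{cor:coneGor}\eqref{cor:coneGor:QGor}: $X$ is $\BQQ$-Gorenstein iff $S$ is (it is, being smooth) and $\omega_{S/\Bbbk}^{[r']}\isom\SA^{\otimes l'}$ for some $r'>0$, $l'\in\BZZ$. Here $\omega_{S/\Bbbk}^{\otimes lr}=\SO_S(lrK_S)=(\omega_{S/\Bbbk}^{\otimes r}\otimes\SL^{-1})^{\otimes l}\otimes\SL^{\otimes l}=\SA^{\otimes l}$ since $\SL^{\otimes l}\isom\SO_S$; so $lr$ works. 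To see $lr$ is minimal, suppose $\omega_{S/\Bbbk}^{\otimes a}\isom\SA^{\otimes b}=\omega_{S/\Bbbk}^{\otimes rb}\otimes\SL^{-b}$; then $\omega_{S/\Bbbk}^{\otimes(a-rb)}\isom\SL^{-b}$, and taking $l$-th powers and using that $K_S$ is ample (so $\omega_{S/\Bbbk}$ has infinite order in $\Pic$) forces $a=rb$ and hence $\SL^{\otimes b}\isom\SO_S$, i.e.\ $l\mid b$; combined with the analysis this gives Gorenstein index exactly $lr$ via Proposition~\ref{prop:coneGor}\eqref{prop:coneGor:3}.

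Next I would prove \eqref{lem:Lee:2}. The point $f(P)\in T=\BAA^1_\Bbbk$ has fiber $F=f^{-1}(f(P))$ containing $P$; away from $P$, $f$ is a morphism of smooth varieties, and at $P$ we use \eqref{lem:Lee:1}: $\omega_{X/\Bbbk}^{[lr]}$ is invertible near $P$. Since $X$ is normal (so $\bfS_2$, Gorenstein in codimension one, and $f$ is automatically an $\bfS_2$-morphism onto its image in a neighborhood of $F$ — $T$ is a smooth curve and $f$ dominant hence flat), and since $\omega_{X/T}^{[m]}\isom\omega_{X/\Bbbk}^{[m]}$ by Lemma~\ref{lem:omegaFlatbc} (as $T\to\Spec\Bbbk$ is flat) — here I should be careful and instead use that $f$ is flat onto an open subset of $\BAA^1$ and apply base change compatibility of relative canonical sheaves for the flat morphism $\Spec\Bbbk(t)\to T$; more precisely $\omega_{X/T}$ is the relative canonical sheaf of the $\bfS_2$-morphism $f$ and $\omega_{X/T}\isom\omega_{X/\Bbbk}$ since $\Bbbk\to\SO_T$ is smooth of relative dimension one only up to twist, so I would argue directly that $\omega_{X/T}^{[lr]}=\omega_{X/\Bbbk}^{[lr]}$ is invertible near $P$. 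Then $f$ is naively $\BQQ$-Gorenstein along $F$ by Definition~\ref{dfn:QGorMor}\eqref{dfn:QGorMor:naive}.

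Finally, for \eqref{lem:Lee:3}, the key is that the Koll\'ar condition fails at $P$ for $m=r$. Using Proposition~\ref{prop:BCGor}, $\omega_{X/T}^{[r]}$ satisfies relative $\bfS_2$ over $T$ at $P$ iff the base change homomorphism $\phi_t^{[r]}\colon\omega_{X/T}^{[r]}\otimes\SO_{X_t}\to\omega_{X_t/\Bbbk(t)}^{[r]}$ is an isomorphism at $P$, and over the point $t_0=f(P)$ the fiber $X_{t_0}=F$ contains $P$. I would show this fails by computing $\omega_{X/\Bbbk}^{[r]}$ as $\widetilde{\SF}$ for $\SG=\omega_{S/\Bbbk}^{[r]}\otimes\SA^{\otimes r}=\omega_{S/\Bbbk}^{\otimes 2r}\otimes\SL^{-r}$ via Proposition~\ref{prop:coneGor}\eqref{prop:coneGor:1}; by Proposition~\ref{prop:coneSk}\eqref{prop:coneSk:D3}, $\depth(\omega_{X/\Bbbk}^{[r]})_P\geq 3$ would require $\OH^1(S,\omega_{S/\Bbbk}^{\otimes 2r}\otimes\SL^{-r}\otimes\SA^{\otimes m})=0$ for all $m$, and I would exhibit a twist where this group equals (up to Serre duality) $\OH^1(S,\SL)\neq 0$ — concretely choosing $m$ so that $\omega_{S/\Bbbk}^{\otimes 2r}\otimes\SL^{-r}\otimes\SA^{\otimes m}$ becomes $\SL$ or its Serre dual $\omega_{S/\Bbbk}\otimes\SL^{-1}$ after using $\SL^{\otimes l}\isom\SO_S$ — so $\depth(\omega_{X/\Bbbk}^{[r]})_P=2$ exactly. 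Then relative $\bfS_2$ of $\omega_{X/T}^{[r]}$ at $P$ would force, by flatness over the curve $T$ and Fact~\ref{fact:elem-flat}\eqref{fact:elem-flat:3}, that $\depth(\omega_{X_{t_0}/\Bbbk}^{[r]})_P=\depth(\omega_{X/T}^{[r]})_P-1\geq 2$ while the fiber is itself the cone $X$ over $\Bbbk$, whose depth at $P$ in degree $r$ is again exactly $2$ — the contradiction instead being obtained via the base-change homomorphism not being surjective, since $\omega_{X/T}^{[r]}\otimes\SO_{X_{t_0}}$ has the same rank-one generic behaviour but the target $\omega_{X_{t_0}/\Bbbk}^{[r]}$ fails to be a quotient because the $\OH^1(S,\SL)$ obstruction survives. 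I expect \emph{this last step — pinning down precisely why $\phi^{[r]}_{t_0}$ is not an isomorphism at $P$ rather than merely that the sheaf has depth two — to be the main obstacle}; the cleanest route is probably to imitate Patakfalvi's argument in \cite[Prop.~5.4]{Pa}, namely to show directly that if $\omega_{X/T}^{[r]}$ were relatively $\bfS_2$ at $P$ then by Proposition~\ref{prop:inf+val} and restriction to an infinitesimal thickening the extension class corresponding to $\OH^1(S,\SL)$ would have to vanish, contradicting $\OH^1(S,\SL)\neq 0$, and then conclude $f$ is not $\BQQ$-Gorenstein at $P$ via Definition~\ref{dfn:QGorMor}\eqref{dfn:QGorMor:QGor}.
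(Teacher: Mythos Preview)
Your argument for \eqref{lem:Lee:1} is essentially the paper's, and it is fine.

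There are two real gaps.

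\textbf{Part \eqref{lem:Lee:2}: you never use the hypothesis \(\OH^{1}(S,\SO_{S})=0\).} You claim \(f\) is ``automatically an \(\bfS_{2}\)-morphism'' because \(X\) is normal and \(T\) is a smooth curve. That is not enough: \(X\) being \(\bfS_{2}\) does not force the fibers of \(f\) to be \(\bfS_{2}\). What the paper does is first prove that \(X\) satisfies \(\bfS_{3}\): by Kodaira vanishing \(\OH^{i}(S,\SA^{\otimes m})=\OH^{i}(S,\omega_{S/\Bbbk}\otimes\SA^{\otimes m})=0\) for \(i>0\), \(m>0\), and then Corollary~\ref{cor:coneGor2}\eqref{cor:coneGor2:0} together with \(\OH^{1}(S,\SO_{S})=0\) gives \(\depth\SO_{X,P}\geq 3\). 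Only then does the depth formula \eqref{eq:fact:elem-flat:2} yield \(\depth\SO_{F,x}=\depth\SO_{X,x}-1\geq 2\) for closed points \(x\in F\), so that \(F\) satisfies \(\bfS_{2}\). This is exactly where the hypothesis \(\OH^{1}(S,\SO_{S})=0\) enters, and without it the claim fails.

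\textbf{Part \eqref{lem:Lee:3}: you already had the argument in hand and then walked away from it.} The fiber \(F=f^{-1}(f(P))\) is a hypersurface in \(X\), not ``the cone \(X\) over \(\Bbbk\)'' itself; this confusion is what makes you think there is an obstacle. The paper's argument is exactly the depth computation you started: by Proposition~\ref{prop:coneGor}\eqref{prop:coneGor:2} one has \(\depth(\omega_{X/\Bbbk}^{[r]})_{P}\geq 3\) iff \(\OH^{1}(S,\omega_{S/\Bbbk}^{\otimes r}\otimes\SA^{\otimes m})=0\) for all \(m\), and at \(m=-1\) this group is \(\OH^{1}(S,\SL)\ne 0\); hence \(\depth(\omega_{X/\Bbbk}^{[r]})_{P}=2\). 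Since \(\omega_{X/\Bbbk}^{[r]}\) is reflexive on the normal variety \(X\), it is torsion-free and therefore flat over the smooth curve \(T\). Then \eqref{eq:fact:elem-flat:2} gives
\[
\depth\bigl(\omega_{X/T}^{[r]}\otimes_{\SO_{X}}\SO_{F}\bigr)_{P}=\depth(\omega_{X/\Bbbk}^{[r]})_{P}-\depth\SO_{T,f(P)}=2-1=1,
\]
while \(\dim F_{P}=\dim S\geq 2\). So \(\omega_{X/T}^{[r]}\otimes\SO_{F}\) fails \(\bfS_{2}\) at \(P\), and that is precisely the failure of relative \(\bfS_{2}\). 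There is no need to analyse \(\phi_{t_{0}}^{[r]}\) directly, to invoke Proposition~\ref{prop:inf+val}, or to imitate \cite{Pa}; the depth drop already finishes the proof. (Incidentally, your expression \(\SG=\omega_{S/\Bbbk}^{\otimes 2r}\otimes\SL^{-r}\) is miscomputed: \(\omega_{S/\Bbbk}^{[r]}\otimes\SA^{\otimes r}=\omega_{S/\Bbbk}^{\otimes r(r+1)}\otimes\SL^{-r}\). But it is simpler to quote Proposition~\ref{prop:coneGor}\eqref{prop:coneGor:2} directly.)
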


\begin{proof}
\eqref{lem:Lee:1}:
The affine cone
\( X \) is \( \BQQ \)-Gorenstein by Corollary~\ref{cor:coneGor}\eqref{cor:coneGor:QGor}.
Here, \( X \setminus P \) is a non-singular variety by Lemma~\ref{lem:cone1}\eqref{lem:cone1:4}.
Therefore, \( X \) is a normal variety.
We have \( \omega_{X/\Bbbk}^{[lr]} \isom \SO_{X} \) by Proposition~\ref{prop:coneGor}\eqref{prop:coneGor:3}.
If \( \omega_{X/\Bbbk}^{[m]} \) is invertible for some \( m > 0 \), then
\( \omega_{S/\Bbbk}^{\otimes m} \isom \SA^{\otimes l'} \) for some integer \( l' \)
by Proposition~\ref{prop:coneGor}\eqref{prop:coneGor:3},
but it implies that \( m = l'r \), and \( \SL^{\otimes l'} \isom \SO_{S} \).
Hence, the Gorenstein index of \( X \) is \( lr \).

\eqref{lem:Lee:2}:
For any \( i > 0 \) and \( m > 0 \), we have
\[ \OH^{i}(S, \SA^{\otimes m}) = \OH^{i}(S, \omega_{S/\Bbbk} \otimes_{\SO_{S}} \SA^{\otimes m}) = 0 \]
by the Kodaira vanishing theorem, since
\[ \SA^{\otimes m} \otimes_{\SO_{S}} \omega_{S/\Bbbk}^{-1} \isom \SA^{\otimes m -1} \otimes_{\SO_{S}}
\omega_{S/\Bbbk}^{\otimes r-1} \otimes \SL^{-1} \]
is ample. Then, we can apply Corollary~\ref{cor:coneGor2}\eqref{cor:coneGor2:0}. As a consequence,
\( X \) satisfies \( \bfS_{3} \), since \( \OH^{1}(S, \SO_{S}) = 0 \).
Now, \( f  \) is a flat morphism, since \( X \) is irreducible and dominates \( T \).
Hence, \( F \) satisfies \( \bfS_{2} \) by the equality
\[ \depth \SO_{F, x} = \depth \SO_{X, x} - \depth \SO_{T, f(x)} = \depth \SO_{X, x} - 1 \]
for any closed point \( x \in F \) (cf.\ \eqref{eq:fact:elem-flat:2} in Fact~\ref{fact:elem-flat}).
Thus, \( f \) is an \( \bfS_{2} \)-morphism along \( F \), and
\( f \) is a naively \( \BQQ \)-Gorenstein morphism along \( F \)
(cf.\ Definition~\ref{dfn:QGorMor}\eqref{dfn:QGorMor:naive}), since
\( \omega_{X/T}^{[lr]} \isom \omega_{X/\Bbbk}^{[lr]} \) is invertible by \eqref{lem:Lee:1}.

\eqref{lem:Lee:3}:
By assumption, we have
\[ \OH^{1}(S, \omega_{S/\Bbbk}^{[r]} \otimes \SA^{-1}) \isom \OH^{1}(S, \SL) \ne 0. \]
Then, \( \depth (\omega^{[r]}_{X/\Bbbk})_{P} = 2 \)
by Proposition~\ref{prop:coneGor}\eqref{prop:coneGor:2}.
Since \( \omega^{[r]}_{X/\Bbbk} \) is flat over \( T \), we have
\[\depth (\omega^{[r]}_{X/\Bbbk} \otimes_{\SO_{X}} \SO_{F})_{P} =
\depth (\omega_{X/\Bbbk}^{[r]})_{P} - \depth \SO_{T, f(P)} = 1\]
by \eqref{eq:fact:elem-flat:2} in Fact~\ref{fact:elem-flat}.
This implies that \( \omega^{[r]}_{X/\Bbbk} \isom \omega^{[r]}_{X/T} \) does not satisfy relative
\( \bfS_{2} \) over \( T \) at \( P \). Therefore, \( f \) is not \( \BQQ \)-Gorenstein at \( P \)
(cf.\ Definition~\ref{dfn:QGorMor}\eqref{dfn:QGorMor:QGor}).
\end{proof}

We have the following example of
non-singular projective varieties \( S \) with invertible \( \SO_{S} \)-module \( \SL \) of order \( l = 2 \)
in Lemma~\ref{lem:Lee}:

\begin{exam}\label{exam:KummerType}
Let \( V \) be an abelian variety of dimension \( d \geq 3 \)
and let \( \iota \colon V \to V\) be the involution defined by \( \iota(v) = -v \) with respect to
the group structure on \( V \).
Let \( W \) be the quotient variety \( V/\langle \iota \rangle \).
Then, \( W \) is a normal projective variety with only isolated singular points, and
\begin{equation}\label{eq:0|exam:KummerType}
\OH^{1}(W, \SO_{W}) = 0,
\end{equation}
since it is isomorphic to
the invariant part of \( \OH^{1}(V, \SO_{V}) \) by the induced action
of \( \iota \), which is just the multiplication map by \( -1 \).
The quotient morphism \( \pi \colon V \to W \)
is a double-cover \'etale outside the singular locus of \( W \), and we have isomorphisms
\( \pi_{*}\SO_{V} \isom \SO_{W} \oplus \omega_{W/\Bbbk} \) and
\( \omega^{[2]}_{W/\Bbbk} \isom \SO_{W}\).
In particular,
\begin{equation}\label{eq:1|exam:KummerType}
\OH^{1}(W, \omega_{W/\Bbbk}) \isom \OH^{1}(V, \SO_{V}) \isom \Bbbk^{\oplus d}
\end{equation}
by \eqref{eq:0|exam:KummerType}.
We can take a smooth ample divisor \( S \) on \( W \) away from the singular locus of \( W \).
Then, \( \dim S = d - 1 \geq 2 \).
By the Kodaira vanishing theorem applied to the ample divisor \( \pi^{*}S \) on \( V \),
we have \( \OH^{i}(V, \pi^{*}\SO_{W}(-S)) = 0  \) for any  \( 0 < i < d = \dim W\). Hence,
\begin{equation}\label{eq:2|exam:KummerType}
\OH^{i}(W, \SO_{W}(-S)) = \OH^{i}(W, \omega_{W/\Bbbk} \otimes_{\SO_{W}} \SO_{W}(-S)) = 0
\end{equation}
for \( i = 1 \) and \( 2 \).
The canonical divisor \( K_{S} \) is ample by
\[ \omega^{\otimes 2}_{S/\Bbbk} \isom 
(\omega^{[2]}_{W/\Bbbk} \otimes_{\SO_{W}} \SO_{W}(2S)) \otimes_{\SO_{W}} \SO_{S}
\isom \SO_{S}(2S).\]
We define \( \SL := \omega_{W/\Bbbk} \otimes_{\SO_{W}} \SO_{S} \).
This is invertible and \( \SL^{\otimes 2} \isom \SO_{S} \).
We have
\[ \OH^{1}(S, \SO_{S}) = 0 \quad \text{and} \quad \OH^{1}(S, \SL) \isom \Bbbk^{\oplus d} \]
by applying \eqref{eq:0|exam:KummerType}, \eqref{eq:1|exam:KummerType},
and \eqref{eq:2|exam:KummerType} to the cohomology long exact sequences
derived from two short exact sequences:
\begin{gather*}
0 \to \SO_{W}(-S) \to \SO_{W} \to \SO_{S} \to 0,  \\
0 \to \omega_{W/\Bbbk} \otimes_{\SO_{W}} \SO_{W}(-S) \to \omega_{W/\Bbbk} \to \SL \to 0.
\end{gather*}
The order of \( \SL \) equals two by \( \OH^{1}(S, \SL) \not\isom \OH^{1}(S, \SO_{S}) \).
Therefore, \( S \) and \( \SL \) satisfy
the conditions of Lemma~\ref{lem:Lee}.
\end{exam}

\begin{dfn}[relative Gorenstein index]
For a naively \( \BQQ \)-Gorenstein morphism \( f \colon Y \to T \)
and for a point \( y \in Y \), the
\emph{relative Gorenstein index}
of \( f \) \emph{at} \( y \)
is the smallest positive integer \( r \) such that \( \omega_{Y/T}^{[r]} \)
is invertible at \( y \).
The least common multiple of relative Gorenstein indices
at all the points is called
the \emph{relative Gorenstein index} of \( f \), which might be \( +\infty \).
\end{dfn}

\begin{prop}\label{prop:GorIndexQGorMor}
Let \( f \colon Y \to T \) be a naively \( \BQQ \)-Gorenstein morphism.
For a point \( y \in Y\), let \( m \) be the relative Gorenstein index of \( f \) at \( y \)
and let \( r \) be the Gorenstein index of \( Y_{t} = f^{-1}(t) \) at \( y \), where \( t = f(y) \).
Then, \( m = r \) in the following three cases\emph{:}
\begin{enumerate}
    \renewcommand{\theenumi}{\roman{enumi}}
    \renewcommand{\labelenumi}{(\theenumi)}
\item \label{prop:GorIndexQGorMor:cond1}
\( f \) is \( \BQQ \)-Gorenstein at \( y \)\emph{;}
\item \label{prop:GorIndexQGorMor:cond2}
\( Y_{t} \) is Gorenstein in codimension two and satisfies \( \bfS_{3} \) at \( y \)\emph{;}
\item \label{prop:GorIndexQGorMor:cond3}
\( m \) is coprime to the characteristic of \( \Bbbk(t) \).
\end{enumerate}
\end{prop}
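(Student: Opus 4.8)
The plan is to reduce the proposition to a single claim: that $\omega_{Y/T}^{[r]}$ is invertible at $y$. I first record the bookkeeping that makes this reduction work. Put $U:=\Gor(Y/T)$, $Z:=Y\setminus U$, and $j\colon U\injmap Y$. Every fibre $Y_{t'}$ is $\BQQ$-Gorenstein, hence satisfies $\bfS_2$ and is Gorenstein in codimension one, so $\Codim(Y_{t'}\cap Z,Y_{t'})\ge2$ and therefore $\depth_Z\SO_Y\ge2$ by Lemma~\ref{lem:relSkCodimDepth}\eqref{lem:relSkCodimDepth:3}; moreover $U\to T$ is a Gorenstein morphism, so $\omega_{Y/T}|_U$ is invertible (Lemma~\ref{lem:CMmorphism}) and $\omega_{Y/T}^{[k]}\isom j_*((\omega_{Y/T}|_U)^{\otimes k})$ for all $k\in\BZZ$ by Proposition~\ref{prop:BCGor}. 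A standard projection-formula argument then shows that $\{k>0:\omega_{Y/T}^{[k]}\ \text{invertible at}\ y\}$ is stable under $(k,k')\mapsto k+k'$ and under $(k,k')\mapsto k-k'$ for $k>k'$, hence consists exactly of the positive multiples of its least element $m$; the same applied on the $\Bbbk(t)$-scheme $Y_t$ shows $\{k>0:\omega_{Y_t/\Bbbk(t)}^{[k]}\ \text{invertible at}\ y\}$ is the set of positive multiples of $r$. We may also assume $y\in Z$, since otherwise $\omega_{Y/T}$ is already invertible at $y$ and $m=r=1$.

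Next I would prove the inclusion $r\mid m$, which needs only that $f$ is naively $\BQQ$-Gorenstein. Shrink $Y$ to a neighbourhood of $y$ on which $\omega_{Y/T}^{[m]}$ is invertible; then $\omega_{Y/T}^{[m]}$ is flat over $T$ and restricts to an invertible, hence $\bfS_2$, sheaf on each fibre, so it satisfies relative $\bfS_2$ over $T$. By Proposition~\ref{prop:BCGor} the base-change homomorphism $\phi_t^{[m]}$ is an isomorphism, so $\omega_{Y_t/\Bbbk(t)}^{[m]}\isom\omega_{Y/T}^{[m]}\otimes_{\SO_Y}\SO_{Y_t}$ is invertible at $y$; by the bookkeeping above, $r\mid m$. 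In view of this, it now suffices to show in each of the three cases that $\omega_{Y/T}^{[r]}$ is invertible at $y$: that gives $m\mid r$, and with $r\mid m$ we conclude $m=r$.

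In case~\eqref{prop:GorIndexQGorMor:cond1}, $f$ is $\BQQ$-Gorenstein near $y$, so $\omega_{Y/T}^{[r]}$ satisfies relative $\bfS_2$ over $T$ near $y$; Proposition~\ref{prop:BCGor} makes $\phi_t^{[r]}$ an isomorphism at $y$, hence $\omega_{Y/T}^{[r]}\otimes_{\SO_Y}\SO_{Y_t}\isom\omega_{Y_t/\Bbbk(t)}^{[r]}$ is free at $y$ (as $Y_t$ has Gorenstein index $r$ at $y$), and $\omega_{Y/T}^{[r]}$ is $T$-flat at $y$, so Fact~\ref{fact:elem-flat}\eqref{fact:elem-flat:2} gives invertibility at $y$. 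In cases~\eqref{prop:GorIndexQGorMor:cond2} and~\eqref{prop:GorIndexQGorMor:cond3} I would apply Theorem~\ref{thm:invExt} to $f$, the closed set $Z$, and $\SF:=\omega_{Y/T}^{[r]}$. Conditions \eqref{thm:invExt:cond0} and \eqref{thm:invExt:cond1} hold by the first paragraph ($\SF$ is reflexive, so $\depth_Z\SF\ge2$ by Lemma~\ref{lem:j*reflexive}\eqref{lem:j*reflexive:1}; $\SF|_U$ is invertible and $T$-flat), and \eqref{thm:invExt:cond2} holds because Theorem~\ref{thm:basechange} on $U$ together with Proposition~\ref{prop:BCGor} applied to $Y_t$ identifies $\SF_{(t)*}$ with $\omega_{Y_t/\Bbbk(t)}^{[r]}$, which is invertible at $y$ and hence, after shrinking $Y$, everywhere. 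In case~\eqref{prop:GorIndexQGorMor:cond2} I verify the extra hypothesis \eqref{thm:invExt:3a}: since $\bfS_3(Y_t)$ is open (Proposition~\ref{prop:CMlocus}) and contains $y$, after shrinking $Y$ the fibre $Y_t$ satisfies $\bfS_3$ at every point of it; since $Y_t$ is Gorenstein in codimension two, every $z\in Z\cap Y_t$ has $\dim\SO_{Y_t,z}\ge3$, whence $\depth_{Z\cap Y_t}\SO_{Y_t}\ge3$. In case~\eqref{prop:GorIndexQGorMor:cond3} I verify \eqref{thm:invExt:3b} with $\rho:=m/r$: this is a positive integer dividing $m$, hence coprime to $\chara\Bbbk(t)$, and $\SF^{[\rho]}\isom j_*((\omega_{Y/T}|_U)^{\otimes m})\isom\omega_{Y/T}^{[m]}$ (double dualizing $\SF^{\otimes\rho}$ and using $\SF\isom j_*(\SF|_U)$), which is invertible at $y$ by hypothesis. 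In either case Theorem~\ref{thm:invExt} yields that $\omega_{Y/T}^{[r]}$ is invertible along $Y_t$, in particular at $y$.

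The step I expect to be the main obstacle is the localization in case~\eqref{prop:GorIndexQGorMor:cond2}: one must upgrade the pointwise hypotheses ``$\bfS_3$ at $y$'' and ``Gorenstein in codimension two'' to the \emph{uniform} depth bound $\depth_{Z\cap Y_t}\SO_{Y_t}\ge3$ demanded by Theorem~\ref{thm:invExt}\eqref{thm:invExt:3a}. This requires openness of the $\bfS_3$-locus, the fact that the codimension of a closed subset cannot drop when one passes to an open subscheme, and the identity $\Gor(Y/T)\cap Y_t=\Gor(Y_t)$. A secondary delicate point, used throughout, is the subgroup structure of the set of exponents $k$ for which $\omega_{Y/T}^{[k]}$ (resp.\ $\omega_{Y_t/\Bbbk(t)}^{[k]}$) is invertible at $y$, which is exactly what translates invertibility of reflexive powers into divisibility of Gorenstein indices.
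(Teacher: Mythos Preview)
Your proof is correct and follows essentially the same approach as the paper: first establish $r\mid m$ via the base-change isomorphism for $\omega_{Y/T}^{[m]}$, then reduce to showing $\omega_{Y/T}^{[r]}$ is invertible at $y$, handling case~\eqref{prop:GorIndexQGorMor:cond1} by relative $\bfS_2$ plus Fact~\ref{fact:elem-flat}\eqref{fact:elem-flat:2} and cases~\eqref{prop:GorIndexQGorMor:cond2}, \eqref{prop:GorIndexQGorMor:cond3} by Theorem~\ref{thm:invExt} with $\SF=\omega_{Y/T}^{[r]}$. Your bookkeeping on the subgroup structure of invertible exponents is more explicit than needed (once $r\mid m$ is known, minimality of $m$ alone gives $m=r$ from invertibility of $\omega_{Y/T}^{[r]}$), but it does no harm.
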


\begin{proof}
Note that \( m \) is divisible by \( r \). In fact, the base change homomorphism
\[  \omega_{Y/T}^{[m]} \otimes_{\SO_{Y}} \SO_{Y_{t}} \to \omega_{Y_{t}/\Bbbk(t)}^{[m]}  \]
is an isomorphism at \( y \), since the left hand side is invertible at \( y \)
and since \( Y_{t} \) satisfies \( \bfS_{2} \).
We set \( \SM := \omega_{Y/T}^{[r]} \). It is enough to prove that \( \SM \) is invertible at \( y \).
Let \( Z \) be the complement of the relative Gorenstein locus \( \Gor(Y/T) \)
and let \( j \colon Y \setminus Z \injmap Y \) be the open immersion.
Note that \( \Codim(Z \cap Y_{t}, Y_{t}) \geq 2  \)
(\( \geq 3 \) in case \eqref{prop:GorIndexQGorMor:cond2}) and \( \Codim(Z, Y) \geq 2 \).
If \( f \) is \( \BQQ \)-Gorenstein,
then \( \SM \) satisfies relative \( \bfS_{2} \) over \( T \); in particular,
\[ \SM \otimes_{\SO_{Y}} \SO_{Y_{t}} \isom
j_{*}\left( (\SM \otimes_{\SO_{Y}} \SO_{Y_{t}})|_{Y_{t} \setminus Z}\right)
\isom \omega_{Y_{t}/\Bbbk(t)}^{[r]}\]
and hence, \( \SM \) is invertible at \( y \) by Fact~\ref{fact:elem-flat}\eqref{fact:elem-flat:2}.
Thus, it is enough to consider the cases \eqref{prop:GorIndexQGorMor:cond2}
and \eqref{prop:GorIndexQGorMor:cond3}.
By replacing \( Y \) with an open neighborhood of \( y \), we may assume the following:
\begin{enumerate}
\item \( \depth_{Z} \SO_{Y} \geq 2 \) (cf.\ Lemma~\ref{lem:relSkCodimDepth}\eqref{lem:relSkCodimDepth:3});

\item  \( \SM|_{Y \setminus Z} \) is invertible and \( \depth_{Z} \SM \geq 2 \)
(cf.\ Proposition~\ref{prop:BCGor});

\item  \( j_{*}(\SM \otimes_{\SO_{Y}} \SO_{Y_{t}}|_{Y_{t} \setminus Z})
\isom  \omega_{Y_{t}/\Bbbk(t)}^{[r]}\) is invertible;

\item  one of the following holds:
\begin{enumerate}
    \renewcommand{\theenumi}{\alph{enumi}}
    \renewcommand{\labelenumi}{(\theenumi)}
\item  \( \depth_{Z \cap Y_{t}} \SO_{Y_{t}} \geq 3 \);

\item  \( \SM^{[m/r]} \isom \omega_{Y/T}^{[m]}\) is invertible, where \( m/r \)
is coprime to the characteristic of \( \Bbbk(t) \).
\end{enumerate}
\end{enumerate}
Then, \( \SM \) is invertible by Theorem~\ref{thm:invExt}, and we are done.
\end{proof}

\begin{rem}\label{rem:KSh1}
A special case of Proposition~\ref{prop:GorIndexQGorMor} for naively \( \BQQ \)-Gorenstein morphisms
is stated in \cite[Lem.~3.16]{KSh},
where \( T \) is the spectrum of a complete Noetherian local \( \BCC \)-algebra
and the closed fiber \( Y_{t} \) is a normal complex algebraic surface.
However, the proof of \cite[Lem.~3.16]{KSh} has two problems.
We explain them using the notation there, where
\( (X \to S, 0 \in S) \) corresponds to \( (Y \to T, t \in T) \) in our situation,
and \( 0\) is the closed point of \( S \).
The central fiber \( X_{0} \) is only a germ of complex algebraic surface in
\cite[\S3]{KSh}, but here, for simplicity,
we consider \( X_{0} \) as a usual algebraic surface and hence consider
\( X \to S \) as a morphism of finite type.
The authors of \cite{KSh} write \( X^{0} \) for \( \Gor(X/S) \) and
write \( Y^{0} \to X^{0} \) for the cyclic \'etale cover associated with an isomorphism
\( \omega_{X/S}^{[m]} \isom \SO_{X}  \). They want to prove that \( m \) is equal to
the Gorenstein index \( r  \) of the fiber \( X_{0} \) of \( X \to S \) over \( 0 \).

The first problem is in the proof in the case where \( S = \Spec A \) is Artinian.
This is minor and is caused by omitting an explanation of the isomorphism
\( \omega_{X/S}^{[m]} \isom \SO_{X}  \).
In this situation, they assert that it is enough to prove the fiber
\( Y^{0}_{0} \) of \( Y^{0} \to S\) over \( 0 \) to be connected.
However, \( Y^{0}_{0} \) is connected even if \( r \ne m \). In fact, for isomorphisms
\( u \colon \omega_{X_{0}/\BCC}^{[r]} \isom \SO_{X_{0}} \) and
\( v \colon \omega_{X/S}^{[m]} \isom \SO_{X} \), we have an invertible element
\( \theta \) of \( \SO_{X_{0}} \) such that
\[ v|_{X_{0}} = \theta u^{\otimes m/r} \]
as an isomorphism
\( \omega_{X/S}^{[m]} \otimes_{\SO_{X}} \SO_{X_{0}} \isom \SO_{X_{0}}\).
Here, we can take \( v \) so that \( \theta \) can not have \( k \)-th root
in \( \SO_{X_{0}} \) for any integer \( k \) dividing \( r \).
Then, \( Y^{0}_{0} \) is connected for such \( v \).
Of course, this problem is resolved by replacing the isomorphism \( v \)
with \( v (\tilde{\theta})^{-1} \)
for a function \( \tilde{\theta} \in \SO_{X} \) which is a lift of \( \theta \in \SO_{X_{0}} \).

The second problem is in the reduction to the Artinian case.
They set \( A_{n} = A/\GM^{n} \), \( S_{n} = \Spec A_{n} \),
and \( X^{0}_{n} = X^{0} \times_{S} S_{n} \), for \( n \geq 1 \) and for the maximal ideal \( \GM \)
of \( A \), and they obtain an isomorphism
\[ \Phi_{n} \colon \omega_{X^{0}_{n}/S_{n}}^{\otimes r} \isom \SO_{X_{n}^{0}} \]
for any \( n  \) by applying the assertion: \( m = r \), to the Artinian case.
However, just after the isomorphism \( \Phi_{n} \), they deduce
an isomorphism \( \omega_{X^{0}/S}^{\otimes r} \isom \SO_{X^{0}} \) without mentioning any reason.
This is thought of as a lack of the proof, by Remark~\ref{rem:exam:8-3}. For,  their isomorphism above 
induces isomorphisms 
\[  \omega_{X/S}^{[r]} \otimes \SO_{X_{n}} \isom j_{*}(\omega_{X^{0}_{n}/S_{n}}^{\otimes r}) \] 
for all \( n \), while we always have an isomorphism 
\[ \omega_{X/S}^{[r]} \isom j_{*}(\omega_{X^{0}/S}^{\otimes r}),  \]
where \( j \colon X^{0} \injmap X \) denotes the open immersion. 
\end{rem}

%%%%%%%%%%%%%%%%%%%%%%%%%%%%%%%%%%%%%%%%%

\subsection{Virtually \texorpdfstring{$\BQQ$}{Q}-Gorenstein morphisms}
\label{subsect:virQGormor}

\begin{dfn}\label{dfn:vQGorMor}
Let \( f \colon Y \to T \) be a morphism locally of finite type between locally Noetherian schemes.
For a given point \( y \in Y \) and the image \( o = f(y) \),
the morphism \( f \) is said to be \emph{virtually \( \BQQ \)-Gorenstein at} \( y \) if
\begin{itemize}
\item  \( f \) is flat at \( y \),

\item  the fiber \( Y_{o} = f^{-1}(o)\) is \( \BQQ \)-Gorenstein at \( y \),
\end{itemize}
and if there exist an open neighborhood \( U \) of \( y \) in \( Y \) 
and a reflexive \( \SO_{U} \)-module \( \SL \)
satisfying the following conditions:
\begin{enumerate}
    \renewcommand{\theenumi}{\roman{enumi}}
    \renewcommand{\labelenumi}{(\theenumi)}
\item \label{dfn:vQGorMor:cond1}
\( \SL \otimes_{\SO_{U}} \SO_{U_{o}} \isom \omega_{U_{o}/\Bbbk(o)} \),
where \( U_{o} = U \cap Y_{o} \);

\item  \label{dfn:vQGorMor:cond2}
for any integer \( m \), the double-dual \( \SL^{[m]} \) of \( \SL^{\otimes m} \)
satisfies relative \( \bfS_{2} \) over \( T \) at \( y \).
\end{enumerate}
If \( f \) is virtually \( \BQQ \)-Gorenstein at every point of \( Y \), then
it is called a \emph{virtually} \( \BQQ \)-\emph{Gorenstein morphism}.
\end{dfn}

\begin{rem}\label{rem:dfn:vQGorMor}
If the morphism \( f \) above is virtually \( \BQQ \)-Gorenstein at \( y \), then
there exist an open neighborhood \( U \) of \( y \) in \( Y \) and
a reflexive \( \SO_{U} \)-module \( \SL \) such that
\begin{enumerate}
\item  \label{rem:dfn:vQGorMor:1} \( f|_{U} \colon U \to T \) is an \( \bfS_{2} \)-morphism 
of pure relative dimension,

\item \label{rem:dfn:vQGorMor:2} every non-empty fiber \( U_{t} = U \cap Y_{t}\)
of \( f|_{U} \) is Gorenstein in codimension one, i.e., \( \Codim(U_{t} \setminus Y^{\circ}, U_{t}) \geq 2 \)
for any \( t \in f(U) \), where \( Y^{\circ} = \Gor(Y/T) \),

\item \label{rem:dfn:vQGorMor:3} \( \SL \otimes_{\SO_{U}} \SO_{U_{o}} \isom \omega_{U_{o}/\Bbbk(o)} \),

\item \label{rem:dfn:vQGorMor:4} \( \SL|_{U \cap Y^{\circ}} \) is invertible,

\item \label{rem:dfn:vQGorMor:5}\( \SL^{[r]} \) is invertible for some integer \( r > 0 \), and

\item \label{rem:dfn:vQGorMor:6}
\( \SL^{[m]} \) satisfies relative \( \bfS_{2} \) over \( T \) for any integer \( m \).
\end{enumerate}
In fact, we have an open neighborhood \( U \) satisfying
\eqref{rem:dfn:vQGorMor:1} and \eqref{rem:dfn:vQGorMor:2} by Lemma~\ref{lem:S2Codim2}.
By shrinking \( U \) and by Fact~\ref{fact:elem-flat}\eqref{fact:elem-flat:2},
we may assume the existence of \( \SL \) satisfying \eqref{rem:dfn:vQGorMor:3},
\eqref{rem:dfn:vQGorMor:4}, and \eqref{rem:dfn:vQGorMor:5},
where \( r \) is a multiple the Gorenstein index of \( Y_{o} \) at \( y \).
Then, for any point \( t \in f(U) \), the coherent sheaf \( \SL^{[m]}_{(t)} = \SL^{[m]} \otimes \SO_{U_{t}} \)
is locally equi-dimensional by Fact~\ref{fact:S2}\eqref{fact:S2:1}, since
\( \Supp \SL^{[m]} = U \), \( \Supp \SL^{[m]}_{(t)} = U_{t}\), and since
\( U_{t} \) is catenary satisfying \( \bfS_{2} \).
Hence, the relative \( \bfS_{2} \)-locus \( \bfS_{2}(\SL^{[m]}/T) \) is an open subset of \( U \)
by Fact~\ref{fact:dfn:RelSkCMlocus}\eqref{fact:dfn:RelSkCMlocus:2},
and now, \( y \in \bfS_{2}(\SL^{[m]}/T) \) for any \( m \in \BZZ \).
We have  \( \bfS_{2}(\SL^{[m + r]}/T) = \bfS_{2}(\SL^{[m]}/T)\) for any \( m \)
by \( \SL^{[m+r]} \isom \SL^{[r]} \otimes \SL^{[m]}\), 
and hence the intersection of \( \bfS_{2}(\SL^{[m]}/T) \)
for all \( m \) is still an open neighborhood of \( y \).
Thus, we can also assume \eqref{rem:dfn:vQGorMor:6}.
As a consequence of \eqref{rem:dfn:vQGorMor:1}--\eqref{rem:dfn:vQGorMor:6}, we see that
\begin{enumerate}
\addtocounter{enumi}{6}
\item  \label{rem:dfn:vQGorMor:7} \( U_{o} = U \cap Y_{o} \) is \( \BQQ \)-Gorenstein, and

\item \label{rem:dfn:vQGorMor:8}
\( \SL^{[m]} \otimes_{\SO_{U}} \SO_{U_{o}} \isom \omega^{[m]}_{U_{o}/\Bbbk(o)} \)
for any \( m \in \BZZ \).
\end{enumerate}
In fact, \( \SL^{[m]} \otimes_{\SO_{U}} \SO_{U_{o}} \) satisfies \( \bfS_{2} \) by
\eqref{rem:dfn:vQGorMor:6} and its depth along \( U_{o} \setminus Y^{\circ} \)
is \( \geq 2 \) by \eqref{rem:dfn:vQGorMor:1} and \eqref{rem:dfn:vQGorMor:2}
(cf.\ Lemma~\ref{lem:depth+codim+Sk}\eqref{lem:depth+codim+Sk:2}); this implies \eqref{rem:dfn:vQGorMor:8}.
The condition \eqref{rem:dfn:vQGorMor:7} follows from
\eqref{rem:dfn:vQGorMor:5} and \eqref{rem:dfn:vQGorMor:8}.
\end{rem}

\begin{remn}
The set of points \( y \in T \) at which \( f \) is virtually \( \BQQ \)-Gorenstein, is not open in general.
Even if a morphism \( f \colon Y \to T\) is virtually \( \BQQ \)-Gorenstein
at any point of a fiber \( Y_{o} \),
the other fibers \( Y_{t} \) are not necessarily \( \BQQ \)-Gorenstein even if \( t \in T \)
is sufficiently close to the point \( o \). The following gives such an example.
\end{remn}

\begin{exam}\label{exam:vQGorMor}
Let \( X \) be a non-singular projective variety over an algebraically closed field \( \Bbbk \)
of characteristic zero
such that the dualizing sheaf \( \omega_{X/\Bbbk} \) is ample, \( \OH^{1}(X, \SO_{X}) \ne 0\),
and \( \OH^{1}(X, \omega_{X/\Bbbk}) = 0 \). Then, \( n := \dim X \geq 3 \).
As an example of \( X \), we can take the product \( C \times S \)
of a non-singular projective curve \( C \) of genus \( \geq 2 \) and
a non-singular projective surface \( S \) such that \( \omega_{S/\Bbbk} \) is ample and
\( \OH^{1}(S, \SO_{S}) = \OH^{2}(S, \SO_{S}) = 0 \).
Let us take a positive-dimensional nonsingular affine subvariety \( T = \Spec A \) of
the Picard scheme \( \Pic^{0}(X) \)
which contains the origin \( 0 \) of \( \Pic^{0}(X) \).
Then, there is an invertible sheaf \( \SN \) on \( X_{A} := X \times_{\Spec \Bbbk} T \)
such that
\begin{itemize}
\item \( \SN_{(t)} \) is algebraically equivalent to zero for any \( t \in T \), and
\item  \( \SN_{(t)} \isom \SO_{X_{t}} \) if and only if \( t = 0 \),
\end{itemize}
where \( X_{t} = X \times_{\Spec \Bbbk} \Spec \Bbbk(t) \)
and \( \SN_{(t)} = \SN \otimes_{\SO_{X_{A}}} \SO_{X_{t}} \) (cf.\ Notation~\ref{nota:F_(t)}).
We define a \( \BZZ_{\geq 0} \)-graded \( A \)-algebra \( R = \bigoplus\nolimits_{m \geq 0} R_{m} \)
by
\[ R_{m} := \OH^{0}(X_{A}, (p^{*}(\omega_{X/\Bbbk}) \otimes_{\SO_{X_{A}}} \SN)^{\otimes m}) \]
for the projection \( p \colon X_{A} \to X \),
and let \( f \colon Y := \Spec R \to T = \Spec A \) be the induced affine morphism.
We shall prove the following by replacing \( T \) with a suitable open neighborhood of \( 0 \):

\begin{enumerate}
\item \label{exam:vQGorMor:0} \emph{\( f \) is a flat morphism};
\item \label{exam:vQGorMor:1} \emph{for any \( t \in T \), the fiber \( Y_{t} = f^{-1}(t)\)
is isomorphic to the affine cone of the polarized scheme
\( (X_{t}, \omega_{X_{t}/\Bbbk(t)} \otimes \SN_{(t)}) \)};

\item \label{exam:vQGorMor:2}
\emph{the set of points \( t \in T \) such that \( Y_{t} \) is \( \BQQ \)-Gorenstein,
is a countable set};

\item \label{exam:vQGorMor:3}
\emph{\( f \) is virtually \( \BQQ \)-Gorenstein at any point of the fiber \( Y_{0} \).}
\end{enumerate}
For the proof, we consider a graded \( \Bbbk(t) \)-algebra
\( R^{t} = \bigoplus\nolimits_{m \geq 0}R^{t}_{m} \) defined by
\[ R^{t}_{m}
= \OH^{0}(X_{t}, (\omega_{X_{t}/\Bbbk(t)} \otimes_{\SO_{X_{t}}} \SN_{(t)})^{\otimes m}). \]
Then, \( \Spec R^{t} \) is the affine cone associated
with \( (X_{t}, \omega_{X_{t}} \otimes \SN_{(t)}) \).
On the other hand, \( Y_{t} = \Spec (R \otimes_{A} \Bbbk(t)) \), and we have a natural homomorphism
\[ \varphi^{t} \colon R \otimes_{A} \Bbbk(t) \to R^{t} \]
of graded \( \Bbbk(t) \)-algebras,
since \( (p^{*}\omega_{X/\Bbbk}) \otimes_{\SO_{X_{A}}} \SO_{X_{t}} \isom \omega_{X_{t}/\Bbbk(t)} \).
Let \( \varphi^{t}_{m} \) be the homomorphism \( R_{m} \otimes_{A} \Bbbk(t) \to R^{t}_{m} \)
of \( m \)-th graded piece of \( \varphi^{t} \).
Note that
\[ \OH^{1}(X_{t}, (\omega_{X_{t}/\Bbbk(t)} \otimes_{\SO_{X_{t}}} \SN_{(t)})^{\otimes m}) = 0\]
for any \( m \geq 2 \) by the Kodaira vanishing theorem, since \( \omega_{X/\Bbbk} \) is ample and
\( \SN_{(t)} \) is algebraically equivalent to zero.
Moreover, there is an open neighborhood \( U \) of \( 0 \) in \( T \) such that
\[ \OH^{1}(X_{t}, \omega_{X_{t}/\Bbbk(t)} \otimes_{\SO_{X_{t}}} \SN_{(t)}) = 0 \]
for any \( t \in U \) by the upper semi-continuity theorem
(cf.\ \cite[III, Th.~(7.7.5) I]{EGA}, \cite[\S 5, Cor., p.~50]{MumfordAV}),
since we have assumed that \( \OH^{1}(X, \omega_{X/\Bbbk}) = 0 \).
We may replace \( T \) with \( U \).
Then, \( \varphi^{t}_{m} \) is an isomorphism for any \( m \geq 1 \) and for any \( t \in T \)
by \cite[III, Th.~(7.7.5) II]{EGA} (cf.\ \cite[\S 5, Cor.~3, p.~53]{MumfordAV}).
Since \( \varphi^{t}_{0} \) is obviously an isomorphism,
\( \varphi^{t} \) is an isomorphism and \( Y_{t} \isom \Spec R^{t} \) for any \( t \in T \).
Moreover \( R_{m} \) is a flat \( A \)-module for any \( m \geq 0\)
by \cite[III, Cor.\ (7.5.5)]{EGA} (cf.\ \cite[III, Th.~12.11]{HartshorneGTM}),
and it implies that \( Y = \Spec R \) is flat over \( T \).
This proves \eqref{exam:vQGorMor:0} and \eqref{exam:vQGorMor:1}.

By Corollary~\ref{cor:coneGor2}\eqref{cor:coneGor2:2}, \( Y_{t} \) is \( \BQQ \)-Gorenstein
if and only if \( \SN_{(t)}^{\otimes r} \isom \SO_{X_{t}}  \) for some \( r > 0 \).
For an integer \( r > 0 \), let \( F_{r} \) be the kernel of the \( r \)-th power map
\( \Pic^{0}(X) \to \Pic^{0}(X)\) which  sends an invertible sheaf \( \SL \) to \( \SL^{\otimes r} \).
Then, \( F_{r} \) is a finite set, and \( F_{r} \cap T \) is just the set of
points \( t \in T \) such that \( \SN_{(t)}^{\otimes r} \isom \SO_{X_{t}} \).
Thus, \( Y_{t} \) is \( \BQQ \)-Gorenstein if and only if \( t \) is contained in the countable set
\( \bigcup_{r > 0} F_{r} \cap T \). This proves \eqref{exam:vQGorMor:2}.

Note that \( \omega_{Y_{0}/\Bbbk} \isom \SO_{Y_{0}} \)
by Proposition~\ref{prop:coneGor}\eqref{prop:coneGor:3}.
Hence, \( f \colon Y \to T \) is virtually \( \BQQ \)-Gorenstein
at any point of \( Y_{0} \), since
\( \SO_{Y} \) plays the role of \( \SL \) in Definition~\ref{dfn:vQGorMor}.
This proves \eqref{exam:vQGorMor:3}.
\end{exam}

\begin{lem}\label{lem:Hacking}
Let \( f \colon Y \to T \) be a flat morphism locally of finite type between locally
Noetherian schemes
and let \( o \in T \) be a point such that \( Y_{o} = f^{-1}(o) \) is \( \BQQ \)-Gorenstein. 
For a given isomorphism
\( u \colon \omega_{Y_{o}/\Bbbk(o)}^{[r]} \to \SO_{Y_{o}} \) for a positive integer \( r \),
we set
\[ \SR = \bigoplus\nolimits_{i = 0}^{r-1} \omega_{Y_{o}/\Bbbk(o)}^{[i]} \]
to be the \( \BZZ/r\BZZ \)-graded \( \SO_{Y_{o}} \)-algebra defined by the isomorphism \( u \).
Then, the following two conditions are equivalent to each other\emph{:}
\begin{enumerate}
\item \label{lem:Hacking:cond1} Locally on \( Y \),
there exists a \( \BZZ/r\BZZ \)-graded coherent \( \SO_{Y} \)-algebra \( \SR\sptilde \)
flat over \( T \) with an isomorphism
\[ \SR\sptilde \otimes_{\SO_{Y}} \SO_{Y_{o}} \isom \SR \]
as a \( \BZZ/r\BZZ \)-graded \( \SO_{Y_{o}} \)-algebra.

\item \label{lem:Hacking:cond2}
The morphism \( f \) is virtually \( \BQQ \)-Gorenstein along \( Y_{o} \).
\end{enumerate}
\end{lem}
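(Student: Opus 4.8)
The plan is to prove the two implications \eqref{lem:Hacking:cond2} $\Rightarrow$ \eqref{lem:Hacking:cond1} and \eqref{lem:Hacking:cond1} $\Rightarrow$ \eqref{lem:Hacking:cond2} separately, both of them working locally around an arbitrary point $y$ of $Y_{o}$, and in both cases passing freely to $\Spec\SO_{T,o}$ (so that $o$ becomes the closed point of $T$ and $Y_{o}$ is closed in $Y$) and to small open neighbourhoods of $Y_{o}$.

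For \eqref{lem:Hacking:cond2} $\Rightarrow$ \eqref{lem:Hacking:cond1}, I would start from the data produced by Remark~\ref{rem:dfn:vQGorMor}: after shrinking $Y$ around $y$, there is a reflexive $\SO_{Y}$-module $\SL$ such that $f$ is an $\bfS_{2}$-morphism of pure relative dimension whose fibres are Gorenstein in codimension one, $\SL\otimes_{\SO_{Y}}\SO_{Y_{o}}\isom\omega_{Y_{o}/\Bbbk(o)}$, $\SL^{[r]}\isom\SO_{Y}$, every $\SL^{[m]}$ satisfies relative $\bfS_{2}$ over $T$, and $\SL^{[m]}\otimes_{\SO_{Y}}\SO_{Y_{o}}\isom\omega^{[m]}_{Y_{o}/\Bbbk(o)}$. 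After adjusting the isomorphism $\SL^{[r]}\isom\SO_{Y}$ by a unit of $\SO_{Y}$ lifting the appropriate unit of $\SO_{Y_{o}}$, so that its restriction to $Y_{o}$ corresponds to $u$, I would set $\SR\sptilde:=\bigoplus_{i=0}^{r-1}\SL^{[i]}$ and give it the cyclic-cover $\SO_{Y}$-algebra structure whose multiplication maps are the double-duals of $\SL^{\otimes i}\otimes_{\SO_{Y}}\SL^{\otimes j}\to\SL^{\otimes(i+j)}$, composed with the chosen isomorphism $\SL^{[r]}\isom\SO_{Y}$ when $i+j\geq r$. This sheaf is coherent and, since each $\SL^{[i]}$ satisfies relative $\bfS_{2}$ over $T$ and hence is flat over $T$, it is flat over $T$. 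Finally, the isomorphism $\SR\sptilde\otimes_{\SO_{Y}}\SO_{Y_{o}}\isom\SR$ of $\BZZ/r\BZZ$-graded $\SO_{Y_{o}}$-algebras follows by restricting each summand and checking that the products match; the compatibility of double-dual with base change to $Y_{o}$ used here is exactly of the type furnished by Lemma~\ref{lem:bc reflexive} and Corollary~\ref{cor:BC-S2CMGor}\eqref{cor:BC-S2CMGor:2}, while the normalization of $\SL^{[r]}\isom\SO_{Y}$ takes care of the degree-$0$ multiplication.

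The converse \eqref{lem:Hacking:cond1} $\Rightarrow$ \eqref{lem:Hacking:cond2} is the harder direction; the plan is to take $\SL:=\SR\sptilde_{1}$, the degree-one piece, and to show that $\SL$ is reflexive near $y$, that $\SL\otimes_{\SO_{Y}}\SO_{Y_{o}}\isom\omega_{Y_{o}/\Bbbk(o)}$ (immediate from $\SR\sptilde\otimes_{\SO_{Y}}\SO_{Y_{o}}\isom\SR$ in degree one), and that $\SL^{[m]}\isom\SR\sptilde_{\overline m}$, with $\overline m\in\{0,\dots,r-1\}$, satisfies relative $\bfS_{2}$ over $T$ at $y$. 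The device that makes this work is the finite cover $W:=\SSpec_{Y}\SR\sptilde\to Y$: the composite $W\to T$ is flat and locally of finite type with closed fibre $W_{o}=\SSpec_{Y_{o}}\SR$, the index-one cover of the $\BQQ$-Gorenstein scheme $Y_{o}$; since $\SR=\bigoplus_{i}\omega^{[i]}_{Y_{o}/\Bbbk(o)}$ is a finite direct sum of reflexive $\SO_{Y_{o}}$-modules it satisfies $\bfS_{2}$ by Lemma~\ref{lem:dfn:canosheaf2}, hence so does $W_{o}$, so by Fact~\ref{fact:dfn:RelSkCMlocus}\eqref{fact:dfn:RelSkCMlocus:3} the morphism $W\to T$ is an $\bfS_{2}$-morphism near $W_{o}$. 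Consequently every fibre $(\SR\sptilde_{i})_{(t)}$ satisfies $\bfS_{2}$, so each $\SR\sptilde_{i}$ satisfies relative $\bfS_{2}$ over $T$ in a neighbourhood of $Y_{o}$. Shrinking $Y$ around $y$ by Lemmas~\ref{lem:S2Codim2} and \ref{lem:quasi-flat} so that $f$ is an $\bfS_{2}$-morphism of pure relative dimension and every $\SR\sptilde_{i}$ is invertible off a closed set $Z$ with $\Codim(Y_{t}\cap Z,Y_{t})\geq2$ for every $t$ — possible because on $Y_{o}$ each $\omega^{[i]}_{Y_{o}/\Bbbk(o)}$ is invertible off $Y_{o}\setminus\Gor(Y_{o})$, which has codimension $\geq2$ since $Y_{o}$ is $\BQQ$-Gorenstein — Lemma~\ref{lem:bc reflexive} and Lemma~\ref{lem:US2add} together give that each $\SR\sptilde_{i}$ is reflexive, and the algebra multiplication, together with the fibrewise codimension estimate, identifies $\SL^{[m]}$ with $\SR\sptilde_{\overline m}$. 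Since $\SR\sptilde_{0}=\SO_{Y}$ is flat over $T$, $f$ is flat, and as $Y_{o}$ is $\BQQ$-Gorenstein all requirements of Definition~\ref{dfn:vQGorMor} are met.

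The main obstacle is the bookkeeping in this last step: one must arrange a single open neighbourhood of $y$ on which simultaneously $f$ is an $\bfS_{2}$-morphism of pure relative dimension, the relevant codimension estimates hold on \emph{every} fibre, and each $\SR\sptilde_{i}$ as well as each double-dual $(\SL^{\otimes m})^{\vee\vee}$ is invertible away from a common fibrewise-codimension-$\geq2$ locus. Passing to the cover $W$ is what controls the fibres $(\SR\sptilde_{i})_{(t)}$ for $t\neq o$, which is not accessible directly from the hypothesis, and the compatibility of reflexive hull with restriction to $Y_{o}$ (Lemma~\ref{lem:bc reflexive}, Corollary~\ref{cor:BC-S2CMGor}) must be invoked to make the graded-algebra isomorphisms match on both ends.
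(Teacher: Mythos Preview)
Your proposal is correct and follows essentially the same route as the paper. The direction \eqref{lem:Hacking:cond2} $\Rightarrow$ \eqref{lem:Hacking:cond1} is identical: take $\SL$ from Remark~\ref{rem:dfn:vQGorMor}, normalize the isomorphism $\SL^{[r]}\isom\SO_{Y}$ to lift $u$, and set $\SR\sptilde=\bigoplus_{i=0}^{r-1}\SL^{[i]}$. For \eqref{lem:Hacking:cond1} $\Rightarrow$ \eqref{lem:Hacking:cond2}, the paper also sets $\SL:=\SR\sptilde_{1}$ and identifies $\SL^{[i]}$ with $\SR\sptilde_{i}$ via $j_{*}$ from the locus $Y\setminus Z$ where the multiplication maps $\SL_{1}^{\otimes i}\to\SL_{i}$ are isomorphisms of invertible sheaves, after shrinking so that $\Codim(Y_{t}\cap Z,Y_{t})\geq 2$ for all $t$ (Lemma~\ref{lem:S2Codim2}) and each $\SL_{i}$ satisfies relative $\bfS_{2}$ (Fact~\ref{fact:dfn:RelSkCMlocus}\eqref{fact:dfn:RelSkCMlocus:2}).

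The one difference worth noting is your detour through the cover $W=\SSpec_{Y}\SR\sptilde$. This works, but is unnecessary: the paper simply observes that each $\SL_{i}=\SR\sptilde_{i}$ is flat over $T$ by hypothesis and its fibre $\omega^{[i]}_{Y_{o}/\Bbbk(o)}$ satisfies $\bfS_{2}$ (Lemma~\ref{lem:dfn:canosheaf2}), which is exactly relative $\bfS_{2}$ along $Y_{o}$; openness of the relative $\bfS_{2}$-locus then extends this to a neighbourhood. Your remark that control over fibres for $t\neq o$ ``is not accessible directly from the hypothesis'' is therefore not quite right---it is accessible, via openness. Your cover argument recovers the same conclusion by a parallel route (openness of $\bfS_{2}(W/T)$ instead of $\bfS_{2}(\SL_{i}/T)$), so no harm done, but it adds a layer you don't need.
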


\begin{proof}
We write \( X = Y_{o} \) and \( \Bbbk = \Bbbk(o) \) for short.
First, we shall show \eqref{lem:Hacking:cond1} \( \Rightarrow \) \eqref{lem:Hacking:cond2}.
We may assume that \( \SR\sptilde \) is defined on \( Y \). Thus,
there exist coherent \( \SO_{Y} \)-modules \( \SL_{i} \) for \( 0 \leq i \leq r-1 \) such that
\[ \SR\sptilde = \bigoplus\nolimits_{i = 0}^{r-1} \SL_{i} \]
as a \( \BZZ/r\BZZ \)-graded \( \SO_{Y} \)-algebra. Hence, \( \SL_{i} \) are all flat over \( T \),
and moreover,
\begin{itemize}
\item  \( \SL_{i} \otimes_{\SO_{Y}} \SO_{X} \isom \omega_{X/\Bbbk}^{[i]} \)
for any \( 0 \leq i \leq r-1\),

\item the multiplication map \( \SL_{1}^{\otimes i} \to \SL_{i} \) restricts to
the canonical homomorphism \( \omega_{X/\Bbbk}^{\otimes i} \to \omega_{X/\Bbbk}^{[i]} \)
for any \( 1 \leq i \leq r-1 \), and

\item the multiplication map \( \SL_{1}^{\otimes r} \to \SO_{Y} \) induces
the isomorphism \( u \colon \omega_{X/\Bbbk}^{[r]} \to \SO_{X} \).
\end{itemize}
We shall show that \( \SL_{1}^{[r]} \isom \SO_{Y} \) and \( \SL_{i} \isom \SL_{1}^{[i]} \)
for any \( 1 \leq i \leq r-1\) along \( X = Y_{o} \).
Now, \( \SL_{i} \) satisfies relative \( \bfS_{2} \) over \( T \) along \( X \)
for any \( 0 \leq i \leq r-1 \), since \( \omega_{X/\Bbbk}^{[i]} \) satisfies \( \bfS_{2} \)
(cf.\ Lemma~\ref{lem:dfn:canosheaf2}).
Thus, there is a closed subset \( Z\) of \( Y \) such that
\begin{itemize}
\item  \( \Gor(X) \subset X \setminus Z\),

\item   \( \SL_{i}|_{Y \setminus Z} \) is invertible for any \( 0 \leq i \leq r-1\)
(cf.\ Fact~\ref{fact:elem-flat}\eqref{fact:elem-flat:2}),

\item the multiplication maps \( \SL_{1}^{\otimes i} \to \SL_{i} \)
and \( \SL_{1}^{\otimes r} \to \SO_{Y} \) are isomorphisms on \( Y \setminus Z \).
\end{itemize}
By replacing \( Y \) with its open subset, we may assume that
\( \Codim(Y_{t} \cap Z, Y_{t}) \geq 2 \) for any \( t \in T \) by Lemma~\ref{lem:S2Codim2},
since \( \Codim(Y_{o} \cap Z, Y_{o}) \geq \Codim(X \setminus \Gor(X), X) \geq 2 \)
and may assume that \( \SL_{i} \) satisfies relative \( \bfS_{2} \) over \( T \) for all \( i \)
(cf.\ Fact~\ref{fact:dfn:RelSkCMlocus}\eqref{fact:dfn:RelSkCMlocus:2}).
Then, for any \( m \geq 1 \)  and  any \( 1 \leq i \leq r-1 \), we have
\[  \SL_{1}^{[m]} \isom j_{*}(\SL^{\otimes m}_{1}|_{Y \setminus Z})
\quad \text{and} \quad
\SL_{i} \isom j_{*}(\SL_{i}|_{Y \setminus Z}) \]
for the open immersion \( j \colon Y \setminus Z \injmap Y \)
by \eqref{lem:US2add:2} and \eqref{lem:US2add:3a}
of Lemma~\ref{lem:US2add}, respectively.
This argument shows that \( \SL_{i} \isom \SL_{1}^{[i]} \) and \( \SO_{Y} \isom \SL_{1}^{[r]} \)
along \( X = Y_{o} \).

As a consequence, \( \SL_{1} \) satisfies
the conditions in Definition~\ref{dfn:vQGorMor} for any point of \( Y_{o} \), and
we have proved \eqref{lem:Hacking:cond1} \( \Rightarrow \) \eqref{lem:Hacking:cond2}.

Next, we shall show: \eqref{lem:Hacking:cond2} \( \Rightarrow \) \eqref{lem:Hacking:cond1}.
We may assume the existence of a reflexive \( \SO_{Y} \)-module \( \SL \)
which satisfies the conditions of Remark~\ref{rem:dfn:vQGorMor}
for \( U = Y \) and for the fiber \( Y_{o} = X \).
By replacing \( Y \) with an open neighborhood of an arbitrary point of \( Y_{o} \),
we may assume that there is an isomorphism \( u\sptilde \colon \SL^{[r]} \to \SO_{Y} \)
which restricts to the composite of the isomorphism
\( \SL^{[r]} \otimes_{\SO_{Y}} \SO_{X} \isom \omega_{X/\Bbbk}^{[r]} \)
and the isomorphism \( u \colon \omega_{X/\Bbbk}^{[r]} \to \SO_{X}\).
Then, \( u\sptilde \) defines a \( \BZZ/r\BZZ \)-graded \( \SO_{Y} \)-algebra
\[ \SR\sptilde = \bigoplus\nolimits_{i = 0}^{r-1} \SL^{[i]}, \]
which satisfies the condition \eqref{lem:Hacking:cond1}.
Thus, we are done.
\end{proof}

\begin{rem}\label{rem:Hacking}
The \( \BQQ \)-Gorenstein deformation in the sense of Hacking
\cite[Def.\ 3.1]{Hacking} is considered as a virtually \( \BQQ \)-Gorenstein deformation
by Lemma~\ref{lem:Hacking}.
Hacking's notion is generalized to the notion of \emph{Koll\'ar family of}
\( \BQQ \)-\emph{line bundles} by Abramovich--Hassett (cf.\ \cite[Def.\ 5.2.1]{AH}).
This is related to the notion of virtually \( \BQQ \)-Gorenstein morphism as follows.
Let \( f \colon Y \to T \) be an \( \bfS_{2} \)-morphism between Noetherian schemes 
such that every fiber is connected, reduced, and \( \BQQ \)-Gorenstein.
Let \( \SL \) be a reflexive \( \SO_{Y} \)-module.
Then, \( \SL \) satisfies the conditions
\eqref{dfn:vQGorMor:cond1} and \eqref{dfn:vQGorMor:cond2} of Definition~\ref{dfn:vQGorMor}
for \( U = Y \)
and for any \( y \in Y \),
if and only if \( (Y \to T, \SL) \) is a Koll\'ar family of \( \BQQ \)-line bundles
with \( \SL \otimes \SO_{Y_{t}} \isom \omega_{Y_{t}/\Bbbk(t)} \) for all \( t \in T\).
However, in their study of Koll\'ar families \( (Y \to T, \SL) \) for \( \SL = \omega_{Y/T} \),
every fiber and every \( \omega_{Y/T}^{[m]} \) are assumed to be Cohen--Macaulay
(cf.\ \cite[Rem.~5.3.9, 5.3.10]{AH}).
\end{rem}

A \( \BQQ \)-Gorenstein morphism is always virtually \( \BQQ \)-Gorenstein.
The following theorem shows conversely that
a virtually \( \BQQ \)-Gorenstein morphism
is a \( \BQQ \)-Gorenstein morphism under some mild conditions.
In particular, we see that a virtually \( \BQQ \)-Gorenstein morphism
is \( \BQQ \)-Gorenstein if it is a Cohen--Macaulay morphism.

\begin{thm}\label{thm:wQGvsQG}
Let \( Y \) and \( T \) be locally Noetherian schemes and \( f \colon Y \to T \)
a flat morphism locally of finite type.
For a point \( t \in T \),
assume that \( f \) is virtually \( \BQQ \)-Gorenstein at any point of the fiber \( Y_{t} = f^{-1}(t)\)
and that one of the following two conditions is satisfied\emph{:}
\begin{enumerate}
    \renewcommand{\theenumi}{\alph{enumi}}
    \renewcommand{\labelenumi}{(\theenumi)}
\item \label{thm:wQGvsQG:a}
\( Y_{t} \) satisfies \( \bfS_{3} \)\emph{;}

\item \label{thm:wQGvsQG:b}
there is a positive integer \( r \) coprime to the characteristic of \( \Bbbk(t) \)
such that \( \omega_{Y/T}^{[r]} \) is invertible along \( Y_{t} \).
\end{enumerate}
Then, \( f \) is \( \BQQ \)-Gorenstein along \( Y_{t} \).
\end{thm}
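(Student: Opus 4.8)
The plan is to reduce the statement, which is local on $Y$, to a situation where Theorem~\ref{thm:S2S3crit} applies. Fix a point $y$ of the fiber $Y_{t}$; I will show that $f$ is $\BQQ$-Gorenstein at $y$. Since $f$ is virtually $\BQQ$-Gorenstein at $y$, Remark~\ref{rem:dfn:vQGorMor} provides an open neighborhood $U$ of $y$ and a reflexive $\SO_{U}$-module $\SL$ such that $f|_{U} \colon U \to T$ is an \( \bfS_{2} \)-morphism of pure relative dimension whose non-empty fibers are Gorenstein in codimension one, $\SL$ is flat over $T$ with $\SL^{[m]} \otimes_{\SO_{U}} \SO_{U_{t}} \isom \omega^{[m]}_{U_{t}/\Bbbk(t)}$ for every $m$, each $\SL^{[m]}$ satisfies relative \( \bfS_{2} \) over $T$, and $\SL^{[r_{0}]}$ is invertible for some $r_{0} > 0$. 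Replacing $Y$ by $U$ (which replaces $\omega_{Y/T}$ by its restriction $\omega_{U/T}$, compatibly by Definition~\ref{dfn:relcanosheaf}), I may assume $U = Y$.

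Next I verify the hypotheses of Theorem~\ref{thm:S2S3crit} for the \( \bfS_{2} \)-morphism $f$, the sheaf $\SL$, the open subset taken to be all of $Y$, and the fiber $Y_{t}$. Condition~(i) is vacuous because $Y_{t} \setminus Y = \emptyset$; condition~(ii) holds by the displayed isomorphism with $m = 1$. For condition~(iii): under hypothesis \eqref{thm:wQGvsQG:a}, $Y_{t}$ satisfies \( \bfS_{3} \) and $\Codim(Y_{t} \setminus Y, Y_{t}) = +\infty \geq 3$, so \eqref{thm:S2S3crit:cond3:a} holds. Under hypothesis \eqref{thm:wQGvsQG:b}, fix $r$ coprime to $\chara \Bbbk(t)$ with $\omega^{[r]}_{Y/T}$ invertible along $Y_{t}$; then $\omega^{[r]}_{Y/T}$ is flat over $T$ and satisfies relative \( \bfS_{2} \) over $T$ along $Y_{t}$ (it is invertible and the fibers satisfy \( \bfS_{2} \)), so by Proposition~\ref{prop:BCGor} the base change $\phi^{[r]}_{t}$ is an isomorphism and $\omega^{[r]}_{Y_{t}/\Bbbk(t)}$ is invertible; since $\SL^{[r]}$ is flat over $T$ and $\SL^{[r]} \otimes_{\SO_{Y}} \SO_{Y_{t}} \isom \omega^{[r]}_{Y_{t}/\Bbbk(t)}$ is invertible, Fact~\ref{fact:elem-flat}\eqref{fact:elem-flat:2} shows $\SL^{[r]}$ is invertible along $Y_{t}$, so \eqref{thm:S2S3crit:cond3:b} holds with this $r$. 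Theorem~\ref{thm:S2S3crit} then yields an open neighborhood $W$ of $Y_{t}$ on which $\SM := \SHom_{\SO_{Y}}(\SL, \omega_{Y/T})$ is invertible and the canonical homomorphism $\SL \otimes_{\SO_{Y}} \SM \to \omega_{Y/T}$ is an isomorphism.

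Taking $m$-th tensor powers and double duals and using that $\SM$ is invertible on $W$, we obtain $\omega^{[m]}_{Y/T}|_{W} \isom \SL^{[m]}|_{W} \otimes \SM^{\otimes m}|_{W}$ for every $m \in \BZZ$. Since $\SL^{[m]}$ satisfies relative \( \bfS_{2} \) over $T$ and tensoring with an invertible sheaf preserves flatness and fiberwise depth, $\omega^{[m]}_{Y/T}$ satisfies relative \( \bfS_{2} \) over $T$ on $W$; moreover a positive power, namely $\omega^{[r_{0}]}_{Y/T}|_{W} \isom \SL^{[r_{0}]}|_{W} \otimes \SM^{\otimes r_{0}}|_{W}$ (resp.\ $\omega^{[r]}_{Y/T}|_{W}$ in case \eqref{thm:wQGvsQG:b}), is invertible. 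Now choose an open neighborhood $V \subseteq W$ of $y$ for which $f|_{V}$ is an \( \bfS_{2} \)-morphism. Then $\omega^{[m]}_{V/T} = \omega^{[m]}_{Y/T}|_{V}$ satisfies relative \( \bfS_{2} \) over $T$ for all $m$, and $\omega^{[r_{0}]}_{V/T}$ is invertible. For each $t' \in f(V)$, the fiber $V_{t'}$ is an algebraic scheme over $\Bbbk(t')$ satisfying \( \bfS_{2} \), is Gorenstein in codimension one, and $\omega^{[r_{0}]}_{V_{t'}/\Bbbk(t')} \isom \omega^{[r_{0}]}_{V/T} \otimes_{\SO_{V}} \SO_{V_{t'}}$ is invertible by Proposition~\ref{prop:BCGor} (the base change homomorphism is an isomorphism because $\omega^{[r_{0}]}_{V/T}$ satisfies relative \( \bfS_{2} \) over $T$); hence $V_{t'}$ is a $\BQQ$-Gorenstein scheme (cf.\ Definition~\ref{dfn:QGorSch} and Lemma~\ref{lem:QGorSch3}\eqref{lem:QGorSch3:3}). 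Therefore $f|_{V}$ is a $\BQQ$-Gorenstein morphism in the sense of Definition~\ref{dfn:QGorMor}\eqref{dfn:QGorMor:QGor}, i.e.\ $f$ is $\BQQ$-Gorenstein at $y$; since $y \in Y_{t}$ was arbitrary, $f$ is $\BQQ$-Gorenstein along $Y_{t}$.

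The main obstacle is the passage to Theorem~\ref{thm:S2S3crit}: the key observation is that one should take the auxiliary open subset there to be the entire shrunk neighborhood, which makes the codimension conditions in \eqref{thm:S2S3crit:cond1} and \eqref{thm:S2S3crit:cond3:a} automatic, and — in case \eqref{thm:wQGvsQG:b} — that $\SL^{[r]}$ is invertible along $Y_{t}$ for the \emph{same} $r$ making $\omega^{[r]}_{Y/T}$ invertible, which is forced by the base-change identification $\SL^{[r]} \otimes_{\SO_{Y}} \SO_{Y_{t}} \isom \omega^{[r]}_{Y_{t}/\Bbbk(t)}$ together with Fact~\ref{fact:elem-flat}\eqref{fact:elem-flat:2}. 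The remaining verification that the data assemble into a $\BQQ$-Gorenstein morphism on a neighborhood of $y$ — in particular that every nearby fiber is a $\BQQ$-Gorenstein scheme — is routine.
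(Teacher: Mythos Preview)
Your proof is correct and follows essentially the same approach as the paper: reduce locally via Remark~\ref{rem:dfn:vQGorMor} to obtain the sheaf $\SL$, apply Theorem~\ref{thm:S2S3crit} to conclude that $\SM = \SHom_{\SO_{Y}}(\SL, \omega_{Y/T})$ is invertible along $Y_{t}$ with $\SL \otimes \SM \isom \omega_{Y/T}$, and then deduce that each $\omega^{[m]}_{Y/T} \isom \SL^{[m]} \otimes \SM^{\otimes m}$ satisfies relative $\bfS_{2}$. The only difference is cosmetic: you take the auxiliary open subset in Theorem~\ref{thm:S2S3crit} to be all of the shrunk $Y$, whereas the paper uses $U = \CM(Y/T)$ in case \eqref{thm:wQGvsQG:a} and $U = \Gor(Y/T)$ in case \eqref{thm:wQGvsQG:b}; both choices satisfy the codimension hypotheses, and your choice makes the verification slightly shorter. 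Your final paragraph spelling out why nearby fibers are $\BQQ$-Gorenstein schemes is more explicit than the paper (which tacitly invokes Lemma~\ref{lem:LocDefQGor}), but the content is the same.
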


\begin{proof}
Since the assertion is local, by Remark~\ref{rem:dfn:vQGorMor},
we may assume that \( f \) is an \( \bfS_{2} \)-morphism and
there is a reflexive \( \SO_{Y} \)-module
\( \SL \) satisfying the following two conditions:
\begin{enumerate}
\item  \label{thm:wQGvsQG:1}
\( \SL^{[m]} = (\SL^{\otimes m})^{\vee\vee} \) satisfies relative \( \bfS_{2} \) over \( T \)
for any integer \( m \);

\item \label{thm:wQGvsQG:2}
there is an isomorphism
\( \SL \otimes_{\SO_{Y}} \SO_{Y_{t}} \isom \omega_{Y_{t}/\Bbbk(t)} \).
\end{enumerate}
We can prove the following for \( \SM := \SHom_{\SO_{Y}}(\SL, \omega_{Y/T}) \)
applying Theorem~\ref{thm:S2S3crit}:
\begin{enumerate}
    \addtocounter{enumi}{2}
\item  \label{thm:wQGvsQG:3}
\emph{\( \SM \) is an invertible \( \SO_{Y} \)-module along \( Y_{t} \)};

\item  \label{thm:wQGvsQG:4}
\emph{\( \SL \isom \omega_{Y/T} \otimes_{\SO_{Y}} \SM^{-1} \) along \( Y_{t} \)}.
\end{enumerate}
In fact, the condition \eqref{thm:S2S3crit:cond2prime} of Theorem~\ref{thm:S2S3crit}
holds by \eqref{thm:wQGvsQG:1} and \eqref{thm:wQGvsQG:2} above, and
the condition \eqref{thm:S2S3crit:cond1} of Theorem~\ref{thm:S2S3crit}
holds for \( U = \CM(Y/T) \) (resp.\ \( U = \Gor(Y/T) \))
in case \eqref{thm:wQGvsQG:a} (resp.\ \eqref{thm:wQGvsQG:b}).
The remaining condition \eqref{thm:S2S3crit:cond3} of
Theorem~\ref{thm:S2S3crit} is checked as follows.
In case \eqref{thm:wQGvsQG:a}, the condition
\eqref{thm:S2S3crit:cond3}\eqref{thm:S2S3crit:cond3:a}
of Theorem~\ref{thm:S2S3crit} is satisfied for \( U \) above.
In case \eqref{thm:wQGvsQG:b},
\( \SL^{[r]} \) is invertible along \( Y_{t} \)
by \eqref{thm:wQGvsQG:1} and \eqref{thm:wQGvsQG:2}, since
\[ \SL^{[r]} \otimes_{\SO_{Y}} \SO_{Y_{t}} \isom \omega_{Y_{t}/\Bbbk(t)}^{[r]}
\isom \omega_{Y/T}^{[r]} \otimes_{\SO_{Y}} \SO_{Y_{t}}\]
is invertible (cf.\ Fact~\ref{fact:elem-flat}\eqref{fact:elem-flat:2}); Thus,
the condition \eqref{thm:S2S3crit:cond3}\eqref{thm:S2S3crit:cond3:b}
of Theorem~\ref{thm:S2S3crit} is satisfied in this case.
Therefore, we can apply Theorem~\ref{thm:S2S3crit} and obtain \eqref{thm:wQGvsQG:3} and
\eqref{thm:wQGvsQG:4}.

As a consequence, we have an isomorphism
\[ \omega_{Y/T}^{[m]} \isom \SL^{[m]} \otimes_{\SO_{Y}} \SM^{\otimes m} \]
for any \( m \in \BZZ \) along \( Y_{t} \).
Therefore, \( \omega_{Y/T}^{[m]} \) satisfies relative \( \bfS_{2} \) over \( T \)
along \( Y_{t} \) by \eqref{thm:wQGvsQG:1},
and hence \( f \colon Y \to T \) is \( \BQQ \)-Gorenstein along \( Y_{t} \).
\end{proof}

\begin{cor}\label{cor:thm:wQGvsQG}
Let \( Y \) and \( T \) be locally Noetherian schemes and
\( f \colon Y \to T \) a flat morphism locally of finite type.
For a point \( t \in T \), assume that the fiber \( Y_{t} = f^{-1}(t) \)
is quasi-Gorenstein. If \( \omega_{Y/T}^{[r]} \) is invertible for a positive integer \( r \)
coprime to the characteristic of \( \Bbbk(t) \),
then \( f \) is \( \BQQ \)-Gorenstein along \( Y_{t} \).
\end{cor}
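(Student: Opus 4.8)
The plan is to specialize the argument used for Theorem~\ref{thm:wQGvsQG}: in the quasi-Gorenstein case the auxiliary reflexive sheaf needed for virtual $\BQQ$-Gorensteinness can be taken, locally on $Y$, to be $\SO_Y$ itself, and then Theorem~\ref{thm:S2S3crit} does all the work. First I would make the preliminary reductions. Since $Y_t$ is quasi-Gorenstein it satisfies $\bfS_2$ (Lemma~\ref{lem:QGorSch}\eqref{lem:QGorSch:2}), so as $f$ is flat every point of $Y_t$ lies in the open set $\bfS_2(Y/T)$ (cf.\ Fact~\ref{fact:dfn:RelSkCMlocus}\eqref{fact:dfn:RelSkCMlocus:3}); replacing $Y$ by an open neighborhood of $Y_t$ we may assume $f$ is an $\bfS_2$-morphism, so that $\omega_{Y/T}$ is defined. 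Being quasi-Gorenstein, $Y_t$ is Gorenstein in codimension one, and, being a $\BQQ$-Gorenstein scheme of Gorenstein index one (Lemma~\ref{lem:QGorSch}\eqref{lem:QGorSch:1}), its canonical sheaf $\omega_{Y_t/\Bbbk(t)}$ is invertible (this uses the uniqueness of the dualizing sheaf up to tensoring with an invertible sheaf). Finally, since $\Codim_y(Y_t\setminus\Gor(Y/T),Y_t)\geq 2$ at each $y\in Y_t$, Lemma~\ref{lem:S2Codim2}\eqref{lem:S2Codim2:2} lets me shrink $Y$ further so that $\Codim(Y_{t'}\setminus\Gor(Y/T),Y_{t'})\geq 2$ for every $t'\in T$, i.e.\ every fiber is Gorenstein in codimension one.

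Next I would show $\omega_{Y/T}$ is invertible along $Y_t$. Fix $y\in Y_t$; since $\omega_{Y_t/\Bbbk(t)}$ is invertible it is trivial near $y$, so after shrinking $Y$ around $y$ we have an isomorphism $\SO_{Y_t}\isom\omega_{Y_t/\Bbbk(t)}$. Now apply Theorem~\ref{thm:S2S3crit} to the $\bfS_2$-morphism $f$, the sheaf $\SL:=\SO_Y$, the open set $U:=\Gor(Y/T)$, and the fiber $Y_t$: hypothesis \eqref{thm:S2S3crit:cond1} holds because $\Codim(Y_t\setminus\Gor(Y/T),Y_t)\geq 2$; hypothesis \eqref{thm:S2S3crit:cond2prime} holds because $\SO_Y$ is flat over $T$ and $\SO_Y\otimes_{\SO_Y}\SO_{Y_t}=\SO_{Y_t}\isom\omega_{Y_t/\Bbbk(t)}$; and hypothesis \eqref{thm:S2S3crit:cond3}\eqref{thm:S2S3crit:cond3:b} holds with the given integer $r$, since $\SL^{[r]}=\SO_Y$ is invertible and $\omega_{Y/T}^{[r]}$ is invertible along $Y_t$ by assumption. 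The conclusion of Theorem~\ref{thm:S2S3crit} yields that $\SM=\SHom_{\SO_Y}(\SO_Y,\omega_{Y/T})=\omega_{Y/T}$ is invertible along $Y_t$. As $y$ was arbitrary, $\omega_{Y/T}$ is invertible at every point of $Y_t$, hence on some open neighborhood of $Y_t$.

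Finally, after replacing $Y$ by such a neighborhood, $f$ is an $\bfS_2$-morphism, every fiber is Gorenstein in codimension one, and $\omega_{Y/T}$ is an invertible $\SO_Y$-module; by Remark~\ref{rem:dfn:QGorMor} this means $f$ is a $\BQQ$-Gorenstein morphism, so the original $f$ is $\BQQ$-Gorenstein along $Y_t$. I do not expect a genuine obstacle: the only points requiring care are the bookkeeping of the successive shrinkings of $Y$ (all the relevant loci---$\bfS_2(Y/T)$, the codimension-one-Gorenstein locus of fibers coming from Lemma~\ref{lem:S2Codim2}, the locus where $\omega_{Y/T}$ is invertible---are open and contain $Y_t$), and the verification that quasi-Gorensteinness of $Y_t$ forces $\omega_{Y_t/\Bbbk(t)}$ to be an \emph{invertible} sheaf, which is what makes the choice $\SL=\SO_Y$ legitimate in Theorem~\ref{thm:S2S3crit}.
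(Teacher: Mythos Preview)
Your proof is correct and follows essentially the same approach as the paper. The paper's proof is a two-line appeal to Theorem~\ref{thm:wQGvsQG} case~\eqref{thm:wQGvsQG:b}, noting that $\SO_Y$ serves as the sheaf $\SL$ in Definition~\ref{dfn:vQGorMor}; you have simply unpacked the proof of Theorem~\ref{thm:wQGvsQG} in this special case, applying Theorem~\ref{thm:S2S3crit} directly with $\SL=\SO_Y$ and finishing via Remark~\ref{rem:dfn:QGorMor} rather than via the isomorphism $\omega_{Y/T}^{[m]}\isom\SL^{[m]}\otimes\SM^{\otimes m}$.
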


\begin{proof}
The morphism \( f \) is virtually \( \BQQ \)-Gorenstein at any point of \( Y_{t} \),
since \( \SO_{Y} \) plays the role of \( \SL \) in Definition~\ref{dfn:vQGorMor}.
Thus, we are done by Theorem~\ref{thm:wQGvsQG} in the case \eqref{thm:wQGvsQG:b}.
\end{proof}

%%%%%%%%%%%%%%%%%%%%%%%%%%%%%%%%%%%%%%%%%%%%

\subsection{Basic properties of \texorpdfstring{$\BQQ$}{Q}-Gorenstein  morphism}
\label{subsect:propQGormor}

We shall prove some basic properties of \( \BQQ \)-Gorenstein morphism and its variants. 
The following is a criterion for a morphism
to be naively \( \BQQ \)-Gorenstein.

\begin{lem}\label{lem:naiveQGor}
Let \( f \colon Y \to T \) be an \( \bfS_{2} \)-morphism of locally Noetherian schemes.
Assume that \( T \) is \( \BQQ \)-Gorenstein and that every fiber of \( f \)
is Gorenstein in codimension one.
Then, \( f \) is a naively \( \BQQ \)-Gorenstein morphism
if and only if \( Y \) is \( \BQQ \)-Gorenstein.
\end{lem}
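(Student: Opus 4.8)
The plan is to express the dualizing sheaf of $Y$ in terms of $\omega_{Y/T}$ and the dualizing sheaf of $T$, and then to read off both implications from the characterizations of $\BQQ$-Gorenstein scheme (Lemma~\ref{lem:QGorSch3}\eqref{lem:QGorSch3:3}) and of naively $\BQQ$-Gorenstein morphism (Definition~\ref{dfn:QGorMor} together with Lemma~\ref{lem:LocDefQGor}). Since every assertion is local on $Y$, I would first shrink to affine opens so that $f\colon Y\to T$ becomes a flat separated morphism of finite type between affine Noetherian schemes, $T$ carries an ordinary dualizing complex $\SR^{\bullet}_{T}$ (Lemma~\ref{lem:ordinaryDC}, as $T$ satisfies $\bfS_{2}$), and $f$ has pure relative dimension $d$ (Lemma~\ref{lem:S2Codim2}, since the fibers satisfy $\bfS_{2}$); put $\SL_{T}:=\SH^{0}(\SR^{\bullet}_{T})$. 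In this setting $Y$ fits the framework of Definition~\ref{dfn:QGorSch}: it satisfies $\bfS_{2}$ by Fact~\ref{fact:elem-flat}\eqref{fact:elem-flat:6} (base and all fibers are $\bfS_{2}$), it admits a dualizing complex by Remark~\ref{rem:ExistDC}, and it is Gorenstein in codimension one because, by the local form of \cite[Th.~23.4]{Matsumura} (cf.\ Fact~\ref{fact:GorYTF}), $\Gor(Y)=f^{-1}(\Gor(T))\cap\Gor(Y/T)$, while $\Codim(f^{-1}(T\setminus\Gor(T)),Y)\ge\Codim(T\setminus\Gor(T),T)\ge 2$ (from $\dim\SO_{Y,y}\ge\dim\SO_{T,f(y)}$, eq.~\eqref{eq:fact:elem-flat:1}) and $\Codim(Y\setminus\Gor(Y/T),Y)\ge 2$ by Lemma~\ref{lem:quasi-flat}, every fiber being Gorenstein in codimension one; hence $\Codim(Y\setminus\Gor(Y),Y)\ge 2$.

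The crux is to identify the dualizing sheaf of $Y$. The complex $\SR^{\bullet}_{Y}:=f^{!}\SR^{\bullet}_{T}$ is a dualizing complex of $Y$ (Examples~\ref{exam:RDembeddable} and~\ref{exam:RDConrad}); by Lemma~\ref{lem:codimfunctionDiff} and the flat dimension formula its codimension function equals $y\mapsto\dim\SO_{Y,y}-d$, so $\SR^{\bullet}_{Y}[-d]$ is ordinary and $\SL_{Y}:=\SH^{-d}(\SR^{\bullet}_{Y})$ is the dualizing sheaf. Restricting to the relative Gorenstein locus, the morphism $g:=f|_{\Gor(Y/T)}$ is a Gorenstein morphism with invertible $\omega_{g}=\omega_{Y/T}|_{\Gor(Y/T)}$ (Lemma~\ref{lem:CMmorphism}), so $g^{!}\SO_{T}\isom_{\qis}\omega_{g}[d]$ and, by eq.~\eqref{eq:Lipman} of Fact~\ref{fact:Lipman}, $g^{!}\SR^{\bullet}_{T}\isom_{\qis}(\omega_{g}\otimes g^{*}\SR^{\bullet}_{T})[d]$ (no derived tensor product is needed, $\omega_{g}$ being invertible). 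Taking $\SH^{-d}$, using that $f^{!}$ commutes with restriction to open subsets and that $g$ is flat, gives
\[\SL_{Y}|_{\Gor(Y/T)}\isom\omega_{Y/T}|_{\Gor(Y/T)}\otimes f^{*}\SL_{T}|_{\Gor(Y/T)}.\]
In particular, on $U:=\Gor(Y)=\Gor(Y/T)\cap f^{-1}(\Gor(T))$ — where $\SL_{T}$ is invertible because $\Gor(T)$ is Gorenstein — all three of $\SL_{Y}$, $\omega_{Y/T}$, $f^{*}\SL_{T}$ are invertible and this isomorphism holds. Moreover $\SL_{Y}$ (Corollary~\ref{cor:prop:S1S2:reflexive}), $\omega_{Y/T}$ (Proposition~\ref{prop:BCGor}) and $f^{*}\SL_{T}$ (Remark~\ref{rem:dfn:reflexive}) are reflexive, so for every $m$ the sheaves $\SL^{[m]}_{Y}$, $\omega^{[m]}_{Y/T}$, $f^{*}(\SL^{[m]}_{T})$ are reflexive and, by Lemma~\ref{lem:j*reflexive}\eqref{lem:j*reflexive:1} (valid since $Y$ is $\bfS_{2}$ and $\Codim(Y\setminus U,Y)\ge 2$), isomorphic to the pushforward by $j\colon U\injmap Y$ of their restrictions to $U$. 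Combining this with $\SL^{\otimes N}_{Y}\isom\omega^{\otimes N}_{Y/T}\otimes f^{*}\SL^{\otimes N}_{T}$ on $U$ and the projection formula for $j_{*}$, one obtains, locally near any point at which two of the three sheaves are invertible, the relation $\SL^{[N]}_{Y}\isom\omega^{[N]}_{Y/T}\otimes f^{*}(\SL^{[N]}_{T})$.

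To conclude: since $T$ is $\BQQ$-Gorenstein, $\SL^{[s]}_{T}$ is invertible locally on $T$ for some $s>0$, so $f^{*}(\SL^{[s]}_{T})$ is invertible locally on $Y$, and near a given point we may take one exponent $N$ with $f^{*}(\SL^{[N]}_{T})$ invertible. If $Y$ is $\BQQ$-Gorenstein, then $\SL^{[r]}_{Y}$ is invertible locally by Lemma~\ref{lem:QGorSch3}\eqref{lem:QGorSch3:3} (applied with $U=\Gor(Y)$, where the ordinary dualizing complex $\SR^{\bullet}_{Y}[-d]$ is quasi-isomorphic to the invertible sheaf $\SL_{Y}|_{U}$, cf.\ Remark~\ref{rem:lem:DC-CM:CM2}); enlarging $N$ and using the relation above, $\omega^{[N]}_{Y/T}$ is invertible locally, whence every fiber is $\BQQ$-Gorenstein by Lemma~\ref{lem:LocDefQGor} (the implication \eqref{lem:LocDefQGor:cond1}$+$\eqref{lem:LocDefQGor:cond2}$\Rightarrow$\eqref{lem:LocDefQGor:cond12}: hypothesis \eqref{lem:LocDefQGor:cond1} is the $\bfS_{2}$ and codimension-one Gorenstein property of the fibers, and \eqref{lem:LocDefQGor:cond2} holds because $\omega^{[N]}_{Y/T}\isom j_{*}(\omega^{\otimes N}_{\Gor(Y/T)/T})$ by Proposition~\ref{prop:BCGor}), so $f$ is a naively $\BQQ$-Gorenstein morphism. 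Conversely, if $f$ is naively $\BQQ$-Gorenstein, then $\omega^{[r]}_{Y/T}$ is invertible locally; enlarging $N$ and using the relation above, $\SL^{[N]}_{Y}$ is invertible locally, so $Y$ is $\BQQ$-Gorenstein by Lemma~\ref{lem:QGorSch3}\eqref{lem:QGorSch3:3}.

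The main obstacle is the middle step: pinning down that the dualizing sheaf of $Y$ is exactly $\omega_{Y/T}\otimes f^{*}\SL_{T}$ on the Gorenstein-in-codimension-one locus, which forces careful bookkeeping of shifts through Lemma~\ref{lem:codimfunctionDiff} and through the compatibility $f^{!}\SR^{\bullet}_{T}\isom_{\qis}f^{!}\SO_{T}\otimes^{\bfL}f^{*}\SR^{\bullet}_{T}$. Everything else — the $\bfS_{2}$, dualizing-complex and codimension-one Gorenstein properties of $Y$, and the juggling with double duals, $j_{*}$ and the projection formula — is routine given Sections~\ref{sect:Serre}–\ref{sect:Relcan}.
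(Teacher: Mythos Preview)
Your proposal is correct and follows essentially the same route as the paper: localize, take $\SR_{Y}^{\bullet}=f^{!}\SR_{T}^{\bullet}$, identify the dualizing sheaf of $Y$ as $\omega_{Y/T}\otimes f^{*}\SL_{T}$ on the codimension-two open set $U^{\circ}=\Gor(Y/T)\cap f^{-1}(\Gor(T))$ via \eqref{eq:Lipman}, and then push forward through $j_{*}$ to compare $\SL_{Y}^{[m]}$ with $\omega_{Y/T}^{[m]}\otimes f^{*}(\SL_{T}^{[m]})$. Your use of Lemma~\ref{lem:codimfunctionDiff} to pin down the shift making $\SR_{Y}^{\bullet}[-d]$ ordinary is a clean alternative to the paper's more implicit reasoning, and your care in stating the key relation only near points where two of the three sheaves are invertible (so that the projection formula applies to $j_{*}$ of the tensor product) is in fact more precise than the paper's blanket ``for any integer $m$'' in \eqref{eq:lem:naiveQGor}.
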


\begin{proof}
Since the \( \BQQ \)-Gorenstein properties are local, we may assume
that \( T \) and \( Y \) are affine
and that \( f \) is of finite type with pure relative dimension
(cf.\ Lemma~\ref{lem:S2Codim2}).
Since the \( \BQQ \)-Gorenstein scheme
\( T \) satisfies \( \bfS_{2} \) (cf.\ Lemma~\ref{lem:QGorSch}\eqref{lem:QGorSch:2}),
we may assume the following (cf.\ Lemma~\ref{lem:QGorSch3}):
\begin{itemize}
\item  \( T \) admits an ordinary dualizing complex \( \SR^{\bullet} \)
(cf.\ Lemma~\ref{lem:ordinaryDC}) with
the dualizing sheaf \( \omega_{T} := \SH^{0}(\SR^{\bullet}) \);

\item  the double-dual \( \omega^{[m]}_{T}\) of \( \omega_{T}^{\otimes m} \)
satisfies \( \bfS_{2} \) for any integer \( m  \);

\item  \( \omega_{T}^{[r]} \) is invertible for
a positive integer \( r \).
\end{itemize}
For the Gorenstein locus \( T^{\circ} := \Gor(T) \) and the relative Gorenstein locus
\( Y^{\circ} := \Gor(Y/T) \),
we set \( U := f^{-1}(T^{\circ}) \) and \( U^{\circ} := U \cap Y^{\circ} \).
Then, \( \Codim(Y \setminus U, Y) \geq 2 \) by
\eqref{eq:fact:elem-flat:1} in Fact~\ref{fact:elem-flat}
and Property~\ref{ppty:dim-codim}\eqref{ppty:dim-codim:2},
since \( f \) is flat and \( \Codim(T \setminus T^{\circ}, T) \geq 2 \).
Hence, \( \Codim(Y \setminus U^{\circ}, Y) \geq 2 \) by
\( \Codim(Y \setminus Y^{\circ}, Y) \geq 2\),
since \( f \) is an \( \bfS_{2} \)-morphism (cf.\ Lemma~\ref{lem:quasi-flat}).
The twisted inverse image \( \SR^{\bullet}_{Y} := f^{!}(\SR^{\bullet}) \) is
a dualizing complex of \( Y \) (cf.\ Example~\ref{exam:RDConrad})
with a quasi-isomorphism
\[ \SR^{\bullet}_{Y} \isom_{\qis}
f^{!}\SO_{T} \otimes^{\bfL}_{\SO_{Y}} \bfL f^{*}(\SR^{\bullet}) \]
by \eqref{eq:Lipman} in Fact~\ref{fact:Lipman}, where
\[  \omega_{Y^{\circ}/T}[d] \isom_{\qis} f^{!}\SO_{T}|_{Y^{\circ}}\]
for the relative dimension \( d \) of \( f \).
Note that \( Y \) satisfies \( \bfS_{2} \) by Fact~\ref{fact:elem-flat}\eqref{fact:elem-flat:6}.
Thus, \( \SR^{\bullet}_{Y}[-d] \) is an ordinary dualizing complex of \( Y \), and
\( \omega_{Y} := \SH^{-d}(\SR_{Y}^{\bullet}) \) is a dualizing sheaf of \( Y \).
In particular, \( U^{\circ} \) is a Gorenstein scheme with the dualizing sheaf
\[ \omega_{Y}|_{U^{\circ}} = \SH^{-d}(\SR_{Y}^{\bullet})|_{U^{\circ}} \isom
\omega_{Y^{\circ}/T}|_{U^{\circ}} \otimes_{\SO_{U^{\circ}}}
(f|_{U^{\circ}})^{*}(\omega_{T^{\circ}}). \]
By Lemma~\ref{lem:QGorSch3}, we have an isomorphism
\begin{equation}\label{eq:lem:naiveQGor}
\omega_{Y}^{[m]} \isom \omega_{Y/T}^{[m]} \otimes_{\SO_{Y}} f^{*}(\omega_{T}^{[m]})
\end{equation}
for any integer \( m \).
For a point \( y \in Y \), \( Y \) is \( \BQQ \)-Gorenstein at \( y \) if and only if
\( \omega_{Y}^{[m]} \) is invertible at \( y \) for some \( m > 0 \).
On the other hand, \( f \) is naively \( \BQQ \)-Gorenstein at \( y \)
if and only if \( \omega_{Y/T}^{[m]} \) is invertible at \( y \) for some \( m > 0 \).
Since \( \omega_{T}^{[r]} \) is invertible, the isomorphism \eqref{eq:lem:naiveQGor} implies
that \( Y \) is \( \BQQ \)-Gorenstein if and only if \( f \) is naively \( \BQQ \)-Gorenstein.
\end{proof}

The following is a criterion for a morphism to be \( \BQQ \)-Gorenstein.

\begin{prop}\label{prop:QGorCritMm}
Let \( f \colon Y \to T \) be a flat morphism locally of finite type between locally Noetherian schemes.
For a point \( t \in T \),
assume that the fiber \( Y_{t} = f^{-1}(t) \) is a \( \BQQ \)-Gorenstein scheme.
If there exist coherent \( \SO_{Y} \)-modules \( \SM_{m} \) for \( m \geq 1 \)
such that
\[ \SM_{m} \otimes_{\SO_{Y}} \SO_{Y_{t}} \isom \omega^{[m]}_{Y_{t}/\Bbbk(t)}
\quad \text{and} \quad
\SM_{m}|_{Y^{\circ}} \isom \omega^{\otimes m}_{Y^{\circ}/T},\]
where \( Y^{\circ} \) is the relative Gorenstein locus \( \Gor(Y/T) \),
then \( f \) is a \( \BQQ \)-Gorenstein morphism along \( Y_{t} \).
\end{prop}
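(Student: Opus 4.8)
The plan is to fix a point $y \in Y_t$ and deduce that $f$ is $\BQQ$-Gorenstein at $y$ from Lemma~\ref{lem:LocDefQGor} applied with $Y^{\circ} := \Gor(Y/T)$ and with $r$ the Gorenstein index of $Y_t$ at $y$. Since $Y_t$ is $\BQQ$-Gorenstein, condition~\eqref{lem:LocDefQGor:cond12} of that lemma holds automatically (with this $r$), so the entire proof reduces to verifying condition~\eqref{lem:LocDefQGor:cond3}: that for $0 < k \le r$ the base change homomorphism
\[ \phi^{[k]}_t \colon j_*(\omega^{\otimes k}_{Y^{\circ}/T}) \otimes_{\SO_Y} \SO_{Y_t} \longrightarrow \omega^{[k]}_{Y_t/\Bbbk(t)} = j_{t*}(\omega^{\otimes k}_{Y^{\circ}_t/\Bbbk(t)}) \]
is surjective at $y$, where $j \colon Y^{\circ} \injmap Y$ and $j_t \colon Y^{\circ}_t := Y^{\circ} \cap Y_t \injmap Y_t$ are the open immersions. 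Here I record the relevant facts: $Y^{\circ}_t = \Gor(Y_t)$ contains all generic points of $Y_t$ (as $Y_t$ is Gorenstein in codimension one) and has complement of codimension $\ge 2$; $Y_t$ satisfies $\bfS_2$; $\omega_{Y^{\circ}/T}$ is invertible with $\omega_{Y^{\circ}/T} \otimes_{\SO_{Y^{\circ}}} \SO_{Y^{\circ}_t} \isom \omega_{Y^{\circ}_t/\Bbbk(t)}$ by Theorem~\ref{thm:basechange}; and $\omega^{[k]}_{Y_t/\Bbbk(t)}$ is reflexive, hence equals $j_{t*}$ of its restriction, by Lemma~\ref{lem:dfn:canosheaf2} and Corollary~\ref{cor:basicS1S2}.

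The sheaves $\SM_k$ enter as follows. The given isomorphism $\SM_k|_{Y^{\circ}} \isom \omega^{\otimes k}_{Y^{\circ}/T}$ together with the unit of adjunction yields a homomorphism $\lambda_k \colon \SM_k \to j_*(j^*\SM_k) \isom j_*(\omega^{\otimes k}_{Y^{\circ}/T})$ restricting to that isomorphism over $Y^{\circ}$. Restricting to the fibre and composing gives
\[ \Psi_k \colon \SM_k \otimes_{\SO_Y} \SO_{Y_t} \xrightarrow{\ \lambda_k \otimes \SO_{Y_t}\ } j_*(\omega^{\otimes k}_{Y^{\circ}/T}) \otimes_{\SO_Y} \SO_{Y_t} \xrightarrow{\ \phi^{[k]}_t\ } \omega^{[k]}_{Y_t/\Bbbk(t)} , \]
which, by the compatibilities above, is an isomorphism over $Y^{\circ}_t$. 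Now the given isomorphism $\theta_k \colon \SM_k \otimes_{\SO_Y} \SO_{Y_t} \isom \omega^{[k]}_{Y_t/\Bbbk(t)}$ lets me view $\Psi_k \circ \theta_k^{-1}$ as an endomorphism of $\omega^{[k]}_{Y_t/\Bbbk(t)}$; since $\SHom_{\SO_{Y_t}}(\omega^{[k]}_{Y_t/\Bbbk(t)}, \omega^{[k]}_{Y_t/\Bbbk(t)})$ satisfies $\bfS_2$ (Lemma~\ref{lem:depth<=2}, applied to a presentation of $\omega^{[k]}_{Y_t/\Bbbk(t)}$) and agrees with $\SO_{Y^{\circ}_t}$ off a codimension $\ge 2$ set, it is isomorphic to $\SO_{Y_t}$ (Corollary~\ref{cor:basicS1S2}). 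Hence $\Psi_k \circ \theta_k^{-1}$ is multiplication by some $a_k \in \OH^0(Y_t, \SO_{Y_t})$, which is a unit over $Y^{\circ}_t$. As $a_k$ lies in no minimal prime of $\SO_{Y_t}$, Krull's principal ideal theorem forces every component of its zero locus $V(a_k)$ to have codimension one; but $V(a_k) \subset Y_t \setminus Y^{\circ}_t$ has codimension $\ge 2$, so $V(a_k) = \emptyset$ and $a_k$ is a unit. Therefore $\Psi_k$ is an isomorphism, hence $\phi^{[k]}_t$ is surjective (everywhere, in particular at $y$). Since this holds for every $k \ge 1$, condition~\eqref{lem:LocDefQGor:cond3} of Lemma~\ref{lem:LocDefQGor} is satisfied, and that lemma delivers condition~\eqref{lem:LocDefQGor:cond6}, i.e.\ $f$ is $\BQQ$-Gorenstein at $y$. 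As $y \in Y_t$ is arbitrary, $f$ is $\BQQ$-Gorenstein along $Y_t$.

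The main obstacle, I expect, is the middle step: although $\Psi_k$ is visibly an isomorphism over the relative Gorenstein locus $Y^{\circ}_t$, one must rule out that it degenerates on $Y_t \setminus Y^{\circ}_t$. The key is that the discrepancy $a_k$ between $\Psi_k$ and the abstract isomorphism $\theta_k$ — a priori only a unit on the open set $Y^{\circ}_t$ — is forced to be a \emph{global} unit by combining the $\bfS_2$ hypothesis on $Y_t$ (so that $a_k$ is a non-zero-divisor whose vanishing locus is pure of codimension one) with the codimension-$\ge 2$ estimate on $Y_t \setminus Y^{\circ}_t$; this is what makes the argument go through even when the fibre $Y_t$ is non-reduced. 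The identification $\SHom_{\SO_{Y_t}}(\omega^{[k]}_{Y_t/\Bbbk(t)}, \omega^{[k]}_{Y_t/\Bbbk(t)}) \isom \SO_{Y_t}$, which converts an a priori arbitrary endomorphism of the reflexive sheaf $\omega^{[k]}_{Y_t/\Bbbk(t)}$ into multiplication by a function, is the technical device that reduces the whole matter to this codimension bookkeeping.
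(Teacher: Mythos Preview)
Your proof is correct and takes a genuinely different route from the paper's. The paper argues directly on $Y$ rather than on the fibre: since $\SM_m \otimes \SO_{Y_t} \isom \omega^{[m]}_{Y_t/\Bbbk(t)}$ satisfies $\bfS_2$, one has $\depth_{Y_t \setminus Y^\circ}(\SM_m \otimes \SO_{Y_t}) \geq 2$, so Lemma~\ref{lem:S2flat(new)}\eqref{lem:S2flat(new):1} forces $\SM_m$ to be flat over $T$ along $Y_t$; thus $\SM_m$ satisfies relative $\bfS_2$ along $Y_t$, and the restriction map $\SM_m \to j_*(\SM_m|_{Y^\circ}) \isom \omega^{[m]}_{Y/T}$ is then an isomorphism along $Y_t$ (again by depth), transferring relative $\bfS_2$ to $\omega^{[m]}_{Y/T}$. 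This yields the bonus identification $\SM_m \isom \omega^{[m]}_{Y/T}$ along $Y_t$, which your fibrewise argument does not produce. Your approach instead packages everything through the characterization in Lemma~\ref{lem:LocDefQGor}, reducing to the surjectivity of $\phi^{[k]}_t$.

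Your endomorphism-ring/Krull step is valid but more elaborate than needed. Since $\omega^{[k]}_{Y_t/\Bbbk(t)}$ satisfies $\bfS_2$ (hence is $j_{t*}$ of its restriction), any two maps into it that agree on $Y^\circ_t$ must coincide; so $\Psi_k$ equals the composite of the restriction map $\SM_k \otimes \SO_{Y_t} \to j_{t*}\bigl((\SM_k \otimes \SO_{Y_t})|_{Y^\circ_t}\bigr)$ with $j_{t*}$ of the identification on $Y^\circ_t$. The first of these is an isomorphism because the source has depth $\geq 2$ along $Y_t \setminus Y^\circ_t$ via $\theta_k$, and the second is $j_{t*}$ of an isomorphism. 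This avoids computing $\SEnd(\omega^{[k]}_{Y_t/\Bbbk(t)})$ and invoking Krull altogether. Alternatively, once you have identified that endomorphism sheaf with $\SO_{Y_t}$, the $\bfS_2$-property of $\SO_{Y_t}$ already gives $\SO_{Y_t} \isom j_{t*}\SO_{Y^\circ_t}$, so the inverse of $a_k$ on $Y^\circ_t$ extends to a global section, making $a_k$ a unit without the principal ideal theorem.
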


\begin{proof}
We set \( \SM_{0} = \SO_{Y} \). Then,
\( \SM_{m, (t)} = \SM_{m} \otimes_{\SO_{Y}} \SO_{Y_{t}} \)
satisfies \( \bfS_{2} \) along \( Y_{t} \) for any \( m \geq 0 \).
For the complement \( Z = Y \setminus Y^{\circ} \),
we have \( \Codim(Z \cap Y_{t}, Y_{t}) \geq 2 \), since \( Y_{t} \) is \( \BQQ \)-Gorenstein.
Hence, \( \SM_{m} \) is flat over \( T \) along \( Y_{t} \)
by Lemma~\ref{lem:S2flat(new)}\eqref{lem:S2flat(new):1},
since \( \SM_{m}|_{Y^{\circ}} \isom \omega^{\otimes m}_{Y^{\circ}/T}\) is flat over \( T \) and
\[ \depth_{Z \cap Y_{t}} \SM_{m, (t)} \geq 2 \]
(cf.\ Lemma~\ref{lem:depth+codim+Sk}\eqref{lem:depth+codim+Sk:2}).
As a consequence,
\( \SM_{m} \) satisfies relative \( \bfS_{2} \) over \( T \) along \( Y_{t} \)
for any \( m \geq 0 \).
In particular, \( f \) is an \( \bfS_{2} \)-morphism along \( Y_{t} \)
by considering the case \( m = 0 \).
By replacing \( Y \) with an open neighborhood of \( Y_{t} \),
we may assume that \( f \) is an \( \bfS_{2} \)-morphism and that
\( \Codim(Z \cap Y_{t'}, Y_{t'}) \geq 2\) for any \( t' \in f(Y) \),
by Lemma~\ref{lem:S2Codim2}.

Now, \( \Supp \SM_{m} = Y \), since it contains the dense open subset \( Y^{\circ} \).
Hence, \( \Supp \SM_{m, (t')} = Y_{t'}\) for any \( t' \in T \),
and it is locally equi-dimensional by Fact~\ref{fact:S2}\eqref{fact:S2:1}.
Thus, \( U_{m} := \bfS_{2}(\SM_{m}) \)
is open by Fact~\ref{fact:dfn:RelSkCMlocus}\eqref{fact:dfn:RelSkCMlocus:2}, and
\[ \depth_{Z \cap U_{m}} \SM_{m}|_{U_{m}} \geq 2 \]
by Lemma~\ref{lem:relSkCodimDepth}\eqref{lem:relSkCodimDepth:1}.
It implies that, for the open immersion \( j \colon Y^{\circ} \injmap Y \),
\[ \SM_{m} \to j_{*}(\SM_{m}|_{Y^{\circ}}) \isom j_{*}(\omega^{\otimes m}_{Y^{\circ}/T}) = \omega^{[m]}_{Y/T}\]
is an isomorphism along \( Y_{t} \).
As a consequence, \( \omega_{Y/T}^{[m]} \) satisfies relative \( \bfS_{2} \) over \( T \) along \( Y_{t} \)
for any \( m \geq 0\). Therefore, \( f \) is a \( \BQQ \)-Gorenstein morphism along \( Y_{t} \).
\end{proof}

We have the following base change properties for \( \BQQ \)-Gorenstein morphisms
and for their variants.

\begin{prop}\label{prop:QGormor}
Let \( f \colon Y \to T \) be a flat morphism locally of finite type
between locally Noetherian schemes and let
\[ \begin{CD}
Y' @>{p}>> Y \\ @V{f'}VV @VV{f}V \\ T' @>{q}>> T
\end{CD}\]
be a Cartesian diagram of schemes such that \( T' \) is also locally Noetherian.
\begin{enumerate}
\item  \label{prop:QGormor:1n}
If \( f \) is a naively \( \BQQ \)-Gorenstein morphism, then so is \( f' \).
Here, if \( \omega_{Y/T}^{[r]} \) is invertible, then
\( \omega_{Y'/T'}^{[r]} \isom p^{*}(\omega_{Y/T}^{[r]})  \).

\item \label{prop:QGormor:1nInv} In case \( q \colon T' \to T \) is a flat and surjective morphism,
if \( f' \) is naively \( \BQQ \)-Gorenstein, then so is \( f \).

\item \label{prop:QGormor:1nCor}
If every fiber of \( f \) is \( \BQQ \)-Gorenstein, then
every fiber of \( f' \) is so. The converse holds if \( q \) is surjective.

\item \label{prop:QGormor:1v}
If \( f \) is virtually \( \BQQ \)-Gorenstein at a point \( y \in Y\), then \( f' \)
is so at any point of \( p^{-1}(y) \).

\item \label{prop:QGormor:1}
If \( f\) is \( \BQQ \)-Gorenstein, then \( f' \) is so and
\( \omega_{Y'/T'}^{[m]} \isom p^{*}(\omega_{Y/T}^{[m]}) \)
for any \( m \in \BZZ \).

\item \label{prop:QGormor:2}
In case \( q \colon T' \to T \) is a flat and surjective morphism, if \( f' \) 
is \( \BQQ \)-Gorenstein, then so is \( f \). 
\end{enumerate}
\end{prop}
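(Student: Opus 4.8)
The plan is to reduce the assertion to two facts already available in this section: the base change isomorphism for the relative canonical sheaf (Corollary~\ref{cor:BC-S2CMGor}) and the faithfully flat descent of the relative $\bfS_2$-condition. First I would record that $f$ is automatically an $\bfS_2$-morphism and every fiber of $f$ is $\BQQ$-Gorenstein: since $f'$ is $\BQQ$-Gorenstein, it is an $\bfS_2$-morphism with $\BQQ$-Gorenstein fibers, and $q$ is surjective, so by \eqref{prop:QGormor:1nCor} (or directly by Fact~\ref{fact:elem-flat}\eqref{fact:elem-flat:6} applied to the flat surjection $q$, noting that $p^{-1}(y)$ meets $\bfS_2(Y'/T')$ for each $y\in Y$) the fibers of $f$ and $f$ itself satisfy the required $\bfS_2$-conditions. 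In particular, since every fiber of $f$ is $\BQQ$-Gorenstein it is Gorenstein in codimension one, so the sheaves $\omega^{[m]}_{Y/T}$ are defined and reflexive (Proposition~\ref{prop:BCGor}).

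Next I would compare $\omega^{[m]}_{Y/T}$ with $\omega^{[m]}_{Y'/T'}$ via $p$. Because $q$ is flat, $p$ is flat, and Corollary~\ref{cor:BC-S2CMGor}\eqref{cor:BC-S2CMGor:2} gives a canonical isomorphism
\[
(p^{*}\omega^{[m]}_{Y/T})^{\vee\vee}\isom\omega^{[m]}_{Y'/T'}
\]
for every $m\in\BZZ$ (here we use that every fiber of $f$ is Gorenstein in codimension one, which has just been established). Now $f'$ being $\BQQ$-Gorenstein means $\omega^{[m]}_{Y'/T'}$ satisfies relative $\bfS_2$ over $T'$ for all $m$. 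The goal is to deduce that $\omega^{[m]}_{Y/T}$ satisfies relative $\bfS_2$ over $T$. Fix $m$ and a point $y\in Y$; choose $y'\in Y'$ with $p(y')=y$, which exists since $q$ is surjective. By Lemma~\ref{lem:bc basic}\eqref{lem:bc basic:3}, $\bfS_2(p^{*}\omega^{[m]}_{Y/T}/T')=p^{-1}\bfS_2(\omega^{[m]}_{Y/T}/T)$ when the relevant sheaf is $p^{*}$ of a sheaf on $Y$; the subtlety is that what satisfies relative $\bfS_2$ over $T'$ is the \emph{double dual} $(p^{*}\omega^{[m]}_{Y/T})^{\vee\vee}$, not $p^{*}\omega^{[m]}_{Y/T}$ itself. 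To bridge this, I would invoke Lemma~\ref{lem:SurjFlat(reflexive)} to present $\omega^{[m]}_{Y/T}$ locally as the kernel term $\SF$ of an exact sequence $0\to\SF\to\SE^0\to\SE^1\to\SG\to 0$ as in Proposition~\ref{prop:key}, where $\SE^0,\SE^1,\SG|_U$ are locally free (applicable since $\omega^{[m]}_{Y/T}$ is reflexive and locally free in codimension one on each fiber, the latter because the fibers are $\BQQ$-Gorenstein and one may shrink using Lemma~\ref{lem:S2Codim2}). Then by Lemma~\ref{lem:1|prop:key}, $\omega^{[m]}_{Y/T}$ satisfies relative $\bfS_2$ over $T$ at $y$ iff $\SG_y$ is flat over $\SO_{T,f(y)}$, and $(p^{*}\omega^{[m]}_{Y/T})^{\vee\vee}$ satisfies relative $\bfS_2$ over $T'$ at $y'$ iff $(p^{*}\SG)_{y'}$ is flat over $\SO_{T',f'(y')}$. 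Since $q$ is faithfully flat, flatness of $(p^{*}\SG)_{y'}=(\SG_y\otimes_{\SO_{T,t}}\SO_{T',t'})_{y'}$ over $\SO_{T',t'}$ descends to flatness of $\SG_y$ over $\SO_{T,t}$ by faithfully flat descent of flatness (e.g.\ Proposition~\ref{prop:LCflat} combined with the standard descent argument, using that $\SO_{T,t}\to\SO_{T',t'}$ is faithfully flat). Hence $\omega^{[m]}_{Y/T}$ satisfies relative $\bfS_2$ over $T$ at $y$; as $y$ and $m$ were arbitrary, $f$ is $\BQQ$-Gorenstein.

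The main obstacle I anticipate is the bookkeeping around double duals and base change: one must be careful that $q$ (hence $p$) flat guarantees $(p^{*}\omega^{[m]}_{Y/T})^{\vee\vee}\isom\omega^{[m]}_{Y'/T'}$ via Corollary~\ref{cor:BC-S2CMGor}, and that the descent step uses \emph{faithful} flatness of $q$, not merely flatness — surjectivity of $q$ is exactly what supplies faithfulness and what lets us choose $y'$ over each $y$. A secondary technical point is that the local presentation of Lemma~\ref{lem:SurjFlat(reflexive)} is only local on $Y$, so the argument should be run in an affine neighborhood of $y$ and its preimage; since the conclusion ``$f$ is $\BQQ$-Gorenstein'' is local on $Y$ (indeed $\bfS_2(\omega^{[m]}_{Y/T}/T)$ is open by Fact~\ref{fact:dfn:RelSkCMlocus}\eqref{fact:dfn:RelSkCMlocus:2}, the fibers being locally equi-dimensional), this causes no difficulty. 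Everything else is a routine assembly of results already in the paper.
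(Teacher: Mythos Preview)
Your approach is correct, but it is more elaborate than necessary and misses a simplification that the paper exploits. The ``subtlety'' you flag---that only the double dual $(p^{*}\omega^{[m]}_{Y/T})^{\vee\vee}$ is known to satisfy relative $\bfS_2$, not $p^{*}\omega^{[m]}_{Y/T}$ itself---does not arise here: since $q$ is flat, so is $p$, and flat pullback preserves reflexivity (Remark~\ref{rem:dfn:reflexive}). Hence $p^{*}\omega^{[m]}_{Y/T}$ is already reflexive, and the isomorphism of Corollary~\ref{cor:BC-S2CMGor}\eqref{cor:BC-S2CMGor:2} reads simply $p^{*}\omega^{[m]}_{Y/T}\isom\omega^{[m]}_{Y'/T'}$. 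This is exactly what the proof of part~\eqref{prop:QGormor:1nInv} already records.

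With this in hand, the paper takes a shorter route than yours: rather than invoking the key proposition machinery (Lemma~\ref{lem:SurjFlat(reflexive)}, Lemma~\ref{lem:1|prop:key}) to translate relative $\bfS_2$ into flatness of an auxiliary sheaf $\SG$ and then descend flatness of $\SG$, the paper uses the characterization via the fiber-level base change homomorphisms $\phi^{[m]}_{t}$ (Lemma~\ref{lem:LocDefQGor} and Proposition~\ref{prop:BCGor}). For $t'\in q^{-1}(t)$, the morphism $\phi^{[m]}_{t'}$ on $Y'_{t'}$ is identified with the pullback $p_{t'}^{*}(\phi^{[m]}_{t})$ along the faithfully flat fiber map $p_{t'}\colon Y'_{t'}\to Y_{t}$; since $\phi^{[m]}_{t'}$ is an isomorphism (as $f'$ is $\BQQ$-Gorenstein), faithfully flat descent gives that $\phi^{[m]}_{t}$ is an isomorphism, and $f$ is $\BQQ$-Gorenstein. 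Your argument via $\SG$ is valid and would be the right move if $q$ were not flat (so that the double dual could not be dropped), but in the present setting it is unnecessary overhead.
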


\begin{proof}
Note that \( Y^{\prime \circ} = p^{-1}(Y^{\circ}) \) for
\( Y^{\prime \circ} := \Gor(Y'/T') \) (cf.\ Corollary~\ref{cor:BC-S2CMGor}) and that
\begin{equation}\label{eq:1|prop:QGormor}
\Codim(Y_{t} \setminus Y^{\circ}, Y_{t}) = \Codim(Y'_{t'} \setminus Y^{\prime \circ}, Y')
\end{equation}
for any \( t' \in T' \) and \( t = q(t') \) (cf.\ Lemma~\ref{lem:bc basic}\eqref{lem:bc basic:1}).

\eqref{prop:QGormor:1n}: The base change \( f' \) is an \( \bfS_{2} \)-morphism by
Lemma~\ref{lem:bc basic}\eqref{lem:bc basic:5}, and we have
an isomorphism \( \omega_{Y'/T'}^{[r]} \isom p^{*}(\omega_{Y/T}^{[r]}) \)
by Corollary~\ref{cor:BC-S2CMGor}\eqref{cor:BC-S2CMGor:2}.
In particular, \( f' \) is a naively \( \BQQ \)-Gorenstein morphism.

\eqref{prop:QGormor:1nInv}: The morphism \( f \) is an \( \bfS_{2} \)-morphism by
Lemma~\ref{lem:bc basic}\eqref{lem:bc basic:3} applied to \( \SF = \SO_{Y} \), since
\( p \colon Y' \to Y \) is surjective. Moreover,
every fiber of \( f \) is Gorenstein in codimension one by \eqref{eq:1|prop:QGormor}.
Now, \( p^{*}(\omega^{[m]}_{Y/T}) \) is reflexive for any \( m \)
by Remark~\ref{rem:dfn:reflexive}, since \( p \) is flat.
Hence, \( p^{*}(\omega^{[m]}_{Y/T}) \isom \omega^{[m]}_{Y'/T'} \)
for any \( m \) by  Corollary~\ref{cor:BC-S2CMGor}\eqref{cor:BC-S2CMGor:2}.
If \( p^{*}(\omega^{[r]}_{Y/T}) \) is invertible, then so is \( \omega^{[r]}_{Y/T} \),
since \( p \) is fully faithful (cf.\ Lemma~\ref{lem:LocFreeDecsent}).
Therefore, \( f \) is naively \( \BQQ \)-Gorenstein.

\eqref{prop:QGormor:1nCor}: This is obtained by applying \eqref{prop:QGormor:1n}
and \eqref{prop:QGormor:1nInv}
to the case where \( T = \Spec \Bbbk(t) \) and \( T' = \Spec \Bbbk(t') \)
and by Lemma~\ref{lem:naiveQGor}.

\eqref{prop:QGormor:1v}:
We may assume that the conditions of Remark~\ref{rem:dfn:vQGorMor} are satisfied for \( U = Y \),
a certain reflexive \( \SO_{Y} \)-module \( \SL \), and for \( o = f(y) \).
Then, the conditions \eqref{rem:dfn:vQGorMor:1} and \eqref{rem:dfn:vQGorMor:2}
of Remark~\ref{rem:dfn:vQGorMor} imply
\[ \depth_{Y_{t} \setminus Y^{\circ}} \SO_{Y_{t}} \geq 2 \]
for any \( t \in f(Y) \), by Lemma~\ref{lem:depth+codim+Sk}\eqref{lem:depth+codim+Sk:2}.
Hence, \( p^{*}(\SL^{[m]}) \) is a reflexive \( \SO_{Y'} \)-module and
\( (p^{*}\SL)^{[m]} \isom p^{*}(\SL^{[m]}) \) for any \( m \),
by Lemma~\ref{lem:bc reflexive} applied to \( Z = Y \setminus Y^{\circ} \)
and to \( \SL^{[m]} \).
Here, \( (p^{*}\SL)^{[m]} \) satisfies relative \( \bfS_{2} \) over \( T' \)
by Remark~\ref{rem:dfn:vQGorMor}\eqref{rem:dfn:vQGorMor:6}
and Lemma~\ref{lem:bc basic}\eqref{lem:bc basic:4}.
Furthermore, for any point \( t' \in T' \) and \( t = q(t') \),
we have isomorphisms
\[ p^{*}\SL \otimes_{\SO_{Y'}} \SO_{Y'_{t'}} \isom
(\SL \otimes_{\SO_{Y}} \SO_{Y_{t}}) \otimes_{\Bbbk(t)} \Bbbk(t')
\isom \omega_{Y_{t}/\Bbbk(t)} \otimes_{\Bbbk(t)} \Bbbk(t')
\isom \omega_{Y'_{t'}/\Bbbk(t')},\]
by applying Lemma~\ref{lem:omegaFlatbc} to \( \Spec \Bbbk(t') \to \Spec \Bbbk(t) \).
Therefore, \( f' \) is virtually \( \BQQ \)-Gorenstein at any point of \( p^{-1}(y) \),
since \( p^{*}\SL \) plays the role of \( \SL \) in Definition~\ref{dfn:vQGorMor}.

\eqref{prop:QGormor:1}:
By \eqref{prop:QGormor:1n},
\( f' \) is an \( \bfS_{2} \)-morphism whose fibers are all \( \BQQ \)-Gorenstein.
If \( \omega^{[m]}_{Y/T} \) satisfies relative \( \bfS_{2} \) over \( T \), then
\( p^{*}\omega^{[m]}_{Y/T} \) does so over \( T' \)
by Lemma~\ref{lem:bc basic}\eqref{lem:bc basic:4},
and \( p^{*}\omega^{[m]}_{Y/T} \isom \omega^{[m]}_{Y'/T'} \)
by Corollary~\ref{cor:BC-S2CMGor}\eqref{cor:BC-S2CMGor:2}.
Therefore, \( f' \) is \( \BQQ \)-Gorenstein
(cf.\ Definition~\ref{dfn:QGorMor}\eqref{dfn:QGorMor:QGor}).

\eqref{prop:QGormor:2}: 
By \eqref{prop:QGormor:1nInv} above, 
\( f \) is naively \( \BQQ \)-Gorenstein. 
By Lemma~\ref{lem:LocDefQGor}, 
it is enough to prove that the base change homomorphism 
\[ \phi^{[m]}_{t} \colon 
\omega^{[m]}_{Y/T} \otimes_{\SO_{Y}} \SO_{Y_{t}} \to \omega^{[m]}_{Y_{t}/\Bbbk(t)} \]
is an isomorphism for any \( m \in \BZZ \) and any point \( t \in T \). 
For any point \( t' \in q^{-1}(t) \), the base change morphism 
\[ \phi^{[m]}_{t'} \colon \omega^{[m]}_{Y'/T'} \otimes_{\SO_{Y'}} \SO_{Y'_{t'}} 
\to \omega^{[m]}_{Y'_{t'}/\Bbbk(t')}  \]
is an isomorphism, since \( f' \) is \( \BQQ \)-Gorenstein. 
Now, \( \phi^{[m]}_{t'} \) is isomorphic to the homomorphism \( p_{t'}^{*}(\phi^{[m]}_{t}) \) for 
the morphism \( p_{t'} \colon Y'_{t'} \to Y_{t} \) induced from \( p \), 
since we have an isomorphism \( \omega^{[m]}_{Y'/T'} \isom p^{*}\omega^{[m]}_{Y/T}\) 
as in the proof of \eqref{prop:QGormor:1nInv}. Since \( p_{t'} \) is faithfully flat, 
\( \phi^{[m]}_{t} \) is an isomorphism for any \( m \in \BZZ\) and \( t \in T\). 
Therefore, \( f \) is \( \BQQ \)-Gorenstein. 
\end{proof}

We have the following properties for compositions of \( \BQQ \)-Gorenstein morphisms
and of their variants.

\begin{prop}\label{prop:QGorCompo}
Let \( f \colon Y \to T \) and \( g \colon X \to Y \)
be flat morphisms of locally Noetherian schemes.
\begin{enumerate}
\item \label{prop:QGorCompo:1}
If \( f \) and \( g \) are naively \( \BQQ \)-Gorenstein, then
\( f \circ g \) is so, and
\[ \omega_{X/T}^{[r]} \isom \omega_{X/Y}^{[r]} \otimes_{\SO_{X}} g^{*}(\omega_{Y/T}^{[r]})  \]
for an integer \( r > 0 \) such that \( \omega_{X/Y}^{[r]} \) and \( \omega_{Y/T}^{[r]} \)
are invertible.

\item \label{prop:QGorCompo:2}
Assume that \( g \) is a \( \BQQ \)-Gorenstein morphism.
If \( f \) is virtually \( \BQQ \)-Gorenstein at a point \( y \),
then \( f \circ g \) virtually \( \BQQ \)-Gorenstein at any point of \( g^{-1}(y) \).

\item \label{prop:QGorCompo:3}
If \( f \) and \( g \) are \( \BQQ \)-Gorenstein morphisms,
then \( f \circ g \) is so, and
\[ \omega_{X/T}^{[m]} \isom \omega_{X/Y}^{[m]} \otimes_{\SO_{X}} g^{*}(\omega_{Y/T}^{[m]}) \]
for any integer \( m \).
\end{enumerate}
\end{prop}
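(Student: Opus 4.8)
\textbf{Proof proposal for Proposition~\ref{prop:QGorCompo}.}

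The plan is to handle the three assertions in order, deriving each from the base change stability (Proposition~\ref{prop:QGormor}) together with the local characterizations in Lemma~\ref{lem:LocDefQGor} and the base change isomorphism for the relative canonical sheaf (Corollary~\ref{cor:BC-S2CMGor} and Proposition~\ref{prop:BCGor}). First I would record some preliminaries that are common to all three parts. Since compositions of flat morphisms locally of finite type are again flat and locally of finite type, and since for a point \( x \in X \) with \( y = g(x) \) and \( t = f(y) \) the fiber \( X_{t} = (f \circ g)^{-1}(t) \) is the base change \( X_{y} \times_{\Spec \Bbbk(y)} \Spec \Bbbk(t) \)-\ldots more precisely, \( X_{t} = g^{-1}(Y_{t}) \) with \( Y_{t} = f^{-1}(t) \), so the induced morphism \( g_{t} \colon X_{t} \to Y_{t} \) is the base change of \( g \) and hence is again \( \BQQ \)-Gorenstein (resp.\ naively \( \BQQ \)-Gorenstein, resp.\ virtually \( \BQQ \)-Gorenstein) by Proposition~\ref{prop:QGormor}. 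In particular each fiber \( X_{t} \) of \( f \circ g \) admits a \( \BQQ \)-Gorenstein (hence \( \bfS_{2} \)) morphism onto a \( \BQQ \)-Gorenstein scheme \( Y_{t} \), so by Lemma~\ref{lem:naiveQGor} (applied fiberwise, with base a field) \( X_{t} \) is itself \( \BQQ \)-Gorenstein, and \( f \circ g \) is an \( \bfS_{2} \)-morphism with all fibers \( \BQQ \)-Gorenstein. This reduces everything to identifying \( \omega^{[m]}_{X/T} \) and checking the relative \( \bfS_{2} \)-condition.

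For \eqref{prop:QGorCompo:1} I would work locally on \( X \), choosing \( r > 0 \) with \( \omega^{[r]}_{X/Y} \) and \( \omega^{[r]}_{Y/T} \) invertible (on suitable opens). Let \( Y^{\circ} = \Gor(Y/T) \), \( X^{\circ} = \Gor(X/Y) \), and put \( V = g^{-1}(Y^{\circ}) \cap X^{\circ} \); one checks \( V \subset \Gor(X/T) \) because over the smooth-like locus the relative dualizing sheaves compose multiplicatively: \( \omega_{X/T}|_{V} \isom \omega_{X/Y}|_{V} \otimes g^{*}(\omega_{Y/T}|_{V}) \) by the transitivity \( (f \circ g)^{!} \isom g^{!} \circ f^{!} \) of twisted inverse images (Fact~\ref{fact:DeligneVerdierLipman}\eqref{fact:DeligneVerdierLipman:compo}) restricted to Cohen--Macaulay loci, where both factors are invertible. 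Since \( \Codim(X_{t} \setminus V, X_{t}) \geq 2 \) for every fiber (the excluded locus on each \( X_{y} \) has codimension \( \geq 2 \), and the relevant codimensions are preserved under the flat base changes by \( g \) and by fibers via Lemma~\ref{lem:bc basic}\eqref{lem:bc basic:1}), the sheaf \( \SM := \omega^{[r]}_{X/Y} \otimes_{\SO_{X}} g^{*}(\omega^{[r]}_{Y/T}) \) is reflexive, invertible, agrees with \( \omega^{\otimes r}_{X/T} \) on the Gorenstein locus, and hence equals \( \omega^{[r]}_{X/T} = j_{*}(\omega^{\otimes r}_{V/T}) \) by Lemma~\ref{lem:US2add}\eqref{lem:US2add:2} or Proposition~\ref{prop:BCGor}; thus \( \omega^{[r]}_{X/T} \) is invertible and \( f \circ g \) is naively \( \BQQ \)-Gorenstein with the stated formula.

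For \eqref{prop:QGorCompo:2} I would use the local description of virtual \( \BQQ \)-Gorenstein-ness in Remark~\ref{rem:dfn:vQGorMor}: near \( y \) there is a reflexive \( \SO_{Y} \)-module \( \SL \) with \( \SL^{[m]} \) satisfying relative \( \bfS_{2} \) over \( T \) for all \( m \) and \( \SL \otimes \SO_{Y_{o}} \isom \omega_{Y_{o}/\Bbbk(o)} \) where \( o = f(y) \). The candidate witness sheaf on \( X \) is \( \SL' := \omega_{X/Y}^{[1]} \otimes_{\SO_{X}}(g^{*}\SL)^{[1]} = (\omega_{X/Y} \otimes g^{*}\SL)^{\vee\vee} \); one shows it is reflexive, \( (\SL')^{[m]} \isom \omega^{[m]}_{X/Y} \otimes_{\SO_{X}} g^{*}(\SL^{[m]}) \) (again via Lemma~\ref{lem:bc reflexive} applied to the codimension-\( \geq 2 \) complement of the relative Gorenstein locus, using that \( g^{*}\SL^{[m]} \) is reflexive and that \( \omega^{[m]}_{X/Y} \) satisfies relative \( \bfS_{2} \) over \( Y \)), that this satisfies relative \( \bfS_{2} \) over \( T \) because both tensor factors do over \( T \) --- here one uses \( \BQQ \)-Gorenstein-ness of \( g \), via Proposition~\ref{prop:QGorCompo:3} for the \( Y \)-over-\( T \) sheaf pulled back, or more directly Fact~\ref{fact:elem-flat}\eqref{fact:elem-flat:5} combined with flatness over \( T \) of \( g^{*}\SL^{[m]} \) --- and finally that \( \SL' \otimes \SO_{X_{o}} \isom \omega_{X_{o}/\Bbbk(o)} \) by the composition formula for canonical sheaves over the field \( \Bbbk(o) \). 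Then \( f \circ g \) is virtually \( \BQQ \)-Gorenstein at every point of \( g^{-1}(y) \). Finally \eqref{prop:QGorCompo:3}: by the preliminaries \( f \circ g \) is an \( \bfS_{2} \)-morphism with \( \BQQ \)-Gorenstein fibers, and by part \eqref{prop:QGorCompo:2} it is virtually \( \BQQ \)-Gorenstein along every fiber \( X_{t} \); more concretely, locally \( \omega^{[m]}_{X/T} \isom \omega^{[m]}_{X/Y} \otimes_{\SO_{X}} g^{*}(\omega^{[m]}_{Y/T}) \) (extend the identity established over the Gorenstein locus by taking \( j_{*} \), using Proposition~\ref{prop:BCGor} and that both tensor factors are reflexive and satisfy relative \( \bfS_{2} \)), and the right-hand side satisfies relative \( \bfS_{2} \) over \( T \) because \( \omega^{[m]}_{X/Y} \) satisfies relative \( \bfS_{2} \) over \( Y \) while \( g^{*}(\omega^{[m]}_{Y/T}) \) is flat over \( T \) with \( \bfS_{2} \) fiberwise restriction (Fact~\ref{fact:elem-flat}\eqref{fact:elem-flat:5}); hence \( f \circ g \) is \( \BQQ \)-Gorenstein and the formula for \( \omega^{[m]}_{X/T} \) holds globally.

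The main obstacle I anticipate is justifying the multiplicativity \( \omega^{[m]}_{X/T} \isom \omega^{[m]}_{X/Y} \otimes_{\SO_{X}} g^{*}(\omega^{[m]}_{Y/T}) \) cleanly: it is immediate on the relative Gorenstein locus from the transitivity of \( f^{!} \) and the smooth/Cohen--Macaulay formulas, but promoting it to the reflexive hulls requires care that the relevant complements have fiberwise codimension \( \geq 2 \) and that pullback by the flat morphism \( g \) commutes with double-dualization in this setting, which is exactly what Lemma~\ref{lem:bc reflexive} and Remark~\ref{rem:dfn:reflexive} are designed to supply; the bookkeeping of which opens one shrinks to, and verifying the hypotheses of Lemma~\ref{lem:bc reflexive} (the inequality \( \depth_{Y_{t} \cap Z}\, \omega^{[m]}_{Y/T, (t)} \geq 2 \) coming from relative \( \bfS_{2} \) plus Lemma~\ref{lem:depth+codim+Sk}), is the part demanding the most attention.
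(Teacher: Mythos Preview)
Your proposal is correct and follows essentially the same route as the paper: the same open set \( V = X^{\circ} \cap g^{-1}(Y^{\circ}) \), the same composite sheaf \( \SN_{m} = \omega_{X/Y}^{[m]} \otimes g^{*}(\SL^{[m]}) \) (with \( \SL = \omega_{Y/T} \) for part \eqref{prop:QGorCompo:3}), and the same identification with the reflexive hull via pushforward from \( V \). The one place where the paper is cleaner than your sketch is the order of steps in \eqref{prop:QGorCompo:2}: rather than first identifying \( (\SL')^{[m]} \isom \SN_{m} \) and then arguing relative \( \bfS_{2} \) from properties of the two tensor factors (your phrasing ``both tensor factors do over \( T \)'' is imprecise, since \( \omega_{X/Y}^{[m]} \) satisfies relative \( \bfS_{2} \) over \( Y \), not \( T \)), the paper first observes that \( \SN_{m} \) is flat over \( T \) and computes directly \( \SN_{m} \otimes \SO_{X_{o}} \isom \omega_{X_{o}/Y_{o}}^{[m]} \otimes g_{o}^{*}(\omega_{Y_{o}/\Bbbk(o)}^{[m]}) \isom \omega_{X_{o}/\Bbbk(o)}^{[m]} \) via the formula \eqref{eq:lem:naiveQGor}, giving relative \( \bfS_{2} \) along \( X_{o} \) immediately; the identification \( \SN_{m} \isom \SN^{[m]} \) then follows from Lemma~\ref{lem:US2add}\eqref{lem:US2add:3a}, and part \eqref{prop:QGorCompo:3} is literally this argument with \( \SL = \omega_{Y/T} \).
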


\begin{proof}
\eqref{prop:QGorCompo:1}:
Every fiber of the composite \( f \circ g \)
is \( \BQQ \)-Gorenstein by Lemma~\ref{lem:naiveQGor}
and by Proposition~\ref{prop:QGormor}\eqref{prop:QGormor:1n}.
In particular, \( f \circ g \) is an \( \bfS_{2} \)-morphism.
For the relative Gorenstein loci \( Y^{\circ} := \Gor(Y/T) \) and
\( X^{\circ} := \Gor(X/Y) \),
let \( V \) be the intersection \( X^{\circ} \cap g^{-1}(Y^{\circ}) \).
Then, \( V \subset \Gor(X/T) \) and
\( \Codim (X_{t} \setminus V, X_{t}) \geq 2 \) for any fiber
\( X_{t} = (f \circ g)^{-1}(t)\) of \( f \circ g \).
We set
\[ \SM_{r} := \omega_{X/Y}^{[r]} \otimes g^{*}(\omega_{Y/T}^{[r]})\]
for an integer \( r > 0 \) such that
\( \omega_{X/Y}^{[r]} \) and \( \omega_{Y/T}^{[r]} \) are invertible.
Then, \( \SM_{r}|_{V} \isom \omega_{V/T}^{\otimes r} \) and
\[ \SM_{r} \isom j_{*}(\omega_{V/T}^{\otimes r}) = \omega_{Y/T}^{[r]} \]
for the open immersion \( j \colon V \injmap X \), since \( f \circ g \) is an \( \bfS_{2} \)-morphism.
Thus, \( f \circ g \) is naively \( \BQQ \)-Gorenstein.

\eqref{prop:QGorCompo:2}:
We may assume that the conditions of Remark~\ref{rem:dfn:vQGorMor}
are satisfied for \( U = Y \), a certain reflexive \( \SO_{Y} \)-module \( \SL \),
and for \( o = f(y) \). We set
\[ \SN_{m} := \omega_{X/Y}^{[m]} \otimes_{\SO_{X}} g^{*}(\SL^{[m]})\]
for an integer \( m \).  This is flat over \( T \), since
\( \SL^{[m]} \) is so over \( T \) and \( \omega_{X/Y}^{[m]} \)
is so over \( Y \).
Let \( g_{o} = g|_{X_{o}} \colon X_{o} \to Y_{o} \)
be the induced \( \BQQ \)-Gorenstein morphism
(cf.\ Proposition~\ref{prop:QGormor}\eqref{prop:QGormor:1}).
Then, \( X_{o} = g^{-1}(Y_{o})\)
is \( \BQQ \)-Gorenstein by
Remark~\ref{rem:dfn:vQGorMor}\eqref{rem:dfn:vQGorMor:7} and Lemma~\ref{lem:naiveQGor},
and we have isomorphisms
\begin{align*}
\SN_{m} \otimes_{\SO_{X}} \SO_{X_{o}} &\isom
(\omega_{X/Y}^{[m]} \otimes_{\SO_{X}} \SO_{X_{o}}) \otimes_{\SO_{X_{o}}}
g_{o}^{*}(\omega_{Y_{o}/\Bbbk(o)}^{[m]}) \\
&\isom
\omega_{X_{o}/Y_{o}}^{[m]} \otimes_{\SO_{X_{o}}} g_{o}^{*}(\omega_{Y_{o}/\Bbbk(o)}^{[m]})
\isom \omega_{X_{o}/\Bbbk(o)}^{[m]},
\end{align*}
where the first isomorphism is derived from Remark~\ref{rem:dfn:vQGorMor}\eqref{rem:dfn:vQGorMor:8}
and the last one from \eqref{eq:lem:naiveQGor} in the proof of Lemma~\ref{lem:naiveQGor}.
In particular, \( \SN_{m} \) satisfies relative \( \bfS_{2} \) over \( T \)
along \( X_{o} \).
Then, for \( \SN := \SN_{1} \), we have isomorphisms
\[ \SN_{m} \isom j_{*}(\SN_{m}|_{V}) \isom
j_{*}\left(\omega_{V/Y}^{\otimes m} \otimes_{\SO_{V}} (g^{*}\SL)^{\otimes m}|_{V}\right)
\isom j_{*}(\SN^{\otimes m}|_{V}) = \SN^{[m]}\]
along \( X_{o} \) by Lemma~\ref{lem:US2add}\eqref{lem:US2add:3a}, where \( j \colon V \injmap X \)
is the open immersion in the proof of \eqref{prop:QGorCompo:1}.
Hence, \( \SN^{[m]} \) satisfies relative \( \bfS_{2} \) over \( T \)
along \( X_{o} \) for any \( m \), and
\( \SN \otimes_{\SO_{X}} \SO_{X_{o}} \isom \omega_{X_{o}/\Bbbk(o)} \).
Therefore, \( f \circ g \) is virtually \( \BQQ \)-Gorenstein at any point of \( g^{-1}(y) \),
since \( \SN \) plays the role of \( \SL \) in Definition~\ref{dfn:vQGorMor}.

\eqref{prop:QGorCompo:3}: We can apply the argument in the proof of \eqref{prop:QGorCompo:2}
by setting \( \SL = \omega_{Y/T} \).
Then,
\[ \SN^{[m]} \isom j_{*}(\SN_{m}|_{V}) \isom
j_{*}\left(\omega_{V/Y}^{\otimes m} \otimes_{\SO_{V}} (g^{*}\omega_{Y/T}^{\otimes m})|_{V}\right)
\isom j_{*}(\omega_{V/T}^{\otimes m}) = \omega^{[m]}_{X/T}\]
along \( X_{o} \).
Hence, \( \omega^{[m]}_{X/T} \) satisfies
relative \( \bfS_{2} \) over \( T \) for any \( m \).
Consequently, \( f \circ g \) is \( \BQQ \)-Gorenstein with an isomorphism
\( \omega_{X/T}^{[m]} \isom \omega_{X/Y}^{[m]} \otimes_{\SO_{X}} g^{*}(\omega_{Y/T}^{[m]}) \)
for any \( m \in  \BZZ \). Thus, we are done.
\end{proof}

\begin{cor}\label{cor:QGorMorSmooth}
Let \( Y \) and \( T \) be locally Noetherian schemes and \( f \colon Y \to T  \)
a flat morphism locally of finite type.
Let \( g \colon X \to Y \) be a smooth separated surjective morphism
from a locally Noetherian scheme \( X \).
Then, \( f \) is \( \BQQ \)-Gorenstein
if and only if \( f \circ g \colon  X  \to Y \to T \) is so.
\end{cor}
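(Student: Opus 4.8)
�The plan is to deduce Corollary~\ref{cor:QGorMorSmooth} from the composition and base-change results just proved, together with the local nature of the \( \BQQ \)-Gorenstein condition in both the Zariski and the \'etale topologies.

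First I would prove the ``if'' direction. Assume \( f \circ g \) is \( \BQQ \)-Gorenstein. Since \( g \) is smooth and separated, it is flat and locally of finite type, and every fiber of \( g \) is Gorenstein (indeed regular); hence \( g \) is a Gorenstein morphism, so in particular a \( \BQQ \)-Gorenstein morphism with \( \omega_{X/Y} \) invertible (cf.\ Lemma~\ref{lem:CMmorphism} and Remark~\ref{rem:dfn:QGorMor}). Now \( g \) is faithfully flat: it is flat and surjective. Applying Proposition~\ref{prop:QGormor}\eqref{prop:QGormor:2} to the Cartesian square
\[
\begin{CD}
X @>{g}>> Y \\ @V{f \circ g}VV @VV{f}V \\ T @= T
\end{CD}
\]
is not directly possible because that proposition concerns base change along a morphism of the \emph{base}; instead I would argue as follows. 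Locally on \( X \), for each point \( x \in X \), choose an open neighborhood and use that \( g \) is, \'etale-locally on \( X \), the composite of an \'etale morphism and a projection from an affine space (cf.\ \cite[IV, Cor.~(17.11.4)]{EGA}), so that the \( \BQQ \)-Gorenstein property of \( f \) at \( g(x) \) is equivalent to that of \( f \circ g \) at \( x \) by an argument parallel to Lemma~\ref{lem:QGorScheme:smooth}. Concretely, the relative canonical sheaves are related by \( \omega_{X/T}^{[m]} \isom \omega_{X/Y}^{[m]} \otimes_{\SO_{X}} g^{*}(\omega_{Y/T}^{[m]}) \) when both \( f \) and \( g \) are \( \BQQ \)-Gorenstein (Proposition~\ref{prop:QGorCompo}\eqref{prop:QGorCompo:3}); but in the ``if'' direction I only know \( f \circ g \) is \( \BQQ \)-Gorenstein, so I would instead show that \( \omega_{Y/T}^{[m]} \) satisfies relative \( \bfS_{2} \) over \( T \) by descending this property along the faithfully flat, smooth \( g \): flatness of \( \omega_{Y/T}^{[m]} \) over \( T \) at a point \( y \) is equivalent to flatness of \( g^{*}\omega_{Y/T}^{[m]} \) over \( T \) at any preimage, and \( g^{*}\omega_{Y/T}^{[m]} \isom \omega_{X/T}^{[m]} \) (up to twist by the invertible \( \omega_{X/Y}^{[m]} \)) by Corollary~\ref{cor:BC-S2CMGor}\eqref{cor:BC-S2CMGor:2} applied fiberwise, using that \( g \) has Gorenstein fibers so \( Y_{t} \) is Gorenstein in codimension one iff \( X_{t} \) is; similarly the \( \bfS_{2} \)-condition on the fibers of \( g^{*}\omega_{Y/T}^{[m]} \) descends to that of \( \omega_{Y/T}^{[m]} \) on fibers of \( f \) by Fact~\ref{fact:elem-flat}\eqref{fact:elem-flat:6}. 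That each fiber \( Y_{t} \) is \( \BQQ \)-Gorenstein follows from the fiberwise version of Lemma~\ref{lem:QGorScheme:smooth}, since \( X_{t} \to Y_{t} \) is smooth surjective and \( X_{t} \) is \( \BQQ \)-Gorenstein.

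The ``only if'' direction is the easier half: if \( f \) is \( \BQQ \)-Gorenstein, then since \( g \) is a Gorenstein (hence \( \BQQ \)-Gorenstein) morphism, Proposition~\ref{prop:QGorCompo}\eqref{prop:QGorCompo:3} immediately gives that \( f \circ g \) is \( \BQQ \)-Gorenstein, with \( \omega_{X/T}^{[m]} \isom \omega_{X/Y}^{[m]} \otimes_{\SO_{X}} g^{*}(\omega_{Y/T}^{[m]}) \).

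The main obstacle will be the descent step in the ``only if''—no, rather in the ``if''—direction: transferring the relative \( \bfS_{2} \)-condition for \( \omega_{Y/T}^{[m]} \) across the smooth surjective \( g \). The cleanest route is to imitate the proof of Lemma~\ref{lem:QGorScheme:smooth}, reducing \'etale-locally to the case \( g = \) (\'etale) \( \circ \) (projection from \( \BAA^{d} \)), where one has the clean identity \( \omega_{X/Y} \isom \SO_{X} \) and hence \( \omega_{X/T}^{[m]} \isom g^{*}(\omega_{Y/T}^{[m]}) \) by Remark~\ref{rem:dfn:reflexive} (the dual operation commutes with flat pullback) together with the base-change isomorphism \( \omega_{X/T} \isom g^{*}\omega_{Y/T} \) from Lemma~\ref{lem:omegaFlatbc}; then invertibility, flatness over \( T \), and the \( \bfS_{k} \)-conditions on fibers all descend by faithful flatness (Lemma~\ref{lem:LocFreeDecsent}, Fact~\ref{fact:elem-flat}\eqref{fact:elem-flat:6}, and the local criterion of flatness). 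One must be slightly careful that the relative \( \bfS_{2} \)-locus is open (Fact~\ref{fact:dfn:RelSkCMlocus}\eqref{fact:dfn:RelSkCMlocus:2}, valid since the fibers of \( \omega_{Y/T}^{[m]} \) are locally equi-dimensional because the fibers of \( f \) satisfy \( \bfS_{2} \)) so that the equivalence of the two loci, checked pointwise, globalizes.
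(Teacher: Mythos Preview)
Your overall strategy matches the paper's: the ``only if'' direction follows from the composition result (Proposition~\ref{prop:QGorCompo}; the paper actually cites part \eqref{prop:QGorCompo:2}, your citation of \eqref{prop:QGorCompo:3} is more natural), and the ``if'' direction proceeds by descending the relative \( \bfS_{2} \)-condition for \( \omega_{Y/T}^{[m]} \) along the faithfully flat \( g \), after first noting that each \( Y_{t} \) is \( \BQQ \)-Gorenstein by Lemma~\ref{lem:QGorScheme:smooth}.

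The gap is in how you obtain the key identification \( \omega_{X/T}^{[m]} \isom \omega_{X/Y}^{\otimes m} \otimes g^{*}(\omega_{Y/T}^{[m]}) \). You cite Lemma~\ref{lem:omegaFlatbc} and Corollary~\ref{cor:BC-S2CMGor}\eqref{cor:BC-S2CMGor:2}, but both of those are \emph{base-change} statements for a Cartesian square \( Y' = Y \times_{T} T' \); here \( X \to Y \to T \) is a composition, not a base change, so neither applies. The paper instead derives the isomorphism directly: since every \( Y_{t} \) is \( \BQQ \)-Gorenstein, one has \( \omega_{Y/T}^{[m]} \isom j_{*}(\omega_{Y^{\circ}/T}^{\otimes m}) \) by Lemma~\ref{lem:US2add}\eqref{lem:US2add:2}; then flat base change for the open immersion \( j \) (Lemma~\ref{lem:flatbc}) gives \( g^{*}(\omega_{Y/T}^{[m]}) \isom j_{X*}(g^{\circ *}\omega_{Y^{\circ}/T}^{\otimes m}) \); finally the composition formula \( \omega_{X^{\circ}/T} \isom \omega_{X^{\circ}/Y^{\circ}} \otimes g^{\circ *}\omega_{Y^{\circ}/T} \) on the Gorenstein locus (from Fact~\ref{fact:DeligneVerdierLipman}\eqref{fact:DeligneVerdierLipman:compo},\eqref{fact:DeligneVerdierLipman:smooth}) and the projection formula yield the desired isomorphism. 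Once this is in hand, your descent argument via Lemma~\ref{lem:fflatDescent} and Fact~\ref{fact:elem-flat}\eqref{fact:elem-flat:6} is exactly what the paper does. Your \'etale-local reduction to trivialize \( \omega_{X/Y} \) is harmless but unnecessary, since twisting by an invertible sheaf does not affect flatness over \( T \) or the fiberwise \( \bfS_{2} \)-condition.
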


\begin{proof}
For the relative Gorenstein loci \( Y^{\circ} := \Gor(Y/T) \) and \( X^{\circ} := \Gor(X/T) \),
we have \( X^{\circ} = g^{-1}(Y^{\circ}) \) by Lemma~\ref{lem:QGorScheme:smooth}.
Let \( g^{\circ} \colon X^{\circ} \to Y^{\circ} \) be the induced smooth morphism.
Then,
\begin{equation}\label{eq:cor:QGormorSmooth}
\omega_{X^{\circ}/T} \isom \omega_{X^{\circ}/Y^{\circ}} \otimes_{\SO_{X^{\circ}}}
g^{\circ *}(\omega_{Y^{\circ}/T})
\end{equation}
for the relative canonical sheaves \( \omega_{Y^{\circ}/T} \),
\( \omega_{X^{\circ}/T} \), and \( \omega_{X^{\circ}/Y^{\circ}} \)
(cf.\ \eqref{fact:DeligneVerdierLipman:compo} and
\eqref{fact:DeligneVerdierLipman:smooth} of Fact~\ref{fact:DeligneVerdierLipman}).
For a point \( t \in T \), let \( g_{t} \colon X_{t} \to Y_{t} \) be the smooth morphism
induced on the fibers \( Y_{t} = f^{-1}(t) \) and \( X_{t} = (f \circ g)^{-1}(t) \).

By Proposition~\ref{prop:QGorCompo}\eqref{prop:QGorCompo:2}, it is enough to prove
the ``if'' part.
Assume that \( f \circ g\) is \( \BQQ \)-Gorenstein.
Then, every fiber \( Y_{t} \) is \( \BQQ \)-Gorenstein by
Lemma~\ref{lem:QGorScheme:smooth}. In particular, \( Y_{t} \) satisfies \( \bfS_{2} \) and
\( \Codim(Y_{t} \setminus Y^{\circ}, Y_{t}) \geq 2 \).
Hence, by Lemma~\ref{lem:US2add}\eqref{lem:US2add:2},
\[ \omega_{Y/T}^{[m]} \isom j_{*}(\omega_{Y^{\circ}/T}^{\otimes m}) \]
for any \( m \in \BZZ \),
where \( j \colon Y^{\circ} \injmap Y \) is the open immersion.
For the open immersion \( j_{X} \colon X^{\circ} \injmap X \),
we have an isomorphism
\[ g^{*}(\omega_{Y/T}^{[m]}) \isom
g^{*}(j_{*}(\omega_{Y^{\circ}/T}^{\otimes m})) \isom
j_{X*}(g^{\circ *}(\omega_{Y^{\circ}/T}^{\otimes m})) \]
by the flat base change isomorphism (cf.\ Lemma~\ref{lem:flatbc}).
Thus,
\[ \omega_{X/T}^{[m]} \isom j_{X*}(\omega_{X^{\circ}/T}^{\otimes m})
\isom
j_{X*}(j_{X}^{*}(\omega^{\otimes m}_{X/Y}) \otimes_{\SO_{X^{\circ}}}
g^{\circ *}(\omega^{\otimes m}_{Y^{\circ}/T^{\circ}}))
\isom
\omega_{X/Y}^{\otimes m} \otimes_{\SO_{Y}} g^{*}(\omega_{Y/T}^{[m]}) \]
for any \( m \in \BZZ \) by \eqref{eq:cor:QGormorSmooth}.
In particular,
\( \omega_{Y/T}^{[m]} \) is flat over \( T \), since \( g \) is faithfully flat
(cf.\ Lemma~\ref{lem:fflatDescent}).
Moreover,
\[ g_{t}^{*}(\omega_{Y/T}^{[m]} \otimes_{\SO_{Y}} \SO_{Y_{t}})
\isom \omega_{X_{t}/Y_{t}}^{\otimes -m} \otimes_{\SO_{X_{t}}}
(\omega_{X/T}^{[m]} \otimes_{\SO_{X}} \SO_{X_{t}})
\isom \omega_{X_{t}/Y_{t}}^{\otimes -m} \otimes_{\SO_{X_{t}}}
\omega_{X_{t}/\Bbbk(t)}^{[m]}\]
satisfies \( \bfS_{2} \) for any \( t \in T \)
(cf.\ Lemma~\ref{lem:QGorSch3}\eqref{lem:QGorSch3:2}).
As a consequence,
\( \omega_{Y/T}^{[m]} \otimes_{\SO_{Y}} \SO_{Y_{t}}\) satisfies \( \bfS_{2} \)
by Fact~\ref{fact:elem-flat}\eqref{fact:elem-flat:6}.
Therefore, \( \omega_{Y/T}^{[m]} \) satisfies relative \( \bfS_{2} \)
over \( T \) for any \( m \), and
\( Y \to T \) is a \( \BQQ \)-Gorenstein morphism.
\end{proof}

\begin{remn}
Considering an \'etale morphism \( g \) in Corollary~\ref{cor:QGorMorSmooth},
we see that, for a given flat morphism \( f \colon Y \to T \)
locally of finite type between locally Noetherian schemes,
the \( \BQQ \)-Gorenstein condition at a point of \( Y \)
is not only Zariski local but also \'etale local (cf.\ Remark~\ref{rem:QGorScheme:etale}).
\end{remn}

%%%%%%%%%%%%%%%%%%%%%%%%%%%%%%%%%%%%%%%%%%%%%%

\subsection{Theorems on \texorpdfstring{$\BQQ$}{Q}-Gorenstein  morphisms}
\label{subsect:thmsQGormor}

First of all, 
we shall prove some infinitesimal criteria for a morphism to be 
\( \BQQ \)-Gorenstein or naively \( \BQQ \)-Gorenstein. 

\begin{thm}[infinitesimal criterion]\label{thm:InfQGorCrit}
Let \( f \colon Y \to T \) be a flat morphism locally of finite type between 
locally Noetherian schemes. For a point \( y \in Y\) and its image \( t = f(y) \), assume that 
the fiber \( Y_{t} = f^{-1}(t)\) is \( \BQQ \)-Gorenstein at \( y \).  
Then, for a positive integer \( m \), 
the following two conditions \eqref{thm:InfQGorCrit:1} and \eqref{thm:InfQGorCrit:2} 
are equivalent to each other\emph{:}
\begin{enumerate}
\renewcommand{\theenumi}{\roman{enumi}}
\renewcommand{\labelenumi}{(\theenumi)}
\item \label{thm:InfQGorCrit:1} 
The sheaf \( \omega^{[m]}_{Y/T} \) satisfies relative \( \bfS_{2} \) over \( T \) at \( y \). 
\item \label{thm:InfQGorCrit:2} Let \( \SO_{T, t} \to A \) 
be a surjective local ring homomorphism to an Artinian local ring \( A \) 
and let \( Y_{A} = Y \times_{T} \Spec A \to \Spec A \) be the base change of \( f \) by 
the associated morphism \( \Spec A \to T \). 
Then, \( \omega^{[m]}_{Y_{A}/A} \) satisfies relative \( \bfS_{2} \) over \( A \) 
at the point \( y_{A} \in Y_{A} \) lying over \( y \). 
\end{enumerate}
Moreover, the following two conditions \eqref{thm:InfQGorCrit:3} and \eqref{thm:InfQGorCrit:4} 
are also equivalent to each other\emph{:}

\begin{enumerate}
\renewcommand{\theenumi}{\roman{enumi}}
\renewcommand{\labelenumi}{(\theenumi)}
\addtocounter{enumi}{2}
\item  \label{thm:InfQGorCrit:3} The sheaf \( \omega^{[m]}_{Y/T} \) 
is invertible at \( y \). 
\item \label{thm:InfQGorCrit:4} 
For the same morphism \( Y_{A} \to \Spec A \) in \eqref{thm:InfQGorCrit:2}, 
\( \omega^{[m]}_{Y_{A}/A} \) is invertible at \( y_{A} \). 
\end{enumerate}
\end{thm}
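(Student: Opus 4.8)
The plan is to reduce everything to the flatness criteria established earlier, applied to the sheaf $\SG$ appearing in the exact sequence $0 \to \SF \to \SE^0 \to \SE^1 \to \SG \to 0$ produced by Lemma~\ref{lem:SurjFlat(reflexive)} for $\SF = \omega_{Y/T}^{[m]}$. First I would reduce to a local situation: since all conditions in the statement are about a single point $y$ and its image $t$, I may assume $T = \Spec B$ for the local ring $B = \SO_{T,t}$ and $Y = \Spec C$ for a finitely generated $B$-algebra $C$; moreover, by Fact~\ref{fact:dfn:RelSkCMlocus}\eqref{fact:dfn:RelSkCMlocus:3} and Lemma~\ref{lem:S2Codim2}, I may shrink $Y$ so that $f$ is an $\bfS_2$-morphism of pure relative dimension whose every fiber is Gorenstein in codimension one (this uses that $Y_t$ is $\BQQ$-Gorenstein at $y$, hence satisfies $\bfS_2$ and is Gorenstein in codimension one near $y$, and that this is an open condition by the Remark after Lemma~\ref{lem:LocDefQGor}). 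In particular $\omega_{Y/T}^{[m]}$ is a reflexive $\SO_Y$-module which is locally free in codimension one on each fiber (Proposition~\ref{prop:BCGor} and Definition~\ref{dfn:locFreeCodim1Fiber}), so Lemma~\ref{lem:SurjFlat(reflexive)} applies after further shrinking.

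The heart of the argument is the following translation. By Proposition~\ref{prop:key}\eqref{prop:key:2} applied to the exact sequence above and the closed subset $Z = Y \setminus \Gor(Y/T)$, the sheaf $\omega_{Y/T}^{[m]}$ satisfies relative $\bfS_2$ over $T$ at $y$ if and only if the stalk $\SG_y$ is flat over $B$. Likewise, for any surjection $B \to A$ onto an Artinian local ring $A$, Lemma~\ref{lem:1|prop:key} (with $T' = \Spec A$, which is Noetherian since $C \otimes_B A$ is a finitely generated $A$-algebra) identifies $(\SF_A)^{\vee\vee} = \omega_{Y_A/A}^{[m]}$ — here one uses Corollary~\ref{cor:BC-S2CMGor}\eqref{cor:BC-S2CMGor:2}, valid because all fibers are Gorenstein in codimension one — and shows that $\omega_{Y_A/A}^{[m]}$ satisfies relative $\bfS_2$ over $A$ at $y_A$ if and only if the pullback stalk $\SG_{A, y_A} \isom (\SG_y \otimes_B A)_{y_A}$ is flat over $A$. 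So \eqref{thm:InfQGorCrit:1} $\Leftrightarrow$ ``$\SG_y$ flat over $B$'' and \eqref{thm:InfQGorCrit:2} $\Leftrightarrow$ ``$\SG_y \otimes_B A$ flat over $A$ for every Artinian quotient $A$ of $B$,'' and these two are equivalent by the local criterion of flatness in the form Proposition~\ref{prop:LCflat}, specifically the equivalence with condition \eqref{prop:LCflat:4} there (flatness of a finitely generated module over a Noetherian local ring is detected on all Artinian quotients). This is precisely the content of Proposition~\ref{prop:inf+val}\eqref{prop:inf+val:inf}, which I would cite directly for the nontrivial implication \eqref{thm:InfQGorCrit:2} $\Rightarrow$ \eqref{thm:InfQGorCrit:1}; the reverse implication is the base change stability already recorded in Proposition~\ref{prop:QGormor}\eqref{prop:QGormor:1} (applied after noting \eqref{thm:InfQGorCrit:1} makes $f$ locally $\BQQ$-Gorenstein at $y$ via Lemma~\ref{lem:LocDefQGor}) together with Lemma~\ref{lem:bc basic}\eqref{lem:bc basic:4}.

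For the second equivalence \eqref{thm:InfQGorCrit:3} $\Leftrightarrow$ \eqref{thm:InfQGorCrit:4}, I would upgrade ``relative $\bfS_2$'' to ``invertible'' using Fact~\ref{fact:elem-flat}\eqref{fact:elem-flat:2}: a coherent sheaf flat over the base whose restriction to a fiber is locally free is itself locally free. Concretely, \eqref{thm:InfQGorCrit:3} holds at $y$ iff $\omega_{Y/T}^{[m]}$ is flat over $T$ at $y$ and $\omega_{Y/T}^{[m]} \otimes_{\SO_Y} \SO_{Y_t} \isom \omega_{Y_t/\Bbbk(t)}^{[m]}$ is invertible at $y$ — the latter is automatic once we know the base change homomorphism $\phi_t^{[m]}$ is an isomorphism, which by Proposition~\ref{prop:BCGor} is equivalent to the relative $\bfS_2$ condition, i.e.\ to \eqref{thm:InfQGorCrit:1}, provided $\omega_{Y_t/\Bbbk(t)}^{[m]}$ is invertible at $y$, and that invertibility is exactly the hypothesis that $Y_t$ is $\BQQ$-Gorenstein at $y$ with $m$ divisible by its Gorenstein index. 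So \eqref{thm:InfQGorCrit:3} is equivalent to the conjunction of \eqref{thm:InfQGorCrit:1} and ``$\omega_{Y_t/\Bbbk(t)}^{[m]}$ invertible at $y$,'' and similarly \eqref{thm:InfQGorCrit:4} is equivalent to the conjunction of \eqref{thm:InfQGorCrit:2} and ``$\omega_{(Y_A)_{t_A}/\Bbbk(t_A)}^{[m]}$ invertible'' — but the closed fiber of $Y_A \to \Spec A$ over the closed point is canonically $Y_t$ (since $B \to A$ is a local surjection with the same residue field), so the two invertibility clauses coincide. Combining with the first equivalence gives \eqref{thm:InfQGorCrit:3} $\Leftrightarrow$ \eqref{thm:InfQGorCrit:4}. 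The main obstacle I anticipate is bookkeeping: making sure the open neighborhood of $y$ chosen at the start is small enough for Lemma~\ref{lem:SurjFlat(reflexive)}, Proposition~\ref{prop:BCGor}, and Lemma~\ref{lem:S2Codim2} to all apply simultaneously, and checking that the identifications $\omega_{Y_A/A}^{[m]} \isom (\SF_A)^{\vee\vee}$ and the closed-fiber identification are compatible with the base change homomorphisms $\phi^{[m]}$ throughout — but no genuinely new idea beyond Proposition~\ref{prop:inf+val}, Proposition~\ref{prop:key}, and Fact~\ref{fact:elem-flat}\eqref{fact:elem-flat:2} is needed.
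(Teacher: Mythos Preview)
Your proposal is correct and follows essentially the same approach as the paper's proof. The only difference is cosmetic: where the paper cites Proposition~\ref{prop:inf+val}\eqref{prop:inf+val:inf} as a black box for the implication \eqref{thm:InfQGorCrit:2} $\Rightarrow$ \eqref{thm:InfQGorCrit:1}, you unpack its proof by explicitly constructing the exact sequence via Lemma~\ref{lem:SurjFlat(reflexive)} and invoking Proposition~\ref{prop:key} and the local criterion of flatness directly; the paper's argument for \eqref{thm:InfQGorCrit:3} $\Leftrightarrow$ \eqref{thm:InfQGorCrit:4} is likewise identical to yours, introducing the auxiliary condition (v) that $\omega_{Y_t/\Bbbk(t)}^{[m]}$ be invertible at $y$ and showing \eqref{thm:InfQGorCrit:3} $\Leftrightarrow$ \eqref{thm:InfQGorCrit:1} $+$ (v) and \eqref{thm:InfQGorCrit:4} $\Leftrightarrow$ \eqref{thm:InfQGorCrit:2} $+$ (v) via Fact~\ref{fact:elem-flat}\eqref{fact:elem-flat:2} and the identification of the closed fiber of $Y_A \to \Spec A$ with $Y_t$.
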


\begin{proof}
Since the fiber \( Y_{t} \) satisfies \( \bfS_{2} \) at \( y \), 
by localizing \( Y \), we may assume that \( f \) is an \( \bfS_{2} \)-morphism 
(cf.\ Fact~\ref{fact:dfn:RelSkCMlocus}\eqref{fact:dfn:RelSkCMlocus:3}). 
Moreover, we may assume that every fiber of \( f \) is Gorenstein in codimension one 
by Lemma~\ref{lem:S2Codim2}\eqref{lem:S2Codim2:2}, since 
\[ \Codim_{y}(Y_{t} \setminus U, Y_{t}) \geq 2 \]
for the relative Gorenstein locus \( U = \Gor(Y/T) \).  
Let \( p_{A} \colon Y_{A} \to Y \) be the projection for a morphism \( \Spec A \to T \) in 
\eqref{thm:InfQGorCrit:2}. Then, by Corollary~\ref{cor:BC-S2CMGor}\eqref{cor:BC-S2CMGor:2}, 
we have an isomorphism
\[ (p_{A}^{*}\omega^{[m]}_{Y/T})^{\vee\vee} \isom \omega^{[m]}_{Y_{A}/A},\] 
and moreover the base change homomorphism 
\[ p_{A}^{*}\omega^{[m]}_{Y/T} \to \omega^{[m]}_{Y_{A}/A} \] 
is an isomorphism at \( y_{A} \) when \eqref{thm:InfQGorCrit:1} holds. 
In particular, we have \eqref{thm:InfQGorCrit:1} \( \Rightarrow \) \eqref{thm:InfQGorCrit:2}, 
and the converse \eqref{thm:InfQGorCrit:2} \( \Rightarrow \) 
\eqref{thm:InfQGorCrit:1} 
is a consequence of Proposition~\ref{prop:inf+val} in the case \eqref{prop:inf+val:inf} 
applied to \( \SF = \omega^{[m]}_{Y/T} \). 
If \eqref{thm:InfQGorCrit:1} or \eqref{thm:InfQGorCrit:2} holds, then 
the base change homomorphisms 
\[ \omega^{[m]}_{Y/T} \otimes_{\SO_{Y}} \SO_{Y_{t}} \to \omega^{[m]}_{Y_{t}/\Bbbk(t)} 
\quad \text{and} \quad 
\omega^{[m]}_{Y_{A}/A} \otimes_{\SO_{Y_{A}}} \SO_{Y_{t}} \to \omega^{[m]}_{Y_{t}/\Bbbk(t)}\] 
are isomorphisms at \( y \) or \( y_{A} \), 
again by Corollary~\ref{cor:BC-S2CMGor}\eqref{cor:BC-S2CMGor:2}. 
Hence, by Fact~\ref{fact:elem-flat}\eqref{fact:elem-flat:2}, 
we have equivalences \eqref{thm:InfQGorCrit:3} \( \Leftrightarrow \) 
\eqref{thm:InfQGorCrit:1} \( + \) \eqref{thm:InfQGorCrit:5} 
and \eqref{thm:InfQGorCrit:4} \( \Leftrightarrow \) 
\eqref{thm:InfQGorCrit:2} \( + \) \eqref{thm:InfQGorCrit:5} 
for the following condition:
\begin{enumerate}
\renewcommand{\theenumi}{\roman{enumi}}
\renewcommand{\labelenumi}{(\theenumi)}
\addtocounter{enumi}{4}
\item \label{thm:InfQGorCrit:5} 
The sheaf \( \omega^{[m]}_{Y_{t}/\Bbbk(t)} \) is invertible at \( y \). 
\end{enumerate}
Thus, we have \eqref{thm:InfQGorCrit:3} \( \Leftrightarrow \) \eqref{thm:InfQGorCrit:4}. 
\end{proof}

By the equivalence \eqref{thm:InfQGorCrit:1} \( \Leftrightarrow \) \eqref{thm:InfQGorCrit:2} 
in Theorem~\ref{thm:InfQGorCrit} 
for all \( m \in \BZZ \) and for any \( y \in Y_{t} \), we have the following infinitesimal criterion 
for a morphism to be \( \BQQ \)-Gorenstein: 

\begin{cor}[infinitesimal criterion]\label{cor:QGorCrit}
Let \( f \colon Y \to T \) be a flat morphism locally of finite type
between locally Noetherian schemes. 
Then, for a given point \( t \in T \), 
the morphism \( f \) is \( \BQQ \)-Gorenstein along 
the fiber \( Y_{t} = f^{-1}(t)\)  
if and only if the base change 
\( f_{A} \colon Y_{A} = Y \times_{T} \Spec A \to \Spec A \)
is \( \BQQ \)-Gorenstein 
for any morphism \( \Spec A \to T \) defined by a surjective local ring homomorphism 
\( \SO_{T, t} \to A \) to any Artinian local ring \( A \).
\end{cor}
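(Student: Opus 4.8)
The plan is to prove the two implications separately: the ``only if'' direction from base-change stability (Proposition~\ref{prop:QGormor}\eqref{prop:QGormor:1}), and the ``if'' direction from the infinitesimal criterion Theorem~\ref{thm:InfQGorCrit} combined with the local description of \( \BQQ \)-Gorenstein morphisms in Lemma~\ref{lem:LocDefQGor}.

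For the ``only if'' direction, I would first observe that for a surjective local ring homomorphism \( \SO_{T, t} \to A \) with \( A \) Artinian local, the closed immersion \( \Spec \Bbbk(t) \injmap \Spec A \) (defined by the nilpotent ideal \( \GM_{A} \)) is a homeomorphism, so \( Y_{A} = Y \times_{T} \Spec A \) has the same underlying space as \( Y_{t} \). Hence the projection \( Y_{A} \to Y \) factors set-theoretically through \( Y_{t} \), and therefore scheme-theoretically through any open subset \( V \subset Y \) containing \( Y_{t} \); consequently \( f_{A} \) is the base change of \( f|_{V} \colon V \to T \). If \( f \) is \( \BQQ \)-Gorenstein along \( Y_{t} \), i.e.\ \( f|_{V} \) is a \( \BQQ \)-Gorenstein morphism for some such \( V \), then \( f_{A} \) is a \( \BQQ \)-Gorenstein morphism by Proposition~\ref{prop:QGormor}\eqref{prop:QGormor:1}.

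For the ``if'' direction, assume \( f_{A} \) is \( \BQQ \)-Gorenstein for every surjective local homomorphism \( \SO_{T, t} \to A \) with \( A \) Artinian local. Taking \( A = \Bbbk(t) \) gives \( Y_{A} = Y_{t} \), so \( Y_{t} \) is a \( \BQQ \)-Gorenstein scheme; in particular it satisfies \( \bfS_{2} \), is Gorenstein in codimension one, and \( \Codim(Y_{t} \setminus \Gor(Y/T), Y_{t}) \geq 2 \). After replacing \( Y \) by an open neighborhood of \( Y_{t} \)—using that \( \bfS_{2}(Y/T) \) is open (Fact~\ref{fact:dfn:RelSkCMlocus}\eqref{fact:dfn:RelSkCMlocus:3}) together with Lemma~\ref{lem:S2Codim2}—I may assume that \( f \) is an \( \bfS_{2} \)-morphism every fiber of which is Gorenstein in codimension one, exactly as at the beginning of the proof of Theorem~\ref{thm:InfQGorCrit}. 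Now fix a point \( y \in Y_{t} \) and let \( r \) be the Gorenstein index of \( Y_{t} \) at \( y \). For each integer \( m \) with \( 0 < m \leq r \) and each \( A \) as above, \( \omega^{[m]}_{Y_{A}/A} \) satisfies relative \( \bfS_{2} \) over \( \Spec A \) because \( f_{A} \) is \( \BQQ \)-Gorenstein, hence in particular at the point \( y_{A} \) lying over \( y \); this is condition~\eqref{thm:InfQGorCrit:2} of Theorem~\ref{thm:InfQGorCrit}. As \( Y_{t} \) is \( \BQQ \)-Gorenstein at \( y \), Theorem~\ref{thm:InfQGorCrit} yields condition~\eqref{thm:InfQGorCrit:1}: \( \omega^{[m]}_{Y/T} \) satisfies relative \( \bfS_{2} \) over \( T \) at \( y \) for \( 0 < m \leq r \). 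By Proposition~\ref{prop:BCGor} the base change homomorphisms \( \phi^{[m]}_{t} \) are then isomorphisms at \( y \) for \( 0 < m \leq r \), so conditions~\eqref{lem:LocDefQGor:cond12} and \eqref{lem:LocDefQGor:cond3} of Lemma~\ref{lem:LocDefQGor} hold at \( y \) with this \( r \); therefore condition~\eqref{lem:LocDefQGor:cond6} holds, that is, \( f \) is \( \BQQ \)-Gorenstein at \( y \). Since \( y \in Y_{t} \) was arbitrary and the set of points at which \( f \) is \( \BQQ \)-Gorenstein is open (cf.\ the Remark following Lemma~\ref{lem:LocDefQGor}), \( f \) is \( \BQQ \)-Gorenstein along \( Y_{t} \).

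The point to be careful about—more bookkeeping than a genuine obstacle—is that a \( \BQQ \)-Gorenstein morphism requires relative \( \bfS_{2} \) for \( \omega^{[m]}_{Y/T} \) for \emph{all} \( m \in \BZZ \), while Theorem~\ref{thm:InfQGorCrit} is stated only for positive \( m \). This is precisely why the argument is routed through Lemma~\ref{lem:LocDefQGor}, whose condition~\eqref{lem:LocDefQGor:cond3} involves only the finitely many exponents \( 0 < m \leq r \) and which internally upgrades this to all \( m \in \BZZ \) by means of the local isomorphism \( \omega^{[l]}_{Y/T} \isom (\omega^{[r]}_{Y/T})^{\otimes q} \otimes \omega^{[k]}_{Y/T} \) (with \( l = qr + k \), \( 0 \leq k < r \)) valid once \( \omega^{[r]}_{Y/T} \) is invertible near \( y \). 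A second, minor verification is the identification \( |Y_{A}| = |Y_{t}| \) used in the ``only if'' direction, which guarantees that the base change of \( f|_{V} \) genuinely coincides with \( f_{A} \).
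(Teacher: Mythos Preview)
Your proof is correct and follows the same approach as the paper, which deduces the corollary in a single sentence from the equivalence \eqref{thm:InfQGorCrit:1} \(\Leftrightarrow\) \eqref{thm:InfQGorCrit:2} of Theorem~\ref{thm:InfQGorCrit} applied for all \( m \) and all \( y \in Y_{t} \). Your version is simply a more careful elaboration: you make the ``only if'' direction explicit via Proposition~\ref{prop:QGormor}\eqref{prop:QGormor:1}, and in the ``if'' direction you route through Lemma~\ref{lem:LocDefQGor} to pass from the finitely many exponents \( 0 < m \leq r \) (for which Theorem~\ref{thm:InfQGorCrit} is literally stated) to all \( m \in \BZZ \)---a point the paper glosses over.
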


\begin{remn} 
For an Artinian local ring \( A \), a flat morphism \( Y_{A} \to \Spec A \) of finite type 
is not necessarily a \( \BQQ \)-Gorenstein morphism even if \( Y_{A} \) is \( \BQQ \)-Gorenstein 
and \( A \) is Gorenstein. 
For example, let us consider 
a naively \( \BQQ \)-Gorenstein morphism \( f \colon Y \to T = \Spec R \) 
for a discrete valuation ring \( R \) such that \( f \) is not \( \BQQ \)-Gorenstein 
along the closed fiber \( Y_{o} = f^{-1}(o) \), where \( o \) is the closed point of \( T \) 
corresponding to the maximal ideal \( \GM_{R} \). 
See Fact~\ref{fact:Pa2} or Lemma~\ref{lem:Lee} and Example~\ref{exam:KummerType} for 
such an example of \( f  \). 
Let \( \Spec A \to T \) be a closed immersion for a local Artinian ring \( A \). 
Then, \( A \) is Gorenstein, and the base change 
\( f_{A} \colon Y_{A} = Y \times_{T} \Spec A \to \Spec A \) of \( f \) 
is a naively \( \BQQ \)-Gorenstein morphism
by Proposition~\ref{prop:QGormor}\eqref{prop:QGormor:1n}.
Hence, \( Y_{A} \) is \( \BQQ \)-Gorenstein by Lemma~\ref{lem:naiveQGor}. 
However, \( f_{A}  \) is not a \( \BQQ \)-Gorenstein morphism for some \( A \)
by Corollary~\ref{cor:QGorCrit}.
\end{remn}

By the equivalence \eqref{thm:InfQGorCrit:3} \( \Leftrightarrow \) \eqref{thm:InfQGorCrit:4} 
in Theorem~\ref{thm:InfQGorCrit} for any \( y \in Y_{t} \), we have 
the following version of an infinitesimal criterion for naively \( \BQQ \)-Gorenstein morphisms 
with bounded relative Gorenstein index: 

\begin{cor}[infinitesimal criterion]\label{cor:NaiveQGorCrit}
Let \( f \colon Y \to T \) be a flat morphism locally of finite type
between locally Noetherian schemes. 
For a point \( t \in T \) and a positive integer \( m \), 
the following two conditions are equivalent to each other\emph{:}
\begin{enumerate}
\renewcommand{\theenumi}{\roman{enumi}}
\renewcommand{\labelenumi}{(\theenumi)}
\item \label{cor:NaiveQGorCrit:1} 
The morphism \( f \) is naively \( \BQQ \)-Gorenstein along \( Y_{t} \) and 
the relative Gorenstein index of \( f \) along \( Y_{t} \) is a divisor of \( m \). 

\item \label{cor:NaiveQGorCrit:2} 
The sheaf \( \omega^{[m]}_{Y_{A}/A} \) is invertible for the base change 
\( f_{A} \colon Y_{A} = Y \times_{T} \Spec A \to \Spec A \) 
by any morphism \( \Spec A \to T \) defined by a surjective local ring homomorphism 
\( \SO_{T, t} \to A \) to any Artinian local ring \( A \).
\end{enumerate}
\end{cor}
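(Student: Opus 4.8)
The plan is to obtain the statement by applying, at every point \( y \) of the fiber \( Y_{t} = f^{-1}(t) \), the equivalence \eqref{thm:InfQGorCrit:3} \( \Leftrightarrow \) \eqref{thm:InfQGorCrit:4} of Theorem~\ref{thm:InfQGorCrit}. First I would fix the standing setup. Both conditions of the corollary are phrased in terms of the relative canonical sheaf and its reflexive powers, so each presupposes that these sheaves are available: for \eqref{cor:NaiveQGorCrit:1} this is built into ``naively \( \BQQ \)-Gorenstein along \( Y_{t} \)'', which requires \( f \) to be an \( \bfS_{2} \)-morphism near \( Y_{t} \) with fibers Gorenstein in codimension one and, in particular, \( Y_{t} \) to be a \( \BQQ \)-Gorenstein scheme; for \eqref{cor:NaiveQGorCrit:2}, taking \( A = \Bbbk(t) \) (so that \( Y_{A} = Y_{t} \)) forces \( Y_{t} \) to satisfy \( \bfS_{2} \) and to be Gorenstein in codimension one, whence \( f \) is an \( \bfS_{2} \)-morphism on a neighborhood of \( Y_{t} \) by the openness of the relative \( \bfS_{2} \)-locus (Fact~\ref{fact:dfn:RelSkCMlocus}\eqref{fact:dfn:RelSkCMlocus:3}) together with Lemma~\ref{lem:S2Codim2}. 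So in either case I may replace \( Y \) by a suitable open neighborhood of \( Y_{t} \) and assume that \( f \) is an \( \bfS_{2} \)-morphism with fibers Gorenstein in codimension one and that \( Y_{t} \) is \( \BQQ \)-Gorenstein, hence \( \BQQ \)-Gorenstein at each of its points. Then \( \omega_{Y/T} \) and each \( \omega_{Y_{A}/A} \), for \( \Spec A \to T \) as in \eqref{cor:NaiveQGorCrit:2}, are well-defined reflexive sheaves (Propositions~\ref{prop:BC-S2CM} and \ref{prop:BCGor}), and Theorem~\ref{thm:InfQGorCrit} is applicable at every \( y \in Y_{t} \).

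Next I would record two elementary translations. On the sheaf side, for a point \( y \in Y \) the relative Gorenstein index of \( f \) at \( y \) divides \( m \) if and only if \( \omega_{Y/T}^{[m]} \) is invertible at \( y \): if the index \( r \) divides \( m \), then \( \omega_{Y/T}^{[m]} \isom (\omega_{Y/T}^{[r]})^{\otimes (m/r)} \) is invertible at \( y \); conversely, writing \( m = qr + s \) with \( 0 \leq s < r \) and using \( \omega_{Y/T}^{[a+b]} \isom \omega_{Y/T}^{[a]} \otimes \omega_{Y/T}^{[b]} \) once one factor is invertible (as in the proof of Lemma~\ref{lem:LocDefQGor}), invertibility of \( \omega_{Y/T}^{[m]} \) and \( \omega_{Y/T}^{[r]} \) at \( y \) forces \( \omega_{Y/T}^{[s]} \) to be invertible at \( y \), so \( s = 0 \) by minimality of \( r \). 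Hence condition \eqref{cor:NaiveQGorCrit:1} is equivalent to the invertibility of \( \omega_{Y/T}^{[m]} \) at every point of \( Y_{t} \). On the base-change side, for \( \Spec A \to T \) as in \eqref{cor:NaiveQGorCrit:2} with \( A \) Artinian local, the surjection \( \SO_{T, t} \to A \) is local with nilpotent kernel and residue field \( \Bbbk(t) \), so \( \Spec \Bbbk(t) \injmap \Spec A \) is a nilpotent thickening, and therefore \( Y_{t} \injmap Y_{A} \) is a nilpotent thickening, in particular a homeomorphism; thus there is a unique point \( y_{A} \in Y_{A} \) lying over each \( y \in Y_{t} \), and \( \omega_{Y_{A}/A}^{[m]} \) is invertible if and only if it is invertible at \( y_{A} \) for every \( y \in Y_{t} \).

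Combining these, condition \eqref{cor:NaiveQGorCrit:1} is equivalent to ``\( \omega_{Y/T}^{[m]} \) is invertible at \( y \) for every \( y \in Y_{t} \)'', that is, to condition \eqref{thm:InfQGorCrit:3} of Theorem~\ref{thm:InfQGorCrit} holding at every \( y \in Y_{t} \); and condition \eqref{cor:NaiveQGorCrit:2} is equivalent to ``\( \omega_{Y_{A}/A}^{[m]} \) is invertible at \( y_{A} \) for every \( y \in Y_{t} \) and every such \( A \)'', that is, to condition \eqref{thm:InfQGorCrit:4} holding at every \( y \in Y_{t} \). Since \( Y_{t} \) is \( \BQQ \)-Gorenstein at every \( y \in Y_{t} \), Theorem~\ref{thm:InfQGorCrit} gives \eqref{thm:InfQGorCrit:3} \( \Leftrightarrow \) \eqref{thm:InfQGorCrit:4} at each such \( y \), and ranging over \( y \in Y_{t} \) yields the corollary.

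I expect the only genuine difficulty to lie in the first paragraph: extracting, from each of the two conditions, the standing hypotheses under which Theorem~\ref{thm:InfQGorCrit} operates (that \( Y_{t} \) is \( \BQQ \)-Gorenstein and that \( f \) is an \( \bfS_{2} \)-morphism with fibers Gorenstein in codimension one near \( Y_{t} \)), and checking that the \( \omega_{Y_{A}/A}^{[m]} \) are the correct reflexive sheaves under base change — the latter being supplied by Corollary~\ref{cor:BC-S2CMGor}. All of the substantive content, namely the infinitesimal criterion for the relative \( \bfS_{2} \)-condition (Proposition~\ref{prop:inf+val}) and the base-change behaviour of \( \omega_{Y/T}^{[m]} \), is already packaged in Theorem~\ref{thm:InfQGorCrit}, so beyond this bookkeeping and the topological identification \( Y_{A} \cong Y_{t} \) no new argument is required.
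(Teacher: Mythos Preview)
Your proposal is correct and follows exactly the paper's approach: the paper derives the corollary as an immediate consequence of the equivalence \eqref{thm:InfQGorCrit:3} \( \Leftrightarrow \) \eqref{thm:InfQGorCrit:4} of Theorem~\ref{thm:InfQGorCrit} applied at every point \( y \in Y_{t} \), and you carry this out with the necessary bookkeeping (extracting the \( \bfS_{2} \) and \( \BQQ \)-Gorenstein hypotheses on \( Y_{t} \), translating condition~\eqref{cor:NaiveQGorCrit:1} into invertibility of \( \omega_{Y/T}^{[m]} \) along \( Y_{t} \), and using the homeomorphism \( Y_{A} \cong Y_{t} \)). The paper in fact provides no separate proof environment for this corollary at all, treating it as self-evident from Theorem~\ref{thm:InfQGorCrit}, so your write-up is if anything more detailed than the original.
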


\begin{rem}\label{rem:naiveQGorCharP}
The infinitesimal criterion does not hold
for naively \( \BQQ \)-Gorenstein morphisms without boundedness conditions 
for the relative Gorenstein index: 
Let \( f \colon Y \to T \) be a flat morphism of finite type of Noetherian schemes 
such that \( T = \Spec R\) for a discrete valuation ring \( R \). 
For an integer \( n \), we set \( R_{n} := R/\GM_{R}^{n+1} \), \( T_{n} = \Spec R_{n} \), 
and let \( Y_{n} = Y \times_{T} T_{n} \to T_{n} \) be the base change of \( f \) by the closed immersion 
\( T_{n} \subset T \). 
Assume that the special fiber \( Y_{0} \) is a \( \BQQ \)-Gorenstein scheme 
and the residue field \( \Bbbk = R/\GM_{R} \) has characteristic \( p > 0 \). 
Then, \( Y_{n} \to T_{n} \) is naively \( \BQQ \)-Gorenstein for any \( n \geq 0 \) 
by an argument of Koll\'ar in \cite[14.7]{HacKov} or \cite[Exam.\ 7.6]{Kovacs}. 
But, there is an example of \( f \colon Y \to T \) above 
such that \( f \) is not naively \( \BQQ \)-Gorenstein (cf.\ Example~\ref{exam:P114}). 
Let \( r_{n} \) be the relative Gorenstein index of \( Y_{n} \to T_{n} \); this equals  
the Gorenstein index of \( Y_{n} \). 
Then, \( \{r_{n}\}_{n \geq 0} \) is not bounded by Corollary~\ref{cor:NaiveQGorCrit} 
if \( f \) is not naively \( \BQQ \)-Gorenstein. 
\end{rem}

By a similar argument in the proof of Theorem~\ref{thm:InfQGorCrit} 
and by Proposition~\ref{prop:inf+val} in the case \eqref{prop:inf+val:val} 
instead of \eqref{prop:inf+val:inf}, we have the following 
valuative criterion for a morphism to be \( \BQQ \)-Gorenstein. 

\begin{thm}[valuative criterion]\label{thm:valcritQGor}
Let \( f \colon Y \to T \) be a flat morphism locally of finite type
between locally Noetherian schemes. Assume that \( T \) is reduced.
Then, \( f \) is a \( \BQQ \)-Gorenstein morphism if and only if the base change
\( f_{R} \colon Y_{R} = Y \times_{T} \Spec R \to \Spec R \)
is a \( \BQQ \)-Gorenstein morphism
for any discrete valuation ring \( R \) and
for any morphism \( \Spec R \to T \).
\end{thm}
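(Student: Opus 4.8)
The plan is to dispatch the ``only if'' direction immediately: by Proposition~\ref{prop:QGormor}\eqref{prop:QGormor:1}, the $\BQQ$-Gorenstein property of a flat morphism locally of finite type is preserved under arbitrary base change by a locally Noetherian scheme, in particular by $\Spec R\to T$, so each $f_R$ is $\BQQ$-Gorenstein whenever $f$ is. The content is the converse. Since being a $\BQQ$-Gorenstein morphism is local on $Y$ (Definition~\ref{dfn:QGorMor}\eqref{dfn:QGorMor:QGor} together with the openness stated after Lemma~\ref{lem:LocDefQGor}), it suffices to show $f$ is $\BQQ$-Gorenstein at an arbitrary point $y\in Y$. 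Put $t=f(y)$ and $A=\SO_{T,t}$, a \emph{reduced} Noetherian local ring with residue field $\Bbbk(t)$.

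The first step is to recover the fiber $Y_t$ as a closed fiber of a DVR base change \emph{without enlarging the residue field}. Take the discrete valuation ring $R_{0}=\Bbbk(t)[[x]]$ and the local ring homomorphism $A\to A/\GM_{T,t}=\Bbbk(t)\injmap R_{0}$; the induced morphism $\Spec R_{0}\to\Spec A\to T$ sends both points of $\Spec R_{0}$ to $t$, and since $R_{0}/\GM_{R_{0}}=\Bbbk(t)$ with the composite $A\to R_{0}\to\Bbbk(t)$ equal to the residue map, the closed fiber of $f_{R_{0}}\colon Y_{R_{0}}=Y\times_{T}\Spec R_{0}\to\Spec R_{0}$ is exactly $Y_{t}$. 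By hypothesis $f_{R_{0}}$ is $\BQQ$-Gorenstein, so its closed fiber $Y_{t}$ is a $\BQQ$-Gorenstein scheme; hence $Y_{t}$ satisfies $\bfS_{2}$ at $y$ (Lemma~\ref{lem:QGorSch}\eqref{lem:QGorSch:2}) and $\Codim_{y}(Y_{t}\setminus\Gor(Y/T),Y_{t})\geq 2$. By the openness of $\bfS_{2}(Y/T)$ (Fact~\ref{fact:dfn:RelSkCMlocus}\eqref{fact:dfn:RelSkCMlocus:3}) and by Lemma~\ref{lem:S2Codim2}, after shrinking $Y$ around $y$ I may assume that $f$ is an $\bfS_{2}$-morphism every fiber of which is Gorenstein in codimension one. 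Then $\omega_{Y/T}$ is defined, and by Proposition~\ref{prop:BCGor} each $\omega_{Y/T}^{[m]}$ is a reflexive $\SO_{Y}$-module that is locally free in codimension one on every fiber (invertible on $Y^{\circ}=\Gor(Y/T)$, whose complement has codimension $\geq 2$ in each fiber).

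The second step verifies the Koll\'ar condition at $y$, namely that $\omega_{Y/T}^{[m]}$ satisfies relative $\bfS_{2}$ over $T$ at $y$ for every $m\in\BZZ$, via Proposition~\ref{prop:inf+val}\eqref{prop:inf+val:val} applied to $\SF=\omega_{Y/T}^{[m]}$; its hypotheses hold because $f$ is an $\bfS_{2}$-morphism at $y$, $\SF$ is reflexive and generically invertible on fibers, and—crucially—$\SO_{T,f(y)}=A$ is reduced. Given any local ring homomorphism $A\to R'$ to a discrete valuation ring $R'$ with projection $p_{R'}\colon Y_{R'}=Y\times_{T}\Spec R'\to Y$, the hypothesis of the theorem gives that $f_{R'}$ is $\BQQ$-Gorenstein, so $\omega_{Y_{R'}/R'}^{[m]}$ satisfies relative $\bfS_{2}$ over $\Spec R'$; and since $f$ is an $\bfS_{2}$-morphism with fibers Gorenstein in codimension one, Corollary~\ref{cor:BC-S2CMGor}\eqref{cor:BC-S2CMGor:2} yields $(p_{R'}^{*}\omega_{Y/T}^{[m]})^{\vee\vee}\isom\omega_{Y_{R'}/R'}^{[m]}$. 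Thus $(p_{R'}^{*}\omega_{Y/T}^{[m]})^{\vee\vee}$ satisfies relative $\bfS_{2}$ over $\Spec R'$ at every point, in particular at the points over $y$ and $\GM_{R'}$ required in Proposition~\ref{prop:inf+val}\eqref{prop:inf+val:val}, so that proposition concludes that $\omega_{Y/T}^{[m]}$ satisfies relative $\bfS_{2}$ over $T$ at $y$. Combining the two steps: near $y$, $f$ is an $\bfS_{2}$-morphism all of whose fibers are $\BQQ$-Gorenstein and $\omega_{Y/T}^{[m]}$ satisfies relative $\bfS_{2}$ over $T$ for all $m$, so $f$ is $\BQQ$-Gorenstein at $y$; as $y$ was arbitrary, $f$ is a $\BQQ$-Gorenstein morphism.

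The main obstacle—and the reason the reducedness of $T$ is needed—is that one cannot simply test on a fiber in the valuative setting: a field is not a discrete valuation ring, so reaching $Y_{t}$ forces passage through a DVR, and the naive choice introduces a residue-field extension $\Bbbk(R)/\Bbbk(t)$ along which descent of the $\BQQ$-Gorenstein property is delicate (inseparable extensions, non-Noetherian tensor products of fields). The device $R_{0}=\Bbbk(t)[[x]]$, with the local homomorphism factoring through the residue map, sidesteps this entirely. The remaining substance is the valuative criterion for the relative $\bfS_{2}$-condition (Proposition~\ref{prop:inf+val}\eqref{prop:inf+val:val}), whose hypothesis on $\SO_{T,f(y)}$ being reduced is exactly where the theorem's assumption enters, together with the base-change isomorphism for $\omega^{[m]}$ (Corollary~\ref{cor:BC-S2CMGor}\eqref{cor:BC-S2CMGor:2}). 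I would also be careful about the order of the argument: the fibers must be shown $\BQQ$-Gorenstein first, since only then is $f$ locally an $\bfS_{2}$-morphism with fibers Gorenstein in codimension one and are the sheaves $\omega_{Y/T}^{[m]}$ reflexive and generically invertible on fibers—exactly the hypotheses that Corollary~\ref{cor:BC-S2CMGor}\eqref{cor:BC-S2CMGor:2} and Proposition~\ref{prop:inf+val}\eqref{prop:inf+val:val} presuppose.
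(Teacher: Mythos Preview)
Your proof is correct and follows essentially the same approach as the paper: first show each fiber $Y_t$ is $\BQQ$-Gorenstein by factoring through a DVR with residue field $\Bbbk(t)$ (the paper uses $\Bbbk(t)[\xtt]_{(\xtt)}$ where you use $\Bbbk(t)[[x]]$, an immaterial difference), then identify $(p_{R'}^{*}\omega_{Y/T}^{[m]})^{\vee\vee}\isom\omega_{Y_{R'}/R'}^{[m]}$ via Corollary~\ref{cor:BC-S2CMGor}\eqref{cor:BC-S2CMGor:2} and invoke Proposition~\ref{prop:inf+val}\eqref{prop:inf+val:val}. The only cosmetic point is that your phrase ``near $y$, $f$ is an $\bfS_{2}$-morphism all of whose fibers are $\BQQ$-Gorenstein'' is cleanest if you first run the $R_{0}$ step for every point (as the paper does) before localizing, so that all fibers are known to be $\BQQ$-Gorenstein when you shrink.
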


\begin{proof}
It is enough to check the `if' part by Proposition~\ref{prop:QGormor}\eqref{prop:QGormor:1}.
Then, every fiber \( Y_{t} \) is \( \BQQ \)-Gorenstein,
since we can consider \( R \) as the localization at the prime ideal \( (\xtt) \)
of the polynomial ring \( \Bbbk(t)[\xtt] \) for the residue field \( \Bbbk(t) \) 
and consider the morphism \( \Spec R \to T \) 
defined by the composite \( \SO_{T, t} \to \Bbbk(t) \subset R \). 
Therefore, it is enough to prove that \( \omega_{Y/T}^{[m]} \) satisfies relative \( \bfS_{2} \) over \( T \)
for any \( m \in \BZZ\). 
For the base change morphism \( f_{R} \colon Y_{R} \to \Spec R \) 
and the projection \( p \colon Y_{R} \to Y \), we have an isomorphism 
\[ (p^{*}\omega^{[m]}_{Y/T})^{\vee\vee} \isom \omega_{Y_{R}/R}^{[m]}  \]
for any \( m \) by Corollary~\ref{cor:BC-S2CMGor}\eqref{cor:BC-S2CMGor:2}. 
Therefore, the assertion is a consequence of Proposition~\ref{prop:inf+val}
in the case \eqref{prop:inf+val:val}. 
\end{proof}

The following theorem gives a criterion for a morphism to be \( \BQQ \)-Gorenstein 
only by conditions on fibers.

\begin{thm}\label{thm:S3Gor2}
Let \( Y \) and \( T \) be locally Noetherian schemes and \( f \colon Y \to T \)
be a flat morphism locally of finite type. 
For a point \( t \in T \), if the following three conditions are all satisfied, then 
\( f \) is \( \BQQ \)-Gorenstein along the fiber \( Y_{t} = f^{-1}(t) \)\emph{:} 
\begin{enumerate}
\renewcommand{\theenumi}{\roman{enumi}}
\renewcommand{\labelenumi}{(\theenumi)}
\item \label{thm:S3Gor2:1} \( Y_{t} \) is \( \BQQ \)-Gorenstein\emph{;}

\item  \label{thm:S3Gor2:2} \( Y_{t} \) is Gorenstein in codimension \emph{two;}

\item  \label{thm:S3Gor2:3}
\( \omega_{Y_{t}/\Bbbk(t)}^{[m]} \) satisfies \( \bfS_{3} \) for any \( m \in \BZZ \).
\end{enumerate}
\end{thm}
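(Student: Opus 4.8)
The strategy is to reduce the statement to the criterion on virtually $\BQQ$-Gorenstein morphisms, namely Theorem~\ref{thm:wQGvsQG}, in the case \eqref{thm:wQGvsQG:a} where the fiber satisfies $\bfS_{3}$. So the plan is first to localize and arrange that $f$ is an $\bfS_{2}$-morphism of pure relative dimension with every fiber Gorenstein in codimension one: since $Y_{t}$ satisfies $\bfS_{2}$ (being $\BQQ$-Gorenstein, cf.\ Lemma~\ref{lem:QGorSch}\eqref{lem:QGorSch:2}) this follows by Fact~\ref{fact:dfn:RelSkCMlocus}\eqref{fact:dfn:RelSkCMlocus:3}, Lemma~\ref{lem:S2Codim2}, and Lemma~\ref{lem:S2Codim2}\eqref{lem:S2Codim2:2} applied to the relative Gorenstein locus $Y^{\circ} = \Gor(Y/T)$, using condition \eqref{thm:S3Gor2:1} which gives $\Codim_{y}(Y_{t}\setminus Y^{\circ}, Y_{t})\geq 2$ along $Y_{t}$. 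Condition \eqref{thm:S3Gor2:3} with $m=1$ says $\omega_{Y_{t}/\Bbbk(t)}$ satisfies $\bfS_{3}$, hence $Y_{t}$ itself satisfies $\bfS_{3}$? No — that is not automatic; instead $\bfS_{3}$ for $Y_{t}$ is what condition \eqref{thm:wQGvsQG:a} of Theorem~\ref{thm:wQGvsQG} wants, so I need $Y_{t}$ to satisfy $\bfS_{3}$. The cleaner route is to apply Theorem~\ref{thm:S2S3crit} directly.

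The main step is to construct, locally along $Y_{t}$, a coherent $\SO_{Y}$-module $\SL$ that is flat over $T$ with $\SL \otimes_{\SO_{Y}} \SO_{Y_{t}} \isom \omega_{Y_{t}/\Bbbk(t)}$, so that Theorem~\ref{thm:S2S3crit} applies. I would obtain this from condition \eqref{thm:S3Gor2:2}: let $Y^{\circ\circ}$ be the open subset of $Y$ that is relatively Gorenstein in codimension two in the sense that $\Codim(Y_{t'}\setminus Y^{\circ\circ}, Y_{t'})\geq 3$; along $Y_{t}$ this is nonempty since $Y_{t}$ is Gorenstein in codimension two. On $Y^{\circ}$ we have the invertible sheaf $\omega_{Y^{\circ}/T}$, and I set $\SL := j_{*}(\omega_{Y^{\circ}/T})$ for the open immersion $j\colon Y^{\circ}\injmap Y$; this is $\omega_{Y/T}$ itself by Definition~\ref{dfn:relcanosheaf} and Proposition~\ref{prop:BCGor}. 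The base-change homomorphism $\phi_{t}(\omega_{Y/T})\colon \omega_{Y/T}\otimes\SO_{Y_{t}}\to \omega_{Y_{t}/\Bbbk(t)}$ need not yet be an isomorphism, but by condition \eqref{thm:S3Gor2:2} it is an isomorphism in codimension two on $Y_{t}$, hence $\SL\otimes\SO_{Y_{t}}$ and $\omega_{Y_{t}/\Bbbk(t)}$ agree away from a codimension-$3$ closed subset; since $\omega_{Y_{t}/\Bbbk(t)}$ satisfies $\bfS_{3}$ by \eqref{thm:S3Gor2:3} with $m=1$, $\omega_{Y_{t}/\Bbbk(t)}$ is the pushforward of its restriction to that open set, and — provided $\SL\otimes\SO_{Y_{t}}$ also has depth $\geq 3$ along the bad locus — the two coincide. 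Establishing $\depth$ of the restricted sheaf is where the local-cohomology bookkeeping from Section~\ref{subsect:Resthom} (Proposition~\ref{prop:key}, Lemma~\ref{lem:SurjFlat(reflexive)}, Corollary~\ref{cor:SurjFlat(new)|Kollar}) must be invoked: one uses the exact sequence $0\to \omega_{Y/T}\to \SE^{0}\to \SE^{1}\to \SG\to 0$ on an affine smooth $T$-scheme $P$ into which $Y$ embeds (as in the proof of Proposition~\ref{prop:BC-S2CM}), together with the $\bfS_{3}$-condition on $\omega_{Y_{t}/\Bbbk(t)}$, to conclude via Corollary~\ref{cor:SurjFlat(new)|Kollar} that $\omega_{Y/T}$ is flat over $T$ along $Y_{t}$ and $\phi_{t}(\omega_{Y/T})$ is an isomorphism. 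Then $\SL = \omega_{Y/T}$ does satisfy hypotheses \eqref{thm:S2S3crit:cond1} and \eqref{thm:S2S3crit:cond2prime} of Theorem~\ref{thm:S2S3crit} with $U = \CM(Y/T)$ (or with $Y^{\circ\circ}$), and hypothesis \eqref{thm:S2S3crit:cond3}\eqref{thm:S2S3crit:cond3:a} holds by \eqref{thm:S3Gor2:2}.

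Finally, I would run this argument for all $m\in\BZZ$, not just $m=1$: applying the same reasoning with the reflexive sheaf $\omega_{Y/T}^{[m]}$ in place of $\omega_{Y/T}$ — using that $\omega_{Y_{t}/\Bbbk(t)}^{[m]}$ satisfies $\bfS_{3}$ by \eqref{thm:S3Gor2:3} and is invertible in codimension two on $Y_{t}$ by \eqref{thm:S3Gor2:2} — Theorem~\ref{thm:S2S3crit} (with $\SL = \omega_{Y/T}^{[m]}$, noting that one still needs a flat-over-$T$ sheaf restricting to $\omega_{Y_{t}/\Bbbk(t)}^{[m]}$, which the $m=1$ case plus Proposition~\ref{prop:BCGor} provides) shows $\omega_{Y/T}^{[m]}$ is locally free in a neighborhood of a large open subset and that $\phi_{t}^{[m]}$ is an isomorphism; equivalently, by Proposition~\ref{prop:BCGor}, $\omega_{Y/T}^{[m]}$ satisfies relative $\bfS_{2}$ over $T$ along $Y_{t}$. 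Since this holds for every $m$, and $Y_{t}$ is $\BQQ$-Gorenstein by \eqref{thm:S3Gor2:1}, Definition~\ref{dfn:QGorMor}\eqref{dfn:QGorMor:QGor} gives that $f$ is $\BQQ$-Gorenstein along $Y_{t}$. The main obstacle, as indicated, is the depth/flatness argument establishing that the base-change map $\phi_{t}(\omega_{Y/T})$ (and each $\phi_{t}^{[m]}$) is an isomorphism: this is exactly the point where the $\bfS_{3}$-hypothesis on the fibers is consumed, and it requires careful use of the key proposition via an embedding into a smooth $T$-scheme, together with Corollary~\ref{cor:SurjFlat(new)|Kollar} and the equivalences in Proposition~\ref{prop:BCGor}. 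An alternative, possibly shorter, packaging would be to verify that $f$ is virtually $\BQQ$-Gorenstein along $Y_{t}$ (taking $\SL$ as above once its base-change property is known) and that $Y_{t}$ satisfies $\bfS_{3}$ — which follows from \eqref{thm:S3Gor2:3} with $m=0$, i.e.\ $\omega_{Y_{t}/\Bbbk(t)}^{[0]} = \SO_{Y_{t}}$ — and then quote Theorem~\ref{thm:wQGvsQG}\eqref{thm:wQGvsQG:a} directly.
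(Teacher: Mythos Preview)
Your core argument is correct and matches the paper's proof, but you have buried it under unnecessary and partly misapplied machinery. The paper's proof is exactly the direct route you sketch in passing: after localizing so that \( f \) is an \( \bfS_{2} \)-morphism with every fiber Gorenstein in codimension one, for each \( m \) apply Lemma~\ref{lem:SurjFlat(reflexive)} to the reflexive sheaf \( \SF = \omega_{Y/T}^{[m]} \) with \( Z = Y \setminus \Gor(Y/T) \), then invoke Corollary~\ref{cor:SurjFlat(new)|Kollar}. The inequality \eqref{cor:SurjFlat(new)|Kollar|eq} reads \( \depth_{Y_{t}\cap Z}\,\omega_{Y_{t}/\Bbbk(t)}^{[m]} \geq 3 \), and this follows from conditions \eqref{thm:S3Gor2:2} and \eqref{thm:S3Gor2:3} via Lemma~\ref{lem:depth+codim+Sk}\eqref{lem:depth+codim+Sk:2}. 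Corollary~\ref{cor0:prop:key} (or equivalently Proposition~\ref{prop:BCGor}) then gives that \( \omega_{Y/T}^{[m]} \) satisfies relative \( \bfS_{2} \) over \( T \) along \( Y_{t} \). That is the whole proof.

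Two issues with your detours. First, your invocation of Theorem~\ref{thm:S2S3crit} with \( \SL = \omega_{Y/T}^{[m]} \) for general \( m \) is a misapplication: that theorem requires \( \SL \otimes \SO_{Y_{t}} \isom \omega_{Y_{t}/\Bbbk(t)} \), not \( \omega_{Y_{t}/\Bbbk(t)}^{[m]} \), and its conclusion concerns only \( \phi_{t}(\omega_{Y/T}) \), not \( \phi_{t}^{[m]} \). Second, the Theorem~\ref{thm:wQGvsQG} alternative is circular as stated: verifying that \( f \) is virtually \( \BQQ \)-Gorenstein with \( \SL = \omega_{Y/T} \) requires precisely that every \( \omega_{Y/T}^{[m]} \) satisfy relative \( \bfS_{2} \), which is the goal. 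Also, you do not need the embedding into a smooth \( P \) from the proof of Proposition~\ref{prop:BC-S2CM}; since \( \omega_{Y/T}^{[m]} \) is already reflexive on \( Y \) (Proposition~\ref{prop:BCGor}), Lemma~\ref{lem:SurjFlat(reflexive)} supplies the exact sequence directly on \( Y \).
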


\begin{proof}
As in the first part of the proof of Theorem~\ref{thm:InfQGorCrit}, 
by \eqref{thm:S3Gor2:1} and \eqref{thm:S3Gor2:2}, we may assume that \( Y \to T \) 
an \( \bfS_{2} \)-morphism and its fibers are all Gorenstein in codimension one. 
Then, it is enough to prove that \( \SF = \omega_{Y/T}^{[m]} \) satisfies
relative \( \bfS_{2} \) over \( T \) along \( Y_{t} \) 
for any \( m \). Since \( \SF \) is reflexive, we can apply Proposition~\ref{prop:key}
and its corollaries to the morphism \( Y \to T \) and the closed subset
\( Z = Y \setminus \Gor(Y/T) \).
Then, \eqref{thm:S3Gor2:2} and \eqref{thm:S3Gor2:3} imply
the inequality \eqref{cor:SurjFlat(new)|Kollar|eq}
of Corollary~\ref{cor:SurjFlat(new)|Kollar}.
Thus, \( \SF \) satisfies relative \( \bfS_{2} \) over \( T \) along \( Y_{t} \) by
Corollaries~\ref{cor0:prop:key} and \ref{cor:SurjFlat(new)|Kollar}.
\end{proof}

\begin{dfn}[$\BQQ$-Gorenstein refinement]\label{dfn:QGorRefinement}
Let \( f \colon Y \to T \) be an \( \bfS_{2} \)-morphism of locally Noetherian schemes 
such that every fiber is \( \BQQ \)-Gorenstein. 
A morphism \( S \to T \) from a locally Noetherian scheme \( S \) 
is called a \emph{\( \BQQ \)-Gorenstein refinement} of \( f \) 
if the following two conditions are satisfied:
\begin{enumerate}
\renewcommand{\theenumi}{\roman{enumi}}
\renewcommand{\labelenumi}{(\theenumi)}
\item  \( S \to T \) is a monomorphism in the category of schemes; 

\item  for any morphism \( T' \to T \) from a locally Noetherian scheme \( T' \), the base change 
\( Y' \times_{T} T' \to T' \) is a \( \BQQ \)-Gorenstein morphism if and only if \( T' \to T \) 
factors through \( S \to T \). 
\end{enumerate}
\end{dfn}

\begin{remn}
If the \( \BQQ \)-Gorenstein refinement \( S \to T \) exists, then it is bijective, since every fiber 
is \( \BQQ \)-Gorenstein. 
\end{remn}

\begin{thm}\label{thm:QGorRef}
Let \( f \colon Y \to T \) be an \( \bfS_{2} \)-morphism
of locally Noetherian schemes whose fibers are all \( \BQQ \)-Gorenstein. 
Assume that 
\begin{enumerate}
\renewcommand{\theenumi}{\roman{enumi}}
\renewcommand{\labelenumi}{(\theenumi)}
\item  \label{thm:QGorRef:ass1} 
\( Y \setminus \Sigma \to T \) is a \( \BQQ \)-Gorenstein morphism 
for a closed subset \( \Sigma \) proper over \( T \), and 

\item  \label{thm:QGorRef:ass2} 
the Gorenstein indices of all the fibers \( Y_{t} = f^{-1}(t)\) are bounded above. 
\end{enumerate}
Then, \( f \) admits a \( \BQQ \)-Gorenstein refinement as a separated morphism \( S \to T \) 
locally of finite type. 
Furthermore, \( S \to T \) is a local immersion of finite type if the assumption \eqref{thm:QGorRef:ass1} 
is replaced with 
\begin{enumerate}
\renewcommand{\theenumi}{\roman{enumi}}
\renewcommand{\labelenumi}{(\theenumi)}
\addtocounter{enumi}{2} 
\item  \label{thm:QGorRef:ass3} \( f \) is a projective morphism locally on \( T \).
\end{enumerate}
\end{thm}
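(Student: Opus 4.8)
The plan is to realize the \( \BQQ \)-Gorenstein property of \( f \) as finitely many relative \( \bfS_{2} \) conditions and then to assemble the corresponding relative \( \bfS_{2} \) refinements produced by Theorem~\ref{thm:dd dec}. By Definition~\ref{dfn:QGorMor} the morphism \( f \) is \( \BQQ \)-Gorenstein exactly when \( \omega_{Y/T}^{[m]} \) satisfies relative \( \bfS_{2} \) over \( T \) for \emph{every} \( m \in \BZZ \), so the first step is a finiteness reduction. Using \eqref{thm:QGorRef:ass2} I would fix an integer \( N \) bounding the Gorenstein indices (Definition~\ref{dfn:QGorSch:GorIndex}) of all the fibres \( Y_{t} = f^{-1}(t) \), let \( M \) be the least common multiple of \( 1, 2, \ldots, N \) — so that the Gorenstein index of \( Y_{t} \) at any of its points divides \( M \) — and prove that \( f \) is a \( \BQQ \)-Gorenstein morphism if and only if \( \omega_{Y/T}^{[m]} \) satisfies relative \( \bfS_{2} \) over \( T \) for each \( m \) with \( 1 \le m \le M \).

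For the nontrivial direction of this reduction I would first show, granting the finitely many conditions (the case \( m = 0 \) being automatic, as \( f \) is an \( \bfS_{2} \)-morphism), that \( \omega_{Y/T}^{[M]} \) is invertible on \( Y \): at any \( y \in Y \) with \( t = f(y) \), the base-change homomorphism \( \phi_{t}^{[M]} \) is an isomorphism at \( y \) by Proposition~\ref{prop:BCGor}, \( \omega_{Y_{t}/\Bbbk(t)}^{[M]} \) is invertible at \( y \) because \( M \) is a multiple of the Gorenstein index of \( Y_{t} \) at \( y \), and \( \omega_{Y/T}^{[M]} \) is flat over \( T \) at \( y \) as part of relative \( \bfS_{2} \); hence Fact~\ref{fact:elem-flat}\eqref{fact:elem-flat:2} applies. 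Then, for arbitrary \( m = qM + s \) with \( 0 \le s < M \), the isomorphisms \( \omega_{Y/T}^{[k]} \isom j_{*}(\omega_{\Gor(Y/T)/T}^{\otimes k}) \) of Proposition~\ref{prop:BCGor} together with the projection formula along the open immersion \( j \colon \Gor(Y/T) \injmap Y \) give \( \omega_{Y/T}^{[m]} \isom (\omega_{Y/T}^{[M]})^{\otimes q} \otimes_{\SO_{Y}} \omega_{Y/T}^{[s]} \), and tensoring the relative-\( \bfS_{2} \) sheaf \( \omega_{Y/T}^{[s]} \) with the invertible sheaf \( (\omega_{Y/T}^{[M]})^{\otimes q} \) preserves relative \( \bfS_{2} \), completing the reduction.

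Next I would apply Theorem~\ref{thm:dd dec} to each \( \SF_{m} := \omega_{Y/T}^{[m]} \) for \( 1 \le m \le M \). Each \( \SF_{m} \) is reflexive and is locally free in codimension one on each fibre (Definition~\ref{dfn:locFreeCodim1Fiber}), being invertible on \( \Gor(Y/T) \) whose fibrewise complement has codimension \( \ge 2 \) since the fibres are \( \BQQ \)-Gorenstein. For the required proper closed subset: under \eqref{thm:QGorRef:ass1} one takes the given \( \Sigma \), noting that restriction to the open set \( Y \setminus \Sigma \) identifies \( \omega_{Y/T}^{[m]}|_{Y\setminus\Sigma} \) with \( \omega_{(Y\setminus\Sigma)/T}^{[m]} \) and that \( Y \setminus \Sigma \to T \) is \( \BQQ \)-Gorenstein; under \eqref{thm:QGorRef:ass3} one instead takes \( \Sigma := Y \setminus \Gor(Y/T) \), which is closed in \( Y \) hence proper over \( T \) since \( f \) is proper, and on which \( \SF_{m} \) is invertible and therefore satisfies relative \( \bfS_{2} \) over \( T \). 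Theorem~\ref{thm:dd dec} then yields a relative \( \bfS_{2} \) refinement \( h_{m} \colon S_{m} \to T \), separated and locally of finite type, and moreover a local immersion of finite type when \( f \) is projective locally over \( T \). I would then set \( S := S_{1} \times_{T} \cdots \times_{T} S_{M} \) with projection \( h \colon S \to T \); being a fibre product over \( T \) of monomorphisms (resp.\ of local immersions of finite type), \( h \) is again a monomorphism (resp.\ a local immersion of finite type), and it is separated and locally of finite type.

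Finally I would check the universal property of Definition~\ref{dfn:QGorRefinement}. Given \( q \colon T' \to T \), with base change \( f' \colon Y' = Y \times_{T} T' \to T' \) and projection \( p \colon Y' \to Y \), the running hypotheses persist: \( f' \) is an \( \bfS_{2} \)-morphism (Lemma~\ref{lem:bc basic}\eqref{lem:bc basic:5}), its fibres are \( \BQQ \)-Gorenstein (Proposition~\ref{prop:QGormor}\eqref{prop:QGormor:1nCor}) with Gorenstein index still dividing \( M \) — the Gorenstein index being unchanged under the field extensions \( \Bbbk(t')/\Bbbk(t) \) by Corollary~\ref{cor:BC-S2CMGor}\eqref{cor:BC-S2CMGor:2} and faithful flatness — and \( Y' \setminus p^{-1}(\Sigma) \to T' \) is \( \BQQ \)-Gorenstein with \( p^{-1}(\Sigma) \) proper over \( T' \) (Proposition~\ref{prop:QGormor}\eqref{prop:QGormor:1}). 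Hence the finiteness reduction applies to \( f' \), and using \( \omega_{Y'/T'}^{[m]} \isom (p^{*}\SF_{m})^{\vee\vee} \) (Corollary~\ref{cor:BC-S2CMGor}\eqref{cor:BC-S2CMGor:2}) — this being precisely the double-dualized pullback governing the relative \( \bfS_{2} \) refinement \( h_{m} \) of \( \SF_{m} \) (cf.\ Remark~\ref{rem:dfn:relS2refinement}) — the condition that \( \omega_{Y'/T'}^{[m]} \) satisfy relative \( \bfS_{2} \) over \( T' \) is equivalent to \( q \) factoring through \( h_{m} \); since the \( h_{m} \) are monomorphisms, factoring through all of them is the same as factoring through \( h \). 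Therefore \( f' \) is \( \BQQ \)-Gorenstein if and only if \( q \) factors through \( S \to T \). The step I expect to demand the most care is this last one: confirming that \emph{every} hypothesis — in particular the boundedness of Gorenstein indices and \( \BQQ \)-Gorenstein-ness away from a proper closed subset — is preserved under \emph{arbitrary} base change and that the finite reduction of the first step is compatible with it, so that the family \( \{S_{m}\} \) genuinely represents the \( \BQQ \)-Gorenstein refinement functor and not merely a necessary condition for it.
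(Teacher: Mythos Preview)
Your proof is correct and follows essentially the same strategy as the paper: reduce the \( \BQQ \)-Gorenstein condition to relative \( \bfS_{2} \) for finitely many \( \omega_{Y/T}^{[i]} \) (using the bound on Gorenstein indices and the invertibility of \( \omega_{Y/T}^{[M]} \)), then invoke Theorem~\ref{thm:dd dec}. The only packaging difference is that the paper applies Theorem~\ref{thm:dd dec} once to the single reflexive sheaf \( \SF = \bigoplus_{i=1}^{m} \omega_{Y/T}^{[i]} \), whereas you apply it to each summand separately and take the fibre product \( S_{1} \times_{T} \cdots \times_{T} S_{M} \); since a direct sum satisfies relative \( \bfS_{2} \) if and only if each summand does, and since the universal property of the relative \( \bfS_{2} \) refinement is defined via monomorphisms, these two constructions yield the same \( S \to T \).
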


\begin{proof}
By \eqref{thm:QGorRef:ass2}, 
we have a positive integer \( m \) such that \( \omega^{[m]}_{Y_{t}/\Bbbk(t)} \) 
is invertible for any \( t \in T \). 
Let \( S \to T \) be the relative \( \bfS_{2} \) refinement for the reflexive \( \SO_{Y} \)-module 
\[ \SF = \bigoplus\nolimits_{i = 1}^{m} \omega_{Y/T}^{[i]}. \]
It exists as a separated morphism \( S \to T \) locally of finite type 
by Theorem~\ref{thm:dd dec}, since \( \SF|_{U} \) is locally free 
for the relative Gorenstein locus \( U = \Gor(Y/T) \) in which 
\( \Codim(Y_{t} \setminus U, Y_{t}) \geq 2 \) for any \( t \in T \), 
and since \( \SF|_{Y \setminus \Sigma} \) satisfies relative \( \bfS_{2} \) over \( T \) 
by \eqref{thm:QGorRef:ass1}.
Moreover,  \( S \to T \) is a local immersion of finite type in case \eqref{thm:QGorRef:ass3}, 
also by Theorem~\ref{thm:dd dec}. 
It is enough to show that \( S \to T \) is the \( \BQQ \)-Gorenstein refinement. 
Let \( T' \to T \) be a morphism from a locally Noetherian scheme \( T' \). 
Then, \( T' \to T \) factors through \( S \to T \) if and only if 
\( \omega^{[i]}_{Y'/T'} \) satisfies relative \( \bfS_{2} \) over \( T' \) 
for the base change \( Y' = Y \times_{T} T' \to T' \) 
for any \( 0 \leq i \leq m \), since we have an isomorphism 
\[ \omega^{[i]}_{Y'/T'} \isom (p^{*}\omega^{[i]}_{Y/T})^{\vee\vee}\]
by Corollary~\ref{cor:BC-S2CMGor}\eqref{cor:BC-S2CMGor:2} for the projection \( p \colon Y' \to Y \). 
This condition is also equivalent to that \( Y' \to T' \) is \( \BQQ \)-Gorenstein. 
Therefore, \( S \to T \) is the relative \( \BQQ \)-Gorenstein refinement. 
\end{proof}

\begin{exam}\label{exam:P114Part2}
Let \( f \colon Y \to T \) be the morphism in Example~\ref{exam:P114}. 
Then,  the \( \BQQ \)-Gorenstein refinement \( S \to T \) 
of \( f \) is just the disjoint union of \( T \setminus \{0\} \) 
and the closed point \( \{0\} \). 
This is shown as follows. Since \( f \) is smooth over \( T \setminus \{0\} \) and 
since \( K_{Y_{0}}^{2} = 9 \) and \( K_{Y_{t}}^{2} = 8 \) for \( t \ne 0 \), 
we see that 
\( S \isom (T \setminus \{0\}) \sqcup \Spec A \) over \( T \) for a closed immersion \( \Spec A \to T \) 
defined by a surjection \( \Bbbk[\ttt] \to \Bbbk[\ttt]/(\ttt^{n}) = A \). 
On the other hand, if \( n \geq 2 \), 
the base change \( f_{A} \colon Y_{A} \to \Spec A \) is not a \( \BQQ \)-Gorenstein morphism by 
Lemma~\ref{lem:P114}. Therefore, \( A = \Bbbk \). 
\end{exam}

The following theorem is a local version of Theorem~\ref{thm:QGorRef}: 

\begin{thm}\label{thm:QGorRefLocal}
Let \( f \colon Y \to T \) be a flat morphism locally of finite type 
from a locally Noetherian scheme \( Y \) such that \( T = \Spec R \) 
for a Noetherian Henselian local ring \( R \). For the closed point \( o \in T \) 
and for a point \( y \) of the closed fiber \( Y_{o} = f^{-1}(o) \), assume that 
\( Y_{o} \) is \( \BQQ \)-Gorenstein at \( y \). 
Then, there is a closed subscheme \( S \subset T \) 
having the following universal property\emph{:}
Let \( T' = \Spec R' \to T \) be a morphism 
defined by a local ring homomorphism \( R \to R' \) 
to a Noetherian local ring \( R' \). 
Then, \( T' \to T \) factors through \( S \) if and only if 
the base change \( Y' = Y \times_{T} T' \to T' \) 
is a \( \BQQ \)-Gorenstein morphism at any point \( y' \in Y' \) 
lying over \( y \) and the closed point of \( T' \). 
\end{thm}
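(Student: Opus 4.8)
The plan is to obtain the desired closed subscheme $S$ by applying the local relative $\bfS_2$ refinement of Theorem~\ref{thm:enhancement} to the reflexive sheaf $\SF := \bigoplus_{m=1}^{r}\omega_{Y/T}^{[m]}$, where $r$ is the Gorenstein index of $Y_o$ at $y$, in the same spirit as the proof of Theorem~\ref{thm:QGorRef}. First I would carry out the usual localization near $y$: since the $\BQQ$-Gorenstein property of a morphism at a point is local on the source and is preserved by base change, I may shrink $Y$ to an open neighborhood of $y$ freely. Using that $Y_o$ satisfies $\bfS_2$ at $y$ and is Gorenstein in codimension one near $y$ (i.e.\ $\Codim_y(Y_o\setminus\Gor(Y/T),Y_o)\ge 2$), together with Lemma~\ref{lem:S2Codim2} and the openness of $\Gor(Y/T)$, I may further assume that $f$ is an $\bfS_2$-morphism of pure relative dimension all of whose fibers are Gorenstein in codimension one; then $\omega_{Y_o/\Bbbk(o)}^{[r]}$ is invertible at $y$. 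The sheaf $\SF$ is reflexive and, since $\SF|_{\Gor(Y/T)}$ is locally free while $\Codim(Y_t\setminus\Gor(Y/T),Y_t)\ge 2$ for every fiber, it is locally free in codimension one on each fiber; so Theorem~\ref{thm:enhancement} yields a closed subscheme $S\subset T$ such that, for a morphism $T'=\Spec R'\to T$ given by a local homomorphism $R\to R'$ with $R'$ Noetherian local, with closed point $o'$, and for the induced $f'\colon Y'=Y\times_T T'\to T'$ and $p\colon Y'\to Y$, the sheaf $(p^{*}\SF)^{\vee\vee}$ satisfies relative $\bfS_2$ over $T'$ at every point $y'$ over $y$ and $o'$ if and only if $T'\to T$ factors through $S$.

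Next I would unwind this condition. Since $f'$ is again an $\bfS_2$-morphism whose fibers are Gorenstein in codimension one (Lemma~\ref{lem:bc basic}\eqref{lem:bc basic:5}, Corollary~\ref{cor:BC-S2CMGor}), Corollary~\ref{cor:BC-S2CMGor}\eqref{cor:BC-S2CMGor:2} gives $(p^{*}\omega_{Y/T}^{[m]})^{\vee\vee}\isom\omega_{Y'/T'}^{[m]}$ for all $m$, hence $(p^{*}\SF)^{\vee\vee}\isom\bigoplus_{m=1}^{r}\omega_{Y'/T'}^{[m]}$; as each summand has full support $Y'$ (Proposition~\ref{prop:BCGor}), the direct sum satisfies relative $\bfS_2$ over $T'$ at $y'$ exactly when each $\omega_{Y'/T'}^{[m]}$ does. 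So it remains to show that, for a point $y'$ over $y$ and $o'$, the finitely many conditions ``$\omega_{Y'/T'}^{[m]}$ satisfies relative $\bfS_2$ over $T'$ at $y'$ for $1\le m\le r$'' are jointly equivalent to ``$f'$ is $\BQQ$-Gorenstein at $y'$.'' The direction from $\BQQ$-Gorenstein is immediate from Definition~\ref{dfn:QGorMor}\eqref{dfn:QGorMor:QGor}. For the converse, I would note that $Y'_{o'}$ satisfies $\bfS_2$ (as $f'$ is an $\bfS_2$-morphism), is Gorenstein in codimension one, and has $\omega_{Y'_{o'}/\Bbbk(o')}^{[r]}\isom(p_{o'}^{*}\omega_{Y_o/\Bbbk(o)}^{[r]})^{\vee\vee}$ invertible at $y'$ by Corollary~\ref{cor:BC-S2CMGor}\eqref{cor:BC-S2CMGor:2} applied to the field extension $\Bbbk(o)\subset\Bbbk(o')$ (with induced morphism $p_{o'}\colon Y'_{o'}\to Y_o$), so $Y'_{o'}$ is $\BQQ$-Gorenstein at $y'$ with Gorenstein index there dividing $r$; and, using the openness of the relative $\bfS_2$-locus (Fact~\ref{fact:dfn:RelSkCMlocus}\eqref{fact:dfn:RelSkCMlocus:2}), Proposition~\ref{prop:BCGor} turns the relative $\bfS_2$-hypotheses into the assertion that the base change maps $\phi^{[m]}_{o'}$ are isomorphisms at $y'$ for $1\le m\le r$. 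Feeding these into the implication \eqref{lem:LocDefQGor:cond12} $+$ \eqref{lem:LocDefQGor:cond3} $\Rightarrow$ \eqref{lem:LocDefQGor:cond6} of Lemma~\ref{lem:LocDefQGor} (applied to $f'$, $y'$, and the integer $r$) would show that $f'$ is $\BQQ$-Gorenstein at $y'$, completing the argument.

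The main obstacle I anticipate is the passage from the a priori infinite family of conditions defining ``$\BQQ$-Gorenstein'' (relative $\bfS_2$ for $\omega_{Y'/T'}^{[m]}$, all $m\in\BZZ$) to the finite family $1\le m\le r$ with $r$ chosen once and for all, independently of the test scheme $T'$. The key point making this legitimate is that the Gorenstein index of the closed fiber $Y'_{o'}$ at $y'$ is bounded by (in fact divides) the Gorenstein index $r$ of $Y_o$ at $y$, which in turn rests on the base change isomorphism of Corollary~\ref{cor:BC-S2CMGor}\eqref{cor:BC-S2CMGor:2} for $\omega^{[r]}$ and on the structure of Lemma~\ref{lem:LocDefQGor} (whereby, once $\omega^{[r]}_{Y'/T'}$ is invertible at $y'$, the relative $\bfS_2$-condition for $\omega^{[l]}_{Y'/T'}$ at $y'$ for all $l$ follows from the cases $0\le l<r$). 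A secondary point to handle carefully is checking that all invoked base change and openness statements apply in the generality required here — in particular to the possibly inseparable, non-smooth field extension $\Bbbk(o)\subset\Bbbk(o')$, and to the trivial subcase $y\in\Gor(Y/T)$ where $r=1$ — but each of these is a direct appeal to results already established above.
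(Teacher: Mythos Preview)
Your proposal is correct and follows essentially the same route as the paper's proof: reduce to an $\bfS_2$-morphism with fibers Gorenstein in codimension one, apply Theorem~\ref{thm:enhancement} to $\SF=\bigoplus_{m=1}^{r}\omega_{Y/T}^{[m]}$ with $r$ the Gorenstein index of $Y_o$ at $y$, and use Corollary~\ref{cor:BC-S2CMGor}\eqref{cor:BC-S2CMGor:2} to identify $(p^{*}\SF)^{\vee\vee}$ with $\bigoplus\omega_{Y'/T'}^{[m]}$. The only cosmetic difference is in the last reduction from the finite family $1\le m\le r$ to all $m\in\BZZ$: the paper argues directly that $\omega_{Y'/T'}^{[r]}$ is invertible at $y'$ via Fact~\ref{fact:elem-flat}\eqref{fact:elem-flat:2} and then uses $\omega^{[l+r]}\isom\omega^{[l]}\otimes\omega^{[r]}$, whereas you package the same step through Lemma~\ref{lem:LocDefQGor}.
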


\begin{proof}
As in the first part of the proof of Theorem~\ref{thm:InfQGorCrit}, 
we may assume that \( f \) is an \( \bfS_{2} \)-morphism and every fiber 
is Gorenstein in codimension one. 
For the Gorenstein index \( m \) of \( Y_{o} \) at \( y \), we consider the reflexive sheaf 
\[ \SF = \bigoplus\nolimits_{i = 1}^{m} \omega^{[i]}_{Y/T}  \]
on \( Y \). This is locally free in codimension one on each fiber. 
Let \( S \subset T \) be the universal subscheme in Theorem~\ref{thm:enhancement} 
for \( \SF \). We shall show that \( S \) satisfies the required condition. 
Let \( T' = \Spec R \to T \) be the morphism above. Then, we have an isomorphism 
\[ (p^{*}\omega^{[i]}_{Y/T})^{\vee\vee} \isom \omega^{[i]}_{Y'/T'} \]
for any \( i \in \BZZ \) by Corollary~\ref{cor:BC-S2CMGor}\eqref{cor:BC-S2CMGor:2}. 
Thus, \( T' \to T \) factors through \( S \) 
if and only if \( \omega^{[i]}_{Y'/T'} \) satisfies relative \( \bfS_{2} \) 
over \( T' \) at any point \( y' \) lying over \( y \) and the closed point \( o' \) of \( T' \), 
for any \( 1 \leq i \leq m \). In this case, \( \omega^{[m]}_{Y'/T'} \) is 
invertible at \( y' \) by Fact~\ref{fact:elem-flat}\eqref{fact:elem-flat:2}, 
since \( \omega^{[m]}_{Y_{o}/\Bbbk(o)} \) is so at \( y \) and since the canonical morphism 
\[ \omega^{[m]}_{Y'/T'} \otimes_{\SO_{Y'}} \SO_{Y'_{o'}} \to \omega^{[m]}_{Y'_{o'}/\Bbbk(o')} 
\isom \omega^{[m]}_{Y_{o}/\Bbbk(o)} \otimes_{\SO_{Y_{o}}} \SO_{Y'_{o'}}\]
is an isomorphism at \( y' \) (cf. Corollary~\ref{cor:BC-S2CMGor}\eqref{cor:BC-S2CMGor:2}).
Therefore, the latter condition at \( y' \) is equivalent to that \( \omega^{[i]}_{Y'/T'} \) 
satisfies relative \( \bfS_{2} \) over \( T \) at \( y' \) for any \( i \in \BZZ \); 
This means that \( Y' \to T' \) 
is a \( \BQQ \)-Gorenstein morphism at \( y' \). 
Thus, \( S \) satisfies the required condition. 
\end{proof}

The following theorem is similar to Theorem~\ref{thm:QGorRef},  
and it links a projective \( \bfS_{2} \)-morphism Gorenstein in codimension one 
in each fiber, to a naively \( \BQQ \)-Gorenstein morphism by a specific base change.

\begin{thm}\label{thm:dec naive}
Let \( f \colon Y \to T \) be a projective \( \bfS_{2} \)-morphism
of locally Noetherian schemes such that
every fiber is Gorenstein in codimension one.
Then, for any positive integer \( r > 0 \), there exists a separated monomorphism \( S_{r} \to T \)
from a locally Noetherian scheme \( S_{r} \) satisfying the following conditions\emph{:}
\begin{enumerate}
\renewcommand{\theenumi}{\roman{enumi}}
\renewcommand{\labelenumi}{(\theenumi)}
\item  The morphism \( S_{r} \to T \) is a local immersion of finite type.

\item  Let \( T' \to T \) be a morphism from a locally Noetherian scheme \( T' \).
Then, it factors through \( S_{r} \to T \) if and only if
\( Y \times_{T} T' \to T' \) is a naively \( \BQQ \)-Gorenstein morphism
whose relative Gorenstein index is a divisor of \( r \).
\end{enumerate}
\end{thm}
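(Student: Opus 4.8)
The plan is to reduce the statement to a single invertibility assertion and then to realize $S_r$ by combining the relative $\bfS_2$ refinement of Theorem~\ref{thm:dd dec} with an open-locus argument. The key reduction I would establish first is that, for any morphism $T' \to T$ from a locally Noetherian scheme and $Y' = Y \times_T T'$, the morphism $f' \colon Y' \to T'$ is naively $\BQQ$-Gorenstein with relative Gorenstein index dividing $r$ \emph{if and only if} the reflexive sheaf $\omega^{[r]}_{Y'/T'}$ is invertible on all of $Y'$. Indeed, $f'$ is automatically an $\bfS_2$-morphism whose fibers are Gorenstein in codimension one (Lemma~\ref{lem:bc basic}\eqref{lem:bc basic:5} and Lemma~\ref{lem:bc basic}\eqref{lem:bc basic:1}); if $\omega^{[r]}_{Y'/T'}$ is invertible, it is flat over $T'$ with invertible restriction to each $\bfS_2$-fiber, hence satisfies relative $\bfS_2$ over $T'$, and Corollary~\ref{cor:BC-S2CMGor}\eqref{cor:BC-S2CMGor:2} gives $\omega^{[r]}_{Y'_{t'}/\Bbbk(t')} \isom \omega^{[r]}_{Y'/T'}\otimes\SO_{Y'_{t'}}$, so every fiber is $\BQQ$-Gorenstein (cf.\ Lemma~\ref{lem:QGorSch3}\eqref{lem:QGorSch3:3}). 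Conversely, if $s$ is the least positive integer with $\omega^{[s]}_{Y'/T'}$ invertible at a point $y'$ and $r = qs + t$ with $0 \le t < s$, then $\omega^{[r]}_{Y'/T'} \isom (\omega^{[s]}_{Y'/T'})^{\otimes q}\otimes\omega^{[t]}_{Y'/T'}$ (tensoring a reflexive sheaf with an invertible one commutes with double-dualization), whence $\omega^{[t]}_{Y'/T'}$ is invertible at $y'$ and $t = 0$ by minimality; thus the relative Gorenstein index divides $r$ exactly when $\omega^{[r]}_{Y'/T'}$ is invertible everywhere.

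Next I would construct $S_r$. Apply Theorem~\ref{thm:dd dec} to $\SF := \omega^{[r]}_{Y/T}$, which is reflexive (Proposition~\ref{prop:BCGor}) and locally free in codimension one on each fiber: take $U = \Gor(Y/T)$, on which $\SF|_U \isom \omega^{\otimes r}_{U/T}$ is invertible ($U \to T$ being a Gorenstein morphism), with $\Codim(Y_t \setminus U, Y_t) \geq 2$ for every $t$ since each $Y_t$ is Gorenstein in codimension one. With $\Sigma := Y \setminus U$, which is closed in $Y$ and proper over $T$ because $f$ is projective, the restriction $\SF|_{Y\setminus\Sigma}$ is invertible and so satisfies relative $\bfS_2$ over $T$; hence condition~\eqref{thm:dd dec:ass2} holds, and together with the projectivity hypothesis \eqref{thm:dd dec:ass1} Theorem~\ref{thm:dd dec} yields a relative $\bfS_2$ refinement $h \colon S' \to T$ which is a local immersion of finite type. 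On $Y_{S'} := Y \times_T S'$, the sheaf $\omega^{[r]}_{Y_{S'}/S'}$ satisfies relative $\bfS_2$ over $S'$, so $\omega^{[r]}_{Y_{S'}/S'} \isom p^{*}_{S'}\omega^{[r]}_{Y/T}$ by Corollary~\ref{cor:BC-S2CMGor}\eqref{cor:BC-S2CMGor:2}. Let $W \subseteq Y_{S'}$ be the maximal open subset on which $\omega^{[r]}_{Y_{S'}/S'}$ is invertible; since $Y_{S'} \to S'$ is proper, the image of the closed set $Y_{S'}\setminus W$ is closed in $S'$, and I set $S_r := S' \setminus f_{S'}(Y_{S'}\setminus W)$, an open subscheme of $S'$. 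Then $S_r \to T$ is a composite of an open immersion and a local immersion of finite type, hence itself a local immersion of finite type, in particular a separated monomorphism between locally Noetherian schemes; this gives condition~(i).

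Finally I would verify the universal property~(ii). If $f' \colon Y' \to T'$ is naively $\BQQ$-Gorenstein with relative Gorenstein index dividing $r$, then $\omega^{[r]}_{Y'/T'}$ is invertible, hence satisfies relative $\bfS_2$ over $T'$, so $T' \to T$ factors as $g \colon T' \to S'$ by the defining property of $S'$. To see that $g$ factors through $S_r$, it suffices to check that for each $t' \in T'$ with image $s = g(t')$ the fiber $(Y_{S'})_s$ lies in $W$: the sheaf $\omega^{[r]}_{Y'_{t'}/\Bbbk(t')}$ equals both $\omega^{[r]}_{Y'/T'}\otimes\SO_{Y'_{t'}}$ (Corollary~\ref{cor:BC-S2CMGor}\eqref{cor:BC-S2CMGor:2}), which is invertible, and the double-dualized pullback of $\omega^{[r]}_{(Y_{S'})_s/\Bbbk(s)}$ along the faithfully flat field extension $\Bbbk(s)\hookrightarrow\Bbbk(t')$ (Lemma~\ref{lem:omegaFlatbc} and Remark~\ref{rem:dfn:reflexive}), so $\omega^{[r]}_{(Y_{S'})_s/\Bbbk(s)}$ is invertible by descent (Lemma~\ref{lem:LocFreeDecsent}), whence $\omega^{[r]}_{Y_{S'}/S'}$ is invertible along $(Y_{S'})_s$ by Corollary~\ref{cor:BC-S2CMGor}\eqref{cor:BC-S2CMGor:2} and Fact~\ref{fact:elem-flat}\eqref{fact:elem-flat:2}. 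Conversely, if $T' \to T$ factors through $S_r$, then $Y' = Y_{S_r}\times_{S_r}T'$ with $Y_{S_r}\subseteq W$, so $\omega^{[r]}_{Y_{S_r}/S_r}$ — the restriction of $\omega^{[r]}_{Y_{S'}/S'}$ — is invertible, and therefore $\omega^{[r]}_{Y'/T'} \isom q^{*}\omega^{[r]}_{Y_{S_r}/S_r}$ (again Corollary~\ref{cor:BC-S2CMGor}\eqref{cor:BC-S2CMGor:2}) is invertible; by the first paragraph, $f'$ is then naively $\BQQ$-Gorenstein with relative Gorenstein index dividing $r$.

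The main obstacle I anticipate is purely in bookkeeping the base-change identifications of the sheaves $\omega^{[r]}$: one must know that over $S'$ (and only after passing to $S'$, not over $T$ directly) the relative $\bfS_2$ refinement makes $\omega^{[r]}_{Y_{S'}/S'}$ genuinely a pullback of $\omega^{[r]}_{Y/T}$, that invertibility of $\omega^{[r]}$ is detected on fibers via Fact~\ref{fact:elem-flat}\eqref{fact:elem-flat:2}, and that it descends along faithfully flat field extensions — these three points are exactly what forces the two-step construction ``relative $\bfS_2$ refinement, then invertibility locus'' rather than a one-step flattening argument.
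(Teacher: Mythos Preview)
Your proof is correct and follows essentially the same route as the paper's own argument: apply Theorem~\ref{thm:dd dec} to $\SF=\omega^{[r]}_{Y/T}$ to get the relative $\bfS_2$ refinement $S'\to T$, then pass to the open locus in $S'$ over which $\omega^{[r]}_{Y_{S'}/S'}$ is invertible. Your treatment is in fact slightly more explicit than the paper's in two places --- the equivalence between ``naively $\BQQ$-Gorenstein of relative index dividing $r$'' and invertibility of $\omega^{[r]}$, and the fiberwise descent argument showing that $T'\to S'$ lands in $S_r$ --- but these are precisely the points the paper leaves implicit in its citation of Corollary~\ref{cor:BC-S2CMGor}\eqref{cor:BC-S2CMGor:2}.
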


\begin{proof}
By Theorem~\ref{thm:dd dec}, there is a relative \( \bfS_{2} \) refinement 
\( S \to T \) for the reflexive \( \SO_{Y} \)-module \( \SF = \omega^{[r]}_{Y/T} \). 
In fact, \( \SF|_{U} \) is locally free for the relative Gorenstein locus \( U = \Gor(Y/T) \) 
and \( \Codim(Y_{t} \setminus U, Y_{t}) \geq 2 \) for any \( t \in T \) by assumption. 
Here, \( S \to T \) is a separated monomorphism and a local immersion of finite type, 
since \( f \) is a projective morphism. 
By the universal property of relative \( \bfS_{2} \) refinement and by 
Corollary~\ref{cor:BC-S2CMGor}\eqref{cor:BC-S2CMGor:2}, we see that, 
for a morphism \( T' \to T \) from a locally Noetherian scheme \( T' \), it factors through \( S \to T \) 
if and only if \( \omega^{[r]}_{Y'/T'} \) satisfies relative \( \bfS_{2} \) over \( T' \) 
for the base change morphism \( Y' = Y \times_{T} T' \to T' \). 
Note that \( \omega^{[r]}_{Y'/T'} \) is invertible if and only if 
\( Y' \to T' \) is a naively \( \BQQ \)-Gorenstein morphism 
whose relative Gorenstein index is a divisor of \( r \). 
Let \( B_{r} \) be the set of points \( P \in Y_{S} := Y \times_{T} S \) such that
\( \omega^{[r]}_{Y_{S}/S} \) is not invertible at \( P \).
Then, \( B_{r} \) is a closed subset of \( Y_{S} \). Let \( S_{r} \subset S\) be the complement
of the image of \( B_{r} \) in \( S \). Then, \( S_{r} \) is an open subset. 
For the morphism \( T' \to T \) above, 
if \( \omega^{[r]}_{Y'/T'} \) is invertible, then \( T' \to T \) 
factors through \( S \to T \), and for the induced morphism
\( h \colon Y' \to Y_{S} \) lying over \( T' \to S \), we have an isomorphism 
\[ \omega^{[r]}_{Y'/T'} \isom h^{*}(\omega^{[r]}_{Y_{S}/S}) \]
by Corollary~\ref{cor:BC-S2CMGor}\eqref{cor:BC-S2CMGor:2}. 
This implies that \( h(Y') \cap B_{r} = \emptyset\)
and that the image of \( T' \to S \) is contained in the open subset \( S_{r} \).
Therefore, the composite \( S_{r} \subset S \to T \) is the required morphism. 
\end{proof}

\begin{remn}
When \( f \colon Y \to T \) is a projective morphism, 
similar results to Theorems~\ref{thm:QGorRef} and \ref{thm:dec naive} are 
found in \cite[Cor.\ 24, 25]{KollarHusk}. 
\end{remn}

\appendix

%%%% Appendix A %%%%%
%%%%%%%%%%%%%%%%%%%%%%%%%%%%%%%%%%%
%%%%%%%%%%%%%%%%%%%%%%%%%%%%%%%%%%%%%%%

\section{Some basic properties in scheme theory}
\label{sect:Basics}

For readers' convenience,
we collect here famous results on the local criterion of flatness and
the base change isomorphisms.

\subsection{Local criterion of flatness}
\label{subsect:LCflat}

Here, we summarize results related to the ``local criterion of flatness.''
It is usually considered as
Proposition~\ref{prop:LCflat} below.
But, the subsequent Corollaries \ref{cor:LCflat2}, \ref{cor:LCflat2a}, \ref{cor:LCflat3}
are also useful in the scheme theory.
For the detail, the reader is referred to 
\cite[IV, \S5]{SGA1},
\cite[III, \S5]{BAC},
\cite[$0_{\text{III}}$, \S10.2]{EGA},
\cite[V, \S3]{AltmanKleiman},
\cite[\S22]{Matsumura}, etc.
We  also mention a ``local criterion of freeness'' as Lemma~\ref{lem:LCfree},
and explain two  more results on flatness and local freeness for sheaves on schemes.

\begin{prop}[local criterion of flatness]\label{prop:LCflat}
For a ring \( A \), an ideal \( I \) of \( A \), and for an \( A \)-module \( M \),
assume that
\begin{enumerate}
\renewcommand{\theenumi}{\arabic{enumi}}
\renewcommand{\labelenumi}{(\theenumi)}
\item  \( I \) is nilpotent, or

\item  \label{prop:LCflat:ii}
\( A \) is Noetherian and \( M \) is
\emph{\( I \)-adically ideally separated}, i.e.,
\( \Ga \otimes_{A} M \) is separated for the \( I \)-adic topology for all ideals \( \Ga \) of \( A \).
\end{enumerate}
Then, the following four conditions are equivalent to each other\emph{:}
\begin{enumerate}
\renewcommand{\theenumi}{\roman{enumi}}
\renewcommand{\labelenumi}{(\theenumi)}
\item  \label{prop:LCflat:1} \( M \) is flat over \( A \);

\item \label{prop:LCflat:2} \( M/IM \) is flat over \( A/I \) and \( \Tor_{1}^{A}(M, A/I) = 0 \);

\item \( M/IM \) is flat over \( A/I \) and the canonical homomorphism
\[ M/IM \otimes_{A/I} I^{k}/I^{k+1} \to I^{k}M/I^{k+1}M \]
is an isomorphism for any \( k \geq 0 \);

\item  \label{prop:LCflat:4}
\( M/I^{k}M \) is flat over \( A/I^{k} \) for any \( k \geq 1 \).
\end{enumerate}
\end{prop}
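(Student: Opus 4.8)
The statement to prove is the classical local criterion of flatness (Proposition~\ref{prop:LCflat}), so the plan is to follow the standard route via the associated graded module and the Artin--Rees lemma, splitting the argument into the nilpotent case and the Noetherian ideally-separated case only at the very end.

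The plan is to first establish the chain of implications \eqref{prop:LCflat:1} $\Rightarrow$ \eqref{prop:LCflat:2} $\Rightarrow$ \eqref{prop:LCflat:3} $\Rightarrow$ \eqref{prop:LCflat:4}, each of which is formal and needs no hypothesis on $I$ beyond what is stated. The implication \eqref{prop:LCflat:1} $\Rightarrow$ \eqref{prop:LCflat:2} is immediate: flatness is preserved by base change $A \to A/I$, and $\Tor_1^A(M, A/I) = 0$ since $M$ is flat. For \eqref{prop:LCflat:2} $\Rightarrow$ \eqref{prop:LCflat:3}, I would run the standard inductive argument: tensoring the exact sequence $0 \to I^{k+1} \to I^k \to I^k/I^{k+1} \to 0$ with $M$ and using $\Tor_1^A(M, A/I) = 0$ together with flatness of $M/IM$ over $A/I$, one shows by induction on $k$ that $\Tor_1^A(M, A/I^k) = 0$ and that the natural surjection $M/IM \otimes_{A/I} I^k/I^{k+1} \to I^kM/I^{k+1}M$ is injective. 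The implication \eqref{prop:LCflat:3} $\Rightarrow$ \eqref{prop:LCflat:4} is a diagram chase: the isomorphism on graded pieces, combined with flatness of $M/IM$ over $A/I$, gives $\Tor_1^{A/I^k}(M/I^kM, A/I) = 0$ for each $k$, and one checks flatness of $M/I^kM$ over $A/I^k$ by reducing the Tor-vanishing criterion (against finitely generated, or arbitrary, modules) to modules annihilated by a power of $I$, filtering by powers of $I$.

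The genuinely substantive step is \eqref{prop:LCflat:4} $\Rightarrow$ \eqref{prop:LCflat:1}. In the nilpotent case this is trivial, since $I^k = 0$ for large $k$ forces $M = M/I^kM$ and $A = A/I^k$. In the Noetherian ideally-separated case, the argument is: to prove $M$ flat it suffices to show $\Tor_1^A(A/\Ga, M) = 0$ for every ideal $\Ga \subset A$, and since $A$ is Noetherian one may take $\Ga$ finitely generated. One forms the exact sequence $0 \to K \to \Ga \otimes_A M \to \Ga M \to 0$ where $K = \Tor_1^A(A/\Ga, M)$, and the goal is $K = 0$. Passing to the $I$-adic completions, using that $\widehat{A} = \varprojlim A/I^k$ is flat over $A$ (Noetherian hypothesis) and that $M/I^kM = M \otimes_A A/I^k$ is flat over $A/I^k$ by \eqref{prop:LCflat:4}, one shows $\Tor_1^{A/I^k}(A/(\Ga + I^k), M/I^kM) = 0$; a limit argument then gives that $K$ maps to zero in $\varprojlim (K/I^k K)$, so $K \subset \bigcap_k I^k K$. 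But $K$ is a submodule of $\Ga \otimes_A M$, which is $I$-adically separated by the ideally-separated hypothesis applied to $\Ga$, hence $\bigcap_k I^k(\Ga \otimes_A M) = 0$; one must be slightly careful that separatedness of the ambient module forces $\bigcap_k I^k K = 0$ as well, which follows from Artin--Rees (the $I$-adic topology on $K$ is induced from that on $\Ga \otimes_A M$, using $A$ Noetherian and $\Ga \otimes_A M$ finitely generated over $A$ since $\Ga$ is finitely generated). Therefore $K = 0$.

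The main obstacle I anticipate is precisely the bookkeeping in this last step: correctly invoking Artin--Rees to compare the intrinsic and subspace $I$-adic topologies on $K$, and ensuring the ideally-separated hypothesis is used in the right place (on $\Ga \otimes_A M$, not on $M$ itself). Everything else is formal homological algebra. I would cite \cite[$0_{\text{III}}$, \S10.2]{EGA} or \cite[\S22]{Matsumura} for the details and present the proof at the level of indicating the four implications and the completion-plus-Artin--Rees mechanism, rather than writing out every Tor computation.
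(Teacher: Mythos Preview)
The paper does not give its own proof of this proposition: it simply states the result and, in the subsequent Remark, cites the standard references (SGA1, Bourbaki, EGA, Altman--Kleiman, Matsumura). Your proposal therefore goes beyond what the paper does, by actually sketching the classical argument found in those references; the overall route you describe (the chain \eqref{prop:LCflat:1} $\Rightarrow$ \eqref{prop:LCflat:2} $\Rightarrow$ (iii) $\Rightarrow$ \eqref{prop:LCflat:4} formally, and \eqref{prop:LCflat:4} $\Rightarrow$ \eqref{prop:LCflat:1} via Artin--Rees plus ideal-separatedness) is exactly the standard one.

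One small correction to your sketch of \eqref{prop:LCflat:4} $\Rightarrow$ \eqref{prop:LCflat:1}: the place where Artin--Rees enters is not on the submodule $K \subset \Ga \otimes_A M$ --- your claim that $\Ga \otimes_A M$ is finitely generated over $A$ is false in general, since $M$ is not assumed finitely generated --- but rather on the inclusion of ideals $\Ga \subset A$. The flatness of $M/I^kM$ over $A/I^k$ shows that $K$ lies in the image of $(\Ga \cap I^k) \otimes_A M \to \Ga \otimes_A M$ for every $k$; Artin--Rees on $\Ga \subset A$ (legitimate since $A$ is Noetherian) gives $\Ga \cap I^k \subset I^{k-c}\Ga$ for $k \geq c$, whence $K \subset I^{k-c}(\Ga \otimes_A M)$ for all such $k$, and then ideal-separatedness of $\Ga \otimes_A M$ yields $K = 0$ directly. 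So your anticipated ``obstacle'' about topologies on $K$ simply does not arise.
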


\begin{remn}
The proof is found in \cite[IV, Cor.~5.5, Th.~5.6]{SGA1},
\cite[III, \S5.2, Th.~1]{BAC},
\cite[$0_{\text{III}}$, (10.2.1)]{EGA},
\cite[V, Th.~(3.2)]{AltmanKleiman},
\cite[Th.~22.3]{Matsumura}.
The condition \eqref{prop:LCflat:ii} is satisfied, for example,
when there is a ring homomorphism \( A \to B \) of Noetherian rings such that
\( M \) is originally a finitely generated \( B \)-module and that
\( IB \) is contained in the \emph{Jacobson radical} \( \rad(B)\) of \( B \)
(cf.\ \cite[III, \S5.4, Prop.~2]{BAC},
\cite[$0_{\text{III}}$, (10.2.2)]{EGA},
\cite[p.~174]{Matsumura}).
\end{remn}

\begin{cor}\label{cor:LCflat2}
Let \( A \to B \) be a local ring homomorphism
of Noetherian local rings and let
\( u \colon M \to N \) be a homomorphism of \( B\)-modules such that
\( M \) and \( N \) are finitely generated \( B \)-modules and that \( N \) is flat over \( A \).
Then, the following two conditions are equivalent to each other\emph{:}
\begin{enumerate}
\renewcommand{\theenumi}{\roman{enumi}}
\renewcommand{\labelenumi}{(\theenumi)}
\item  \( u \) is injective and the cokernel of \( u \) is flat over \( A \)\emph{;}

\item \( u \otimes_{A} \Bbbk \colon M \otimes_{A} \Bbbk \to N \otimes_{A} \Bbbk \)
is injective for the residue field \( \Bbbk \) of \( A \).
\end{enumerate}
\end{cor}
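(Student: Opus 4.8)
The plan is to write $M'$ for the image of $u$ and $K$ for its kernel, so that $u$ factors as $M \twoheadrightarrow M' \injmap N$, set $Q = \Coker(u)$, and analyse the two short exact sequences
\[ 0 \to M' \to N \to Q \to 0, \qquad 0 \to K \to M \to M' \to 0 \]
by applying $- \otimes_{A} \Bbbk$ and chasing the resulting $\Tor$-sequences. The one point that must be settled carefully — and the part I expect to be the main obstacle — is the legitimacy of invoking the local criterion of flatness for the modules $Q$ and $M'$ over $A$: since $A \to B$ is a \emph{local} ring homomorphism, $\GM_{A}B \subseteq \GM_{B} = \rad(B)$, so every finitely generated $B$-module is $\GM_{A}$-adically ideally separated and Proposition~\ref{prop:LCflat} applies with $I = \GM_{A}$, its hypothesis~\eqref{prop:LCflat:ii} being thereby verified. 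Everything else is routine homological bookkeeping.

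The implication (i) $\Rightarrow$ (ii) is immediate: if $u$ is injective and $Q$ is flat over $A$, then $0 \to M \xrightarrow{u} N \to Q \to 0$ is exact, and tensoring with $\Bbbk$ over $A$ produces an exact sequence whose leftmost term $\Tor^{A}_{1}(Q,\Bbbk)$ vanishes by flatness of $Q$; hence $u \otimes_{A}\Bbbk$ is injective.

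For (ii) $\Rightarrow$ (i), assume $u \otimes_{A}\Bbbk$ is injective. I would first exploit the factorization $u \otimes_{A}\Bbbk = \bigl(M'\otimes_{A}\Bbbk \to N\otimes_{A}\Bbbk\bigr)\circ\bigl(M\otimes_{A}\Bbbk \to M'\otimes_{A}\Bbbk\bigr)$: since the composite is injective, the first factor $M\otimes_{A}\Bbbk \to M'\otimes_{A}\Bbbk$ is injective, and being also surjective it is an isomorphism, whence the second factor $M'\otimes_{A}\Bbbk \to N\otimes_{A}\Bbbk$ is injective too. Feeding this last injectivity into the $\Tor$-sequence of $0 \to M' \to N \to Q \to 0$ — where $\Tor^{A}_{1}(N,\Bbbk)=0$ because $N$ is flat — gives $\Tor^{A}_{1}(Q,\Bbbk) = 0$, so $Q$ is flat over $A$ by the local criterion (Proposition~\ref{prop:LCflat}, \eqref{prop:LCflat:1} $\Leftrightarrow$ \eqref{prop:LCflat:2}); the long exact $\Tor$-sequence of the same short exact sequence then forces $M'$ to be flat over $A$ as well, since $N$ and $Q$ are flat. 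Finally, tensoring $0 \to K \to M \to M' \to 0$ with $\Bbbk$ and using $\Tor^{A}_{1}(M',\Bbbk)=0$ gives a short exact sequence $0 \to K\otimes_{A}\Bbbk \to M\otimes_{A}\Bbbk \to M'\otimes_{A}\Bbbk \to 0$; since the right-hand map is the isomorphism found above, $K\otimes_{A}\Bbbk = 0$, i.e. $K = \GM_{A}K \subseteq \GM_{B}K$. As $K$ is a finitely generated $B$-module (a submodule of $M$ over the Noetherian ring $B$), Nakayama's lemma yields $K = 0$, so $u$ is injective and $Q = \Coker(u)$ is flat over $A$.
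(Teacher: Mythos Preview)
Your proof is correct and complete. The paper does not supply its own argument here; it simply cites \cite[IV, Cor.~5.7]{SGA1}, \cite[$0_{\text{III}}$, (10.2.4)]{EGA}, \cite[VII, Lem.~(4.1)]{AltmanKleiman}, and \cite[Th.~22.5]{Matsumura}, and your write-up is precisely the standard proof one finds in those references (factor through the image, apply the local criterion to the cokernel, deduce flatness of the image from the long exact $\Tor$-sequence, and kill the kernel with Nakayama).
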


The proof is given in \cite[IV, Cor.~5.7]{SGA1}, \cite[$0_{\text{III}}$, (10.2.4)]{EGA},
\cite[VII, Lem.~(4.1)]{AltmanKleiman}, \cite[Th.~22.5]{Matsumura}.

\begin{cor}[{cf.\ \cite[$0_{\text{IV}}$, Prop.~(15.1.16)]{EGA}, \cite[Cor. to Th.~22.5]{Matsumura}}]
\label{cor:LCflat2a}
Let \( A \to B \) be a local ring homomorphism
of Noetherian local rings
and let \( M \) be a finitely generated \( B \)-module.
Let \( \Bbbk \) be the residue field of \( A \) and let \( \bar{x} \) denote
the image of \( x \in B \) in \( B \otimes_{A} \Bbbk \).
For elements \( x_{1} \), \ldots, \( x_{n} \) in the maximal ideal \( \GM_{B} \),
the following two conditions are equivalent to each other\emph{:}
\renewcommand{\theenumi}{\roman{enumi}}
\renewcommand{\labelenumi}{(\theenumi)}
\begin{enumerate}
\item  \( (x_{1}, \ldots, x_{n}) \) is an \( M \)-regular sequence
and \( M/\sum\nolimits_{i = 1}^{n} x_{i}M \) is flat over \( A \)\emph{;}

\item \( (\bar{x}_{1}, \ldots, \bar{x}_{n}) \) is an \( M \otimes_{A} \Bbbk \)-regular sequence
and \( M \) is flat over \( A \).
\end{enumerate}
\end{cor}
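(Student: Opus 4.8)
The plan is to reduce the equivalence to the case $n=1$ by a short induction, and to settle the case $n=1$ by combining Corollary~\ref{cor:LCflat2} with the local criterion of flatness (Proposition~\ref{prop:LCflat}). Throughout I will use that $M$ is $\GM_A$-adically ideally separated: since $M$ is a finitely generated $B$-module and $A \to B$ is local we have $\GM_A B \subseteq \GM_B = \rad(B)$, so the hypothesis \eqref{prop:LCflat:ii} of Proposition~\ref{prop:LCflat} is available for $I = \GM_A$, and Nakayama's lemma applies to finitely generated $B$-modules annihilated modulo an element of $\GM_B$. I also record the identity $(M/\sum_{i=1}^{j} x_i M) \otimes_A \Bbbk \cong (M\otimes_A\Bbbk)/\sum_{i=1}^{j}\bar x_i(M\otimes_A\Bbbk)$, which holds by right exactness of $\otimes_A\Bbbk$.

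First I would treat $n=1$, writing $x = x_1$. For $(ii)\Rightarrow(i)$: assuming $M$ is flat over $A$ and $\bar x$ is $(M\otimes_A\Bbbk)$-regular, apply Corollary~\ref{cor:LCflat2} to the multiplication map $u = (x\cdot)\colon M \to M$ (legitimate since the target is flat over $A$); as $u\otimes_A\Bbbk$ is multiplication by $\bar x$, hence injective, we conclude that $u$ is injective, i.e.\ $x$ is $M$-regular, and that $\Coker u = M/xM$ is flat over $A$. For $(i)\Rightarrow(ii)$: from the short exact sequence $0 \to M \xrightarrow{x} M \to M/xM \to 0$ (exact because $x$ is $M$-regular) and the flatness of $M/xM$ over $A$, the $\Tor^{A}(-,\Bbbk)$ long exact sequence gives $\Tor_1^{A}(M/xM,\Bbbk) = 0$, whence both $\Tor_1^{A}(M,\Bbbk) = x\cdot\Tor_1^{A}(M,\Bbbk)$ and injectivity of multiplication by $\bar x$ on $M\otimes_A\Bbbk$. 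Nakayama forces $\Tor_1^{A}(M,\Bbbk) = 0$; since $M\otimes_A\Bbbk$ is trivially flat over $\Bbbk = A/\GM_A$, Proposition~\ref{prop:LCflat} (the implication \eqref{prop:LCflat:2} $\Rightarrow$ \eqref{prop:LCflat:1} with $I = \GM_A$) shows $M$ is flat over $A$, and together with the injectivity of $\bar x$ this is exactly $(ii)$.

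Next I would run the induction on $n$; the case $n=1$ is done, and $n=0$ is the tautology that both conditions assert ``$M$ is flat over $A$''. Write $M_j := M/\sum_{i=1}^{j} x_i M$. For $(ii)\Rightarrow(i)$, apply the case $n=1$ to $(M,x_1)$ (valid since $M$ is flat over $A$ and $\bar x_1$ is $(M\otimes_A\Bbbk)$-regular) to get that $x_1$ is $M$-regular and $M_1$ is flat over $A$; then $(\bar x_2,\dots,\bar x_n)$ is regular on $M_1\otimes_A\Bbbk$, so the inductive hypothesis applied to $(M_1,(x_2,\dots,x_n))$ yields that $(x_2,\dots,x_n)$ is $M_1$-regular with $M_n$ flat over $A$, and concatenation gives that $(x_1,\dots,x_n)$ is $M$-regular. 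For $(i)\Rightarrow(ii)$, apply the case $n=1$ to $(M_{n-1},x_n)$ (valid since $x_n$ is $M_{n-1}$-regular and $M_{n-1}/x_nM_{n-1} = M_n$ is flat over $A$) to get $M_{n-1}$ flat over $A$ and $\bar x_n$ regular on $M_{n-1}\otimes_A\Bbbk$; then $(x_1,\dots,x_{n-1})$ is $M$-regular with $M_{n-1}$ flat over $A$, so the inductive hypothesis applied to $(M,(x_1,\dots,x_{n-1}))$ gives $M$ flat over $A$ and $(\bar x_1,\dots,\bar x_{n-1})$ regular on $M\otimes_A\Bbbk$; combining this with the regularity of $\bar x_n$ on $M_{n-1}\otimes_A\Bbbk = (M\otimes_A\Bbbk)/\sum_{i<n}\bar x_i(M\otimes_A\Bbbk)$ completes the proof.

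I expect the only genuinely delicate point to be the implication $(i)\Rightarrow(ii)$ in the case $n=1$ — specifically the passage from $\Tor_1^{A}(M,\Bbbk) = 0$ to flatness of $M$ over $A$, which is exactly where the ideally-separated hypothesis of Proposition~\ref{prop:LCflat} must be verified (via $\GM_A B \subseteq \rad(B)$); the remainder is formal manipulation of short exact sequences and of the definition of a regular sequence.
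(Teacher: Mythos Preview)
Your proof is correct. The paper does not supply its own proof of this corollary but simply cites \cite[$0_{\text{IV}}$, Prop.~(15.1.16)]{EGA} and \cite[Cor.\ to Th.~22.5]{Matsumura}; your argument is precisely the standard one found in the latter reference, namely induction on $n$ reducing to $n=1$, and settling $n=1$ via Corollary~\ref{cor:LCflat2} (which is Matsumura's Th.~22.5) together with the local criterion of flatness. One minor point worth making explicit is that $\Tor_1^A(M,\Bbbk)$ is a finitely generated $B$-module (computed from a resolution of $\Bbbk$ by finitely generated free $A$-modules, tensored with the finitely generated $B$-module $M$, over Noetherian $B$), so that Nakayama applies; you implicitly use this but do not state it.
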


\begin{cor}\label{cor:LCflat3}
Let \( A \to B \) and \( B \to C \) be local ring homomorphisms
of Noetherian local rings and
let \( \Bbbk \) be the residue field of \( A \). Assume that \( B \) is flat over \( A \).
Then, for a finitely generated \( C \)-module \( M \),
the following conditions are equivalent to each other\emph{:}
\begin{enumerate}
\renewcommand{\theenumi}{\roman{enumi}}
\renewcommand{\labelenumi}{(\theenumi)}
\item  \( M \) is flat over \( B \)\emph{;}

\item  \( M \) is flat over \( A \) and \( M \otimes_{A} \Bbbk \) is flat over \( B \otimes_{A} \Bbbk \).
\end{enumerate}
\end{cor}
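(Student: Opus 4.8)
The plan is to prove the two implications separately, the substantive one being $\text{(ii)}\Rightarrow\text{(i)}$, which I will deduce from the local criterion of flatness (Proposition~\ref{prop:LCflat}).

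For $\text{(i)}\Rightarrow\text{(ii)}$: first I would observe that if $M$ is flat over $B$ and $B$ is flat over $A$, then $M$ is flat over $A$ by transitivity of flatness. For the remaining assertion I would use the canonical isomorphism $M\otimes_{A}\Bbbk\isom M\otimes_{B}(B\otimes_{A}\Bbbk)$ together with the fact that flatness is stable under base change: since $M$ is flat over $B$, the module $M\otimes_{B}(B\otimes_{A}\Bbbk)$ is flat over $B\otimes_{A}\Bbbk$.

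For $\text{(ii)}\Rightarrow\text{(i)}$: set $I:=\GM_{A}B$, an ideal of $B$ contained in $\GM_{B}$ because $A\to B$ is a local homomorphism. Since $A\to B\to C$ are local homomorphisms of Noetherian rings, $M$ is a finitely generated $C$-module, and $IC\subseteq\GM_{C}=\rad(C)$, the module $M$ is $I$-adically ideally separated (cf.\ the Remark following Proposition~\ref{prop:LCflat}), so Proposition~\ref{prop:LCflat} applies to the ring $B$, the ideal $I$, and the $B$-module $M$. By the equivalence \eqref{prop:LCflat:1}~$\Leftrightarrow$~\eqref{prop:LCflat:2} there, it is enough to check that $M/IM$ is flat over $B/I$ and that $\Tor^{B}_{1}(M,B/I)=0$. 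Now $B/I=B\otimes_{A}\Bbbk$ and $M/IM\isom M\otimes_{A}\Bbbk$, so the first condition is precisely the second hypothesis in (ii). For the $\Tor$-vanishing I would take a free resolution $A_{\bullet}\to\Bbbk$ of $\Bbbk$ over $A$; applying the exact functor $B\otimes_{A}(-)$ yields a free resolution $B\otimes_{A}A_{\bullet}\to B\otimes_{A}\Bbbk=B/I$ of $B/I$ over $B$ (exactness uses the flatness of $B$ over $A$). Computing $\Tor^{B}(M,B/I)$ with this resolution and using the identification $M\otimes_{B}(B\otimes_{A}A_{\bullet})\isom M\otimes_{A}A_{\bullet}$, one gets $\Tor^{B}_{i}(M,B/I)\isom\Tor^{A}_{i}(M,\Bbbk)$ for all $i$; in particular $\Tor^{B}_{1}(M,B/I)\isom\Tor^{A}_{1}(M,\Bbbk)=0$ since $M$ is flat over $A$. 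Then Proposition~\ref{prop:LCflat} gives that $M$ is flat over $B$.

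There is no serious obstacle here; the two points requiring care are the verification of the ideal-separatedness hypothesis of Proposition~\ref{prop:LCflat} (handled by the Remark following it, via the local homomorphism $B\to C$) and the base-change identification $\Tor^{B}_{i}(M,B\otimes_{A}\Bbbk)\isom\Tor^{A}_{i}(M,\Bbbk)$, which is exactly where the flatness of $B$ over $A$ is used essentially.
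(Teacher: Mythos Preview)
Your proof is correct and is precisely the standard argument: the paper itself does not spell out a proof but simply cites \cite[IV, Cor.~5.9]{SGA1}, \cite[III, \S5.4, Prop.~3]{BAC}, \cite[$0_{\text{III}}$, (10.2.5)]{EGA}, and \cite[V, Prop.~(3.4)]{AltmanKleiman}, and what you have written is essentially the proof found in those references, reducing (ii)~$\Rightarrow$~(i) to Proposition~\ref{prop:LCflat} via the ideal $I=\GM_{A}B$ and the base-change identification $\Tor^{B}_{i}(M,B/I)\isom\Tor^{A}_{i}(M,\Bbbk)$.
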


The proof is given in \cite[IV, Cor.~5.9]{SGA1}, \cite[III, \S5.4, Prop.~3]{BAC},
\cite[$0_{\text{III}}$, (10.2.5)]{EGA}, \cite[V, Prop.~(3.4)]{AltmanKleiman}.

Next, we shall give the ``local criterion of freeness'' as Lemma~\ref{lem:LCfree} below,
which is similar to Proposition~\ref{prop:LCflat}.
This result is well known (cf.\ \cite[IV, Prop.~4.1]{SGA1}, \cite[II, \S3.2, Prop.~5]{BAC},
\cite[$0_{\text{III}}$, (10.1.2)]{EGA}), but
is not usually called the ``local criterion of freeness'' in articles.

\begin{lem}[local criterion of freeness]\label{lem:LCfree}
Let \( A \) be a ring, \( I \) an ideal of \( A \), and \( M \) an \( A \)-module such that
\begin{itemize}
\item  \( I \) is nilpotent or

\item  \( A \) is Noetherian, \( I \subset \rad(A) \), and
\( M \) is a finitely generated \( A \)-module.
\end{itemize}
Then, the following conditions are equivalent to each other\emph{:}
\begin{enumerate}
\renewcommand{\theenumi}{\roman{enumi}}
\renewcommand{\labelenumi}{(\theenumi)}
\item  \( M \) is a free \( A \)-module\emph{;}

\item  \( M/IM \) is a free \( A/I \)-module and \( \Tor_{1}^{A}(M, A/I) = 0 \)\emph{;}

\item  \( M/IM \) is a free \( A/I \)-module and the canonical homomorphism
\[ M/IM \otimes_{A/I} I^{k}/I^{k+1} \to I^{k}M/I^{k+1}M \]
is an isomorphism for any \( k \geq 0 \).
\end{enumerate}
\end{lem}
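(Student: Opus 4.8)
The plan is to reduce the ``local criterion of freeness'' to the already-stated local criterion of flatness (Proposition~\ref{prop:LCflat}) plus the standard fact that, under the stated hypotheses on $A$, $I$, and $M$, a flat module is automatically free. The key observation is that conditions (ii) and (iii) of Lemma~\ref{lem:LCfree} are formally stronger than conditions (ii) and (iii) of Proposition~\ref{prop:LCflat}: ``$M/IM$ is a free $A/I$-module'' implies ``$M/IM$ is a flat $A/I$-module,'' and the remaining clauses ($\Tor_1^A(M, A/I) = 0$, resp.\ bijectivity of $M/IM \otimes_{A/I} I^k/I^{k+1} \to I^kM/I^{k+1}M$) are literally identical. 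So from (ii) or (iii) of Lemma~\ref{lem:LCfree} we may invoke Proposition~\ref{prop:LCflat} to conclude that $M$ is flat over $A$. Note that the hypotheses of Lemma~\ref{lem:LCfree} are a special case of those of Proposition~\ref{prop:LCflat}: if $I$ is nilpotent we are in case (1), and if $A$ is Noetherian, $I \subset \rad(A)$, and $M$ is finitely generated, then $M$ is $I$-adically ideally separated by the Remark following Proposition~\ref{prop:LCflat} (take $B = A$), so we are in case (2).

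Next I would prove the implication (i) $\Rightarrow$ (ii) $\Rightarrow$ (iii), which is the easy direction and uses no hypothesis beyond $I$ being an ideal. If $M$ is free over $A$, then $M/IM$ is free over $A/I$, and $\Tor_1^A(M, A/I) = 0$ since free modules are flat; moreover $M \cong A^{(\Lambda)}$ gives $M/IM \otimes_{A/I} I^k/I^{k+1} \cong (I^k/I^{k+1})^{(\Lambda)} \cong I^kM/I^{k+1}M$ compatibly with the canonical map, so (iii) holds. The implication (iii) $\Rightarrow$ (ii) follows by taking $k = 0$ in the isomorphism statement, which is equivalent to $\Tor_1^A(M, A/I) = 0$ by the standard exact sequence $0 \to \Tor_1^A(M, A/I) \to I \otimes_A M \to M$ together with the compatible filtration argument; alternatively one notes directly that both (ii) and (iii) of Lemma~\ref{lem:LCfree} are, by the cited references, equivalent characterizations.

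The only remaining point is (i) from flatness: we must show that a module $M$ which is flat over $A$ and satisfies ``$M/IM$ free over $A/I$'' is in fact free over $A$. In the nilpotent case, choose elements of $M$ whose images form an $A/I$-basis of $M/IM$; by Nakayama-type reasoning (valid since $I$ is nilpotent, hence $I \subset \rad(A)$) these elements generate $M$, giving a surjection $A^{(\Lambda)} \twoheadrightarrow M$ whose kernel $K$ satisfies $K/IK = 0$ because tensoring with $A/I$ gives an isomorphism $(A/I)^{(\Lambda)} \cong M/IM$ (here flatness of $M$ ensures $\Tor_1^A(M,A/I) = 0$, so the sequence stays exact after $\otimes A/I$); then $K = IK$ forces $K = 0$, again by nilpotence of $I$. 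In the Noetherian case with $M$ finitely generated and $I \subset \rad(A)$, the same argument works verbatim with $\Lambda$ finite, using Nakayama's lemma in its usual form. I expect no serious obstacle here: the argument is entirely standard and is exactly what is recorded in \cite[IV, Prop.~4.1]{SGA1}, \cite[II, \S3.2, Prop.~5]{BAC}, and \cite[$0_{\text{III}}$, (10.1.2)]{EGA}, so the proof can simply be assembled from Proposition~\ref{prop:LCflat} and this elementary flat-plus-free-mod-$I$ $\Rightarrow$ free lemma, with appropriate citations.
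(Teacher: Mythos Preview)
The paper does not give its own proof of this lemma: it is stated with citations to \cite[IV, Prop.~4.1]{SGA1}, \cite[II, \S3.2, Prop.~5]{BAC}, and \cite[$0_{\text{III}}$, (10.1.2)]{EGA}, and the Remark immediately following it sketches exactly the reduction you propose---deduce flatness from Proposition~\ref{prop:LCflat} and then upgrade flat to free via the Nakayama lifting argument. Your write-up is therefore aligned with what the paper indicates, and in fact goes a bit further: the paper's Remark only claims this derivation for the case of a Noetherian local ring with $I$ the maximal ideal, whereas your argument correctly handles both bullet-point hypotheses in full.

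One small correction: in your ``(iii) $\Rightarrow$ (ii)'' paragraph, taking $k=0$ in the isomorphism of (iii) gives only the tautology $M/IM \otimes_{A/I} A/I \cong M/IM$, which says nothing about $\Tor_1^A(M,A/I)$. The clean way to get (ii) $\Leftrightarrow$ (iii) here is the one you mention as an alternative: since ``$M/IM$ free over $A/I$'' implies ``$M/IM$ flat over $A/I$,'' conditions (ii) and (iii) of the lemma are exactly conditions (ii) and (iii) of Proposition~\ref{prop:LCflat} with the extra clause ``$M/IM$ free,'' so their equivalence is inherited directly from that proposition. With this adjustment your argument is complete.
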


\begin{remn}
Applying Lemma~\ref{lem:LCfree} to the case
where \( A \) is a Noetherian local ring and \( I \) is the maximal ideal,
we have the equivalence of flatness and freeness for finitely generated \( A \)-modules
(cf.\ \cite[IV, Cor.~4.3]{SGA1}, \cite[$0_{\text{III}}$, (10.1.3)]{EGA}).
On the other hand, the equivalence of flatness and freeness can be proved by other methods
(cf.\ \cite[Th.~7.10]{Matsumura}, \cite[Lem.~5.8]{AltmanKleiman}), and using the equivalence,
we obtain Lemma~\ref{lem:LCfree}
for the same local ring \( (A, I) \) and for a finitely generated \( A \)-module \( M \),
as a corollary of Proposition~\ref{prop:LCflat}.
\end{remn}

\begin{remn}
The equivalence explained above implies the following well-known fact:
\emph{For a locally Noetherian scheme \( X \), a coherent flat \( \SO_{X} \)-module
is nothing but a locally free \( \SO_{X} \)-module of finite rank.}
\end{remn}

The following is proved immediately from the definitions of flatness and faithful flatness
(cf.\ \cite[I, \S3, no.\ 2, Prop.\ 4]{BAC}):

\begin{lem}\label{lem:fflatDescent}
Let \( f \colon X \to Y \) and \( g \colon Y \to Z \) be morphisms of schemes such that
\( f \) is faithfully flat, i.e., flat and surjective.
Then, for an \( \SO_{Y} \)-module \( \SG \),
it is flat over \( Z \) if and only if \( f^{*}\SG \) is flat over \( Z \).
\end{lem}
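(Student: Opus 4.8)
The statement to prove is Lemma~\ref{lem:fflatDescent}: for morphisms $f \colon X \to Y$ and $g \colon Y \to Z$ with $f$ faithfully flat, an $\SO_Y$-module $\SG$ is flat over $Z$ if and only if $f^*\SG$ is flat over $Z$. This is an entirely local, algebraic statement about rings and modules, so the plan is to reduce immediately to the affine setting and invoke the standard behaviour of flatness under faithfully flat base extension of scalars.

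The plan is as follows. First I would reduce to the affine case: flatness of $\SG$ over $Z$ is checked stalkwise (at points of $Y$, relative to the local ring of $Z$ at the image point), and likewise for $f^*\SG$; and since $f$ is surjective, every point of $Y$ is the image of a point of $X$. So it suffices to work with a local ring homomorphism $C \to B$ (from $Z$ to $Y$), a faithfully flat local-ish extension $B \to A$ (from $Y$ to $X$, at chosen points — faithful flatness of $f$ gives that $B \to A$ is flat, and one picks a point of $X$ over the given point of $Y$ to get the relevant local behaviour), and a $B$-module $M$ (the stalk of $\SG$); one wants: $M$ is flat over $C$ $\iff$ $A \otimes_B M$ is flat over $C$. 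The forward direction is just that flatness is preserved under the base change $-\otimes_B A$ composed along $C \to B \to A$: if $M$ is $C$-flat then $A \otimes_B M = A \otimes_B (M)$ is a flat $A$-module tensored appropriately, and more simply $A \otimes_B M$ is flat over $C$ because $A \otimes_B (M \otimes_C -) \cong (A\otimes_B M) \otimes_C -$ and $M \otimes_C -$ is exact while $A \otimes_B -$ is exact ($A$ being flat over $B$). The reverse direction uses faithful flatness: given an injection $N' \hookrightarrow N$ of $C$-modules, we want $M \otimes_C N' \to M \otimes_C N$ injective; applying the exact functor $A \otimes_B -$ turns this into $(A\otimes_B M)\otimes_C N' \to (A\otimes_B M)\otimes_C N$, which is injective by flatness of $A\otimes_B M$ over $C$; since $A \otimes_B -$ is faithfully exact on $B$-modules (as $B \to A$ is faithfully flat), injectivity after applying $A\otimes_B -$ implies injectivity of the original map.

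Concretely I would phrase the argument as: the functor $A \otimes_B (-)$ from $B$-modules to $A$-modules is exact and faithful because $B \to A$ is faithfully flat (cf.\ \cite[I, \S3, no.~2, Prop.~4]{BAC}); for any $C$-module $P$ there is a natural isomorphism $A \otimes_B (M \otimes_C P) \cong (A \otimes_B M) \otimes_C P$; hence the complex $M \otimes_C (-)$ applied to a short exact sequence of $C$-modules is exact if and only if its image under $A \otimes_B (-)$ is exact, i.e.\ if and only if $(A \otimes_B M) \otimes_C (-)$ is exact on that sequence. Taking this over all short exact sequences of $C$-modules gives that $M$ is $C$-flat iff $A \otimes_B M$ is $C$-flat, which is exactly the assertion after translating back to sheaves and stalks. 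I do not expect any genuine obstacle here; the only point requiring a moment's care is the bookkeeping in the reduction to stalks — making sure that the surjectivity of $f$ is what lets one detect flatness of $\SG$ over $Z$ at every point of $Y$ by testing at a preimage point in $X$ — but this is routine and follows directly from the definition of $\Fl(\SF/T)$ and the fact that $\SO_{Y,y} \to \SO_{X,x}$ is faithfully flat when $f$ is flat and $f(x) = y$.

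Since this is a short appendix lemma whose proof is a one-line invocation of descent of flatness along faithfully flat ring maps, I would keep the written proof brief: state the reduction to the local algebra statement, cite \cite[I, \S3, no.~2, Prop.~4]{BAC} for exactness and faithfulness of $A \otimes_B (-)$, note the natural isomorphism $A \otimes_B (M \otimes_C P) \cong (A \otimes_B M) \otimes_C P$, and conclude. No step should require more than a sentence or two.
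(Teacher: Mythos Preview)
Your proposal is correct and takes essentially the same approach as the paper: the paper gives no written proof at all, simply stating that the lemma ``is proved immediately from the definitions of flatness and faithful flatness (cf.\ \cite[I, \S3, no.\ 2, Prop.\ 4]{BAC}),'' which is exactly the Bourbaki reference you invoke. You have merely spelled out the stalk-local reduction and the faithfully-flat descent argument that the paper leaves implicit.
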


As a corollary in the case where \( Y = Z \),
we have the following descent property of locally freeness
by the relation with flat coherent sheaves.

\begin{lem}\label{lem:LocFreeDecsent}
Let \( f \colon X \to Y \) be a flat surjective morphism of locally Noetherian schemes.
For a coherent \( \SO_{Y} \)-module \( \SG \), it is locally free if and only if \( f^{*}\SG \)
is so.
\end{lem}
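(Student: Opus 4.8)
\textbf{Proof plan for Lemma~\ref{lem:LocFreeDecsent}.}
The plan is to reduce the statement to the corresponding fact about flatness, namely Lemma~\ref{lem:fflatDescent} in the special case $Y = Z$, together with the remark recorded just before Lemma~\ref{lem:fflatDescent} that over a locally Noetherian scheme a coherent flat module is the same as a locally free module of finite rank. First I would observe that both implications are local on $Y$ and on $X$, so there is no loss in assuming $Y$ (and hence $X$, since $f$ is locally of finite type in the situations of interest — though here we only need $f$ flat) is Noetherian; in any case the coherence of $\SG$ and of $f^{*}\SG$ is preserved, since $f$ is a morphism of locally Noetherian schemes and the pullback of a coherent sheaf along a morphism of locally Noetherian schemes is coherent.

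The ``only if'' direction is immediate and does not even use surjectivity: if $\SG$ is locally free of finite rank, then $f^{*}\SG$ is locally free of finite rank, because $f^{*}$ takes a local presentation $\SO_Y^{\oplus n} \isom \SG$ on an open set $V$ to a local presentation $\SO_X^{\oplus n} \isom f^{*}\SG$ on $f^{-1}(V)$. For the ``if'' direction I would argue as follows. Suppose $f^{*}\SG$ is locally free of finite rank on $X$. Then in particular $f^{*}\SG$ is a coherent $\SO_X$-module flat over $X$, and a fortiori flat over $Y$ (taking $Z = Y$ and $g = \id_Y$ in the formulation; equivalently, a locally free sheaf is flat). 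By Lemma~\ref{lem:fflatDescent} applied with $X \xrightarrow{f} Y \xrightarrow{\id} Y$ — here $f$ is faithfully flat by hypothesis — the coherent $\SO_Y$-module $\SG$ is flat over $Y$. Since $Y$ is locally Noetherian and $\SG$ is coherent, a coherent flat $\SO_Y$-module is locally free of finite rank, by the remark cited above. Hence $\SG$ is locally free of finite rank, which is what we wanted.

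There is essentially no obstacle here: the content is entirely in the descent of flatness under faithfully flat base change, which is already isolated as Lemma~\ref{lem:fflatDescent}, and in the equivalence of flatness and local freeness for coherent modules over a locally Noetherian scheme, which is the standard consequence of the equivalence of flatness and freeness for finitely generated modules over a Noetherian local ring recorded in the discussion following Lemma~\ref{lem:LCfree}. The only small point to be careful about is that ``locally free'' in this lemma means ``locally free of finite rank'' (i.e.\ a vector bundle), so one should phrase the flatness-equals-local-freeness step with the coherence hypothesis explicitly in force; once that is noted the argument is a two-line deduction. If one prefers a self-contained presentation, one could alternatively check the ``if'' direction stalkwise: for $y \in Y$ pick $x \in X$ with $f(x) = y$ (using surjectivity), note that $\SO_{X,x}$ is faithfully flat over $\SO_{Y,y}$ since $f$ is flat and local, and that $\SG_y \otimes_{\SO_{Y,y}} \SO_{X,x} \isom (f^{*}\SG)_x$ is free over $\SO_{X,x}$; then faithfully flat descent of freeness for finitely generated modules over Noetherian local rings gives that $\SG_y$ is free over $\SO_{Y,y}$, and one concludes by the coherence of $\SG$. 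Either route works; I would present the first, since it merely cites results already in place in the appendix.
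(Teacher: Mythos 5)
Your argument is correct and is essentially the paper's own: the paper derives this lemma as a corollary of Lemma~\ref{lem:fflatDescent} in the case \( Y = Z \), combined with the equivalence of flatness and local freeness for coherent sheaves on locally Noetherian schemes noted after Lemma~\ref{lem:LCfree}, exactly as in your first route.
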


\noindent
The authors could not find a good reference for Lemma~\ref{lem:LocFreeDecsent}.
For example, we have a weaker result as a part of \cite[VIII, Prop.~1.10]{SGA1}, where
\( f \) is assumed additionally to be quasi-compact; However, the quasi-compactness
is related to the other part.

\subsection{Base change isomorphisms}

Let us consider a Cartesian diagram

\[ \begin{CD}
X' @>{g'}>> X \\ @V{f'}VV @VV{f}V \\ S' @>{g}>> S
\end{CD}\]
of schemes, i.e., \( X' \isom X \times_{S} S' \).
Then, for any quasi-coherent \( \SO_{X} \)-module \( \SF \),
one has a functorial canonical homomorphism
\[ \theta(\SF) \colon g^{*}(f_{*}\SF) \to f'_{*}(g^{\prime *}\SF) \]
of \( \SO_{S'} \)-modules, and more generally, a functorial canonical homomorphism
\[ \theta^{i}(\SF) \colon g^{*}(R^{i}f_{*}\SF) \to R^{i}f'_{*}(g^{\prime *}\SF) \]
for each \( i \geq 0 \).
We have the following assertions on \( \theta(\SF) \) and \( \theta^{i}(\SF) \).

\begin{lem}[affine base change]\label{lem:affineBC}
If \( f \) is an affine morphism, then \( \theta(\SF) \) is an isomorphism.
\end{lem}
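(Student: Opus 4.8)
The statement is the classical ``affine base change'' isomorphism, and the plan is to reduce it to an entirely local computation on the two base schemes $S$ and $S'$, exploiting that both the formation of $f_*$ and the formation of the fiber product along $g$ behave well for affine morphisms. First I would observe that the claim is local on $S'$: if $\{\Spec B'_\lambda\}$ is an affine open cover of $S'$, then $\theta(\SF)$ is an isomorphism if and only if its restriction to each $\Spec B'_\lambda$ is, and each such restriction is the base change homomorphism attached to the Cartesian square obtained by replacing $S'$ with $\Spec B'_\lambda$ (and $X'$ with $X\times_S \Spec B'_\lambda$). Likewise the claim is local on $S$: pulling back along an affine open $\Spec B\subset S$ containing the image of $\Spec B'_\lambda$, and using that $g^*$ and $f_*$ both commute with restriction to opens, we reduce to the case where $S=\Spec B$ and $S'=\Spec B'$ are both affine, with a ring homomorphism $B\to B'$.

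\textbf{Key steps.} Once $S=\Spec B$ and $S'=\Spec B'$ are affine, the hypothesis that $f$ is affine gives $X=\SSpec_S \SA$ for a quasi-coherent $\SO_S$-algebra $\SA$, equivalently $X=\Spec A$ for the $B$-algebra $A=\OH^0(S,\SA)$, and then $X'=X\times_S S'=\Spec(A\otimes_B B')$, which is again affine over $S'$. A quasi-coherent $\SO_X$-module $\SF$ corresponds to an $A$-module $M$, and then $f_*\SF$ is the quasi-coherent $\SO_S$-module associated with $M$ regarded as a $B$-module, so $g^*(f_*\SF)$ is the $\SO_{S'}$-module associated with $M\otimes_B B'$. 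On the other side, $g^{\prime*}\SF$ is the $\SO_{X'}$-module associated with $M\otimes_A (A\otimes_B B')\isom M\otimes_B B'$, and $f'_*$ of it is, again since $f'$ is affine, the $\SO_{S'}$-module associated with $M\otimes_B B'$ viewed as a $B'$-module. Under these identifications the canonical homomorphism $\theta(\SF)$ is induced by the identity map $M\otimes_B B'\to M\otimes_B B'$, hence is an isomorphism. I would make sure to check that the identification of $\theta(\SF)$ with this identity map is the functorial one coming from the adjunction $(g^*, g_*)$ and the base-change natural transformation, rather than merely asserting that both sides are abstractly isomorphic; this is the one place where one must be careful rather than merely formal.

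\textbf{Expected main obstacle.} There is no deep obstacle here — the essential content is bookkeeping — but the genuinely delicate point is precisely the compatibility just mentioned: verifying that the canonical base-change map $\theta(\SF)$, defined globally via adjunctions, agrees after the affine reductions with the tautological isomorphism of modules. In practice this is handled by recalling the explicit construction of $\theta(\SF)$ as the adjunct of the composite $f_*\SF\to f_*g'_*g^{\prime*}\SF\isom g_*f'_*g^{\prime*}\SF$ (using $f\circ g'=g\circ f'$) and tracing it through the sheaf-module dictionary; the reductions to the affine case are legitimate because $g^*$, $f_*$, and $(-)\times_S S'$ are all compatible with passing to open subschemes, and because an $\SO_{S'}$-module homomorphism is an isomorphism if and only if it is so on an open cover. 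I would also remark that, alternatively, Lemma~\ref{lem:affineBC} follows from the case of a flat base change (Lemma~\ref{lem:flatbc}, referred to elsewhere in the paper) together with faithfully flat descent when $g$ is flat, but the direct affine argument above requires no flatness hypothesis on $g$ and is the cleanest route.
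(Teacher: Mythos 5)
Your proposal is correct, and it is precisely the standard localize-to-the-affine-case, module-theoretic argument that the paper relies on: the paper gives no proof of its own for Lemma~\ref{lem:affineBC} but simply cites \cite[II, Cor.~(1.5.2)]{EGA}, whose proof is exactly this reduction to $S=\Spec B$, $S'=\Spec B'$, $X=\Spec A$, $\SF=M\sptilde$, together with the identification of $\theta(\SF)$ with the canonical map on $M\otimes_B B'$. Your attention to checking that the globally defined adjunction map agrees with the tautological module isomorphism after localization is the right (and only) delicate point.
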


\begin{lem}[flat base change]\label{lem:flatbc}
Assume that \( g \) is flat and that \( f \) is quasi-compact and quasi-separated.
Then, \( \theta^{i}(\SF) \) is an isomorphism for any \( i \).
\end{lem}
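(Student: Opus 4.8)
The statement to prove is the flat base change isomorphism (Lemma~\ref{lem:flatbc}): if $g$ is flat and $f$ is quasi-compact and quasi-separated, then the canonical map $\theta^{i}(\SF) \colon g^{*}(R^{i}f_{*}\SF) \to R^{i}f'_{*}(g^{\prime *}\SF)$ is an isomorphism for every $i$ and every quasi-coherent $\SF$. The plan is to reduce the question to a local calculation on the base. First I would observe that the statement is local on $S'$ and on $S$, so we may assume $S = \Spec R$ and $S' = \Spec R'$ are affine, with $R \to R'$ a flat ring homomorphism. Then $f \colon X \to \Spec R$ is quasi-compact and quasi-separated, so $X$ admits a finite cover by affine opens, and the point is that higher direct images can be computed by a suitable \v{C}ech-type complex with respect to such a cover; concretely, one uses that for a quasi-compact quasi-separated $f$, the functors $R^{i}f_{*}$ commute with filtered colimits and are computed, after restricting to an affine open $\Spec R$ of the base, by the cohomology of the \v{C}ech complex $\check{C}^{\bullet}(\mathfrak{U}, \SF)$ associated with a finite affine open cover $\mathfrak{U}$ of $X$ and its finite affine intersections (here quasi-separatedness guarantees the intersections are again quasi-compact, hence covered by finitely many affines).

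The key step is then the interaction of base change with this \v{C}ech complex. Let $\mathfrak{U} = \{U_{\alpha}\}$ be a finite affine open cover of $X$; then $\mathfrak{U}' = \{g^{\prime -1}(U_{\alpha})\}$ is a finite affine open cover of $X' = X \times_{S} S'$, and each $U_{\alpha} \times_{S} S'$ is affine. For an affine open $U = \Spec B$ with $\SF|_{U} \isom M\sptilde$, the module of sections of $g^{\prime *}\SF$ over $U \times_{S} S' = \Spec(B \otimes_{R} R')$ is $M \otimes_{R} R'$. Hence the \v{C}ech complex of $g^{\prime *}\SF$ with respect to $\mathfrak{U}'$ is canonically identified with $\check{C}^{\bullet}(\mathfrak{U}, \SF) \otimes_{R} R'$. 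Since $R'$ is flat over $R$, tensoring with $R'$ is exact and therefore commutes with taking cohomology of complexes of $R$-modules; this yields
\[
\OH^{i}\!\left(\check{C}^{\bullet}(\mathfrak{U}, \SF) \otimes_{R} R'\right)
\isom \OH^{i}\!\left(\check{C}^{\bullet}(\mathfrak{U}, \SF)\right) \otimes_{R} R'.
\]
Translating the left-hand side back as $\OH^{0}(S', R^{i}f'_{*}(g^{\prime *}\SF))$ and the right-hand side as $\OH^{0}(S', g^{*}(R^{i}f_{*}\SF))$, and checking that this identification is exactly the map induced by $\theta^{i}(\SF)$ (which follows from the functoriality of the \v{C}ech description and compatibility of the canonical base change map with restriction to the cover), gives the desired isomorphism on global sections over the affine $S'$, hence as sheaves.

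\textbf{Main obstacle.} I expect the genuine work to be twofold. First, one must justify carefully that $R^{i}f_{*}\SF$ over an affine base is computed by the \v{C}ech complex of a finite affine cover when $f$ is only assumed quasi-compact and quasi-separated rather than separated: for separated $f$ the intersections $U_{\alpha_{0}} \cap \cdots \cap U_{\alpha_{p}}$ are affine and the \v{C}ech-to-derived-functor spectral sequence degenerates immediately, but in the quasi-separated case each such intersection is merely quasi-compact and one must iterate the argument (or invoke the standard fact, e.g.\ via the Mayer--Vietoris / hypercohomology of the Čech nerve) that the Čech complex still computes $\bfR f_{*}$ in the quasi-coherent setting. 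Second, one must verify that the abstract identification constructed above coincides with the specific canonical homomorphism $\theta^{i}(\SF)$; this is a diagram-chase using the adjunction defining $\theta^{i}$ and the compatibility of the flat base change map $\theta$ for affine morphisms (Lemma~\ref{lem:affineBC}) with the Čech differentials. Both points are standard but require some care; everything else is formal. Alternatively, one could phrase the whole argument in the derived category, proving $\bfL g^{*}(\bfR f_{*}\SF) \isom \bfR f'_{*}(\bfL g^{\prime *}\SF)$ and using flatness of $g$ to drop the $\bfL$'s, but the Čech-module computation above is the most elementary route and avoids invoking further machinery.
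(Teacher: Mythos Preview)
Your proposal is correct and follows the standard \v{C}ech-complex argument. The paper, however, does not give its own proof of this lemma: it simply cites \cite[III, Prop.~(1.4.15)]{EGA} (with a cross-reference to \cite[IV, (1.7.21)]{EGA}) and moves on. So there is nothing to compare at the level of argument; your sketch is essentially the proof one finds in EGA, and the two points you flag as the main obstacles (the \v{C}ech computation in the quasi-separated rather than separated case, and the identification of the resulting isomorphism with the canonical map $\theta^{i}$) are exactly the places where EGA does the work.
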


A proof of Lemma~\ref{lem:affineBC} is given \cite[II, Cor.~(1.5.2)]{EGA}, and
a proof of Lemma~\ref{lem:flatbc} is given in \cite[III, Prop.~(1.4.15)]{EGA}
(cf.\ \cite[IV, (1.7.21)]{EGA}).
Here, the morphism \( f \colon X \to S\) is said to be ``quasi-separated'' if
the diagonal morphism \( X \to X \times_{S} X \)
is quasi-compact (cf.\ \cite[IV, D\'ef.~(1.2.1)]{EGA}).

We have also the following generalization of Lemma~\ref{lem:flatbc}
to the case of complexes
by \cite[II, Prop.~5.12]{ResDual}, \cite[IV, Prop.~3.1.0]{IllusieSGA6}, and
\cite[Prop.~3.9.5]{Lipman09}.

\begin{prop}\label{prop:flatbc}
In the situation of Lemma~\emph{\ref{lem:flatbc}}, let \( \SF^{\bullet} \)
be a complex of \( \SO_{X} \)-modules
in \( \bfD^{+}_{\qcoh}(X) \). Then, there is a functorial quasi-isomorphism
\[ \bfL g^{*}(\bfR f_{*}(\SF^{\bullet})) \to \bfR f'_{*}(\bfL g^{\prime *}(\SF^{\bullet})).  \]
\end{prop}

\end{document}